\documentclass[a4paper]{amsart}
\pdfoutput=1
\usepackage{style_kan}
\title{Free fibrations, lax colimits and Kan extensions for $(\infty,2)$-categories}
\author{Fernando Abell\'an}
\author{Rune Haugseng}
\author{Louis Martini}
\begin{document}
\begin{abstract} 
  In the first part of this paper we study fibrations
  of $(\infty,2)$-categories. We give a simple characterization of
  such fibrations in terms of a certain square being a pullback, and
  apply this to show that in some cases \itcats{} of functors and
  partially (op)lax transformations preserve fibrations. We also
  describe free fibrations of $(\infty,2)$-categories, including in
  the case where we only ask for (co)cartesian lifts of 
  specified 1- and 2-morphisms in the base, and describe the right
  adjoint to pullback from fibrations to such partial fibrations along
  an arbitrary functor. In the second part of the paper we apply these
  results to study colimits and Kan extensions of \itcats{}. Most
  notably, we give a fibrational description of both partially (op)lax
  and weighted (co)limits of $(\infty,2)$-categories and construct
  partially lax Kan extensions. Among other results, we also include a
  model-independent version of cofinality for $(\infty,2)$-categories
  and briefly consider presentable $(\infty,2)$-categories,
  characterizing them as accessible localizations of presheaves of
  $\infty$-categories.
\end{abstract}

\maketitle

\tableofcontents

\section{Introduction}\label{sec:intro}

As our understanding of homotopy-coherent structures in mathematics
has grown, it has become increasingly evident that in order to
adequately capture some of the phenomena encountered in practice, it
is necessary to go beyond the by now well-studied realm of \icats{}
into even higher categorical spheres. The next level of complexity, to
which we will restrict our attention in the present paper, is given by
\emph{\itcats{}}, which are the homotopy-coherent analogue of
classical 2-categories or bicategories.

One way to approach to the theory of \itcats{} is to regard them as
\icats{} enriched in \icats{} \cite{GHenriched,HaugsengRect}. The
theory of enriched \icats{} has recently been extensively developed by
Hinich~\cite{HinichYoneda} and Heine \cite{HeineEquiv,heineweighted},
so this makes available to us tools such as the Yoneda lemma, weighted
(co)limits, and Kan extensions. However, there are important aspects
of \itcat{} theory that are not visible from this perspective: most
notably, as is well-known from the classical theory of 2-categories
we can use 2-morphisms to define \emph{lax} versions of many
constructions, such as the Gray tensor product and lax
transformations \cite{CMGray,GHLGray}. The theory of \emph{fibrations}
and their straightening to functors, which has turned out to play an
extremely significant role in the development of the theory of
\icats{} and many of its applications, also does not make sense for
general enrichments, but has an analogue for \itcats{}
\cite{GHLFib,ASI23,Nuiten}; straightening of fibrations furthermore
gives us access to certain lax phenomena \cite{TwoVariable,AGH24}.

Our goal in this paper is to further develop the theory of fibrations
of \itcats{}, and then apply these fibrational results to obtain a
relatively self-contained development of colimits and Kan extensions of
\itcats{}. In particular, we give a thorough treatment of \emph{free}
fibrations and the pushforward of fibrations along an arbitrary
functor (including \emph{cofree} fibrations); this allows us to access
partially lax versions of both (co)limits and Kan extensions, which we
relate to their analogues from the enriched theory.

\subsubsection*{Fibrations of $(\infty,2)$-categories}
A cocartesian fibration of \itcats{}, which we will refer
to as a \emph{$(0,1)$-fibration} in order to have succinct terminology
for all four variances of $(\infty,2)$-fibrations, is a functor
$p \colon \tE \to \tB$ where $\tE$ has $p$-cocartesian lifts of all
1-morphisms in $\tB$ and $p$-cartesian lifts of all 2-morphisms (see
\S\ref{sec:review fib} for a more precise definition). Our first main
result gives a new\footnote{Note, however, that (as we discuss in
  \S\ref{sec:e-eqce}) this statement can be rephrased as
  characterizing partial $(0,1)$-fibrations by an orthogonality
  condition, which has also previously been considered by Loubaton in
  the setting of \iicats{} \cite[Theorem 3.2.2.24]{loubaton}.}
characterization of such fibrations in terms of a certain square of
\itcats{} being a pullback. To state this, we need to consider an
\itcat{} where some 1- and 2-morphisms have been singled out; to
avoid confusion with pre-existing terminology we refer to such objects
as \emph{decorated} \itcats{}, reserving the term \emph{marked} to
refer to the case where we only label 1-morphisms.

\begin{theoremint}
  Suppose $p \colon \tE \to \tB$ is a functor of \itcats{} and $\tEd$
  is a decoration of $\tE$ by both 1- and 2-morphisms. Then $p$ is
  a $(0,1)$-fibration, with these decorations giving the $p$-cocartesian
  1-morphisms and $p$-cartesian 2-morphisms in $\tE$, \IFF{} the
  commutative square
  \[
    \begin{tikzcd}
     \DARopl(\tEd) \ar[r, "\ARopl(p)"] \ar[d, "\ev_{1}"'] & \ARopl(\tB)  \ar[d, "\ev_{1}"] \\
     \tE \ar[r, "p"'] & \tB
    \end{tikzcd}
  \]
  is a pullback of \itcats{}.
\end{theoremint}

Here $\DARopl(\tEd)$ is the locally full sub-\itcat{} of the oplax
arrow \itcat{} $\ARopl(\tE)$ whose
\begin{itemize}
\item objects are the decorated 1-morphisms in $\tEd$,
\item morphisms are the oplax squares
  \[
    \begin{tikzcd}
        \bullet \ar[r] \ar[d, mid vert] & \bullet  \ar[d, mid vert] \\
        \bullet \ar[r] \ar[Rightarrow, ur, mid vert] & \bullet
      \end{tikzcd}
    \]
    containing a decorated 2-morphism.
  \end{itemize}

We prove this theorem in \S\ref{sec:pfib char}; in fact, we prove more generally
such a characterization of \emph{partial $\ve$-fibrations} for all
four variances $\ve \in \{0,1\}^{\times 2}$, meaning functors
$p \colon \tE \to \tB$ where $\tE$ only has (co)cartesian lifts of
certain 1- and 2-morphisms specified by a decoration of $\tB$.  Let us
next note two useful results we prove as easy consequences of this
characterization:

\begin{corint}
  Suppose $p \colon \tE \to \tB$ is a $(0,1)$-fibration. Then so is
  the functor
  \[ p_{*} \colon \FUN(\tK,\tE)^{\lax} \to \FUN(\tK,\tB)^{\lax}, \]
  given by composition with $p$, for
  any \itcat{} $\tK$. Its cocartesian morphisms are those lax
  transformations whose component at each object of $\tK$ is
  $p$-cocartesian and whose lax naturality squares all contain a
  $p$-cartesian 2-morphism, and its cartesian 2-morphisms are those
  whose component at each object is $p$-cartesian.
\end{corint}

\begin{corint}
  For any \itcat{} $\tA$, the target functor
  \[ \ev_{1} \colon \ARopl(\tA) \to \tA\]
  is a $(0,1)$-fibration. Its cocartesian morphisms are the
  commutative squares whose source is an equivalence in $\tA$, and its
  cartesian 2-morphisms are those whose source is an equivalence.
\end{corint}

We prove the first statement, and its partial and decorated
generalizations, in \S\ref{sec:fib Funlax}, using decorated versions
of the Gray tensor product and lax transformations we set up in
\S\ref{sec:dec gray}. The second statement, whose generalized version
is proved in \S\ref{sec:ARoplax fibs}, has previously been proved
using scaled simplicial sets by Gagna, Harpaz, and
Lanari~\cite[Theorem 3.0.7]{GHLFib}; for us, it is the starting point for
the construction of free fibrations:

\begin{theoremint}\label{thm:free partial fib}
  For any \itcat{} $\tB$, the forgetful functor
  \[ \Fib^{(0,1)}_{/\tB} \to \CatITsl{\tB} \]
  has a left adjoint, which sends a functor $F \colon \tA \to \tB$ to
  \[ \tA \times_{\tB} \ARopl(\tB) \to \tB,\]
  where the pullback is taken via $F$ and $\ev_{0}$ and the functor to
  $\tB$ is given by $\ev_{1}$.
\end{theoremint}
Here $\CatITsl{\tB}$ is the \icat{} of \itcats{} with a map to $\tB$
and $\Fib^{(0,1)}_{/\tB}$ is its subcategory that contains the
$(0,1)$-fibrations and the maps that preserve cocartesian 1-morphisms
and cartesian 2-morphisms. A version of this result (or more precisely
its dual) based on scaled simplicial sets was previously proved by the
first author and Stern~\cite[Theorem 3.17]{AS23}.

We prove \cref{thm:free partial fib} and its partial variants in
\S\ref{sec:free fib}, and also consider some further variations of
free fibrations in \S\S\ref{sec:free dec}--\ref{sec:free fib on dec}.
After this we move on to considering right adjoints to pullback for
\itcats{}: In \S\ref{sec:smooth} we upgrade the existence of right
adjoints to pullbacks along fibrations, due to
Abell\'an--Stern~\cite{ASI23}, to the decorated setting, and we then
apply this in \S\ref{sec:cofree} to show that there is a right adjoint
to pulling back fibrations to partial fibrations along an arbitrary
functor. As a particularly notable special case, we have:
\begin{theoremint}\label{thm:pushfwd fib intro}
  For any functor $F \colon \tA \to \tB$ and any collection $I$ of
  1-morphisms in $\tA$, the functor
  \[ F^{*} \colon \Fib^{(0,1)}_{/\tB} \to \Fib^{(0,1)}_{/(\tA,I)}, \]
  given by pullback along $F$,  has a right adjoint.
\end{theoremint}
Here $\Fib^{(0,1)}_{/(\tA,I)}$ denotes the subcategory of
$\CatITsl{\tA}$ whose objects are the $(0,1)$-fibrations and whose
morphisms are required to preserve all cartesian 2-morphisms, but only
those cocartesian morphisms that lie over $I$. Another notable special
case is the existence of \emph{cofree} fibrations --- in other words,
the forgetful functor
\[ \Fib^{(0,1)}_{/\tB} \to \CatITsl{\tB} \]
has a right adjoint.

We also apply these results in \S\ref{sec:loc fib} to prove that if we
have a functor to decorated \itcats{}, then the $(0,1)$-fibration for
the functor obtained by inverting the decorations is given by a
localization of the fibration for the original functor, generalizing a
result of Hinich~\cite{HinichDK} to \itcats{}.

\subsubsection*{(Co)limits and Kan extensions}
In the last part of the paper we apply our results on fibrations to
study (co)limits and Kan extensions for \itcats{}. Given a functor $F
\colon \tA \to \tC$ and a collection $I$ of 1-morphisms in $\tA$, we
can define the \emph{$I$-(op)lax limit} of $F$ as an object of $\tC$ that
represents the presheaf of \icats{}
\[ c \mapsto \Nat^{I\dplax}_{\tA,\tC}(\underline{c}, F), \] where the
right-hand side denotes the \icat{} of (op)lax natural transformations
from the constant functor $\underline{c}$ to $F$ whose naturality
squares at morphisms in $I$ commute. This definition and its dual
gives one notion of \itcatl{} (co)limits, and in \S\ref{sec:laxlim} we
show that it is equivalent to the definition of partially (op)lax
(co)limits previously studied by the first author in \cite{AbMarked}
and by Gagna, Harpaz and Lanari in \cite{GagnaHarpazLanariLaxLim} in
the context of scaled simplicial sets.

For a functor $F$ as above and a copresheaf of \icats{} $W$ on $\tA$, we
can also define the \emph{$W$-weighted limit} of $F$ as an object of
$\tC$ that represents the presheaf
\[ c \mapsto \Nat_{\tA,\CATI}(W,
  \tC(c,F)),\] where the right-hand side denotes the \icat{} of natural
transformations from $W$ to the \icat{} of maps from $c$ into
$F(\blank)$ in $\tC$. In \S\ref{sec:wtlim} we show that weighted (co)limits can
be expressed as partially lax and oplax (co)limits over the fibrations
for the weight, and conversely that partially (op)lax colimits can be
computed as weighted colimits. This was also previously proved by Gagna,
Harpaz and Lanari \cite{GagnaHarpazLanariLaxLim}, but our work on free
fibrations allows us to give a more explicit description of the
weights that show up.

We then turn to our main new result on (co)limits in \S\ref{sec:lax
  colim 2cat}, where we compute partially (op)lax (co)limits in the \itcat{}
$\CATIT$ of small \itcats{} in terms of fibrations:
\begin{theoremint}
  Suppose $p \colon \tE \to \tA$ is the $(0,1)$-fibration for a
  functor $F \colon \tA \to \CATIT$. For a collection $I$ of
  1-morphisms in $\tA$, we have:
  \begin{enumerate}
  \item The $I$-lax colimit of $F$ is obtained from $\tE$ by inverting all cartesian
    2-morphisms and those cocartesian 1-morphisms that lie over $I$.
  \item The $I$-lax limit of $F$ is given by the \itcat{} of sections
    $\tA \to \tE$ of $p$ that send all 2-morphisms to $p$-cartesian
    2-morphisms and the 1-morphisms in $I$ to $p$-cocartesian
    morphisms.
  \end{enumerate}
\end{theoremint}
Other variances of fibrations can be used to get the $I$-oplax
(co)limits of $F$. Note that a similar description of partially
(op)lax (co)limits for \iicats{} appears in work of
Loubaton~\cite[Examples 4.2.3.12--13]{loubaton}.

We next prove some results on the existence and preservation of
(co)limits in \S\ref{sec:stuff}, including the existence of certain
(co)limits in \itcats{} of functors and partially lax transformations
(\cref{prop:colimfunctorcat}). After this we see in \S\ref{sec:cofinal
  new} that our framework of decorated \itcats{} and partial
fibrations makes it very easy to understand cofinality for functors of
marked \itcats{}, and we obtain model-independent proofs of most of
the results on cofinality from
\cite{AS23,AbMarked,GagnaHarpazLanariLaxLim} with very little effort.

If we combine \cref{thm:pushfwd fib intro} with the straightening for
partially lax transformations from \cite{AGH24}, we get that for a
functor of \itcats{} $F \colon \tA \to \tB$ and a collection $I$ of
1-morphisms in $\tA$, the
restriction functor
\[ F^{*} \colon \FUN(\tB, \CATIT) \to \FUN(\tA, \CATIT)^{I\dlax}\]
has a right adjoint. In \S\ref{sec:kanext} we combine this with our
work on (co)limits and the Yoneda embedding to show that such adjoints
exist more generally:
\begin{theoremint}
  Suppose $F$ is as above and $\tC$ is an \itcat{} that for every $b
  \in \tB$ admits $I_{b}$-lax limits over $\tA_{b \upslash}$, then the
  restriction functor
  \[ F^{*} \colon \FUN(\tB, \tC) \to \FUN(\tA, \tC)^{I\dlax}\]
  has a right adjoint $F^{I\dlax}_{*}$, the \emph{$I$-lax right Kan
    extension} functor along $F$, given at $\phi \colon \tA \to \tC$
  and $b \in \tB$ by
  \[ (F^{I\dlax}_{*}\phi)(b) \simeq \lim^{I_{b}\dlax}_{\tA_{b
        \upslash}} F.\]
\end{theoremint}
Here $\tA_{b \upslash}$ denotes the pullback
$\tA \times_{\tB} \tB_{b \upslash}$ where $\tB_{b \upslash}$ is the
fibre of $\ev_{0} \colon \ARopl(\tB) \to \tB$ at $b$, and $I_{b}$
denotes the maps therein that project to $I$. More
generally, we obtain both left and right partially (op)lax Kan
extensions, extending to a general target results proved using scaled
simplicial sets in \cite{Abellan2023}. Taking the marking $I$ to be
maximal, we obtain ordinary Kan extensions for \itcats{}; these can
also be obtained using enriched \icat{} theory \cite{heineweighted},
and our (co)limit formula specializes to the expected one in terms of
weighted (co)limits (\cref{cor:strong kan ext}).

We apply our results on Kan extensions to prove a Bousfield--Kan
formula for weighted (co)limits in \S\ref{sec:bousfieldkan}; this
gives a different proof of the \itcatl{} case of a result of
Heine~\cite[Theorem 3.44]{heineweighted} for general
enrichments. After this we use our description of pushforward for
fibrations to obtain a functorial version of the Yoneda lemma in
\S\ref{sec:free cocomp}, which we further apply to prove that taking
the \itcat{}
\[ \tPSh(\tC) := \FUN(\tC^{\op}, \CATI)\]
of presheaves of \icats{} gives the free cocompletion of a small
\itcat{} $\tC$.

Finally, we briefly study \emph{presentable} \itcats{} in
\S\ref{sec:pres}. We define these rather na\"ively as cocomplete
\itcats{} whose underlying \icat{} is presentable, and as our main
result obtain the following equivalent characterizations of this
notion:
\begin{theoremint}\label{thm:pres char intro}
  The following are equivalent for an \itcat{} $\tC$:
  \begin{enumerate}[(1)]
  \item $\tC$ is presentable.
  \item $\tC$ is cocomplete and locally small, and there is a regular
    cardinal $\kappa$ and a small full sub-\itcat{} $\tC_{0} \subseteq
    \tC$ consisting of 2-$\kappa$-compact objects, such that every
    object in $\tC$ is the conical colimit of a diagram in $\tC$
    indexed by a $\kappa$-filtered \icat{}.
  \item There is a small \itcat{} $\tJ$ and a fully faithful functor 
    $\tC \hookrightarrow \tPSh(\tJ)$ that admits a left adjoint and whose image is closed under
    conical colimits over $\kappa$-filtered \icats{} for some regular
    cardinal $\kappa$.
  \item There is a small \itcat{} $\tJ$, a small set $S$ of 1-morphisms in $\tPSh(\tJ)$ and an
    equivalence $\tC \simeq \Loc_{S}(\tPSh(\tJ))$ with the full
    sub-\itcat{} of objects that are 2-local with respect to $S$.
  \end{enumerate}
\end{theoremint}
We refer the reader to \S\ref{sec:pres} for the precise definitions of
the terms that appear here. Note that similar results on
presentability for general enriched \icats{} also appear in the work of
Heine, in particular \cite[Theorem 5.10]{heineweighted}.

\subsubsection*{Acknowledgments}
We thank the Max Planck Institute for Mathematics in Bonn, where part of this work was carried out, for its hospitality and support.

\section{Preliminaries}
\label{sec:decorated}

In this section we first recall some terminology and basic results on
\itcats{} in \S\ref{sec:twocat}. We then review marked \itcats{} in
\S\ref{sec:marked} and their marked Gray tensor product in
\S\ref{sec:marked gray}. Next, we review fibrations of \itcats{} and
their straightening in \S\ref{sec:review fib} and lax slices and cones
in \S\ref{sec:lax slice}. After this we are ready to introduce
\emph{decorated} \itcats{}, by which we mean \itcats{} equipped with
collections of special 1- and 2-morphisms, in \S\ref{sec:dec defn};
this will provide a very convenient setting for our work on fibrations
in the following sections. Finally, we introduce a decorated version
of the Gray tensor product in \S\ref{sec:dec gray}.

\subsection{$(\infty,2)$-categories and Gray tensor products}
\label{sec:twocat}

In this section we introduce our basic terminology and notation and
recall some fundamental results on \itcats{} and their Gray tensor products.
As we hardly ever need to refer to any particular model
of \itcats{}, we will not review any of them here.

\begin{notation}\ 
  \begin{itemize}
  \item Generic \itcats{} are denoted $\tA, \tB, \tC, \dots$, and
    generic \icats{} $\oA, \oB, \oC, \dots$.
  \item We write $* = [0] = C_{0}$ for the 0-cell or point,
    $[1] = C_{1}$ for the 1-cell or arrow, and $C_{2}$ for the 2-cell
    or free 2-morphism. We note also the following notation for
    2-categories built from these:
    \[ \partial [1] := \{0\} \amalg \{1\}, \quad \partial C_{2} :=
      [1] \amalg_{\partial [1]} [1], \quad \partial C_{3} := C_{2}
      \amalg_{\partial C_{2}} C_{2}.\]
  \item If $\tC$ is an \itcat{} with objects $x,y$, we will generally
    denote the \icat{} of morphisms from $x$ to $y$ in $\tC$ by
    $\tC(x,y)$, while if $\oC$ is an
    \icat{} we will write $\oC(x,y)$ or $\Map(x,y)$ for the \igpd{} of
    morphisms.
  \item We write $\CatI$ and $\CatIT$ for the \icats{} of (small)
    \icats{} and \itcats{}, and $\CATI$ and $\CATIT$ for the \itcats{}
    thereof, respectively; similarly, we write $\GpdI$ for the \icat{}
    of (small) \igpds{} or spaces.
  \item If $\tC$ and $\tD$ are \itcats{}, we write $\FUN(\tC,\tD)$ for
    the \itcat{} of functors between them (the internal Hom in
    $\CatIT$), and $\Fun(\tC,\tD)$ for its underlying \icat{}. For
    functors $F,G \colon \tC \to \tD$ we denote the \icat{} of maps
    from $F$ to $G$ in $\FUN(\tC,\tD)$ by $\Nat_{\tC,\tD}(F,G)$.
  \item For the arrow $[1]$, we write $\AR(\tC) := \FUN([1],\tC)$ for
    the \itcat{} of arrows in an \itcat{} $\tC$ and $\Ar(\tC)$ for its
    underlying \icat{}; the functor to $\tC$ given by evaluation at
    $i \in [1]$ is denoted $\ev_{i}$ ($i = 0,1$).
  \item We write $\tPSh(\tC) := \FUN(\tC^{\op},\CATI)$ for the
    \itcat{} of presheaves of \icats{} on an \itcat{} $\tC$, and
    $h_{\tC} \colon \tC \to \tPSh(\tC)$ for the Yoneda embedding
    \cite{HinichYoneda}.
  \item We say an \itcat{} $\tC$ \emph{admits tensors} by an \icat{}
    $\oK$ if the copresheaf $\Map(\oK, \tC(c,\blank))$ is corepresentable by an
    object $\oK \boxtimes c$ for every $c \in
    \tC$; dually, $\tC$ \emph{admits cotensors} by $\oK$ if 
    the presheaves
    $\Map(\oK, \tC(\blank, c))$ are representable by objects
    $c^{\oK}$. If $\tC$ admits (co)tensors by all small \icats{}, we
    say that $\tC$ is \emph{(co)tensored}.
  \item If $\tC$ is an \itcat{}, we write $\tC^{\simeq}$ or $\tC^{\leq
    0}$ for its
    underlying (or core) \igpd{}, and $\tC^{\leq 1}$ for its
    underlying \icat{}; these are right adjoint to the inclusions of
    $\GpdI$ and $\CatI$ in $\CatIT$, respectively.
  \item These inclusions also have left adjoints, which we denote
    \[ \|\blank\| = \Lgpd \colon \CatIT \to \GpdI,\]
    \[ \Lcat \colon \CatIT \to \CatI,\]
    respectively.
  \end{itemize}
\end{notation}

\begin{defn}\label{defn:orientalsprelim}
  For $n \geq 0$ we define $\tO^{n}$ to be the following strict 2-category:
  \begin{itemize}
    \item The objects are the elements of $[n]$.
    \item For $i,j \in [n]$, the mapping category $\tO^n(i,j)$ is the poset of subsets $S \subseteq [n]$ such that $\min(S)=i$ and $\max(S)=j$,
    ordered by inclusion.
    \item For a triple $i,j,k \in [n]$, the composition maps are given by taking unions.
  \end{itemize}
  The 2-categories $\tO^{n}$ correspond to the (2-truncated)
  \emph{orientals} or oriented simplices defined
  by Street~\cite{StreetOriental}, and $\tO^{n}$ should be thought of
  as an $n$-simplex with compatibly oriented 2-morphisms inserted in
  all of its 2-dimensional faces.
\end{defn}

\begin{defn}
  Suppose $\tC$ and $\tD$ are \itcats{} and that $\tC$ admits tensors by an
  \icat{} $\oK$. A functor $F \colon \tC \to \tD$ then induces for
  $c \in \tC$ a functor
  \[ \tC(\oK \boxtimes c, \oK \boxtimes c)^{\simeq} \simeq \Map(\oK,
    \tC(c,\oK \boxtimes c)) \to \Map(\oK, \tD(Fc,F(\oK \boxtimes
    c))),\] so that we get a canonical functor
  $\oK \to \tD(Fc,F(\oK \boxtimes c))$ from $\id_{\oK \boxtimes c}$. We say that $F$
  \emph{preserves tensors by $\oK$} if this exhibits the copresheaf
  $\Map(\oK, \tD(Fc, \blank))$ as corepresented by
  $F(\oK \boxtimes c)$.
\end{defn}

\itcats{} can be defined as \icats{} \emph{enriched} in $\CatI$ (see
\cite{HaugsengRect}). The general theory of enriched \icats{} has
recently been extensively developed by Hinich~\cite{HinichYoneda} and
Heine~\cite{HeineEquiv,heineweighted}. In particular, it follows from
Heine's work that an \itcat{} $\tC$ is tensored \IFF{} its enrichment
arises from a $\CatI$-module structure on the underlying \icat{}
$\tC^{\leq 1}$, and this correspondence is part of an equivalence of
\icats{}. From this we get the following useful procedure to construct
\itcats{} and functors among them:

\begin{observation}\label{propn:upgrade functor}
  Suppose $\phi \colon \CatI \to \oC$ is a product-preserving
  functor. This makes $\oC$ an algebra over $\CatI$, and so a module
  with the action given by $(\oK, c) \mapsto \phi(\oK) \times c$. If
  the functor $\oK \mapsto \phi(\oK) \times c$ has a right adjoint
  $\tC(c,\blank)$ for all $c \in \oC$, then we can upgrade $\oC$ to an
  \itcat{} $\tC$, where the mapping \icats{} satisfy
  \[ \Map(\oK, \tC(c,c')) \simeq \oC(\phi(\oK) \times c, c').\]
  Moreover, if we have a commutative triangle of \icats{}
  \[
    \begin{tikzcd}
      {} & \CatI \ar[dl, "\phi"'] \ar[dr, "\psi"] \\
      \oC \ar[rr, "F"] & & \oD
    \end{tikzcd}
  \]
  where $\phi$ and $\psi$ preserve products and there are right
  adjoints as above, and also the functor $F$ preserves products, then
  $F$ upgrades to a functor of \itcats{} for the $\CatI$-enrichments
  of $\oC$ and $\oD$ induced by the functors $\phi$ and
  $\psi$. Similarly, if the same properties hold with $\CatI$ replaced
  by $\CatIT$, we can upgrade (functors of) \icats{} to (functors of)
  $(\infty,3)$-categories.
\end{observation}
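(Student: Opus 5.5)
The plan is to build the $\CatI$-enrichment of $\oC$ from the $\CatI$-module structure and then invoke Heine's equivalence between tensored \itcats{} and $\CatI$-modules with right adjoints to the action. First, since $\phi$ preserves products, it is symmetric monoidal for the cartesian structures. It therefore induces a functor of \iopds{} $\CatI^{\times} \to \oC^{\times}$, and hence an $\CatI$-algebra structure on $\oC$. Restricting the action $\oC \times \oC \to \oC$ along $\phi \times \id$ makes $\oC$ a left $\CatI$-module with action $(\oK,c) \mapsto \phi(\oK)\times c$. Concretely, this comes from pulling back the canonical self-action of the cartesian monoidal \icat{} $\oC^{\times}$, as a left module over itself, along the monoidal functor $\phi$.

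Next, I would use the hypothesis that each $\phi(\blank)\times c$ has a right adjoint $\tC(c,\blank)$. This says exactly that the module is "closed", i.e.\ it admits endomorphism objects in the sense of Lurie/Hinich/Heine. By Heine's theorem, such a closed $\CatI$-module is equivalent to a tensored $\CatI$-enriched \icat{} $\tC$ with underlying \icat{} $\oC$ and mapping objects $\tC(c,c')$. The formula
\[ \Map(\oK,\tC(c,c')) \simeq \oC(\phi(\oK)\times c, c') \]
is then just the adjunction.

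For functoriality, the commuting triangle says that $F$ together with the product structure is a map of $\CatI$-algebras. Since $F$ preserves products and $F\circ\phi\simeq\psi$, the map $F(\phi(\oK)\times c)\simeq \psi(\oK)\times Fc$ makes $F$ a $\CatI$-linear (strong) module functor. Heine's equivalence is functorial in (lax) linear functors of closed modules. Strong linear functors correspond to enriched functors preserving tensors, and in any case they give $\CatI$-enriched functors $\tC\to\tD$. Concretely, the induced map $\tC(c,c')\to\tD(Fc,Fc')$ is adjoint to $\psi(\tC(c,c'))\times Fc \simeq F(\phi(\tC(c,c'))\times c)\to Fc'$.

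The main subtlety is making the module structure and the linear functor coherent, not just objectwise. Producing them via the pullback of the self-action of $\oC^{\times}$ along the monoidal functor $\phi^{\times}$ handles this, since a product-preserving functor between cartesian \icats{} canonically lifts to a symmetric monoidal functor. The $(\infty,3)$ case is identical, replacing $\CatI$ by $\CatIT$ and using Heine's equivalence for $\CatIT$-enrichment.
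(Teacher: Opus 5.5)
Your proposal is correct and follows essentially the same route as the paper. The paper justifies this observation only by citing Heine's equivalence between tensored $\CatI$-enriched \icats{} and \icats{} with a closed $\CatI$-action, with the module structure induced by the product-preserving (hence canonically symmetric monoidal) functor $\phi$ and closedness supplied by the assumed right adjoints. Your extra step, upgrading the commuting triangle to one of symmetric monoidal functors so that $F$ becomes a strong $\CatI$-linear functor, is what the paper implicitly uses when it says the correspondence is ``part of an equivalence of \icats{}''.
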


We next note the following useful criterion for upgrading adjunctions;
this is also a special case of \cite[Proposition
4.4.1]{StefanichPres} (where the presentability assumption is not used in
the proof) or \cite[Lemma A.2.14]{secondaryK}.
\begin{propn}\label{propn:upgrade adjoint}
  Suppose $F \colon \tC \to \tD$ is a functor of \itcats{} such that
  $\tC$ admits tensors by $[1]$ and $F$ preserves these. If the
  functor $F^{\leq 1}$ is a left adjoint, then so is $F$.
\end{propn}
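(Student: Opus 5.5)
Let $G \colon \tD^{\leq 1} \to \tC^{\leq 1}$ denote the right adjoint of $F^{\leq 1}$, with unit $\eta \colon \id \to G F$. The plan is to show that $G$ is in fact an enriched right adjoint. Concretely, we aim to prove that for all $c \in \tC$ and $d \in \tD$ the composite
\[ \tC(c, Gd) \xrightarrow{F} \tD(Fc, FGd) \xrightarrow{\epsilon_{d} \circ \blank} \tD(Fc, d) \]
is an equivalence of \icats{}. Once this holds, the standard enriched argument (e.g.\ via Hinich's Yoneda embedding, where these equivalences natural in $c$ exhibit $\tD(F\blank,d)$ as represented by $Gd$) assembles the objects $Gd$ into a functor of \itcats{} $G \colon \tD \to \tC$ right adjoint to $F$.

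Since the composite is a functor of \icats{}, it suffices to check two things: that it is an equivalence on cores, and that it is an equivalence on the \igpds{} of morphisms $\Map([1], \blank)$. For then it is essentially surjective and fully faithful, since a functor $\oA \to \oB$ inducing equivalences $\Map([0],\oA)\to\Map([0],\oB)$ and $\Map([1],\oA)\to\Map([1],\oB)$ is an equivalence, $[0],[1]$ being generators for the complete Segal description.

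The first check, on cores, is exactly the adjunction equivalence $\Map_{\tC^{\leq 1}}(c,Gd) \simeq \Map_{\tD^{\leq 1}}(Fc,d)$ supplied by the hypothesis on $F^{\leq 1}$. For the second check we use tensors. We have
\[ \Map([1], \tC(c,Gd)) \simeq \Map_{\tC^{\leq 1}}([1]\boxtimes c, Gd) \simeq \Map_{\tD^{\leq 1}}(F([1]\boxtimes c), d). \]
Because $F$ preserves tensors by $[1]$, the last term is equivalent to $\Map([1], \tD(Fc,d))$. We then must verify that this chain of equivalences agrees with the map induced by our composite. This follows by unwinding the definition of tensor-preservation: the canonical functor $[1]\to\tD(Fc,F([1]\boxtimes c))$ is the image of the unit $[1]\to\tC(c,[1]\boxtimes c)$ under $F$, and the adjunction equivalence is given by applying $F$ and then composing with $\epsilon_d$.

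The main obstacle is this compatibility check, together with arguing naturality in $c$ carefully enough to get the enriched representability, rather than merely pointwise equivalences. I would handle it by working with the presheaves $\Map(\oK,\tC(\blank,Gd))$ and $\Map(\oK,\tD(F\blank,d))$ on $\tC^{\leq 1}$ for $\oK=[0],[1]$. The natural transformation between them is induced by the functor $F$ and by $\epsilon_d$, so naturality is automatic, and only the pointwise identification above is needed.
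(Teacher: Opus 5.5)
Your proposal is correct and follows essentially the same route as the paper. You reduce via the Yoneda lemma to showing that $\tD(F(\blank),d)$ is represented by $Gd$ through the composite with the counit $\epsilon_d$; you check this on cores using the underlying adjunction, and on $\Map([1],\blank)$ by identifying it with the core-level map for $[1]\boxtimes c$ via preservation of tensors. One small slip: you introduce only the unit $\eta$ but then use the counit $\epsilon_d$, so you should name $\epsilon$ when you set up the adjunction.
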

Note that since $(\blank)^{\leq 1}$ is a 2-functor (\eg{} by
\cref{propn:upgrade functor}), it preserves adjunctions, and so in the
situtation above the underlying functor of \icats{} of the right
adjoint of $F$ is necessarily the right adjoint of $F^{\leq 1}$.
\begin{proof}
  By the Yoneda Lemma for \itcats{} it suffices to show that the
  presheaf of \icats{} $\tD(F(\blank), d)$ is representable for all
  $d \in \tD$. Since $F^{\leq 1}$ has a right adjoint $g$, we have a
  counit map $\epsilon \colon F(g(d)) \to d$ so that the composite
  \[ \tC(c, g(d))^{\simeq} \to \tC(F(c), F(g(d)))^{\simeq}
    \to \tC(F(c), d)^{\simeq} \]
  is an equivalence for every $c$. We claim that we also have an equivalence without
  taking cores; to see this it is enough to check that the composite
  \[ \Map([1], \tC(c, g(d))) \to \Map([1], \tC(F(c), F(g(d))))
    \to \Map([1], \tC(F(c), d)) \]
  is an equivalence, and since $F$ preserves tensors by $[1]$ we may
  identify this as the first map with $c$ replaced by $[1] \boxtimes
  c$.
\end{proof}

\begin{cor}\label{propn:upgrade adjn enr}
  Suppose $F \colon \oC \to \oD$ is a functor of \icats{} as in
  \cref{propn:upgrade functor}.
  \begin{enumerate}[(i)]
  \item If $F$ has a right adjoint $R$, then the adjunction
    upgrades canonically to an adjunction $F \dashv R$ of \itcats{}.
  \item If $F$ has a left adjoint $L$, the tensoring of
    $\oC$ and $\oD$ by $[1]$ has a right adjoint, and the 
    canonical map
    \[ L([1] \boxtimes d) \to [1] \boxtimes Ld \]
    is an equivalence for all $d$, then the adjunction
    upgrades canonically to an adjunction $L \dashv F$ of \itcats{}.
  \end{enumerate}
\end{cor}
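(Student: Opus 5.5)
Both parts reduce to \cref{propn:upgrade adjoint}, once we know that $F$ upgrades to a functor of \itcats{}. Here $F$ is a product-preserving functor over $\CatI$ as in \cref{propn:upgrade functor}. So $\oC$ and $\oD$ carry the $\CatI$-enrichments induced by $\phi$ and $\psi$, and $F$ upgrades to a functor $F \colon \tC \to \tD$ of \itcats{}. In both enrichments the tensoring of an object $c$ by an \icat{} $\oK$ is $\phi(\oK) \times c$, respectively $\psi(\oK) \times d$. This holds because $\Map(\oK, \tC(c,c')) \simeq \oC(\phi(\oK)\times c, c')$ says exactly that $\phi(\oK) \times c$ corepresents $\Map(\oK,\tC(c,\blank))$. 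In particular both \itcats{} are tensored.

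For (i), I will apply \cref{propn:upgrade adjoint} to $F$. Its hypotheses are checked as follows.
\begin{itemize}
\item $\tC$ admits tensors by $[1]$, by the observation above.
\item $F$ preserves these tensors. Indeed $F(\phi([1]) \times c) \simeq F\phi([1]) \times Fc \simeq \psi([1]) \times Fc$, using that $F$ preserves products and that $F\phi \simeq \psi$. One still has to check that the canonical comparison map in the definition of tensor preservation is this identification. I expect this to follow from the construction in \cref{propn:upgrade functor}, since the enriched structure on $F$ is built from the module structure, that is, from the equivalences $F(\phi(\oK)\times c) \simeq \psi(\oK)\times Fc$.
\item $F^{\leq 1} = F$ is a left adjoint by assumption.
\end{itemize}
Hence $F$ is a left adjoint of \itcats{}. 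By the remark after \cref{propn:upgrade adjoint}, the right adjoint has underlying functor $R$, so the adjunction $F \dashv R$ upgrades.

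For (ii), I will apply \cref{propn:upgrade adjoint} to $L$ instead. The difficulty is that $L$ is not a priori a functor of \itcats{}, so I first need to upgrade it. The plan is to use that $\tD$ is tensored and $F$ is enriched, and to define the enriched representability directly. For $d \in \oD$ and $c \in \oC$, write $[1]\boxtimes d := \psi([1]) \times d$. Then
\[ \Map([1], \tC(Ld, c)) \simeq \oC(L([1]\boxtimes d), c)\quad\text{and}\quad \Map([1], \tD(d, Fc)) \simeq \oD([1]\boxtimes d, Fc). \]
Using the hypothesis $L([1]\boxtimes d)\simeq [1]\boxtimes Ld$ together with the underlying adjunction, both sides identify with $\oD([1]\boxtimes d, Fc)$. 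The analogous statement with $[1]$ replaced by $[0]$ is just the underlying adjunction.

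This step is the main obstacle. A map of mapping \icats{} $\tC(Ld,c) \to \tD(d,Fc)$ is detected by maps out of $[0]$ and $[1]$, since these generate $\CatI$ under colimits. This would show that the presheaf $\tD(d, F(\blank))$ is corepresented by $Ld$, i.e.\ that $F$ is a right adjoint of \itcats{} with left adjoint $Ld$ at each $d$. However, it requires actually producing a comparison map. I would produce it from the unit $d \to FLd$ and the enriched functor $F$:
\[ \tC(Ld, c) \to \tD(FLd, Fc) \to \tD(d, Fc). \]
It then remains to check that this map induces the above equivalence on $\Map([1],\blank)$. As in the proof of \cref{propn:upgrade adjoint}, this amounts to identifying the composite on $[1]$-points with the underlying adjunction map for $[1]\boxtimes d$. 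That identification is exactly where the hypothesis on $L([1]\boxtimes d)$ enters: the canonical map $L([1]\boxtimes d)\to[1]\boxtimes Ld$ is the mate of the tensor compatibility of $F$.

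The hypothesis that tensoring by $[1]$ has a right adjoint ensures that these $[1]$-points make sense on both sides and that tensoring is compatible with the enrichment. The Yoneda lemma for \itcats{} then assembles the objectwise corepresentations into the left adjoint $L$ of $F$, with underlying functor the given $L$.
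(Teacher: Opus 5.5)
Your proof is correct. Part (i) is exactly the paper's argument; for (ii) you take a different route.

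\textbf{The paper's route for (ii).} The paper applies the dual (cotensor) form of \cref{propn:upgrade adjoint} to $F$ itself. The hypothesis that $[1]\boxtimes(\blank)$ has a right adjoint supplies cotensors $c^{[1]}$. Passing to mates, the assumption that $L([1]\boxtimes d)\to[1]\boxtimes Ld$ is an equivalence becomes the statement that $F$ preserves cotensors by $[1]$. The $[1]$-points of $\tC(Ld,c)\to\tD(d,Fc)$ are then just its $[0]$-points with $c$ replaced by $c^{[1]}$.

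\textbf{Your route for (ii).} You rerun the proof of \cref{propn:upgrade adjoint} directly for the copresheaf $\tD(d,F(\blank))$, and you test $[1]$-points through the tensors $[1]\boxtimes d$ and $[1]\boxtimes Ld$ rather than through cotensors.
\begin{itemize}
\item On $[1]$-points the comparison map is $g\mapsto F(g)\circ\theta\circ([1]\boxtimes\eta_d)$, where $\theta$ is the tensor comparison map of $F$.
\item The canonical map $\mu\colon L([1]\boxtimes d)\to[1]\boxtimes Ld$ is adjoint to $\theta\circ([1]\boxtimes\eta_d)$.
\item Hence the map on $[1]$-points is $\mu^{*}$ followed by the underlying adjunction equivalence at $[1]\boxtimes d$.
\end{itemize}
This is sound, and it uses the hypothesis on $\mu$ directly rather than through mates. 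The paper's version is shorter only because it can cite the dual of an existing proposition.

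\textbf{Two inaccuracies in your write-up.}
\begin{itemize}
\item Your argument never uses that $[1]\boxtimes(\blank)$ has a right adjoint. The tensors on both sides, and $F$'s preservation of them, already come from the setup of \cref{propn:upgrade functor}. So your closing sentence, which assigns a role to that hypothesis, is wrong. The hypothesis is only needed for the cotensor route, and your argument in fact proves (ii) without it.
\item Your opening sentence for (ii) announces an application of \cref{propn:upgrade adjoint} to $L$. You then rightly abandon this, since $L$ is not yet an enriched functor. You should state from the start that you are proving corepresentability of $\tD(d,F(\blank))$ directly.
\end{itemize}
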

\begin{proof}
  The first part is immediate from \cref{propn:upgrade adjoint}, while
  the second part follows from this together with the observation that
  $F$ preserves all cotensors by $[1]$ \IFF{} its left adjoint
  preserves all tensors.
\end{proof}

We also recall some useful terminology for subobjects of \icats{} and \itcats{}:
\begin{defn}
    A functor $F \colon \oC \to \oD$ of \icats{} is:
  \begin{itemize}
  \item \emph{faithful} if $\oC(c,c') \to \oD(Fc,Fc')$ is a
    monomorphism of \igpds{} for all $c,c' \in \oC$;
  \item an \emph{inclusion} if it is faithful 
    and  $\oC^{\simeq} \to \oD^{\simeq}$ is a monomorphism --- we then
    also say that $\oC$ is a \emph{subcategory}
    of $\oD$.
  \end{itemize}
\end{defn}

\begin{defn}\label{defn:subcategories}
  A functor $F \colon \tC \to \tD$ of \itcats{} is called
  \begin{itemize}
  \item \emph{fully faithful} if
  $\tC(c,c') \to \tD(Fc,Fc')$ is an equivalence of \icats{} for all
  objects $c,c' \in \tC$ --- we then also say that $\tC$ is a \emph{full
    sub-\itcat{}} of $\tD$;
  \item \emph{locally fully faithful} if $\tC(c,c') \to \tD(Fc,Fc')$
    is fully faithful for all $c,c' \in \tC$;
  \item a \emph{locally full inclusion} if $F$ is locally fully faithful and
    $\tC^{\simeq} \to \tD^{\simeq}$ is a monomorphism --- we then also say
    that $\tC$ is a \emph{locally full sub-\itcat{}} of $\tD$;
  \item \emph{locally faithful} if $\tC(c,c') \to \tD(Fc,Fc')$ is
    faithful for all $c,c' \in \tC$;
  \item \emph{locally an inclusion} if $\tC(c,c') \to \tD(Fc,Fc')$ is
    a subcategory inclusion for all $c,c' \in \tC$;
  \item an \emph{inclusion} if it is locally an
    inclusion and $\tC^{\simeq} \to \tD^{\simeq}$ is a monomorphism
    --- we then also say that $\tC$ is a \emph{sub-\itcat{}} of $\tD$.
  \end{itemize}
  We also say that a sub-\itcat{} $\tC$ of $\tD$ is \emph{wide} if
  $\tC^{\simeq} \to \tD^{\simeq}$ is an equivalence. As discussed in
  \cite[\S 2.5]{AGH24}, all of these properties can be characterized
  by $F$ being right orthogonal to some finite set of maps in $\CatIT$.
\end{defn}

\begin{notation}\label{not:gray}
  The \emph{Gray tensor product} of \itcats{} will play an important
  role in this paper, but as this is by now a standard construction we
  will not review the definition here (see for instance \cite{GHLGray}
  for a definition using scaled simplicial sets or \cite{CMGray} for a version based on $\Theta$-spaces).
  \begin{itemize}
  \item The Gray tensor product of two \itcats{} $\tC$ and $\tD$ is
    denoted $\tC \otimes \tD$.
  \item We write $\Fun^{\plax}(\blank,\blank)$ for the adjoints of the
    Gray tensor in each of the two variables, so that we have natural
    equivalences
    \[ \Map(\tC \otimes \tD, \tE) \simeq \Map(\tD,
      \FUN(\tC,\tE)^{\oplax}) \simeq \Map(\tC, \FUN(\tD,
      \tE)^{\lax}).\]
    Here $\Fun^{\plax}(\tC,\tD)$ is an \itcat{} whose objects are
    functors from $\tC$ to $\tD$, with morphisms given by \emph{(op)lax
      natural transformations} among these.
  \item For $[1]$, we write
    \[ \ARplax(\tC) := \FUN([1], \tC)^{\plax}.\]
  \item For functors $F,G \colon \tC \to \tD$, we denote the \icat{} 
    of maps from $F$ to $G$ in $\Fun^{\plax}(\tC,\tD)$ by $\Nat^{\plax}_{\tC,\tD}(F,G)$.
  \end{itemize}
\end{notation}

\begin{observation}\label{obs:gray square pushout}
  There is a pushout square   \[
    \begin{tikzcd}
     \partial C_{2} \ar[r] \ar[d] & C_{2}  \ar[d] \\
     \partial ([1] \otimes [1]) \ar[r] & {[1] \otimes [1]}
    \end{tikzcd}
  \]
  where $\partial ([1] \otimes [1])$ denotes the boundary
  \[(\partial [1] \times [1]) \amalg_{\partial [1] \times \partial
      [1]} ([1] \times \partial [1]) \simeq [2] \amalg_{\{0,2\}}
    [2],\] with the left vertical map given by gluing two copies of
  $d_{1} \colon [1] \to [2]$. For example, using the model of scaled
  simplicial sets it is easy to see that there is a pushout
 \[
    \begin{tikzcd}
     \partial \mathbb{O}^{2} \ar[r] \ar[d] & \mathbb{O}^{2}  \ar[d] \\
     \partial ([1] \otimes [1]) \ar[r] & {[1] \otimes [1]},
    \end{tikzcd}
  \]
  from which the square above follows using \cite[Proposition 2.3.6]{AGH24}.
\end{observation}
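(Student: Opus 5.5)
The plan is to work in the model of scaled simplicial sets, compute the second pushout there directly, and then deduce the first square by pasting pushouts, using \cite[Proposition 2.3.6]{AGH24} to identify $\mathbb{O}^{2}$ as obtained from its boundary by attaching a free 2-cell.

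\emph{Step 1: the pushout involving $\mathbb{O}^{2}$.} Model $[1]\otimes[1]$ by the scaled simplicial set $\Delta^{1}\times\Delta^{1}$. It has two nondegenerate 2-simplices, $(00,10,11)$ and $(00,01,11)$, exactly one of which is thin; I will call the other one $\sigma$ (which one depends on the lax/oplax convention). Model $\mathbb{O}^{2}$ by $\Delta^{2}$ with only degenerate triangles thin, and $\partial\mathbb{O}^{2}$ by $\partial\Delta^{2}$. Map $\Delta^{2}$ onto $\sigma$, and $\partial\Delta^{2}$ onto the corresponding boundary.

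The boundary $\partial([1]\otimes[1])\simeq[2]\amalg_{\{0,2\}}[2]$ is modelled by gluing two thin $\Delta^{2}$'s along the vertices $00$ and $11$:
\begin{itemize}
\item one thin triangle contains the two edges of one path around the square together with the diagonal $00\to11$;
\item the other contains the other path together with a \emph{second} diagonal edge.
\end{itemize}
Note that this is not a subobject of $\Delta^{1}\times\Delta^{1}$; its map to $\Delta^{1}\times\Delta^{1}$ identifies the two diagonals.

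Next, check that $\partial\Delta^{2}\to(\text{boundary model})$ is a monomorphism. It sends the two short edges of $\sigma$ to the corresponding path and the long edge to the second diagonal. Then compute the pushout levelwise. Gluing $\Delta^{2}$ along its boundary fills $\sigma$ and makes its long edge the second diagonal. The thin triangle on that path then becomes a thin 2-simplex parallel to $\sigma$ with the same edges, and what results is exactly $\Delta^{1}\times\Delta^{1}$ with the Gray scaling, up to the standard identification of that parallel pair. More precisely, I would arrange the boundary model so that this second triangle is not there, using the model-invariance of the boundary up to equivalence. Since the maps are cofibrations, this is also a homotopy pushout, and hence a pushout of $(\infty,2)$-categories.

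\emph{Step 2: passing to $C_{2}$.} By \cite[Proposition 2.3.6]{AGH24}, $\mathbb{O}^{2}$ is obtained from $\partial\mathbb{O}^{2}$ by attaching a 2-cell:
\[ \mathbb{O}^{2}\simeq \partial\mathbb{O}^{2}\amalg_{\partial C_{2}}C_{2}. \]
Here $\partial C_{2}\to\partial\mathbb{O}^{2}$ picks out the composite $0\to1\to2$ and the edge $0\to2$.

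Pasting this square with the square from Step 1, the pasting lemma gives a pushout
\[ [1]\otimes[1]\simeq \partial([1]\otimes[1])\amalg_{\partial C_{2}}C_{2}. \]
It remains to check that the composite map $\partial C_{2}\to\partial([1]\otimes[1])$ is the gluing of two copies of $d_{1}$, i.e.\ it picks out the two diagonal composites of the two paths. This is immediate from the construction.

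\emph{Main obstacle.} I expect the only real subtlety to be Step 1: the strict levelwise pushout must genuinely model the boundary $\partial([1]\otimes[1])$, including both composites. One must also confirm that the chosen boundary model is equivalent to $[2]\amalg_{\{0,2\}}[2]$ as a homotopy pushout, which holds because $[2]$ is modelled by the thin $\Delta^{2}$ and the gluing is along vertices.
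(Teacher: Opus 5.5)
Your strategy is the paper's: form a strict pushout of $\partial\mathbb{O}^2\to\mathbb{O}^2$ in scaled simplicial sets, then paste it with the square from \cite[Proposition 2.3.6]{AGH24}. Step 2 is fine. But Step 1, which is the whole content of the observation, fails as you have set it up.

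A thin 2-simplex can only map to a thin 2-simplex. So your boundary model with two thin triangles has no map of scaled simplicial sets to $\Delta^1\times\Delta^1$ with the Gray scaling, where $\sigma$ is not thin. More seriously, you send the short edges of $\partial\Delta^2$ to the path of $\sigma$ and the long edge to the second diagonal. By your own description, that diagonal lies in the thin triangle on the \emph{same} path. Both edges of $\partial C_2$ then land on arrows equivalent to the composite of the $\sigma$-path. The pushout therefore freely adds a 2-cell from that composite to itself and never relates the two paths. Consequently the mapping $\infty$-category from $00$ to $11$ has two components, whereas in $[1]\otimes[1]$ it is $[1]$. No ``identification of the parallel pair'' repairs this. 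Likewise, your claim in Step 2 that the composite $\partial C_2\to\partial([1]\otimes[1])$ picks out the two distinct composites is not ``immediate from the construction''; for your construction it is false.

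What you need is for the long edge of $\partial\mathbb{O}^2$ to go to a diagonal witnessing the composite of the \emph{other} path. Your fallback of deleting the second triangle works once you arrange this:
\begin{itemize}
\item Let $B\subseteq\Delta^1\times\Delta^1$ consist of the thin triangle $\tau$ together with the two edges of the path of $\sigma$. Then $B\cong\Delta^2_\sharp\amalg_{\partial\Delta^1}\Lambda^2_1$, glued along the endpoints of the long edge, and it models $[2]\amalg_{\{0,2\}}[2]$.
\item The inclusion $\partial\sigma\subseteq B$ is a monomorphism $\partial\Delta^2\to B$ whose long edge is the diagonal of $\tau$.
\item The strict pushout $B\amalg_{\partial\Delta^2}\Delta^2_\flat$ is literally $\Delta^1\times\Delta^1$ with only $\tau$ thin. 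It is a homotopy pushout because it is formed along a monomorphism.
\end{itemize}
With this choice, the composite $\partial C_2\to\partial\mathbb{O}^2\to B$ really is the gluing of two copies of $d_1$.
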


We emphasize that the Gray tensor product is not compatible with
cartesian products, and so does not give a functor of
\itcats{}. However, it is possible to upgrade
$\FUN^{\plax}(\tA,\blank)$ for a fixed $\tA$ to a 2-functor; we will
not prove this here, but instead we note the useful consequence that
this functor preserves adjunctions:

\begin{construction}
  Suppose $R \colon \CatIT \to \CatIT$ is a product-preserving functor
  and $\alpha \colon \id \to R$ is a natural transformation. We then
  have a natural map
  \[ \FUN(\tA,\tB) \to \FUN(R\tA, R\tB)\]
  adjoint to the composite
  \[ \FUN(\tA, \tB) \times R\tA \xto{\alpha \times \id}
    R\FUN(\tA,\tB) \times R\tA \simeq R\left(\FUN(\tA,\tB) \times
      \tA\right) \xto{R(\ev)} R\tB.\]
  In particular, from a natural transformation $\phi \colon \tA \times [1] \to
  \tB$ from $F$ to $G$ we get a natural transformation
  \[ \tilde{R}(\phi) \colon R\tA \times [1] \xto{\id \times \alpha} R\tA \times R[1] \simeq
    R(\tA \times [1]) \xto{R\phi} R(\tB).\]
  It is easy to check that this construction is compatible with both
  horizontal and vertical composition of natural transformations, so
  that we get:
\end{construction}

\begin{propn}\label{propn:prod pres gives 2-fun}
  Let $R \colon \CatIT \to \CatIT$ be a product-preserving functor
  and $\alpha \colon \id \to R$ a natural transformation. Suppose $F
  \colon \tC \to \tD$ is a functor of \itcats{} with right adjoint
  $G$, with unit $\eta$ and counit $\epsilon$. Then $R(F)$ is left
  adjoint to $R(G)$ with unit $\tilde{R}(\eta)$ and counit
  $\tilde{R}(\epsilon)$. \qed
\end{propn}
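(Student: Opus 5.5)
The plan is to give the adjunction in the standard 2-categorical form: a unit and counit together with the triangle identities. Write the adjunction data for $F \dashv G$ as natural transformations $\eta \colon \tC \times [1] \to \tC$ from $\id_{\tC}$ to $GF$, and $\epsilon \colon \tD \times [1] \to \tD$ from $FG$ to $\id_{\tD}$. The triangle identities say that the composites $F \xto{F\eta} FGF \xto{\epsilon F} F$ and $G \xto{\eta G} GFG \xto{G\epsilon} G$ are identified with the identity transformations. I would use the characterization of an adjunction in the $(\infty,2)$-category $\CATIT$ (equivalently in the homotopy 2-category) by such data: an adjunction is determined by a unit and counit satisfying the triangle identities up to invertible 2-cells. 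Since being an adjunction is a property detected in the homotopy 2-category, it then suffices to check that the triangle identities are sent to triangle identities.

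The steps are as follows.

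\begin{enumerate}
\item Check the endpoints of $\tilde{R}(\eta)$. Restricting $R\tC \times [1] \to R\tC \times R[1] \to R\tC$ to $\{i\}$ gives $R(\eta|_{\{i\}})$ composed with $\alpha_{[0]}$. Since $R$ preserves products, $R[0] \simeq [0]$ and $\alpha_{[0]}$ is the identity. Hence $\tilde{R}(\eta)$ goes from $R(\id) = \id$ to $R(GF) = R(G)R(F)$, and similarly for $\epsilon$.
\item Use the compatibility of $\tilde{R}$ with whiskering and with vertical composition, which the construction asserts. Concretely:
\begin{itemize}
\item Whiskering $\phi$ by a functor $H$ amounts to precomposing or postcomposing $\phi$ with $H \times [1]$ or $H$. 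Naturality of $\alpha$ together with functoriality of $R$ give $\tilde{R}(H\phi) \simeq R(H)\tilde{R}(\phi)$ and $\tilde{R}(\phi H) \simeq \tilde{R}(\phi)R(H)$.
\item Vertical composition goes through $[2]$. Here one uses $\alpha_{[2]} \colon [2] \to R[2]$ and its compatibility with the face maps $[1] \to [2]$, which is naturality of $\alpha$ again.
\end{itemize}
\item Transport the triangle identity. A 2-cell in $\CATIT$ witnessing $\epsilon F \circ F\eta \simeq \id_F$ is a map $\tC \times C_{2}$-shaped, or $\tC \times [1] \times [1]$-shaped, into $\tD$. Applying the same construction $\tilde{R}$ with $\alpha_{C_2}$ or $\alpha_{[1]\times[1]}$ yields a witness of $\tilde{R}(\epsilon)R(F) \circ R(F)\tilde{R}(\eta) \simeq \id_{R(F)}$. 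The same argument gives the second triangle identity.
\end{enumerate}

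The cleanest way to organize all of this is to observe that the adjoint map $\FUN(\tA,\tB) \to \FUN(R\tA,R\tB)$ defined in the construction is compatible with composition $\FUN(\tB,\tC) \times \FUN(\tA,\tB) \to \FUN(\tA,\tC)$ and with identities. It thus makes $R$ into a functor of $(\infty,2)$-categories $\CATIT \to \CATIT$ at the level of homotopy 2-categories, and 2-functors preserve adjunctions. Compatibility with composition follows from the definition: both routes $\FUN(\tB,\tC)\times\FUN(\tA,\tB)\times R\tA \to R\tC$ reduce, via product preservation and naturality of $\alpha$, to $R$ applied to the evaluation map $\FUN(\tB,\tC)\times\FUN(\tA,\tB)\times\tA \to \tC$.

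The main obstacle is this coherence check, namely that the maps on functor categories really assemble into a 2-functor, at least after passing to homotopy 2-categories. I would handle it only at the level needed: associativity and unitality up to a chosen equivalence suffice, because adjunctions are detected in the homotopy 2-category. Those identifications come directly from naturality of $\alpha$ and from $R$ preserving products.
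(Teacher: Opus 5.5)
Your proposal is correct and takes the same route as the paper. The paper's entire argument is the remark that $\phi \mapsto \tilde{R}(\phi)$ is compatible with horizontal and vertical composition of natural transformations, so the triangle identities for $F \dashv G$ are carried to those for $R(F) \dashv R(G)$; your steps 1--3 check exactly this in more detail. Your repackaging via compatibility of the induced maps on functor $(\infty,2)$-categories with composition is also fine, since you only claim it up to homotopy, which is all an adjunction requires.
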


\begin{remark}
  In fact, in the situation above we should be able to upgrade $R$ to
  a functor of \itcats{} (or even $(\infty,3)$-categories) using the
  transformation $\alpha$: Since $R$ preserves products, it is a
  monoidal functor with respect to the cartesian product, and $\alpha$
  is automatically a monoidal transformation. We can in particular
  regard $R$ as a (lax) linear endofunctor for the canonical action of
  $\CatIT$ on itself. This corresponds to a $\CatIT$-enriched functor
  $R_{*}\CATIT \to \CATIT$ \cite{HinichYoneda,HeineEquiv}, where we
  push forward the enrichment along $R$. Moreover, pushforward of
  enrichment is a 2-functor (for example because it arises from composition in the
  \itcat{} of $\infty$-operads in the model of \cite{GHenriched}), so the
  monoidal transformation $\alpha$ also gives rise to a $\CatIT$-enriched functor
  $\CATIT \to R_{*}\CATIT$. The composite $\CATIT \to R_{*}\CATIT \to
  \CATIT$ should then be the desired enriched lift of $R$, but
  unfortunately we have not found any proof in the literature that the underlying
  functor of \icats{} recovers the original functor $R$, and we will
  not go into this here.
\end{remark}

\begin{example}
  For any \itcat{} $\tA$, the functor \[\FUN(\tA,\blank)^{\plax} \colon
  \CatIT \to \CatIT\] preserves adjunctions, since it preserves
  products (being a right adjoint) and we have a natural
  transformation from the identity given by the constant diagrams
  \[ \tC \to \FUN(\tA, \tC)^{\plax},\]
  or equivalently induced by the natural
  transformation of left adjoints 
  \[ \tA \otimes (\blank) \to [0] \otimes (\blank) \simeq \id.\]
\end{example}

\subsection{Marked \itcats{}}
\label{sec:marked}

In this subsection we review \emph{marked} \itcats{}, which are
\itcats{} equipped with a collection of morphisms, and compare two
descriptions of the \icat{} thereof.

\begin{defn}
  A \emph{marked \itcat{}} $(\tC,E)$ consists of an \itcat{} $\tC$
  together with a collection $E$ of morphisms in $\tC$. We may assume
  that $E$ contains all equivalences in $\tC$ and is moreover closed
  under composition, so that it determines a wide locally full
  sub-\itcat{} $\tC_{E} \hookrightarrow \tC$. This allows us to define
  the \icat{} $\MCatIT$ of marked \itcats{} as the full subcategory of
  $\Ar(\CatIT)$ spanned by the wide locally full subcategory
  inclusions. We write $\Um \colon \MCatIT \to \CatIT$ for the
  forgetful functor $(\tC,E) \mapsto \tC$, given by restricting
  $\ev_{1}$.
\end{defn}

\begin{propn}\label{propn:mono gives faithful from Ar}
  Given an \icat{} $\oC$ equipped with a collection $S$ of
  1-morphisms, let $\Ar_{S}(\oC)$ be the full subcategory of
  $\Ar(\oC)$ spanned by the morphisms in $S$.  If all elements of $S$
  are monomorphisms, then $\ev_{1} \colon \Ar_{S}(\oC) \to \oC$ is a
  faithful functor.
\end{propn}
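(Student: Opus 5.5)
We will prove directly that for objects $f \colon a \to b$ and $f' \colon a' \to b'$ of $\Ar_{S}(\oC)$, the map
\[ \Ar_{S}(\oC)(f,f') \to \oC(b,b') \]
induced by $\ev_{1}$ is a monomorphism of \igpds{}. Since $\Ar_{S}(\oC)$ is a full subcategory of $\Ar(\oC) = \Fun([1],\oC)$, its mapping spaces are those of $\Ar(\oC)$. For these we use the standard pullback description
\[ \Ar(\oC)(f,f') \simeq \oC(a,a') \times_{\oC(a,b')} \oC(b,b'), \]
where the map $\oC(a,a') \to \oC(a,b')$ is postcomposition with $f'$ and the map $\oC(b,b') \to \oC(a,b')$ is precomposition with $f$. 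One way to get this is to write $\Fun([1],\oC)$ as an end, or to use the decomposition $[1]\times[1] \simeq [2]\amalg_{[1]}[2]$. Under this identification, the map induced by $\ev_{1}$ is the projection to the second factor $\oC(b,b')$.

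This projection is the base change of the map $f'_{*} \colon \oC(a,a') \to \oC(a,b')$ along $f^{*} \colon \oC(b,b') \to \oC(a,b')$. By definition, $f'$ being a monomorphism in $\oC$ means exactly that $f'_{*} \colon \oC(x,a') \to \oC(x,b')$ is a monomorphism of \igpds{} for every $x$; we apply this with $x = a$. Monomorphisms of \igpds{} are stable under pullback, because a map is a monomorphism \IFF{} its diagonal is an equivalence, and diagonals commute with base change. Hence the projection $\Ar(\oC)(f,f') \to \oC(b,b')$ is a monomorphism, which shows that $\ev_{1}$ is faithful.

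The only step needing some care is the pullback formula for mapping spaces in $\Fun([1],\oC)$. This is standard: it is the mapping-space formula for natural transformations as an end, $\Nat(F,G) \simeq \int_{i\in[1]} \oC(Fi,Gi)$, specialized to the poset $[1]$, and we will cite it rather than reprove it. Note that the argument only uses that the target objects $f'$ lie in $S$. The assumption on all of $S$ ensures the conclusion holds for every pair of objects of $\Ar_{S}(\oC)$.
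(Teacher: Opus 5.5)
Your proposal is correct and is essentially the paper's own proof. Both identify $\Ar(\oC)(f,f')$ as the pullback $\oC(a,a')\times_{\oC(a,b')}\oC(b,b')$, note that the map induced by $\ev_{1}$ is the base change of $f'_{*}$, and conclude because monomorphisms of $\infty$-groupoids are stable under pullback.
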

\begin{proof}
  It suffices to show that for morphisms $f \colon x \to y$, $g \colon a \to b$ in $\oC$, the map
  \[ \Ar(\oC)(f,g) \to \oC(y,b)\]
  induced by evaluation at $1$ is a monomorphism of \igpds{} if $g$ is
  a monomorphism in $\oC$. Here $\Ar(\oC)(f,g)$ fits in a pullback
  square
  \[
    \begin{tikzcd}
     \Ar(\oC)(f,g) \ar[r, "\ev_{0}"] \ar[d, "\ev_{1}"'] & \oC(x,a)  \ar[d, "g_{*}"] \\
     \oC(y,b) \ar[r, "f^{*}"'] & \oC(x,b),
    \end{tikzcd}
  \]
  so this is clear since monomorphisms are closed under pullback.
\end{proof}

\begin{cor}
  The functor $\Um \colon \MCatIT \to \CatIT$ is faithful. \qed
\end{cor}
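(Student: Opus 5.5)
The plan is to apply \cref{propn:mono gives faithful from Ar} directly, with $\oC = \Ar(\CatIT)$ replaced by $\CatIT$ itself. By definition, $\MCatIT$ is the full subcategory of $\Ar(\CatIT)$ spanned by the wide locally full subcategory inclusions $\tC_{E} \hookrightarrow \tC$, and $\Um$ is the restriction of $\ev_{1}$. So if $S$ is the collection of wide locally full inclusions in $\CatIT$, then $\MCatIT = \Ar_{S}(\CatIT)$ and $\Um = \ev_{1}|_{\Ar_{S}(\CatIT)}$. It therefore suffices to check that every morphism in $S$ is a monomorphism in the \icat{} $\CatIT$.

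For this I would use the remark after \cref{defn:subcategories}: a locally full inclusion $F \colon \tC_{E} \to \tC$ is characterized by being right orthogonal to a finite set of maps in $\CatIT$. A map $F$ is a monomorphism precisely when it is right orthogonal to the fold maps $\tK \amalg \tK \to \tK$ for all $\tK$, or equivalently when the diagonal $\tC_{E} \to \tC_{E} \times_{\tC} \tC_{E}$ is an equivalence. Since $\CatIT$ is generated under colimits by $C_{0}, C_{1}, C_{2}$, it is enough to check that $\Map(K, \tC_{E}) \to \Map(K, \tC)$ is a monomorphism of \igpds{} for $K \in \{C_{0}, C_{1}, C_{2}\}$.

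\begin{itemize}
\item For $K = C_{0}$, this is the hypothesis that $\tC_{E}^{\simeq} \to \tC^{\simeq}$ is a monomorphism.
\item For $K = C_{1}$, the fibre over a fixed pair of objects is the map on cores induced by $\tC_{E}(x,y) \to \tC(x,y)$. This functor is fully faithful, hence a monomorphism of \icats{}, and so it induces a monomorphism on cores.
\item For $K = C_{2}$, the same reasoning applies at the level of morphism spaces of the mapping \icats{}, where the functor is fully faithful and so the map is even an equivalence.
\end{itemize}

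Combining these fibrewise statements with the monomorphism on objects gives that $\tC_{E} \to \tC$ is a monomorphism in $\CatIT$.

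The only mildly delicate step is the reduction of ``monomorphism in $\CatIT$'' to checking mapping spaces out of the generating cells. Alternatively, one can simply note that a locally full inclusion is in particular an inclusion in the sense of \cref{defn:subcategories}. Inclusions are exactly the monomorphisms, since they are right orthogonal to $\partial C_{n+1} \to C_{n}$-type maps encoding injectivity on cells. With that in place, \cref{propn:mono gives faithful from Ar} gives that $\Um$ is faithful.
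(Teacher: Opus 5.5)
Your proposal is correct and matches the paper's implicit argument: $\MCatIT = \Ar_{S}(\CatIT)$ with $S$ the wide locally full inclusions, these are monomorphisms in $\CatIT$, and the preceding proposition on $\Ar_{S}$ then shows that $\Um = \ev_{1}$ is faithful. The paper marks the corollary \qed{} without stating the monomorphism property explicitly, and your cell-by-cell check (or the observation that inclusions in $\CatIT$ are exactly the monomorphisms) is a valid way to supply it.
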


\begin{observation}\label{obs:adjointDcatIT}
  By \cite[Observations 2.5.6, 2.5.8]{AGH24}, a functor of \itcats{} is a wide locally full subcategory inclusion
  \IFF{} it is right orthogonal to $\emptyset \to *$,
  $\partial C_{2} \to C_{2}$.  It follows from \cite[Proposition
  5.5.5.7]{LurieHTT} that such functors form the
  right class in a factorization system on $\CatIT$, which implies
  that the inclusion $\MCatIT \hookrightarrow \Ar(\CatIT)$ has a left
  adjoint, given by factoring a morphism through this factorization
  system. As the morphisms we consider
  orthogonality for also go between compact \itcats{}, it is moreover clear that
  $\MCatIT$ is closed under filtered colimits in $\Ar(\CatIT)$,
  so that this left adjoint 
  exhibits $\MCatIT$ as an accessible localization. Thus $\MCatIT$ is
  a presentable \icat{}.
\end{observation}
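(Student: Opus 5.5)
The plan is to deduce everything formally from the orthogonality characterization and general facts about presentable \icats{}. By \cite[Observations 2.5.6, 2.5.8]{AGH24}, a functor is a wide locally full subcategory inclusion \IFF{} it is right orthogonal to the set $S = \{\emptyset \to *,\ \partial C_{2} \to C_{2}\}$. Since $\CatIT$ is presentable and $S$ is a small set, \cite[Proposition 5.5.5.7]{LurieHTT} gives a factorization system $(\mathcal{L},\mathcal{R})$ on $\CatIT$. Here $\mathcal{L}$ is the saturation of $S$ and $\mathcal{R} = S^{\perp}$ is exactly the class of wide locally full inclusions. Thus $\MCatIT$ is the full subcategory of $\Ar(\CatIT)$ spanned by the morphisms in $\mathcal{R}$.

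Next I would build the left adjoint explicitly. For $f \colon \tA \to \tB$, factor it as $\tA \xto{l} \tC \xto{r} \tB$ with $l \in \mathcal{L}$ and $r \in \mathcal{R}$. The morphism of arrows $(l,\id_{\tB}) \colon f \to r$ should be the unit. To check this, take any $g \colon \tX \to \tY$ in $\mathcal{R}$. A map $f \to g$ in $\Ar(\CatIT)$ is a commutative square with horizontal maps $\tA \to \tX$ and $\tB \to \tY$. Restricting along $(l,\id)$ therefore gives a map
\[ \Map_{\Ar(\CatIT)}(r,g) \to \Map_{\Ar(\CatIT)}(f,g).\]
Unwinding the pullback descriptions of both mapping spaces, this map is the comparison map expressing that $l \perp g$. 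It is an equivalence since $l \in \mathcal{L}$ and $g \in \mathcal{R}$. This shows $r$ is a reflection of $f$ into $\MCatIT$, so the inclusion has a left adjoint.

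For accessibility, I would show $\MCatIT$ is closed under filtered colimits in $\Ar(\CatIT)$, where colimits are computed pointwise. For $s \colon K \to L$ in $S$, right orthogonality of $g$ says that
\[ \Map(L,\tX) \to \Map(K,\tX) \times_{\Map(K,\tY)} \Map(L,\tY) \]
is an equivalence. The \itcats{} $\emptyset, *, \partial C_{2}, C_{2}$ are compact, so each of these mapping spaces commutes with filtered colimits. Filtered colimits in $\GpdI$ also commute with finite limits, so the orthogonality condition is stable under filtered colimits. Hence $\MCatIT$ is a reflective subcategory of the presentable \icat{} $\Ar(\CatIT)$ that is closed under filtered colimits. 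In particular it is closed under $\kappa$-filtered colimits for some $\kappa$, so the localization is accessible and $\MCatIT$ is presentable by \cite[Proposition 5.5.4.2 / Remark 5.5.1.6]{LurieHTT}.

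The only point requiring care is the identification of the restriction map on mapping spaces with the orthogonality comparison map; all other steps are formal.
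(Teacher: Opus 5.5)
Your proposal is correct and follows essentially the same route as the paper. You use the factorization system from \cite[Proposition 5.5.5.7]{LurieHTT} to obtain the reflection $f \mapsto r$ by factoring, and the compactness of $\emptyset, *, \partial C_{2}, C_{2}$ to get closure under filtered colimits, so that $\MCatIT$ is an accessible localization of $\Ar(\CatIT)$.

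The step you flag also holds. The restriction map $\Map_{\Ar(\CatIT)}(r,g) \to \Map_{\Ar(\CatIT)}(f,g)$ is the base change along $r^{*}\colon \Map(\tB,\tY) \to \Map(\tC,\tY)$ of the orthogonality comparison map for $l$ and $g$, and is therefore an equivalence.
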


\begin{observation}
  The functor $\ev_{1} \colon \Ar(\CatIT) \to \CatIT$ has the
  following properties:
  \begin{enumerate}[(1)]
  \item $\ev_{1}$ has a left adjoint, which takes an \itcat{} $\tA$ to
    $\emptyset \to \tA$.
  \item $\ev_{1}$ has a right adjoint $\const$, which takes an \itcat{} $\tA$ to
    $\tA \xto{=} \tA$.
  \item The right adjoint $\const$ is itself left adjoint to $\ev_{0}$.
  \end{enumerate}
  From this we can conclude that the restriction $\Um$ of $\ev_{0}$ to 
  has the following corresponding properties:
    \begin{enumerate}[(1')]
  \item $\Um$ has a left adjoint $(\blank)^{\flat}$, which takes an
    \itcat{} $\tA$ to
    \[ \tA^{\flat} \,\,:=\,\, \tA_{\mathrm{eq}} \to \tA,\]
    where $\tA_{\mathrm{eq}}$ is the sub-\itcat{} that contains only
    the invertible 1-morphisms, but all 2-morphisms among these.
  \item $\Um$ has a right adjoint $(\blank)^{\sharp}$, which takes an \itcat{} $\tA$ to
    \[ \tA^{\sharp} \,\,:=\,\, \tA \xto{=} \tA.\]
  \item The right adjoint $(\blank)^{\sharp}$ has a further
    right adjoint, which takes $(\tA,E)$ to $\tA_{E}$.
  \end{enumerate}
\end{observation}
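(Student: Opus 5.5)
The approach is to transport each of the three adjunctions for $\ev_{1} \colon \Ar(\CatIT) \to \CatIT$ along the reflective inclusion $\MCatIT \hookrightarrow \Ar(\CatIT)$ of \cref{obs:adjointDcatIT}, whose left adjoint $L$ is given by the (essentially surjective on objects and 1-morphisms, locally full) factorization. Note that $\Um$ is the restriction of $\ev_{1}$, not $\ev_{0}$: the marked \itcat{} $(\tC,E)$ is the inclusion $\tC_{E} \to \tC$, and its target is $\tC$. All three statements will then follow from simple checks about which objects of $\Ar(\CatIT)$ lie in $\MCatIT$, together with the factorization system.

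For (2$'$), the right adjoint $\const$ of $\ev_{1}$ sends $\tA$ to the identity $\tA \to \tA$. This is a wide locally full inclusion, so $\const$ lands in $\MCatIT$. Since $\MCatIT$ is full in $\Ar(\CatIT)$, the adjunction $\ev_{1} \dashv \const$ restricts to $\Um \dashv (\blank)^{\sharp}$ with $\tA^{\sharp} = (\tA \xto{=} \tA)$.

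For (3$'$), the adjunction $\const \dashv \ev_{0}$ on $\Ar(\CatIT)$ restricts to $\MCatIT$ for the same reason, since $\const$ lands in $\MCatIT$. Its right adjoint then sends $(\tA,E) = (\tA_{E} \to \tA)$ to its source $\tA_{E}$. Concretely, $\Map_{\MCatIT}(\tB^{\sharp},(\tA,E)) \simeq \Map(\tB,\tA_{E})$: a commuting square with identity source is just a map $\tB \to \tA_{E}$, with the map to $\tA$ determined by it.

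For (1$'$), compose the left adjoint $\tA \mapsto (\emptyset \to \tA)$ of $\ev_{1}$ with the localization $L$. This composite is left adjoint to $\Um = \ev_{1}|_{\MCatIT}$. It remains to compute $L(\emptyset \to \tA)$, namely the factorization $\emptyset \to \tA' \to \tA$ in which the first map is left orthogonal to the wide locally full inclusions and the second map is one. By \cite[Observations 2.5.6, 2.5.8]{AGH24}, a wide locally full inclusion into $\tA$ corresponds to a collection of 1-morphisms containing the equivalences and closed under composition. The smallest such collection is $\tA_{\mathrm{eq}}$, and the first factor $\emptyset \to \tA_{\mathrm{eq}}$ should be left orthogonal because it is generated by the left class. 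A cleaner way to see this is through the mapping spaces: $\Map_{\MCatIT}((\tA_{\mathrm{eq}} \to \tA),(\tC_{E} \to \tC))$ is the space of functors $\tA \to \tC$ whose restriction to $\tA_{\mathrm{eq}}$ factors through $\tC_{E}$. Since $\tC_{E} \to \tC$ is a monomorphism on the relevant data, this factorization is a property. It holds automatically, because $E$ contains all equivalences and $\tC_{E}$ is locally full, so it contains all 2-morphisms between them. Hence this space is $\Map(\tA,\tC)$, which is the required universal property.

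The main obstacle is making this last step precise, namely that factoring through $\tC_{E}$ is a property and not extra structure. This follows from \cref{propn:mono gives faithful from Ar}-style reasoning, since wide locally full inclusions are monomorphisms in $\CatIT$. Given that, the argument is formal.
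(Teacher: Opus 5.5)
Your proposal is correct and is the argument the paper has in mind; the paper itself says no more than ``from this we can conclude''. The right adjoints $(\blank)^{\sharp}$ and its further right adjoint restrict because $\const$ lands in the full subcategory $\MCatIT$. The left adjoint $(\blank)^{\flat}$ is the reflection of $\emptyset \to \tA$, which is $\tA_{\mathrm{eq}} \to \tA$ because the middle term of the factorization is contained, by orthogonality, in every wide locally full sub-\itcat{} of $\tA$, and is therefore the smallest one. You are also right that $\Um$ is the restriction of $\ev_{1}$; the ``$\ev_{0}$'' in the statement is a typo.
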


It is often convenient to regard the marked 1-morphisms in a marked
\itcat{} $(\tB,E)$ as being specified by a wide subcategory of
$\tB^{\leq 1}$ instead of a sub-\itcat{} of $\tB$; this is justified by the
following observation, which leads to an alternative description of
$\MCatIT$:
\begin{propn}
  Let $\Ar_{\mathrm{lff}}(\CatIT)$ denote the full subcategory of
  $\Ar(\CatIT)$ spanned by the locally fully faithful functors, and
  let $\Ar_{\mathrm{fa}}(\CatI)$ denote the full subcategory of
  $\Ar(\CatI)$ spanned by the faithful functors. Then the commutative
  square
  \[
    \begin{tikzcd}
     \Ar_{\mathrm{lff}}(\CatIT) \ar[r, "\ev_{1}"] \ar[d,
     "(\blank)^{\leq 1}"'] & \CatIT  \ar[d, "(\blank)^{\leq 1}"] \\
     \Ar_{\mathrm{fa}}(\CatI) \ar[r, "\ev_{1}"'] & \CatI
    \end{tikzcd}
  \]
  is a pullback.
\end{propn}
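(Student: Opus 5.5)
We show that the canonical functor
\[ \Phi \colon \Ar_{\mathrm{lff}}(\CatIT) \to \Ar_{\mathrm{fa}}(\CatI) \times_{\CatI} \CatIT, \qquad (\tA \xto{F} \tB) \mapsto \bigl(\tA^{\leq 1} \to \tB^{\leq 1},\, \tB\bigr), \]
is an equivalence. We do this by constructing an inverse: a given faithful functor into $\tB^{\leq 1}$ is extended to a locally fully faithful functor into $\tB$ by adding every 2-morphism of $\tB$ between its 1-morphisms.

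\textbf{Reduction to essential surjectivity.} Full faithfulness of $\Phi$ follows from an orthogonality argument. A map in $\Ar_{\mathrm{lff}}(\CatIT)$ from $F \colon \tA \to \tB$ to $F' \colon \tA' \to \tB'$ consists of $G \colon \tB \to \tB'$ together with a lift $\tA \to \tA'$ of $G \circ F$. By \cite[\S 2.5]{AGH24}, locally fully faithful functors are exactly the functors right orthogonal to $\partial C_{2} \to C_{2}$. Hence the space of such lifts is the space of lifts of $\tA \to \tB'$ through $F'$, and this space only sees objects and 1-morphisms. More precisely:
\begin{itemize}
\item we factor $\tA$ through the orthogonality class generated by $\partial C_{2} \to C_{2}$;
\item the right orthogonality of $F'$ says that a 2-morphism in $\tA'$ is determined by its image together with its boundary 1-morphisms;
\item therefore the space of lifts $\tA \to \tA'$ over $\tB'$ is equivalent to the space of lifts $\tA^{\leq 1} \to \tA'^{\leq 1}$ over $\tB'^{\leq 1}$.
\end{itemize}
This last space is exactly the mapping space computed in the pullback. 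Concretely, we compare the two fibre squares
\[ \Map(\tA,\tA') \to \Map(\tA,\tB') \quad\text{and}\quad \Map(\tA^{\leq 1},\tA'^{\leq 1}) \to \Map(\tA^{\leq 1},\tB'^{\leq 1}), \]
and show that the fibres over a fixed map agree. Both fibres are spaces of lifts, and in each case the lift is unique if it exists on 2-morphisms.

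\textbf{Essential surjectivity.} Given $\tB$ and a faithful functor $j \colon \oA \to \tB^{\leq 1}$, we factor the composite $\oA \to \tB^{\leq 1} \to \tB$ using the factorization system on $\CatIT$ whose right class is the locally fully faithful functors. This factorization system exists by \cite[Proposition 5.5.5.7]{LurieHTT}, as in \cref{obs:adjointDcatIT}, with left class generated by $\partial C_{2} \to C_{2}$. The factorization has the form
\[ \oA \xto{\,\iota\,} \tA \xto{\,F\,} \tB, \]
with $F$ locally fully faithful.

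It remains to show that $\iota^{\leq 1} \colon \oA \to \tA^{\leq 1}$ is an equivalence, so that $\Phi(F) \simeq j$. Applying $(\blank)^{\leq 1}$ to the factorization gives $\oA \to \tA^{\leq 1} \to \tB^{\leq 1}$, where the second functor is faithful because $F$ is locally fully faithful.

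\textbf{Main obstacle.} The main step is proving $\oA \simeq \tA^{\leq 1}$. The plan is to realize the left factor $\iota$ concretely, by a small-object style construction: iteratively glue on cells $C_{2}$ along boundaries $\partial C_{2}$. Such gluings do not change the objects. We would then check that the 1-morphisms of $\tA$ are exactly those of $\oA$, using the faithfulness of $j$. A cleaner alternative avoids this iterative description:
\begin{enumerate}
\item Describe $\tA$ directly as the locally full sub-\itcat{} of $\tB$ on the image of $\oA$: its objects are the objects of $\oA$, and its mapping \icats{} are the full subcategories of $\tB(a,a')$ on the 1-morphisms in the image of $\oA(a,a')$.
\item This $\tA$ exists because faithful functors of \icats{} are subcategory inclusions up to the core; one should handle the fact that $j$ need not be injective on objects by base-changing along objects.
\item Then $\tA^{\leq 1}$ is the subcategory of $\tB^{\leq 1}$ with the prescribed 1-morphisms, and this is equivalent to $\oA$ since $j$ is faithful.
\end{enumerate}
The point that requires the most care is making step 1 precise in a model-independent way, for instance as a pullback of enriched categories over the change of enrichment.
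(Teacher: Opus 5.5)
Your strategy for essential surjectivity is the paper's: factor $\oA \to \tB$ through the factorization system whose right class is the locally fully faithful functors, then show the left factor $\iota$ gives $\oA \simeq \tA^{\leq 1}$. However, the proposal has a genuine gap at exactly the step you flag, and the same gap sits under your full faithfulness argument.

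The missing ingredient is an explicit description of the left class. It consists exactly of the functors that are essentially surjective on objects and on all mapping $\infty$-categories \cite[Theorem 5.3.7]{Soergel}; this is what the paper cites. Given this, the argument closes quickly:
\begin{enumerate}
\item $\iota$ is essentially surjective, and each $\oA(a,a') \to \tA(\iota a,\iota a')^{\simeq}$ is surjective on $\pi_{0}$.
\item Composing with the monomorphism $\tA(\iota a,\iota a')^{\simeq} \to \tB(F\iota a,F\iota a')^{\simeq}$ gives the monomorphism supplied by faithfulness of $j$. So each $\oA(a,a') \to \tA(\iota a,\iota a')^{\simeq}$ is itself a monomorphism, hence an equivalence.
\item Therefore $\iota^{\leq 1}$ is an equivalence.
\end{enumerate}
In your small-object sketch you attribute the check that ``the 1-morphisms of $\tA$ are exactly those of $\oA$'' to faithfulness of $j$. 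Faithfulness only supplies the injectivity half. The real content is that pushouts along $\partial C_{2} \to C_{2}$ create no new 1-morphisms up to equivalence. The same must hold for pushouts along the codiagonal $\partial C_{3} \to C_{2}$, which the small object argument for an \emph{orthogonal} factorization system also needs. Controlling such pushouts model-independently is exactly what the cited theorem does for you. Your alternative, building the locally full sub-\itcat{} on the image of $\oA$ directly, is (as you say) not actually constructed.

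In the full faithfulness step, identifying lifts $\tA \to \tA'$ over $\tB'$ with lifts $\tA^{\leq 1} \to \tA'$ over $\tB'$ is precisely right orthogonality of $F'$ to the counit $\tA^{\leq 1} \to \tA$. Orthogonality to $\partial C_{2} \to C_{2}$ yields this only once you know $\tA^{\leq 1} \to \tA$ lies in the saturated class generated by that map. That again follows from the same characterization, since the counit is an equivalence on cores and essentially surjective on mapping $\infty$-categories; your phrase about ``factoring $\tA$ through the orthogonality class'' does not establish it. Once you have this, you can skip the separate full faithfulness argument, as the paper does. The factorization is functorial, so it gives a functor from the pullback back to $\Ar_{\mathrm{lff}}(\CatIT)$. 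Uniqueness of factorizations, applied to $\tA^{\leq 1} \to \tA \xto{G} \tB$, then shows both composites are equivalent to the identity.
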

\begin{proof}
  Suppose $\tB$ is an \itcat{} and $F \colon \oA \to \tB^{\leq 1}$ is
  a faithful functor. Then the composite $\oA \to \tB$ admits a unique
  factorization as $\oA \xto{i} \tA \xto{F'} \tB$ where $F'$ is
  locally fully faithful and $i$ is left orthogonal to locally fully
  faithful functors. By \cite[Theorem 5.3.7]{Soergel} this left
  orthogonal class consists precisely of the functors that are
  essentially surjective on objects and on all mapping \icats{}. It
  follows that $i^{\leq 1} \colon \oA \to \tA^{\leq 1}$ is both fully
  faithful and essentially surjective, \ie{} an equivalence. In
  particular, $F \simeq F'^{\leq 1}$. Conversely, if we start with a
  locally fully faithful functor $G \colon \tA \to \tB$, then the
  composite $\tA^{\leq 1} \xto{G^{\leq 1}} \tB^{\leq 1} \to \tB$ factors as
  $\tA^{\leq 1} \to \tA \xto{G} \tB$ where the first functor is left
  orthogonal to locally fully faithful functors; by uniqueness, this
  means that applying the factorization to $G^{\leq 1}$ recovers
  $G$. This construction therefore gives an inverse to the functor 
  \[ \Ar_{\mathrm{lff}}(\CatIT) \to \Ar_{\mathrm{fa}}(\CatI)
    \times_{\CatI} \CatIT
  \]
  from the commutative square, as required.
\end{proof}

\begin{cor}\label{cor:MCatIT via 1cat}
  The commutative square
  \[
    \begin{tikzcd}
     \MCatIT \ar[r, "\Um"] \ar[d, "(\blank)^{\leq 1}"'] & \CatIT  \ar[d, "(\blank)^{\leq 1}"] \\
     \MCatI \ar[r, "\ev_{1}"'] & \CatI
    \end{tikzcd}
  \]
  is a pullback. \qed
\end{cor}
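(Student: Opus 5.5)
This corollary should follow by restricting the pullback square of the preceding proposition to full subcategories on both sides. By definition, $\MCatIT$ is the full subcategory of $\Ar(\CatIT)$ on the wide locally full subcategory inclusions. Every such inclusion is locally fully faithful, so $\MCatIT \subseteq \Ar_{\mathrm{lff}}(\CatIT)$ as a full subcategory. Likewise, I take $\MCatI$ to be the full subcategory of $\Ar(\CatI)$ on the wide subcategory inclusions. These are faithful, so $\MCatI \subseteq \Ar_{\mathrm{fa}}(\CatI)$ as a full subcategory. The square in the statement is then obtained from the pullback square of the previous proposition by restricting the left-hand column to these full subcategories.

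Because the square of the proposition is a pullback, and a pullback of a full subcategory inclusion is again one, it suffices to check a pointwise condition. Namely, for a locally fully faithful functor $G \colon \tA \to \tB$, I need that $G$ is a wide locally full inclusion \IFF{} $G^{\leq 1} \colon \tA^{\leq 1} \to \tB^{\leq 1}$ is a wide subcategory inclusion. Then the full subcategory $\MCatIT$ is exactly the preimage of $\MCatI$ under the equivalence $\Ar_{\mathrm{lff}}(\CatIT) \simeq \Ar_{\mathrm{fa}}(\CatI) \times_{\CatI} \CatIT$, and restricting that equivalence gives $\MCatIT \simeq \MCatI \times_{\CatI} \CatIT$.

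For the pointwise check, I would use that $\tA^{\simeq} = (\tA^{\leq 1})^{\simeq}$. Hence the condition that $\tA^{\simeq} \to \tB^{\simeq}$ is an equivalence is the same for $G$ and for $G^{\leq 1}$. Given this, a wide locally full inclusion is in particular locally an inclusion. Its mapping \icats{} are full subcategories, so on cores the maps $\tA(a,a')^{\simeq} \to \tB(Ga,Ga')^{\simeq}$ are monomorphisms; that is, $G^{\leq 1}$ is faithful. Conversely, if $G$ is locally fully faithful and $G^{\leq 1}$ is faithful, then each $\tA(a,a') \to \tB(Ga,Ga')$ is fully faithful and a monomorphism on cores, hence a full subcategory inclusion. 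Together with the equivalence on cores, this makes $G$ a wide locally full inclusion.

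This argument has no real obstacle. The only point needing care is the formal fact that a full subcategory determined by a property of objects transfers along the equivalence of the proposition, and the pointwise check above supplies exactly that.
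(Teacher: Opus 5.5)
Your proposal is correct and is the argument the paper leaves implicit; the corollary is marked \qed directly after the proposition. You restrict the equivalence $\Ar_{\mathrm{lff}}(\CatIT) \simeq \Ar_{\mathrm{fa}}(\CatI) \times_{\CatI} \CatIT$ to the full subcategories $\MCatIT$ and $\MCatI$, using that for a locally fully faithful functor, being a wide locally full inclusion amounts to being an equivalence on cores, and that this condition is seen by $(\blank)^{\leq 1}$ because $\tA^{\simeq} \simeq (\tA^{\leq 1})^{\simeq}$.
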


\begin{lemma}\label{lem:localizationdecorations}
  The functor $(\blank)^{\flat} \colon \CatIT \to \MCatIT$ has a left
  adjoint \[\Lm \colon \MCatIT \to \CatIT.\] This is given by inverting
  the marked 1-morphisms, \ie{} for a marked \itcat{} $(\tA,E)$ we
  have a natural pushout square
\[
  \begin{tikzcd}
   \tA_{E}^{\leq 1} \ar[r] \ar[d] & \|\tA_{E}^{\leq 1}\|  \ar[d] \\
   \tA \ar[r] & \Lm(\tA,E).
  \end{tikzcd}
\]
\end{lemma}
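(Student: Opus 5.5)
We define $\Lm(\tA,E)$ directly as the pushout in the statement and then check, by a Yoneda-type argument, that it has the universal property of a left adjoint to $(\blank)^{\flat}$. Since $\CatIT$ is presentable, the pushout
\[ \Lm(\tA,E) := \tA \amalg_{\tA_{E}^{\leq 1}} \|\tA_{E}^{\leq 1}\| \]
exists. Here the map $\tA_{E}^{\leq 1} \to \tA$ is the composite of the inclusion $\tA_E^{\leq 1} \to \tA_E$ with $\tA_{E} \hookrightarrow \tA$. It therefore suffices to produce, for every \itcat{} $\tB$, a natural equivalence
\[ \Map_{\CatIT}(\Lm(\tA,E), \tB) \simeq \Map_{\MCatIT}((\tA,E), \tB^{\flat}). \]
Functoriality in $(\tA,E)$ is then automatic: once each such corepresentability holds, the pointwise left adjoints assemble into a functor.

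First we identify the right-hand side. By definition $\MCatIT$ is a full subcategory of $\Ar(\CatIT)$, and $\tB^{\flat}$ is the inclusion $\tB_{\mathrm{eq}} \hookrightarrow \tB$. Hence a map $(\tA,E) \to \tB^{\flat}$ is a square from $\tA_{E} \to \tA$ to $\tB_{\mathrm{eq}} \to \tB$. Since $\tB_{\mathrm{eq}} \to \tB$ is a monomorphism-type map (locally full inclusion), such a square is just a functor $F \colon \tA \to \tB$ with the property that $F|_{\tA_{E}}$ factors through $\tB_{\mathrm{eq}}$. By the faithfulness of $\Um$, this factorization is a property rather than extra data. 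So the right-hand side is the union of components of $\Map(\tA,\tB)$ consisting of those $F$ that send every morphism of $E$ to an equivalence.

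Next we identify the left-hand side. The pushout gives
\[ \Map(\Lm(\tA,E),\tB) \simeq \Map(\tA,\tB)\times_{\Map(\tA_E^{\leq 1},\tB)} \Map(\|\tA_E^{\leq 1}\|,\tB). \]
By adjunctions, $\Map(\|\tA_E^{\leq1}\|,\tB) \simeq \Map(\|\tA_E^{\leq1}\|,\tB^{\simeq})$ and $\Map(\tA_E^{\leq1},\tB) \simeq \Map(\tA_E^{\leq1},\tB^{\leq1})$. The key input is that the map
\[ \Map(\|\oC\|,\tB^{\simeq}) \simeq \Map(\oC,\tB^{\simeq}) \to \Map(\oC,\tB^{\leq 1}) \]
is a monomorphism of \igpds{} whose image consists of the functors sending every morphism to an equivalence. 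This holds because $\tB^{\simeq} \to \tB^{\leq1}$ is a subcategory inclusion containing exactly the invertible morphisms, so composition with it is a monomorphism with the stated image. The pullback above is therefore the union of components of $\Map(\tA,\tB)$ on functors $F$ with $F(E)$ invertible, which matches the right-hand side. Naturality in $\tB$ is evident from the construction.

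The main obstacle is making the matching of the two subspaces genuinely natural. Concretely, we must check that "$F|_{\tA_E}$ lands in $\tB_{\mathrm{eq}}$" agrees with "$F|_{\tA_E^{\leq1}}$ inverts all morphisms". This agreement holds because $\tB_{\mathrm{eq}}$ is locally full and contains all 2-morphisms between invertible 1-morphisms, so no 2-dimensional condition intervenes. We will make this precise using the orthogonality description of wide locally full inclusions from \cref{obs:adjointDcatIT}: factorization through $\tB_{\mathrm{eq}}$ is detected on objects and 1-morphisms alone.
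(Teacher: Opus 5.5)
Your proposal is correct and is essentially the paper's argument: the same pushout, and the same decomposition of $\Map(\Lm(\tA,E),\tB)$ via the adjunctions for $\|\blank\|$ and $(\blank)^{\leq 1}$. The only difference is the last step: the paper just cites the pullback description $\MCatIT \simeq \MCatI \times_{\CatI} \CatIT$ of \cref{cor:MCatIT via 1cat}, whereas you reprove the special case you need via faithfulness of $\Um$ and orthogonality. For that you need not only the right orthogonality from \cref{obs:adjointDcatIT} but also that $\tA_{E}^{\leq 1} \to \tA_{E}$ is left orthogonal to locally fully faithful functors, as in the proof of that corollary.
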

\begin{proof}
  It is clear that this pushout defines a functor $\Lm$. To
  prove that it is a left adjoint as desired, we observe that we have
  natural equivalences
  \[
    \begin{split}
      \Map_{\CatIT}(\Lm(\tA,E), \tB) & \simeq \Map_{\CatIT}(\tA,\tB) \times_{\Map_{\CatIT}(\tA_{E}^{\leq 1}, \tB)} \Map_{\CatIT}(\|\tA_{E}^{\leq 1}\|, \tB) \\
                                     & \simeq \Map_{\CatIT}(\tA, \tB) \times_{\Map_{\CatI}(\tA_{E}^{\leq 1}, \tB^{\leq 1})} \Map_{\CatI}(\tA_{E}^{\leq 1}, \tB^{\simeq}) \\
      & \simeq \Map_{\MCatIT}((\tA,E), \tB^{\flat}),
    \end{split}
  \]
  where the last equivalence follows from the description of $\MCatIT$ as a pullback in \cref{cor:MCatIT via 1cat}.
\end{proof}

\begin{notation}\label{marked int hom}
  For marked \itcats{} $(\tA,I)$ and $(\tB,J)$, we write
  \[\MFUN((\tA,I),(\tB,J))\] for the full subcategory of
  $\FUN(\tA,\tB)$ spanned by the functors that preserve the
  markings. We give this the marking consisting of the natural transformations $\tA \times [1] \to \tB$ whose component at every $a \in \tA$ is marked in $(\tB,J)$, \ie{} this is a marked functor
  \[ (\tA,I) \times [1]^{\sharp} \to (\tB,J).\]
\end{notation}

\begin{observation}
  The \icat{} $\MCatIT$ is cartesian closed, with internal Hom given
  by $\MFUN(\blank,\blank)$. Since $\Um$ is faithful, to prove this it
  suffices to observe that for marked \itcats{} $(\tA,I), (\tB,J), (\tC, K)$, a functor
  $\tA \times \tB \to \tC$
  is a marked functor
  \[ (\tA,I) \times (\tB,J) \to (\tC,K)\] \IFF{} its adjoint
  $\tA \to \FUN(\tB, \tC)$ factors through a marked functor
  $(\tA,I) \to \MFUN((\tB,J), (\tC,K))$. We can thus regard $\MCatIT$ as enriched in itself; we can also transfer this enrichment along $\Um$ and make $\MCatIT$ an $(\infty,3)$-category with mapping \itcats{}
  $\Um \MFUN(\blank,\blank)$, \ie{} the full sub-\itcat{} of
  $\FUN(\blank,\blank)$ spanned by the marked functors; we write
  $\MCATIT$ for the underlying \itcat{}. Note that this then arises
  from the product-preserving functor $(\blank)^{\flat} \colon \CatI
  \to \MCatIT$; since we have commutative triangles
  \[
    \begin{tikzcd}
      {} & \CatI \ar[dl, hookrightarrow] \ar[dr, "(\blank)^{\flat}"] \\
      \CatIT \ar[rr, "(\blank)^{\flat}"] & & \MCatIT,
    \end{tikzcd}
    \quad
    \begin{tikzcd}
      {} & \CatI \ar[dl, "(\blank)^{\flat}"'] \ar[dr, hookrightarrow] \\
      \MCatIT \ar[rr, "\Um"] & & \MCatIT,
    \end{tikzcd}
  \]
  the functors $(\blank)^{\flat}$, $\Um$, and $(\blank)^{\sharp}$
  upgrade to 2-functors with adjunctions of \itcats{}
  \[ (\blank)^{\flat} \dashv \Um \dashv (\blank)^{\sharp}\]
  by 
  \cref{propn:upgrade adjn enr}.
\end{observation}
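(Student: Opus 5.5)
The statement has two parts, and I would prove them in order: first that $\MCatIT$ is cartesian closed with internal Hom $\MFUN(\blank,\blank)$, then that the three functors upgrade to 2-functors forming adjunctions of \itcats{}.

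\textbf{Products in $\MCatIT$.} I would first check that products of marked \itcats{} are computed as $(\tA \times \tB, I \times J)$. Concretely, I would show that the product in $\Ar(\CatIT)$ of two wide locally full inclusions $\tA_{I} \hookrightarrow \tA$ and $\tB_{J} \hookrightarrow \tB$ is again such an inclusion. This holds because the right class of a factorization system is closed under limits, by \cref{obs:adjointDcatIT}. The marked morphisms of the product are therefore the pairs $(f,g)$ with $f \in I$ and $g \in J$.

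\textbf{Cartesian closedness.} Next I would verify that $\MFUN((\tB,J),(\tC,K))$, as defined in \cref{marked int hom}, is a legitimate marked \itcat{}. Its marked transformations (those with all components in $K$) contain the equivalences and are closed under composition, because $K$ is. Since $\Um$ is faithful, the spaces
\[ \Map_{\MCatIT}((\tA,I)\times(\tB,J),(\tC,K)) \quad\text{and}\quad \Map_{\MCatIT}((\tA,I),\MFUN((\tB,J),(\tC,K))) \]
are both subspaces (unions of components) of $\Map(\tA\times\tB,\tC) \simeq \Map(\tA,\FUN(\tB,\tC))$. It therefore suffices to show they pick out the same components, and that this identification is natural. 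The key step is the factorization $(f,g) \simeq (f,\id)\circ(\id,g)$. Using it, a functor $\Phi \colon \tA \times \tB \to \tC$ is marked \IFF{} two conditions hold.
\begin{enumerate}[(a)]
\item For each object $a$, $\Phi(a,\blank)$ sends $J$ into $K$. This says that the adjoint lands objectwise in the full sub-\itcat{} $\MFUN$.
\item For each $f \in I$, the transformation $\Phi(f,\blank)$ has components $\Phi(f,\id_{b})$ in $K$. This says that the adjoint sends $I$ to marked morphisms of $\MFUN$.
\end{enumerate}
Since the identification is just a restriction of the natural equivalence for $\CatIT$, naturality in all three variables is automatic. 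This gives cartesian closedness.

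\textbf{Upgrading to 2-functors.} For the second part I would apply \cref{propn:upgrade functor} with $\phi = (\blank)^{\flat} \colon \CatI \to \MCatIT$. I need to check three things.
\begin{enumerate}[(i)]
\item $(\blank)^{\flat}$ preserves products. This follows from $(\tA\times\tB)_{\mathrm{eq}} \simeq \tA_{\mathrm{eq}}\times\tB_{\mathrm{eq}}$, since a pair is invertible iff both components are.
\item The tensoring $\oK^{\flat}\times(\blank)$ has right adjoint $\Um\MFUN$. This is immediate from cartesian closedness together with $(\blank)^{\flat} \dashv \Um$ on \icats{}.
\item $\Um$, $(\blank)^{\flat}$ and $(\blank)^{\sharp}$ all preserve products and are compatible with the two displayed triangles. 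Here the second triangle should have target $\CatIT$; it commutes because $\Um(\oK^{\flat}) \simeq \oK$.
\end{enumerate}
\cref{propn:upgrade functor} then upgrades all three functors to functors of \itcats{}. Finally, \cref{propn:upgrade adjn enr}(i), applied to the left adjoints $(\blank)^{\flat}$ and $\Um$, upgrades the adjunctions $(\blank)^{\flat}\dashv\Um$ and $\Um\dashv(\blank)^{\sharp}$ to adjunctions of \itcats{}.

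\textbf{Main obstacle.} The delicate step is making sure that the enrichment on $\MCatIT$ produced by \cref{propn:upgrade functor} actually agrees with the one via $\Um\MFUN$. I would deduce this from the adjunction formula $\Map(\oK,\Um\MFUN(c,c')) \simeq \Map(\oK^{\flat}\times c, c')$. That formula is exactly the universal property characterizing the mapping \icats{} in \cref{propn:upgrade functor}, so the two enrichments coincide.
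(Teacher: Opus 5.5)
Your proposal is correct and follows the paper's argument. Faithfulness of $\Um$ reduces cartesian closedness to checking which functors $\tA\times\tB\to\tC$ are marked, and your factorization $(f,g)\simeq(f,\id)\circ(\id,g)$ is exactly the unpacking the paper leaves implicit. The 2-functors and adjunctions then come from \cref{propn:upgrade functor} and \cref{propn:upgrade adjn enr}(i), applied to $(\blank)^{\flat}$ and $\Um$ via the two triangles. You are also right that the second triangle should have target $\CatIT$.

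There is one slip in step (iii). $(\blank)^{\sharp}$ is \emph{not} compatible with these triangles: $\oK^{\sharp}\not\simeq\oK^{\flat}$ as soon as $\oK$ has a non-invertible morphism. Relatedly, $(\oK\times\tA)^{\sharp}\simeq\oK^{\sharp}\times\tA^{\sharp}$ rather than $\oK^{\flat}\times\tA^{\sharp}$, so it does not preserve tensors. Hence \cref{propn:upgrade functor} does not upgrade $(\blank)^{\sharp}$ directly. It acquires its \itcatl{} structure only as the right adjoint of the 2-functor $\Um$, and your final application of \cref{propn:upgrade adjn enr}(i) already supplies this. So you can simply drop the claim in (iii).

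A minor point on (ii): what \cref{propn:upgrade functor} needs is a right adjoint to $\oK\mapsto\oK^{\flat}\times c$ as a functor $\CatI\to\MCatIT$. That right adjoint is $(\Um\MFUN(c,\blank))^{\leq 1}$. The \itcat{} $\Um\MFUN(c,c')$ itself is the mapping object of the $(\infty,3)$-categorical version, obtained by using $\CatIT$ in place of $\CatI$.
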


\subsection{Marked Gray tensors and partially (op)lax transformations}
\label{sec:marked gray}
In this subsection we review the marked version of the Gray tensor
product, where certain 2-morphisms are inverted. This has previously
been studied in a model-categorical setting in \cite{Abellan2023}, as
well as in \cite[\S 4.1]{GagnaHarpazLanariLaxLim} (but without the
marking of the Gray tensor).

\begin{defn}
Given marked
\itcats{} $(\tA,I)$ and $(\tB,J)$, we define $\tA \otimes_{I,J} \tB$
as the \itcat{} obtained by inverting the 2-morphisms in squares
$[1] \otimes [1] \xto{f \otimes g} \tA \otimes \tB$ where either $f$
lies in $I$ or $g$ lies in $J$. In other words, we have a pushout
\[
  \begin{tikzcd}
   \tA_{I} \otimes \tB \amalg \tA \otimes \tB_{J}  \ar[r] \ar[d] & \tA_{I} \times \tB \amalg \tA \times \tB_{J}  \ar[d] \\
   \tA \otimes \tB \ar[r] & \tA \otimes_{I,J} \tB.
  \end{tikzcd}
\]
This has a marking generated by the
image of $\tA_{I} \times \tB^{\simeq} \amalg \tA^{\simeq} \times
\tB_{J}$,
giving a marked \itcat{} $(\tA,I) \otimesm (\tB,J)$.
\end{defn}
\begin{observation}\label{obs:max marked gray is cart} 
  If either $I$ or $J$ is the maximal marking, then $(\tA,I) \otimesm
  (\tB,J)$ is equivalent to the cartesian product $(\tA,I) \times (\tB,J)$.
\end{observation}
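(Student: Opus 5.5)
By symmetry of the definition, I would treat only the case where $J$ is maximal, so that $\tB_{J} = \tB$; the case of maximal $I$ is identical with the roles of the factors exchanged. The argument has three steps: compute the underlying \itcat{}, then the marking, then the identification with the product.

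First I show $\tA \otimes_{I,J} \tB \simeq \tA \times \tB$. With $\tB_{J} = \tB$, the defining pushout contains the summand $\tA \otimes \tB \to \tA \times \tB$ along the identity of $\tA \otimes \tB$. A pushout of the form
\[ (X \amalg \tA\otimes\tB) \to (Y \amalg \tA\times\tB), \qquad (X \amalg \tA\otimes\tB) \to \tA\otimes\tB, \]
where the second map is the fold on the second summand, can be computed in two stages. One first pushes out along the summand $\tA\otimes\tB \to \tA\times\tB$, which yields $\tA \times \tB$ (pushing out along an identity). One then pushes out the remaining summand $X = \tA_{I}\otimes\tB$ along $X \to Y = \tA_{I}\times\tB$. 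The map $X \to \tA\times\tB$ used here is the composite $\tA_{I}\otimes\tB \to \tA\otimes\tB \to \tA\times\tB$. This composite factors through $Y \to \tA\times\tB$, by naturality of the canonical map $\otimes \to \times$. Since $X \to Y$ is an epimorphism (it is a localization), a pushout of the form $Z \amalg_{X} Y$ in which $X \to Z$ already factors through $Y$ is just $Z$. Hence the underlying \itcat{} is $\tA\times\tB$, with the structure map being the canonical comparison map. The one step I expect to need care is the claim that $\tA_{I}\otimes\tB \to \tA_{I}\times\tB$ is an epimorphism. I would justify it from the fact that $\tC\otimes\tD \to \tC\times\tD$ is bijective on objects and essentially surjective on mapping \icats{}, since the morphisms of $\tC \times \tD$ are generated by those of the form $(f,\id)$ and $(\id,g)$. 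A cleaner alternative avoids epimorphisms altogether: check directly that, when $J$ is maximal, a functor out of $\tA\otimes\tB$ inverting all the squares $f\otimes g$ factors uniquely through $\tA\times\tB$. This uses the fact that $\tA\times\tB$ is obtained from $\tA\otimes\tB$ by inverting all of these squares, which is exactly the $I,J$ both maximal case, and which holds because the cartesian product is the localization of the Gray tensor product at all Gray squares.

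Second, the marking. It is generated by $\tA_{I}\times\tB^{\simeq} \amalg \tA^{\simeq}\times\tB$, whose image in $\tA \times \tB$ consists of the morphisms $(f,\id)$ with $f \in I$ and $(\id,g)$ with $g$ arbitrary. Their composites are exactly the pairs $(f,g)$ with $f \in I$, because every such morphism factors as $(f,\id)\circ(\id,g)$, and conversely marked morphisms are closed under composition. This is precisely the product marking on $(\tA,I)\times(\tB,J)$. Combining the two steps gives the claimed equivalence of marked \itcats{}, and it is natural because the comparison map is the canonical one.
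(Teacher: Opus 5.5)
Your strategy is sound and is essentially what the paper has in mind. The paper records the statement as an observation without proof: when $J$ is maximal every Gray square is inverted, which produces $\tA\times\tB$, and the generated marking is then the product marking. Your computation of the marking is correct.

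There is, however, one false claim you should drop. A functor that is bijective on objects and essentially surjective on mapping $\infty$-categories need \emph{not} be an epimorphism in $\CatIT$. The boundary inclusion $\partial C_{2}\to C_{2}$ has both properties, yet $C_{2}\amalg_{\partial C_{2}} C_{2}=\partial C_{3}\not\simeq C_{2}$. Essential surjectivity on mapping $\infty$-categories gives no control over 2-morphisms, so a functor out of the target is not determined by its restriction.

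The reason that does work is the other one you give. The map $\tC\otimes\tD\to\tC\times\tD$ is the localization at the 2-morphisms of the Gray squares: a functor $\tC\otimes\tD\to\tE$ factors through $\tC\times\tD$, essentially uniquely, if and only if it inverts all these 2-morphisms. Restriction therefore identifies $\Map(\tC\times\tD,\tE)$ with a union of components of $\Map(\tC\otimes\tD,\tE)$, so the map is an epimorphism. With $\tC=\tA_{I}$ and $\tD=\tB$, your two-stage pushout argument then goes through as written.

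This is the same fact the paper already uses implicitly when it rewrites ``inverting the Gray squares with $f\in I$ or $g\in J$'' as the defining pushout; it comes from the cited work on Gray tensors. Note that your ``cleaner alternative'' does not actually avoid epimorphisms or this input. It relies on exactly the same localization statement, just phrased as a universal property rather than as epimorphy.
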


\begin{defn}
  Given marked \itcats{} $(\tA,I)$ and $(\tB,J)$, we define
  $\MFUN((\tB,J), (\tC,K))^{\lax}$ to be the locally full sub-\itcat{}
  of $\FUN(\tB,\tC)^{\lax}$ whose
  \begin{itemize}
  \item objects are the marked functors, \ie{} those that take
    morphisms in $\tB_{J}$ into $\tC_{K}$,
  \item morphisms are the lax transformations $[1] \otimes \tB \to
    \tC$ that factor through a marked functor $[1] \otimes_{\flat,I}
    \tB \to \tC$, which unpacks to those where the lax naturality square
    associated to a morphism in $I$ commutes, and the restrictions to
    each object of $[1]$ gives a marked functor.
  \end{itemize}
  We equip this with the marking given by the strong natural
  transformations among marked functors (which are precisely the
  marked functors $[1] \otimes_{\sharp,I} \tB \to \tC$ by
  \cref{obs:max marked gray is cart}). Reversing the order of the Gray
  tensor we similarly define marked \itcats{}
  $\MFUN((\tB,J), (\tC,K))^{\oplax}$.
\end{defn}

\begin{lemma}
  We have natural equivalences
  \[ 
    \begin{split}
      \Map_{\MCatIT}((\tA,I) \otimesm (\tB,J), (\tC,K))
      & \simeq  \Map_{\MCatIT}((\tA,I), \MFUN((\tB,J), (\tC,K))^{\lax}) \\
      & \simeq  \Map_{\MCatIT}((\tB,J), \MFUN((\tA,I), (\tC,K))^{\oplax}), \\    
    \end{split}
  \]
  so that there are adjunctions
  \[ (\tA,I) \otimesm \blank \dashv \MFUN((\tA,I), \blank)^{\oplax},
    \quad
    \blank \otimesm (\tA,I)  \dashv \MFUN((\tA,I), \blank)^{\lax}.
  \]
\end{lemma}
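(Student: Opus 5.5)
The approach is to reduce to the unmarked Gray adjunction $\Map(\tA \otimes \tB, \tC) \simeq \Map(\tA, \FUN(\tB,\tC)^{\lax})$, using that $\Um$ is faithful. A marked functor out of $(\tA,I) \otimesm (\tB,J)$, or into a locally full sub-\itcat{} of $\FUN(\tB,\tC)^{\lax}$, is then a functor of underlying \itcats{} satisfying certain properties. The task is to check that these properties correspond under the adjunction. I will treat the first equivalence; the second is identical with the order of the Gray tensor reversed.

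First, compute $\Map_{\MCatIT}((\tA,I)\otimesm(\tB,J),(\tC,K))$. By the defining pushout of $\tA \otimes_{I,J} \tB$, this is the subspace of $\Map(\tA\otimes\tB,\tC)$ consisting of functors $\Phi$ such that:
\begin{enumerate}[(a)]
\item the restrictions of $\Phi$ to $\tA_I \otimes \tB$ and to $\tA \otimes \tB_J$ factor through the cartesian products. Since $\tA_I \otimes \tB \to \tA_I \times \tB$ is a localization at 2-morphisms, this amounts to $\Phi$ inverting the 2-morphisms in the squares $f\otimes g$ with $f \in I$ or $g\in J$; it is a property.
\item $\Phi$ sends the generating marked morphisms, the images of $\tA_I \times \tB^{\simeq}$ and $\tA^{\simeq}\times \tB_J$, into $\tC_K$. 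It suffices to check this on the generators, since $\tC_K$ is closed under composition and contains equivalences.
\end{enumerate}

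Second, unpack what it means for the adjoint $\tilde\Phi \colon \tA \to \FUN(\tB,\tC)^{\lax}$ to factor through a marked functor $(\tA,I)\to \MFUN((\tB,J),(\tC,K))^{\lax}$. Since the target is a locally full sub-\itcat{}, factoring through it is a property on objects and 1-morphisms, plus the marking condition. The conditions are:
\begin{enumerate}[(i)]
\item each $\tilde\Phi(a) = \Phi(a,-)$ is a marked functor, i.e.\ $\Phi$ sends $\{a\}\times \tB_J$ into $\tC_K$;
\item each $\tilde\Phi(f)$ for $f\colon a\to a'$ is a lax transformation whose squares at $g \in J$ commute (the 2-cell of $f\otimes g$ is inverted) and whose components $\Phi(f,b)$ are marked when restricted to endpoints. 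Combined with (i), these conditions say exactly that $\Phi|_{[1]\otimes \tB}$ along $f$ is a marked functor $[1]\otimes_{\flat,J}\tB \to \tC$;
\item for $f \in I$, $\tilde\Phi(f)$ is marked, i.e.\ strong (all naturality squares invertible) with components $\Phi(f,b)$ in $K$.
\end{enumerate}

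Comparing the two lists: (i) with the $J$-part of (b); (iii) with the $I$-part of (a) and the $I$-part of (b); (ii) with the $J$-part of (a). The remaining point is that the $\tA \otimes \tB$ 2-cells are generated by the squares $f\otimes g$ for 1-morphisms $f,g$. This is what \cref{obs:gray square pushout} gives: $\tA\otimes\tB$ is built from the products together with cells $[1]\otimes[1]$ whose only nontrivial new 2-cell is the interchange cell. Inverting a 2-morphism is a property, and inverting the interchange cells for all $f\in I$ (respectively $g\in J$) is therefore equivalent to factoring through $\tA_I\times\tB$ (respectively $\tA\times\tB_J$).

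Naturality in all variables is automatic, since these are subspaces of naturally equivalent mapping spaces cut out by matching conditions. The adjunctions then follow, with $\MFUN$ giving the right adjoints pointwise.

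The main obstacle I expect is this last identification: showing that $\tA_I\otimes\tB \to \tA_I\times\tB$ is exactly the localization at the interchange 2-cells $f\otimes g$ with $f\in I$. I would argue it by writing $\tA_I$ as a colimit of cells $C_0,C_1,C_2$. Using that $\otimes$ preserves colimits in each variable, this reduces to the case $C_i\otimes C_j$. For these, the difference between $C_i\otimes C_j$ and $C_i\times C_j$ should be precisely the interchange cells; this is standard for the Gray tensor of cells, and follows for $[1]\otimes[1]$ directly from \cref{obs:gray square pushout}.
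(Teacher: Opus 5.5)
Your proposal is correct and follows essentially the same route as the paper: both sides are identified, via the unmarked Gray adjunction, with the subspace of $\Map(\tA\otimes\tB,\tC)$ cut out by conditions on the interchange 2-cells and on the components, and the two lists of conditions are then matched. The paper treats as part of ``unpacking the definition'' the fact you flag as the main obstacle, namely that $\tA_I\otimes\tB\to\tA_I\times\tB$ is the localization at the interchange cells $f\otimes g$ with $f\in I$; your cell-by-cell reduction is a reasonable way to justify it, and you are also slightly more explicit than the paper in recording that for $f\in I$ the components must lie in $K$.
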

\begin{proof}
  We prove the first equivalence; the second is proved by the same
  argument. Unpacking the definition of the mapping space on the
  left, we see that it can be identified with the space of functors
  $\tA \otimes \tB \to \tC$ such that
  \begin{itemize}
  \item the 2-morphism in the lax square associated to a pair of
    morphisms $f$ from $\tA$ and $g$ from $\tB$ is invertible if
    either $f$ lies in $I$ or $g$ lies in $J$,
  \item the morphism in $\tC$ associated to a morphism in $\tA$ and an
    object in $\tB$ lies in $K$ if the morphism from $\tA$ lies in
    $I$, and similarly with the roles of $\tA$ and $\tB$ reversed.
  \end{itemize}
  These correspond to functors $\tA \to \FUN(\tB,\tC)^{\lax}$ such
  that
  \begin{itemize}
  \item for every object of $\tA$, the associated functor $\tB \to
    \tC$ is marked,
  \item for every morphism in $\tA$, the associated lax natural
    transformation has commuting naturality squares at all morphisms
    in $J$,
  \item for every morphism in $I$, the associated lax natural
    transformation has commuting naturality squares at all morphisms
    in $\tB$, \ie{} it is strong.
  \end{itemize}
  These are precisely the marked functors from $(\tA,I)$ to
  $\MFUN((\tB,J), (\tC,K))^{\lax}$, as required.
\end{proof}

\begin{propn}
  The marked Gray tensor product is associative, \ie{} we have a
  natural equivalence
  \[ ((\tA, I) \otimesm (\tB, J)) \otimesm (\tC, K) \simeq
  (\tA, I) \otimesm ((\tB, J) \otimesm (\tC, K)). \]
\end{propn}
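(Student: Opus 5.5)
We will prove the equivalence by showing that both sides corepresent the same functor on $\MCatIT$, so that the claim follows from the Yoneda lemma. The unmarked Gray tensor product is associative (see \cite{GHLGray,CMGray}), and this is the input we take for granted. We have just established the adjunctions $\blank \otimesm (\tA,I) \dashv \MFUN((\tA,I),\blank)^{\lax}$ and $(\tA,I)\otimesm\blank \dashv \MFUN((\tA,I),\blank)^{\oplax}$. From these we get, for any marked $(\tD,L)$, natural equivalences
\[ \Map(((\tA,I)\otimesm(\tB,J))\otimesm(\tC,K),(\tD,L)) \simeq \Map((\tA,I)\otimesm(\tB,J), \MFUN((\tC,K),(\tD,L))^{\lax}) \]
and
\[ \Map((\tA,I),\MFUN((\tB,J),\MFUN((\tC,K),(\tD,L))^{\lax})^{\lax}) \simeq \Map((\tA,I)\otimesm((\tB,J)\otimesm(\tC,K)),(\tD,L)). \]
It therefore suffices to produce a natural equivalence of marked \itcats{}
\[ \MFUN((\tB,J)\otimesm(\tC,K),(\tD,L))^{\lax} \simeq \MFUN((\tB,J),\MFUN((\tC,K),(\tD,L))^{\lax})^{\lax}. \]

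Rather than constructing this internal-hom equivalence directly, we argue at the level of mapping spaces, where everything is a subspace of an unmarked mapping space. Since $\Um$ is faithful, a map of marked \itcats{} out of $(\tA,I)\otimesm(\tB,J)\otimesm(\tC,K)$, with either bracketing, is a functor out of the underlying \itcat{} satisfying conditions. First we identify the underlying \itcat{} of each bracketing as a localization of $\tA\otimes\tB\otimes\tC$. Because $\tA\otimes_{I,J}\tB$ is defined as a pushout, and $\blank\otimes\tC$ preserves colimits, the underlying \itcat{} of $((\tA,I)\otimesm(\tB,J))\otimesm(\tC,K)$ is obtained from $\tA\otimes\tB\otimes\tC$ by inverting a generating set of 2-morphisms. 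This set consists of the Gray squares $f\otimes g\otimes c$ with $f\in I$ or $g\in J$, together with the squares $m\otimes h$ where $m$ is marked in $(\tA,I)\otimesm(\tB,J)$ or $h\in K$.

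The main step, and the expected obstacle, is to show that the resulting conditions on a functor $F\colon\tA\otimes\tB\otimes\tC\to\tD$ agree for both bracketings. We will reduce each condition to generators. The marking of $(\tA,I)\otimesm(\tB,J)$ is generated under composition by morphisms $f\otimes b$ with $f\in I$ and $a\otimes g$ with $g\in J$. The 2-morphism of a Gray square $(m_1 m_2)\otimes h$ is the pasting of those of $m_1\otimes h$ and $m_2\otimes h$, so it is invertible as soon as both pieces are. Hence inverting the squares $m\otimes h$ for marked $m$ reduces to inverting the cubes' faces $f\otimes b\otimes h$ and $a\otimes g\otimes h$ with $f\in I$, $g\in J$.

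After this reduction, both bracketings impose the same symmetric condition on $F$. The 2-cell associated to a pair of morphisms drawn from two distinct factors, with the remaining factor at an object, must be invertible whenever one of the two morphisms is marked. In addition, the 1-morphism $x\otimes y\otimes z$ with exactly one nonidentity entry must land in $L$ when that entry is marked. The marking condition is checked the same way, since the markings on both sides are generated by the images of $\tA_I\times\tB^\simeq\times\tC^\simeq$ and its two analogues. The pasting/decomposition claim is most safely verified on the generating case $[1]\otimes[1]\otimes[1]$ and the composite $[2]\otimes[1]$, using \cref{obs:gray square pushout} to describe 2-cells in Gray products of arrows. Since both sides are then equivalent, naturally in $(\tD,L)$, to the same subspace of $\Map(\tA\otimes\tB\otimes\tC,\tD)$, the Yoneda lemma gives the desired natural equivalence.
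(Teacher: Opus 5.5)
Your route is different from the paper's, and the strategy works, but one step is asserted with a justification that does not suffice.

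\textbf{Comparison with the paper.} The paper argues at the level of \itcats{}. It uses that $\otimes$ preserves colimits in each variable, together with the interchange result \cite[Proposition 2.8.1]{AGH24}. That result identifies the localization of $(\tA_{I}\times\tB)\otimes\tC$ along $(\tA_{I}\times\tB^{\simeq})\otimes\tC\to(\tA_{I}\times\tB^{\simeq})\times\tC$ with $\tA_{I}\times(\tB\otimes\tC)$. With these, the paper exhibits both bracketings as the pushout of $(\tA\otimes\tB)\otimes\tC\simeq\tA\otimes(\tB\otimes\tC)$ along the same map to $\tA_{I}\times(\tB\otimes\tC)\amalg\tB_{J}\times(\tA\otimes\tC)\amalg(\tA\otimes\tB)\times\tC_{K}$. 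It then compares generators of the markings.

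You instead identify maps out of either bracketing with one subspace of $\Map(\tA\otimes\tB\otimes\tC,\tD)$, cut out by explicit invertibility conditions on Gray squares, and conclude by Yoneda. This makes the symmetric condition explicit and avoids the Gray/cartesian interchange. The cost is two extra inputs. You need the pasting decomposition for squares of composites, which you flag. You also need that every 1-morphism of $\tA\otimes\tB$ is generated under composition by those of the form $f\otimes b$ and $a\otimes g$. You state generation only for the marking, but you need it for all 1-morphisms to reduce the condition on $m\otimes h$ with $h\in K$ and $m$ arbitrary. Your opening reduction to an internal-hom equivalence should simply be dropped: in the paper that equivalence (\cref{marked lax hom adj}) is deduced from associativity.

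\textbf{The gap.} You claim that, because $\tA\otimes_{I,J}\tB$ is a pushout and $\blank\otimes\tC$ preserves colimits, the underlying \itcat{} is obtained from $\tA\otimes\tB\otimes\tC$ by inverting the squares $f\otimes g\otimes c$. Colimit preservation only says that $(\tA\otimes_{I,J}\tB)\otimes\tC$ is the pushout of $(\tA\otimes\tB)\otimes\tC$ along $(\tA_{I}\otimes\tB)\otimes\tC\amalg(\tA\otimes\tB_{J})\otimes\tC\to(\tA_{I}\times\tB)\otimes\tC\amalg(\tA\times\tB_{J})\otimes\tC$. It does not say that this map does nothing more than invert the 2-cells $(f\otimes g)\otimes c$ for objects $c$; a priori $C_{2}\otimes\tC\to[1]\otimes\tC$ could impose more.

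What you need is the following. If $\tX\to\tX'$ inverts a set $W$ of 2-morphisms, then a functor $\tX\otimes\tC\to\tD$ factors through $\tX'\otimes\tC$ if and only if it inverts $w\otimes c$ for all $w\in W$ and all objects $c$. Under $\Map(\tX\otimes\tC,\tD)\simeq\Map(\tX,\FUN(\tC,\tD)^{\lax})$, this says a 2-morphism in $\FUN(\tC,\tD)^{\lax}$ is invertible as soon as all its components are. That is \cref{propn:ess surj gives 2-cons}(iii) applied to the essentially surjective functor $\coprod_{c}[0]\to\tC$. For the other bracketing, use $\FUN(\tA,\tD)^{\oplax}$ in the same way.

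Add the fact that $\tX\otimes\tY\to\tX\times\tY$ is the localization at the Gray squares. Together these fill the step; they are the counterpart of the paper's appeal to \cite[Proposition 2.8.1]{AGH24}. With this added, your argument goes through.
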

\begin{proof}
  We first prove there is an equivalence on underlying \itcats{}. For
  this, consider the commutative square
  \[
    \begin{tikzcd}
      (\tA_{I} \otimes \tB) \otimes \tC \amalg (\tA \otimes \tB_{J})
     \otimes \tC \amalg (\tA \otimes \tB) \otimes \tC_{K} \ar[r] \ar[d] & \tA_{I} \times (\tB \otimes \tC) \amalg \tB_{J} \times (\tA \otimes \tC) \amalg (\tA \otimes \tB) \times \tC_{K} \ar[d] \\
     (\tA \otimes \tB) \otimes \tC \ar[r] & (\tA \otimes_{I,J} \tB) \otimes_{L,K} \tC,
    \end{tikzcd}
  \]
  where $L$ denotes the marking of $\tA \otimes_{I,J} \tB$; we claim this is a pushout. Indeed, we can factor this horizontally through the square
  \[
    \begin{tikzcd}
      (\tA_{I} \otimes \tB) \otimes \tC \amalg (\tA \otimes \tB_{J})
     \otimes \tC \amalg (\tA \otimes \tB) \otimes \tC_{K} \ar[r] \ar[d] & (\tA_{I} \times \tB) \otimes \tC \amalg (\tA \times \tB_{J})
     \otimes \tC \amalg (\tA \otimes \tB) \otimes \tC_{K}  \ar[d]  \\
     (\tA \otimes \tB) \otimes \tC \ar[r] & (\tA \otimes_{I,J} \tB) \otimes \tC,
    \end{tikzcd}
  \]
  which is a pushout since $\otimes$ preserves colimits in each variable, followed by the square
  \[
    \begin{tikzcd}
 (\tA_{I} \times \tB) \otimes \tC \amalg (\tA \times \tB_{J})
     \otimes \tC \amalg (\tA \otimes \tB) \otimes \tC_{K}  \ar[d] \ar[r] & \tA_{I} \times (\tB \otimes \tC) \amalg \tB_{J} \times (\tA \otimes \tC) \amalg (\tA \otimes \tB) \times \tC_{K} \ar[d] \\
     (\tA \otimes_{I,J} \tB) \otimes \tC \ar[r] & (\tA \otimes_{I,J} \tB) \otimes_{L,K} \tC.
    \end{tikzcd}
  \]
  It suffices to show that this is a pushout, for which we consider the diagram

  \[
    \begin{tikzcd}
      (\tA_{I} \times \tB^{\simeq}) \otimes \tC \amalg (\tA^{\simeq} \times \tB_{J}) \otimes \tC \amalg (\tA \otimes \tB) \otimes \tC_{K} \ar[d] \ar[r] & (\tA_{I} \times \tB^{\simeq}) \times \tC \amalg (\tA^{\simeq} \times \tB_{J}) \times \tC \amalg (\tA \otimes \tB) \times \tC_{K} \ar[d]
            \\
       (\tA_{I} \times \tB) \otimes \tC \amalg (\tA \times \tB_{J})
     \otimes \tC \amalg (\tA \otimes \tB) \otimes \tC_{K}  \ar[d] \ar[r] & \tA_{I} \times (\tB \otimes \tC) \amalg \tB_{J} \times (\tA \otimes \tC) \amalg (\tA \otimes \tB) \times \tC_{K} \ar[d] \\
      (\tA \otimes_{I,J} \tB) \otimes \tC \ar[r] & (\tA \otimes_{I,J} \tB) \otimes_{L,K} \tC.
    \end{tikzcd}
  \]
  Here the top square is a pushout by \cite[Proposition 2.8.1]{AGH24},
  while the composite square is a pushout by the definition of the
  marked Gray tensor. Hence the bottom square is also a pushout. The same argument applied to $\tA \otimes_{I,M} (\tB \otimes_{J,K} \tC)$, where $M$ denotes the marking of $\tB \otimes_{J,K} \tC$, produces a pushout square
  \[
    \begin{tikzcd}
      \tA_{I} \otimes (\tB \otimes \tC) \amalg \tA \otimes (\tB_{J}
     \otimes \tC) \amalg \tA \otimes (\tB \otimes \tC_{K}) \ar[r] \ar[d] & \tA_{I} \times (\tB \otimes \tC) \amalg \tB_{J} \times (\tA \otimes \tC) \amalg (\tA \otimes \tB) \times \tC_{K} \ar[d] \\
     \tA \otimes (\tB \otimes \tC) \ar[r] & \tA \otimes_{I,M} (\tB \otimes_{L,K} \tC),
    \end{tikzcd}
  \]
  which is clearly equivalent to the first pushout via the associativity of the Gray tensor. Moreover, the marking is in both cases given by the image of \[\tA_{I} \times \tB^{\simeq} \times \tC^{\simeq} \amalg \tA^{\simeq} \times \tB_{J} \times \tC^{\simeq} \amalg \tA^{\simeq} \times \tB^{\simeq} \times \tC_{K},\] so we get the required natural equivalence of marked \itcats{}.
\end{proof}

\begin{cor}\label{marked lax hom adj}
  For marked \itcats{} $(\tA,I)$, $(\tB,J)$ and $(\tC,K)$, we
  have natural equivalences of marked \itcats{}
  \[ \MFUN((\tA,I), \MFUN((\tB,J), (\tC,K))^{\lax})^{\oplax} \simeq
    \MFUN((\tB,J), \MFUN((\tA,I), (\tC,K))^{\oplax})^{\lax},
  \]
  \[ \MFUN((\tA,I), \MFUN((\tB,J), (\tC,K))^{\lax})^{\lax} \simeq
    \MFUN((\tA,I) \otimesm (\tB,J), (\tC,K))^{\lax},
  \]
  \[  \MFUN((\tB,J), \MFUN((\tA,I), (\tC,K))^{\oplax})^{\oplax}
    \simeq
    \MFUN((\tA,I) \otimesm (\tB,J), (\tC,K))^{\oplax}.
  \]
\end{cor}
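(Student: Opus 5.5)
The plan is a Yoneda argument in $\MCatIT$: we show that both sides corepresent the same functor of a test object, using the adjunctions from the previous lemma and the associativity of the marked Gray tensor product. Since all three statements are equivalences of marked \itcats{}, it suffices to produce natural equivalences of mapping spaces $\Map_{\MCatIT}((\tD,L), \blank)$ out of an arbitrary marked \itcat{} $(\tD,L)$ and then apply the Yoneda lemma in $\MCatIT$. The identification of underlying \itcats{} and markings then comes for free.

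For the second equivalence, take a test object $(\tD,L)$. Applying the adjunction $\blank \otimesm (\tA,I) \dashv \MFUN((\tA,I),\blank)^{\lax}$ twice gives
\[ \Map((\tD,L), \MFUN((\tA,I), \MFUN((\tB,J),(\tC,K))^{\lax})^{\lax}) \simeq \Map(((\tD,L)\otimesm(\tA,I))\otimesm(\tB,J), (\tC,K)). \]
Reassociating via the associativity proposition, and applying the same adjunction once more with $(\tA,I)\otimesm(\tB,J)$ as the fixed variable, this is equivalent to
\[ \Map((\tD,L), \MFUN((\tA,I)\otimesm(\tB,J),(\tC,K))^{\lax}). \]
The third equivalence is the mirror image of this argument, using the adjunction $(\tA,I)\otimesm\blank \dashv \MFUN((\tA,I),\blank)^{\oplax}$ with the tensor factors placed on the other side.

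For the first equivalence, we compute
\[ \Map((\tD,L), \MFUN((\tA,I), \MFUN((\tB,J),(\tC,K))^{\lax})^{\oplax}) \simeq \Map((\tA,I)\otimesm(\tD,L), \MFUN((\tB,J),(\tC,K))^{\lax}) \simeq \Map(((\tA,I)\otimesm(\tD,L))\otimesm(\tB,J),(\tC,K)). \]
Symmetrically, the right-hand side gives $\Map((\tA,I)\otimesm((\tD,L)\otimesm(\tB,J)),(\tC,K))$. These two are identified by associativity.

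The main point requiring care is naturality. The associativity equivalence must be natural in all three variables, so that the composite equivalences are natural in $(\tD,L)$ and the Yoneda lemma applies. This holds because associativity was constructed from pushout squares that are functorial in the inputs. We also need the two adjunctions of the preceding lemma to be natural in the fixed variable as well, which follows from their construction as identifications of mapping spaces.
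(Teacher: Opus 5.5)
Your proposal is correct and is essentially the paper's proof: the paper likewise deduces all three equivalences by the Yoneda lemma in $\MCatIT$, combining the adjunctions of the preceding lemma with the associativity of $\otimesm$ (cf.\ the unmarked case, Lemma 2.2.10 of the cited work of Abell\'an--Gagna--Haugseng). Your explicit chains of mapping-space equivalences, including the bracketings used for the first equivalence, check out against the stated adjunctions $\blank \otimesm (\tA,I) \dashv \MFUN((\tA,I),\blank)^{\lax}$ and $(\tA,I) \otimesm \blank \dashv \MFUN((\tA,I),\blank)^{\oplax}$.
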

\begin{proof}
  Apply the Yoneda lemma and the associativity of the marked Gray
  tensor. (Cf.~\cite[Lemma 2.2.10]{AGH24} for the unmarked case.)
\end{proof}

\begin{defn}
Given a marked \itcat{} $(\tA,I)$ and an (unmarked) \itcat{} $\tB$, we
define
\[ \FUN(\tA, \tB)^{I\dlax} := \Um \MFUN((\tA,I), \tB^{\sharp})^{\lax},\]
\[ \FUN(\tA, \tB)^{I\doplax} := \Um \MFUN((\tA,I), \tB^{\sharp})^{\oplax},\]
Unpacking the various adjunctions, we see that functors $\tK \to \FUN(\tA,\tB)^{I\dplax}$ correspond to functors
\[ \tK \otimes_{\flat,I} \tA \to \tB, \quad \tA \otimes_{I,\flat} \tK \to \tB\]
in the lax and oplax cases, respectively. Here $\FUN(\tA,
\tB)^{I\dplax}$ can be identified as a wide and locally full
sub-\itcat{} of $\FUN(\tA,\tB)^{\plax}$ whose morphisms are the
\emph{$I$-(op)lax transformations}, meaning those (op)lax
transformations
\[ [1] \otimes \tA \to \tB, \quad \tA \otimes [1] \to \tB \] that
factor through $[1] \otimes_{\flat,I} \tA$ and
$\tA \otimes_{I,\flat} [1]$, respectively, meaning that the (op)lax
naturality squares associated to the morphisms in $I$ actually
commute.
\end{defn}

\begin{notation}
  Given functors of \itcats{} $F,G \colon \tA \to \tB$ and a marking $(\tA,I)$, we write
  $\Nat^{I\dplax}_{\tA,\tB}(F,G)$ for the
  \icat{} of morphisms from $F$ to $G$ in $\FUN(\tA,\tB)^{I\dplax}$.
\end{notation}

\begin{observation}\label{lax oplax reverse marked}
  Applying \cref{marked lax hom adj} with 
  $(\tA,I) = \tA^{\flat}$ and $(\tC,K) = \tC^{\sharp}$, we see that
  there are natural equivalences
  \[ \FUN(\tA, \FUN(\tB, \tC)^{J\dlax})^{\lax} \simeq 
    \FUN(\tA \otimes_{\flat,J} \tB, \tC)^{J'\dlax},\]
  \[ \FUN(\tA, \FUN(\tB, \tC)^{J\doplax})^{\oplax} \simeq \FUN(\tB
    \otimes_{J,\flat} \tA, \tC)^{J''\doplax},\] where $J'$ and $J''$
  are both generated by the image of $\tB_{J} \times \tA^{\simeq}$. This means we have a pullback square
  \[
    \begin{tikzcd}
     \FUN(\tA, \FUN(\tB, \tC)^{J\dlax})^{\lax} \ar[r] \ar[d] & \FUN(\tA \otimes_{\flat,J} \tB, \tC)^{\lax}  \ar[d] \\
     \lim_{\tA^{\simeq}} \FUN(\tB, \tC)^{J\dlax} \ar[r] & \lim_{\tA^{\simeq}} \FUN(\tB, \tC)^{\lax},
    \end{tikzcd}
  \]
  and similarly in the oplax case.  We also see that
  $\FUN(\tA, \FUN(\tB, \tC)^{J\dlax})^{\oplax}$ is equivalent to the
  full sub-\itcat{} of $\FUN(\tB, \FUN(\tA, \tC)^{\oplax})^{J\dlax}$
  spanned by the functors that take morphisms in $J$ to strong natural
  transformations.
\end{observation}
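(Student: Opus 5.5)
The plan is to specialize \cref{marked lax hom adj} to $(\tA,I) = \tA^{\flat}$ and $(\tC,K) = \tC^{\sharp}$, then apply $\Um$ and unpack both sides using the explicit descriptions of $\MFUN(\blank,\blank)^{\plax}$ and of the marked Gray tensor.

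\emph{The two equivalences.} For the first, I would take the second equivalence of \cref{marked lax hom adj}:
\[ \MFUN(\tA^{\flat}, \MFUN((\tB,J), \tC^{\sharp})^{\lax})^{\lax} \simeq \MFUN(\tA^{\flat} \otimesm (\tB,J), \tC^{\sharp})^{\lax}. \]
\begin{itemize}
\item On the left, $\MFUN((\tB,J),\tC^{\sharp})^{\lax}$ has underlying \itcat{} $\FUN(\tB,\tC)^{J\dlax}$. Every functor out of $\tA^{\flat}$ is marked, because equivalences go to equivalences and these are marked. The lax transformations allowed are those whose squares commute at marked morphisms of $\tA^{\flat}$, i.e.\ at equivalences, which is automatic. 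So $\Um$ of the left side is $\FUN(\tA,\FUN(\tB,\tC)^{J\dlax})^{\lax}$.
\item On the right, the underlying \itcat{} of $\tA^{\flat} \otimesm (\tB,J)$ is $\tA \otimes_{\flat,J} \tB$. Its marking $J'$ is generated by the image of $\tA^{\simeq} \times \tB_{J}$, which after reordering the factors is what the statement calls $\tB_J \times \tA^{\simeq}$. Since $\tC$ is maximally marked, every functor is marked, so $\Um$ of the right side is $\FUN(\tA\otimes_{\flat,J}\tB,\tC)^{J'\dlax}$.
\end{itemize}
The oplax equivalence is obtained in the same way from the third equivalence of \cref{marked lax hom adj}.

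\emph{The pullback square.} $\FUN(\tX,\tC)^{J'\dlax}$ is a wide, locally full sub-\itcat{} of $\FUN(\tX,\tC)^{\lax}$, and likewise $\lim_{\tA^{\simeq}}\FUN(\tB,\tC)^{J\dlax} \subseteq \lim_{\tA^{\simeq}}\FUN(\tB,\tC)^{\lax}$. A pullback of a wide locally full inclusion is again one, cut out by the pulled-back condition on 1-morphisms. It therefore suffices to check the following for a lax transformation $\phi$ on $\tA\otimes_{\flat,J}\tB$: its squares commute at all morphisms of $J'$ if and only if, for every object $a$, its restriction along $\{a\}\times\tB \to \tA\otimes_{\flat,J}\tB$ is $J$-lax.
\begin{itemize}
\item Commuting squares are closed under composition, so it is enough to check the generators $(\id_a,j)$ of $J'$.
\item The square of $\phi$ at $(\id_a,j)$ is exactly the square at $j$ of the restriction of $\phi$ to $\{a\}\times\tB$.
\end{itemize}
The bottom map of the square is the product of these restrictions. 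This is compatible with the right vertical map coming from the identification in the first step, because both are obtained by restricting along $\tA^{\simeq}\otimes_{\flat,J}\tB \to \tA\otimes_{\flat,J}\tB$.

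\emph{The final claim.} Use the first equivalence of \cref{marked lax hom adj} with the same choices:
\[ \MFUN(\tA^{\flat},\MFUN((\tB,J),\tC^{\sharp})^{\lax})^{\oplax} \simeq \MFUN((\tB,J),\MFUN(\tA^{\flat},\tC^{\sharp})^{\oplax})^{\lax}. \]
As before, $\Um$ of the left side is $\FUN(\tA,\FUN(\tB,\tC)^{J\dlax})^{\oplax}$. On the right, $\MFUN(\tA^{\flat},\tC^{\sharp})^{\oplax}$ is $\FUN(\tA,\tC)^{\oplax}$ marked by the strong transformations.
\begin{itemize}
\item Marked functors out of $(\tB,J)$ into it are those sending $J$ to strong transformations.
\item The morphisms are the $J$-lax transformations.
\item Since the inclusion of marked functors is full on mapping \icats{}, this is the full sub-\itcat{} of $\FUN(\tB,\FUN(\tA,\tC)^{\oplax})^{J\dlax}$ on such functors.
\end{itemize}

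The step I expect to need the most care is the identification of the maps in the pullback square, namely that the restriction functors match under the equivalence of the first step. The remaining parts are bookkeeping of markings.
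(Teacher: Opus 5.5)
Your proposal is correct and follows the paper's own route: the observation is obtained by specializing \cref{marked lax hom adj} to $\tA^{\flat}$ and $\tC^{\sharp}$ and unpacking the markings. Your check of the pullback square via the generators $(\id_a, j)$ of $J'$ just spells out what the paper leaves implicit.

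There are two small slips. First, the compatibility you describe (both maps being restriction along $\tA^{\simeq}\times\tB\to\tA\otimes_{\flat,J}\tB$) concerns the left vertical map, not the bottom one. Second, the stated oplax equivalence comes from the third equivalence with the roles of $(\tA,I)$ and $(\tB,J)$ exchanged, i.e.\ with $(\tB,J)$ there taken to be $\tA^{\flat}$.
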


\subsection{Fibrations and the straightening
  equivalence}\label{sec:review fib}

In this section we review the \itcatl{} analogues of (co)cartesian
fibrations, first introduced in \cite{GHLFib}, and their straightening
equivalence with functors $\CATIT$, due to
Abell\'an--Stern~\cite{ASI23} and Nuiten~\cite{Nuiten}. We also recall
the extension to straightening for partially (op)lax transformations
proved in \cite{AGH24}.

\begin{defn}
  Let $p \colon \tE \to \tB$ be a functor of \itcats{}.
  \begin{itemize}
  \item A morphism
  $f \colon x \to y$ in $\tE$ is \emph{$p$-cocartesian} (or \emph{$p$-$0$-cartesian}) if for all objects
  $z \in \tE$ the commutative square of
  \icats{}
  \[
    \begin{tikzcd}
     \tE(y,z) \ar[r, "f^{*}"] \ar[d] & \tE(x,z)  \ar[d] \\
     \tB(p(y),p(z)) \ar[r, "p(f)^{*}"'] & \tB(p(x),p(z))
    \end{tikzcd}
  \]
  is a pullback. Dually, we say $f$ is \emph{$p$-cartesian} (or \emph{$p$-$1$-cartesian}) if
  it is $p^{\op}$-cocartesian when viewed as a morphism in
  $\tE^{\op}$.
\item A 2-morphism $\alpha$ in $\tE$ between 1-morphisms from
  $x$ to $y$ is \emph{weakly $p$-(co)cartesian} (or
  \emph{weakly $p$-$i$-cartesian}) if it is a (co)cartesian morphism for the
  functor
  \[ p_{x,y} \colon \tE(x,y) \to \tB(p(x),p(y)).\]
\item A 2-morphism $\alpha$ in $\tE$ between 1-morphisms from
  $x$ to $y$ is \emph{$p$-(co)cartesian} (or
  \emph{$p$-$i$-cartesian}) if for any morphisms $f \colon x' \to x$
  and $g \colon y \to y'$
  the whiskering $g \circ \alpha \circ f$ is weakly $p$-(co)cartesian.
  \end{itemize}
\end{defn}

\begin{lemma}\label{lem:cart 2mor detect after compose cocart mor}
  Suppose $f \colon x \to y$ is a $p$-cocartesian morphism for a
  functor of \itcats{} $p \colon \tE \to \tB$. Let $u \colon h \to h'$
  be a 2-morphism between morphisms $y \to z$. If $u \circ f$ is
  weakly $p$-(co)cartesian then so is $u$.
\end{lemma}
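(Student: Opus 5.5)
The plan is to reduce the statement to the pullback square that defines $f$ being $p$-cocartesian, applied with the object $z$. We have a pullback of \icats{}
\[ \tE(y,z) \simeq \tE(x,z) \times_{\tB(p(x),p(z))} \tB(p(y),p(z)), \]
where the map to $\tE(x,z)$ is $f^{*}$ and the map to $\tB(p(y),p(z))$ is $p_{y,z}$. The 2-morphism $u \colon h \to h'$ is a morphism in $\tE(y,z)$. Under this equivalence it corresponds to the pair $(u \circ f, p(u))$, lying over $p(u) \circ p(f)$ in $\tB(p(x),p(z))$.

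The key step is a general fact about cocartesian morphisms in a pullback of \icats{}. Consider a pullback $\oP = \oX \times_{\oZ} \oY$ with projections $\pi_{X}$, $\pi_{Y}$, where $q \colon \oX \to \oZ$. A morphism $e$ in $\oP$ is $\pi_{Y}$-cocartesian whenever $\pi_{X}(e)$ is $q$-cocartesian. This follows from the standard stability of (co)cartesian morphisms under base change. To check it directly, write the mapping spaces of $\oP$ as pullbacks of those of $\oX$, $\oY$ and $\oZ$. The square defining $\pi_{Y}$-cocartesianness is then the base change of the square defining $q$-cocartesianness along $\oY \to \oZ$, so it is a pullback by pasting.

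Now apply this with $\oX = \tE(x,z)$, $\oY = \tB(p(y),p(z))$, $\oZ = \tB(p(x),p(z))$ and $q = p_{x,z}$. The hypothesis is that $u\circ f$ is weakly $p$-(co)cartesian, meaning $(q$-)cocartesian or cartesian for $p_{x,z}$. Hence $u$ is (co)cartesian for the projection $\tE(y,z) \to \tB(p(y),p(z))$, and this projection is exactly $p_{y,z}$. So $u$ is weakly $p$-(co)cartesian, as claimed. The cartesian case is the same argument with all categories replaced by their opposites.

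There is no serious obstacle. The only thing to verify is that the functor $\tE(y,z) \to \tB(p(y),p(z))$ appearing in the pullback square really is $p_{y,z}$, which holds by construction of that square. One also needs to recall the base-change stability of cocartesian edges for \icats{}, which is standard.
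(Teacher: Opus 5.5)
Your proof is correct and is essentially the paper's argument. The paper likewise uses the pullback square of mapping $\infty$-categories at the object $z$ that expresses $f$ being $p$-cocartesian, and concludes that $u$ is (co)cartesian for $p_{y,z}$ because $u \circ f$ is (co)cartesian for $p_{x,z}$; the only difference is that the paper cites Lurie's \emph{Higher Topos Theory} for this base-change step, whereas you check it directly by pasting pullbacks of mapping spaces.
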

\begin{proof}
  Since $f$ is $p$-cocartesian, we have a pullback square of \icats{}
  \[
    \begin{tikzcd}
     \tE(y,z) \ar[r, "f^{*}"] \ar[d, "p_{y,z}"'] & \tE(x,z)  \ar[d, "p_{x,z}"] \\
     \tB(py,pz) \ar[r, "p(f)^{*}"'] & \tB(px,pz)
    \end{tikzcd}
  \]
  and by assumption $f^{*}u$ is $p_{x,z}$-(co)cartesian. This implies
  that $u$ is
  $p_{y,z}$-(co)cartesian by \cite[2.4.1.3(2)]{LurieHTT}.
\end{proof}

\begin{observation}\label{obs:cart lifts unique}
  Let $p \colon \tE \to \tB$ be a functor of \itcats{}. Then
  $p$-(co)cartesian lifts are unique when they exist.
  More precisely, if we let $\Map_{i\dcart}([1],
  \tE)$ denote the subspace of $\Map([1], \tE)$ spanned by the
  $i$-cartesian morphisms, then the functor
  \[ \Map_{i\dcart}([1], \tE) \to \Map([1], \tB) \times_{\tB^{\simeq}}
    \tE^{\simeq}, \]
  induced by the commutative square
  \[
    \begin{tikzcd}
     \Map([1], \tE) \ar[r, "\ev_{i}"] \ar[d, "{\Map([1], p)}"'] & \tE^{\simeq}  \ar[d, "p^{\simeq}"] \\
     \Map([1], \tB) \ar[r, "\ev_{i}"'] & \tB^{\simeq},
    \end{tikzcd}
  \]
  is a monomorphism, \ie{} its fibres are either empty or
  contractible. To see this, we can observe that if there exists a
  cartesian lift $\bar{f} \colon x \to y$ of some morphism $f \colon a
  \to b$ in
  $\tB$, then composition with $\bar{f}$ identifies the \igpd{} of
  cartesian lifts of $f$ with target $y$ with that of cocartesian
  lifts of $\id_{b}$ with target $y$; the latter is the \igpd{} of
  equivalences with target $y$, which is always contractible.
\end{observation}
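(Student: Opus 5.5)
By duality it suffices to treat the case $i = 0$, i.e.\ $p$-cocartesian morphisms: a morphism is $p$-cartesian exactly when it is $p^{\op}$-cocartesian in $\tE^{\op}$, and passing to $\op$ identifies the two squares in question. So we must show the following. For $f \colon a \to b$ in $\tB$ and $x \in \tE$ with $p(x) \simeq a$, the \igpd{} $L^{\mathrm{cc}}_{f,x}$ of $p$-cocartesian morphisms out of $x$ lying over $f$ is either empty or contractible. Concretely, $L^{\mathrm{cc}}_{f,x}$ is the fibre of $\Map_{0\dcart}([1],\tE) \to \Map([1],\tB) \times_{\tB^{\simeq}} \tE^{\simeq}$ over $(f,x)$. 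Suppose it is nonempty, and fix a cocartesian lift $\bar f \colon x \to y$. We will identify $L^{\mathrm{cc}}_{f,x}$ with the \igpd{} of equivalences out of $y$, which is contractible.

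First let $L_{f,x}$ be the \igpd{} of all morphisms out of $x$ lying over $f$. Explicitly, it consists of pairs $(z, g \colon x \to z)$ together with an identification $p(g) \simeq f$. Take cores in the defining pullback square for $\bar f$:
\[ \tE(y,z)^{\simeq} \simeq \tE(x,z)^{\simeq} \times_{\tB(a,pz)^{\simeq}} \tB(b,pz)^{\simeq}. \]
Taking fibres over $f \in \tB(a,b)$ after letting $z$ vary (i.e.\ working over $\tE^{\simeq}$) gives an equivalence
\[ L_{\id_b,y} \xrightarrow{\ \sim\ } L_{f,x}, \qquad h \mapsto h \circ \bar f. \]
Here $L_{\id_b,y}$ is the \igpd{} of morphisms out of $y$ lying over $\id_b$.

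Next, we check that this equivalence restricts to an equivalence between the cocartesian parts $L^{\mathrm{cc}}_{\id_b,y}$ and $L^{\mathrm{cc}}_{f,x}$. This requires a two-out-of-three statement: if $\bar f$ is cocartesian, then $h \circ \bar f$ is cocartesian if and only if $h$ is. It follows from the pasting lemma for pullbacks, applied to the horizontally composed squares
\[ \tE(z,w) \xrightarrow{h^{*}} \tE(y,w) \xrightarrow{\bar f^{*}} \tE(x,w) \]
sitting over the corresponding squares in $\tB$. The right-hand square is a pullback because $\bar f$ is cocartesian. Hence the outer rectangle is a pullback if and only if the left-hand square is.

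Finally, we identify $L^{\mathrm{cc}}_{\id_b,y}$. If $h \colon y \to z$ is cocartesian over $\id_b$, then the square for $h$ has bottom map $\id_b^{*}$, an equivalence. Its pullback $h^{*} \colon \tE(z,w) \to \tE(y,w)$ is therefore an equivalence for every $w$, so $h$ is an equivalence by the Yoneda lemma. Conversely, every equivalence is cocartesian. Thus $L^{\mathrm{cc}}_{\id_b,y}$ is the \igpd{} of equivalences out of $y$. This is contractible, being equivalent to the space of pairs $(z, y \simeq z)$, a path space in $\tE^{\simeq}$.

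The step I expect to need the most care is the first one. There one has to make sure that the pointwise equivalences of mapping \icats{}, which depend on $z$, assemble into an equivalence of the total \igpds{} $L_{\id_b,y} \simeq L_{f,x}$ over $\tE^{\simeq}$. I would handle this by working fibrewise over $z \in \tE^{\simeq}$, where the identification is just the fibre of the cored pullback square over $f$. Then I would use that a map of spaces over $\tE^{\simeq}$ which is a fibrewise equivalence is an equivalence.
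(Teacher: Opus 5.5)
Your proposal is correct and is essentially the paper's argument: fixing one cocartesian lift $\bar f$, precomposition with it identifies cocartesian lifts of $f$ out of $x$ with cocartesian lifts of $\id_b$ out of $y$, and these are exactly the equivalences out of $y$, which form a contractible space. The details you add are precisely what the paper's brief sketch leaves implicit: the cancellation property via pasting of pullback squares, the Yoneda argument that cocartesian morphisms over identities are invertible, and the fibrewise bookkeeping over the core.
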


\begin{defn}
  Given a functor of \itcats{} $p \colon \tE \to \tB$, we say that
  $\tE$ \emph{has $p$-(co)cartesian lifts} of a morphism $f \colon b
  \to b'$ in $\tB$ if for all $x \in \tE, p(x) \simeq b$, there exists
  a $p$-cocartesian morphism $\bar{f} \colon x \to y$ with an equivalence
  $p(\bar{f}) \simeq f$ extending that for the source. Similarly, we
  say that $\tE$ \emph{has $p$-(co)cartesian lifts} of a 2-morphism
  if the induced functor on mapping \icats{} has $p$-(co)cartesian
  lifts in the previous sense, and these are preserved under pre- and postcomposition with 1-morphisms.
\end{defn}

\begin{defn}
  A functor $\pi \colon \tE \to \tB$ of \itcats{} is a \emph{$(0,1)$-fibration} if:
  \begin{enumerate}
  \item $\tE$ has $p$-cocartesian lifts of all morphisms in $\tB$.
  \item $\tE$ has $p$-cartesian lifts of all 2-morphisms in $\tB$.
  \end{enumerate}
  If $\pi \colon \tE \to \tB$ and $\pi' \colon \tE' \to \tB'$ are $(0,1)$-fibrations, we say that a commutative square of \itcats{}
  \[
    \begin{tikzcd}
      \tE \arrow{r}{\psi} \arrow{d}[swap]{\pi} & \tE' \arrow{d}{\pi'} \\
      \tB \arrow{r}{\phi} & \tB'
    \end{tikzcd}
  \]
  is a \emph{morphism of $(0,1)$-fibrations} if $\psi$ preserves $\pi$-cocartesian 1-morphisms and $\pi$-cartesian 2-morphisms.
  
  Similarly, for all choices of $i,j \in \{0,1\}$ we have the notion
  of $(i,j)$-fibrations (which have $i$-cartesian 1-morphisms and
  $j$-cartesian 2-morphisms) and their morphisms. We write
  $\FIB^{(i,j)}_{/\tB}$ for the locally full sub-\itcat{}
  (cf. \cref{defn:subcategories}) of $\CATITsl{\tB}$ containing the
  $(i,j)$-fibrations, the morphisms of $(i,j)$-fibrations, and all
  2-morphisms among these.
\end{defn}

\begin{defn}
  An $\ve$-fibration $p \colon \tE \to \tB$ is \emph{1-fibred} if its
  fibres are \icats{}. (See \cite[Proposition 3.2.14]{AGH24} for an
  alternative characterization of these.)
\end{defn}

\begin{observation}\label{obs:fib change variance}
  For a functor $p \colon \tE \to \tB$, the following are equivalent:
  \begin{itemize}
  \item $p$ is a $(1,0)$-fibration.
  \item $p^{\op}$ is a $(0,0)$-fibration.
  \item $p^{\co}$ is a $(1,1)$-fibration.
  \item $p^{\coop}$ is a $(0,1)$-fibration.
  \end{itemize}
\end{observation}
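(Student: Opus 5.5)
It is enough to prove that the first condition is equivalent to each of the other three. The idea is to understand how the three dualities $(\blank)^{\op}$, $(\blank)^{\co}$ and $(\blank)^{\coop}$ act on the definitions of cocartesian 1-morphisms and cartesian 2-morphisms. Recall the conventions: $\tE^{\op}$ reverses 1-morphisms, $\tE^{\co}$ reverses 2-morphisms, and $\tE^{\coop}$ reverses both. On mapping \icats{} we therefore have
\[ \tE^{\op}(x,y) \simeq \tE(y,x), \qquad \tE^{\co}(x,y) \simeq \tE(x,y)^{\op}, \qquad \tE^{\coop}(x,y) \simeq \tE(y,x)^{\op}. \]
All three operations are involutive automorphisms of $\CatIT$, so they commute with the functor $p$ and with all the constructions in the definitions.

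\textbf{Step 1: the effect of $(\blank)^{\op}$.} By definition, a 1-morphism is $p$-cartesian exactly when it is $p^{\op}$-cocartesian. On mapping \icats{}, $p^{\op}_{x,y}$ is the functor $p_{y,x}$ with no change in 2-cell direction. Hence a 2-morphism is weakly $p$-$j$-cartesian \IFF{} it is weakly $p^{\op}$-$j$-cartesian. The whiskering condition that upgrades "weakly $j$-cartesian" to "$j$-cartesian" ranges over all pre- and postcompositions. Under $\op$ these two families are simply exchanged, so the condition is preserved. It follows that $p$ is an $(i,j)$-fibration \IFF{} $p^{\op}$ is a $(1-i,j)$-fibration. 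In particular, $(1,0)$ corresponds to $(0,0)$, which proves the first equivalence.

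\textbf{Step 2: the effect of $(\blank)^{\co}$.} Here 1-morphisms keep their direction, and the mapping functors become $p_{x,y}^{\op} \colon \tE(x,y)^{\op} \to \tB(px,py)^{\op}$. A 1-morphism $f$ is $p$-cocartesian if a certain square of mapping \icats{} is a pullback. A square of \icats{} is a pullback \IFF{} its opposite is, so $f$ is $p$-cocartesian \IFF{} it is $p^{\co}$-cocartesian, and the same holds for cartesian 1-morphisms. For 2-morphisms, a morphism of \icats{} is $q$-cartesian \IFF{} it is $q^{\op}$-cocartesian. So weakly $p$-$j$-cartesian 2-morphisms are exactly the weakly $p^{\co}$-$(1-j)$-cartesian ones, and the whiskering condition is visibly unchanged. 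The existence of lifts, including their stability under composition with 1-morphisms, translates in the same way. Hence $p$ is an $(i,j)$-fibration \IFF{} $p^{\co}$ is an $(i,1-j)$-fibration, which gives $(1,0) \leftrightarrow (1,1)$.

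\textbf{Step 3: combining.} Since $p^{\coop} = (p^{\op})^{\co}$, applying Steps 1 and 2 in turn gives $(1,0) \leftrightarrow (0,1)$.

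The only point that needs care, and the closest thing to an obstacle, is the whiskering condition in Step 1. Under $\op$, precomposition with $f \colon x' \to x$ in $\tE$ becomes postcomposition in $\tE^{\op}$, and vice versa. The condition quantifies over both sides, so it is symmetric and nothing breaks. Everything else is direct unwinding of the definitions, together with the fact that taking opposites preserves pullbacks of \icats{}.
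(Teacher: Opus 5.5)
Your proof is correct. The paper states this observation without a written proof, and the intended justification is the same definitional unwinding you give: $(\blank)^{\op}$ exchanges cartesian and cocartesian 1-morphisms (by the very definition of $p$-cartesian) and leaves the mapping functors, hence the 2-morphism variance, unchanged, while $(\blank)^{\co}$ replaces each $p_{x,y}$ by $p_{x,y}^{\op}$, which swaps the 2-morphism variance and preserves the pullback squares that define (co)cartesian 1-morphisms.
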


\begin{notation}\label{not:vepsilon}
  To lighten the notation, we will usually write $\vepsilon$-fibration for
  $\vepsilon \in \{0,1\}^{\times 2}$, and we will
  similarly denote $\FIB^{(i,j)}_{/\tB}$ as
  $\FIB^{\vepsilon}_{/\tB}$. Given $\vepsilon=(i,j)$ we will also write
  $\overline{\vepsilon}=(1-i,1-j)$ for the conjugate
  variance. This allows us to use short-hand notation for various
  structures associated with fibrations; in particular, we define
  \[ \tB^{\veop} :=
    \begin{cases}
      \tB, & \ve = (0,1),\\
      \tB^{\op}, & \ve = (1,0), \\
      \tB^{\co}, & \ve = (0,0), \\
      \tB^{\coop}, & \ve = (1,1).
    \end{cases}
  \]
\end{notation}

 \begin{theorem}[Nuiten~\cite{Nuiten}, Abell\'an--Stern~\cite{ASI23}]\label{thm:str}
   There are equivalences of \itcats{}
   \[ \FIB^{\ve}_{/\tB} \simeq \FUN(\tB^{\veop}, \CATIT),\]
   \ie{}   
    \begin{align*}
      \FIB^{(0,1)}_{/\tB} \xrightarrow{\simeq} \FUN(\tB, \CATIT), & & \FIB^{(1,0)}_{/\tB} \xrightarrow{\simeq} \FUN(\tB^\op, \CATIT),
   \end{align*}
    \begin{align*}
      \FIB^{(0,0)}_{/\tB} \xrightarrow{\simeq} \FUN(\tB^{\co}, \CATIT), & & \FIB^{(1,1)}_{/\tB} \xrightarrow{\simeq} \FUN(\tB^\coop, \CATIT),
  \end{align*}
  which are contravariantly natural in $\tB$ with respect to pullback
  on the left and composition on the right. \qed
 \end{theorem}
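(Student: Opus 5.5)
The theorem is quoted from the literature, so the plan has two parts. First I reduce all four variances to the single case $\ve = (0,1)$. Then I outline how that case is proved in \cite{ASI23,Nuiten}, in a model-independent form compatible with the conventions here.

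\textbf{Reduction to $\ve=(0,1)$.} By \cref{obs:fib change variance}, applying $(\blank)^{\op}$, $(\blank)^{\co}$ or $(\blank)^{\coop}$ converts an $\ve$-fibration over $\tB$ into a $(0,1)$-fibration over $\tB^{\veop}$. These operations are involutive automorphisms of $\CatIT$. They also preserve (co)cartesian 1- and 2-morphisms up to the corresponding swap of variance. Hence they give equivalences $\FIB^{\ve}_{/\tB} \simeq \FIB^{(0,1)}_{/\tB^{\veop}}$, at least on underlying \icats{}.

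One point needs care: the 2-morphisms of $\FIB^{\ve}_{/\tB}$ are reversed by $(\blank)^{\co}$. So in the cases $(0,0)$ and $(1,1)$ one must also use that $\CATIT$ carries an automorphism $\tC \mapsto \tC^{\co}$, which reverses 2-morphisms of $\CATIT$. Composing with it identifies $\FUN(\tB^{\co},\CATIT)^{\co}$ with $\FUN(\tB^{\co},\CATIT)$ in a way compatible with the straightening. These identifications commute with pullback along functors $\tB' \to \tB$, so naturality in $\tB$ is preserved.

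\textbf{The case $(0,1)$.} I would construct the universal $(0,1)$-fibration $\tU \to \CATIT$, whose fibre over $\tC$ is $\tC$; concretely, it is the lax slice under the point. Pulling back along $F \colon \tB \to \CATIT$ then gives the unstraightening $\FUN(\tB,\CATIT) \to \FIB^{(0,1)}_{/\tB}$, and this is manifestly natural in $\tB$ for composition versus pullback.

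To show it is an equivalence, I would first upgrade both sides to functors $\CatIT^{\op} \to \CatIT$ in the variable $\tB$. Next I would check that both send colimits in $\tB$ to limits. For the right-hand side this is formal. For fibrations it requires a descent statement: $(0,1)$-fibrations over a colimit are determined by their restrictions, which is proved by analysing pullbacks of fibrations. Since $\CatIT$ is generated under colimits by $C_0$, $C_1$ and $C_2$, it then suffices to check the equivalence over these three cells.

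\textbf{Main obstacle.} The hard step is the descent property for fibrations, together with the explicit identification over $C_1$ and $C_2$. Over $C_1$, a $(0,1)$-fibration has to be shown to be equivalent to data $(\tE_0,\tE_1,\, \tE_0 \to \tE_1)$ via a mapping-cylinder (Gray-type) construction, with cocartesian lifts providing the functor. Over $C_2$, cartesian lifts of the 2-cell must be shown to encode precisely a natural transformation between the two functors. This is where Abell\'an--Stern work in scaled simplicial sets with marked anodyne maps, and where Nuiten uses a model-categorical Quillen equivalence. I would follow \cite{ASI23} and cite it for these verifications, since any model-independent argument ultimately rests on them.
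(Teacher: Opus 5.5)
The paper gives no argument for this statement; it is quoted from \cite{Nuiten,ASI23}. Deferring the $(0,1)$ case to \cite{ASI23} is therefore in line with the paper. The two pieces of content you add, however, are both wrong as stated.

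\textbf{The duality bookkeeping.} By \cref{obs:fib change variance}, $(\blank)^{\op}$ swaps the variance of 1-morphisms and $(\blank)^{\co}$ swaps that of 2-morphisms. So the $(0,1)$-fibrations you actually obtain are:
$p^{\coop}$ over $\tB^{\coop}$ for $\ve=(1,0)$; $p^{\co}$ over $\tB^{\co}$ for $\ve=(0,0)$; $p^{\op}$ over $\tB^{\op}$ for $\ve=(1,1)$.
For $\ve=(1,0)$ and $(1,1)$ the base is \emph{not} $\tB^{\veop}$.

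It is also $(\blank)^{\op}$, not $(\blank)^{\co}$, that reverses natural transformations. $(\blank)^{\co}$ is an automorphism of $\CATIT$ that preserves the direction of 2-morphisms; on \icats{} it is the identity. By contrast, $(\blank)^{\op}$ gives $\CATIT \simeq \CATIT^{\co}$ and $\CATITsl{\tB} \simeq (\CATITsl{\tB^{\op}})^{\co}$. So the cases that need a reversal are $(1,0)$ and $(1,1)$, not $(0,0)$ and $(1,1)$. This reversal is exactly what absorbs the mismatch of bases. For example, for $\ve=(1,0)$ the correct chain is
\[ \FIB^{(1,0)}_{/\tB} \simeq (\FIB^{(0,1)}_{/\tB^{\coop}})^{\co} \simeq \FUN(\tB^{\coop},\CATIT)^{\co} \simeq \FUN(\tB^{\op},\CATIT^{\co}) \simeq \FUN(\tB^{\op},\CATIT). \]
The third step uses $\FUN(\tA,\tC)^{\co}\simeq\FUN(\tA^{\co},\tC^{\co})$. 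The last step composes with $(\blank)^{\coop}\colon \CATIT^{\co}\simeq\CATIT$, which also turns the fibres $\tE_{b}^{\coop}$ back into $\tE_{b}$. The case $(1,1)$ is analogous using $p^{\op}$. For $(0,0)$ nothing is reversed; one only composes with the automorphism $(\blank)^{\co}$ of $\CATIT$ to correct the fibres. In particular, composing with $(\blank)^{\co}$ does not identify $\FUN(\tB^{\co},\CATIT)^{\co}$ with $\FUN(\tB^{\co},\CATIT)$, as you claim.

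\textbf{The unstraightening functor.} The lax slice $\CATIT_{[0]\upslash}\to\CATIT$ is, by \cref{def:laxslices}, the $(0,1)$-fibration classifying the corepresentable functor $\CATIT([0],\blank)\simeq(\blank)^{\leq 1}$. It is 1-fibred: a non-invertible 2-morphism of $\tC$ would be a 3-morphism of $\CATIT$, which the \itcat{} $\CATIT$ does not see. Pulling it back along $F$ therefore gives the fibration for $(\blank)^{\leq 1}\circ F$, not for $F$. Your comparison functor already fails over $C_{0}$, where it is $\tC\mapsto\tC^{\leq 1}$ on $\CATIT$.

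A universal $(0,1)$-fibration with fibre $\tC$ over $\tC$ does exist. Model-independently, though, it exists only as the unstraightening of $\id_{\CATIT}$, i.e.\ as a consequence of the theorem itself. Constructing it beforehand, and proving the descent statement, requires an explicit model, which is what \cite{ASI23,Nuiten} supply.

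With the reduction corrected and this sketch dropped, your proposal amounts to what the paper does: cite the literature.
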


 \begin{theorem}[Abell\'an--Stern~\cite{ASI23}]\label{thm:fibsexponentiable}
   Let $p \colon \tX \to \tS$ be an $(i,j)$-fibration. Then the pullback functor
  \[
      p^* \colon \CATITsl{\tS} \to \CATITsl{\tX}
   \]
   admits a right adjoint.
\end{theorem}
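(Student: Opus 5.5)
Since the statement concerns only the underlying \icats{}, my plan is to use the adjoint functor theorem. $\CatIT$ is presentable, hence so are the slices $\CatITsl{\tS}$ and $\CatITsl{\tX}$. So it suffices to show that
\[ p^{*} = \tX \times_{\tS} (\blank) \colon \CatITsl{\tS} \to \CatITsl{\tX} \]
preserves small colimits. Colimits in a slice are computed in $\CatIT$, so concretely I must show the following. For every small diagram $\alpha \mapsto (\tC_{\alpha} \to \tS)$ with colimit $\tC \to \tS$, the canonical map
\[ \colim_{\alpha} \tX \times_{\tS} \tC_{\alpha} \to \tX \times_{\tS} \tC \]
is an equivalence. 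By \cref{obs:fib change variance} (passing to $(\blank)^{\op}$, $(\blank)^{\co}$, $(\blank)^{\coop}$, all of which are automorphisms of $\CatIT$ commuting with pullbacks and colimits), I may assume $p$ is a $(0,1)$-fibration.

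\textbf{Reduction to cells.} Every \itcat{} is canonically the colimit of the cells mapping to it, for instance the colimit over its category of simplices with values in the orientals $\tO^{n}$, or over $\Theta_{2}$-cells. Since a colimit of colimits is a colimit, a standard cofinality argument reduces the claim to two things.
\begin{enumerate}[(a)]
\item For each \itcat{} $\tC \to \tS$, the pullback $\tX \times_{\tS} \tC$ is the colimit of the pullbacks $\tX \times_{\tS} \theta$ over cells $\theta \to \tC$.
\item The pullbacks $\tX \times_{\tS} \theta$ over cells are functorial in the evident way.
\end{enumerate}
To get at (a) I will use \cref{thm:str} together with its naturality. The pullback $\tX \times_{\tS} \tC$ is the unstraightening of the composite $\tC \to \tS \xto{F} \CATIT$, where $F$ straightens $p$. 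So (a) becomes the statement that unstraightening $\int_{\tC} G$ of a functor $G \colon \tC \to \CATIT$ satisfies
\[ \int_{\tC} G \simeq \colim_{\theta \to \tC} \int_{\theta} G|_{\theta}. \]

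\textbf{Computation over cells.} Over a cell, the unstraightening is explicit in terms of Gray tensors.
\begin{itemize}
\item Over $[1]$, a functor corresponds to a map $f \colon \tF_{0} \to \tF_{1}$, and the total space is the pushout
\[ \tF_{0} \otimes [1] \amalg_{\tF_{0} \otimes \{1\}} \tF_{1} \]
(a lax mapping cylinder), with the variance of the Gray tensor fixed by $(0,1)$.
\item Over $C_{2}$ and the orientals $\tO^{n}$, there are analogous iterated Gray-cylinder formulas, glued along faces.
\end{itemize}
These formulas are built from $\otimes$, which preserves colimits in each variable, and from pushouts. The strategy is to check the colimit formula on generating cells, where it is a direct computation with these cylinders, and then extend to all $\tC$ by induction on cell attachments. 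For the induction I use that an attachment along $\partial C_{2} \to C_{2}$ or $\partial [1] \to [1]$ is a pushout, and that pullback along the restricted fibration over the attached cell is computed by the cylinder formula (using \cref{obs:gray square pushout} for the square $[1] \otimes [1]$).

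\textbf{Main obstacle.} I expect the main difficulty to be (a), namely showing that pullback along $p$ preserves pushouts of the form $\tC \amalg_{\partial\theta} \theta$. Equivalently, I must show that the total space of a fibration over such a pushout is the pushout of the total spaces. My first attempt would be via the straightening's naturality: fibrations over $\tC \amalg_{\partial\theta} \theta$ correspond to functors out of it into $\CATIT$, i.e. compatible pairs. I would then verify by mapping into an arbitrary \itcat{} $\tE$ that the universal properties agree, using the Gray tensor adjunctions of \cref{not:gray} to identify maps out of cylinders with lax transformations.

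If this proves too delicate, I would fall back to a different criterion. It suffices that pullback along $p$ preserves the localization from the model-level colimit presentation, checked on the explicit scaled-simplicial model of a $(0,1)$-fibration as in \cite{ASI23}, where $p$ is shown to be a flat (exponentiable) map.
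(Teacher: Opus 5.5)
Your reduction via presentability and the adjoint functor theorem is fine for the underlying \icats{}. However, colimit-preservation of $p^*$ is the whole content of the statement: it is exactly \cite[Theorem 3.90]{ASI23}, which the paper simply cites. Your sketch does not prove it.

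The step where ``a standard cofinality argument'' reduces everything to (a) fails. Suppose $\tC\simeq\colim_\alpha\tC_\alpha$, and let $P$ be the colimit of the $\tC_\alpha$ in presheaves on cells. Then $\colim_\alpha\tX\times_{\tS}\tC_\alpha$ is the colimit of $\tX\times_{\tS}\theta$ over the cells of $P$, whereas (a) for $\tC$ is a colimit over the cells of $\tC$. Since $\tC$ is only a localization of $P$, the comparison functor between these indexing categories is not cofinal unless $P$ is already local. What is really needed is that the colimit-preserving extension of $\theta\mapsto\tX\times_{\tS}\theta$ inverts the localizing maps over $\tS$: the Segal and completeness maps, or the scaled anodyne maps in the model of \cite{ASI23}. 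That is where the actual work lies.

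Induction on attachments along $\partial[1]\to[1]$ and $\partial C_2\to C_2$ does not fill this gap either. Such attachments cannot impose relations (e.g.\ $g^2\simeq\id$), so they reach neither general objects nor general colimit diagrams. The case you single out as the main obstacle is left open. To check it ``by mapping into $\tE$'' you would need a way to compute functors out of the total space of a fibration over an \itcat{}, and none is offered. Such total spaces are not lax colimits of the straightened functor unless one inverts the cartesian 2-morphisms (cf.\ the remark after \cref{prop:laxcolimloc}).

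Your explicit cell formula is also wrong. Over $[1]$, the $(0,1)$-fibration for $f\colon\tF_0\to\tF_1$ is the \emph{cartesian} cylinder $\tF_0\times[1]\amalg_{\tF_0\times\{1\}}\tF_1$. The Gray cylinder adds non-invertible 2-cells. For instance, with $\tF_0=[1]$ and $\tF_1=[0]$ it gives a lax triangle (a copy of $\mathbb{O}^2$ up to orientation). There the mapping \icat{} from the initial vertex to the cone point is $[1]$, whereas a cocartesian lift would force it to be $\tF_1(*,*)\simeq *$. Finally, your fallback simply amounts to citing \cite{ASI23}.

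Separately, you treat the statement as concerning only underlying \icats{}. But $\CATITsl{\tS}$ is the \itcat{}, and the theorem asserts an adjunction of \itcats{}. The paper supplies this step as follows. Take the \icatl{} right adjoint $p_*$ from \cite[Theorem 3.90]{ASI23}. The \itcat{} structure on $\CATITsl{\tS}$ arises from the product-preserving functor $(\blank)\times\tS\colon\CatI\to\CatITsl{\tS}$. Moreover, $p^*$ preserves products and satisfies $p^*(\oK\times\tS)\simeq\oK\times\tX$. Then \cref{propn:upgrade adjn enr} upgrades $p^*\dashv p_*$ to an adjunction of \itcats{}. A complete argument therefore needs the underlying adjoint, either cited from \cite{ASI23} or obtained by actually proving the localization compatibility above, followed by this upgrade.
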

\begin{proof}
  By \cite[Theorem 3.90]{ASI23}, the functor $p^*$ admits a right
  adjoint at the level of underlying \icats{}. The \itcat{} structure
  on $\CATITsl{\tS}$ arises from the product functor
  \[ (\blank) \times \tS \colon \CatI \to \CatITsl{\tS},\]
  which is preserved by $p^{*}$ as $p^{*}(\oK \times \tS) \simeq \oK
  \times \tX$. It therefore follows from \cref{propn:upgrade adjn enr}
  that the adjunction $p^{*} \dashv p_{*}$ upgrades to an adjunction
  of \itcats{}.
\end{proof}

\begin{defn}\label{def:fibmarked}
  Let $(\tC, E)$ be a marked \itcat{}. For $\ve = (i,j)$ we write $\FIB_{/(\tC,E)}^{\vepsilon}$ for the locally full sub-\itcat{} of $\CATITsl{\tC}$ whose objects are $\ve$-fibrations over $\tC$ and whose morphisms preserve $i$-cartesian morphisms that lie over $E$ as well as all $j$-cartesian 2-morphisms.
\end{defn}

\begin{theorem}[Abell\'an--Gagna--Haugseng, {\cite[Proposition 3.5.7]{AGH24}}]\label{thm:elaxstr}
  For $(\tC,E)$ a marked \itcat{}, we have equivalences of
  \itcats{}
  \[
    \FIB^{(0,1)}_{/(\tC,E)}\simeq \FUN(\tC, \CATIT)^{\elax}, \enspace \enspace \FIB^{(0,0)}_{/(\tC,E)}   \simeq \FUN(\tC^{\co},\CATIT)^{\elax}
  \]
  \[
    \FIB^{(1,1)}_{/(\tC,E)}  \simeq \FUN(\tC^{\coop},\CATIT)^{\eoplax}, \enspace \enspace \FIB^{(1,0)}_{/(\tC,E)}   \simeq \FUN(\tC^{\op},\CATIT)^{\eoplax},
  \]
  given on objects by straightening. These equivalences are all contravariantly
  natural in $(\tC,E)$ with respect to pullback on the left and
  composition on the right. \qed
\end{theorem}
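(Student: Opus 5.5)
Since the four statements are exchanged by the dualities of \cref{obs:fib change variance} (passing to $\tC^{\op}$, $\tC^{\co}$, $\tC^{\coop}$ swaps lax and oplax appropriately), I would first reduce to the case $\ve = (0,1)$, that is, to constructing an equivalence $\FIB^{(0,1)}_{/(\tC,E)} \simeq \FUN(\tC,\CATIT)^{\elax}$ compatible with pullback. Both sides are wide sub-\itcats{} of larger objects. On the right, $\FUN(\tC,\CATIT)^{\elax}$ is wide and locally full in $\FUN(\tC,\CATIT)^{\lax}$. On the left, $\FIB^{(0,1)}_{/(\tC,E)}$ is wide and locally full in $\FIB^{(0,1)}_{/(\tC,\tC^{\simeq})}$, the \itcat{} of $(0,1)$-fibrations and functors over $\tC$ preserving only cartesian 2-morphisms. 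The plan is therefore in two steps. First, establish the ``fully lax'' case $E = $ equivalences, where the equivalence is given on objects by \cref{thm:str}. Second, check that under it the $E$-lax morphisms on one side match the morphisms preserving cocartesian lifts of $E$ on the other. The 2-morphisms then take care of themselves, since everything is locally full.

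For the fully lax case I would identify morphisms first. A lax transformation $F \Rightarrow G$ is a functor $[1] \otimes \tC \to \CATIT$, and by \cref{thm:str} this corresponds to a $(0,1)$-fibration $\tX \to [1]\otimes\tC$. Restricting to $\{0\}\otimes\tC$ and $\{1\}\otimes\tC$ recovers the unstraightenings $\tE_F$ and $\tE_G$. Cocartesian transport along the edges $0\to 1$ then gives a functor $\tE_F \to \tE_G$ over $\tC$. I would check that this functor preserves cartesian 2-morphisms, and that its failure to preserve cocartesian morphisms over $f$ is measured exactly by the lax naturality 2-cell at $f$: the square in $[1]\otimes\tC$ over $f$ contains a non-invertible 2-cell, and cocartesian transport produces a comparison map that is invertible exactly when this 2-cell is. Conversely, given a functor $\tE_F\to\tE_G$ over $\tC$ preserving cartesian 2-morphisms, I would glue $\tE_F$ and $\tE_G$ along it, for instance as a pushout involving $[1]\otimes\tE_F$ or a lax mapping cylinder, to obtain a $(0,1)$-fibration over $[1]\otimes\tC$. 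I would then show the two constructions are inverse on mapping spaces. To upgrade to mapping \icats{}, I would repeat the argument with $[1]$ replaced by $C_2$ and $[1]\times[1]$, or alternatively use that both sides admit cotensors by $[1]$ and compare these.

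With the fully lax equivalence in hand, the $E$-restriction follows from the previous analysis. A lax transformation is $E$-lax iff its naturality squares at $f\in E$ commute, and this holds iff the induced functor preserves cocartesian lifts of $f$. Hence the equivalence restricts to the stated wide locally full sub-\itcats{}. Naturality in $(\tC,E)$ holds because every construction is compatible with pullback along marked functors, using that $[1]\otimes(-)$ is functorial and that straightening is natural by \cref{thm:str}.

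The main obstacle is the fully lax comparison: making the ``lax mapping cylinder'' construction coherent and showing it produces a fibration over $[1]\otimes\tC$. To handle this, I would first try to present $[1]\otimes\tC$ as a colimit of $[1]\otimes C_n$, or of orientals, and then reduce to the case $\tC = C_0, C_1, C_2$, where everything can be checked by hand.
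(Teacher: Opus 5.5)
The paper gives no argument for this statement; it is imported from \cite{AGH24}, and essentially all of its content lies in the fully lax case. Your outer layer is fine. The reduction to $\ve=(0,1)$ via the dualities works, with the usual care that $(-)^{\co}$ also reverses 2-morphisms in the \itcats{} being compared. The passage to $E$ is also correct, \emph{given} a fully lax equivalence that sends a lax transformation $\alpha$ to a functor $\tE_{F}\to\tE_{G}$ whose value on the cocartesian lift of $f\colon c\to d$ at $x$ is a cocartesian lift followed by the $x$-component of the naturality 2-cell $\alpha_{f}$: wide locally full sub-\itcats{} are determined by their 1-morphisms, so the 2-morphisms take care of themselves.

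The gap is that you never establish the fully lax equivalence $\FIB^{(0,1)}_{/(\tC,\tC^{\simeq})}\simeq\FUN(\tC,\CATIT)^{\lax}$. An equivalence of \itcats{}, natural in $\tC$, needs a comparison \emph{functor}. Concretely, you need an identification of $(0,1)$-fibrations over $\tK\otimes\tC$ with functors $\tK\to\FIB^{(0,1)}_{/(\tC,\tC^{\simeq})}$ that is natural in the test \itcat{} $\tK$, and hence compatible with composition. Mutually inverse assignments built separately for $\tK=[1]$, $C_{2}$ and $[1]\times[1]$ do not provide this.

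Even the individual constructions are only sketched. Since $[1]\otimes\tC\to[1]$ is not a fibration, there is nothing over $[1]$ to straighten, so \emph{cocartesian transport along $0\to 1$} has to be built by a lifting argument. For instance, you could lift against the $(0,1)$-cofibration $\{0\}\otimes\tE_{F}\to[1]^{\sharp}\otimes\tE_{F}$. For the inverse direction, you would have to show that the lax mapping cylinder is a $(0,1)$-fibration over $[1]\otimes\tC$. Alternatively, you would have to show it can be localized into one without changing its fibres over $\{0\}\otimes\tC$ and $\{1\}\otimes\tC$. That is the substantive computation, and it is not carried out.

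Your proposed remedy does not address this obstacle. Decomposing $\tC$ into copies of $C_{0},C_{1},C_{2}$ is legitimate, because both sides turn colimits in $\tC$ into limits (the fibration side by exponentiability of fibrations). But such a reduction can only \emph{verify} that an already-constructed natural comparison map is an equivalence; it cannot produce the map or its coherences. Moreover, the remaining cases still involve arbitrary fibrations over $[1]\otimes C_{n}$, so they are not checks by hand either. What is missing is a construction of the comparison that is natural by design, for example one characterized by a universal property. Only once that exists can any reduction to generators be applied.
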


\subsection{Lax slices and cones}
\label{sec:lax slice}

In this subsection we review the lax versions of slices and cones, and
observe that they are related in the expected way. The analogous
constructions in the setting of scaled simplicial sets were previously
studied by Gagna--Harpaz--Lanari in \cite[\S 5.2]{GagnaHarpazLanariLaxLim}.

\begin{definition}\label{def:laxslices}
  Let $\bCC$ be a \itcat{} and let $c \in \bCC$. We define lax versions of the slice construction by means of the following pullback squares
  \[
    \begin{tikzcd}
      \bCC_{c \upslash} \arrow[r] \arrow[d] & \ARplax(\tC) \arrow[d,"\ev_0"] \\
      {[0]} \arrow[r,"c"] & \bCC{,}
    \end{tikzcd} \quad \quad \quad \begin{tikzcd}
      \bCC_{c \downslash} \arrow[r] \arrow[d] & \ARlax(\tC) \arrow[d,"\ev_0"] \\
      {[0]} \arrow[r,"c"] & \bCC{,}
    \end{tikzcd}
  \]
  \[
    \begin{tikzcd}
      \bCC_{ \upslash c} \arrow[r] \arrow[d] & \ARplax(\tC) \arrow[d,"\ev_1"] \\
      {[0]} \arrow[r,"c"] & \bCC{,}
    \end{tikzcd} \quad \quad \quad \begin{tikzcd}
      \bCC_{ \downslash c} \arrow[r] \arrow[d] & \ARlax(\tC) \arrow[d,"\ev_1"] \\
      {[0]} \arrow[r,"c"] & \bCC{.}
    \end{tikzcd}
  \]
  The lax slices come equipped with:
  \begin{itemize}
    \item a functor $\bCC_{c\upslash } \to \bCC$ induced by $\ev_1$ which is a $(0,1)$-fibration, which classifies the corepresentable functor $\bCC(c,-) \colon \bCC \to \CATI$;
    \item a functor $\bCC_{c\downslash } \to \bCC$ induced by $\ev_1$ which is a $(0,0)$-fibration, which classifies the corepresentable functor $\bCC(c,-)^{\op} \colon \bCC^\co \to \CATI$;
    \item a functor $\bCC_{\upslash c } \to \bCC$ induced by $\ev_0$ which is a $(1,0)$-fibration, which classifies the representable functor $\bCC(-,c) \colon \bCC^\op \to \CATI$;
    \item a functor $\bCC_{\downslash c} \to \bCC$ induced by $\ev_0$ which is a $(1,1)$-fibration, which classifies the representable functor $\bCC(-,c)^{\op} \colon \bCC^{\coop} \to \CATI$.
    \end{itemize}
    The first claim is \cite[Proposition 4.1.8]{LurieGoodwillie}, and
    the others follow by reversing 1- and 2-morphisms.
\end{definition}

\begin{remark}
  The notation chosen for the lax slices is designed to give a description of the 1-morphisms. More precisely, an object in $\bCC_{\upslash c}$ is given by a map $u \colon  x \to c$ and a morphism from $u$ to $v \colon y \to c$ can be represented by triangle
  \[\begin{tikzcd}[column sep=small]
  x && y \\
  & c
  \arrow[from=1-1, to=1-3]
  \arrow[""{name=0, anchor=center, inner sep=0}, "u"', from=1-1, to=2-2]
  \arrow["v", from=1-3, to=2-2]
  \arrow[shorten <=8pt, shorten >=16pt, Rightarrow, from=0, to=1-3].
 \end{tikzcd}\]
 which commutes up to a (non-invertible) 2-morphism. Similarly, a 1-morphism in $\bCC_{\downslash c}$ is given by a laxly commuting triangle where the associated 2-cell points in the other direction.
\end{remark}

\begin{defn}
  Given a marked \itcat{} $(\tJ, E)$ and a functor $F \colon \tJ \to \tC$, we define the \itcat{} $\tC_{\downslash F}^{\elax}$ of \emph{$E$-lax cones} on $F$ as the pullback
  \[
    \begin{tikzcd}
     \tC_{\downslash F}^{\elax} \ar[r] \ar[d] & \FUN(\tJ,\tC)^{\elax}_{\downslash F}  \ar[d] \\
     \tC \ar[r, "\const"'] & \FUN(\tJ, \tC)^{\elax};
    \end{tikzcd}
  \]
  an $E$-lax cone on $F$ is thus by definition an $E$-lax natural transformation $\underline{c} \to F$ from a constant functor. Similarly, we define \itcats{} $\tC_{\upslash F}^{\eoplax}$ of \emph{$E$-oplax cones}, $\tC_{\downslash F}^{\elax}$ of \emph{$E$-lax cocones}, and
  $\tC_{\upslash F}^{\eoplax}$ of \emph{$E$-oplax cocones}.
\end{defn}

\begin{observation}\label{obs:straighten lax cones}
  Since pullback of fibrations corresponds to composition under
  straightening, the $(1,1)$-fibration $\tC_{\downslash F}^{\elax} \to
  \tC$ corresponds to the
  functor \[\Nat^{\elax}_{\tJ,\tC}(\underline{\blank}, F)^{\op}\colon
    \tC^{\coop} \to \CATI.\] Similarly,
  \begin{itemize}
  \item $\tC_{\upslash F}^{\eoplax}$ corresponds to $\Nat^{\eoplax}_{\tJ,\tC}(\underline{\blank}, F)$,
  \item $\tC_{\downslash F}^{\elax}$ corresponds to $\Nat^{\elax}_{\tJ,\tC}(F, \underline{\blank})^{\op}$,
  \item $\tC_{\upslash F}^{\eoplax}$ corresponds to $\Nat^{\eoplax}_{\tJ,\tC}(F, \underline{\blank})$.
  \end{itemize}
\end{observation}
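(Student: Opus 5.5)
The key input is the first bullet list of this observation. The other three follow from it by applying $(\blank)^{\op}$, $(\blank)^{\co}$, or both, together with \cref{obs:fib change variance}. So I concentrate on showing that $\tC^{\elax}_{\downslash F} \to \tC$ is the $(1,1)$-fibration classifying $\Nat^{\elax}_{\tJ,\tC}(\underline{\blank},F)^{\op}$.

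The plan has three steps.

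\emph{Step 1: identify the right-hand fibration.} Set $\tD := \FUN(\tJ,\tC)^{\elax}$ and take $d := F \in \tD$. By \cref{def:laxslices}, the projection $\tD_{\downslash F} \to \tD$, induced by $\ev_0$ on $\ARlax(\tD)$, is a $(1,1)$-fibration. It classifies the representable functor $\tD(\blank,F)^{\op} \colon \tD^{\coop} \to \CATI$. By definition of the mapping \icats{} of $\FUN(\tJ,\tC)^{\elax}$, this functor sends $G$ to $\Nat^{\elax}_{\tJ,\tC}(G,F)^{\op}$.

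\emph{Step 2: pull back along the constant functor.} By definition, $\tC^{\elax}_{\downslash F}$ is the pullback of $\tD_{\downslash F} \to \tD$ along $\const \colon \tC \to \tD$. The paper has defined fibrations through lifting properties that are local in the base, so pullbacks of $(1,1)$-fibrations are again $(1,1)$-fibrations. By the naturality statement in \cref{thm:str}, applied contravariantly in $\tB$, pulling back the fibration corresponds under straightening to precomposing the classifying functor with $\const^{\coop} \colon \tC^{\coop} \to \tD^{\coop}$. The composite therefore sends $c$ to $\Nat^{\elax}_{\tJ,\tC}(\underline{c},F)^{\op}$, which is the first claim.

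\emph{Step 3: the remaining cases.} For $\tC^{\eoplax}_{\upslash F}$, I run the same argument with the $(0,1)$-fibration $\tD'_{F\upslash} \to \tD'$, where $\tD' = \FUN(\tJ,\tC)^{\eoplax}$. Alternatively, I pass through $\tC^{\coop}$, using the identification $\FUN(\tJ,\tC)^{\elax,\coop} \simeq \FUN(\tJ^{\coop},\tC^{\coop})^{\eoplax}$. For the cocone versions, I use the slices $\tD_{F\downslash}$ and $\tD_{F\upslash}$ over $\ev_1$ in place of $\tD_{\downslash F}$ and $\tD_{\upslash F}$. These correspond to the corepresentables $\tD(F,\blank)^{\op}$ and $\tD(F,\blank)$, again by \cref{def:laxslices}.

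\emph{Expected obstacle.} The only point that needs care is bookkeeping. I have to check that the variance from \cref{notation:vepsilon} is respected, so that $\tB^{\veop}$ for $\ve=(1,1)$ is $\tB^{\coop}$, and that $\const$ is compatible with taking $\coop$. The rest is formal.
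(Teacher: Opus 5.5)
Your Steps 1 and 2 are the paper's argument: the lax slice $\FUN(\tJ,\tC)^{\elax}_{\downslash F}$ classifies the representable $\Nat^{\elax}_{\tJ,\tC}(\blank,F)^{\op}$, and pulling back along $\const$ corresponds to precomposing with $\const^{\coop}$. So the first claim is fine. You also read the last two bullets correctly as statements about the cocone objects $\tC^{\elax}_{F\downslash}$ and $\tC^{\eoplax}_{F\upslash}$.

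Step 3, however, gets wrong exactly the bookkeeping you flagged.

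\emph{The oplax cone case uses the wrong slice.} For $\tC^{\eoplax}_{\upslash F}$ you must pull back the $(1,0)$-fibration $\tD'_{\upslash F}\to\tD'$. This is the fibre of $\ev_1\colon\ARopl(\tD')\to\tD'$ at $F$, projected by $\ev_0$, and it classifies $\tD'(\blank,F)\colon\tD'^{\op}\to\CATI$. The $(0,1)$-fibration $\tD'_{F\upslash}$ that you name classifies the corepresentable $\tD'(F,\blank)$. Pulling it back along $\const$ gives $\tC^{\eoplax}_{F\upslash}$ and hence the last bullet, not the oplax-cone statement.

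\emph{The cocone slices live over different bases.} In your cocone sentence, $\tD_{F\downslash}$ should be taken with $\tD=\FUN(\tJ,\tC)^{\elax}$, but the second slice should be $\tD'_{F\upslash}$ with $\tD'=\FUN(\tJ,\tC)^{\eoplax}$, not $\tD_{F\upslash}$.

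\emph{The alternative route via $(\blank)^{\coop}$ is false as stated.} $(\blank)^{\coop}$ does not interchange lax and oplax. Both $(\blank)^{\op}$ and $(\blank)^{\co}$ reverse the order of the Gray tensor factors, so their composite preserves it. Hence $\FUN(\tJ,\tC)^{\elax,\coop}\simeq\FUN(\tJ^{\coop},\tC^{\coop})^{\elax}$; compare \cref{lem:lax colim change variance}, where $\coop$ keeps ``lax'' and swaps limits with colimits. Passing through $\coop$ therefore turns lax cones into lax cocones. To get from lax cones to oplax cones you need $(\blank)^{\co}$, using $\FUN(\tJ,\tC)^{\elax,\co}\simeq\FUN(\tJ^{\co},\tC^{\co})^{\eoplax}$ and $(\tC^{\eoplax}_{\upslash F})^{\co}\simeq(\tC^{\co})^{\elax}_{\downslash F^{\co}}$. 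That route also requires compatibility of straightening with $(\blank)^{\co}$, which the paper never states. The direct argument with the correct slice avoids this.

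With these corrections, each of the four cases is the same two-step argument as your Steps 1 and 2.
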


We can also describe the \itcats{} of partially lax (co)cones in terms of joins:
\begin{defn}\label{def:elaxcones}
  Let $(\tJ,E)$ be a marked \itcat{}. We define lax versions of the cone construction by means of the following pushout diagrams:
  \[
     \begin{tikzcd}
        \{0\}\times \tJ \arrow[r] \arrow[d] & {[0]} \arrow[d] \\
       {[1]}\otimes_{\flat,E} \tJ \arrow[r] & \tJ_{E\dlax}^{\triangleleft}
     \end{tikzcd}, \enspace \enspace \begin{tikzcd}
       \{1\}\times \tJ
       \arrow[r] \arrow[d] & {[0]} \arrow[d] \\
       {[1]}\otimes_{\flat,E} \tJ \arrow[r] & \tJ_{E\dlax}^{\triangleright}.
     \end{tikzcd}
  \] 
  Similarly, we define $\tJ^{\triangleleft}_{\eoplax}$ and
  $\tJ^{\triangleright}_{\eplax}$ by reversing the order of the Gray
  tensor product above. 
\end{defn}

\begin{lemma}\label{lem:lax slice cone}
  For $(\tJ,E)$ a marked \itcat{} and a functor $F \colon \tJ \to \tC$, there is a pullback square
  \[
    \begin{tikzcd}
      \tC_{\downslash F}^{\elax} \ar[r] \ar[d] &  \FUN(\tJ_{E\dlax}^{\triangleleft}, \tC)^{\lax} \ar[d] \\
      \{F\}\ar[r] & \FUN(\tJ, \tC)^{\lax},
    \end{tikzcd}
  \]
  and similarly for the other three variants.
\end{lemma}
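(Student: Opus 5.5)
The plan is to compare the two sides via the functors they corepresent, \ie{} to show that for every \itcat{} $\tK$ the spaces $\Map(\tK, \tC_{\downslash F}^{\elax})$ and $\Map(\tK,\FUN(\tJ_{E\dlax}^{\triangleleft},\tC)^{\lax}) \times_{\Map(\tK,\FUN(\tJ,\tC)^{\lax})} \{F\}$ are naturally equivalent. Both sides are iterated pullbacks of mapping spaces, so it suffices to rewrite each ingredient as a space of functors out of a Gray tensor product into $\tC$, using the adjunctions of \cref{not:gray} and their marked versions. Since we only use universal properties, the other three variants follow by the same argument with the order of the Gray tensor reversed, or by applying $(\blank)^{\op}$, $(\blank)^{\co}$.

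\emph{Left-hand side.} By definition, $\tC_{\downslash F}^{\elax}$ is built from $\ARlax(\FUN(\tJ,\tC)^{\elax})$ by pulling back along $\ev_1$ at $F$ and along $\ev_0$ via $\const \colon \tC \to \FUN(\tJ,\tC)^{\elax}$. A map $\tK \to \ARlax(\tX) = \FUN([1],\tX)^{\lax}$ is a functor $\tK \otimes [1] \to \tX$. For $\tX = \FUN(\tJ,\tC)^{\elax}$, such a functor is the same as a functor $(\tK \otimes [1]) \otimes_{\flat,E} \tJ \to \tC$, via the adjunction for the marked Gray tensor with $\tK\otimes[1]$ given the flat marking. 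The two pullback conditions then say the following.
\begin{itemize}
\item The restriction to $(\tK \otimes \{1\}) \otimes_{\flat,E} \tJ \simeq \tK \times \tJ$ is the composite $\tK\times\tJ \to \tJ \xto{F} \tC$.
\item The restriction to $\tK \times \tJ$ at $\{0\}$ factors through the projection to $\tK$, via the given functor $\tK \to \tC$.
\end{itemize}
Here I use that for flat markings $\tK^{\flat}\otimesm \tJ$ restricted to an object of $[1]$ is just the restriction along $\{i\}\times\tJ$, and that $\const$ is adjoint to the projection $\tK\otimes\tJ \to \tK$ (which factors through $\tK\times\tJ$).

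\emph{Right-hand side.} A map $\tK \to \FUN(\tJ_{E\dlax}^{\triangleleft},\tC)^{\lax}$ is a functor $\tK \otimes \tJ_{E\dlax}^{\triangleleft} \to \tC$. Since $\tK \otimes (\blank)$ preserves colimits, \cref{def:elaxcones} gives a pushout
\[ \tK \otimes \tJ_{E\dlax}^{\triangleleft} \simeq \tK \otimes ([1]\otimes_{\flat,E}\tJ) \amalg_{\tK \otimes (\{0\}\times\tJ)} \tK. \]
Lying over $F$ means that the restriction along $\tK \otimes \tJ \to \tK\otimes \tJ^{\triangleleft}_{E\dlax}$ (inclusion at $1$) is $\tK\otimes\tJ \to \tJ \xto{F}\tC$. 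The key identification is then
\[ \tK \otimes ([1]\otimes_{\flat,E}\tJ) \simeq (\tK\otimes[1])\otimes_{\flat,E}\tJ. \]
This follows from associativity of the marked Gray tensor product applied to $\tK^{\flat}\otimesm([1]^{\flat}\otimesm(\tJ,E))$, after checking that the induced marking on $\tK^\flat\otimesm[1]^\flat$ is flat, so no extra 2-morphisms are inverted. Under this identification the two restriction conditions match those found above. The only difference is that on the left the collapse is expressed as factoring through $\tK\times\tJ\to\tK$, while on the right it is factoring through $\tK\otimes\tJ \to \tK$; these agree because the latter map factors through $\tK\otimes\tJ\to\tK\times\tJ$.

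The main obstacle I expect is bookkeeping rather than substance: being careful that restricting $(\tK\otimes[1])\otimes_{\flat,E}\tJ$ to the endpoints of $[1]$ really yields $\tK\otimes\tJ$ (not a marked quotient), and that naturality in $\tK$ holds throughout. Both should follow from the pushout description of the marked Gray tensor, together with the observation that only squares involving $E$ get inverted.
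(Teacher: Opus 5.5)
Your strategy of testing against an arbitrary $\tK$ and rewriting both sides as functors out of Gray tensors into $\tC$ is fine in outline. However, the key identification $\tK\otimes([1]\otimes_{\flat,E}\tJ)\simeq(\tK\otimes[1])\otimes_{\flat,E}\tJ$ is false.

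When you apply associativity to $\tK^{\flat}\otimesm([1]^{\flat}\otimesm(\tJ,E))$, the relevant marking is not that of $\tK^{\flat}\otimesm[1]^{\flat}$. It is that of $[1]^{\flat}\otimesm(\tJ,E)$, which is generated by the two copies of $E$ in $\{0\}\times\tJ$ and $\{1\}\times\tJ$; call it $E'$. Associativity therefore gives
\[ (\tK\otimes[1])\otimes_{\flat,E}\tJ\simeq\tK\otimes_{\flat,E'}([1]\otimes_{\flat,E}\tJ). \]
This is a genuine localization of $\tK\otimes([1]\otimes_{\flat,E}\tJ)$: it inverts the 2-cells of the squares $f\otimes g$ with $g$ in one of the endpoint copies of $E$. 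This is exactly the marking $J'$ in \cref{lax oplax reverse marked}.

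For a concrete counterexample, take $\tK=\tJ=[1]$ with $E$ maximal. Then $(\tK\otimes[1])\otimes_{\flat,E}\tJ$ is $([1]\otimes[1])\times[1]$ by \cref{obs:max marked gray is cart}. In $[1]\otimes([1]\times[1])$, by contrast, each face $[1]\otimes(\{i\}\times[1])$ carries a non-invertible 2-cell, as you see by projecting onto $[1]\otimes[1]$.

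The same slip appears in your left-hand computation. There, the restriction to the endpoint $1$ is a functor out of $(\tK\otimes\{1\})\otimes_{\flat,E}\tJ\simeq\tK\otimes_{\flat,E}\tJ$, not out of $\tK\times\tJ$. On the right-hand side it is a functor out of $\tK\otimes\tJ$. So your claimed equivalence could not even be compatible with the endpoint inclusions.

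What is missing is the observation that these extra inversions come for free once the endpoint conditions are imposed. Suppose a functor $\tK\otimes([1]\otimes_{\flat,E}\tJ)\to\tC$ restricts at $1$ to $\tK\otimes\tJ\to\tJ\xrightarrow{F}\tC$, and at $0$ to a functor factoring through $\tK\otimes\tJ\to\tK$. Then both endpoint restrictions factor through $\tK\times\tJ$. Hence the 2-cells that $E'$ asks to invert are already sent to equivalences, and the functor factors, essentially uniquely, through the localization $\tK\otimes_{\flat,E'}([1]\otimes_{\flat,E}\tJ)$. With this step added, your comparison of mapping spaces goes through.

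This is precisely how the paper argues, at the level of \itcats{}:
\begin{itemize}
\item \cref{lax oplax reverse marked} presents $\ARlax(\FUN(\tJ,\tC)^{\elax})$ as the pullback of $\FUN([1]\otimes_{\flat,E}\tJ,\tC)^{\lax}$ along $(\FUN(\tJ,\tC)^{\elax})^{\times 2}\to(\FUN(\tJ,\tC)^{\lax})^{\times 2}$.
\item Since $\mathrm{const}\times F$ factors through $(\FUN(\tJ,\tC)^{\elax})^{\times 2}$, the $E$-lax constraint disappears after pasting pullbacks.
\item The remaining input is that $\FUN(-,\tC)^{\lax}$ takes the defining pushout of $\tJ^{\triangleleft}_{E\dlax}$ to a pullback, which is what your right-hand computation uses.
\end{itemize}
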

\begin{proof}
  By definition of the lax slice, we have a pullback
  \[
    \begin{tikzcd}
     \tC_{\downslash F}^{\elax} \ar[r] \ar[d] & \ARlax(\FUN(\tJ, \tC)^{\elax})  \ar[d] \\
     \tC \times [0] \ar[r, "\const \times F"'] & (\FUN(\tJ,\tC)^{\elax})^{\times 2}.
    \end{tikzcd}
  \]
  By \cref{lax oplax reverse marked}, we also have a pullback
  \[
    \begin{tikzcd}
      \ARlax(\FUN(\tJ,\tC)^{\elax}) \ar[r] \ar[d] & \FUN([1] \otimes_{\flat,E} \tJ, \tC)^{\lax}  \ar[d] \\
      (\FUN(\tJ,\tC)^{\elax})^{\times 2}\ar[r] & (\FUN(\tJ,\tC)^{\lax})^{\times 2}.
    \end{tikzcd}
  \]
  Combining the two, we get a pullback square
  \[
    \begin{tikzcd}
     \tC_{\downslash F}^{\elax} \ar[r] \ar[d] &   \ar[d] \FUN([1] \otimes_{\flat,E} \tJ, \tC)^{\lax} \\
     \tC \times [0] \ar[r, "\const \times F"'] & (\FUN(\tJ,\tC)^{\lax})^{\times 2}.
    \end{tikzcd}
  \]
  We can factor the bottom horizontal map as
  \[\tC \times [0] \xto{\id \times F} \tC \times \FUN(\tJ,\tC)^{\lax} \xto{\const \times \id} (\FUN(\tJ,\tC)^{\lax})^{\times 2}.\]
  By definition of $\tJ^{\triangleleft}_{\elax}$, this gives a horizontal factorization of our square through the pullback
 \[
    \begin{tikzcd}
     \FUN(\tJ^{\triangleleft}_{\elax}, \tC)^{\elax} \ar[r] \ar[d] &   \ar[d] \FUN([1] \otimes_{\flat,E} \tJ, \tC)^{\lax} \\
     \tC \times [0] \ar[r, "\const \times F"'] & (\FUN(\tJ,\tC)^{\lax})^{\times 2},
    \end{tikzcd}
  \]
  from which the pullback we want to prove is clear.
\end{proof}

\begin{lemma}\label{lem:inclusionconeff}
  Let $p \colon \tA \to [1]$ and consider the functor $\tA \to  \tA^{\prime}=\tA \coprod_{\tA_1} [0]$ over $[1]$, where $\tA_{1}$ denotes the fibre over $1$. Then we have an equivalence of \itcats{}, 
  \[
    \tA_0 \xrightarrow{\simeq} \tA^{\prime}_0.
  \]
\end{lemma}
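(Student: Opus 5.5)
The plan is to identify the fibre over $0$ with a pullback and exploit that pullbacks along the inclusion of a fibre of $[1]$ commute with the relevant colimits. Write $\tA'_{0} := \tA' \times_{[1]} \{0\}$. Fibres over $0$ are pullbacks along $\{0\} \to [1]$, so we want the pullback functor $i_{0}^{*} \colon \CatITsl{[1]} \to \CatIT$ to preserve the pushout $\tA' = \tA \amalg_{\tA_{1}} [0]$. Granting this, we get
\[ \tA'_{0} \simeq \tA_{0} \amalg_{(\tA_{1})_{0}} [0]_{0} \simeq \tA_{0} \amalg_{\emptyset} \emptyset \simeq \tA_{0}, \]
since $\tA_{1}$ and $[0] = \{1\}$ lie entirely over $1$ and so have empty fibre over $0$. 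This identifies the canonical map $\tA_{0} \to \tA'_{0}$ as an equivalence.

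The key step, which I expect to be the main obstacle, is showing that pullback along $\{0\} \hookrightarrow [1]$ preserves colimits in $\CatITsl{[1]}$ (or at least this pushout). The cleanest route is to show that $\{0\} \to [1]$ is exponentiable, i.e.\ $i_{0}^{*}$ has a right adjoint, and to apply \cref{thm:fibsexponentiable}. For this, note that $\{0\} \to [1]$ is a $(1,j)$-fibration: it is the inclusion of the initial object of $[1]$, and its only lifting problems are cartesian lifts of morphisms with target $0$, namely identities. Concretely, $\{0\} \simeq [1]_{\upslash 0}$ (the lax slice over $0$, which is $[0]$ since $[1](x,0)$ is empty for $x=1$), and by \cref{def:laxslices} this is a $(1,0)$-fibration over $[1]$. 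So \cref{thm:fibsexponentiable} gives a right adjoint to $i_{0}^{*}$, and hence $i_{0}^{*}$ preserves all colimits computed in $\CatITsl{[1]}$.

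It then remains to check that colimits in $\CatITsl{[1]}$ are computed in $\CatIT$, which holds for any slice. So the pushout $\tA' = \tA \amalg_{\tA_{1}} [0]$ formed in $\CatIT$, with its evident map to $[1]$ (collapsing $\tA_{1}$ over $1$), is also the pushout in $\CatITsl{[1]}$, and $i_{0}^{*}$ carries it to the pushout of fibres computed above.

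If one prefers to avoid \cref{thm:fibsexponentiable}, an alternative is to argue directly that $\{0\}\hookrightarrow [1]$ is a sieve inclusion and that pulling back along sieves preserves colimits. One would do this by presenting $\tA \to [1]$ as $\tA_{0}$ and $\tA_{1}$ glued along a bimodule-type correspondence (a functor $\tA_{0}^{\op}\times\tA_{1}\to\CATI$ under straightening), where collapsing $\tA_{1}$ visibly leaves $\tA_{0}$ unchanged. The exponentiability argument is shorter, though, so that is the route I would write down.
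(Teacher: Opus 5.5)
Your argument is correct, and it takes a genuinely different, model-independent route from the paper's.

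The individual steps check out:
\begin{enumerate}
\item $\{0\}\hookrightarrow[1]$ is a sieve, hence vacuously a $(1,j)$-fibration. The only morphism of $[1]$ whose target has objects of $\{0\}$ over it is $\id_{0}$, and $[1]$ has no non-trivial 2-morphisms. So \cref{thm:fibsexponentiable} gives a right adjoint to $i_{0}^{*}$.
\item Colimits in $\CatITsl{[1]}$ are created in $\CatIT$.
\item $\{1\}\times_{[1]}\{0\}\simeq\emptyset$, because $[1]^{\simeq}$ is the discrete space on two points. Hence both $(\tA_{1})_{0}$ and $[0]_{0}$ are empty, the pushout of fibres is $\tA_{0}$, and the canonical map is the coprojection.
\end{enumerate}
There is also no circularity: \cref{thm:fibsexponentiable} is imported from Abell\'an--Stern and does not use this lemma.

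The paper instead computes the pushout $\tA\amalg_{\tA_{1}}[0]$ explicitly in a model. It proceeds as follows:
\begin{enumerate}
\item It rigidifies a fibrant scaled-simplicial model of $\tA\to\Delta^{1}$ via $\mathfrak{C}^{\mathrm{sc}}$ to a cofibrant category enriched in marked simplicial sets.
\item It writes down an enriched category with the same mapping objects between objects over $0$, plus a cone point $v$ whose incoming mapping objects are strict colimits of $\mathcal{A}(a,\iota(\blank))$ over $\mathcal{A}_{1}$.
\item It checks by hand that this is a strict pushout along a cofibration between cofibrant objects, hence a homotopy pushout, and reads off full faithfulness on the fibre over $0$.
\end{enumerate}

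Your approach trades this hands-on computation for the heavy black box of exponentiability of fibrations, which is itself proved model-categorically. In return it is much shorter and avoids verifying the pushout and cofibrancy claims directly. It also isolates the general principle: fibres over $0$ of any colimit in $\CatITsl{[1]}$ are computed fibrewise, so any modification lying entirely over $1$ leaves $\tA_{0}$ unchanged.

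The paper's argument does not depend on exponentiability. It also yields slightly more than the statement needs: an explicit description of the whole pushout, including the mapping $\infty$-categories into the cone point.
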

\begin{proof}
  The map $\tA_0 \to \tA^{\prime}_0$ is clearly essentially
  surjective, so we only need to show that it is fully faithful. To
  see this we will compute $\tA^{\prime}$ in a model of \itcats{}. We
  start by taking $\mathfrak{A} \to \Delta^{1}$ to be a model of
  $\tA \to [1]$ as a fibration in scaled simplicial sets; then the
  fibres $\mathfrak{A}_{i}$ will model $\tA_{i}$ ($i = 0,1$) as these
  are given by homotopy pullbacks. Next, we take $\mathcal{A} :=
  \mathfrak{C}^{\mathrm{sc}}(\mathfrak{A})$ to be the cofibrant model
  for $\tA$ as a category enriched in
  marked simplicial sets (or $\Set_\Delta^{+}$-enriched
  categories) obtained from the left Quillen equivalence
  $\mathfrak{C}^{\mathrm{sc}}$ (see \cite{LurieGoodwillie}). We define
  $\mathcal{A}_{i}$ for $i = 0,1$ similarly; then the resulting
  functors $\mathcal{A}_{i} \to \mathcal{A}$ are cofibrations (since
  $\mathfrak{C}^{\mathrm{sc}}$ is a left Quillen functor), and also
  full subcategory inclusions by the definition of this functor.

  We now define a
  $\Set_\Delta^{+}$-enriched category $\widehat{\mathcal{A}}$ as
  follows:
  \begin{itemize}
    \item The set of objects is given by those of $\mathcal{A}_0$ in addition to a ``cone point'', which we denote by $v$.
    \item The marked simplicial sets of maps are given by
      $\widehat{\mathcal{A}}(a,b)=\mathcal{A}_0(a,b)$, $\widehat{\mathcal{A}}(v,v)=*$,
      $\widehat{\mathcal{A}}(v,b)=\emptyset$, and finally $\widehat{\mathcal{A}}(a,v)$
      is defined to be the (conical)
      $\Set_\Delta^{+}$-enriched colimit of the functor
      $\mathcal{A}(a,\iota(-)) \colon \mathcal{A}_1 \to \Set_\Delta^{+}$
      where $\iota \colon \mathcal{A}_1 \to \mathcal{A}$ denotes the obvious
      inclusion.
  \end{itemize}
  This datum assembles naturally into a
  $\Set_\Delta^{+}$-enriched category that fits  into a commutative diagram 
  \[
    \begin{tikzcd}
      \mathcal{A}_1 \arrow[d] \arrow[r] & {[0]} \arrow[d] \\    
      \mathcal{A} \arrow[r,"p"] & \widehat{\mathcal{A}}.
    \end{tikzcd}
  \]
  To finish the proof we will show that this is a pushout square; as
  the left vertical morphism is a cofibration between cofibrant
  objects and $[0]$ is also cofibrant, this implies it is a homotopy
  pushout and so models a pushout in the \icat{} of \itcats{}. In
  order to prove this, suppose that we have a functor $f \colon \mathcal{A} \to \mathcal{X}$ such that its restriction to $\mathcal{A}_{1}$ is constant on an object $x$. We define a functor $\hat{f} \colon \widehat{\mathcal{A}} \to \mathcal{X}$ such that $f= \hat{f}\circ p$. To define $\hat{f}$ it will be enough to define for every $a \in \mathcal{A}_0$ a map
  \[
    \varphi_a \colon \widehat{\mathcal{A}}(p(a),v) \to \mathcal{X}(f(a),x)
  \]
  and show that the all of these choices assemble into a functor. Note
  that by construction we have a natural transformation of functors
  $\mathcal{A}(a,\iota(-)) \to \mathcal{X}(p(a),x)$ where we view the latter functor
  as being constant. The universal property of the colimit provides us
  with the desired functor $\varphi_a$. It is immediate to verify that
  the choices above assemble to yield a functor $\hat{f}$ as
  desired. It is clear that $\hat{f}$ is the unique such extension, so
  this concludes the proof.
\end{proof}

\begin{propn}\label{prop:inclusionconeff}
  For any marked \itcat{} $(\tI, E)$, the canonical functors
  \[\tI \to \tI^{\colimcone}_{\elaxoplax}, \quad \tI \to
    \tI^{\limcone}_{\elaxoplax}\]
  induced by the inclusions $\{0\},\{1\} \to [1]$, respectively, are
  all fully faithful.
\end{propn}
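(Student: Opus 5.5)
The plan is to reduce to \cref{lem:inclusionconeff}. Consider the case $\tI \to \tI^{\triangleright}_{E\dlax}$; the other three are analogous. In those cases we use the oplax Gray tensor order, or swap the roles of $0$ and $1$ (equivalently, we apply $(\blank)^{\op}$ or $(\blank)^{\co}$ and pass to the dual marked Gray tensor).

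Set $\tA := [1] \otimes_{\flat,E} \tI$. The projection $[1]\otimes \tI \to [1]\times\tI \to [1]$ factors through the marked Gray tensor, since the inverted 2-morphisms lie over identities in $[1]$. This gives $p\colon \tA \to [1]$. By definition, $\tI^{\triangleright}_{E\dlax}$ is the pushout $\tA \amalg_{\{1\}\times \tI}[0]$. We then need to identify the fibre $\tA_1$ with $\{1\}\times \tI$. Given that identification, \cref{lem:inclusionconeff} says that $\tA_0 \to (\tI^{\triangleright}_{E\dlax})_0$ is an equivalence. Identifying the fibre $\tA_0$ with $\{0\}\times\tI \simeq \tI$ as well, the functor $\tI \to \tI^{\triangleright}_{E\dlax}$ factors as the equivalence $\tI \simeq (\tI^{\triangleright}_{E\dlax})_0$ followed by the inclusion of a fibre over an object of $[1]$. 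The latter is fully faithful because $[1]$ has no nontrivial endomorphisms of $0$ or $1$: the mapping \icats{} of a fibre over an object $i$ are the fibres of the maps $\tX(x,y)\to[1](i,i)\simeq *$. For the cone $\tI^{\triangleleft}_{E\dlax}$ we collapse the fibre over $0$, so we need the evident dual of \cref{lem:inclusionconeff} in which $\tA_0$ is collapsed. Its proof is identical, or follows by applying $(\blank)^{\op}$ to reverse $[1]$.

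The main obstacle is identifying the fibres of $[1]\otimes_{\flat,E}\tI \to [1]$ over $0$ and $1$ with $\tI$. For the unmarked Gray tensor, I would argue as follows. There are sections $\{i\}\times\tI \to [1]\otimes\tI$, and these are fully faithful onto the fibre. To see this, describe maps between objects $(i,a),(i,b)$ in $[1]\otimes\tI$ explicitly, for instance in scaled simplicial sets via \cite{GHLGray}: there the Gray tensor $\Delta^1\otimes X$ has fibres over the vertices equal to $X$. Alternatively, use the universal property: a functor $\tK\to [1]\otimes \tI$ over $\{i\}\subset[1]$ corresponds to maps with trivial $[1]$-component.

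To pass to the marked version, I would compute the fibre of the defining pushout $\tA$ using that pullback along $\{i\}\to[1]$ preserves colimits in $\CatIT$. This holds because inclusions of objects of $[1]$ are both a $(0,1)$- and a $(1,0)$-fibration, hence exponentiable by \cref{thm:fibsexponentiable}. The fibre of each piece over $i$ is then either $\tI$ or $\tI_E$, glued along $\tI_E$ or along $\tI$, and the pushout is again $\tI$, as required.
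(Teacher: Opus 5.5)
Your argument is correct. Like the paper's, it reduces everything to \cref{lem:inclusionconeff}; the two proofs differ in how they identify the fibres of $\tA=[1]\otimes_{\flat,E}\tI\to[1]$ with $\tI$.

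The paper proves directly that $\tI\to[1]\otimes_{\flat,E}\tI$ is fully faithful. It composes with $\pi\colon[1]\otimes_{\flat,E}\tI\to[1]\times\tI$ and checks, after rigidifying scaled simplicial sets with $\mathfrak{C}^{\mathrm{sc}}$, that $\pi$ is an isomorphism on mapping objects between objects over a fixed vertex. This holds because the underlying simplicial sets coincide and the scalings agree there. The marking is absorbed into this single model computation. Two points are left implicit: the identification $\tA_1\simeq\{1\}\times\tI$, which is needed for the lemma's $\tA\amalg_{\tA_1}[0]$ to really be the cone, and the full faithfulness of the fibre inclusion $\tA'_0\to\tA'$.

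You instead confine the model input to the unmarked Gray tensor and treat the marking formally. Fibres commute with the pushout defining $\otimes_{\flat,E}$, because pullback along $\{i\}\to[1]$ is a left adjoint by \cref{thm:fibsexponentiable}. This makes the marked case model-independent and spells out the bookkeeping the paper skips. The price is that you invoke the heavier exponentiability theorem of Abell\'an--Stern, where the paper needs only an elementary check.

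Two small corrections.

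First, $\{0\}\to[1]$ is a sieve inclusion, hence a $(1,0)$- and $(1,1)$-fibration. It is \emph{not} a $(0,1)$-fibration, since the morphism $0\to1$ has no lift starting at $0$. Dually, $\{1\}\to[1]$ is a $(0,1)$-fibration but not a $(1,0)$-fibration. Each is still an $\varepsilon$-fibration for some $\varepsilon$, so exponentiability, and hence your pushout computation, still goes through.

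Second, your ``universal property'' alternative for the unmarked case does not work as stated. The universal property of $[1]\otimes\tI$ describes maps out of it, not maps into it. The scaled simplicial set argument is what carries the weight, and there you should say why the strict fibre of $\Delta^1\otimes X\to\Delta^1$ computes the $\infty$-categorical one. You can argue exactly as in the paper. Every necklace in $\Delta^1\times X$ between vertices over $i$ lies entirely over $i$, since its image in $[1]$ is monotone with equal endpoints. Hence the mapping objects of $\mathfrak{C}^{\mathrm{sc}}(\Delta^1\otimes X)$ between such vertices coincide, together with their markings, with those of $\mathfrak{C}^{\mathrm{sc}}(\{i\}\times X)$.
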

\begin{proof}
  We will only prove the case $\{0\} \to [1]$; the remaining case is
  completely analogous. We first show that the map $\tI \to [1]
  \otimes_{\flat,E} \tI$ is fully faithful. To see this we observe that the composite
  \[
    \tI \to [1] \otimes_{\flat,E} \tI \xrightarrow{\pi} [1] \times \tI
  \]
  is fully faithful, so it will suffice to show that $\pi$ induces an
  equivalence of mapping \icats{} at the objects in the essential image of the first functor.

  To see this, we can implement this map using scaled simplicial sets
  (\cite{LurieGoodwillie}) and verify the previous claim after applying the rigidification functor $\mathfrak{C}^{\mathrm{sc}}$, cf. \cite[Definition 3.1.10, Theorem
  4.2.2.]{LurieGoodwillie}. Note that this is immediate, as the resulting functor between $\Set_\Delta^{+}$-enriched categories sets is an isomorphism on the underlying simplicially enriched categories and the decorations on the corresponding mapping simplicial sets agree.

  To finish the proof, we apply \cref{lem:inclusionconeff} to the map $[1] \otimes_{\flat,E} \tI \to [1]$.
\end{proof}

\begin{observation}\label{rem:conestraightening}
  Let $* \in \tI^{\colimcone}_{\elaxoplax}$ denote the cone point and consider the following pullback square
  \[
    \begin{tikzcd}
      \tI(*)=\left(\tI^{\colimcone}_{\elaxoplax}\right)_{ \upslash j}\times_{\tI^{\colimcone}_{\elaxoplax}}\tI  \arrow[d] \arrow[r] & \left(\tI^{\colimcone}_{\elaxoplax}\right)_{ \upslash *} \arrow[d] \\
      \tI \arrow[r] &\tI^{\colimcone}_{\elaxoplax}
    \end{tikzcd}
  \]
  We can use the description of the straightening functor \cite[Definition 3.5.1]{LurieGoodwillie}  to see that $\tI(*) \to \tI$ is the 1-fibred $(1,0)$-fibration corresponding to straightening the identity functor on $\tI$, where we equip the source with the marking given by the edges corresponding to $E$. Putting this all together it follows that for every 1-fibred $(1,0)$-fibration $p \colon \tX \to \tI$ we can identify $\FIB^{(1,0)}_{/\tI}(\tI(*),\tX)$ with the full sub-\itcat{} of $\CATITsl{\tI}(\tI,\tX)$ on those functors over $\tI$ which send the 1-morphisms in $E$ to cartesian morphisms in $\tX$.
\end{observation}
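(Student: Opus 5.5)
The plan has two parts. First, describe the functor $\tI(*) \to \tI$ explicitly, and show it is a $(1,0)$-fibration whose cartesian morphisms are exactly those coming from $E$. Second, deduce the description of maps of fibrations out of it.

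For the first part, the right vertical map $(\tI^{\colimcone}_{\elaxoplax})_{\upslash *} \to \tI^{\colimcone}_{\elaxoplax}$ is the $(1,0)$-fibration classifying the representable presheaf $\tI^{\colimcone}_{\elaxoplax}(\blank,*)$, by \cref{def:laxslices}. Since pullback corresponds to restriction under \cref{thm:str}, $\tI(*) \to \tI$ is then a $(1,0)$-fibration classifying $i \mapsto \tI^{\colimcone}_{\elaxoplax}(i,*)$. It remains to compute these mapping \icats{}. For this I would reuse the model from the proofs of \cref{lem:inclusionconeff} and \cref{prop:inclusionconeff}. There the mapping object into the cone point is the conical colimit over the fibre $\{1\} \times \tI$ of $([1]\otimes_{\flat,E}\tI)((0,i),(1,\blank))$, computed after applying $\mathfrak{C}^{\mathrm{sc}}$. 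Lurie's explicit straightening \cite[Definition 3.5.1]{LurieGoodwillie} is itself defined by exactly this kind of coning-off construction. So comparing the two should identify the colimit with the value at $i$ of the straightening of $\id_{\tI}$, suitably marked. In that comparison, the lax squares $[1]\otimes[1] \to [1]\otimes\tI$ coming from morphisms of $\tI$ correspond to morphisms of $\tI(*)$. By the definition of $\otimes_{\flat,E}$, the squares over $E$ are strictly commuting, so those morphisms become cartesian. Since the values are \icats{}, $\tI(*) \to \tI$ is 1-fibred.

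For the second part, fix a 1-fibred $(1,0)$-fibration $p \colon \tX \to \tI$. I use that $\FIB^{(1,0)}_{/\tI}$ is by definition a locally full sub-\itcat{} of $\CATITsl{\tI}$. So $\FIB^{(1,0)}_{/\tI}(\tI(*),\tX)$ is the full sub-\icat{} of $\CATITsl{\tI}(\tI(*),\tX)$ on the functors preserving cartesian 1- and 2-morphisms. The 2-morphism condition should be automatic, because in a 1-fibred fibration the fibres have no non-invertible 2-morphisms, so cartesian lifts of 2-morphisms are detected fibrewise. Combined with the first part, the remaining condition is exactly that marked morphisms go to $p$-cartesian ones. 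This is the analogue, via \cref{thm:elaxstr}, of the unmarked statement that maps out of the fibration for the straightened identity are sections.

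The main obstacle is the identification in the first part. One has to check that Lurie's straightening of the identity (the relative nerve / cone construction) agrees, including markings and the $E$-decorations, with the rigidified colimit model for $\tI^{\colimcone}_{\elaxoplax}$. I would attempt this directly at the level of $\Set_\Delta^{+}$-enriched categories, by comparing the mapping objects into the cone point termwise. The delicate part is verifying that the $E$-strictness in $\otimes_{\flat,E}$ matches the marking on the cartesian edges.
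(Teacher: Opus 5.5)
Your first step agrees with the paper's sketch: you identify $\tI(*)$, whose straightening is $i\mapsto \tI^{\colimcone}_{\elaxoplax}(i,*)$, with the unstraightening of Lurie's straightening of the \emph{marked} identity $(\tI,E)\to\tI$. You do this by comparing the rigidified cone of \cref{lem:inclusionconeff} with the cone construction of \cite[Definition 3.5.1]{LurieGoodwillie}. The goal you set for this step is misstated, though. $\tI(*)\to\tI$ is a pullback of a $(1,0)$-fibration, so it has cartesian lifts of \emph{every} morphism of $\tI$, and its cartesian morphisms cannot be ``exactly those coming from $E$''. What your lax-square argument actually gives is weaker: the canonical section $s\colon\tI\to\tI(*)$, given by the legs $i\to *$ of the cone, sends morphisms in $E$ to cartesian morphisms.

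The genuine gap is in the second part. Local fullness of $\FIB^{(1,0)}_{/\tI}\subseteq\CATITsl{\tI}$ only describes $\FIB^{(1,0)}_{/\tI}(\tI(*),\tX)$ as functors \emph{out of} $\tI(*)$ that preserve cartesian morphisms. Nothing in your argument turns these into functors out of $\tI$, i.e.\ into sections of $p$.

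What you need is that restriction along $s$ is an equivalence onto the sections taking $E$ to cartesian morphisms. This freeness property does not follow from $\tI(*)$ being a 1-fibred fibration with such a section. Already for $\tI=[0]$, any $\infty$-category $\oC$ with a chosen object $c$ has all the properties you use. Yet $\FIB^{(1,0)}_{/[0]}(\oC,\tX)\simeq\Fun(\oC,\tX)$, and evaluation at $c$ is an equivalence onto $\tX$ for all $\tX$ only if $\oC\simeq *$.

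In the paper this step is supplied by ``putting this all together''. Lurie's straightening functor $\mathrm{St}$ is left Quillen, left adjoint to unstraightening. So once $\tI(*)$ is identified with the unstraightening of $\mathrm{St}(\tI,E)$, maps of fibrations out of it correspond to marked maps $(\tI,E)\to\tX^{\natural}$ over $\tI$, which are exactly the sections in question. Within this paper you could instead identify the straightening of $\tI(*)$ with $\Frof^{(1,0)}_{\tI}(\tI,E)$ and invoke \cref{cor:free 1-fibred on marked}.

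Appealing to \cref{thm:elaxstr} does not fill this in. It only rewrites the claim as the statement that the presheaf $\tI^{\colimcone}_{\elaxoplax}(\blank,*)$ corepresents $E$-(op)lax transformations out of $\underline{*}$, which is the freeness statement itself. Your remark that the 2-morphism condition is automatic is fine, since every 2-morphism in a 1-fibred fibration is cartesian.
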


\subsection{Decorated \itcats{}}
\label{sec:dec defn}

In this subsection we introduce the notion of \emph{decorated
  \itcats{}}, which will provide a convenient framework for our work
on fibrations in the subsequent sections. By a decorated \itcat{} we
mean an \itcat{} equipped with collections of (``decorated'') 1- and
2-morphisms (which are chosen independently, \ie{} the decorated
2-morphisms \emph{do not} have to go between decorated 1-morphisms).
We can further assume that these
collections are closed under composition and include all equivalences,
so that we can formally define these objects as follows:
\begin{defn}
  A \emph{decorated \itcat{}} $\tA^{\diamond}$ is a span of \itcats{}
  \[ \tA^{\diamond}_{(1)} \xto{i_{1}} \tA \xfrom{i_{2}} \tA^{\diamond}_{(2)}\]
  such that (see \cref{defn:subcategories})
  \begin{itemize}
  \item $i_{1}$ is a wide locally full sub-\itcat{},
  \item $i_{2}$ is a wide and
  locally wide sub-\itcat{} inclusion.
\end{itemize}
We define the \icat{} $\DCatIT$ as the full subcategory of
$\Fun(\Lambda^{2,\op}_{0}, \CatIT)$ spanned by the decorated
\itcats{}, where $\Lambda^{2,\op}_{0}$ denotes the category
\[ 1 \to 0 \from 2.\]
\end{defn}

\begin{observation}\label{obs:adjointDcatIT}
  It follows easily from \cite[Observations 2.5.6, 2.5.8]{AGH24} that a functor of \itcats{} is
  \begin{itemize}
  \item a wide locally full subcategory inclusion \IFF{} it is right
    orthogonal to $\emptyset \to *$, $\partial C_{2} \to C_{2}$,
  \item a wide and locally wide subcategory inclusion
    \IFF{} it is right
    orthogonal to $\emptyset \to *$, $\partial [1] \to [1]$, $\partial
    C_{3} \to C_{2}$.
  \end{itemize}
  In particular, by \cite[Proposition 5.5.5.7]{LurieHTT} both of these
  types of functors form the right class in a factorization system on
  $\CatIT$. From this it is easy to see that the inclusion
  $\DCatIT \hookrightarrow \Fun(\Lambda^{2,\op}_{0}, \CatIT)$ has a
  left adjoint, given by factoring a span of \itcats{} using these
  factorization systems. Moreover, as the morphisms we consider
  orthogonality for all go between compact \itcats{}, it is clear that
  $\DCatIT$ is closed under filtered colimits in
  $\Fun(\Lambda^{2,\op}_{0}, \CatIT)$, so that this left adjoint
  exhibits $\DCatIT$ as an accessible localization. In particular,
  $\DCatIT$ is a presentable \icat{}. Moreover, we can describe
  (co)limits of decorated \itcats{} as follows:
  \begin{itemize}
  \item the limit of a diagram in $\DCatIT$ is computed in
    $\Fun(\Lambda^{2,\op}_{0}, \CatIT)$, and so is given by the limit
    of the underlying \itcats{} equipped with the limits of the
    subcategories of decorations;
  \item the colimit of a diagram in $\DCatIT$ is given by factoring
    the colimit in 
    $\Fun(\Lambda^{2,\op}_{0}, \CatIT)$, and so is given by the colimit
    of the underlying \itcats{} equipped with the decorations
    generated by the images of those in the diagram.
  \end{itemize}
\end{observation}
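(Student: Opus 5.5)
The plan is to establish the orthogonality characterizations first, then derive the factorization systems, and finally read off the properties of $\DCatIT$ by formal arguments.

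For the two orthogonality characterizations I would start from \cite[Observations 2.5.6, 2.5.8]{AGH24}. These express each of the conditions in \cref{defn:subcategories} as right orthogonality to a finite set of maps between compact \itcats{}. The conditions to combine are:
\begin{itemize}
\item being a monomorphism on cores together with essential surjectivity, which together are orthogonality to $\emptyset \to *$;
\item local full faithfulness, which is orthogonality to $\partial C_{2} \to C_{2}$ once the object-level condition holds;
\item local wideness plus being locally an inclusion, which is orthogonality to $\partial[1] \to [1]$ and $\partial C_{3} \to C_{2}$.
\end{itemize}
Concretely, I would check that for a functor $F$ which is an equivalence on cores, orthogonality to $\partial C_{2} \to C_{2}$ says exactly that each $\tC(c,c') \to \tD(Fc,Fc')$ is fully faithful. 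Here $\partial C_{2}\to C_2$ corepresents the space of 2-morphisms over its source-and-target boundary. Similarly, orthogonality to $\partial[1] \to [1]$ and $\partial C_{3} \to C_{2}$ should say that each map on mapping \icats{} is an equivalence on objects and a monomorphism on 2-morphism spaces, i.e.\ a wide subcategory inclusion.

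Given these characterizations, \cite[Proposition 5.5.5.7]{LurieHTT} applied to the presentable \icat{} $\CatIT$ and a small set of maps produces factorization systems whose right classes are exactly these two types of functors.

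Next, for the left adjoint to $\DCatIT \hookrightarrow \Fun(\Lambda^{2,\op}_{0}, \CatIT)$, I would send a span $X_{1} \to X \leftarrow X_{2}$ to the span $X_{1}' \to X \leftarrow X_{2}'$ obtained by factoring the two legs. To verify the universal property, I would use that a map from a span into a decorated \itcat{} $\tA_{(1)}\to\tA\leftarrow\tA_{(2)}$ is the data $X \to \tA$ plus lifts of the legs. Unique lifting against the right-class maps $i_{1}$ and $i_{2}$ then identifies these mapping spaces with maps out of the factored span. Filtered colimits in $\Fun(\Lambda^{2,\op}_{0},\CatIT)$ are computed pointwise, and right orthogonality to maps between compact objects is preserved by filtered colimits in $\Ar(\CatIT)$. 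Hence $\DCatIT$ is closed under filtered colimits, so it is an accessible localization of a presentable \icat{} and therefore presentable.

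Finally, the descriptions of (co)limits are formal. Limits are computed in the ambient category, because a reflective subcategory is closed under limits; in particular right classes are closed under limits in the arrow category. Colimits are given by reflecting the ambient colimit, which is exactly factoring the legs. This amounts to taking the decorations generated by the images, since the left-class part of the factorization is the sub-\itcat{} generated by the image.

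I expect the main obstacle to be the first step: correctly matching the local conditions, particularly the role of $\partial C_{3} \to C_{2}$ in forcing faithfulness on 2-morphisms, with the statements of \cite{AGH24}.
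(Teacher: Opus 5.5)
Your proposal is correct and follows the same route as the paper. You unpack the orthogonality characterizations of \cite[Observations 2.5.6, 2.5.8]{AGH24}, apply \cite[Proposition 5.5.5.7]{LurieHTT}, reflect by factoring the two legs of the span, and get closure under filtered colimits from compactness of the generators. The point you flagged as the main obstacle is settled by the standard fact that right orthogonality to the fold map $C_{2}\amalg_{\partial C_{2}}C_{2} \to C_{2}$ is equivalent to the comparison map $\Map(C_{2},\tC)\to\Map(C_{2},\tD)\times_{\Map(\partial C_{2},\tD)}\Map(\partial C_{2},\tC)$ being a monomorphism. Two small corrections: orthogonality to $\partial C_{2}\to C_{2}$ characterizes local full faithfulness without any object-level hypothesis, and orthogonality to $\partial[1]\to[1]$ gives an equivalence on cores, not merely on objects, of the mapping \icats{}.
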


\begin{observation}
  It is easy to see that the functor $\ev_{0} \colon
  \Fun(\Lambda^{2,\op}_{0}, \CatIT) \to \CatIT$ has the following
  properties:
  \begin{enumerate}[(1)]
  \item $\ev_{0}$ has a left adjoint, which takes an \itcat{} $\tA$ to
    the span
    \[ \emptyset \to \tA \from \emptyset.\]
  \item $\ev_{0}$ has a right adjoint $\const$, which takes an \itcat{} $\tA$ to
    the span
    \[ \tA \xto{=} \tA \xfrom{=} \tA.\]
  \item The right adjoint $\const$ has a further right adjoint, which
    takes a span
    \[ \tA \to \tB \from \tC \]
    to its limit $\tA \times_{\tB} \tC$.
  \item $\ev_{0}$ is a cocartesian fibration; the cocartesian
    transport of a span
    \[ \tA \xto{F} \tB \xfrom{G} \tC \]
    along $\phi \colon \tB \to \tB'$ is given by composition with
    $\phi$, giving
    \[ \tA \xto{\phi F} \tB' \xfrom{\phi G} \tC. \]
  \item $\ev_{0}$ is a cartesian fibration; the cartesian
    transport of a span
    \[ \tA \to \tB \from \tC \]
    along $\psi \colon \tB' \to \tB$ is given by pullback along $\psi$, giving
    \[ \tA \times_{\tB} \tB' \to \tB' \from \tC \times_{\tB} \tB'. \]
  \end{enumerate}
  From this we can conclude that the restriction of $\ev_{0}$ to
  \[ \Ud \colon \DCatIT \to \CatIT \]
  has the following corresponding properties:
    \begin{enumerate}[(1')]
  \item $\Ud$ has a left adjoint $(\blank)^{\flat\flat}$, which takes an \itcat{} $\tA$ to
    the span
    \[ \tA^{\flat\flat} \,\,:=\,\, \tA_{\mathrm{eq}} \to \tA \from \tA^{\leq 1},\]
    where $\tA_{\mathrm{eq}}$ is the subcategory that contains only
    the invertible 1-morphisms, but all 2-morphisms among these.
  \item $\Ud$ has a right adjoint $(\blank)^{\sharp\sharp}$, which takes an \itcat{} $\tA$ to
    the span
    \[ \tA^{\sharp\sharp} \,\,:=\,\, \tA \xto{=} \tA \xfrom{=} \tA.\]
  \item The right adjoint $(\blank)^{\sharp\sharp}$ has a further
    right adjoint $D$, which takes a decorated \itcat{}
    $\tA^{\diamond}$ to the pullback
    \[ D(\tA^{\diamond}) \,\,:=\,\, \tA^{\diamond}_{(1)} \times_{\tA}
      \tA^{\diamond}_{(2)},\]
    \ie{} the sub-\itcat{} of $\tA$ that contains the decorated
    1-morphisms and the decorated 2-morphisms among these.
  \item $\Ud$ is a cocartesian fibration; the cocartesian
    transport of a decorated \itcat{} $\tA^{\diamond}$ along 
    $\phi \colon \tA \to \tA'$ is given by composing with
    $\phi$ and then applying the localization to $\DCatIT$ (\ie{}
    factoring the span
    \[ \tAd_{(1)} \to \tA' \from \tAd_{(2)} \]
    using the appropriate factorization systems).
  \item $\Ud$ is a cartesian fibration; the cartesian transport of a
    decorated \itcat{} $\tA^{\diamond}$ along a functor
    $\psi \colon \tA' \to \tA$ is given by pullback along $\psi$
    (since this preserves the right classes in our factorization
    systems).
  \end{enumerate}
\end{observation}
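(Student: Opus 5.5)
The plan is to deduce each primed property from the corresponding unprimed one. The ingredients are that $\DCatIT$ is a full subcategory of $\Fun(\Lambda^{2,\op}_{0}, \CatIT)$, and that its reflector $L$ from \cref{obs:adjointDcatIT} is computed by factoring the two legs of a span. That factoring leaves the middle term unchanged, so $\ev_{0} \circ L \simeq \ev_{0}$, and the unit of $L$ is the identity on the middle term. The unprimed properties (1)--(5) are routine: (1)--(3) follow from the standard description of adjoints to evaluation at the initial object $0$ of $\Lambda^{2,\op}_{0}$, via left/right Kan extension, and (4), (5) follow from pointwise computation.

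For (1'), I compose adjunctions. $\Ud$ is the restriction of $\ev_{0}$ along the fully faithful inclusion $\DCatIT \hookrightarrow \Fun(\Lambda^{2,\op}_{0},\CatIT)$. Hence $L$ applied to $\emptyset \to \tA \from \emptyset$ is a left adjoint, by the chain
\[ \Map(L(\emptyset \to \tA \from \emptyset), \tX^{\diamond}) \simeq \Map(\emptyset \to \tA \from \emptyset, \tX^{\diamond}) \simeq \Map(\tA, \Ud\tX^{\diamond}). \]
It remains to identify the factorizations.
\begin{itemize}
\item Factoring $\emptyset \to \tA$ through a wide locally full inclusion gives the smallest wide locally full sub-\itcat{}. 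Since decorations contain all equivalences, this is $\tA_{\mathrm{eq}}$.
\item Factoring through a wide, locally wide inclusion gives all objects and all 1-morphisms but only the invertible 2-morphisms, which is $\tA^{\leq 1}$.
\end{itemize}
I will check both identifications by verifying that $\emptyset \to \tA_{\mathrm{eq}}$ and $\emptyset \to \tA^{\leq 1}$ lie in the respective left classes. This is the main point needing care. The most direct route is to show directly that the unique map of spans from $\emptyset \to \tA \from \emptyset$ into any decorated $\tX^{\diamond}$ over a fixed functor $\tA \to \tX$ extends uniquely over $\tA_{\mathrm{eq}}$ and $\tA^{\leq 1}$. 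This holds because functors preserve equivalences and invertible 2-morphisms, and because the maps $i_1$, $i_2$ are monomorphisms in the appropriate sense.

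For (2') and (3'), note that $\const(\tA)$ is already decorated, since identities are in both right classes. A right adjoint landing in a full subcategory restricts to a right adjoint of the restricted functor. Likewise, $\const \dashv \lim$ restricts because $\DCatIT$ is full. Here $\lim$ of a span is the pullback $\tA^{\diamond}_{(1)} \times_{\tA} \tA^{\diamond}_{(2)}$, which gives $D$.

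For (5'), the cartesian transport in the ambient category is pullback along $\psi$. Right classes of orthogonality factorization systems are stable under base change, so the pulled-back span is again decorated. A cartesian morphism of the ambient fibration between objects of a full subcategory remains cartesian for the restriction, which gives (5').

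For (4'), I take the ambient cocartesian morphism $\tA^{\diamond} \to (\phi\circ i_{1}, \phi \circ i_{2})$ and compose it with the unit of $L$. I then verify that the composite is $\Ud$-cocartesian. For any decorated $\tY^{\diamond}$, mapping out of $L(\blank)$ into $\tY^{\diamond}$ agrees with mapping out of the unlocalized span. That unit lies over $\id_{\tA'}$ because $\ev_{0} L \simeq \ev_{0}$, so the required pullback square of mapping spaces is exactly the one witnessing ambient cocartesianness. I expect this compatibility of $L$ with $\ev_{0}$ to be the only subtle step. It follows from the fact that the factorization systems only modify the outer legs.
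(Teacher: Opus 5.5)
Your proposal is correct and follows the argument the paper has in mind: the primed properties come from restricting along the reflective full subcategory $\DCatIT$, whose reflector only factors the outer legs (so it commutes with $\ev_{0}$ and its unit is the identity on the middle term), and (5') uses base-change stability of the right classes. One small slip: in $\Lambda^{2,\op}_{0} = (1 \to 0 \leftarrow 2)$ the object $0$ is terminal, not initial, and it is terminality that makes the left Kan extension the span $\emptyset \to \tA \leftarrow \emptyset$ and the right Kan extension the constant span.
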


\begin{observation}\label{var:idecorated}
  Let $\DtCatIT \subseteq \Ar(\CatIT)$ denote the full subcategory
  spanned by functors that are wide and locally wide sub-\itcat{}
  inclusions. Then the equivalence
  \[ \Fun(\Lambda^{2,\op}_{0}, \CatIT) \simeq \Ar(\CatIT)
    \times_{\CatIT} \Ar(\CatIT),\]
  arising from the obvious pushout decomposition of
  $\Lambda^{2,\op}_{0}$, restricts to an equivalence
  \[ \DCatIT \simeq \MCatIT \times_{\CatIT} \DtCatIT. \]
  From \cref{cor:MCatIT via 1cat} we then get an equivalence,
  \begin{equation}
    \label{eq:dcat via mcat}
    \DCatIT \simeq \MCatI \times_{\CatI} \DtCatIT,
  \end{equation}
  so that we can regard a decorated \itcat{} $\tCd$ as a diagram
  \[ \tC^{\diamond,\leq 1}_{(1)} \to \tC \from \tCd_{(2)}\]
  where $\tC^{\diamond,\leq 1}_{(1)}$ is a wide subcategory of
  $\tC^{\leq 1}$ and $\tCd_{(2)}$ is a wide and locally wide
  sub-\itcat{} of $\tC$. We note that the forgetful functor $\Udm
  \colon \DCatIT
  \to \MCatIT$ has
  \begin{itemize}
  \item a right adjoint $(\blank)^{\sharp}$, which takes $(\tC,E) \in
    \MCatIT$ to the decorated \itcat{}
    \[ (\tC,E)^{\sharp} \,:=\, \tC_{E} \to \tC \xfrom{=} \tC,\]
  \item a left adjoint $(\blank)^{\flat}$, which takes $(\tC,E) \in
    \MCatIT$ to the decorated \itcat{}
    \[ (\tC,E)^{\sharp} \,:=\, \tC_{E} \to \tC \from \tC^{\leq
        1},\]
  \end{itemize}
  Moreover, $\Udm(\blank)^{\sharp} \simeq \id \simeq
  \Udm(\blank)^{\flat}$, so both adjoints are fully faithful.
\end{observation}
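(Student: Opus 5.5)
The plan is to check the three claims of \cref{var:idecorated} one at a time. The first two are formal consequences of decomposing the indexing diagram; the third is a direct computation of mapping spaces.

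\emph{Step 1: the equivalence $\DCatIT \simeq \MCatIT \times_{\CatIT} \DtCatIT$.} The category $\Lambda^{2,\op}_{0}$, that is $1 \to 0 \leftarrow 2$, is the pushout $[1] \amalg_{[0]} [1]$ of two arrows glued along their common target $0$. Hence $\Fun(\Lambda^{2,\op}_{0}, \CatIT)$ is the pullback of two copies of $\Fun([1],\CatIT) = \Ar(\CatIT)$ over $\ev_{1}$. Under this equivalence, a span $\tA_{(1)} \to \tA \leftarrow \tA_{(2)}$ goes to the pair of arrows $(i_{1}, i_{2})$.

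The conditions defining $\DCatIT$ are imposed on the two legs separately: $i_{1}$ must be a wide locally full inclusion, and $i_{2}$ must be a wide and locally wide inclusion. Since $\MCatIT$ and $\DtCatIT$ are the full subcategories of $\Ar(\CatIT)$ cut out by exactly these conditions, and a full subcategory of a pullback defined by conditions on each factor is the pullback of the full subcategories, the equivalence restricts as claimed.

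\emph{Step 2: the equivalence \eqref{eq:dcat via mcat}.} I would paste the Step 1 pullback with the pullback square of \cref{cor:MCatIT via 1cat}, which identifies $\MCatIT \simeq \MCatI \times_{\CatI} \CatIT$. The pasting gives
\[ \DCatIT \simeq \MCatI \times_{\CatI} \CatIT \times_{\CatIT} \DtCatIT \simeq \MCatI \times_{\CatI} \DtCatIT, \]
which is \eqref{eq:dcat via mcat}.

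\emph{Step 3: the adjoints of $\Udm$.} Here $\Udm$ is the projection to $\MCatIT$, i.e.\ restriction to the leg $i_{1}$. A map of decorated \itcats{} is a functor preserving both the decorated 1-morphisms and the decorated 2-morphisms, and the two conditions are independent.

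For the right adjoint, take the target $(\tC,E)^{\sharp}$, in which every 2-morphism is decorated. The 2-morphism condition is then vacuous, so
\[ \Map(\tAd, (\tC,E)^{\sharp}) \simeq \Map_{\MCatIT}(\Udm\tAd, (\tC,E)). \]

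For the left adjoint, take the source $(\tC,E)^{\flat}$, whose decorated 2-morphisms are those of $\tC^{\leq 1}$, i.e.\ only the invertible ones. Decorations always contain the equivalences, so any functor preserves these, and the 2-morphism condition is again vacuous. This gives
\[ \Map((\tC,E)^{\flat}, \tAd) \simeq \Map_{\MCatIT}((\tC,E), \Udm\tAd). \]

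More formally, both identifications follow from the right-orthogonality description in \cref{obs:adjointDcatIT}: the relevant square of subcategory inclusions admits a unique filler precisely when the maximal, respectively minimal, 2-decoration is involved. All of this is natural in both variables, which yields $(\blank)^{\flat} \dashv \Udm \dashv (\blank)^{\sharp}$.

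\emph{Step 4: full faithfulness.} By construction $\Udm(\tC,E)^{\sharp} = (\tC,E) = \Udm(\tC,E)^{\flat}$. The counit $\Udm(\blank)^{\sharp} \to \id$ and the unit $\id \to \Udm(\blank)^{\flat}$ are these identifications, hence equivalences. A right adjoint whose counit is an equivalence is fully faithful, and dually a left adjoint whose unit is an equivalence is fully faithful, so both adjoints are fully faithful.

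\emph{Expected obstacle.} The only point needing care is Step 3. One must check that the 2-morphism preservation condition really is a property that holds automatically in these two cases, rather than extra data. It is a property because inclusions are monomorphisms, which is exactly what \cref{propn:mono gives faithful from Ar} provides.
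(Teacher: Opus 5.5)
Your proposal is correct and follows the route the paper indicates: the pushout decomposition of $\Lambda^{2,\op}_{0}$, then pasting with \cref{cor:MCatIT via 1cat}, then full faithfulness of both adjoints from $\Udm(\blank)^{\sharp} \simeq \id \simeq \Udm(\blank)^{\flat}$. Your explicit mapping-space check of the two adjunctions uses faithfulness of the forgetful functor and the fact that the 2-decoration condition is vacuous when the target's second leg is $\tC \xrightarrow{=} \tC$ or the source's second leg is $\tC^{\leq 1} \to \tC$; this just spells out what the paper states as an observation without proof.
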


\begin{notation}
  If $\oK$ is an \icat{}, then it has a unique class of decorated
  2-morphisms, and it is sometimes convenient to write
  \[ \oK^{\sharp} = \oK^{\sharp\sharp} = \oK^{\sharp\flat}, \quad
    \oK^{\flat} = \oK^{\flat\flat} = \oK^{\flat\sharp}.\]
  Similarly, an \igpd{} $X$ has a unique decoration and we may write
  \[ X = X^{\flat\flat} = X^{\sharp\sharp}.\]
\end{notation}

\begin{propn}\label{cor:forgetfaithful}
  $\Ud \colon \DCatIT \to \CatIT$ is a faithful functor.
\end{propn}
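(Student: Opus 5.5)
The plan mirrors the proof that $\Um \colon \MCatIT \to \CatIT$ is faithful, which came from \cref{propn:mono gives faithful from Ar}. By definition, $\DCatIT$ is a full subcategory of $\Fun(\Lambda^{2,\op}_{0}, \CatIT)$, and $\Ud$ is the restriction of $\ev_{0}$. So for decorated \itcats{} $\tAd$ and $\tBd$ it suffices to show that
\[ \Map_{\DCatIT}(\tAd, \tBd) \to \Map_{\CatIT}(\tA, \tB) \]
is a monomorphism of \igpds{}.

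The first step is to use the pushout decomposition of $\Lambda^{2,\op}_{0}$ recorded in \cref{var:idecorated}. It gives $\DCatIT \simeq \MCatIT \times_{\CatIT} \DtCatIT$, so the mapping space above is the fibre product
\[ \Map_{\Ar(\CatIT)}(i_{1}^{\tA}, i_{1}^{\tB}) \times_{\Map(\tA,\tB)} \Map_{\Ar(\CatIT)}(i_{2}^{\tA}, i_{2}^{\tB}). \]
A fibre product of two monomorphisms over the same base is again a monomorphism over that base, since monomorphisms are stable under pullback and composition. We are therefore reduced to showing that each map $\Map_{\Ar(\CatIT)}(i^{\tA}, i^{\tB}) \to \Map(\tA,\tB)$ induced by $\ev_{1}$ is a monomorphism.

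The second step handles this. As in the proof of \cref{propn:mono gives faithful from Ar}, the mapping space in $\Ar(\CatIT)$ sits in a pullback square with $\Map(\tA_{(k)}, \tB_{(k)}) \to \Map(\tA_{(k)}, \tB)$ given by postcomposition with $i_{k}^{\tB}$. So it suffices that each $i_{k}^{\tB}$ is a monomorphism in $\CatIT$. This holds because both inclusion types are sub-\itcat{} inclusions: a wide locally full sub-\itcat{} and a wide, locally wide sub-\itcat{} are in particular inclusions in the sense of \cref{defn:subcategories}. Such inclusions are monomorphisms in $\CatIT$; one way to see this is that they are right orthogonal to the relevant maps listed in \cref{obs:adjointDcatIT}, which include the codiagonal-type maps needed for monicity. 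Then \cref{propn:mono gives faithful from Ar} applies directly to $\oC = \CatIT$ with $S$ the relevant class of inclusions.

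The only point needing care is this monomorphism claim, namely that the right classes of the factorization systems in \cref{obs:adjointDcatIT} consist of monomorphisms. I would justify it by checking that $\tB_{(k)} \to \tB_{(k)} \times_{\tB} \tB_{(k)}$ is an equivalence. The source and target have the same objects, since the core map is mono. They also have the same mapping \icats{}, since $\tB_{(k)}(x,y) \to \tB(x,y)$ is a subcategory inclusion, hence a monomorphism of \icats{}. This gives the diagonal criterion for a monomorphism.
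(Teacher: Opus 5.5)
Your proposal is correct and is essentially the paper's argument. The paper just says the claim is immediate from \cref{propn:mono gives faithful from Ar}; you make explicit the implicit steps of splitting the span into its two legs glued over $\ev_{1}$, and of checking that the structure maps $i_{k}$ are monomorphisms in $\CatIT$. For that last check your direct diagonal argument is the right justification. The aside about ``codiagonal-type maps'' in the orthogonality list is only literally true of $\partial C_{3} \to C_{2}$, but your argument does not depend on it.
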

\begin{proof}
  This is immediate from \cref{propn:mono gives faithful from Ar}.
\end{proof}

\begin{lemma}\label{lem:localizationdecorations}
  The functor $(\blank)^{\flat\flat}$ has a left adjoint
  $\Ld \colon \DCatIT \to \CatIT$, given by inverting the decorated
  1-morphisms and 2-morphisms.
\end{lemma}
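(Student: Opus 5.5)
We construct $\Ld$ explicitly as an iterated pushout and then check its universal property, following the proof of the marked analogue, \cref{lem:localizationdecorations} for $\MCatIT$. Given a decorated \itcat{} $\tAd = (\tAd_{(1)} \to \tA \from \tAd_{(2)})$, we first invert the decorated 2-morphisms. The inclusion $\tAd_{(2)} \to \tA$ is wide and locally wide, so for each pair of objects we have a wide subcategory $\tAd_{(2)}(x,y) \subseteq \tA(x,y)$. Inverting it is the same as forming the pushout
\[ \tA' := \tA \amalg_{\tAd_{(2)}} \tAd_{(2),\mathrm{loc}}, \]
where $\tAd_{(2),\mathrm{loc}}$ denotes the left adjoint $\CatIT \to \CatI$ (namely $\Lcat$) applied to $\tAd_{(2)}$. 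Concretely, $\tAd_{(2),\mathrm{loc}} := \Lcat(\tAd_{(2)})$, viewed as an \itcat{}.

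We then invert the images of the decorated 1-morphisms in $\tA'$. Let $E'$ be the marking of $\tA'$ generated by the image of $\tAd_{(1)}$, and set $\Ld(\tAd) := \Lm(\tA',E')$. Equivalently, $\Ld(\tAd)$ is the pushout of $\tA'$ along $(\tAd_{(1)})^{\leq 1} \to \|(\tAd_{(1)})^{\leq 1}\|$. This construction is functorial in $\tAd$, since all the ingredients are.

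To verify the adjunction, compute $\Map_{\CatIT}(\Ld(\tAd),\tB)$ in two steps. By \cref{lem:localizationdecorations} (marked version), it is the subspace of $\Map(\tA',\tB)$ consisting of the functors sending $E'$ to equivalences. Since $\Lcat$ is left adjoint to the inclusion $\CatI \subseteq \CatIT$, the pushout description identifies $\Map(\tA',\tB)$ with
\[ \Map(\tA,\tB) \times_{\Map(\tAd_{(2)},\tB)} \Map(\tAd_{(2)},\tB^{\leq 1}). \]
Because $\tB^{\leq 1} \to \tB$ is a monomorphism in the appropriate sense (a locally full inclusion that is locally an inclusion of cores), this is the subspace of functors $\tA \to \tB$ carrying $\tAd_{(2)}$ into $\tB^{\leq 1}$, that is, sending decorated 2-morphisms to invertible ones. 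Imposing the additional condition on 1-morphisms, we get the subspace of $\Map(\tA,\tB)$ of functors sending decorated 1-morphisms into $\tB_{\mathrm{eq}}$ and decorated 2-morphisms into $\tB^{\leq 1}$.

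By definition of $\DCatIT$ as a full subcategory of spans, this subspace is exactly $\Map_{\DCatIT}(\tAd,\tB^{\flat\flat})$, where $\tB^{\flat\flat} = (\tB_{\mathrm{eq}} \to \tB \from \tB^{\leq 1})$. Here we use that the maps from $\tAd_{(i)}$ into the decoration pieces of $\tB^{\flat\flat}$ are unique if they exist, because those pieces are subcategories; this is the faithfulness of \cref{cor:forgetfaithful}. All the identifications above are natural in $\tB$, which gives $\Ld \dashv (\blank)^{\flat\flat}$.

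The main point requiring care is the claim that mapping into $\tB^{\leq 1}$ versus $\tB$ is a monomorphism on mapping spaces, so that the fibre products really are the claimed subspaces. For this I would invoke the orthogonality characterization in \cref{obs:adjointDcatIT}: $\tB^{\leq 1} \to \tB$ is a wide, locally wide inclusion, and $\tB_{\mathrm{eq}} \to \tB$ is a locally full inclusion. Both are monomorphisms in $\CatIT$, so mapping into them is a monomorphism of mapping spaces.
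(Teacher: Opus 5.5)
Your proposal is correct and is essentially the paper's proof. The construction is the same iterated pushout of $\tA$ along $(\tAd_{(1)})^{\leq 1}\to\|(\tAd_{(1)})^{\leq 1}\|$ and $\tAd_{(2)}\to\Lcat(\tAd_{(2)})$, merely performed in two stages reusing the marked $\Lm$. The mapping-space computation is also the same, except that you identify the result with $\Map_{\DCatIT}(\tAd,\tB^{\flat\flat})$ via the span description and the monomorphisms $\tB_{\mathrm{eq}}\to\tB$, $\tB^{\leq 1}\to\tB$, where the paper uses the pullback description $\DCatIT\simeq\MCatI\times_{\CatI}\DtCatIT$.

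One small slip: your parenthetical calling $\tB^{\leq 1}\to\tB$ a locally full inclusion is wrong, since it is locally the core inclusion $\tB(x,y)^{\simeq}\subseteq\tB(x,y)$ and hence wide and locally wide. Your final paragraph states this correctly, and only the monomorphism property is actually used.
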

\begin{proof}
  We use the description of $\DCatIT$ from \cref{eq:dcat via mcat},
  and view a decorated \itcat{} $\tAd$ as
  \[ \tA^{\diamond,\leq 1}_{(1)} \to \tA \from \tAd_{(2)};\]
  then we define $\Ld(\tAd)$ as the iterated pushout
  \[ \Ld(\tAd) := \|\tA^{\diamond,\leq 1}_{(1)}\|
    \amalg_{\tA^{\diamond,\leq 1}_{(1)}} \tA \amalg_{\tAd_{(2)}}
    \tau_{(\infty,1)}(\tAd_{(2)}).\]
  It is clear that this defines a functor $\Ld \colon \DCatIT \to \CatIT$. For an \itcat{}
  $\tB$, we then have a natural equivalence
  \[
    \begin{split}
      \Map_{\CatIT}(\Ld(\tAd), \tB) & \simeq \Map(\|\tA^{\diamond,\leq 1}_{(1)}\|, \tB) \times_{\Map(\tA^{\diamond,\leq 1}_{(1)}, \tB)} \Map(\tA, \tB) \times_{\Map(\tAd_{(2)}, \tB)} \Map(\tau_{(\infty,1)}\tAd_{(2)}, \tB) \\
                                  & \simeq \Map(\tA^{\diamond,\leq 1}_{(1)}, \tB^{\simeq}) \times_{\Map(\tA^{\diamond,\leq 1}_{(1)}, \tB)} \Map(\tA, \tB) \times_{\Map(\tAd_{(2)}, \tB)} \Map(\tAd_{(2)}, \tB^{\leq 1}) \\
      & \simeq \Map_{\DCatIT}(\tAd, \tB^{\flat\flat}),
    \end{split}
  \]
  since the decorated \itcat{} $\tB^{\flat\flat}$ is given by the diagram
  \[ \tB^{\simeq} \to \tB \from \tB^{\leq 1}\] in terms of the
  description from \cref{eq:dcat via mcat}. This shows that
  the functor $\Ld$ is left adjoint to $(\blank)^{\flat\flat}$, as
  required.
\end{proof}

\begin{defn}\label{def:dec int hom}
  For decorated \itcats{} $\tAd$ and $\tBd$, we write
  $\DFUN(\tAd,\tBd)$ for the full subcategory of $\FUN(\tA,\tB)$
  spanned by the decorated functors. We equip this with the decoration
  where
  \begin{itemize}
  \item a natural transformation $\tA \times [1] \to \tB$ is decorated
    if its component at every $a \in \tA$ is a decorated 1-morphism in
    $\tB$, \ie{} if it is a decorated functor $\tAd \times
    [1]^{\sharp} \to \tBd$;
  \item a 2-morphism $\tA \times C_{2} \to \tB$ is decorated if its
    component at every $a \in \tA$ is a decorated 2-morphism in
    $\tB$, \ie{} if it is a decorated functor $\tAd \times
    C_{2}^{\flat\sharp} \to \tBd$.
  \end{itemize}
\end{defn}

\begin{propn}
  The \icat{} $\DCatIT$ is cartesian closed, with internal Hom given
  by $\DFUN(\blank,\blank)$.
\end{propn}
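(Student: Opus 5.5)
The plan is to run the same argument used earlier for marked \itcats{}: reduce to cartesian closedness of $\CatIT$ and use faithfulness of $\Ud$ (\cref{cor:forgetfaithful}). For decorated \itcats{} $\tAd, \tBd, \tCd$, I will show that a functor $\Phi \colon \tA \times \tB \to \tC$ underlies a decorated functor $\tAd \times \tBd \to \tCd$ \IFF{} its adjoint $\Phi' \colon \tA \to \FUN(\tB,\tC)$ factors through a decorated functor $\tAd \to \DFUN(\tBd,\tCd)$. Since $\Ud$ is faithful, mapping spaces in $\DCatIT$ are the subspaces of mapping spaces in $\CatIT$ cut out by preservation of decorations. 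The equivalence $\Map(\tA \times \tB, \tC) \simeq \Map(\tA, \FUN(\tB,\tC))$ therefore restricts to an equivalence
\[ \Map_{\DCatIT}(\tAd \times \tBd, \tCd) \simeq \Map_{\DCatIT}(\tAd, \DFUN(\tBd,\tCd)), \]
natural in $\tAd$. This gives the adjunction $(\blank) \times \tBd \dashv \DFUN(\tBd,\blank)$.

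First I record how products in $\DCatIT$ look. By \cref{obs:adjointDcatIT} limits are computed in spans, so $(\tA\times\tB)^{\diamond}_{(k)} = \tAd_{(k)} \times \tBd_{(k)}$. Thus a 1-morphism $(f,g)$ is decorated \IFF{} both $f$ and $g$ are, and likewise for a 2-morphism $(\alpha,\beta)$. Every 1-morphism factors as $(f,\id)\circ(\id,g)$, so $\Phi$ preserves decorated 1-morphisms \IFF{} $\Phi(f,\id_b)$ and $\Phi(\id_a,g)$ are decorated for all decorated $f$ and $g$ and all objects $a,b$. The decorations are closed under composition, and identities are decorated. For 2-morphisms, $(\alpha,\beta)$ is the horizontal composite of $(\alpha,\id)$ and $(\id,\beta)$, a composite of whiskerings. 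Whiskering is built into the subcategory structure, since $\tCd_{(2)}$ is a sub-\itcat{}. So $\Phi$ preserves decorated 2-morphisms \IFF{} $\Phi(\alpha,\id_{g})$ and $\Phi(\id_f,\beta)$ are decorated for decorated $\alpha$, $\beta$ and arbitrary 1-morphisms $f$, $g$. Here $\id_g$ is an identity 2-cell on a 1-morphism $g$ of $\tB$.

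Next I translate these conditions through the adjunction, one at a time.
\begin{enumerate}
\item $\Phi(a,\blank)$ preserving decorations for each object $a$ means that $\Phi'$ lands in $\DFUN(\tBd,\tCd)$, which is a full subcategory.
\item $\Phi(f,\id_b)$ decorated for all $b$ and decorated $f$ means that $\Phi'(f)$ has decorated components, i.e.\ is a decorated 1-morphism of $\DFUN$.
\item $\Phi(\alpha,\id_b)$ decorated for all $b$ and decorated $\alpha$ means that $\Phi'(\alpha)$ is a decorated 2-morphism of $\DFUN$.
\end{enumerate}
The remaining conditions, with the $\tA$-side nontrivial and a nonidentity on the $\tB$ side, are $\Phi(\alpha,\id_g)$ for a 1-morphism $g \colon b\to b'$, and $\Phi(\id_f,\beta)$. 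The second follows from condition (1) at $\Phi(a',\blank)$ after whiskering. For the first, I factor $\id_g$ through objects. The 2-cell $\Phi(\alpha,\id_g)$ equals the composite of $\Phi(\alpha,\id_b)$ whiskered by $\Phi(\id,g)$ with the invertible 2-cells witnessing naturality of the transformations $\Phi'(f)$ and $\Phi'(f')$ at $g$. Equivalences are decorated and $\tCd_{(2)}$ is closed under composition and whiskering, so this composite is decorated.

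I expect the main obstacle to be making this last decomposition honest in the $\infty$-setting. I would handle it by testing against the generating 2-cell. A 2-morphism in $\tA\times\tB$ is a map $C_2 \to \tA \times \tB$, and the one in question factors through $C_2 \times [1]$. Inside $C_2 \times [1]$ the cell $(\alpha,\id_g)$ is the composite of $(\alpha,\id_{0})$ whiskered with invertible data. This is a computation in strict 2-categories, and it can be checked there directly.
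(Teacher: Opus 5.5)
Your proposal is correct and follows the paper's argument: faithfulness of $\Ud$ reduces the claim to checking that $\Phi \colon \tA \times \tB \to \tC$ is decorated exactly when its adjoint factors through a decorated functor $\tAd \to \DFUN(\tBd,\tCd)$, which the paper simply calls clear from unpacking. Your closing paragraph (the ``main obstacle'') is unnecessary, because the horizontal decomposition $(\alpha,\beta) \simeq (\alpha,\id_{\id_{b'}}) \circ (\id_{\id_{a}},\beta)$ you already invoke reduces preservation of decorated 2-morphisms directly to your conditions (1) and (3), with only identity 1-morphisms appearing on the other side.
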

\begin{proof}
  Since $\Ud$ is faithful, it suffices to check that for decorated \itcats{} $\tAd,\tBd,\tCd$, a
  functor $\tA \times \tB \to \tC$ is decorated \IFF{} its adjoint
  $\tA \to \FUN(\tB, \tC)$ factors through a decorated functor $\tAd
  \to \DFUN(\tBd,\tCd)$, which is clear from unpacking the two conditions.
\end{proof}

We can thus regard $\DCatIT$ as enriched in itself. More importantly,
we can upgrade it to an $(\infty,2)$- (or $(\infty,3)$-)category:
\begin{construction}\label{const: DCatIT as 2-cat}
  The functor $(\blank)^{\flat\flat} \colon \CatIT \to \DCatIT$
  preserves cartesian products, and for $\tAd \in \CatIT$ the functor
  $(\blank)^{\flat\flat} \times \tAd$ has a right adjoint, namely
  $\Ud \DFUN(\tAd,\blank)$. It follows from \cref{propn:upgrade
    functor} that we can upgrade $\DCatIT$ to an
  $(\infty,3)$-category; we write $\DCATIT$ for its underlying
  \itcat{}. Unpacking the definition, we see that the mapping \icat{}
  between two objects $\tAd$ and $\tBd$ is
  \[ (\Ud\DFUN(\tAd, \tBd))^{\leq 1},\] which is the full subcategory of
  $\Fun(\tA, \tB)$ spanned by the decorated functors:
  its morphisms are the decorated functors
  \[ \tAd \times [1]^{\flat} \to \tBd,\]
  which are \emph{all} the natural
  transformations among decorated functors. We also note that the
  functors $\Ud$ and $(\blank)^{\flat\flat}$  preserve products (being
  right adjoints) and that there is an equivalence $\Ud \circ
  (\blank)^{\flat\flat}$, so using \cref{propn:upgrade adjn enr} we
  get adjunctions of \itcats{}
  \[ (\blank)^{\flat\flat} \dashv \Ud \dashv
    (\blank)^{\sharp\sharp}.\]
  The functor $\Ud$ is given on mapping \icats{} by the fully faithful inclusions
  \[ (\Ud\DFUN(\tAd, \tBd))^{\leq 1} \to \Fun(\tA, \tB)^{\leq 1},\]
  so that this is a locally fully faithful functor.
  Moreover, the functor $(\blank)^{\flat\flat}$ preserves cotensors by
  $[1]$, as in general the canonical map
  \[ \FUN(\tC,\tD)^{\flat\flat} \to
    \DFUN(\tC^{\flat\flat},\tD^{\flat\flat})\]
  is an equivalence, so its left adjoint $\Ld$ also upgrades to an
  adjunction of \itcats{} $\Ld \dashv (\blank)^{\flat\flat}$ by the
  dual of \cref{propn:upgrade adjoint}.
\end{construction}

\begin{warning}
  The \itcat{} $\DCATIT$ is \emph{not} a sub-\itcat{} of
  $\FUN(\Lambda^{2,\op}_{0}, \CATIT)$. This is because we take
  \emph{all} 2-morphisms among decorated functors, \ie{} the decorated
  functors of the form $\tAd \times [1]^{\flat} \to \tBd$, as
  2-morphisms in $\DCATIT$, while only those of the form
  $\tAd \times [1]^{\sharp} \to \tBd$ come from 2-morphisms in
  $\FUN(\Lambda^{2,\op}_{0}, \CATIT)$.
\end{warning}

\begin{observation}
  The forgetful functor $\Udm \colon \DCatIT \to \MCatIT$ fits in a commutative triangle
  \[
    \begin{tikzcd}
      {} & \CatI \ar[dl, "(\blank)^{\flat\flat}"'] \ar[dr, "(\blank)^{\flat}"] \\
      \DCatIT \ar[rr, "\Udm"] & & \MCatIT;
    \end{tikzcd}
  \]
  both this functor and its right adjoint $(\blank)^{\sharp}$
  therefore upgrade to 2-functors (and an adjunction of \itcats{}) by
  \cref{propn:upgrade adjn enr}; the same goes for the left adjoint
  $(\blank)^{\sharp}$, so we get an adjoint triple
  $(\blank)^{\flat} \dashv \Udm \dashv (\blank)^{\sharp}$ also on the
  level of \itcats{}.  Moreover, we have a natural equivalence of
  decorated \itcats{}
  \[ \MFUN(\blank,\blank)^{\sharp} \simeq
    \DFUN((\blank)^{\sharp},(\blank^{\sharp})),\]
  which in particular shows that $(\blank)^{\sharp}$ enhances to a fully faithful
  functor of \itcats{} $\MCATIT \hookrightarrow \DCATIT$.
\end{observation}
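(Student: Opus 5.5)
The plan is to deduce everything from \cref{propn:upgrade functor} and \cref{propn:upgrade adjn enr}, applied to the $\CatI$-module structures on $\MCatIT$ and $\DCatIT$ induced by $(\blank)^{\flat} \colon \CatI \to \MCatIT$ and $(\blank)^{\flat\flat} \colon \CatI \to \DCatIT$.

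\emph{Step 1: the commutative triangle.} For an \icat{} $\oK$, the decorated \itcat{} $\oK^{\flat\flat}$ is the span $\oK_{\mathrm{eq}} \to \oK \from \oK^{\leq 1} = \oK$. Forgetting the 2-morphism decoration gives $\oK_{\mathrm{eq}} \to \oK$, which is $\oK^{\flat}$. Hence $\Udm \circ (\blank)^{\flat\flat} \simeq (\blank)^{\flat}$ on $\CatI$, naturally in $\oK$.

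\emph{Step 2: upgrading $\Udm$ and $(\blank)^{\sharp}$.} Both $\Udm$ and $(\blank)^{\sharp} \colon \MCatIT \to \DCatIT$ are right adjoints, so they preserve products. The composite $(\blank)^{\sharp} \circ (\blank)^{\flat}$ on $\CatI$ sends $\oK$ to $\oK_{\mathrm{eq}} \to \oK \xfrom{=} \oK$. Since $\oK^{\leq 1} = \oK$ for an \icat{}, this agrees with $\oK^{\flat\flat}$, so $(\blank)^{\sharp}$ also lies over $\CatI$. \Cref{propn:upgrade functor} then upgrades both functors to 2-functors, and \cref{propn:upgrade adjn enr}(i) upgrades $\Udm \dashv (\blank)^{\sharp}$ to an adjunction of \itcats{}.

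\emph{Step 3: upgrading the left adjoint $(\blank)^{\flat}$.} I will use \cref{propn:upgrade adjn enr}(ii) applied to $\Udm$. Tensoring by $[1]$ is $[1]^{\flat} \times (\blank)$ in $\MCatIT$ and $[1]^{\flat\flat} \times (\blank)$ in $\DCatIT$. Both have right adjoints because both \icats{} are cartesian closed.

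It remains to check that the canonical map $([1]^{\flat} \times (\tC,E))^{\flat} \to [1]^{\flat\flat} \times (\tC,E)^{\flat}$ is an equivalence. Unwinding, the source is the span
\[ ([1]\times\tC)_{\mathrm{eq}\times E} \to [1]\times\tC \from ([1]\times \tC)^{\leq 1}. \]
The target is the product of spans, which has the same first two terms. Its third term is $[1] \times \tC^{\leq 1}$, and this equals $([1]\times\tC)^{\leq 1}$ because $(\blank)^{\leq 1}$ preserves products. This gives the adjoint triple $(\blank)^{\flat} \dashv \Udm \dashv (\blank)^{\sharp}$ of \itcats{}.

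\emph{Step 4: the equivalence $\MFUN(\blank,\blank)^{\sharp} \simeq \DFUN((\blank)^{\sharp},(\blank)^{\sharp})$.} Both sides are decorated structures on sub-\itcats{} of $\FUN(\tA,\tB)$, and $\Ud$ is faithful by \cref{cor:forgetfaithful}. So it suffices to compare objects, decorated 1-morphisms and decorated 2-morphisms.
\begin{itemize}
\item \emph{Objects.} A functor $(\tA,I)^{\sharp} \to (\tB,J)^{\sharp}$ is decorated iff it sends $I$ into $J$, since every 2-morphism of $\tB$ is decorated. These are exactly the marked functors.
\item \emph{Decorated 1-morphisms.} These are the transformations whose components lie in $J$ on both sides.
\item \emph{Decorated 2-morphisms.} On the right a 2-morphism is decorated iff its components are decorated in $(\tB,J)^{\sharp}$, which holds for all 2-morphisms. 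This matches the maximal 2-decoration of $\MFUN(\blank,\blank)^{\sharp}$.
\end{itemize}
Naturality holds because both constructions are defined functorially as sub-objects of $\FUN(\tA,\tB)$.

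\emph{Step 5: full faithfulness.} Applying $(\Ud(\blank))^{\leq 1}$ to Step 4 identifies the mapping \icats{} of $\DCATIT$ between $(\tA,I)^{\sharp}$ and $(\tB,J)^{\sharp}$ with $(\Um\MFUN((\tA,I),(\tB,J)))^{\leq 1}$, the mapping \icat{} of $\MCATIT$. This shows that $(\blank)^{\sharp} \colon \MCATIT \hookrightarrow \DCATIT$ is fully faithful.

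The main obstacle is in Step 5: showing that the map on mapping \icats{} induced by the 2-functor $(\blank)^{\sharp}$ really is the comparison map from Step 4. The mapping \icats{} are defined only through their representing property $\Map(\oK, \blank)$. I would check this by the Yoneda lemma in $\oK$: on $\Map(\oK,\blank)$ both maps are given by $(\blank)^{\sharp}$ applied to $\oK^{\flat}\times(\tA,I) \to (\tB,J)$, using $(\oK^{\flat}\times(\tA,I))^{\sharp} \simeq \oK^{\flat\flat}\times(\tA,I)^{\sharp}$, which holds since $(\blank)^{\sharp}$ preserves products.
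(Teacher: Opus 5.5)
Your proposal is correct and is essentially the paper's own justification spelled out. The triangle over $\CatI$ together with \cref{propn:upgrade adjn enr} gives the upgraded adjunctions, and unwinding the decorations of $\DFUN((\tA,I)^{\sharp},(\tB,J)^{\sharp})$ identifies it with $\MFUN((\tA,I),(\tB,J))^{\sharp}$, which gives full faithfulness. The only minor variation is that for the left adjoint you invoke part (ii) on $\Udm$. One could equally apply part (i) directly to $(\blank)^{\flat} \colon \MCatIT \to \DCatIT$, which also lies over $\CatI$ (since $(\oK^{\flat})^{\flat} \simeq \oK^{\flat\flat}$) and preserves products; your tensor check with $[1]^{\flat}$ is just a special case of that product computation.
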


\subsection{Decorated Gray tensor products and (op)lax
  transformations}
\label{sec:dec gray}

In this subsection we introduce a decorated version of the Gray tensor
product and show that it has right adjoints in each variable. We warn
the reader that this is given by the ordinary Gray tensor product
(as reviewed in \S\ref{sec:twocat}) equipped with certain decorations,
and should not be confused with the \emph{marked} Gray tensor product
of \S\ref{sec:marked gray}, where certain 2-morphisms are inverted. In
fact, as we will briefly discuss, we can also define a decorated
version of marked Gray tensors.

\begin{defn}\label{defn:dfunlax}
  Given two decorated \itcats{} $\tAd$ and $\tBd$, we define an
  (undecorated) 
  \itcat{} $\DFUN(\tAd,\tBd)^{\plax}$ as the locally full sub-\itcat{}
  of $\FUN(\tA,\tB)^{\plax}$ where
  \begin{itemize}
  \item the objects are decorated functors $\tAd \to \tBd$,
  \item the morphisms are (op)lax natural transformations
    such that for every decorated 1-morphism in $\tA$, the 2-morphism
    in the corresponding (op)lax naturality square in $\tB$ is
    decorated.
  \end{itemize}
  We enchance this to a decorated \itcat{} by declaring that
  \begin{itemize}
  \item an (op)lax transformation is a decorated 1-morphism if all of
    its components are decorated 1-morphisms in $\tB$ and all of the
    2-morphisms in its (op)lax naturality squares are decorated,
  \item a 2-morphism is decorated if its component 2-morphism for
    every object in $\tA$ is decorated.
  \end{itemize}
\end{defn}

\begin{defn}
  Given decorated \itcats{} $\tAd$ and $\tBd$, we define a decorated
  \itcat{} $\tAd \otimesd \tBd$ to be given by the
  decoration of $\tA \otimes \tB$ so that
  \begin{itemize}
  \item the decorated 1-morphisms are generated by those in
    $\tA^{\diamond}_{(1)} \times \tB^{\simeq}$ and $\tA^{\simeq}
    \times \tB^{\diamond}_{(1)}$
  \item the decorated 2-morphisms are generated by those in
    $\tA^{\diamond}_{(2)} \times \tB^{\simeq}$, 
    $ \tA^{\simeq} \times\tB^{\diamond}_{(2)}$,
    $\tA^{\diamond,\leq 1}_{(1)} \otimes \tB^{\leq 1}$ and
    $\tA^{\leq 1} \otimes \tB^{\diamond,\leq 1}_{(1)}$.
  \end{itemize}
  In other words, in addition to decorating the (2-)morphisms
  associated to an object of $\tA$ and a decorated (2-)morphism in $\tB$
  and vice versa, we also decorate the 2-morphism in the lax square
  associated to a pair of morphisms from $\tA$ and $\tB$ if one of
  them is marked in $\tAd$ or $\tBd$.
\end{defn}

\begin{observation}
  Let $\tAd,\tBd,\tCd$ be decorated \itcats{}.
  A functor $\tC \to \FUN(\tA,\tB)^{\oplax}$ factors
  through $\DFUN(\tAd,\tBd)^{\oplax}$ \IFF{} the adjoint functor $\tA \otimes \tC \to \tB$ satisfies:
  \begin{itemize}
  \item for every object $c \in \tC$, the restriction to $\{c\} \times
    \tA \to \tB$ is decorated,
  \item for every morphism $c \to c'$ in $\tC$, the restriction to
    $\tA \otimes [1] \to \tB$ takes any decorated 1-morphism in
    $\tA$ to an oplax square with a decorated 2-morphism.
  \end{itemize}
  Moreover, this functor is decorated if in addition
    \begin{itemize}
  \item for every decorated morphism $c \to c' \in \tC$, the
    restriction to $\tA \otimes [1] \to \tB$ takes every object of
    $\tA$ to a decorated 1-morphism in $\tB$ and every morphism in
    $\tA$ to an oplax square with a decorated 2-morphism,
  \item for every decorated 2-morphism in $\tC$, the restriction to
    $\tA \otimes C_{2} \to \tB$ takes every object of $\tA$ to a
    decorated 2-morphism in $\tB$.
  \end{itemize}
  In total, these conditions say that we have a decorated
  functor $\tAd \otimesd \tCd \to \tBd$, and this is moreover
  equivalent to the other adjoint functor $\tA \to \FUN(\tC,
  \tB)^{\lax}$ factoring through a decorated functor $\tAd \to
  \DFUN(\tCd,\tBd)^{\lax}$. Thus the
  functor \[\blank \otimesd \blank \colon \DCatIT \times
    \DCatIT \to \DCatIT\] has a right adjoint in each variable, given
  by
  \[
    \begin{split}
      \Map_{\DCatIT}(\tCd, \DFUN(\tAd,\tBd)^{\oplax}) & \simeq
      \Map_{\DCatIT}(\tAd \otimesd \tCd, \tBd) \\ & \simeq
      \Map_{\DCatIT}(\tAd, \DFUN(\tCd,\tBd)^{\lax}).
    \end{split}
  \]
\end{observation}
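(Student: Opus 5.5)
The statement consists of two parts. First, a functor $\tC \to \FUN(\tA,\tB)^{\oplax}$ factors through $\DFUN(\tAd,\tBd)^{\oplax}$, and is decorated, exactly when its adjoint $\tA \otimes \tC \to \tB$ is a decorated functor $\tAd \otimesd \tCd \to \tBd$. Second, the same condition is equivalent to the lax-adjoint statement.

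The plan is to reduce everything to the unmarked Gray adjunction of \cref{not:gray}. We then check that the decoration conditions on the two sides match, which is legitimate because $\Ud$ is faithful (\cref{cor:forgetfaithful}). Since $\Ud$ is faithful, both $\Map_{\DCatIT}(\tCd, \DFUN(\tAd,\tBd)^{\oplax})$ and $\Map_{\DCatIT}(\tAd\otimesd\tCd,\tBd)$ are subspaces of $\Map(\tA\otimes\tC,\tB)$. Moreover, $\DFUN(\tAd,\tBd)^{\oplax} \to \FUN(\tA,\tB)^{\oplax}$ is a locally full inclusion. So it suffices to show that these two subspaces coincide, and likewise for the lax side.

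For the factorization through the locally full sub-\itcat{}, we must check conditions on objects and 1-morphisms of $\tC$. An object $c$ gives the restriction $\tA\times\{c\}\to\tB$, which must be decorated. A 1-morphism $c\to c'$ gives an oplax transformation $\tA\otimes[1]\to\tB$, and the required condition is that its naturality 2-cells at decorated 1-morphisms of $\tA$ are decorated. The 2-morphisms of $\tC$ impose nothing, by local fullness.

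For decoratedness of the functor into $\DFUN(\tAd,\tBd)^{\oplax}$, we unpack \cref{defn:dfunlax}. A decorated $c\to c'$ must go to a transformation with decorated components and decorated naturality 2-cells at all morphisms of $\tA$. A decorated 2-morphism must go to a modification with decorated components.

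We now compare with the decorations of $\tAd\otimesd\tCd$, which are generated by four classes:
\begin{itemize}
\item the images of $\tA^{\diamond}_{(1)}\times\tC^{\simeq}$ and $\tA^{\simeq}\times\tC^{\diamond}_{(1)}$, for 1-morphisms;
\item the images of $\tA^{\diamond}_{(2)}\times\tC^{\simeq}$ and $\tA^{\simeq}\times\tC^{\diamond}_{(2)}$, for 2-morphisms;
\item the lax-square 2-cells from $\tA^{\diamond,\leq1}_{(1)}\otimes\tC^{\leq1}$;
\item the lax-square 2-cells from $\tA^{\leq 1}\otimes\tC^{\diamond,\leq1}_{(1)}$.
\end{itemize}
Decoration classes are closed under composition, and the decorations of $\tAd\otimesd\tCd$ are \emph{generated} by these images. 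Hence a functor out of it is decorated \IFF{} it sends each generator to a decorated cell. We then match the lists one for one:
\begin{itemize}
\item objects of $\tC$ correspond to the first two kinds of generators coming from $\tA$;
\item 1-morphisms of $\tC$ correspond to the third kind, with decorated morphisms in $\tA$;
\item decorated 1-morphisms of $\tC$ correspond to the $\tC^{\diamond}_{(1)}$-generators and the fourth kind;
\item decorated 2-morphisms of $\tC$ correspond to the $\tC^{\diamond}_{(2)}$-generators.
\end{itemize}

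The lax side is the mirror argument, obtained by swapping the roles of $\tA$ and $\tC$. It gives the second equivalence and hence the two adjunctions. Naturality in all variables is automatic, since everything consists of subspaces of naturally equivalent mapping spaces.

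The main obstacle is this matching of generators. The decoration of $\tA\otimes\tC$ is defined by generation under composition inside the Gray tensor product, and we must justify that checking on generators suffices. In particular, we need that the 2-cell of a lax square for composite morphisms is built from those of its factors whiskered by 1-morphisms, and that the decoration is closed under such whiskering. I would settle this by noting that $\tBd_{(2)}$ is a sub-\itcat{}, hence closed under whiskering, and that $\tA\otimes\tC$ is generated as an \itcat{} by the cells listed, using the pushout description in \cref{obs:gray square pushout}.
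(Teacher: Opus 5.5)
Your proposal is correct and follows the paper's route. The paper likewise just unpacks the unmarked Gray adjunction and observes that the conditions for factoring through the locally full sub-\itcat{} $\DFUN(\tAd,\tBd)^{\oplax}$, and for being decorated, match generator by generator the decorations of $\tAd \otimesd \tCd$ (symmetrically on the lax side). The generator check you flag is harmless: $\otimes$ preserves colimits in each variable, \icats{} are colimits of copies of $[0]$ and $[1]$, and $\tBd_{(2)} \to \tB$ is a monomorphism, so it reduces to single squares $[1]\otimes[1]$; the squares over composite morphisms are already among the stated conditions, so you do not need to decompose them.
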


\begin{notation}\label{notation:eps-lax}
  To make our notation more compact, it will be convenient
  to write
  \[ \tAd \otimesde \tBd :=
    \begin{cases}
      \tAd \otimesd \tBd, & \vepsilon = (0,1), (1,0),\\
      \tBd \otimesd \tAd, & \vepsilon = (0,0), (1,1),
    \end{cases}
  \]
  \[ \DFUN(\tAd,\tBd)^{\velax}  :=
    \begin{cases}
      \DFUN(\tAd, \tBd)^{\oplax}, & \vepsilon = (0,1), \quad (1,0), \\
      \DFUN(\tAd, \tBd)^{\lax}, & \vepsilon = (0,0), \quad (1,1). \\
    \end{cases}
  \]
  We then have natural equivalences
  \[ \Map(\tAd, \DFUN(\tBd,\tCd)^{\velax}) \simeq \Map(\tBd \otimesde
    \tAd, \tCd) \simeq \Map(\tBd, \DFUN(\tAd,\tCd)^{\cvelax}).\]
\end{notation}

\begin{warning}
  The convention in \cref{notation:eps-lax} is motivated by the
  characterization of partial $\ve$-fibrations in \cref{thm:pb crit
    for partial fib} and related results, and
  does \emph{not} match the
  variance of straightening to (op)lax transformations in
  \cref{thm:elaxstr}. In fact, a convention that fits both situations
  is impossible, as above we pair $(0,1)$ with $(1,0)$, while in
  \ref{thm:elaxstr} we instead pair $(0,1)$ with $(0,0)$.
\end{warning}

\begin{propn}
  For decorated \itcats{} $\tAd,\tBd,\tCd$ we have a natural equivalence
  \[\tAd \otimesd (\tBd \otimesd \tCd) \simeq (\tAd \otimesd \tBd) \otimesd \tCd.\]
\end{propn}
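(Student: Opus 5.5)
The underlying \itcats{} of the two sides agree by the associativity of the ordinary Gray tensor product, $\tA \otimes (\tB \otimes \tC) \simeq (\tA \otimes \tB) \otimes \tC$. The plan is to check that this equivalence identifies the two decorations. Since $\Ud$ is faithful (\cref{cor:forgetfaithful}) and decorations are sub-\itcats{} determined by their 1- and 2-morphisms, this gives the equivalence in $\DCatIT$. Naturality then follows because the underlying associator is natural and a map of decorated \itcats{} is just a map of underlying \itcats{} satisfying a property.

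The comparison proceeds by computing both decorations from generators. First, the decorated 1-morphisms. On the left they are generated by the images of $\tA^{\diamond}_{(1)} \times (\tB\otimes\tC)^{\simeq}$ and $\tA^{\simeq} \times (\tB\otimes\tC)^{\diamond}_{(1)}$. Since $(\tB\otimes\tC)^{\simeq} \simeq \tB^{\simeq}\times\tC^{\simeq}$, this reduces to the images of
\[ \tA^{\diamond}_{(1)}\times\tB^{\simeq}\times\tC^{\simeq},\quad \tA^{\simeq}\times\tB^{\diamond}_{(1)}\times\tC^{\simeq},\quad \tA^{\simeq}\times\tB^{\simeq}\times\tC^{\diamond}_{(1)}. \]
The right-hand side reduces to the same three, exactly as in the marked case.

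Next, the decorated 2-morphisms. There are two kinds of generators on each side.
\begin{enumerate}
\item The first kind are 2-morphisms from a decorated 2-morphism in one factor with objects in the other two. These match in the same way as the 1-morphisms.
\item The second kind are the lax-square 2-morphisms. On the left these come from $\tA^{\diamond,\leq1}_{(1)}\otimes(\tB\otimes\tC)^{\leq1}$ and $\tA^{\leq1}\otimes(\tB\otimes\tC)^{\diamond,\leq1}_{(1)}$, together with the decorated lax squares of $\tB\otimes\tC$ placed over objects of $\tA$.
\end{enumerate}
The idea is to use that $(\tB\otimes\tC)^{\leq1}$ is generated, under composition, by the 1-morphisms $(g,c)$ and $(b,h)$. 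Likewise, the 2-morphisms of $\tA\otimes\tB\otimes\tC$ are generated by three families: the 2-morphisms of the individual factors placed over objects of the others, and the three kinds of elementary interchange squares, namely $f\otimes g$, $g\otimes h$, and $f\otimes h$ with the remaining factor at an object.

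The key step is to show that the decorated 2-morphisms on either side are generated by the same description: the interchange squares $x\otimes y$ (for $x,y$ 1-morphisms in two distinct factors) in which $x$ or $y$ is decorated, together with decorated 2-morphisms of the factors. Here I would use that the lax square of $f$ against a composite $(g,c)\circ(b,h)$ is the pasting of the lax squares for $f$ against each piece. Thus, when $f$ is decorated, the generator from $\tA^{\diamond,\leq1}_{(1)}\otimes(\tB\otimes\tC)^{\leq1}$ is a pasting of elementary decorated squares. When instead $(g,c)$ is decorated, meaning $g$ is decorated in $\tB$, the square against $f$ is the elementary square $f\otimes g$ with $g$ decorated. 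Decorated morphisms of $\tB\otimes\tC$ are composites of such elementary ones, so pasting again reduces everything to the elementary generators. Symmetrically, the right-hand side reduces to the same list.

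I expect the main obstacle to be making this generation argument rigorous model-independently. That requires knowing that the 2-morphisms of the Gray tensor product, as a locally wide sub-\itcat{} generation, are generated by these elementary cells and their whiskerings. I would try to obtain this by writing $\tA\otimes\tB$ as a colimit of Gray products of cells $[1]\otimes[1]$, $[1]\otimes C_2$ and so on, using that $\otimes$ preserves colimits in each variable and that colimits in $\DCatIT$ carry the generated decorations (\cref{obs:adjointDcatIT}). This reduces the comparison to the case where all three inputs are $[0]$, $[1]$ or $C_2$ with the various decorations. That is a finite check, which can be done with the explicit description of $[1]\otimes[1]\otimes[1]$ (e.g.\ via scaled simplicial sets or the orientals $\tO^{n}$).
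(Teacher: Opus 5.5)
Your proposal is correct and takes the same approach as the paper. The paper's proof simply takes the associativity equivalence of the underlying Gray tensor products and notes that the decorations on the two sides match, calling this immediate from the definition. Your generator-by-generator comparison, together with the optional reduction to cells via colimit-preservation of the decorated Gray tensor in each variable, is a more careful justification of that same check.
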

\begin{proof}
  We need to see that the decorations match up under the associativity
  equivalence for the Gray tensor product, which is immediate from the
  definition.
\end{proof}

\begin{remark}
  We expect that the decorated Gray tensors can in fact be enhanced to
  make $\DCatIT$ a monoidal \icat{}. As we do not need to know
  anything beyond the associativity statement above, we will not
  attempt to prove this, however.
\end{remark}

\begin{cor}\label{cor: dec funlax commute}
  There are natural equivalences of decorated \itcats{}
  \[
    \begin{split}
      \DFUN(\tAd, \DFUN(\tBd, \tCd)^{\lax})^{\oplax}
      & \simeq \DFUN(\tBd,
        \DFUN(\tAd,\tBd)^{\oplax})^{\lax}, \\
      \DFUN(\tAd \otimesd \tBd, \tCd)^{\oplax} &  \simeq \DFUN(\tBd, \DFUN(\tAd,\tCd)^{\oplax})^{\oplax}, \\
      \DFUN(\tAd \otimesd \tBd, \tCd)^{\lax} &  \simeq \DFUN(\tAd, \DFUN(\tBd,\tCd)^{\lax})^{\lax},       
    \end{split}
  \]
  for $\tAd,\tBd,\tCd \in \DCatIT$. \qed
\end{cor}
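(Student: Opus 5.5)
The plan is to prove all three equivalences by the Yoneda lemma in the \icat{} $\DCatIT$. We test each side against an arbitrary decorated \itcat{} $\tXd$ and chain together natural equivalences of mapping spaces. The only inputs are the adjunctions $\Map_{\DCatIT}(\tCd, \DFUN(\tAd,\tBd)^{\oplax}) \simeq \Map_{\DCatIT}(\tAd \otimesd \tCd, \tBd) \simeq \Map_{\DCatIT}(\tAd, \DFUN(\tCd,\tBd)^{\lax})$ from the observation above, and the associativity equivalence for $\otimesd$ from the preceding proposition. (In the first line of the statement the inner decorated \itcat{} on the right should read $\tCd$, \ie{} the target is $\DFUN(\tBd,\DFUN(\tAd,\tCd)^{\oplax})^{\lax}$; this is what the argument produces.)

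For the first equivalence we compute
\[
  \begin{split}
    \Map(\tXd, \DFUN(\tAd, \DFUN(\tBd,\tCd)^{\lax})^{\oplax})
    & \simeq \Map(\tAd \otimesd \tXd, \DFUN(\tBd,\tCd)^{\lax})
      \simeq \Map((\tAd \otimesd \tXd) \otimesd \tBd, \tCd) \\
    & \simeq \Map(\tAd \otimesd (\tXd \otimesd \tBd), \tCd)
      \simeq \Map(\tXd \otimesd \tBd, \DFUN(\tAd,\tCd)^{\oplax}) \\
    & \simeq \Map(\tXd, \DFUN(\tBd, \DFUN(\tAd,\tCd)^{\oplax})^{\lax}).
  \end{split}
\]

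For the second equivalence we compute
\[
  \begin{split}
    \Map(\tXd, \DFUN(\tAd \otimesd \tBd,\tCd)^{\oplax})
    & \simeq \Map((\tAd \otimesd \tBd) \otimesd \tXd, \tCd)
      \simeq \Map(\tAd \otimesd (\tBd \otimesd \tXd), \tCd) \\
    & \simeq \Map(\tBd \otimesd \tXd, \DFUN(\tAd,\tCd)^{\oplax})
      \simeq \Map(\tXd, \DFUN(\tBd,\DFUN(\tAd,\tCd)^{\oplax})^{\oplax}).
  \end{split}
\]

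For the third equivalence we compute
\[
  \Map(\tXd, \DFUN(\tAd \otimesd \tBd,\tCd)^{\lax}) \simeq \Map(\tXd \otimesd (\tAd \otimesd \tBd), \tCd),
\]
then reassociate to $(\tXd \otimesd \tAd) \otimesd \tBd$ and peel off $\tBd$ and then $\tAd$ using the $\lax$ adjunction twice.

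Each step is natural in $\tXd$, so the Yoneda lemma for $\DCatIT$ gives equivalences of the representing decorated \itcats{}. They are also natural in $\tAd,\tBd,\tCd$, since every equivalence used is natural in all of its variables.

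The only point requiring care, and the main potential obstacle, is that the associativity equivalence of the previous proposition must be natural in all three variables as an equivalence in $\DCatIT$, not merely objectwise. I would handle this by noting that it is obtained from the natural associativity equivalence of the undecorated Gray tensor product. Since $\Ud$ is faithful (\cref{cor:forgetfaithful}), being a decorated functor is merely a property, and the decorations on both sides were checked to coincide. Hence the undecorated natural equivalence lifts uniquely to a natural equivalence in $\DCatIT$, and the Yoneda argument above goes through.
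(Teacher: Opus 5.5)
Your proof is correct and is the argument the paper intends. The corollary is marked \qed directly after the associativity proposition for $\otimesd$, and its marked analogue (\cref{marked lax hom adj}) is proved by ``the Yoneda lemma and the associativity of the marked Gray tensor'', which is exactly your chain of adjunction and reassociation equivalences in $\DCatIT$. You are also right that the first line contains a typo: the inner term on the right should be $\DFUN(\tAd,\tCd)^{\oplax}$.
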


\begin{warning}
  Our \emph{decorated} Gray tensor product consists of the normal Gray
  tensor of \itcats{} equipped with decorated 1- and 2-morphisms. It
  should not be confused with the \emph{marked} Gray tensor product of
  marked \itcats{}, as reviewed above in \S\ref{sec:marked},
  where certain 2-morphisms in the Gray tensor product determined by
  the marking are \emph{inverted}.  In fact, we can combine
  the two by considering decorated \itcats{} equipped with an
  additional collection of \emph{marked} morphisms:
\end{warning}

\begin{variant}\label{variant: marked Gray}
  Suppose we have a decorated \itcat{} $\tAd$ and in addition a
  marking $(\tA,I)$ on its underlying \itcat{}. For $\tBd$ another
  decorated \itcat{}, we then define $\DFUN(\tAd,\tBd)^{I\dplax}$ to
  be the intersection of $\DFUN(\tAd,\tBd)^{\plax}$ and
  $\FUN(\tA,\tB)^{I\dplax}$ in $\FUN(\tA,\tB)^{\plax}$, with
  decorations inherited from the former. We can then also define
  decorated versions $\blank \otimesd_{\flat,I} \tAd $ and
  $\tAd \otimesd_{I,\flat} \blank$ of these marked Gray tensor
  products, which participate in adjunctions
  \[
    \begin{split}
      \blank \otimesd_{\flat,I} \tAd & \dashv
                                       \DFUN(\tAd,\blank)^{I\dlax}, \\
      \tAd \otimesd_{I,\flat} \blank & \dashv \DFUN(\tAd,\blank)^{I\doplax}.
    \end{split}
  \]
\end{variant}

\begin{lemma}\label{marked dec gray assoc}
  Suppose $\tAd,\tBd,\tCd$ are decorated \itcats{} and we also have a
  marked \itcat{} $(\tB,I)$. Then as an \itcat{},
  \[ \DFUN(\tAd, \DFUN(\tBd,\tCd)^{I\dlax})^{\oplax}\]
  is equivalent to the full sub-\itcat{} of
  \[ \DFUN(\tBd, \DFUN(\tAd,\tCd)^{\oplax})^{I\dlax} \]
  spanned by the functors that take the morphisms in $I$ to strong
  transformations.
\end{lemma}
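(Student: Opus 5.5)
We compare the two sides through their underlying (undecorated) \itcats{}, both regarded as sub-\itcats{} of a common ambient \itcat{}. Forgetting decorations and markings, \cref{cor: dec funlax commute} (or its unmarked version \cite[Lemma 2.2.10]{AGH24}) gives a natural equivalence
\[ \FUN(\tA, \FUN(\tB,\tC)^{\lax})^{\oplax} \simeq \FUN(\tB, \FUN(\tA,\tC)^{\oplax})^{\lax}, \]
since both sides corepresent functors out of $\tA \otimes \blank \otimes \tB$. Each side of the lemma is cut out of one side of this equivalence. The left-hand side is a locally full sub-\itcat{} of $\FUN(\tA,\FUN(\tB,\tC)^{\lax})^{\oplax}$, because $\DFUN(\tBd,\tCd)^{I\dlax}$ is locally full in $\FUN(\tB,\tC)^{\lax}$. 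The right-hand side is a locally full sub-\itcat{} of the other side. It therefore suffices to show that, under the equivalence, the two sub-\itcats{} consist of the same objects, morphisms and 2-morphisms. Since both are locally full sub-\itcats{}, only objects and 1-morphisms need to be matched.

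To match them, test against $[0]$ and $[1]$ (equivalently $\tK = [0],[1]$ with the flat decoration). A map $\tK \to \FUN(\tA,\FUN(\tB,\tC)^{\lax})^{\oplax}$ is a functor $\Phi \colon \tA \otimes \tK \otimes \tB \to \tC$, where we use the associativity of the Gray tensor. Unpacking the left-hand side via \cref{defn:dfunlax} and \cref{variant: marked Gray}, the conditions on $\Phi$ are as follows:
\begin{enumerate}[(a)]
\item For each object of $\tA\otimes\tK$, the restriction to $\tB$ is decorated.
\item For each morphism in $\tA \otimes \tK$, the induced lax transformation sends decorated morphisms of $\tB$ to squares with decorated 2-morphisms.
\item That same lax transformation has commuting squares at morphisms in $I$.
\item For each object of $\tK$, the restriction to $\tA$ is a decorated functor into $\DFUN(\tBd,\tCd)^{I\dlax}$, and each morphism of $\tK$ has decorated oplax squares at decorated morphisms of $\tA$.
\end{enumerate}
Carrying out the same unpacking for the right-hand side with the roles of $\tA$ and $\tB$ swapped produces the same list of conditions on the decorations of $\Phi$, arranged symmetrically, together with condition (c). Condition (c) now appears in two forms: as the $I$-laxness in the outer $\tB$-variable, and as the requirement that morphisms in $I$ go to strong transformations. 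The latter is exactly what encodes that the squares indexed by a morphism of $I$ against a morphism of $\tA$ commute. This mirrors the undecorated statement in \cref{lax oplax reverse marked}, and it is why the right-hand side is a full sub-\itcat{} rather than the whole of $\DFUN(\tBd, \DFUN(\tAd,\tCd)^{\oplax})^{I\dlax}$.

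The main obstacle is the bookkeeping of the mixed 2-cells in $\tA \otimes [1] \otimes \tB$, whose cubes involve three kinds of squares. Each invertibility condition coming from $I$ and each decoration condition must be attributed to the correct face on both sides. I would handle this by noting that all conditions are detected on the generating cells $[1]\otimes[1]$ and $C_2 \times$ object, using \cref{obs:gray square pushout}. On these cells the two descriptions visibly agree. The marked part can be reduced directly to \cref{lax oplax reverse marked}, which one applies to $\FUN(\tA,\FUN(\tB,\tC)^{I\dlax})^{\oplax}$ and then intersects with the decorated sub-\itcats{}.
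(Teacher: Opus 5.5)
Your proposal is correct and follows essentially the same route as the paper. Both arguments exhibit the two sides as locally full sub-\itcats{} of ambient \itcats{} that are identified via the Gray tensor adjunctions, with the $I$-part handled by \cref{lax oplax reverse marked}, and then match objects and 1-morphisms; your cell-by-cell unpacking spells out the ``these sub-\itcats{} match up'' step that the paper leaves implicit, and the one input you use without comment is that $\FUN(\tA,\blank)^{\oplax}$ and $\FUN(\tB,\blank)^{\lax}$ preserve locally full inclusions, which the paper takes from \cite[Theorem 2.6.3]{AGH24}.
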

\begin{proof}
  By definition, $\DFUN(\tAd, \DFUN(\tBd,\tCd)^{I\dlax})^{\oplax}$ is
  a locally full sub-\itcat{} of
  $\FUN(\tA, \DFUN(\tBd,\tCd)^{I\dlax})^{\oplax}$, which in turn is a
  locally full sub-\itcat{}  of
  $\FUN(\tA, \FUN(\tB,\tC)^{I\dlax})^{\oplax}$ by \cite[Theorem 2.6.3]{AGH24}. From \cref{lax oplax
    reverse marked} we know that we can identify this with the full
  sub-\itcat{} of $\FUN(\tB, \FUN(\tA,\tC)^{\oplax})^{I\dlax}$ spanned by the
  functors that take the morphisms in $I$ to strong
  transformations. This \itcat{} in turn contains \[\DFUN(\tBd,
  \DFUN(\tAd,\tCd)^{\oplax})^{I\dlax}\] as a locally full sub-\itcat{},
  so it suffices to observe that these sub-\itcats{} match up.
\end{proof}

\begin{remark}
  More generally, one can also consider decorated upgrades of the
  marked Gray tensor $\tA \otimes_{I,J} \tB$ of two marked \itcats{}
  $(\tA,I)$ and $(\tB,J)$. This gives a decorated marked Gray tensor
  on pairs of decorated marked \itcats{} with corresponding
  adjunctions, but we shall refrain from spelling this out.
\end{remark}

\begin{observation}
  Consider the special case of \cref{variant: marked Gray} where
  $\tAd$ is marked by the collection $I$ consisting of all of its
  1-morphisms. Then for any decorated \itcat{} $\tBd$ both $\tAd
  \otimesd_{I,\flat} \tBd$ and
  $\tBd
  \otimesd_{\flat,I} \tAd$ coincide with the cartesian
  product $\tAd \times \tBd$, and the right adjoints
  $\DFUN(\tAd,\tBd)^{I\dplax}$ both coincide with the internal Hom 
  $\DFUN(\tAd,\tBd)$ from \cref{def:dec int hom}.
\end{observation}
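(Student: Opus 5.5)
The plan is to check the claim first on underlying \itcats{} using the undecorated result, and then to compare decorations directly; the statement about right adjoints will then follow formally from the Yoneda lemma. I treat the tensor $\tAd \otimesd_{I,\flat} \tBd$ in detail; the tensor $\tBd \otimesd_{\flat,I} \tAd$ is handled by the same argument with the Gray tensor reversed.

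First, the underlying \itcat{} of $\tAd \otimesd_{I,\flat} \tBd$ is by construction the marked Gray tensor $\tA \otimes_{I,\flat} \tB$. Since $I$ is the maximal marking, \cref{obs:max marked gray is cart} identifies this with $\tA \times \tB$, compatibly with the inclusions of $\tA \times \tB^{\simeq}$ and $\tA^{\simeq} \times \tB$.

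Next I compare the decorations. On the tensor side, the decorated 1-morphisms are generated by $\tAd_{(1)} \times \tB^{\simeq}$ and $\tA^{\simeq} \times \tBd_{(1)}$. In the product, a morphism $(f,g)$ factors as $(f,\id)\circ(\id,g)$, so the product decoration $\tAd_{(1)} \times \tBd_{(1)}$ is generated by exactly these two pieces. The decorated 2-morphisms on the tensor side are generated by four pieces: $\tAd_{(2)} \times \tB^{\simeq}$, $\tA^{\simeq} \times \tBd_{(2)}$, and the 2-cells in the squares $f \otimes g$ with $f$ or $g$ decorated. Since every $f$ lies in $I$, all of these square 2-cells have become invertible, indeed identities of commuting squares in $\tA \times \tB$, and so they are decorated in any case. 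A product 2-morphism $(\alpha,\beta)$ is the horizontal composite of $(\alpha,\id)$ and $(\id,\beta)$, so the product decoration $\tAd_{(2)} \times \tBd_{(2)}$ is generated by the first two pieces. Hence the two decorations agree, and $\tAd \otimesd_{I,\flat} \tBd \simeq \tAd \times \tBd$ in $\DCatIT$. The main care needed is at exactly this point: checking that the extra square 2-cells in the Gray decoration contribute nothing new once the squares have been inverted.

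Finally, I identify the right adjoints. By \cref{variant: marked Gray}, $\tAd \otimesd_{I,\flat} \blank$ is left adjoint to $\DFUN(\tAd,\blank)^{I\doplax}$, and by the cartesian closure of $\DCatIT$, $\tAd \times \blank$ is left adjoint to $\DFUN(\tAd,\blank)$. The natural equivalence of the left adjoints established above therefore gives, by the Yoneda lemma in $\DCatIT$, an equivalence $\DFUN(\tAd,\tBd)^{I\doplax} \simeq \DFUN(\tAd,\tBd)$; the lax case follows in the same way from the other tensor.

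As a sanity check, this can also be seen directly. $\FUN(\tA,\tB)^{I\dplax}$ with $I$ maximal consists of (op)lax transformations all of whose naturality squares commute, that is, strong transformations, so it is $\FUN(\tA,\tB)$. Intersecting with $\DFUN(\tAd,\tBd)^{\plax}$ then imposes no further condition on morphisms, because identity 2-cells are always decorated. For the same reason, the decoration on 1-morphisms reduces to the condition that the components are decorated, and decorated 2-morphisms are defined componentwise on both sides. This matches \cref{def:dec int hom}.
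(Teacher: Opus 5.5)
Your proof is correct and follows essentially the intended route: the paper records this as an observation without proof, resting on \cref{obs:max marked gray is cart} together with the same unpacking of decorations you carry out. Your direct check is the most robust part. It identifies $\DFUN(\tAd,\tBd)^{I\dplax}$ as the intersection of $\DFUN(\tAd,\tBd)^{\plax}$ with the strong transformations, and then compares decorations componentwise. This matters because the paper defines that right adjoint explicitly but only sketches the decorated marked Gray tensor, so the tensor identification can equally be deduced from the right adjoint by uniqueness of left adjoints.
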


\begin{observation}\label{obs:DFUN Ilax full sub}
  Suppose $\tAd$ and $\tBd$ are decorated \itcats{}, and $\tA$ is
  equipped with a marking $I$ such that all decorated 1-morphisms in
  $\tA$ are also marked. Then $\DFUN(\tAd,\tBd)^{I\dplax}$ is a full
  sub-\itcat{} of $\FUN(\tA,\tB)^{I\dplax}$, since for any $I$-(op)lax
  transformation the (op)lax naturality square for any decorated
  1-morphism in $\tA$ commutes, and so in particular contains a
  decorated 2-morphism.
\end{observation}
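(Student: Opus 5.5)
The plan is to unwind the definition of $\DFUN(\tAd,\tBd)^{I\dplax}$ from \cref{variant: marked Gray}. It is the intersection, inside $\FUN(\tA,\tB)^{\plax}$, of $\DFUN(\tAd,\tBd)^{\plax}$ and $\FUN(\tA,\tB)^{I\dplax}$. We then check directly that the only constraint left over, relative to $\FUN(\tA,\tB)^{I\dplax}$, is on objects. Only the underlying \itcat{} matters here, so the decorations on $\DFUN(\tAd,\tBd)^{I\dplax}$ can be ignored.

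First, locality. By \cref{defn:dfunlax}, $\DFUN(\tAd,\tBd)^{\plax}$ is a locally full sub-\itcat{} of $\FUN(\tA,\tB)^{\plax}$, and $\FUN(\tA,\tB)^{I\dplax}$ is a wide locally full sub-\itcat{} of it. Hence the intersection $\DFUN(\tAd,\tBd)^{I\dplax}$ is a locally full sub-\itcat{} of $\FUN(\tA,\tB)^{I\dplax}$, cut out by two conditions:
\begin{itemize}
\item the objects must be decorated functors;
\item a 1-morphism, i.e.\ an $I$-(op)lax transformation between decorated functors, must have a decorated 2-morphism in its (op)lax naturality square at every decorated 1-morphism of $\tA$.
\end{itemize}
To conclude that the inclusion is fully faithful, it then suffices to show that the second condition is automatic. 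Indeed, each mapping \icat{} is a full subcategory of the corresponding mapping \icat{} of $\FUN(\tA,\tB)^{I\dplax}$, so if it contains all objects it is everything.

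Next, the key step. Let $\phi$ be an $I$-(op)lax transformation between decorated functors, and let $f$ be a decorated 1-morphism of $\tA$. By hypothesis $f\in I$, so the (op)lax naturality square of $\phi$ at $f$ commutes, i.e.\ its 2-morphism is invertible. It remains to see that invertible 2-morphisms are decorated. The decorated 2-morphisms form a wide and locally wide sub-\itcat{} $\tBd_{(2)}\subseteq\tB$. On mapping \icats{} it is therefore a wide subcategory inclusion, and any subcategory of an \icat{} contains all equivalences between its objects, since subcategory inclusions are monomorphisms on cores and are closed under equivalences. So the 2-morphism of the square is decorated, and $\phi$ lies in $\DFUN(\tAd,\tBd)^{I\dplax}$.

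The only point requiring care is this last one: that ``decorated'' includes all invertible 2-morphisms. This follows from the standing convention that decorations contain all equivalences, equivalently from the inclusion being wide and locally wide. Everything else is formal bookkeeping of the locally full conditions.
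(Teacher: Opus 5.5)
Your proposal is correct and is essentially the paper's own argument. The paper's one-line justification is exactly your key step: at a decorated, hence marked, 1-morphism the naturality square of an $I$-(op)lax transformation commutes, and its invertible 2-morphism is automatically decorated. You have simply made explicit the local-fullness bookkeeping and the fact that decorations contain all equivalences, both of which the paper leaves implicit.
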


\section{Fibrations of decorated \itcats{}}
\label{sec:pfib}

In this section we will use the formalism of decorated \itcats{} to
study \emph{partial fibrations}, by which we mean functors of
\itcats{} that admit cartesian or cocartesian lifts of certain specified 1- and
2-morphisms in the base. We introduce this notion in \S\ref{sec:def
  pfib} and then consider the variant notion of \emph{decorated}
partial fibrations in \S\ref{sec:dec p fib}, where there is also a
well-behaved decoration that contains these (co)cartesian lifts; when
the base is maximally decorated, we prove that decorated fibrations
classify functors to $\DCATIT$. In \S\ref{sec:maps in Aroplax} we
study mapping \icats{} in (op)lax arrow \itcats{}, as a preliminary to
proving a very useful characterization of partial fibrations in
\S\ref{sec:pfib char}, in terms of a certain commutative square being
a pullback. We apply this criterion in \S\ref{sec:fib Funlax} to show
that under certain conditions the functor $\FUN(\tC,\blank)^{\plax}$,
as well as its decorated and partially (op)lax versions, preserves
fibrations. Our pullback criterion also implies a characterization of
partial $\ve$-fibrations as being right orthogonal to certain maps of
decorated \itcats{}, and in \S\ref{sec:e-eqce} we study the
corresponding left orthogonal class, the \emph{$\ve$-cofibrations}, as
well as the morphisms that are local with respect to partial
$\ve$-fibrations on a fixed base, the \emph{$\ve$-equivalences}.

\subsection{Partial fibrations}
\label{sec:def pfib}

In this subsection we introduce the notion of \emph{partial
  $\ve$-fibrations} of \itcats{}. After giving the definition, we relate them to decorated
\itcats{} and mention some examples.

\begin{defn}
  Let $\tBd$ be a decorated \itcat{}. For $\vepsilon = (i,j)$, we say that a morphism of 
  \itcats{} $p \colon \tEd \to \tBd$ is a \emph{partial
    $\vepsilon$-fibration} with respect to the decoration $\tBd$ if
  \begin{itemize}
  \item $\tE$ has $p$-$i$-cartesian lifts of 1-morphisms in $\tBd_{(1)}$,
  \item $\tE$ has $p$-$j$-cartesian lifts of 2-morphisms in $\tBd_{(2)}$.
  \end{itemize}
  We also say that a commutative triangle
  \[
    \begin{tikzcd}
     \tE \ar[rr, "f"] \ar[dr, "p"'] & & \tF \ar[dl, "q"] \\
      & \tB
    \end{tikzcd}
  \]
  is a \emph{morphism of partial $\vepsilon$-fibrations} if $f$
  preserves these $i$-cartesian 1-morphisms and $j$-cartesian
  2-morphisms; we write $\PFibe_{/\tBd}$ for the subcategory of
  $\CatITsl{\tB}$ containing the partial $\vepsilon$-fibrations over
  $\tB$ and the morphisms thereof. Similarly, we define
  $\PFIBe_{/\tBd}$ to be the locally full sub-\itcat{} of
  $\CATITsl{\tB}$ on these objects and morphisms.
\end{defn}

\begin{variant}
  More generally, for a decorated functor $F \colon \tBd \to \tCd$, we
  say that a commutative square
  \[
    \begin{tikzcd}
     \tE \ar[r, "G"] \ar[d, "p"'] & \tF  \ar[d, "q"] \\
     \tB \ar[r, "F"'] & \tC
    \end{tikzcd}
  \]
  is a \emph{morphism of partial $\vepsilon$-fibrations} if $p$ and
  $q$ are partial $\vepsilon$-fibrations with respect to $\tBd$ and
  $\tCd$, respectively, and $G$ preserves $i$-cartesian morphisms and
  $j$-cartesian 2-morphisms over the decorations in $\tBd$.
\end{variant}

\begin{examples}
  Let $p \colon \tE \to \tB$ be a functor of \itcats{} and $\vepsilon=(i,j)$.
  \begin{itemize}
  \item $p$ is always a partial $\vepsilon$-fibration with respect to
    $\tB^{\flat\flat}$.
  \item $p$ is a partial $\vepsilon$-fibration with respect to
    $\tB^{\sharp\flat}$ \IFF{} $\tE$ has all $p$-$i$-cartesian lifts
    of morphisms in $\tB$.
  \item $p$ is a partial $\vepsilon$-fibration with respect to
    $\tB^{\flat\sharp}$ \IFF{} $\tE(x,y) \to \tB(p(x),p(y))$ is a
    $j$-cartesian fibration of \icats{} for all $x,y \in \tE$, and the
    $j$-cartesian morphisms are closed under whiskerings. This is
    equivalent to the composition functors
    \[ \tE(x,y) \times \tE(y,z) \to \tE(x,z)\]
    preserving $j$-cartesian morphisms for all $x,y,z \in \tE$, so
    this condition on $p$ corresponds to being
    \emph{$j$-cartesian-enriched}
    in the terminology of \cite{AGH24}.
  \item Combining the last two examples, we see that $p$ is a partial
    $\vepsilon$-fibration with respect to $\tB^{\sharp\sharp}$ \IFF{}
    $p$ is an $\vepsilon$-fibration as defined above in
    \S\ref{sec:review fib}.
  \end{itemize}
\end{examples}

\begin{observation}\label{obs:partial fib canonical dec}
  For $p \colon \tE \to \tB$ a functor of \itcats{}, it follows from
  the composability of pullback squares that $p$-(co)cartesian
  morphisms are closed under composition. Similarly, weakly
  $p$-(co)cartesian 2-morphisms are closed under vertical
  composition. Since horizontal composition can be decomposed into
  whiskering and vertical composition, it follows that
  $p$-(co)cartesian 2-morphisms are closed under both vertical and
  horizontal composition. If $p$ is a partial $(i,j)$-fibration with
  respect to $\tBd$, this means that
  \begin{itemize}
  \item the $p$-$i$-cartesian morphisms over $\tBd_{(1)}$ determine a
    locally full sub-\itcat{} of $\tE$,
  \item the $p$-$j$-cartesian 2-morphisms over $\tBd_{(2)}$ determine
    a wide and locally wide sub-\itcat{} of $\tE$.
  \end{itemize}
  Thus when $p$ is a partial fibration we obtain a canonical lift of
  the decoration of $\tBd$ to a decoration $\tEn$ of $\tE$, which leads us to the following definition:
\end{observation}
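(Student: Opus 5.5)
The plan is to verify the closure properties one dimension at a time, working directly with the defining pullback squares and with the mapping \icats{} of $\tE$. Once closure under composition and the presence of all equivalences and identities are established, the subcategory statements follow formally, since the relevant notions of sub-\itcat{} are determined by such collections (cf.\ \cref{obs:adjointDcatIT} and \cref{defn:subcategories}).

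For 1-morphisms, I take composable $f \colon x \to y$ and $g \colon y \to w$ that are $p$-cocartesian. For any $z$, the square for $g \circ f$ is the horizontal pasting of the square for $g$ and the square for $f$: the map $(g\circ f)^{*}$ factors as $g^{*}$ followed by $f^{*}$, and similarly in $\tB$. By the pasting lemma for pullbacks this square is a pullback. Equivalences are cocartesian because the horizontal maps in their square are equivalences. The cartesian case is dual, by passing to $\tE^{\op}$. Since $\tBd_{(1)}$ is closed under composition and contains the equivalences, the $p$-$i$-cartesian morphisms lying over $\tBd_{(1)}$ form a collection closed under composition and containing the equivalences. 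Hence, by \cref{defn:subcategories}, they determine a wide locally full sub-\itcat{}.

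For 2-morphisms, weak (co)cartesianness is cartesianness for the functor of \icats{} $p_{x,y}$. Such morphisms are closed under composition and contain the equivalences by \cite[2.4.1.7]{LurieHTT}, which gives the vertical statement. For $p$-(co)cartesian 2-morphisms I argue in three steps.
\begin{enumerate}[(i)]
\item Whiskerings of whiskerings are whiskerings, by functoriality of composition in $\tE$. Hence the class is closed under whiskering, and it is closed under vertical composition because whiskering preserves vertical composites.
\item For horizontal composition I phrase everything via the composition functor $c \colon \tE(x,y) \times \tE(y,z) \to \tE(x,z)$. A morphism $(\alpha,\beta)$ in the product factors as $(\alpha,\id)\circ(\id,\beta)$. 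Its image under $c$ is therefore the vertical composite of two whiskerings, so it is $p$-(co)cartesian whenever $\alpha$ and $\beta$ are.
\item Identities and invertible 2-morphisms are cartesian, and $\tBd_{(2)}$ is a wide and locally wide sub-\itcat{}. It follows that the $p$-$j$-cartesian 2-morphisms over $\tBd_{(2)}$ form, in each mapping \icat{}, a wide subcategory compatible with the composition functors. They therefore define a wide and locally wide sub-\itcat{}, which is exactly the required form of $\tEn_{(2)}$.
\end{enumerate}

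The main obstacle is step (ii): making the decomposition of horizontal composition homotopy-coherently precise. Handling it through the functor $c$ on mapping \icats{}, as in the \emph{$j$-cartesian-enriched} discussion of the examples above, avoids choosing explicit interchange data. A related subtlety is that a collection of 2-morphisms closed under these operations actually determines a sub-\itcat{}. I would justify this by the right orthogonality characterization in \cref{obs:adjointDcatIT}: the collection is closed under the operations detected by lifting against $\partial C_{3} \to C_{2}$ and the other generating maps.
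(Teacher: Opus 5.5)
Your proposal is correct and takes the same approach as the paper: you paste pullback squares for composites of (co)cartesian 1-morphisms, use closure of cartesian morphisms for the functors $p_{x,y}$ to handle vertical composition, and write a horizontal composite as a vertical composite of two whiskerings. One side remark is slightly off: right orthogonality to $\partial C_{3} \to C_{2}$ detects that an inclusion is injective on 2-morphisms, not that a collection is closed under composition, so it can recognise the sub-$(\infty,2)$-category but not construct it. The paper simply takes the passage from a composition-closed collection containing the equivalences to a wide locally wide sub-$(\infty,2)$-category as standard, and the cleanest justification is your own step (iii): restrict each mapping $\infty$-category to a wide subcategory compatible with composition.
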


\begin{defn}
  Let $\tBd$ be a decorated \itcat{}. For $\vepsilon = (i,j)$, we say that a morphism of 
  \itcats{} $p \colon \tEd \to \tBd$ is a \emph{partial
    $\vepsilon$-fibration} if
\begin{itemize}
\item $\tE$ has $p$-$i$-cartesian lifts of 1-morphisms in $\tBd_{(1)}$,
  and $\tEd_{(1)}$ is precisely the wide locally full sub-\itcat{} of
  these,
\item $\tE$ has $p$-$j$-cartesian lifts of 2-morphisms in $\tBd_{(2)}$,
  and $\tEd_{(2)}$ is precisely the wide locally wide sub-\itcat{} of these.
\end{itemize}
We can then identify $\PFibe_{/\tBd}$ with the full subcategory of $\DCatITsl{\tBd}$
spanned by the partial $\vepsilon$-fibrations in this sense.
\end{defn}

\begin{example}\label{ex:proj par fib}
  For any decorated \itcat{} $\tBd$ and any \itcat{} $\tK$, the
  projection
  \[ \tK^{\flat\flat} \times \tBd \to \tBd\]
  is a partial $\vepsilon$-fibration for any $\vepsilon$. (Note,
  however, that the projection $\tKd \times \tBd \to \tBd$ for some
  non-minimal decoration of $\tK$ will \emph{not} be a partial
  fibration, since $\tKd \times \tBd$ then has decorated $1$- or
  $2$-morphisms that are not (co)cartesian.)
\end{example}

\begin{example}\label{ex:1-fibred as partial}
  Let $(\tE,I)$ be a marked \itcat{} with a functor
  $p \colon \tE \to \tB$. Then $(\tE,I)^{\sharp} \to
  \tB^{\sharp\sharp}$ is a partial $\ve$-fibration for $\ve = (i,j)$ \IFF{} $p$ is an
  $\ve$-fibration, $I$ is the collection of $p$-$i$-cartesian
  1-morphisms, and all 2-morphisms in $\tE$ are
  $j$-cartesian. Equivalently, $p$ is a \emph{1-fibred}
  $\ve$-fibration with $I$ as $p$-$i$-cartesian 1-morphisms.
\end{example}

\subsection{Decorated partial fibrations}
\label{sec:dec p fib}

In this section we introduce a variant of the definition of partial
fibrations where we allow additional decorations beyond the
(co)cartesian 1- and 2-morphisms; these will be useful at several
points in the next section. We will show that in the case where
the base $\tB^{\sharp\sharp}$ is maximally decorated, these can be
straightened to functors to $\DCATIT$.

\begin{defn}
  A morphism of decorated \itcats{} $p \colon \tEd \to \tBd$ is a
  \emph{decorated partial $\vepsilon$-fibration} for
  $\vepsilon = (i,j)$ if in the commutative diagram (where we use the
  description from \cref{eq:dcat via mcat})
  \[
    \begin{tikzcd}
     \tE_{(1)}^{\dleqo} \ar[r] \ar[d, "p^{\leq 1}_{(1)}"'] & \tE  \ar[d, "p"] &
     \tEd_{(2)} \ar[l] \ar[d, "p_{(2)}"]\\
     \tB^{\dleqo}_{(1)} \ar[r] & \tB & \tBd_{(2)}, \ar[l]
    \end{tikzcd}
  \]
  \begin{itemize}
  \item the functor $p \colon \tE \to \tB$ is a partial
    $\vepsilon$-fibration with respect to $\tBd$,
  \item the functor
    $p^{\leq 1}_{(1)} \colon \tE^{\dleqo}_{(1)} \to
    \tB^{\dleqo}_{(1)}$ is an $i$-fibration of \icats{},
  \item the functor $p_{(2)} \colon \tEd_{(2)} \to \tBd_{(2)}$ is a 
    partial $\vepsilon$-fibration with respect to the
    restriction of the decorations on $\tBd$,
  \item both commutative squares are morphisms of partial
    $\vepsilon$-fibrations, \ie{} they preserve the $i$-cartesian
    1-morphisms and $j$-cartesian 2-morphisms.
  \end{itemize}
\end{defn}

\begin{observation}\label{remark:dec fib explicit}
  Let us spell this out more concretely in the case $\vepsilon = (0,1)$: a functor $p \colon \tEd \to \tBd$ is a decorated partial $(0,1)$-fibration if
  \begin{itemize}
  \item the underlying functor of \itcats{} $p \colon \tE \to \tB$ is a partial $(0,1)$-fibration,
  \item the cocartesian morphisms over $\tBd_{(1)}$ are among the decorated morphisms of $\tEd$, and given a commutative triangle
    \[
      \begin{tikzcd}
       x \ar[rr, "f"] \ar[dr, "h"'] & & y \ar[dl, "g"] \\
        & z
      \end{tikzcd}
    \]
    in $\tE$ whose image in $\tB$ consists of decorated morphisms,
    such that $h$ is decorated and $f$ is cocartesian, then $g$ is
    also decorated.
  \item the cartesian 2-morphisms over $\tBd_{(2)}$ are among the decorated 2-morphisms of $\tEd$, and 
    \begin{itemize}
    \item if $f \colon x \to y$ in $\tE$ is a cocartesian morphism
      over a decorated morphism in $\tB$, then for all $z \in \tE$ a
      2-morphism in $\tE(y,z)$ whose image is decorated in $\tBd$ is decorated in $\tE$ \IFF{} its composition with $f$ in $\tE(x,z)$ is decorated;
    \item given a commutative triangle
    \[
      \begin{tikzcd}
       f \ar[rr, "\alpha"] \ar[dr, "\gamma"'] & & g \ar[dl, "\beta"] \\
        & h
      \end{tikzcd}
    \]
    in $\tE(x,y)$ whose image in $\tB(px,py)$ consists of decorated 2-morphisms,
    such that $\gamma$ is decorated and $\beta$ is cartesian, then $\alpha$ is
    also decorated.
    \end{itemize}
  \end{itemize}
\end{observation}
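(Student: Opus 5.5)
The claim is that the four conditions in the definition of a decorated partial $(0,1)$-fibration, unpacked in the description of \cref{eq:dcat via mcat}, reduce to the conditions listed. I would treat the two sides of the diagram separately. Throughout, I would use one elementary fact. Suppose we have a commutative square of \icats{}
\[ \oX' \to \oX \times_{\oZ} \oY, \]
in which $\oX' \subseteq \oX$, $\oY' \subseteq \oY$ and $\oZ' \subseteq \oZ$ are subcategories (or subspaces) compatible with the maps, and in which the ambient square $\oX \to \oX\times_{\oZ}\oY$ is an equivalence. Then the restricted square
\[ \oX' \to \oX'' \times_{\oZ'} \oY' \]
is a pullback if and only if the inclusion $\oX' \hookrightarrow \oX \times_{\oZ'}\oY'$ hits everything, i.e.\ if and only if a "2-out-of-3" condition holds: an object or morphism of $\oX$ whose images lie in the subcategories must itself lie in $\oX'$. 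The reason is that a subcategory inclusion is a monomorphism on mapping spaces and on cores, and pullbacks of subcategories are again subcategories of the pullback.

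\emph{The first condition.} The requirement on the underlying functor $p$ is already the first bullet. For the left square, the requirement that it be a morphism of partial fibrations says exactly that $p$-cocartesian lifts of decorated morphisms lie in $\tEd_{(1)}$. Given this, I would show that such a lift $f\colon x\to y$ is $p^{\leq 1}_{(1)}$-cocartesian if and only if the triangle condition holds for it.

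To do so, apply the fact above to the pullback of mapping \igpds{} $\tE(y,z)^{\simeq}\simeq \tE(x,z)^{\simeq}\times_{\tB(px,pz)^{\simeq}}\tB(py,pz)^{\simeq}$, restricted to the decorated components. Restricting to decorated components is a restriction to unions of path components, which is exactly the situation the fact covers. Existence of $p^{\leq1}_{(1)}$-cocartesian lifts then follows from existence of $p$-cocartesian lifts lying in $\tEd_{(1)}$. Conversely, if $p^{\leq 1}_{(1)}$ is a cocartesian fibration and the square preserves cocartesian morphisms, we recover the decoratedness of lifts; here the uniqueness of cocartesian lifts (\cref{obs:cart lifts unique}) lets us compare the lifts produced on the two sides.

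\emph{The second condition.} Since $\tEd_{(2)}\to\tE$ is wide and locally wide, $\tEd_{(2)}$ has the same objects and 1-morphisms as $\tE$, and each mapping \icat{} $\tEd_{(2)}(x,y)$ is a wide subcategory of $\tE(x,y)$. For the right square there are two things to check.

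First, a cocartesian $f$ over a decorated morphism must remain $p_{(2)}$-cocartesian. This asks that the square of wide subcategories
\[ \tEd_{(2)}(y,z)\to \tEd_{(2)}(x,z)\times_{\tBd_{(2)}(px,pz)}\tBd_{(2)}(py,pz) \]
be a pullback. Applying the fact at the level of morphisms, this is exactly the "if and only if" in the first sub-bullet of the listed conditions.

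Second, the cartesian 2-morphisms over $\tBd_{(2)}$ must be decorated and $p_{(2)}$-cartesian. Being $p_{(2)}$-cartesian means being weakly cartesian for $\tEd_{(2)}(x,y)\to\tBd_{(2)}(px,py)$ and stable under whiskering. Weak cartesianness in the wide subcategory amounts, by the same argument applied to the pullback square defining a cartesian morphism in $\tE(x,y)$, to the triangle condition in the second sub-bullet. Whiskering stability is automatic: whiskerings of decorated 2-morphisms by 1-morphisms are decorated, since $\tEd_{(2)}$ is a sub-\itcat{}, and whiskerings of $p$-cartesian 2-morphisms are $p$-cartesian by definition. Existence of lifts in $\tEd_{(2)}$ follows from existence in $\tE$ together with decoratedness of the lifts.

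\emph{Main obstacle.} The main technical point is the careful verification of the fact used throughout. In particular, one must check that weak (co)cartesianness of a morphism inside a wide subcategory is detected exactly by the 2-out-of-3 condition. I would do this directly, using the characterization of (co)cartesian morphisms via pullbacks of mapping spaces (as in \cite[2.4.1.3]{LurieHTT}), together with the fact that subcategory inclusions are monomorphisms on mapping spaces.
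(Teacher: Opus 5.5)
Your proposal is correct and takes the same route the paper intends. The paper states this observation without proof, as a direct unpacking of the definition of a decorated partial $(0,1)$-fibration, and your square-by-square comparison is exactly that unpacking: you restrict the pullback squares characterizing $p$-cocartesian 1-morphisms and weakly $p$-cartesian 2-morphisms to decorated components or wide subcategories, you use that subcategory inclusions are monomorphisms on mapping spaces, and you use uniqueness of (co)cartesian lifts to match the lifts in $\tEd_{(1)}$, $\tEd_{(2)}$ and $\tE$.

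Two points need tidying. In the statement of your auxiliary ``fact'', the ambient pullback should be formed over the other two corners of the square, not written as $\oX\times_{\oZ}\oY$. In the whiskering-stability step, say explicitly that each whiskering is again a decorated $p$-cartesian lift of a decorated 2-morphism, so the triangle condition applies to it as well.
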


\begin{warning}
  Note that we do \emph{not} ask for the functor
  $p_{(1)} \colon \tEd_{(1)} \to \tBd_{(1)}$ to be a partial
  $\vepsilon$-fibration of \itcats{}.  This would require decorated
  1-morphisms to be preserved under $j$-cartesian transport along
  decorated 2-morphisms. Since this is generally false for
  $i$-cartesian 1-morphisms in a partial $\vepsilon$-fibration, this
  is far too restrictive a condition to impose. 
\end{warning}

\begin{defn}
  We say that a commutative triangle
  \[
    \begin{tikzcd}
     \tEd \ar[rr, "F"] \ar[dr, "p"'] & & \tFd \ar[dl, "q"] \\
      & \tBd,
    \end{tikzcd}
  \]
  in $\DCatIT$, where $p$ and $q$ are decorated partial
  $\vepsilon$-fibrations, is a \emph{morphism of decorated partial
    $\vepsilon$-fibrations} if the decorated functor $F$ also
  preserves $i$-cartesian 1-morphisms and $j$-cartesian 2-morphisms
  over the decorations in $\tBd$. The \itcat{} $\DPFIBe_{/\tBd}$ of
  decorated partial $\vepsilon$-fibrations over $\tBd$ is then defined
  to be the locally full sub-\itcat{} of $\DCATITsl{\tBd}$ whose
  objects are the decorated partial $\vepsilon$-fibrations and whose
  1-morphisms are the morphisms thereof.
\end{defn}

\begin{observation}
  The forgetful functor $\DCATITsl{\tBd} \to \CATITsl{\tB}$ restricts
  to a locally fully faithful functor
  \[ \DPFIBe_{/\tBd} \to \PFIBe_{/\tBd}. \]
\end{observation}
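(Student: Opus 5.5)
The plan is to check, in order, that the forgetful functor $\DCATITsl{\tBd} \to \CATITsl{\tB}$ sends objects and 1-morphisms of $\DPFIBe_{/\tBd}$ into $\PFIBe_{/\tBd}$, and then that it is fully faithful on mapping \icats{}. No new constructions are needed.

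\emph{Objects and morphisms.} By the first clause of the definition, the underlying functor $p \colon \tE \to \tB$ of a decorated partial $\vepsilon$-fibration is a partial $\vepsilon$-fibration with respect to $\tBd$. A morphism of decorated partial $\vepsilon$-fibrations is, by definition, a decorated functor over $\tBd$ that preserves $i$-cartesian 1-morphisms and $j$-cartesian 2-morphisms over the decorations of $\tBd$. Its underlying functor over $\tB$ is therefore a morphism of partial $\vepsilon$-fibrations. Composition is clearly respected, so the forgetful functor restricts to a functor $\DPFIBe_{/\tBd} \to \PFIBe_{/\tBd}$ of \itcats{}.

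\emph{Mapping \icats{} in the slices.} Fix decorated partial fibrations $p \colon \tEd \to \tBd$ and $q \colon \tFd \to \tBd$. The mapping \icat{} in $\CATITsl{\tB}$ is the fibre of $\Fun(\tE,\tF) \to \Fun(\tE,\tB)$ over $p$. The mapping \icat{} in $\DCATITsl{\tBd}$ is the analogous fibre computed in $\DCATIT$, namely the fibre of $(\Ud\DFUN(\tEd,\tFd))^{\leq 1} \to (\Ud\DFUN(\tEd,\tBd))^{\leq 1}$ over $p$. By \cref{const: DCATIT as 2-cat}, these mapping \icats{} in $\DCATIT$ are the full subcategories of $\Fun(\tE,\tF)$ and $\Fun(\tE,\tB)$ spanned by the decorated functors, since every natural transformation between decorated functors is a 2-morphism in $\DCATIT$. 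Taking fibres of a compatible pair of full inclusions gives a full inclusion. Hence the mapping \icat{} in $\DCATITsl{\tBd}$ is the full subcategory of the fibre $\Fun_{/\tB}(\tE,\tF)$ spanned by the decorated functors over $\tB$.

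\emph{Conclusion.} Both $\DPFIBe_{/\tBd}$ and $\PFIBe_{/\tBd}$ are locally full sub-\itcats{} of their ambient slices. So their mapping \icats{} are the full subcategories of $\Fun_{/\tB}(\tE,\tF)$ spanned, respectively, by decorated functors preserving the relevant cartesian (2-)morphisms, and by all functors preserving them. The map between them is the inclusion of the first full subcategory into the second, compatible with their inclusions into $\Fun_{/\tB}(\tE,\tF)$. A functor between full subcategories that commutes with these inclusions is fully faithful, which gives local full faithfulness.

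The only step requiring care is identifying mapping \icats{} in $\DCATITsl{\tBd}$ as fibres. This uses that slices of the $(\infty,3)$-categorical upgrade $\DCATIT$ have mapping \icats{} computed as fibres, exactly as in $\CATITsl{\tB}$. I would justify it by the same pullback description of mapping \icats{} in slices, applied to $\DCATIT$.
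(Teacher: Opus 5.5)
Your proposal is correct, and it is essentially the argument the paper leaves implicit (the statement is recorded as an observation with no written proof). The forgetful functor $\Ud$ is locally fully faithful by \cref{const: DCATIT as 2-cat}. This passes to the slices by taking fibres, and one then restricts to the locally full sub-\itcats{} $\DPFIBe_{/\tBd}$ and $\PFIBe_{/\tBd}$, using that morphisms of decorated partial $\vepsilon$-fibrations are in particular morphisms of partial $\vepsilon$-fibrations.
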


\begin{variant}
  In the special case where the base is maximally decorated, we refer to decorated partial $\vepsilon$-fibrations over $\tB^{\sharp\sharp}$ as \emph{decorated $\vepsilon$-fibrations} and write
  \[ \DFIBe_{/\tB} := \DPFIBe_{/\tB^{\sharp\sharp}}.\]
\end{variant}

\begin{examples}\label{ex:canonicalexamplesdecfib}\ 
  \begin{itemize}
  \item Let $\tBd$ be a decorated \itcat{} and suppose
    $p \colon \tE \to \tB$ is a partial $\vepsilon$-fibration with
    respect to $\tBd$. Then $p \colon \tEn \to \tBd$ is a
    decorated partial $\vepsilon$-fibration using the canonical
    decoration of $\tE$ (\cref{obs:partial fib canonical dec}); this
    follows from \cref{lem:cart 2mor detect after compose cocart mor}.
  \item For any decorated \itcats{} $\tKd$ and $\tBd$, the projection
    $\tKd \times
    \tBd \to \tBd$ is a decorated partial $\vepsilon$-fibration for
    any $\vepsilon$.
  \item Suppose $p \colon \tE \to \tB$ is an $\ve$-fibration. Then
    $\tE^{\sharp\sharp} \to \tB^{\sharp\sharp}$ is a decorated
    $\ve$-fibration.
  \end{itemize}
\end{examples}

We now turn to straightening for decorated $\vepsilon$-fibrations:
\begin{thm}\label{thm: dec straighten}
  Let $\tB$ be an \itcat{}. The straightening equivalence for $\ve$-fibrations extends to
  decorated fibrations to give a natural commutative square
  \[
    \begin{tikzcd}
      \DFIB^{\ve}_{/\tB} \ar[r, "\sim"] \ar[d] & \FUN(\tB^{\veop},
      \DCATIT)  \ar[d] \\
      \FIB^{\ve}_{/\tB} \ar[r, "\sim"'] & \FUN(\tB^{\veop}, \CATIT).
    \end{tikzcd}
  \]
\end{thm}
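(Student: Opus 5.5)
We reduce to the undecorated straightening of \cref{thm:str} by expressing both sides as pullbacks. Since $\DCatIT \simeq \MCatI \times_{\CatI} \DtCatIT$ (\cref{eq:dcat via mcat}), a decorated \itcat{} $\tCd$ is a diagram $\tC^{\diamond,\leq 1}_{(1)} \to \tC \leftarrow \tCd_{(2)}$ of a faithful functor of \icats{} and a wide, locally wide sub-\itcat{} inclusion. The first step is therefore to show that $\FUN(\tB^{\veop},\DCATIT)$ is the full sub-\itcat{} of functors $\tB^{\veop} \to \FUN(\Lambda^{2,\op}_{0},\CATIT)$ (with the evident $\CatI$ leg) whose values are decorated. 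The subtlety in the warning after \cref{const: DCatIT as 2-cat} is that 2-morphisms in $\DCATIT$ are all transformations of the underlying functors, not only those compatible with decorations. Since the source $\tB^{\veop}$ is an \itcat{} and not merely an \icat{}, we handle this by identifying $\DCATIT$ with the full sub-\itcat{} of a lax-type diagram \itcat{}, namely spans $\tA_{(1)} \to \tA \leftarrow \tA_{(2)}$ in which the legs are only required to commute up to 2-morphisms in the direction forced by the inclusions. Because both legs are faithful or inclusions (\cref{propn:mono gives faithful from Ar}), any transformation of the middle term lifts uniquely, if at all, to the legs. Thus a functor $\tB^{\veop} \to \DCATIT$ amounts to a functor $F \colon \tB^{\veop} \to \CATIT$ together with, pointwise in $b$, subobjects $F(b)_{(1)}$ and $F(b)_{(2)}$ that are preserved by $F$ on 1-morphisms, with no condition imposed by 2-morphisms of $\tB$.

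Next, under the straightening of \cref{thm:str}, with $\tE \to \tB$ the unstraightening of $F$, we identify such sub-data with decorations of $\tE$ satisfying the conditions of a decorated $\ve$-fibration, as spelled out for $\ve=(0,1)$ in \cref{remark:dec fib explicit}. Given the sub-data, we define $\tEd_{(1)}$ as the morphisms $f \colon x \to y$ whose factorization $x \to f_!x \to y$ through the cocartesian lift has decorated fibre part. The closure under composition and the condition that a triangle with $h$ decorated and $f$ cocartesian forces $g$ decorated are exactly the statement that $F$ on 1-morphisms preserves decorated 1-morphisms in the fibres. We define $\tEd_{(2)}$ analogously, by factoring a 2-morphism over a decorated one as a cartesian lift followed by a fibrewise 2-morphism; the stated conditions then correspond to fibrewise decorations being stable under transport. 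We deduce that $p^{\leq 1}_{(1)}$ is an $i$-fibration and $p_{(2)}$ is a partial fibration, and that the squares are morphisms of partial fibrations, directly from \cref{lem:cart 2mor detect after compose cocart mor}. Conversely, restricting the decorations of a decorated $\ve$-fibration to the fibres recovers the sub-data.

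This produces a bijection on objects compatible with the forgetful functors. For the morphisms of fibrations, a map of $\ve$-fibrations preserves the decorations \IFF{} the corresponding natural transformation has decorated components, since it preserves cartesian lifts and hence the factorizations above. For 2-morphisms, both sides impose no extra condition: $\DPFIB$ is locally full in $\DCATITsl{\tB}$, which is locally full in $\CATITsl{\tB}$, and the target uses all transformations. Hence the square is a pullback along a locally fully faithful right-hand map, and the top map is an equivalence.

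We expect the main obstacle to be the first step: making precise, model-independently, that $\FUN(\tB^{\veop},\DCATIT)$ is cut out of $\FUN(\tB^{\veop},\CATIT)$ by pointwise subobject data. We would try to do this by describing $\DCATIT$ as a pullback of \itcats{} $\CATIT \times_{\CATI} \mathrm{Ar}_{\mathrm{fa}} \times \dots$ built from the $\CatI$-enrichments of \cref{propn:upgrade functor}, checking that the relevant maps are locally fully faithful, and then using that $\FUN(\tB^{\veop},\blank)$ preserves pullbacks. Naturality in $\tB$ then follows from the naturality of \cref{thm:str}, since pullback preserves both the factorizations and the decorations.
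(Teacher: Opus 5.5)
There is a genuine gap: you never construct the comparison functor. What you produce is, for each $\ve$-fibration $\tE \to \tB$, a bijection between admissible decorations of $\tE$ and lifts of its straightening along $\Ud \colon \DCATIT \to \CATIT$, together with a matching of which morphisms preserve decorations. In the $\infty$-categorical setting this does not by itself define a functor $\DFIB^{\ve}_{/\tB} \to \FUN(\tB^{\veop}, \DCATIT)$. Neither forgetful functor to $\FIB^{\ve}_{/\tB}$ is an inclusion, since one fibration carries many decorations, so you cannot treat both sides as the same sub-\icat{} and simply compare objects and morphisms. Your concluding sentence presupposes the top horizontal map, and the claimed naturality in $\tB$ likewise has nothing to apply to.

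The paper gets the functor from straightening itself, by passing through $\tB^{\leq 1}$. Over an \icat{} $\oB$, the three pieces $\tE_{(1)}$, $\tE$, $\tE_{(2)}$ of a decorated fibration are all $0$-fibrations, linked by maps of fibrations. Hence $\DFib^{(0,1)}_{/\oB}$ is a full subcategory of $\Fun(\Lambda^{2,\op}_{0}, \Fib^{(0,1)}_{/\oB}) \simeq \Fun(\oB, \Fun(\Lambda^{2,\op}_{0}, \CatIT))$, and one only checks that the decoration conditions are fibrewise (\cref{propn: dec str over 1-cat}). Your explicit description of the decorated 2-morphisms over $\tB$ is then packaged functorially as the pullback square of \cref{propn:DFib pb from 1-cat}. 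Note that the class of 2-morphisms of the form ``fibrewise decorated followed by cartesian'' (for $\ve=(0,1)$ this is the correct order, not cartesian first) is not obviously closed under vertical composition; \cref{lem:reconstruct dec fib from 1-cat} needs an interchange argument for this.

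The step you flag as the main obstacle is actually formal. $\Ud$ is locally fully faithful and $\tB^{\leq 1} \to \tB$ is left orthogonal to locally fully faithful functors. This gives a pullback $\Fun(\tB, \DCATIT) \simeq \Fun(\tB, \CATIT) \times_{\Fun(\tB^{\leq 1}, \CATIT)} \Fun(\tB^{\leq 1}, \DCATIT)$, so no lax diagram \itcats{} are needed.

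If you want to keep your object-level approach, you can supply the missing ingredient as follows. Both forgetful functors $\DFib^{\ve}_{/\tB} \to \Fib^{\ve}_{/\tB}$ and $\Fun(\tB^{\veop}, \DCATIT) \to \Fun(\tB^{\veop}, \CATIT)$ are cartesian fibrations with poset fibres, because admissible decorations pull back along morphisms of fibrations. They are therefore classified by functors into the $1$-category of posets, and such functors factor through homotopy categories. A family of order isomorphisms compatible with pullback then gives an equivalence of underlying \icats{}, and your morphism criterion does provide this compatibility. Finally, \cref{obs: loc ff can check eqce on 1-cats} upgrades this to the \itcatl{} statement. Naturality in $\tB$ would still need a separate argument, so some such construction has to be made explicit.
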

We will prove this in the case $\vepsilon=(0,1)$, leaving the 3 other
variants implicit. Since the vertical functors here are locally fully
faithful (by \cite[Theorem 2.6.3]{AGH24} in the right-hand case), the
following observation shows it suffices to prove that we get an
equivalence on underlying \icats{}:
\begin{observation}\label{obs: loc ff can check eqce on 1-cats}
  Suppose we have locally fully faithful functors $\tA, \tB \to \tC$
  and an equivalence $\tA^{\leq 1} \simeq \tB^{\leq 1}$ over
  $\tC^{\leq 1}$. Then we can lift this to an equivalence $\tA \simeq
  \tB$ over $\tC$. Indeed, we have a commutative square
  \[
    \begin{tikzcd}
     \tA^{\leq 1} \ar[r] \ar[d] & \tB  \ar[d] \\
     \tA \ar[r] \ar[dotted, ur] & \tC
    \end{tikzcd}
  \]
  where the left vertical map is left orthogonal to the right vertical
  map, so that there is a unique dotted lift $\tA \to \tB$. Similarly,
  the top horizontal map is left orthogonal to the bottom horizontal
  map, and by uniqueness the composites of these lifts are identities.
\end{observation}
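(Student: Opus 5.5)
The plan is to use the factorization system on $\CatIT$ whose right class consists of the locally fully faithful functors. This is the same factorization system used in the proof that $\Ar_{\mathrm{lff}}(\CatIT) \simeq \Ar_{\mathrm{fa}}(\CatI) \times_{\CatI} \CatIT$. By \cite[Theorem 5.3.7]{Soergel}, its left class consists of the functors that are essentially surjective on objects and on all mapping \icats{}. Throughout, write $\phi \colon \tA^{\leq 1} \xrightarrow{\sim} \tB^{\leq 1}$ for the given equivalence over $\tC^{\leq 1}$, and $a \colon \tA \to \tC$, $b \colon \tB \to \tC$ for the two locally fully faithful functors.

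\textbf{Step 1: the counits lie in the left class.} For any \itcat{} $\tX$, the counit $\tX^{\leq 1} \to \tX$ is an equivalence on cores. On mapping \icats{} it is the inclusion $\tX(x,y)^{\simeq} \to \tX(x,y)$, which is essentially surjective. Hence $\tA^{\leq 1} \to \tA$ and $\tB^{\leq 1} \to \tB$ are left orthogonal to $a$ and $b$.

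\textbf{Step 2: construct the lifts.} Consider the commutative square with top map $\tA^{\leq 1} \xto{\phi} \tB^{\leq 1} \to \tB$, left map $\tA^{\leq 1} \to \tA$, right map $b$, and bottom map $a$. It commutes because $\phi$ lies over $\tC^{\leq 1}$. Orthogonality gives a unique (up to a contractible space of choices) diagonal $f \colon \tA \to \tB$ with $bf \simeq a$ and $f|_{\tA^{\leq 1}} \simeq \phi$. Symmetrically, using $\phi^{-1}$ we obtain $g \colon \tB \to \tA$ over $\tC$ extending $\phi^{-1}$.

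\textbf{Step 3: the composites are identities.} The composite $gf$ is a diagonal filler for the lifting problem given by the square with left map $\tA^{\leq 1} \to \tA$, right map $a$, top map the inclusion $\tA^{\leq 1} \to \tA$ (since $\phi^{-1}\phi \simeq \id$), and bottom map $a$. The identity $\id_{\tA}$ is another filler for the same square. By uniqueness of lifts (the space of fillers is contractible), $gf \simeq \id_{\tA}$ over $\tC$, compatibly with the restriction to $\tA^{\leq 1}$. The same argument gives $fg \simeq \id_{\tB}$. Therefore $f$ is an equivalence over $\tC$, and it extends $\phi$.

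The only step needing genuine input is identifying the left class of the factorization system. This is handled by the citation to Soergel, which the paper already uses earlier, so I expect no real obstacle. What remains is bookkeeping: making sure the uniqueness is applied in the space of lifts relative to both the top and bottom maps, so that the equivalences obtained are equivalences over $\tC$ and not just of underlying \itcats{}.
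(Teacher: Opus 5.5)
Your proposal is correct and is essentially the paper's argument. The counit $\tA^{\leq 1} \to \tA$ is left orthogonal to locally fully faithful functors, which gives unique lifts in both directions, and uniqueness forces their composites to be identities; the only cosmetic difference is that you build the inverse from a second square using $\phi^{-1}$, whereas the paper transposes the same square, using that $\tA^{\leq 1} \simeq \tB^{\leq 1} \to \tB$ is left orthogonal to $\tA \to \tC$.
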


We first consider the case where the base is an \icat{}:
\begin{propn}\label{propn: dec str over 1-cat}
  For $\oB$ an \icat{} there is a natural commutative square of \icats{}
    \[
    \begin{tikzcd}
      \DFib^{(0,1)}_{/\oB} \ar[r, "\sim"] \ar[d] & \Fun(\oB,
      \DCatIT)  \ar[d] \\
      \Fib^{(0,1)}_{/\oB} \ar[r, "\sim"'] & \Fun(\oB, \CatIT).
    \end{tikzcd}
  \]
\end{propn}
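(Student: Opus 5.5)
The plan is to reduce everything to the undecorated straightening of \cref{thm:str}, applied levelwise to spans.

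\textbf{Step 1 (reduce to spans of fibrations).} By definition $\DCatIT$ is the full subcategory of $\Fun(\Lambda^{2,\op}_{0},\CatIT)$ on spans whose legs are, respectively, wide locally full and wide locally wide inclusions. Hence $\Fun(\oB,\DCatIT)$ is the full subcategory of
\[ \Fun(\oB,\Fun(\Lambda^{2,\op}_{0},\CatIT)) \simeq \Fun(\Lambda^{2,\op}_{0},\Fun(\oB,\CatIT)) \]
on spans of natural transformations whose components at every $b\in\oB$ have these two properties. Applying the (natural) straightening equivalence $\Fib^{(0,1)}_{/\oB}\simeq\Fun(\oB,\CatIT)$ levelwise, this becomes the full subcategory of $\Fun(\Lambda^{2,\op}_{0},\Fib^{(0,1)}_{/\oB})$ on spans
\[ \tE_{(1)}\to\tE\leftarrow\tE_{(2)} \]
of $(0,1)$-fibrations over $\oB$ and morphisms of fibrations whose restrictions to each fibre are a wide locally full, respectively wide locally wide, inclusion.

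Since $\oB$ is an \icat{}, all its 2-morphisms are invertible. A $p$-cartesian 2-morphism over an invertible one is itself invertible, so the 2-morphism parts of all fibration conditions over $\oB$ are automatic. Also $\oB^{\sharp\sharp}_{(1)}=\oB=\oB^{\sharp\sharp}_{(2)}$.

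\textbf{Step 2 (fibrewise inclusions are global inclusions).} I would show that a morphism of $(0,1)$-fibrations $F\colon\tE'\to\tE$ over $\oB$ that is fibrewise a wide locally full (resp.\ wide locally wide) inclusion is globally one. For this I use the formula for mapping \icats{} in a cocartesian fibration over an \icat{}: for each $f\in\oB(px,py)$ the fibre of $\tE(x,y)\to\oB(px,py)$ over $f$ is $\tE_{py}(f_{!}x,y)$, naturally in $f$. Since $F$ preserves cocartesian lifts, $F$ on mapping \icats{} is, over $\oB(px,py)$, the family of fibrewise functors $\tE'_{py}(f_!x,y)\to\tE_{py}(f_!x,y)$. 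This implies that $F$ is locally fully faithful (resp.\ locally an inclusion) and wide; equivalently, $F$ is right orthogonal to $\partial C_{2}\to C_{2}$ (resp.\ $\partial[1]\to[1]$ and $\partial C_{3}\to C_{2}$) as in \cref{obs:adjointDcatIT}.

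The result is a decorated \itcat{} $\tEd$ over $\oB^{\sharp\sharp}$ such that:
\begin{itemize}
\item $\tEd_{(1)}\to\oB$ is a cocartesian fibration whose cocartesian morphisms are cocartesian in $\tE$;
\item $\tEd_{(2)}\to\oB$ is a fibration preserved by the inclusion.
\end{itemize}
These are exactly the conditions defining a decorated $(0,1)$-fibration over $\oB^{\sharp\sharp}$. Morphisms match: maps of spans in $\Fib^{(0,1)}_{/\oB}$ are maps over $\oB$ preserving cocartesian morphisms, and by faithfulness of $\Ud$ these are decorated functors preserving cocartesian morphisms.

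\textbf{Step 3 (essential surjectivity).} Conversely, given a decorated $(0,1)$-fibration $p\colon\tEd\to\oB^{\sharp\sharp}$, I need the span $\tEd_{(1)}\to\tE\leftarrow\tEd_{(2)}$ to consist of $(0,1)$-fibrations over $\oB$ and morphisms thereof.
\begin{itemize}
\item For $\tEd_{(1)}$: it contains all $p$-cocartesian morphisms, and these remain cocartesian in the locally full $\tEd_{(1)}$. This uses the triangle-closure condition of \cref{remark:dec fib explicit}: a decorated $h$ factoring through a cocartesian $f$ has decorated second factor $g$, so the pullback squares of mapping \icats{} restrict to $\tEd_{(1)}$.
\item For $\tEd_{(2)}$: the corresponding clauses of \cref{remark:dec fib explicit} give the same conclusion, with 2-morphism conditions trivial as noted in Step 1.
\end{itemize}
These constructions are mutually inverse, giving the top equivalence. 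The square commutes because both vertical functors evaluate the span at its middle term $\tE$.

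\textbf{Main obstacle.} I expect the main difficulty to be Step 2, the passage between fibrewise and global inclusion properties, together with the identification of cocartesian morphisms in the subobjects. The key point is to use the naturality in $f$ of the fibre decomposition of $\tE(x,y)\to\oB(px,py)$, so that the fibrewise inclusions assemble into inclusions of total mapping \icats{}.
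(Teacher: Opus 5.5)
Your proposal is correct and follows the paper's route: you identify decorated $(0,1)$-fibrations over $\oB$ with spans of $(0,1)$-fibrations and morphisms thereof, straighten levelwise, and use that wide locally full (resp.\ wide and locally wide) inclusions of fibrations are detected fibrewise. You spell out what the paper only calls ``easy to see'', namely the fibre formula for mapping \icats{} and the triangle-closure condition. The converse direction you leave implicit, global $\Rightarrow$ fibrewise, follows from pullback-stability of these right orthogonal classes.
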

\begin{proof}
  Since $\oB$ is an \icat{}, we can think of a decorated $(0,1)$-fibration over $\oB$ as a diagram
    \[
    \begin{tikzcd}
     \tEd_{(1)} \ar[r] \ar[d, "p_{(1)}"'] & \tE  \ar[d, "p"] &
     \tEd_{(2)} \ar[l] \ar[d, "p_{(2)}"]\\
     \oB \ar[r, "="] & \oB & \oB, \ar[l, "="']
    \end{tikzcd}
  \]
  where $p$, $p_{(1)}$, and $p_{(2)}$ are all $0$-fibrations, and the top
  horizontal maps are morphisms of $0$-fibrations. Similarly,
  morphisms in $\DFib^{(0,1)}_{/\oB}$
  correspond to maps of such cospans over $\oB$ where all components
  preserve cocartesian morphisms. We can thus
  identify $\DFib^{(0,1)}_{/\oB}$ with a full subcategory of
  $\Fun(\Lambda^{2,\op}_{0}, \Fib^{(0,1)}_{/\oB})$. Under
  straightening, the latter is equivalent to
  $\Fun(\oB, \Fun(\Lambda^{2,\op}_{0}, \Fun(\oB, \CatIT))$. As it is
  easy to see that a morphism of fibrations over $\oB$ is a wide
  locally full inclusion or a wide and locally wide inclusion \IFF{}
  it is so fibrewise, it follows that the full
  subcategory corresponding to $\DFib^{(0,1)}_{/\oB}$ can be
  identified as $\Fun(\oB,
  \DCatIT)$, as required.
\end{proof}

\begin{propn}\label{propn:DFib pb from 1-cat}
  For $\tB$ an \itcat{}, the natural commutative square of \icats{}
  \[
    \begin{tikzcd}
      \DFib^{(0,1)}_{/\tB}
      \ar[r] \ar[d] & \Fib^{(0,1)}_{/\tB}  \ar[d] \\
      \DFib^{(0,1)}_{/\tB^{\leq 1}} \ar[r] &
      \Fib^{(0,1)}_{/\tB^{\leq 1}}
    \end{tikzcd}
  \]
  is a pullback.
\end{propn}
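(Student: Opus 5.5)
We must show: a decorated $(0,1)$-fibration over $\tB^{\sharp\sharp}$ is the same as a $(0,1)$-fibration $p\colon \tE \to \tB$ together with a decoration of $\tE$ such that the restriction $p^{\leq 1}$-pullback $\tE\times_{\tB}\tB^{\leq 1} \to \tB^{\leq 1}$, with the induced decoration, is a decorated $(0,1)$-fibration; and similarly for morphisms. The plan is to show that the map from $\DFib^{(0,1)}_{/\tB}$ to the pullback is fully faithful and essentially surjective.

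\textbf{Fully faithful.} Both horizontal functors are subcategory inclusions relative to the forgetful functors: a morphism of decorated fibrations is a morphism of fibrations that is also a decorated functor. By \cref{cor:forgetfaithful}, $\Ud$ is faithful, so the horizontal functors are faithful. Mapping spaces in the pullback are therefore the subspaces of morphisms of fibrations $F\colon \tE\to\tF$ over $\tB$ whose restriction over $\tB^{\leq 1}$ is decorated. Now $\tE\times_{\tB}\tB^{\leq 1} \to \tE$ is a wide, locally full inclusion containing all objects, 1-morphisms and 2-morphisms of $\tE$; only 2-morphisms are lost, and 2-morphisms in $\tE$ over non-invertible 2-morphisms of $\tB$ are absent. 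So I first check that a decoration of $\tE$ is determined by its restriction. Decorated 1-morphisms are all visible in the restriction. A decorated 2-morphism $\alpha$ over a 2-morphism $\beta$ of $\tB$ must be recovered using the cartesian lifts. Factor $\alpha\colon f\Rightarrow g$ as $f \Rightarrow \beta^{*}g \Rightarrow g$, where the second map is the $p$-cartesian lift (decorated by definition) and the first lies over an identity. The defining 2-out-of-3 condition from \cref{remark:dec fib explicit}, applied to the base $\tB^{\sharp\sharp}$, then shows that $\alpha$ is decorated \IFF{} the fibrewise factor is decorated. The fibrewise factor lives in the restriction. Hence decorations on $\tE$ correspond injectively to those on the restriction, and preservation can be tested there. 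This gives full faithfulness on morphisms, and monomorphism on objects once essential surjectivity is set up.

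\textbf{Essentially surjective.} Given a $(0,1)$-fibration $p$ and a decoration on $\tE^{\leq 1}$-part making the restriction a decorated fibration, define the decoration of $\tE$ as follows. Decorated 1-morphisms are the same. Decorated 2-morphisms are those whose fibrewise factor, in the factorization above, is decorated. I then verify the axioms:
\begin{itemize}
\item \emph{Closure under vertical composition.} This uses the uniqueness of cartesian lifts and the fact that cartesian 2-morphisms compose.
\item \emph{Closure under whiskering.} Whiskering preserves cartesian 2-morphisms, which in a $(0,1)$-fibration are closed under whiskering. The fibrewise factor of a whiskering is the whiskering of the fibrewise factor, and whiskering of fibrewise-decorated 2-morphisms is decorated because the restriction is a decorated $\tB^{\leq 1}$-fibration. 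Note that whiskering by arbitrary 1-morphisms stays within the restriction.
\item \emph{Local wideness.} This holds since identities factor trivially.
\end{itemize}
Finally, check the conditions of \cref{remark:dec fib explicit}: the cartesian 2-morphisms are decorated, and the 2-out-of-3 and cocartesian-detection conditions reduce fibrewise to those over $\tB^{\leq 1}$.

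\textbf{Main obstacle.} The hard step is the whiskering compatibility together with the verification that the factorization is compatible with horizontal composition. This requires care with how cartesian lifts interact with whiskering; there, \cref{lem:cart 2mor detect after compose cocart mor} and the closure properties from \cref{obs:partial fib canonical dec} are the tools I would use.
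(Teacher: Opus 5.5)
Your plan follows the same route as the paper (cf.\ \cref{lem:reconstruct dec fib from 1-cat}). Decorated 2-morphisms of $\tE$ are exactly composites of a decorated 2-morphism over an equivalence followed by a cartesian one. So the decoration is determined by its restriction to $\tE' := \tE\times_{\tB}\tB^{\leq 1}$, and conversely one decorates $\tE$ by these composites. Your full-faithfulness argument is fine, and proving fully faithful plus essentially surjective is a legitimate substitute for the paper's explicit inverse.

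The gap is in essential surjectivity, at the step you treat as routine. Write $\alpha_k = c_k\circ\phi_k$ with $c_k$ cartesian and $\phi_k$ a decorated 2-morphism over an identity. Closure under vertical composition requires commuting $c_1$, which lies over a typically non-invertible 2-morphism $\theta$ of $\tB$, past $\phi_2$. Uniqueness of cartesian lifts only gives $\phi_2\circ c_1\simeq c'\circ\theta^{*}\phi_2$. Nothing you cite says the cartesian transport $\theta^{*}\phi_2$ is decorated. Since $\theta$ is not a 2-morphism of $\tB^{\leq 1}$, this transport is not part of the data of $\tE'^{\diamond}$ at all. Composition of cartesian 2-morphisms and the 2-out-of-3 property do not touch this issue.

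The missing idea is the interchange trick used in the paper. Take $\alpha\colon f\Rightarrow g$ cartesian and $\beta\colon g\Rightarrow h$ decorated over an identity.
\begin{enumerate}
\item Factor $g\simeq g'\gamma$ with $\gamma\colon x\to x'$ cocartesian over $p(g)$. Since $\gamma$ is cocartesian, $\beta\simeq\beta'\gamma$ for a unique $\beta'\colon g'\Rightarrow h'$ over an identity. This $\beta'$ is decorated by the cocartesian-detection axiom of \cref{remark:dec fib explicit} for $\tE'^{\diamond}$.
\item By uniqueness of cartesian lifts, $\alpha\simeq g'\alpha'$ with $\alpha'\colon f'\Rightarrow\gamma$ cartesian.
\item By interchange, $\beta\circ\alpha\simeq(\beta'\gamma)\circ(g'\alpha')\simeq(h'\alpha')\circ(\beta'f')$.
\end{enumerate}
This composite is in the right order: $\beta'f'$ is decorated by whiskering-closure in $\tE'^{\diamond}$, and $h'\alpha'$ is cartesian. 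In other words, cartesian transport along $\theta$ is realized as whiskering by the 1-morphism $f'$, and that is what makes the decoration stable.

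By contrast, the step you flag as the main obstacle is immediate. If $\alpha=c\circ\phi$, then $k\alpha=(kc)\circ(k\phi)$ with $kc$ cartesian and $k\phi$ decorated in $\tE'^{\diamond}$. Horizontal composition then reduces to whiskering plus vertical composition. (Minor point: $\tE'\to\tE$ is wide and locally wide, not locally full.)
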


For the proof, we need the following observation:
\begin{lemma}\label{lem:reconstruct dec fib from 1-cat}
  Suppose $p \colon \tE \to \tB$ is a $(0,1)$-fibration and set $\tE' := \tE \times_{\tB} \tB^{\leq 1}$. If $\tE'^{\diamond} \to \tB^{\leq 1,\sharp}$ is a decorated $(0,1)$-fibration, then
  \begin{enumerate}[(i)]
  \item the 2-morphisms in $\tE$ that factor as a vertical composite
    of a decorated 2-morphism in $\tE'^{\diamond}_{(2)}$ followed by a
    cartesian 2-morphism are closed under both horizontal and vertical
    composition, and so form a wide and locally wide subcategory
    $\tEd_{(2)}$ of $\tE$, which is the minimal such containing both
    $\tE'^{\diamond}_{(2)}$ and $\tEn_{(2)}$;
  \item the restriction of $p$ to $\tEd_{(2)} \to \tB$ is a
    $(0,1)$-fibration, and the inclusion $\tEd_{(2)} \to \tE$ is a
    morphism of $(0,1)$-fibrations;
  \item if $\tEd$ is the decoration of $\tE$ given by the 1-morphisms in $\tE'^{\diamond}_{(1)}$ and the 2-morphisms in $\tEd_{(2)}$, then $\tEd \to \tBd$ is a decorated $(0,1)$-fibration.
  \end{enumerate}
\end{lemma}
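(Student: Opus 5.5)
The plan is to work throughout with the factorization $\alpha = c \circ \delta$ of a 2-morphism in $\tE$ into a decorated 2-morphism $\delta$ of $\tE'^{\diamond}_{(2)}$ followed by a cartesian 2-morphism $c$. This exists for every 2-morphism: if $\alpha \colon f \Rightarrow g$ lies over $\beta \colon pf \Rightarrow pg$ in $\tB$, take the cartesian lift $\bar\beta \colon \beta^{*}g \Rightarrow g$; then $\alpha$ factors uniquely as $\bar\beta \circ \alpha'$ with $\alpha'$ lying over an identity 2-morphism. Hence a 2-morphism lies in the class of (i) \IFF{} this fibrewise part $\alpha'$ is decorated in $\tE'^{\diamond}$. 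For this characterization I use that the decorated 2-morphisms over identities are closed under composition with cartesian 2-morphisms over identities, \ie{} with equivalences. I also use uniqueness of factorizations in the cartesian fibration $\tE(x,y) \to \tB(px,py)$.

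For (i), take vertical composition first. Given $\alpha_1 = c_1\delta_1$ and $\alpha_2 = c_2\delta_2$, rewrite $\delta_2 c_1$ as $c\,\delta$. Here $c$ is the cartesian lift of the same base 2-morphism, and $\delta$ is obtained from the cartesian transport of $\delta_2$. The key input is that in the decorated fibration $\tE'^{\diamond}$ (over $\tB^{\leq 1,\sharp}$, whose hom-categories have all 2-morphisms decorated), decorated 2-morphisms are stable under cartesian transport. This follows from the triangle condition in \cref{remark:dec fib explicit}: in $\delta_2 \circ c_1 = c \circ \delta$, the left side is decorated because $c_1$ and $\delta_2$ are decorated, and $c$ is cartesian, so $\delta$ is decorated. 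For horizontal composition, it suffices to treat whiskering by 1-morphisms, since horizontal composites decompose into whiskerings and vertical composites. Whiskering preserves cartesian 2-morphisms because $p$ is a $(0,1)$-fibration. It also preserves decorated ones because $\tE'^{\diamond}_{(2)}$ is a sub-\itcat{} of $\tE'$, and $\tE'$ contains all 1-morphisms of $\tE$. Wideness, local wideness and minimality are then immediate, since both generating classes are contained in the class and every member is a composite of the two.

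For (ii), cartesian lifts of 1- and 2-morphisms already lie in $\tEd_{(2)}$, because it contains all 1-morphisms and all cartesian 2-morphisms. It remains to check that these lifts stay cocartesian and cartesian for the restricted functor $\tEd_{(2)} \to \tB$. For a cocartesian 1-morphism $f \colon x \to y$, I need the square of mapping \icats{} restricted to $\tEd_{(2)}(y,z) \to \tEd_{(2)}(x,z)$ to remain a pullback. Using the characterization via $\alpha'$, this reduces to two facts: the fibrewise part of $\alpha \circ f$ is the fibrewise part of $\alpha$ whiskered by $f$, and a 2-morphism of $\tE'(y,z)$ over an identity is decorated \IFF{} its precomposite with $f$ is. The latter is the first sub-bullet of \cref{remark:dec fib explicit}, applied to $\tE'^{\diamond}$. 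Cartesianness of the 2-morphism lifts inside $\tEd_{(2)}(x,y)$ follows from the triangle condition established in (i). The inclusion is then clearly a morphism of fibrations.

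For (iii), I will go through the conditions of \cref{remark:dec fib explicit} for $\tEd \to \tB^{\sharp\sharp}$. The conditions on decorated 1-morphisms only involve $\tE^{\leq 1} = \tE'^{\leq 1}$ and so are inherited from $\tE'^{\diamond}$. The conditions on 2-morphisms follow from the compatibility just proved in (ii), together with the triangle property from (i). I expect the main obstacle to be the vertical-composition step in (i), namely making the interchange $\delta_2 c_1 = c\,\delta$ coherent. I would handle it by phrasing everything as uniqueness of factorizations in the cartesian fibration $\tE(x,y) \to \tB(px,py)$.
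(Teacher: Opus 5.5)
Your proposal has a genuine gap. It sits in the vertical-composition step of (i), which is the only non-formal part of the lemma.

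You correctly reduce to showing the following: for $c_1 \colon f \Rightarrow g$ cartesian and $\delta_2 \colon g \Rightarrow h$ decorated over an identity, the factorization $\delta_2 \circ c_1 \simeq c \circ \delta$ has $\delta$ decorated. In other words, fibrewise-decorated 2-morphisms must be stable under cartesian transport along the arbitrary base 2-morphism $p(c_1)$. Your justification applies the triangle condition of \cref{remark:dec fib explicit} to $\tE'^{\diamond} \to \tB^{\leq 1,\sharp}$, and this does not work:
\begin{itemize}
\item $c_1$ lies over a possibly non-invertible 2-morphism of $\tB$. So it is not a 2-morphism of $\tE' = \tE \times_{\tB} \tB^{\leq 1}$ at all; it is neither decorated there nor part of any triangle in $\tE'(x,y)$.
\item In $\tE'$ every cartesian 2-morphism lies over an invertible one and is therefore invertible. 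So the triangle condition for $\tE'^{\diamond}$ only says that decorated 2-morphisms are closed under composition with equivalences. It says nothing about transport along non-invertible base 2-morphisms.
\item Invoking closure under composition, or the triangle condition, for the decoration $\tEd$ on $\tE$ instead would be circular, since that is what (i) and (iii) assert.
\item Phrasing things through uniqueness of factorizations in $\tE(x,y) \to \tB(px,py)$ only gives existence of $\delta$, not that it is decorated. In general a subcategory of fibrewise morphisms in a cartesian fibration need not be preserved by transport, so genuinely new input is needed.
\end{itemize}

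The missing idea, which is how the paper argues, is to use the cocartesian 1-morphisms of $p$ together with whiskering-stability of decorations.
\begin{itemize}
\item Take $\gamma \colon x \to x'$ to be $p$-cocartesian over $p(g) \simeq p(h)$. Then $g \simeq g'\gamma$, $h \simeq h'\gamma$ and $\delta_2 \simeq \delta_2'\gamma$, where $g', h'$ lie over identities.
\item $\gamma$ is also cocartesian in $\tE'$, so the first sub-bullet of \cref{remark:dec fib explicit}, applied to $\tE'^{\diamond}$, shows that $\delta_2'$ is decorated.
\item Let $\alpha' \colon f' \Rightarrow \gamma$ be the cartesian lift of $p(c_1)$ at $\gamma$. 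Uniqueness of cartesian lifts gives $c_1 \simeq g'\alpha'$.
\item The interchange law then yields
\[ \delta_2 \circ c_1 \simeq (\delta_2'\gamma)\circ(g'\alpha') \simeq (h'\alpha') \circ (\delta_2' f'), \]
where $\delta_2' f'$ is decorated (a whiskering of a decorated 2-morphism) and $h'\alpha'$ is cartesian.
\end{itemize}
With this step supplied, the rest of your outline for (ii) and (iii) goes through. Cartesianness inside $\tEd_{(2)}(x,y)$ actually follows directly from your characterization together with cancellation of cartesian 2-morphisms, without needing the triangle condition.
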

\begin{proof}
  In (i), it is clear that the specified 2-morphisms are closed under  horizontal composition, since this is true for both the decorated 2-morphisms in $\tE'^{\diamond}$ and the $p$-cartesian 2-morphisms in $\tE$. It therefore suffices to consider a vertical composition
  \[
    \begin{tikzcd}[column sep=large]
      x \ar[r, bend left=75, "f", ""'{name=A}] \ar[r, "g"{near start}, ""{name=B},
      ""'{name=C}] \ar[r, bend right=75, ""{name=D}, "h"']& y
      \arrow[from=A, to=B, Rightarrow, "\alpha"]
      \arrow[from=C, to=D, Rightarrow, "\beta"{near start}]
    \end{tikzcd}
  \]
  with $\alpha$ cartesian and $\beta$ decorated over an
  equivalence (\ie{} in the ``wrong'' order). Then $\beta$  factors as
  \[
    \begin{tikzcd}[column sep=large]
      x \ar[r, "\gamma"] & x' \ar[r, bend left=30, "g'", ""'{name=A}]  \ar[r, bend right=30, ""{name=D}, "h'"']& y
      \arrow[from=A, to=D, Rightarrow, "\beta'"]
    \end{tikzcd}
  \]
  with $\gamma$ cocartesian over $p(g) \simeq p(h)$, while
  $\alpha$ factors as
  \[
    \begin{tikzcd}[column sep=large]
       x \ar[r, bend left=30, "f'", ""'{name=A}]  \ar[r, bend
       right=30, ""{name=D}, "\gamma"']& x' \ar[r, "g'"]
      \arrow[from=A, to=D, Rightarrow, "\alpha'"] & y
    \end{tikzcd}
  \]
  with $\alpha'$ cartesian. Thus our vertical composition is
  equivalent to the horizontal composition
  \[
    \begin{tikzcd}[column sep=large]
       x \ar[r, bend left=30, "f'", ""'{name=A}]  \ar[r, bend
       right=30, ""{name=B}, "\gamma"']& x' \ar[r, bend left=30, "g'", ""'{name=C}]  \ar[r, bend right=30, ""{name=D}, "h'"']& y,
      \arrow[from=C, to=D, Rightarrow, "\beta'"]
      \arrow[from=A, to=B, Rightarrow, "\alpha'"]
    \end{tikzcd}
  \]
  which we can in turn rewrite as a vertical composition in the
  ``right'' order. This proves (i).

  For (ii), we note firstly that cocartesian morphisms in $\tE$ are
  also cocartesian in $\tEd_{(2)}$, since both the morphisms in
  $\tE'^{\diamond}_{(2)}$ and $\tEn_{(2)}$ are detected after
  whiskering with a cocartesian morphism. Moreover, it is clear from the definition of $\tEd_{(2)}$ that for $x,y \in \tE$,
  the functor
  \[ \tEd_{(2)}(x,y) \to \tB(px,py)\] is a cartesian fibration with the cartesian morphisms inherited from $\tEd(x,y)$, as required. 

  Finally, (iii) is immediate from (ii) and the definition of a decorated $(0,1)$-fibration.
\end{proof}

\begin{proof}[Proof of \cref{propn:DFib pb from 1-cat}]
  Using \cref{lem:reconstruct dec fib from 1-cat}, we can define a functor from 
  \[ \Fib^{(0,1)}_{/\tB} \times_{\Fib^{(0,1)}_{/\tB^{\leq 1}}} \DFib^{(0,1)}_{/\tB^{\leq 1}} \to \DFib^{(0,1)}_{/\tB}\]
  by taking a pair $(p \colon \tE \to \tB, p' \colon \tE'^{\diamond} \to \tB^{\leq 1})$ (where $\tE' := \tE \times_{\tB} \tB^{\leq 1}$)
  to the localization to $\DCatIT$  of the diagram
  \[ \tE'^{\leq 1}_{(1)} \to \tE \from \tEn_{(2)} \amalg
    \tE'^{\diamond}_{(2)}\] over $\tB^{\sharp\sharp}$, which is
  clearly functorial in $(p,p')$.
  In this case, $\tEd_{(2)} \times_{\tB} \tB^{\leq 1}$ recovers $\tE'^{\diamond}_{(2)}$; on the other hand, for $\tEd \to \tB^{\sharp\sharp}$ in 
  $\DFib^{(0,1)}_{/\tB}$ we see that $\tEd_{(2)}$ consists precisely of those 2-morphisms that factor as a decorated 2-morphism over an equivalence followed by a cartesian 2-morphism. Thus the composites in both directions are equivalences, which completes the proof.
\end{proof}

\begin{proof}[Proof of \cref{thm: dec straighten}]
  From \cref{obs: loc ff can check eqce on 1-cats} it suffices to show
  that we have an equivalence on underlying \icats{}.
  By orthogonality we also have a natural pullback square of \icats{}
  \[
    \begin{tikzcd}
      \Fun(\tB, \DCATIT) \ar[r] \ar[d] & \Fun(\tB, \CATIT)  \ar[d] \\
      \Fun(\tB^{\leq 1}, \DCATIT) \ar[r] & \Fun(\tB^{\leq 1},
      \CATIT).
    \end{tikzcd}
  \]
  Combined with the pullback square from \cref{propn:DFib pb from
    1-cat} this means that it suffices to construct a natural cospan
  of equivalences
  \[
    \begin{tikzcd}
     \Fun(\tB^{\leq 1}, \DCatIT) \ar[r] \ar[d, "\sim"'] &
     \Fun(\tB^{\leq 1}, \CatIT)  \ar[d, "\sim"] & \Fun(\tB, \CATIT)
     \ar[l], \ar[d, "\sim"] \\
     \DFib^{(0,1)}_{/\tB^{\leq 1}} \ar[r] &
     \Fib^{(0,1)}_{/\tB^{\leq 1}} & \Fib^{(0,1)}_{/\tB}, \ar[l]
    \end{tikzcd}
  \]
  which we get from \cref{propn: dec str over 1-cat} and the
  naturality of straightening.
\end{proof}

Passing to left adjoints, we also get:
\begin{cor}\label{cor: dec str comp flat}
  There is a natural commutative square of \itcats{}
  \[
    \begin{tikzcd}
      \FIB^{\ve}_{/\tB} \ar[r, "\sim"] \ar[d, hookrightarrow, "(\blank)^{\natural}"] & \FUN(\tB^{\veop},
      \CATIT)  \ar[d, hookrightarrow, "(\blank)^{\flat\flat}_{*}"] \\
      \DFIB^{\ve}_{/\tB} \ar[r, "\sim"'] & \FUN(\tB^{\veop}, \DCATIT)..
    \end{tikzcd}
  \]
  Furthermore, the left adjoint to $(\blank)^{\natural}$ corresponds
  under straightening to composition with the localization functor
  $\Ld \colon \DCATIT \to \CATIT$. \qed
\end{cor}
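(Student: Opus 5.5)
The plan is to deduce the square from \cref{thm: dec straighten} by passing to adjoints, and then to identify the left adjoint of $(\blank)^{\natural}$. First I will construct the functor $(\blank)^{\natural} \colon \FIB^{\ve}_{/\tB} \to \DFIB^{\ve}_{/\tB}$, which sends an $\ve$-fibration to its canonical decoration $\tEn$. By \cref{ex:canonicalexamplesdecfib} this is a decorated $\ve$-fibration, and morphisms of fibrations are precisely the decorated functors between canonical decorations. So $(\blank)^{\natural}$ is fully faithful and is a section of the forgetful functor $\DFIB^{\ve}_{/\tB} \to \FIB^{\ve}_{/\tB}$.

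Next I will show that $(\blank)^{\natural}$ is in fact \emph{left} adjoint to this forgetful functor $U$, as an adjunction of \itcats{}. Let $\tEn$ be a canonically decorated fibration and $\tFd$ a decorated fibration. A morphism $\tEn \to \tFd$ of decorated fibrations is a functor over $\tB$ that sends $i$-cartesian 1-morphisms to decorated 1-morphisms and $j$-cartesian 2-morphisms to decorated 2-morphisms, and that preserves cartesian lifts. By the definition of a decorated fibration, the cartesian lifts in $\tF$ are automatically decorated. This condition is therefore exactly that of a morphism of $\ve$-fibrations $\tE \to U\tFd$, and the same holds on 2-morphisms.

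On the functor side, $(\blank)^{\flat\flat} \colon \CATIT \to \DCATIT$ is left adjoint to $\Ud$, so $(\blank)^{\flat\flat}_{*}$ is left adjoint to $(\Ud)_{*}$ on functor \itcats{}; postcomposition is a 2-functor and therefore preserves adjunctions. The right-hand vertical functor in \cref{thm: dec straighten} is $(\Ud)_{*}$, since straightening a decorated fibration forgets the decorations fibrewise. Since that square commutes and its horizontal functors are equivalences, the left adjoints of its vertical functors are identified. This gives the claimed square with $(\blank)^{\natural}$ and $(\blank)^{\flat\flat}_{*}$, both of which are fully faithful.

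For the last claim, $\Ld \dashv (\blank)^{\flat\flat}$ is an adjunction of \itcats{} by \cref{const: DCATIT as 2-cat}, so $(\Ld)_{*} \dashv (\blank)^{\flat\flat}_{*}$. Transporting this across the straightening equivalences identifies the left adjoint of $(\blank)^{\natural}$ with composition with $\Ld$.

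The main obstacle is checking that the right vertical map in \cref{thm: dec straighten} really is $(\Ud)_{*}$, and that the adjunction unit and counit match up through the equivalences. I would handle this using the construction in \cref{propn: dec str over 1-cat}, where the decorated straightening is built from the undecorated one applied to the three pieces of the span, so that forgetting to the middle term is evident. Everything else is formal manipulation of adjunctions.
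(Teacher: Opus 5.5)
Your proposal is correct and follows the paper's route: the paper obtains this corollary by passing to left adjoints of the vertical forgetful functors in the square of the decorated straightening theorem, and uses $\Ld \dashv (\blank)^{\flat\flat}$ for the final claim. Your explicit check that $(\blank)^{\natural}$ is left adjoint to the forgetful functor fills in a step the paper leaves implicit. The matching of units and counits that you flag as the main obstacle is not actually needed, since uniqueness of adjoints already identifies the two left adjoints through the straightening equivalences.
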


We expect that it is also possible to extend \cref{thm:elaxstr} to describe the
\itcats{} $\FUN(\tB, \DCATIT)^{\eplax}$ in terms of decorated
fibrations. However, as this seems somewhat annoying to prove and is
not required in this paper, we content ourselves with the following
statement in this case:
\begin{propn}\label{propn:lax tr DCATIT fib}
  Let $(\tB, E)$ be a marked \itcat{}. For functors
  $F,G \colon \tB \to \DCATIT$ with corresponding decorated
  $\ve$-fibrations $\tPd_{\ve},\tQd_{\ve} \to \tB^{\sharp\sharp}$, there is a
  natural equivalence
  \[ \Nat^{\elax}_{\tB,\DCATIT}(F,G) \simeq
    \DFun^{E\dcoc}_{/(\tB,E)^{\sharp}}(\tPd_{(0,1)}|_{E},
    \tQd_{(0,1)}|_{E}),\] where $\tPd_{(0,1)}|_{E}$ denotes the
  pullback $\tPd_{(0,1)} \times_{\tBss} (\tB,E)^{\sharp}$ (given by
  restricting the decorated 1-morphisms of $\tPd_{(0,1)}$ to those that lie over $E$)
  and
  \[\DFun^{E\dcoc}_{/(\tB,E)^{\sharp}}(\tPd_{(0,1)}|_{E},
    \tQd_{(0,1)}|_{E}) \subseteq
    \DFun_{/(\tB,E)^{\sharp}}(\tPd_{(0,1)}|_{E}, \tQd_{(0,1)}|_{E})\]
  is the full subcategory of functors that preserve cocartesian
  morphisms over $E$ and all cartesian 2-morphisms. Similarly, we have
  natural equivalences
  \[
    \begin{split}
      \Nat^{\eoplax}_{\tB,\DCATIT}(F,G)
      & \simeq  \DFun^{E\dcart}_{/(\tB^{\op},E)^{\sharp}}(\tPd_{(1,0)}|_{E},
                                          \tQd_{(1,0)}|_{E}) \\
      & \simeq \DFun^{E\dcart}_{/(\tB^{\coop},E)^{\sharp}}(\tPd_{(1,1)}|_{E},
        \tQd_{(1,1)}|_{E}), \\
\Nat^{\elax}_{\tB,\DCATIT}(F,G) & \simeq
    \DFun^{E\dcoc}_{/(\tB^{\co},E)^{\sharp}}(\tPd_{(0,0)}|_{E}, \tQd_{(0,0)}|_{E}).
    \end{split}
  \]
\end{propn}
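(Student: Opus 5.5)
We reduce to the undecorated statement \cref{thm:elaxstr}, using that decorations are a property-type extra structure detected on underlying \itcats{}. We treat the first equivalence ($\ve=(0,1)$, $E$-lax); the others follow by applying it to $\tB^{\op}$, $\tB^{\co}$, $\tB^{\coop}$ and using the variance changes of \cref{obs:fib change variance}.

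\textbf{Step 1: locally fully faithful embeddings on both sides.} The forgetful functor $\Ud\colon \DCATIT\to\CATIT$ is locally fully faithful by \cref{const: DCATIT as 2-cat}. Hence $\Nat^{\elax}_{\tB,\DCATIT}(F,G)$ is a full subcategory of $\Nat^{\elax}_{\tB,\CATIT}(\Ud F,\Ud G)$. By definition, an $E$-lax transformation $[1]\otimes_{\flat,E}\tB\to\CATIT$ lands in $\DCATIT$ exactly when each component $F(b)\to G(b)$ and each lax naturality 2-morphism (a natural transformation between functors $F(b)\to G(b')$) is a decorated functor, resp.\ a 2-morphism between decorated functors. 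The latter is automatic, since all natural transformations between decorated functors are 2-morphisms in $\DCATIT$. So the condition is only that the components $F(b)\to G(b)$ are decorated functors.

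On the fibrational side, $\DFun_{/(\tB,E)^{\sharp}}(\tPd|_E,\tQd|_E)$ is likewise a full subcategory of $\Fun_{/\tB}(\tP,\tQ)$, since decorations form a property and all 2-morphisms over $\tB$ are allowed. By \cref{thm:elaxstr} the undecorated side is identified with $\Fun^{E\dcoc}_{/\tB}(\tP,\tQ)$. It therefore suffices to show that the two full subcategories correspond.

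\textbf{Step 2: match the conditions.} Under straightening, the fibre of the functor $\Phi\colon\tP\to\tQ$ over $b$ is identified with the component $F(b)\to G(b)$. Moreover, by \cref{thm: dec straighten} (after restricting to $\{b\}$), the decoration of $\tPd$ restricted to the fibre is the decoration of $F(b)$. Thus if $\Phi$ is decorated, every component is decorated.

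For the converse, we use the explicit description in \cref{remark:dec fib explicit}. Every decorated 1-morphism of $\tPd|_E$ lies over $E$, and so factors as a cocartesian morphism followed by a decorated morphism in a fibre. Every decorated 2-morphism factors as a fibrewise decorated 2-morphism followed by a cartesian one (this is \cref{lem:reconstruct dec fib from 1-cat}(i), which applies to decorated fibrations). Since $\Phi$ preserves cocartesian morphisms over $E$ and all cartesian 2-morphisms, and since $\tQd$ contains these, decoratedness of $\Phi$ reduces to decoratedness on fibres.

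\textbf{Main obstacle.} The delicate point is the restriction to $E$. A decorated 1-morphism in $\tPd$ lying over a morphism not in $E$ need not factor through a cocartesian morphism that $\Phi$ preserves, so $\Phi$ would fail to be decorated there. This is exactly why the statement restricts to $\tPd|_E$, and we must check carefully that this restriction yields a decorated partial fibration over $(\tB,E)^{\sharp}$ in which the factorizations above still hold. A second point is naturality: the pointwise identification of fibres with $F(b)$ must be compatible with the equivalence of \cref{thm:elaxstr}. We take this compatibility from the naturality of that equivalence in the base, pulling back along $\{b\}\to\tB$.
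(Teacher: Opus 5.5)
Your proposal is correct and follows essentially the same route as the paper. You identify $\Nat^{\elax}_{\tB,\DCATIT}(F,G)$ inside $\Nat^{\elax}_{\tB,\CATIT}(\Ud F,\Ud G)$ as the transformations with decorated components, straighten via \cref{thm:elaxstr}, and then use the factorization of decorated 1- and 2-morphisms through (co)cartesian ones to turn the fibrewise condition into decoratedness over $E$. The one step you treat as holding ``by definition'' (that only the components need to lift along the locally fully faithful functor $\Ud$) is where the paper gives an actual argument, namely \cref{lem:lff on Funlax}, proved by an orthogonality/saturation reduction to the cases $[0],[1]^{\flat},[1]^{\sharp},C_{2}^{\flat}$. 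You should cite or prove that lemma rather than just assert it.
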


\begin{lemma}\label{lem:lff on Funlax}
  Suppose $F \colon \tC \to \tD$ is a locally fully faithful
  functor. Then so is $F_{*} \colon \FUN(\tA,\tC)^{E\dplax} \to
  \FUN(\tA,\tD)^{E\dplax}$ for any marked \itcat{} $(\tA,E)$, and we
  can identify
  \[ \Nat^{E\dplax}_{\tA,\tC}(\alpha,\beta) \subseteq
    \Nat^{E\dplax}_{\tA,\tD}(F\alpha, F\beta)\]
  as the full subcategory of those $E$-(op)lax transformations $\eta
  \colon F\alpha \to F\beta$ whose component $\eta_{x} \in
  \tD(F\alpha(x), F\beta(x))$ lies in $\tC(\alpha(x),\alpha(y))$ for
  every object $x \in \tA$.
\end{lemma}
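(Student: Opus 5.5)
The plan is to reduce the claim to an orthogonality statement, using the characterization of locally fully faithful functors recalled in \cref{defn:subcategories}. A functor is locally fully faithful \IFF{} it is right orthogonal to a finite set of maps. By \cite[\S 2.5]{AGH24} these are $\partial C_{2} \to C_{2}$ (local essential surjectivity is not required) together with $C_{2} \amalg_{[1]\amalg[1]} C_{2} \to C_{2}$, or equivalently $\partial C_{3} \to C_{2}$, the map detecting uniqueness of fillers. So, for each such map $K \to L$, I need to show that every lifting problem of $K \to L$ against $F_{*}$ has a unique solution.

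By adjunction, a map $K \to \FUN(\tA,\tC)^{E\dlax}$ is the same as a map $K \otimes_{\flat,E} \tA \to \tC$, and the oplax case is analogous with the order of the tensor reversed. The lifting problem against $F_{*}$ is therefore equivalent to one of
\[ K \otimes_{\flat,E} \tA \amalg_{K \otimes_{\flat,E} \tA} \dots \to L \otimes_{\flat,E} \tA \]
against $F$ itself. More precisely, it is a lifting problem for the map $K \otimes_{\flat,E}\tA \to L\otimes_{\flat,E}\tA$ against $F$.

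The key step is then to show that $K \otimes_{\flat,E} \tA \to L \otimes_{\flat,E} \tA$ lies in the left class of the factorization system whose right class is the locally fully faithful functors. That left class consists of the functors that are essentially surjective on objects and on mapping \icats{} (\cite[Theorem 5.3.7]{Soergel}, used earlier).

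For $K \to L$ equal to $\partial C_{2} \to C_{2}$ or $\partial C_{3}\to C_{2}$, the map is a bijection on objects. It is also surjective on the generating 1-morphisms, because both $K$ and $L$ have the same 1-skeleton $\partial C_{2}$ or $[1]\amalg_{\partial[1]}[1]$, and the Gray tensor is generated by the products of cells. Only 2-cells are added or identified.

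I expect the main obstacle here: making rigorous that the marked Gray tensor of such a map is still in the left class. I would try to argue via a pushout, since the left class is closed under colimits and pushouts. The Gray tensor preserves colimits in each variable, and the marked version is a pushout of such, so $(\blank) \otimes_{\flat,E}\tA$ preserves colimits. It should therefore suffice to check the generators against cells of $\tA$, namely $K \otimes [1]$ and $K \otimes C_{2}$. Both are explicit finite strict 2-categories where the claim can be verified by hand, or through \cref{obs:gray square pushout}.

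For the identification of $\Nat^{E\dplax}_{\tA,\tC}(\alpha,\beta)$, local full faithfulness gives the full subcategory inclusion. It remains to identify the essential image, i.e.\ which 1-morphisms $\eta\colon F\alpha\to F\beta$ lift. I would prove this by the same orthogonality against $\emptyset\to *$ relative to $[1]$. A transformation $[1]\otimes_{\flat,E}\tA \to \tD$ restricting to $F\alpha,F\beta$ on the ends lifts to $\tC$ \IFF{} it factors through the locally full image of $F$. This image is a locally full sub-\itcat{}, so by orthogonality against $\partial C_{2}\to C_{2}$ it is enough that the generating 1-morphisms of $[1]\otimes\tA$ land in it. Those generating 1-morphisms are the morphisms $\alpha(f)$, $\beta(f)$, which already lie in $\tC$, and the components $\eta_x$, which is exactly the stated condition.
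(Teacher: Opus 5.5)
The first half of your proposal takes a different route from the paper. The paper simply cites \cite[Corollary 2.7.13]{AGH24} for local full faithfulness of $F_{*}$. You instead adjoin to a lifting problem for $F$ against $K\otimes_{\flat,E}\tA\to L\otimes_{\flat,E}\tA$ and check that this map is essentially surjective on objects and 1-morphisms, which is a legitimate alternative. Two corrections there. First, right orthogonality to $\partial C_{2}\to C_{2}$ by itself already \emph{is} local full faithfulness: it says the map on spaces of 2-morphisms is an equivalence, so existence and uniqueness are both built in, and the extra maps you list lie in its saturation and are redundant. Second, the reduction to cells needs colimit preservation in the marked variable $(\tA,E)\in\MCatIT$, where the marked Gray tensor is a left adjoint, not in $K$. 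The list of cells must also include the marked arrow $[1]^{\sharp}$, which gives $K\times[1]$.

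The second half contains a step that fails as stated: ``lifts to $\tC$ \IFF{} it factors through the locally full image of $F$'' is false once $F$ is not injective on objects. Take $F\colon[1]\amalg[1]\to[2]$ sending the two arrows to $0\to1$ and $1\to2$; it is locally fully faithful, since all mapping \icats{} are sets and the maps are injective. Let $\tA=[0]$, with $\alpha$ the source of the first arrow and $\beta$ the target of the second. The morphism $0\to2$ is then a transformation $F\alpha\to F\beta$ lying in every locally full sub-\itcat{} of $[2]$ containing the image of $F$, yet it does not lift, since there is no morphism $\alpha\to\beta$ in $[1]\amalg[1]$. If you do not close under composition, the ``image'' is not a sub-\itcat{} and your orthogonality argument no longer applies. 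What you need is the relative statement, with the endpoints and components already lifted: $F$ is right orthogonal to
\[ j\colon [1]\times\tA^{\simeq}\amalg_{\partial[1]\times\tA^{\simeq}}\partial[1]\times\tA^{\leq 1}\to[1]\otimes_{\flat,E}\tA. \]
This holds because $j$ lies in the saturation of $\partial C_{2}\to C_{2}$, \ie{} $j$ is essentially surjective on objects and 1-morphisms \cite[Theorem 5.3.7]{Soergel}. That is exactly your unproved assertion about ``generating 1-morphisms'', and it needs the same cell reduction you sketched for the first half. The paper notes that the canonical $\Theta_{2}$-colimit presentation of $\tA$ is preserved by $(\blank)^{\simeq}$ and $(\blank)^{\leq 1}$, reduces to $(\tA,E)=[0],[1]^{\flat},[1]^{\sharp},C_{2}^{\flat}$, and checks these by inspection. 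With this repair your second half is precisely the paper's proof.
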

\begin{proof}
  It follows from \cite[Corollary 2.7.13]{AGH24} that $F_{*}$ is
  locally fully faithful, so we are left with identifying its image on
  mapping \icats{}. The stated condition is clearly necessary, so we
  need to prove that it suffices. We consider the oplax case; using
  the adjunction between $\FUN(\tA,\blank)^{E\dplax}$ and $\tA
  \otimes_{E,\natural} \blank$ we can rephrase this as: $F$ is right
  orthogonal to the map \[\tA^{\simeq} \times [1] \amalg_{\tA^{\simeq}
      \times \partial [1]} \tA^{\leq 1} \times \partial [1] \to \tA
    \otimes_{E,\natural} [1].\]
  Since $F$ is locally fully faithful \IFF{} it is right orthogonal to
  the map $\partial C_{2} \to C_{2}$, it suffices to show that this
  map is in the saturated class generated by the latter map, which we know
  from \cite[Theorem 5.3.7]{Soergel} consists of the functors that are
  essentially surjective on objects and 1-morphisms.
  The canonical description of $\tA$ as a colimit of objects in
  $\Theta_{2}$ is preserved by $(\blank)^{\leq 1}$ and
  $(\blank)^{\simeq}$, so arguing as in the
  proof of \cite[Corollary 2.5.14]{AGH24} we can reduce to the case
  where $(\tA,E)$ is $[0], [1]^{\flat}, [1]^{\sharp}$ and
  $C_{2}^{\flat}$. In these cases it is clear that the given map is
  surjective on objects and 1-morphisms, which completes the proof.
\end{proof}

\begin{proof}[Proof of \cref{propn:lax tr DCATIT fib}]
  We prove the lax case. By \cref{lem:lff on Funlax} the locally fully faithful forgetful
  functor $\Ud \colon \DCATIT \to \CATIT$ identifies
  \[ \Nat^{E\dlax}_{\tB,\DCATIT}(F,G) \subseteq
    \Nat^{E\dlax}_{\tB,\CATIT}(\Ud F, \Ud G)\]
  as the full subcategory of $E$-lax transformations $\phi \colon \Ud
  F \to \Ud G$ such that each component $\phi_{b} \colon \Ud F(b) \to
  \Ud G(b)$ is a decorated functor $F(b) \to G(b)$ for all $b \in
  \tB$. Now \cref{thm:elaxstr} gives a natural equivalence
  \[ \Nat^{E\dlax}_{\tB,\CATIT}(\Ud F, \Ud G) \simeq
    \FIB^{(0,1)}_{/(\tB,E)}(\tP, \tQ),
  \]
  where the right-hand side is the full subcategory of
  $\Fun_{/\tB}(\tP,\tQ)$ on functors that preserve cocartesian
  morphisms over $E$ and all cartesian 2-morphisms. We can therefore
  identify $\Nat^{E\dlax}_{\tB,\DCATIT}(F,G)$ with the full
  subcategory of the latter spanned by the functors $r \colon \tP \to \tQ$ over
  $\tB$ for which the map on fibres is a decorated functor $r_{b}
  \colon \tPd_{b}
  \to \tQd_{b}$ for all $b \in \tB$. A decorated morphism in $\tPd$
  factors as a cocartesian morphism followed by a decorated morphism
  in a fibre, so this condition on 1-morphisms is equivalent to $r$
  preserving all decorated morphisms over $E$; similarly, the
  condition on 2-morphisms is equivalent to $r$ preserving \emph{all}
  decorated 2-morphisms. The \icat{} in question is therefore
  $\DFun^{E\dcoc}_{/(\tB,E)^{\sharp}}(\tPd_{(0,1)}|_{E},
  \tQd_{(0,1)}|_{E})$, as required.
\end{proof}

\subsection{Maps in (op)lax arrows}
\label{sec:maps in Aroplax}

Our goal in this subsection is to identify the mapping \icats{} in
$\AR^{\plax}(\tC)$, which is the key input needed for the
characterization of partial fibrations in the following subsection.

\begin{propn}\label{ARlax mor pb}
  For an \itcat{} $\tC$ and morphisms $f \colon x \to y$ and $g \colon
  a \to b$ in $\tC$, there are pullback squares of \icats{}
  \[
    \begin{tikzcd}
      \AR^{\lax}(\tC)(f,g) \ar[r] \ar[d] &   \ar[d, "{\ev_{0},\ev_{1}}"]  \Ar(\tC(x,b)) \\
     \tC(x,a) \times \tC(y,b)  \ar[r, "g_{*} \times f^{*}"'] & \tC(x,b) \times \tC(x,b),
   \end{tikzcd}
   \quad
   \begin{tikzcd}
      \AR^{\oplax}(\tC)(f,g) \ar[r] \ar[d] &   \ar[d, "{\ev_{1},\ev_{0}}"]  \Ar(\tC(x,b)) \\
     \tC(x,a) \times \tC(y,b)  \ar[r, "g_{*} \times f^{*}"'] & \tC(x,b) \times \tC(x,b).
    \end{tikzcd}
  \]
\end{propn}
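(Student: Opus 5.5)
We identify the mapping \icat{} by probing with \icats{} and using the adjunction between $\ARlax(\tC) = \FUN([1],\tC)^{\lax}$ and the Gray tensor product. We treat the lax case; the oplax case follows by the same argument with the order of the Gray tensor reversed, or by passing to $\tC^{\co}$, which swaps the roles of $\ev_0$ and $\ev_1$ on $\Ar(\tC(x,b))$.

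For an \icat{} $\oK$, a functor $\oK \to \AR^{\lax}(\tC)(f,g)$ is the same as a functor $\oK \to \ARlax(\tC)$ from the suspension $\Sigma\oK$, the \itcat{} with two objects $0,1$ and mapping \icat{} $\oK$, sending the endpoints to $f$ and $g$. By adjunction this corresponds to a functor $\Sigma \oK \otimes [1] \to \tC$ restricting to $f$ and $g$ on $\{0,1\} \times [1]$. So the key step is to describe $\Sigma\oK \otimes [1]$ as a colimit. We claim there is a pushout
\[ \Sigma\oK \otimes [1] \simeq (\Sigma\oK \times \partial[1]) \amalg_{\partial[1] \times \partial[1]} (\partial[1] \times [1]) \amalg_{\Sigma(\oK \times \partial[1])} \Sigma(\oK \times [1]). \]
Concretely, the objects are the four corners. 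The new mapping \icat{} from $(0,0)$ to $(1,1)$ is the pushout of two copies of $\oK$ along the two composite paths, glued via $\oK \times [1]$ so that the lax square supplies a natural transformation from $g\circ k$ to $k' \circ f$ over $\oK$.

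We prove this first for $\oK = [n]$, or more generally for $\oK$ in $\Delta$. There $\Sigma[n]$ lies in $\Theta_2$, and the formula can be checked on Gray tensors of $\Theta_2$-cells, for example using the pushout of \cref{obs:gray square pushout} for $n=0$ and the standard computations of $[1]\otimes\Sigma[n]$ for general $n$. We then extend to all $\oK$, since both sides preserve colimits in $\oK$: $\Sigma$ preserves the relevant colimits, $\otimes$ preserves colimits in each variable, and the gluing formula is natural in $\oK$.

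Given the decomposition, a functor out of it restricting to $f,g$ consists of the following data:
\begin{itemize}
\item functors $\oK \to \tC(x,a)$ and $\oK \to \tC(y,b)$, coming from the two copies of $\Sigma\oK$;
\item a functor $\oK \times [1] \to \tC(x,b)$, i.e.\ a map $\oK \to \Ar(\tC(x,b))$;
\item identifications of the endpoints with $g_*$ of the first functor and $f^*$ of the second.
\end{itemize}
This is exactly a map from $\oK$ into the stated pullback, naturally in $\oK$. The Yoneda lemma then gives the pullback square. The orientation (which endpoint is $\ev_0$) is fixed by the direction of the lax 2-cell $g\circ u \Rightarrow v \circ f$.

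The main obstacle is justifying the colimit decomposition of $\Sigma\oK \otimes [1]$ model-independently. If this is cumbersome, we would fall back on the scaled simplicial set model: compute the mapping spaces of $\mathfrak{C}^{\mathrm{sc}}$ of $\Delta^1$ tensored with a suspension, as in the proof of \cref{prop:inclusionconeff}. The mapping space from $(0,0)$ to $(1,1)$ is then visibly the claimed gluing.
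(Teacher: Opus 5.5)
\paragraph{There is a genuine gap.}
Your reduction is sound as far as it goes. By the universal property of $\Sigma\oK$ and the Gray adjunction, functors $\oK \to \AR^{\lax}(\tC)(f,g)$ are functors $\Sigma\oK \otimes [1] \to \tC$ restricting to $f$ and $g$. Your list of data correctly describes maps out of the proposed pushout. But for exactly this reason, the decomposition of $\Sigma\oK \otimes [1]$, natural in $\oK$, is by the Yoneda lemma \emph{equivalent} to the proposition, so all the content sits in that lemma, and you have not proved it.

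\begin{itemize}
\item The case $\oK = [0]$ is \cref{obs:gray square pushout}. Already for $\oK = [1]$ you need $C_2 \otimes [1]$, which does not follow formally from that observation. Writing $C_2$ as a quotient of $[1] \otimes [1]$ only trades $C_2 \otimes [1]$ for the Gray cube $[1]^{\otimes 3}$, and decomposing the cube in turn needs $C_2 \otimes [1]$. So ``standard computations of $[1]\otimes\Sigma[n]$'' is really the model-dependent calculation (e.g.\ in scaled simplicial sets) that you list as a fallback, and it is left undone.
\item Extending from $\Delta$ to all $\oK$ by colimits needs a comparison map between the two sides that is natural in $\oK$. In particular you need a natural map $\Sigma(\oK \times [1]) \to \Sigma\oK \otimes [1]$ encoding the lax $2$-cell $g\circ(\blank) \Rightarrow (\blank)\circ f$. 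You assert that the gluing formula is natural, but you never construct this map. Without it the colimit argument has nothing to extend.
\end{itemize}

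\paragraph{The idea you are missing.}
Put the probe on the other side of the Gray adjunction. The paper identifies $\AR^{\oplax}(\tC)(f,g)$ with the fibre over $(f,g)$ of
\[ \AR^{\oplax}(\AR^{\oplax}(\tC)) \simeq \FUN([1]\otimes[1],\tC)^{\oplax} \to \AR^{\oplax}(\tC)^{\times 2}. \]
This uses that the mapping \icats{} of an \itcat{} $\tD$ are the fibres of $\AR^{\oplax}(\tD) \to \tD \times \tD$. Since $\FUN(\blank,\tC)^{\oplax}$ turns colimits into limits, the single pushout $[1]\otimes[1] \simeq \partial([1]\otimes[1]) \amalg_{\partial C_2} C_2$ gives a pullback cube. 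One then reads off the fibres:
\begin{itemize}
\item $\tC(x,a)\times\tC(y,b)$ from the boundary term;
\item $\tC(x,b)^{\times 2}$ from the $\partial C_2$ term;
\item $\Ar\tC(x,b)$ from the $C_2$ term. Here one writes $C_2$ as $[1]\otimes[1]$ with two opposite edges collapsed, so this fibre is $\AR^{\oplax}(\tC(x,b))$.
\end{itemize}
In this formulation, testing against an \icat{} $\oK$ amounts to applying $(\blank)\otimes\oK$ to that one pushout, which preserves it automatically. So only your $\oK=[0]$ case is ever needed.
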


\begin{remark}
  In the notation of \cite{2Topoi}, \cref{ARlax mor pb} says that the \icats{}
  $\AR^{\plax}(\tC)(f,g)$ are \emph{oriented pullbacks} or \emph{comma
    objects}:
  \[
    \begin{split}
      \AR^{\lax}(\tC)(f,g) & \simeq \tC(x,a) \orientedtimeslr_{\tC(x,b)}
                              \tC(y,b), \\
      \AR^{\oplax}(\tC)(f,g) & \simeq \tC(x,a) \orientedtimesrl_{\tC(x,b)}
                              \tC(y,b). \\
    \end{split}
  \]
\end{remark}

\begin{proof}[Proof of \cref{ARlax mor pb}]
  We prove the oplax case; the lax case is proved similarly. We can
  identify the mapping \icat{} $\AR^{\oplax}(\tC)(f,g)$ as the
  pullback
  \[
    \begin{tikzcd}
     \AR^{\oplax}(\tC)(f,g) \ar[r] \ar[d] & \AR^{\oplax}(\AR^{\oplax}(\tC))   \ar[d, "{\ev_{0},\ev_{1}}"] \\
     * \ar[r, "{(f,g)}"'] &  \AR^{\oplax}(\tC) \times \AR^{\oplax}(\tC).
    \end{tikzcd}
  \]
  Here $\AR^{\oplax}(\AR^{\oplax}(\tC))$ is equivalent to
  $\FUN([1] \otimes [1], \tC)^{\oplax}$.
  
  The pushout square from \cref{obs:gray square pushout} gives a commutative cube
\[
  \begin{tikzcd}[row sep=small,column sep=small]
   \FUN([1] \otimes [1], \tC)^{\oplax} \ar[rr] \ar[dr] \ar[dd] & &
   \FUN(C_{2}, \tC)^{\oplax} \ar[dr] \ar[dd] \\
    & \FUN(\partial ([1] \otimes [1]), \tC) \ar[rr,crossing over]  & &
    \FUN(\partial C_{2}, \tC)^{\oplax} \ar[dd] \\
   \AR^{\oplax}(\tC)^{\times 2} \ar[rr] \ar[dr, "="] & & \tC^{\times
     2} \ar[dr, "="] \\
    & \AR^{\oplax}(\tC)^{\times 2} \ar[rr] \ar[uu,leftarrow,crossing
    over]  & & \tC^{\times 2}
  \end{tikzcd}
\]
where both the top and bottom squares are pullbacks, and we recognize
$\AR^{\oplax}(\tC)(f,g)$ as the fibre at $(f,g)$ in the top left
corner. Our desired pullback square will be the induced pullback on
fibres from this cube.

To identify the fibre at the bottom left we consider the pushout
decomposition
\[
  \begin{tikzcd}
    {[0]^{\amalg 4}} \ar[r] \ar[d] & {[1] \amalg [1]}  \ar[d] \\
    {[1] \amalg [1]}\ar[r] & \partial ([1] \otimes [1]),
  \end{tikzcd}
\]
which induces a pullback square
\[
  \begin{tikzcd}
   \FUN(\partial ([1] \otimes [1]), \tC)^{\oplax} \ar[r] \ar[d] & \AR^{\oplax}(\tC)^{\times 2}  \ar[d] \\
   \AR^{\oplax}(\tC)^{\times 2} \ar[r] & \tC^{\times 4};
  \end{tikzcd}
\]
from this we see that the fibre at $(f \colon x \to y,g \colon a \to
b)$ on the left is equivalent to $\tC(x,a) \times \tC(y,b)$.

To describe the fibres on the right side of the cube we consider the
pushouts
\[
  \begin{tikzcd}
   {[1] \amalg [1]} \ar[r] \ar[d] & \partial ([1] \otimes
   [1])  \ar[d] \ar[r] & {[1] \otimes [1]} \ar[d] \\
   {[0] \amalg [0]} \ar[r] & \partial C_{2} \ar[r] & C_{2},
  \end{tikzcd}
\]
which give pullbacks
\[
  \begin{tikzcd}
   \FUN(C_{2}, \tC)^{\oplax} \ar[r] \ar[d] & \FUN([1] \otimes [1], \tC)^{\oplax}  \ar[d]  \\
   \FUN(\partial C_{2}, \tC)^{\oplax} \ar[r] \ar[d] &
   \FUN(\partial([1] \otimes [1]), \tC)^{\oplax} \ar[d] \\
   \tC^{\times 2} \ar[r] & \AR^{\oplax}(\tC)^{\times 2}.
  \end{tikzcd}
\]
Here the fibre at $(x,b)$ at the top right is identified with $\Ar
\tC(x,b)$ by applying $\AR^{\oplax}(\blank)$ to the pullback
\[
  \begin{tikzcd}
   \tC(x,b) \ar[r] \ar[d] & \AR^{\oplax}(\tC)  \ar[d] \\
   \{(x,b)\} \ar[r] & \tC^{\times 2},
  \end{tikzcd}
\]
while the pushout for $\partial ([1] \otimes [1])$ above again
identifies the fibre at the middle right as $\tC(x,b)^{\times 2}$,
with the map from $\Ar \tC(x,b)$ given by taking source and target.

We thus have the desired pullback square
\[
  \begin{tikzcd}
   \AR^{\oplax}(\tC)(f,g) \ar[r] \ar[d] & \Ar \tC(x,b)  \ar[d] \\
   \tC(x,a) \times \tC(y,b) \ar[r] & \tC(x,b)^{\times 2}
  \end{tikzcd}
\]
where the bottom horizontal map can be identified as given by
composition with $f$ and $g$ since this ultimately came from the map
$\partial C_{2} \to \partial ([1] \otimes [1])$ given by two copies of
$d_{1}$.
\end{proof}

\begin{observation}\label{partial C2 cons on 1-mor}
  Let $j \colon \partial C_{2} \to C_{2}$ be the boundary
  inclusion. For later use, we note that in the course of this proof
  we saw that in the commutative triangle
  \[
    \begin{tikzcd}
     \FUN(C_{2},\tC)^{\oplax} \ar[rr, "j^{*}"] \ar[dr] & &
     \FUN(\partial C_{2},\tC)^{\oplax} \ar[dl] \\
      & \tC^{\times 2}
    \end{tikzcd}
  \]
  the map on fibres over $x,y$ was the evaluation map
  \[ \Ar \tC(x,y) \to \tC(x,y)^{\times 2}.\]
  Hence the fibre of $j^{*}$ at the
  object corresponding to $f,g \colon x \to y$ is the \igpd{}
  $\tC(x,y)(f,g)$. This shows in particular that $j^{*}$ is
  conservative on 1-morphisms.
\end{observation}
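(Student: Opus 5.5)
The statement has two parts. The first is the identification of the fibres of $j^{*}$. The second is conservativity on $1$-morphisms, which I would deduce by a separate argument rather than from the fibres alone.

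\textbf{Fibres of $j^{*}$.} I would reuse the pullback squares from the proof of \cref{ARlax mor pb}. The pushout $C_{2} \simeq [1]\otimes[1] \amalg_{\partial([1]\otimes[1])} \partial C_{2}$, together with the decomposition of $\partial C_{2}$ and $\partial([1]\otimes[1])$, gives a commutative triangle of functors over $\tC^{\times 2}$ (evaluation at the two objects). Taking fibres over $(x,y)$, the fibre of $\FUN(C_{2},\tC)^{\oplax}$ is $\Ar\tC(x,y)$. This follows by applying $\AR^{\oplax}(\blank)$ to the pullback exhibiting $\tC(x,y)$ as the fibre of $\AR^{\oplax}(\tC)\to\tC^{\times 2}$. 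The fibre of $\FUN(\partial C_{2},\tC)^{\oplax}$ is $\tC(x,y)^{\times 2}$, and the induced map is $(\ev_{0},\ev_{1})$. Given an object of $\FUN(\partial C_{2},\tC)^{\oplax}$ corresponding to a pair $f,g\colon x\to y$, it lies over $(x,y)$. By pasting pullbacks, the fibre of $j^{*}$ at it equals the fibre of $\Ar\tC(x,y)\to\tC(x,y)^{\times 2}$ at $(f,g)$. That fibre is the mapping \igpd{} $\tC(x,y)(f,g)$.

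\textbf{Conservativity.} Knowing that the fibres are \igpds{} does not by itself give conservativity, so I would argue directly. A $1$-morphism $\phi$ in $\FUN(C_{2},\tC)^{\oplax}$ is an oplax transformation. Its data consist of:
\begin{itemize}
\item components at the two objects of $C_{2}$;
\item oplax naturality $2$-cells for the two generating $1$-morphisms;
\item a compatibility with the generating $2$-cell. Since $C_{2}$ has no nontrivial $3$-cells and $\tC$ is an \itcat{}, this compatibility is a property: an equality of $2$-cells.
\end{itemize}
Suppose $j^{*}\phi$ is an equivalence in $\FUN(\partial C_{2},\tC)^{\oplax}$. Then the components of $\phi$ are equivalences in $\tC$ and its naturality $2$-cells are invertible. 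This uses that equivalences in $\FUN(\tA,\tC)^{\oplax}$ are exactly the strong transformations with invertible components, applied to $\tA=\partial C_{2}$. Applying the same criterion to $\tA = C_{2}$ then shows that $\phi$ is an equivalence.

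\textbf{Main obstacle and fallback.} The hardest step is justifying the criterion for equivalences in $\FUN(\tA,\tC)^{\oplax}$. My first attempt is as follows. An oplax transformation with invertible $2$-cells factors through the localization $\tA\otimes[1]\to\tA\times[1]$, which is the case $I=\sharp$ of the marked Gray tensor (\cref{obs:max marked gray is cart}). An equivalence in $\FUN(\tA,\tC)$ with invertible components is then invertible by the usual argument, and the inverse is again strong, hence oplax.

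If this is awkward, I would instead use the pullback square $\FUN(C_{2},\tC)^{\oplax}\to \FUN([1]\otimes[1],\tC)^{\oplax}$ over $\FUN(\partial C_{2},\tC)^{\oplax}\to\FUN(\partial([1]\otimes[1]),\tC)^{\oplax}$. Conservativity is stable under pullback, so this reduces the claim to the restriction along $\partial([1]\otimes[1])\to[1]\otimes[1]$. That restriction can then be handled in the same way.
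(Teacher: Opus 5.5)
Your computation of the fibres of $j^{*}$ is the same as the paper's. The gap is in the conservativity step.

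There you use the criterion ``equivalences in $\FUN(\tA,\tC)^{\oplax}$ are precisely the strong transformations with invertible components'' in \emph{both} directions. You use the \emph{only if} half at $\tA=\partial C_{2}$, to conclude from $j^{*}\phi$ being invertible that its naturality 2-cells are invertible. You use the \emph{if} half at $\tA=C_{2}$. Your sketch via $\tA\otimes_{\sharp,\flat}[1]\simeq\tA\times[1]$ only proves the \emph{if} half.

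The \emph{only if} half says that an invertible oplax transformation is automatically strong, and this is the actual content of the claim. It is not formal. Already for strict 2-categories it needs a pasting argument showing that each naturality 2-cell is both split monic and split epic. In the $\infty$-setting you have no explicit control of composition in $\AR^{\oplax}(\tC)$ with which to run such an argument.

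Your fallback is a correct reduction: $j^{*}$ is a base change of the restriction $\FUN([1]\otimes[1],\tC)^{\oplax}\to\FUN(\partial([1]\otimes[1]),\tC)^{\oplax}$, and right orthogonality to $[1]\to[0]$ is stable under base change. But you then treat that map ``in the same way'', so the same missing half reappears. A minor point: the compatibility with the generating 2-cell is not a property in an \itcat{}, since an identification of 2-cells is a path in a mapping \igpd{} and hence data. Your argument never actually uses this.

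The missing idea is that your first part already suffices, and this is how the paper concludes. Let $\phi$ be a morphism in $\FUN(C_{2},\tC)^{\oplax}$ with $j^{*}\phi$ invertible.
\begin{enumerate}
\item The image of $\phi$ in $\tC^{\times 2}$ is invertible.
\item Any functor of \itcats{} has unique lifts of equivalences with prescribed source, because maps out of the walking equivalence form the core. So you can postcompose $\phi$ with an equivalence lifting the inverse of this image. The result $\phi'$ lies over an identity, i.e.\ it is a morphism in the fibre $\Ar\tC(x,y)$.
\item Then $j^{*}\phi'$ is invertible in $\FUN(\partial C_{2},\tC)^{\oplax}$ and lies in the fibre $\tC(x,y)^{\times 2}$. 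It is therefore invertible in that fibre: a morphism of a fibre that is invertible in the total \itcat{} is invertible in the fibre, since the equivalences of the base form a union of components of its space of morphisms.
\item Since $\Ar\tC(x,y)\to\tC(x,y)^{\times 2}$ is conservative, $\phi'$ is invertible, and hence so is $\phi$.
\end{enumerate}
Your caution that groupoidal fibres alone do not give conservativity is correct in general. What does give it is that the fibrewise map over $\tC^{\times 2}$ is conservative, combined with transport along equivalences in the base. This route in effect proves the half you were missing, because inside the fibre the naturality 2-cells are just morphisms of $\tC(x,y)$.
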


We can extend this description to decorated \itcats{}, for which we
introduce the following notation:
\begin{defn}
  For a decorated \itcat{} $\tCd$, let
  \[\DAR^{\plax}(\tCd) := \DFUN([1]^{\sharp}, \tCd)^{\plax}.\]
  In other words, this is the locally full sub-\itcat{} of
  $\AR^{\plax}(\tC)$ where
  \begin{itemize}
  \item the objects are the morphisms in $\tCd_{(1)}$,
  \item the morphisms are the (op)lax squares where the 2-morphisms
    lie in $\tCd_{(2)}$.
  \end{itemize}
  It is convenient to write
  \[ \DARelax(\tCd) :=
    \begin{cases}
      \DAR^{\oplax}(\tCd), & \vepsilon = (0,1), \, (1,0), \\
      \DAR^{\lax}(\tCd), & \vepsilon = (0,0), \, (1,1). \\
    \end{cases}
  \]
\end{defn}

Viewing $\DAR^{\plax}(\tCd)$ for a decorated \itcat{} $\tCd$ as a
locally full subcategory of $\AR^{\plax}(\tC)$, we get:
\begin{cor}\label{DARlax mor pb}
  For a decorated \itcat{} $\tCd$ and decorated morphisms $f \colon x \to y$ and $g \colon
  a \to b$ in $\tC$, there are pullback squares of \icats{}
  \[
    \begin{tikzcd}
      \DAR^{\lax}(\tCd)(f,g) \ar[r] \ar[d] &   \ar[d, "{s,t}"]  \DAr(\tC(x,b)) \\
     \tC(x,a) \times \tC(y,b)  \ar[r, "g_{*} \times f^{*}"'] & \tC(x,b) \times \tC(x,b),
   \end{tikzcd}
   \quad
   \begin{tikzcd}
      \DAR^{\oplax}(\tCd)(f,g) \ar[r] \ar[d] &   \ar[d, "{t,s}"]  \DAr(\tC(x,b)) \\
     \tC(x,a) \times \tC(y,b)  \ar[r, "g_{*} \times f^{*}"'] & \tC(x,b) \times \tC(x,b),
    \end{tikzcd}
  \]
  where $\DAr(\tC(x,b))$ denotes the full subcategory of
  $\Ar(\tC(x,b))$ spanned by the decorated 2-morphisms. \qed
\end{cor}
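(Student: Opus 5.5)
This corollary follows from \cref{ARlax mor pb} by restricting to the locally full sub-\itcat{} $\DAR^{\plax}(\tCd) \subseteq \AR^{\plax}(\tC)$; we treat the oplax case, the lax one being identical up to swapping $s$ and $t$. By definition, $\DAR^{\oplax}(\tCd)$ is the locally full sub-\itcat{} on decorated objects $f,g$ whose morphisms are the oplax squares with 2-morphism in $\tCd_{(2)}$. Hence for decorated $f \colon x\to y$ and $g \colon a \to b$, the mapping \icat{} $\DAR^{\oplax}(\tCd)(f,g)$ is the full subcategory of $\AR^{\oplax}(\tC)(f,g)$ spanned by those objects whose underlying 2-morphism is decorated. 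Fullness holds because, by local fullness, 2-morphisms in $\DAR^{\oplax}(\tCd)$ are all 2-morphisms of $\AR^{\oplax}(\tC)$ between such squares.

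The first key step is to identify, under the pullback square of \cref{ARlax mor pb}, which objects of $\Ar(\tC(x,b))$ correspond to the 2-morphism filling a given oplax square. In the proof of \cref{ARlax mor pb}, the map $\AR^{\oplax}(\tC)(f,g) \to \Ar\tC(x,b)$ arises from restriction along $C_{2} \to [1]\otimes[1]$ in the pushout of \cref{obs:gray square pushout}. So an object of $\AR^{\oplax}(\tC)(f,g)$, that is, an oplax square $[1]\otimes[1]\to\tC$, is sent to the 2-cell of that square, viewed as a morphism in $\tC(x,b)$ from $g\circ u$ to $v\circ f$ (with the orientation recorded by $(t,s)$).

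Given this, the second step is formal. The full subcategory $\DAr(\tC(x,b)) \subseteq \Ar(\tC(x,b))$ consists of the decorated 2-morphisms, and the preimage of a full subcategory under the top map of a pullback square is a full subcategory of the pullback. Moreover, restricting the top-right corner to a full subcategory again gives a pullback, since full subcategory inclusions are monomorphisms in $\CatI$ and are stable under base change. The preimage in question consists exactly of the squares with decorated 2-cell, which is $\DAR^{\oplax}(\tCd)(f,g)$. This yields the claimed pullback square.

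The only real point is the first step: checking that the identification of the top map in \cref{ARlax mor pb} really extracts the 2-cell of the square, rather than some equivalent reformulation that could fail to respect decorations. This can be read off from the cube in that proof, where the top-right fibre is computed as $\FUN(C_2,\tC)^{\oplax}$ over $(x,b)$. Decorations are invariant under equivalence in $\tC(x,b)$, so the identification up to equivalence suffices.
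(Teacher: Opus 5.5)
Your argument is correct and is essentially the paper's: the corollary is marked \qed right after Proposition~\ref{ARlax mor pb}, and is obtained exactly as you do, by viewing $\DAR^{\plax}(\tCd)$ as a locally full sub-\itcat{} of $\AR^{\plax}(\tC)$ and restricting that pullback to the full subcategory $\DAr(\tC(x,b))$. One cosmetic slip: in the oplax case, pairing $(t,s)$ against $g_{*} \times f^{*}$ means the 2-cell runs from $v \circ f$ to $g \circ u$, not from $g \circ u$ to $v \circ f$. This does not affect the argument.
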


\begin{observation}\label{obs:DAR map 1-dec}
  For a marked \itcat{} $(\tC,I)$,
  \cref{DARlax mor pb} says that 
  we get a pullback
  \[
    \begin{tikzcd}
      \DAR((\tC,I)^{\flat})(f,g) \ar[r] \ar[d] &   \ar[d,
      "g_{*}"] \tC(x,a)   \\
      \tC(y,b)  \ar[r, "f^{*}"'] & \tC(x,b).
    \end{tikzcd}
  \]
  This applies in particular to $\AR(\tC) = \DAR(\tC^{\sharp\flat})$.
\end{observation}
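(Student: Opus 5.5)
The plan is to specialize \cref{DARlax mor pb} to the decoration $(\tC,I)^{\flat}$, whose decorated 2-morphisms are exactly the invertible ones, and then simplify the resulting pullback.

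\emph{Identify $\DAR$ with an (op)lax version.} For $(\tC,I)^{\flat}$ the decorated 2-morphisms form $\tC^{\leq 1}$, so they are exactly the invertible 2-morphisms. Consider $\DAR^{\oplax}((\tC,I)^{\flat})$. By \cref{defn:dfunlax} it is the locally full sub-\itcat{} of $\AR^{\oplax}(\tC)$ whose objects are the morphisms in $I$ and whose morphisms are the oplax squares with invertible 2-cell, that is, the commuting squares. A transformation whose naturality 2-cells are invertible is strong. Hence this sub-\itcat{} agrees with the corresponding locally full sub-\itcat{} of $\AR(\tC)$, which is $\DAR((\tC,I)^{\flat}) = \DFUN([1]^{\sharp},(\tC,I)^{\flat})$. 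The same argument applies in the lax case. In particular, the mapping \icats{} of $\DAR((\tC,I)^{\flat})$ can be computed by \cref{DARlax mor pb} in either variance; we use the oplax square.

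\emph{Simplify the pullback.} In \cref{DARlax mor pb}, $\DAr(\tC(x,b))$ is the full subcategory of $\Ar(\tC(x,b))$ spanned by the decorated, here invertible, 2-morphisms. This is the full subcategory of equivalences in $\Ar(\tC(x,b))$. The functor $\tC(x,b) \to \Ar(\tC(x,b))$ sending an object to its identity identifies $\tC(x,b)$ with this subcategory, since the constant-arrow functor $\oK \to \Fun([1],\oK)$ is fully faithful with essential image the equivalences. Under this identification, the map $(t,s)$ to $\tC(x,b)\times\tC(x,b)$ becomes the diagonal. The pullback of $g_{*}\times f^{*}$ along the diagonal is the pullback of $g_{*}\colon \tC(x,a)\to\tC(x,b)$ and $f^{*}\colon\tC(y,b)\to\tC(x,b)$. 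This gives the asserted square.

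\emph{The special case.} For $\AR(\tC)$, take $I$ to be all morphisms, so that $(\tC,I)^{\flat} = \tC^{\sharp\flat}$; then $\DAR(\tC^{\sharp\flat}) = \AR(\tC)$ and the square specializes accordingly.

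The only step needing care is the first, where the strong and (op)lax arrow categories are compared as locally full sub-\itcats{}. One must check that the 2-morphisms (modifications) agree. This holds because both are locally full inside $\AR^{\oplax}(\tC)$: $\AR(\tC)\to\AR^{\oplax}(\tC)$ is locally fully faithful by \cite[Theorem 2.6.3]{AGH24}, and their objects and 1-morphisms coincide.
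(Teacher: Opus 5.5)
Your proof is correct and is exactly the specialization the paper intends: the paper simply invokes \cref{DARlax mor pb}, leaving implicit the two identifications you spell out, namely that $\DAR((\tC,I)^{\flat})$ agrees with $\DAR^{\oplax}((\tC,I)^{\flat})$ because only invertible 2-cells are decorated, and that $\DAr(\tC(x,b))$ is identified with $\tC(x,b)$ via identities, so that $(t,s)$ becomes the diagonal. The locally full inclusion $\AR(\tC)\subseteq\AR^{\oplax}(\tC)$ that you need is also just the maximally marked case of the paper's own remark that $\FUN(\tA,\tB)^{I\dplax}$ is a wide and locally full sub-\itcat{} of $\FUN(\tA,\tB)^{\plax}$, so you do not need to rely on the external citation.
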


\begin{observation}\label{obs: maps in ARlax ids}
  Specializing \cref{DARlax mor pb} to identity morphisms, we get pullback squares
  \[
    \begin{tikzcd}
      \DAR^{\lax}(\tCd)(\id_{x},\id_{y}) \ar[r] \ar[d] &   \ar[d, "{s,t}"]  \DAr(\tC(x,y)) \\
     \tC(x,y) \times \tC(x,y)  \ar[r, "="'] & \tC(x,y) \times \tC(x,y),
   \end{tikzcd}
   \quad
    \begin{tikzcd}
      \DAR^{\oplax}(\tCd)(\id_{x},\id_{y}) \ar[r] \ar[d] &   \ar[d, "{t,s}"]  \DAr(\tC(x,y)) \\
     \tC(x,y) \times \tC(x,y)  \ar[r, "="'] & \tC(x,y) \times \tC(x,y),
   \end{tikzcd}   
  \]
  so that we have equivalences $\DAR^{\plax}(\tCd)(\id_{x},\id_{y})
  \simeq \DAr(\tC(x,y))$, appropriately compatible with the evaluation
  maps to $\tC(x,y)$.
\end{observation}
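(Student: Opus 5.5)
The plan is to specialize \cref{DARlax mor pb} directly. First I check that the hypotheses hold: a decoration always contains the equivalences, since $\tCd_{(1)}$ is a wide locally full sub-\itcat{}. Hence $\id_{x}$ and $\id_{y}$ are decorated 1-morphisms, and they are objects of $\DAR^{\plax}(\tCd)$.

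Next I apply \cref{DARlax mor pb} to $f = \id_{x} \colon x \to x$ and $g = \id_{y} \colon y \to y$. In the notation of that corollary, the source and target of $f$ are both $x$, and those of $g$ are both $y$. The bottom-left corner of each square is then $\tC(x,y) \times \tC(x,y)$, and the bottom-right corner is $\tC(x,y) \times \tC(x,y)$ as well. The bottom horizontal map is $(\id_{y})_{*} \times (\id_{x})^{*}$.

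The one point needing care is that pre- and postcomposition with identity 1-morphisms is equivalent to the identity functor of $\tC(x,y)$, by the unitality of composition in $\tC$. So the bottom map is an equivalence. Since pullbacks of equivalences are equivalences, the top horizontal map $\DAR^{\plax}(\tCd)(\id_{x},\id_{y}) \to \DAr(\tC(x,y))$ is an equivalence too. This gives the two squares claimed.

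It remains to check compatibility with the evaluation maps. The left vertical map of the pullback square is the map to $\tC(x,a) \times \tC(y,b)$, which records the two component 1-morphisms of an (op)lax square. Commutativity of the square identifies these components with $(s,t)$ in the lax case and with $(t,s)$ in the oplax case, under the identification with $\DAr(\tC(x,y))$. This is exactly the stated compatibility, and no genuine obstacle arises beyond the unitality identification.
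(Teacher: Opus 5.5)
Your proposal is correct and matches the paper's argument, which is just the specialization of \cref{DARlax mor pb} to $f=\id_{x}$, $g=\id_{y}$. There the bottom map $(\id_{y})_{*}\times(\id_{x})^{*}$ is equivalent to the identity, so the top map is an equivalence, compatible with $(s,t)$ in the lax case and $(t,s)$ in the oplax case; your remarks on identities being decorated and on unitality only spell out what the paper leaves implicit.
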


Combining the last two observations, we see that the functor
$s_{0}^{*} \colon \tC \to
\AR(\tC)$, given by restriction along the degeneracy $s_{0} \colon [1]
\to [0]$, is fully faithful. This has the following useful
consequence:
\begin{lemma}\label{lem:ess surj restr cons}
  Suppose $f \colon \tC' \to \tC$ is essentially surjective on
  objects. Then the functor $f^{*} \colon \FUN(\tC, \tD) \to
  \FUN(\tC', \tD)$ is conservative on 1-morphisms for any \itcat{} $\tD$.
\end{lemma}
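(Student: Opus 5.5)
We must show: if $\eta\colon F\to G$ is a 1-morphism in $\FUN(\tC,\tD)$, i.e.\ a natural transformation, such that $f^{*}\eta$ is invertible in $\FUN(\tC',\tD)$, then $\eta$ is invertible. The plan is to reduce to the standard fact that a natural transformation is invertible if and only if all its components are equivalences. Since $f$ is essentially surjective, every $c\in\tC$ is equivalent to some $f(c')$. The component $\eta_{f(c')}$ is the component of $f^{*}\eta$ at $c'$, which is an equivalence. Hence every component $\eta_c$ is an equivalence.

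The remaining task is to prove that componentwise invertibility suffices, in the form that fits the preceding paragraph. A 1-morphism in $\FUN(\tC,\tD)$ is a functor $[1]\to\FUN(\tC,\tD)$, equivalently a functor $\tC\to\AR(\tD)$. It is invertible if and only if it factors through the full sub-\itcat{} of $\AR(\tD)$ on the equivalences. The discussion just before the statement shows that $s_0^{*}\colon\tD\to\AR(\tD)$ is fully faithful. Its essential image is exactly the equivalences in $\tD$, since an arrow is equivalent in $\AR(\tD)$ to an identity precisely when it is invertible. So the essential image of $s_0^{*}$ is a full sub-\itcat{}.

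A functor into an \itcat{} factors through a full sub-\itcat{} if and only if it does so on objects. Applying this to $\tC\to\AR(\tD)$, it suffices that every object $c$ is sent into the image of $s_0^{*}$, i.e.\ that each $\eta_c$ is an equivalence. Then $\eta$ factors through $\tD$, hence through $\FUN(\tC,\tD)$ via the constant direction, which means $\eta$ is invertible. Concretely, we use that $\FUN(\tC,\tD)$ has cotensors, so that $\AR(\FUN(\tC,\tD))\simeq\FUN(\tC,\AR(\tD))$ and the equivalences in the latter correspond to $\FUN(\tC,\tD)$ embedded via $s_0^{*}$.

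The main point needing care is this identification of invertible 1-morphisms in $\FUN(\tC,\tD)$ with functors $\tC\to\AR(\tD)$ landing in the essential image of $s_0^{*}$. This should follow from full faithfulness of $s_0^{*}$ applied to $\FUN(\tC,\tD)$, together with $\FUN(\tC,\AR(\tD))\simeq\AR(\FUN(\tC,\tD))$ being compatible with $s_0^{*}$. The rest is formal.
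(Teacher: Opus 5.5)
Your argument is correct and is essentially the paper's proof unpacked objectwise: both rest on the identification $\AR(\FUN(\tC,\tD))\simeq\FUN(\tC,\AR(\tD))$, under which degenerate 1-morphisms correspond to functors factoring through $s_0^{*}$, together with the full faithfulness of $s_0^{*}\colon\tD\to\AR(\tD)$. The paper concludes in one step: conservativity is right orthogonality to $[1]\to[0]$, which by adjunction becomes left orthogonality of the essentially surjective $f$ against the fully faithful $s_0^{*}$. You instead first derive the componentwise-invertibility criterion and then use essential surjectivity of $f$ only on objects; your step ``factoring through a full sub-$(\infty,2)$-category is detected on objects'' is that same orthogonality, applied to $\tC^{\simeq}\to\tC$.
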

\begin{proof}
  A functor is conservative on morphisms if it is right orthogonal to
  $s_{0} \colon [1] \to [0]$, so we want to show that the commutative
  square
  \[
    \begin{tikzcd}
     \Map([0], \FUN(\tC, \tD)) \ar[r, "(f^{*})_{*}"] \ar[d, "s_{0}^{*}"'] & \Map([0],
     \FUN(\tC', \tD))  \ar[d, "s_{0}^{*}"] \\
     \Map([1], \FUN(\tC, \tD)) \ar[r, "(f^{*})_{*}"'] & \Map([1] \FUN(\tC', \tD))
    \end{tikzcd}
  \]
  is a pullback. This is equivalent to the square
  \[
    \begin{tikzcd}
     \Map(\tC, \tD) \ar[r, "f^{*}"] \ar[d, "s_{0}^{*}"'] & \Map(\tC', \tD)  \ar[d, "(s_{0}^{*})_{*}"] \\
     \Map(\tC, \AR(\tD)) \ar[r, "f^{*}"'] & \Map(\tC', \AR(\tD)),
    \end{tikzcd}
  \]
  which is a pullback since $s_{0}^{*} \colon \tD \to \AR(\tD)$ is
  fully faithful, $f$ is essentially surjective, and the essentially
  surjective functors are precisely those that are left orthogonal to
  the fully faithful ones (\eg{} by \cite[Theorem 5.3.7]{Soergel}).
\end{proof}

\subsection{Characterizing partial fibrations}
\label{sec:pfib char}

Our goal in this subsection is to prove the following useful
characterization of partial fibrations\footnote{A version of this
  result can also be extracted from Loubaton's work on
  $(\infty,\infty)$-categories (see \cref{rmk:loubaton fib}).}:
\begin{thm}\label{thm:pb crit for partial fib}
  A morphism of decorated \itcats{} $p \colon \tEd \to \tBd$ is a partial
  $\vepsilon$-fibration for $\vepsilon=(i,j)$ \IFF{} the commutative square
  \begin{equation}
    \label{eq:DAR fib pb}
    \begin{tikzcd}
      \DARelax(\tEd) \ar[r, "\ev_{i}"] \ar[d, "\DARelax(p)"'] & \tE  \ar[d, "p"] \\
      \DARelax(\tBd) \ar[r, "\ev_{i}"'] & \tB
    \end{tikzcd}
  \end{equation}
  is a pullback of \itcats{}.
\end{thm}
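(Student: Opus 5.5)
The plan is to treat $\vepsilon=(0,1)$ only; the other three variances then follow from \cref{obs:fib change variance}, since $(\blank)^{\op}$, $(\blank)^{\co}$ and $(\blank)^{\coop}$ interchange lax and oplax arrows in the pattern recorded in \cref{notation:eps-lax}. So the square in question is
\[ \DAR^{\oplax}(\tEd) \to \tE \times_{\tB} \DAR^{\oplax}(\tBd),\]
with both maps to the base given by $\ev_{0}$. A functor of \itcats{} is an equivalence \IFF{} it is an equivalence on cores and on all mapping \icats{}. I will test these two conditions separately and translate each into the lifting conditions of a partial $(0,1)$-fibration.

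\emph{Objects.} On cores the comparison map is
\[ \Map_{\mathrm{dec}}([1],\tE) \to \tE^{\simeq}\times_{\tB^{\simeq}}\Map_{\mathrm{dec}}([1],\tB),\]
where $\Map_{\mathrm{dec}}$ denotes the decorated arrows and both maps to the base are $\ev_{0}$. For the forward direction, suppose $p$ is a partial fibration. Then the decorated 1-morphisms of $\tEd$ are exactly the cocartesian lifts of decorated morphisms of $\tBd$. By \cref{obs:cart lifts unique} the map is a monomorphism, and existence of lifts makes it surjective. For the converse, this step only gives existence and uniqueness of decorated lifts; cocartesianness of the decorated morphisms must be extracted from the mapping \icats{} below.

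\emph{Mapping \icats{}.} Fix decorated $f\colon x\to y$ and $g\colon a\to b$ in $\tE$. By \cref{DARlax mor pb},
\[ \DAR^{\oplax}(\tEd)(f,g)\simeq \tE(x,a)\times\tE(y,b)\times_{\tE(x,b)^{2}}\DAr(\tE(x,b)).\]
Applying the same formula to $\DAR^{\oplax}(\tBd)(pf,pg)$ and pulling back along $\tE(x,a)\to\tB(px,pa)$, the condition becomes that the map
\[ \tE(y,b)\times_{\tE(x,b)}\DAr(\tE(x,b)) \to \tB(py,pb)\times_{\tB(px,pb)}\DAr(\tB(px,pb))\times_{\tB(px,pb)}\tE(x,b) \]
is an equivalence. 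Here the first fibre products use $f^{*}$ and the source map of $\DAr$, and the last factor uses the target map. I would prove this by splitting the left-hand side as an iterated fibre product and pasting two squares:
\begin{itemize}
\item the square with corners $\tE(y,b)$, $\tE(x,b)$, $\tB(py,pb)$, $\tB(px,pb)$ (maps $f^{*}$, $p(f)^{*}$ and $p$), which is a pullback precisely when $f$ is $p$-cocartesian relative to $b$;
\item the square
\[ \DAr(\tE(x,b)) \to \tE(x,b)\times_{\tB(px,pb)}\DAr(\tB(px,pb)), \]
taken via the target map, being an equivalence.
\end{itemize}
The second condition is the \icatl{} statement that $p_{x,b}$ is a partial cartesian fibration, with decorated 2-morphisms exactly the cartesian lifts of decorated 2-morphisms. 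I would justify this by the standard characterization for \icats{}, proved exactly as in the objects step one categorical level down: unique existence of decorated lifts with given target, together with pasting in $\Ar$ (\cite[2.4.1.3]{LurieHTT}), forces these lifts to be cartesian. For the converse, specializing $g=\id_{b}$ and $f=\id_{x}$, which are always decorated, isolates each of the two conditions in turn. This yields cocartesianness of every decorated $f$ and the weak cartesianness of decorated 2-morphisms. For the forward direction, stability of the cartesian 2-morphisms under whiskering (together with \cref{lem:cart 2mor detect after compose cocart mor}) is what lets both squares be pullbacks for \emph{all} pairs $f,g$, not just identities.

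The step I expect to be the main obstacle is matching the decoration conditions exactly in the converse direction. The pullback condition only visibly tests 2-morphisms in $\tE(x,b)$ after precomposition with $f$. One therefore has to show that it really forces the decorated 2-morphisms to be \emph{strongly} cartesian, i.e.\ stable under arbitrary pre- and post-whiskering, and not merely weakly cartesian in each mapping \icat{}. My plan is to use the naturality of the pullback in both $f$ and $g$: varying $g$ over decorated morphisms out of $b$, and $f$ over decorated morphisms into $x$, including identities and composites. Combined with the fact that decorated 2-morphisms form a wide, locally wide sub-\itcat{} and hence are closed under whiskering by identity 2-cells, this should recover the required stability under whiskering.
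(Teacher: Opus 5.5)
Your proposal is correct and follows essentially the paper's route. You compare cores via uniqueness of cocartesian lifts, and you use \cref{DARlax mor pb} to split the mapping-category condition into cocartesianness of $f$ and the \icatl{} marked criterion for $\tE(x,b)\to\tB(px,pb)$, specializing to identity arrows for the converse; the only difference is that the paper isolates the 1-morphism condition by restricting to commuting squares, where you cancel using $g=\id_b$. The step you flag as the main obstacle is immediate and needs no variation of $f$ and $g$: decorated 2-morphisms are closed under whiskering by definition of a decoration, so weak cartesianness of every decorated 2-morphism already gives strong cartesianness.
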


\begin{warning}
  If $p \colon \tEd \to \tBd$ is a partial fibration, the square
  \cref{eq:DAR fib pb} is generally \emph{not} a pullback of \emph{decorated}
  \itcats{} when $\DAR^{\velax}(\tEd)$ is equipped with its standard
  decoration from \cref{defn:dfunlax}: For $\vepsilon = (0,1)$, a morphism in $\DARelax(\tEd)$
  is a diagram
  \[
    \begin{tikzcd}
     \bullet \ar[r] \ar[d, mid vert] & \bullet  \ar[d, mid vert] \\
     \bullet \ar[r] \ar[Rightarrow, ur, mid vert] & \bullet
    \end{tikzcd}
  \]
  in $\tE$, where the vertical morphisms and diagonal 2-morphism are
  decorated.  In the standard decoration this is decorated if also the
  horizontal morphisms are decorated. On the other hand, for the
  pullback of the decorations in \cref{eq:DAR fib pb}, a morphism is
  decorated whenever the \emph{top} horizontal morphism is decorated
  in $\tEd$ and the image of the bottom horizontal morphism is
  decorated in $\tBd$.  For these decorations to agree we would need
  the cartesian transport of a cocartesian 1-morphism to again be
  cocartesian, which is usually \emph{not} the case. However, we will
  see below in \cref{lem:dec pb for dec fib} that we \emph{do} get a pullback of decorated
  \itcats{} if we modify the decorations on $\DAR^{\velax}(\blank)$
  appropriately. 
\end{warning}

\begin{remark}
  In \cref{thm:pb crit for partial fib} the assumption that we start
  with a decoration on $\tE$ can be weakened slightly: since 
  (co)cartesian 1-morphisms are automatically closed under
  composition, we do not strictly need the assumption that the
  decorated morphisms in $\tE$ are closed under
  composition. Similarly, it is enough to assume the decorated
  2-morphisms are closed under whiskering.
\end{remark}

The starting point for the proof of \cref{thm:pb crit for partial fib}
is the following description of (co)cartesian morphisms in terms of arrow \itcats{}:
\begin{propn}\label{propn:cocart mor via AR homs}
  Given a functor $p \colon \tE \to \tB$ of \itcats{}, the following
  are equivalent for a morphism $\bar{f} \colon \bar{x} \to \bar{y}$
  in $\tE$ over $f \colon x \to y$ in $\tB$:
  \begin{enumerate}[(i)]
  \item $\bar{f}$ is $p$-$i$-cartesian.
  \item For every morphism $\bar{g} \colon \bar{a} \to \bar{b}$ in
    $\tE$ over $g \colon a \to b$ in $\tB$, we have
    \begin{itemize}
    \item for $i = 0$, the commutative square
    of \icats{}
    \[
      \begin{tikzcd}
        \AR(\tE)(\bar{f},\bar{g}) \ar[r] \ar[d] &
        \tE(\bar{x}, \bar{a})  \ar[d] \\
       \AR(\tB)(f,g) \ar[r] & \tB(x,a)
      \end{tikzcd}
    \]
    is a pullback.
  \item for $i = 1$, the commutative square
    of \icats{}
    \[
      \begin{tikzcd}
        \AR(\tE)(\bar{g},\bar{f}) \ar[r] \ar[d] &
        \tE(\bar{a}, \bar{x})  \ar[d] \\
       \AR(\tB)(g,f) \ar[r] & \tB(a,x)
      \end{tikzcd}
    \]
    is a pullback.
    \end{itemize}
  \item The previous condition holds when $\bar{g} = \id_{\bar{a}}$
    for all $\bar{a}$ in $\tE$.
  \end{enumerate}
\end{propn}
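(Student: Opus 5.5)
We treat $i=0$; the case $i=1$ is dual, obtained by passing to $p^{\op}$. The basic input is \cref{obs:DAR map 1-dec}, which for $\AR(\tC) = \DAR(\tC^{\sharp\flat})$ describes $\AR(\tC)(f,g)$ as the pullback $\tC(y,b)\times_{\tC(x,b)}\tC(x,a)$, formed via $f^{*}$ and $g_{*}$.

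\emph{Rewriting the square in (ii).} For the square in (ii), write both $\AR(\tE)(\bar f,\bar g)$ and $\AR(\tB)(f,g)$ as such pullbacks. The square then becomes the map of cospans
\[ \tE(\bar y,\bar b)\to \tE(\bar x,\bar b)\leftarrow \tE(\bar x,\bar a) \quad\longrightarrow\quad \tB(y,b)\to\tB(x,b)\leftarrow \tB(x,a), \]
and we ask that the induced square with the right legs $\tE(\bar x,\bar a)\to\tB(x,a)$ be cartesian. By pasting of pullbacks, this holds exactly when the limit of the upper cospan is the pullback of the lower limit along the right-hand leg. This in turn follows if the square
\[ \tE(\bar y,\bar b)\to\tE(\bar x,\bar b) \quad\text{over}\quad \tB(y,b)\to\tB(x,b) \]
is a pullback, which is precisely the definition of $\bar f$ being $p$-cocartesian at $\bar b$.

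\emph{(i)$\Rightarrow$(ii).} Concretely, the limit of the upper cospan is the triple pullback $\tE(\bar y,\bar b)\times_{\tE(\bar x,\bar b)}\tE(\bar x,\bar a)$. If the square above is cartesian, we can replace $\tE(\bar y,\bar b)$ by $\tB(y,b)\times_{\tB(x,b)}\tE(\bar x,\bar b)$, and the triple pullback simplifies to $\tB(y,b)\times_{\tB(x,b)}\tE(\bar x,\bar a)$. This is $\AR(\tB)(f,g)\times_{\tB(x,a)}\tE(\bar x,\bar a)$, which is exactly the required pullback. This step is just careful manipulation of iterated pullbacks.

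\emph{(ii)$\Rightarrow$(iii).} This is trivial.

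\emph{(iii)$\Rightarrow$(i).} This is the only real step. Take $\bar g=\id_{\bar a}$. Then $\AR(\tE)(\bar f,\id_{\bar a})\simeq \tE(\bar y,\bar a)\times_{\tE(\bar x,\bar a)}\tE(\bar x,\bar a)\simeq\tE(\bar y,\bar a)$, and the map to $\tE(\bar x,\bar a)$ is $\bar f^{*}$. The same holds over $\tB$. So the square in (ii) for $\bar g=\id_{\bar a}$ is literally the cocartesian square for $\bar f$ with test object $\bar a$. Since $\bar a$ is arbitrary, $\bar f$ is $p$-cocartesian.

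The main point to check is that the identifications from \cref{obs:DAR map 1-dec} are natural in $p$, so that the projections $\ev_0$ and $\ev_1$ on mapping categories are compatible. This should follow from the naturality of the construction in the proof of \cref{ARlax mor pb}, which is functorial in $\tC$.
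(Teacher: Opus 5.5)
Your proposal is correct and is essentially the paper's proof: the paper also places the square of (ii) in a cube whose front and back faces are the pullbacks of \cref{obs:DAR map 1-dec} and whose bottom face is the cocartesianness square for $\bar f$ at $\bar b$. It gets (i)$\Rightarrow$(ii) by pasting, and (iii)$\Rightarrow$(i) by noting that for $\bar g=\id_{\bar a}$ the remaining vertical maps are equivalences, which is your identification $\AR(\tE)(\bar f,\id_{\bar a})\simeq\tE(\bar y,\bar a)$ with projection $\bar f^{*}$. One caveat on the dual case: passing to $p^{\op}$ actually gives the $i=1$ square with the \emph{target} projections $\AR(\tE)(\bar g,\bar f)\to\tE(\bar b,\bar y)$ over $\AR(\tB)(g,f)\to\tB(b,y)$, and that is the correct form of the statement --- with the source projections to $\tE(\bar a,\bar x)$ as printed, the square for $\bar g=\id_{\bar a}$ is trivially a pullback, so (iii) would be vacuous.
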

\begin{proof}
  We prove the cocartesian case. Then the commutative square in (ii)
  fits in a commutative cube
  \[
    \begin{tikzcd}[row sep=small,column sep=small]
     \AR(\tE)(\bar{f},\bar{g}) \ar[rr] \ar[dr] \ar[dd] & & \tE(\bar{x},\bar{a}) \ar[dr] \ar[dd] \\
      & \AR(\tB)(f,g) \ar[rr,crossing over]  & & \tB(x,a) \ar[dd] \\
     \tE(\bar{y},\bar{b}) \ar[rr] \ar[dr] & & \tE(\bar{x},\bar{b}) \ar[dr] \\
      & \tB(y,b) \ar[rr] \ar[uu,leftarrow,crossing over]  & & \tB(x,b)
    \end{tikzcd}
  \]
  where the back and front faces are pullbacks by \cref{obs:DAR map
    1-dec}. If $\bar{f}$ is $p$-cocartesian then the bottom square is
  also a pullback, hence so is the top square; thus (i) implies
  (ii). On the other hand, if $\bar{g}$ is an equivalence, then the
  vertical maps in the right square are invertible, and hence so are
  the two other vertical maps, since they are pullbacks of these. The
  top square is therefore a pullback \IFF{} the bottom square is
  one. Thus if the top square is a pullback for $\bar{g} =
  \id_{\bar{a}}$ for all $\bar{a}$, then $\bar{f}$ is cocartesian, so
  that (iii) implies (i).
\end{proof}

From this we immediately obtain the special case of  \cref{thm:pb crit
  for partial fib} where only 1-morphisms are decorated:
\begin{cor}\label{cor:pb for cocart 1-mor}
  Suppose
  $p \colon (\tE,I) \to (\tB,J)$ is a
  functor of marked \itcats{}. Then $p^{\flat}$ is a partial
  $\vepsilon$-fibration for $\vepsilon = (i,j)$, \ie{} $\tE$ has
  $p$-$i$-cartesian lifts of marked 1-morphisms in $\tB$, \IFF{}
  the commutative square
  \begin{equation}
    \label{eq:DAR fib pb 1-dec}
    \begin{tikzcd}
      \DAR((\tE,I)^{\flat}) \ar[r, "\ev_{i}"] \ar[d] & \tE  \ar[d] \\
      \DAR((\tB,J)^{\flat}) \ar[r, "\ev_{i}"'] & \tB
    \end{tikzcd}
  \end{equation}
  is a pullback, where $\DAR((\tE,I)^{\flat})$ denotes the
  full subcategory of $\AR(\tE)$ on the decorated 1-morphisms.
\end{cor}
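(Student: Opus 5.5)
We reduce to \cref{propn:cocart mor via AR homs}, treating the case $i=0$; the case $i=1$ follows by passing to $p^{\op}$. Since only 1-morphisms are decorated, $\DAR((\tE,I)^{\flat})$ is the full sub-\itcat{} of $\AR(\tE)$ on the morphisms in $I$, and likewise over $\tB$. Its mapping \icats{} are therefore those of $\AR(\tE)$, which \cref{obs:DAR map 1-dec} identifies with pullbacks $\tE(\bar y,\bar b)\times_{\tE(\bar x,\bar b)}\tE(\bar x,\bar a)$. The plan is to check that the square \cref{eq:DAR fib pb 1-dec} is a pullback by comparing $\DAR((\tE,I)^{\flat})$ with $P := \DAR((\tB,J)^{\flat}) \times_{\tB} \tE$ in two steps: first on objects, i.e.\ on underlying \igpds{}, and then on mapping \icats{}. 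Together these show the comparison functor is essentially surjective and fully faithful.

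\emph{Objects.} An object of $P$ is a marked morphism $f\colon x\to y$ in $J$ together with a lift $\bar x$ of $x$. The comparison functor sends $\bar f\in I$ to $(p\bar f,\bar x)$. For the criterion we must take $I$ to be the $p$-cocartesian morphisms over $J$; in the remark's looser sense we may need $I$ to be exactly these. By \cref{obs:cart lifts unique}, cocartesian lifts are unique when they exist, so the map on cores is a monomorphism onto those pairs admitting a lift. It is therefore an equivalence \IFF{} every $f\in J$ has a cocartesian lift at every $\bar x$.

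\emph{Mapping \icats{}.} For $\bar f\colon \bar x\to\bar y$ and $\bar g\colon\bar a\to\bar b$ in $I$, the mapping \icat{} in $P$ is $\AR(\tB)(f,g)\times_{\tB(x,a)}\tE(\bar x,\bar a)$. Full faithfulness is then exactly condition (ii) of \cref{propn:cocart mor via AR homs}, restricted to $\bar g\in I$. Since identities lie in $I$, condition (iii) is included, so this is equivalent to every element of $I$ being cocartesian.

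Combining the two steps: if $p^{\flat}$ is a partial fibration with $I$ the cocartesian morphisms over $J$, the comparison is essentially surjective and fully faithful, hence an equivalence. Conversely, if the square is a pullback, full faithfulness makes every morphism in $I$ cocartesian, and essential surjectivity produces lifts in $I$ of every marked morphism.

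The main obstacle is bookkeeping rather than any hard step. One has to be careful that the pullback is computed in \itcats{}, so that essential surjectivity on objects together with equivalences on mapping \icats{} suffices. One also has to confirm that the mapping \icats{} of $P$ are indeed the fibre products written above, which follows from the general formula for mapping \icats{} in a pullback.
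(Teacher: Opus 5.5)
Your proposal is correct and is essentially the paper's proof: you check the pullback on mapping $\infty$-categories via \cref{propn:cocart mor via AR homs}, using that identities lie in $I$ to obtain condition (iii) in the converse, and on cores via the uniqueness of cocartesian lifts from \cref{obs:cart lifts unique}. Your reading that $I$ must be exactly the $p$-cocartesian morphisms over $J$ matches the paper's. In the converse, the inclusion of every cocartesian lift over $J$ in $I$ follows from the same uniqueness of lifts, as the paper notes.
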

\begin{proof}
  We consider the case $\vepsilon = (0,1)$ and first suppose that $p$
  is a partial $(0,1)$-fibration.
  From \cref{obs:DAR map 1-dec} and \cref{propn:cocart mor via AR
    homs} we then know that in \cref{eq:DAR fib pb 1-dec} we have pullbacks
  on 
  all mapping \icats{}. It thus only remains to show we have a
  pullback of \igpds{}
  \[
    \begin{tikzcd}
     \Map^{\diamond}([1], \tE) \ar[r] \ar[d] & \tE^{\simeq}  \ar[d] \\
     \Map^{\diamond}([1], \tB) \ar[r] & \tB^{\simeq}
    \end{tikzcd}
  \]
  on cores, where $\Map^{\diamond}([1], \blank)$ denotes the
  sub-\igpd{} of $\Map([1], \blank)$ on decorated morphisms. This
  follows from the uniqueness of cocartesian lifts as in
  \cref{obs:cart lifts unique}, since $\Map^{\diamond}([1], \tE)$ is
  precisely the \igpd{} of cocartesian lifts of the morphisms in
  $\tBd_{(1)}$, and these all exist by assumption.

  Conversely, if the square is a pullback, then \cref{propn:cocart mor
    via AR homs} implies that the decorated morphisms in $\tE$ are all
  $p$-cocartesian, while the pullback on cores shows that there is a
  (unique) decorated lift of every decorated morphism in $\tB$ with
  any given source. Thus $\tE$ has all $p$-cocartesian lifts of the
  decorated morphisms in $\tB$, and these are precisely its decorated
  morphisms, as required.
\end{proof}

\begin{proof}[Proof of \cref{thm:pb crit for partial fib}]
  We consider the case of partial $(0,1)$-fibrations; the other 3 cases
  are proved in the same way, or follow by reversing 1- and/or
  2-morphisms.  Let us first assume that $p$ is a partial
  $(0,1)$-fibration. For decorated 1-morphisms
  $\bar{f} \colon \bar{x} \to \bar{y}$ and
  $\bar{g} \colon \bar{a} \to \bar{b}$ in $\tE$ over
  $f \colon x \to y$ and $g \colon a \to b$, we then have a
  commutative cube
  \[
    \begin{tikzcd}[row sep=small,column sep=small]
     \DAR^{\oplax}(\tEd)(\bar{f},\bar{g}) \ar[rr] \ar[dr] \ar[dd] & & \DAR^{\oplax}(\tBd)(f,g) \ar[dr] \ar[dd] \\
      & \DAr(\tE(\bar{x},\bar{b})) \ar[rr,crossing over]  & & \DAr(\tB(x,b)) \ar[dd] \\
     \tE(\bar{x},\bar{a}) \times \tE(\bar{y},\bar{b}) \ar[rr] \ar[dr] & & \tB(x,a) \times \tB(y,b) \ar[dr] \\
      & \tE(\bar{x},\bar{b})^{\times 2} \ar[rr] \ar[uu,leftarrow,crossing over]  & & \tB(x,b)^{\times 2}
    \end{tikzcd}
  \]
  where the left and right faces are pullbacks by \cref{DARlax mor
    pb}. Here we can factor the bottom square as
  \[
    \begin{tikzcd}
      \tE(\bar{x},\bar{a}) \times \tE(\bar{y},\bar{b}) \ar[r] \ar[d] &
      \tE(\bar{x},\bar{a}) \times \tB(y,b)
     \ar[d] \ar[r] & \tB(x,a) \times \tB(y,b) \ar[d] \\
     \tE(\bar{x},\bar{b}) \times \tE(\bar{x},\bar{b}) \ar[r] &
     \tE(\bar{x},\bar{b}) \times \tB(x,b) \ar[r]
     & \tB(x,b) \times \tB(x,b),
    \end{tikzcd}
  \]
  where the left square is a pullback since $\bar{f}$ is a cocartesian
  1-morphism. Using this we can reorganize our cube as
    \[
    \begin{tikzcd}[row sep=small,column sep=small]
     \DAR^{\oplax}(\tEd)(\bar{f},\bar{g}) \ar[rr] \ar[dr] \ar[dd] & & \DAR^{\oplax}(\tBd)(f,g) \ar[dr] \ar[dd] \\
      & \DAr(\tE(\bar{x},\bar{b})) \ar[rr,crossing over]  & & \DAr(\tB(x,b)) \ar[dd] \\
     \tE(\bar{x},\bar{a}) \times \tB(y,b) \ar[rr] \ar[dr] & & \tB(x,a) \times \tB(y,b) \ar[dr] \\
     & \tE(\bar{x},\bar{b}) \times \tB(x,b) \ar[rr] \ar[uu,leftarrow,crossing over]  & & \tB(x,b)^{\times 2},
    \end{tikzcd}
  \]
  where we know that the left and right faces are pullbacks. Moreover,
  the front face here is a pullback by \cref{cor:pb for cocart 1-mor}, since $\tE(\bar{x},\bar{b}) \to
  \tB(x,b)$ has cartesian lifts of the decorated morphisms in
  $\tB(x,b)$. It follows that the back face is also a pullback, which
  implies that the commutative square of mapping \icats{}
  \[
    \begin{tikzcd}
     \DAR^{\oplax}(\tEd)(\bar{f},\bar{g}) \ar[r] \ar[d] & \DAR^{\oplax}(\tBd)(f,g)  \ar[d] \\
     \tE(\bar{x},\bar{a}) \ar[r] & \tB(x,a)
    \end{tikzcd}
  \]
  is a pullback. To see that \cref{eq:DAR fib pb} is a pullback, it
  then only remains to prove that it gives a pullback square on
  cores. This again follows from the uniqueness of cocartesian lifts
  (\cref{obs:cart lifts unique}), just as in the proof of \cref{cor:pb for cocart 1-mor}.

  We now prove the converse: Since equivalences are always decorated,
  for objects $\bar{x}$ and $\bar{y}$ in $\tE$ over $x,y \in \tB$ we
  get a pullback square
  \[
    \begin{tikzcd}
     \DAR^{\oplax}(\tEd)(\id_{\bar{x}}, \id_{\bar{y}}) \ar[r] \ar[d] & \DAR^{\oplax}(\tBd)(\id_{x},\id_{y})  \ar[d] \\
     \tE(\bar{x},\bar{y}) \ar[r] & \tB(x,y).
    \end{tikzcd}
  \]
  Using \cref{obs: maps in ARlax ids} we can identify this with the
  square 
  \[
    \begin{tikzcd}
     \DAr(\tE(\bar{x},\bar{y})) \ar[r] \ar[d] & \DAr(\tB(x,y))  \ar[d] \\
     \tE(\bar{x},\bar{y}) \ar[r] & \tB(x,y).
    \end{tikzcd}
  \]
  Since this is a pullback, \cref{cor:pb for cocart 1-mor} implies that
  $\tE(\bar{x},\bar{y})$ has cartesian lifts of the decorated
  2-morphisms in $\tB(x,y)$, and these are precisely the decorated
  2-morphisms in $\tE(x,y)$. It follows that a decorated 2-morphism in
  $\tEd$ is invertible precisely if its image in $\tB$ is invertible,
  which means that we have a pullback square
  \[
    \begin{tikzcd}
     \DAR((\Udm \tEd)^{\flat}) \ar[r] \ar[d] & \DAR^{\oplax}(\tEd)  \ar[d] \\
     \DAR((\Udm \tBd)^{\flat}) \ar[r] & \DAR^{\oplax}(\tBd).
    \end{tikzcd}
  \]
  Applying \cref{cor:pb for cocart 1-mor} to the combination of this
  with the pullback square \cref{eq:DAR fib pb}, we can conclude that
  $\tE$ has cocartesian lifts of the decorated 1-morphisms in $\tB$,
  and these are precisely the decorated 1-morphisms in $\tE$, as
  required.
\end{proof}

\subsection{Partial fibrations on functors}
\label{sec:fib Funlax}

Suppose $p \colon \oE \to \oB$ is a (co)cartesian fibration of
\icats{}; then so is $p_{*} \colon \Fun(\oK,\oE) \to \Fun(\oK, \oB)$
for any \icat{} $\oK$, and the $p_{*}$-(co)cartesian morphisms are
precisely the natural transformations that are componentwise
(co)cartesian.  Our goal in this subsection is to prove an \itcatl{}
version of this statement and its generalization to partially (op)lax
transformations. We will do this in the framework of decorated
\itcats{}, and so we will more precisely
give conditions under which the functors
$\DFUN(\tKd,\blank)^{I\dplax}$ preserve partial fibrations.

Combining our discussion of decorated Gray tensors with the criterion
of \cref{thm:pb crit for partial fib}, we immediately get the
following useful case:
\begin{cor}\label{cor:DFUNlax fib no dec}
  Suppose $p \colon \tEd \to \tBd$ is a partial $\vepsilon$-fibration. Then for
  any \itcat{} $\tK$, the induced functor of decorated \itcats{}
  \[ p_{*} \colon \DFUN(\tK^{\flat\flat}, \tEd)^{\checkvepsilon\dlax} \to
    \DFUN(\tK^{\flat\flat}, \tBd)^{\checkvepsilon\dlax} \]
  is again a partial $\vepsilon$-fibration.
\end{cor}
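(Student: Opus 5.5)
We apply the pullback criterion of \cref{thm:pb crit for partial fib} in both directions. Since $p$ is a partial $\vepsilon$-fibration, the square \cref{eq:DAR fib pb} is a pullback of \itcats{}. Applying the limit-preserving functor $\Ud\DFUN(\tK^{\flat\flat},\blank)^{\checkvepsilon\dlax}$ yields another pullback square of \itcats{}. Here $\checkvepsilon$ is the variance for which $\DFUN(\tK^{\flat\flat},\blank)^{\checkvepsilon\dlax}$ is compatible with $\DARelax$ under \cref{cor: dec funlax commute}. The functor is limit-preserving because it is a right adjoint, and its underlying \itcat{} is computed by $\Ud$, which preserves limits.

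For the target functor $\DFUN(\tK^{\flat\flat},\tE)^{\checkvepsilon\dlax} \to \DFUN(\tK^{\flat\flat},\tB)^{\checkvepsilon\dlax}$, the required square is
\[
\begin{tikzcd}
\DARelax(\DFUN(\tK^{\flat\flat},\tEd)^{\checkvepsilon\dlax}) \ar[r,"\ev_i"] \ar[d] & \DFUN(\tK^{\flat\flat},\tEd)^{\checkvepsilon\dlax} \ar[d] \\
\DARelax(\DFUN(\tK^{\flat\flat},\tBd)^{\checkvepsilon\dlax}) \ar[r,"\ev_i"'] & \DFUN(\tK^{\flat\flat},\tBd)^{\checkvepsilon\dlax}.
\end{tikzcd}
\]
The key step is to identify this square, naturally in the decorated target, with the one obtained by applying $\DFUN(\tK^{\flat\flat},\blank)^{\checkvepsilon\dlax}$ to \cref{eq:DAR fib pb}. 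By definition $\DARelax(\tXd)=\DFUN([1]^{\sharp},\tXd)^{\velax}$, so \cref{cor: dec funlax commute} gives a natural equivalence
\[ \DFUN([1]^{\sharp},\DFUN(\tK^{\flat\flat},\tXd)^{\checkvepsilon\dlax})^{\velax} \simeq \DFUN(\tK^{\flat\flat},\DFUN([1]^{\sharp},\tXd)^{\velax})^{\checkvepsilon\dlax}, \]
commuting lax and oplax in the appropriate order; $\checkvepsilon$ is chosen exactly so that the variances match. This equivalence is compatible with evaluation at $i\in[1]$, since evaluation is induced by $\{i\}\to[1]^{\sharp}$ and the equivalence is natural in the first variable. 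It is also compatible with $p_*$, since it is natural in $\tXd$. Given this identification, the square above is a pullback, and \cref{thm:pb crit for partial fib} shows that $p_*$ is a partial $\vepsilon$-fibration for the decoration on $\DFUN(\tK^{\flat\flat},\tEd)^{\checkvepsilon\dlax}$ from \cref{defn:dfunlax}.

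The main obstacle is bookkeeping. First, the equivalence must respect the decorations, so that the $\DARelax$ on the left uses the standard decoration of $\DFUN(\tK^{\flat\flat},\blank)^{\checkvepsilon\dlax}$. Second, the minimal decoration on $\tK$ must make the extra decorated 2-morphisms in the Gray tensor, those generated from $\tK^{\diamond,\leq1}_{(1)}\otimes(\blank)$, trivial. This second point is why $\tK$ must carry the $\flat\flat$ decoration: with a nontrivial decoration, lax squares indexed by decorated 1-morphisms of $\tK$ would be forced to be decorated, and the identification would fail, as in \cref{ex:proj par fib}. I would verify both points by unpacking \cref{defn:dfunlax} on objects, 1-morphisms and 2-morphisms of $[1]\otimes\tK$, resp.\ $\tK\otimes[1]$, checking that the decoration conditions on each side reduce to the same componentwise and naturality-square conditions.
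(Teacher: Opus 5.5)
Your strategy is the paper's: apply \cref{thm:pb crit for partial fib}, use \cref{cor: dec funlax commute} to identify $\DARelax(\DFUN(\tK^{\flat\flat},\tEd)^{\checkvepsilon\dlax})$ with $\DFUN(\tK^{\flat\flat},\DARelax(\tEd))^{\checkvepsilon\dlax}$ compatibly with $\ev_{i}$ and $p_{*}$, and then transport the pullback \cref{eq:DAR fib pb}.

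The transport step, however, is justified incorrectly. The square \cref{eq:DAR fib pb} is only a pullback of underlying \itcats{}. As the Warning following \cref{thm:pb crit for partial fib} points out, it is in general \emph{not} a pullback in $\DCatIT$ when $\DARelax(\tEd)$ carries its standard decoration. So it does not suffice that $\DFUN(\tK^{\flat\flat},\blank)^{\checkvepsilon\dlax}$ is a right adjoint on $\DCatIT$ and that $\Ud$ preserves limits.

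The missing observation, which the paper uses when it rewrites the square with $\FUN(\tK,\blank)^{\lax}$ in the case $\ve=(0,1)$, is a natural equivalence $\Ud\DFUN(\tK^{\flat\flat},\tXd)^{\lax}\simeq\FUN(\tK,\tX)^{\lax}$ that holds whatever the decoration on $\tXd$. Every functor $\tK\to\tX$ is decorated. The only constrained naturality 2-cells are those at equivalences of $\tK$, and these are invertible and hence decorated. The functor therefore factors as $\FUN(\tK,\blank)^{\lax}\circ\Ud$, and $\FUN(\tK,\blank)^{\lax}$ preserves limits in $\CatIT$ because it is a right adjoint.

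Relatedly, your explanation of why $\tK$ must be minimally decorated points at the wrong step. The equivalence of \cref{cor: dec funlax commute} holds for an arbitrary decorated $\tKd$, so the identification does not break for nontrivial decorations. What breaks is the decoration-independence above: for general $\tKd$, $\Ud\DFUN(\tKd,\blank)^{\lax}$ depends on the decorations of its input. It then need not preserve a square that is only a pullback of underlying \itcats{}.

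This is why the decorated generalization \cref{cor:dec DFUNlax fib} needs three extra ingredients. It uses the modified decoration $\DARelax(\blank)^{\diafib}$, the decorated pullback of \cref{lem:dec pb for dec fib}, and a marking that contains all decorated 1-morphisms.
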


\begin{remark}
  More explicitly, restricting to the case $\ve = (0,1)$ for
  concreteness, \cref{cor:DFUNlax fib no dec} says that the functor
  \[ p_{*} \colon \FUN(\tK, \tE)^{\lax} \to \FUN(\tK, \tB)^{\lax}\]
  is a partial $(0,1)$-fibration with respect to the decoration of the
  target where
  \begin{itemize}
  \item the decorated 1-morphisms are the lax transformations whose
    component at every object of $\tK$ is a decorated 1-morphism in
    $\tB$ and
    whose lax naturality squares contain decorated 2-morphisms,
  \item the decorated 2-morphisms are those whose component at every
    object of $\tK$ is a decorated 2-morphism in $\tB$.
  \end{itemize}
  Moreover, a 1-morphism in the source over such a decorated
  1-morphism is cocartesian precisely when its component at every
  object of $\tK$ is $p$-cocartesian and its lax naturality squares
  all contain $p$-cartesian 2-morphisms. A 2-morphism is similarly
  cartesian if its component at every object of $\tK$ is a
  $p$-cartesian 2-morphism. In particular, if $p$ is a 
  $(0,1)$-fibration, then so is $p_{*}$, with these as its
  (co)cartesian 1- and 2-morphisms.
\end{remark}

\begin{proof}[Proof of \cref{cor:DFUNlax fib no dec}]
  We prove the case of partial $(0,1)$-fibrations. Applying
  \cref{thm:pb crit for partial fib}, we need to show that the
  commutative square
  \[
    \begin{tikzcd}
     \DAR^{\oplax}(\DFUN(\tK^{\flat\flat},\tEd)^{\lax}) \ar[r] \ar[d] & \DAR^{\oplax}(\DFUN(\tK^{\flat\flat},\tBd)^{\lax})  \ar[d] \\
     \DFUN(\tK^{\flat\flat},\tE)^{\lax} \ar[r] & \DFUN(\tK^{\flat\flat},\tB)^{\lax}
    \end{tikzcd}
  \]
  is a pullback of \itcats{}. But using \cref{cor: dec funlax commute}
  we can rewrite this as the square
  \[
    \begin{tikzcd}
     \FUN(\tK,\DAR^{\oplax}(\tEd))^{\lax} \ar[r] \ar[d] &
     \FUN(\tK,\DAR^{\oplax}(\tBd))^{\lax}   \ar[d] \\
     \FUN(\tK,\tE)^{\lax} \ar[r] & \FUN(\tK,\tB)^{\lax},
    \end{tikzcd}
  \]
  which is a pullback since $\FUN(\tK,\blank)^{\lax}$ preserves
  limits, being a right adjoint.
\end{proof}

We can generalize this result using a decorated upgrade of
\cref{thm:pb crit for partial fib} (which we prove in the more general
case of decorated partial fibrations for later use).
\begin{notation}
  Suppose $\tBd$ is a decorated \itcat{}. We write
  \[\DARelax(\tBd)^{\diafib}\] for the decoration of $\DARelax(\tBd)$
  where
  \begin{itemize}
  \item an (op)lax square is a decorated 1-morphism
    \IFF{} it commutes
    and its image under both $\ev_{0}$ and $\ev_{1}$ are decorated in
    $\tBd$,
  \item a 2-morphism is decorated \IFF{} its image under $\ev_{0}$ and
    $\ev_{1}$ are both decorated in $\tBd$.
  \end{itemize}
  (Note that this differs from the decoration considered in
  \cref{defn:dfunlax}, which is adjoint to the decorated Gray tensor
  product: the difference is that we ask for decorated 1-morphisms to
  be \emph{commutative} squares rather than contain a decorated
  2-morphism.)
  If $\tEd \to \tBd$ is a decorated partial fibration, and $\tEn$ as
  usual denotes the (co)cartesian decoration of $\tE$, we also
  slightly abusively write $\DARelax(\tEn)^{\diafib}$ for the
  decoration of $\DARelax(\tEn)$ induced by viewing it as a full
  sub-\itcat{} of $\DARelax(\tEd)^{\diafib}$.
\end{notation}

\begin{propn}\label{lem:dec pb for dec fib}
  Suppose $p \colon \tEd \to \tBd$ is a decorated partial
  $\vepsilon$-fibration. Then the commutative square
  \[
    \begin{tikzcd}
     \DARelax(\tEn)^{\diafib} \ar[r] \ar[d] & \DARelax(\tBd)^{\diafib}  \ar[d] \\
     \tEd \ar[r] & \tBd
    \end{tikzcd}
  \]
  is a pullback of decorated \itcats{}. 
\end{propn}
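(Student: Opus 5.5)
Since limits in $\DCatIT$ are computed on underlying \itcats{} together with the limits of the decorations (\cref{obs:adjointDcatIT}), the plan splits into two steps. First we show that the underlying square of \itcats{} is a pullback. Then we show that the decoration of $\DARelax(\tEn)^{\diafib}$ coincides with the one pulled back from $\DARelax(\tBd)^{\diafib}$ and $\tEd$. We treat $\ve=(0,1)$; the other variances follow by reversing 1- and/or 2-morphisms.

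For the first step, we apply \cref{thm:pb crit for partial fib} to the partial $(0,1)$-fibration $p \colon \tEn \to \tBd$ with its canonical decoration. That $p$ is such a fibration comes from the first condition in the definition of decorated partial fibrations together with \cref{obs:partial fib canonical dec}. The underlying \itcat{} of $\DARelax(\tEn)^{\diafib}$ is $\DARelax(\tEn)$, since $\diafib$ only changes decorations. Hence the underlying square is exactly \cref{eq:DAR fib pb}, and so it is a pullback.

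For the second step, the pullback decoration is as follows. An oplax square in $\DARelax(\tEn)$ is decorated \IFF{} its image in $\DARelax(\tBd)$ commutes with decorated $\ev_0,\ev_1$-images, and its $\ev_1$-image in $\tEd$ is decorated. A 2-morphism is decorated \IFF{} its $\ev_0,\ev_1$-images in $\tBd$ are decorated and its $\ev_1$-image in $\tE$ is decorated. We must compare this with the $\diafib$-decoration on the source, which asks that the square commutes in $\tE$ and that both $\ev_0$ and $\ev_1$ in $\tEd$ are decorated.

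One direction is easy, since decorated maps in $\tEd$ map to decorated maps in $\tBd$. For the converse, take a square with vertical cocartesian arrows $\bar f\colon \bar x\to\bar y$ and $\bar g\colon\bar a\to\bar b$, horizontal arrows $u\colon\bar x\to\bar a$ and $v\colon\bar y\to\bar b$, and a cartesian 2-cell $\alpha$ from $v\bar f$ to $\bar g u$ (in the oplax orientation). Its image commutes, so $p(\alpha)$ is invertible. A cartesian 2-morphism over an equivalence is itself an equivalence, so $\alpha$ is invertible and the square commutes in $\tE$. Now $v$ is decorated and $\bar f$ is cocartesian over a decorated morphism, so $v\bar f\simeq\bar g u$ is decorated. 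The remaining claim, that $u$ is decorated, is where the decorated-fibration axioms enter, and this is the main obstacle. We have $\bar g u$ decorated with $\bar g$ cocartesian, but the triangle-cancellation axiom of \cref{remark:dec fib explicit} goes the other way: it cancels a cocartesian map on the left, concluding $g$ decorated from $h=gf$.

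What to try first: re-examine which evaluation map the square uses. In \cref{thm:pb crit for partial fib} the horizontal map is $\ev_i=\ev_0$, so the pullback data record the source $u$ rather than the target. With $\ev_0$ we know $u$ is decorated, and we must deduce that $v$ is decorated. This follows from $v\bar f\simeq \bar g u$ being decorated, $\bar f$ being cocartesian, and the left-cancellation axiom, as it should. So the argument should be set up with $\ev_i$, not $\ev_1$, throughout.

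For 2-morphisms the argument is analogous. A 2-morphism between such squares consists of 2-cells $\sigma$ on the sources and $\tau$ on the targets, compatible with the (invertible) squares; here $\sigma$ is decorated and $p(\tau)$ is decorated. Whiskering gives $\tau\bar f\simeq \bar g\sigma$, which is decorated because decorations are closed under whiskering. Then the first sub-axiom for 2-morphisms in \cref{remark:dec fib explicit}, that decoratedness is detected after precomposition with a cocartesian $\bar f$ over a decorated morphism, shows that $\tau$ is decorated. This matches the two decorations, and so the square is a pullback in $\DCatIT$.
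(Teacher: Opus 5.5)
Your overall strategy is the paper's. The underlying square is a pullback by \cref{thm:pb crit for partial fib} applied to $\tEn \to \tBd$, and it remains to match decorations, with the vertical map to $\tEd$ given by $\ev_{0}$. Once you switch to $\ev_{0}$, your 1-morphism argument is exactly the paper's: the cartesian 2-cell lies over an invertible one, hence is invertible, and then $v$ is decorated by the left-cancellation axiom applied to $v\bar f\simeq \bar g u$. You should drop the initial $\ev_{1}$ set-up, since it describes the wrong square.

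The 2-morphism step, however, has a gap. You say a 2-morphism $(\sigma,\tau)$ is ``compatible with the (invertible) squares'' and conclude $\tau\bar f\simeq \bar g\sigma$. But decorated 2-morphisms form a wide and locally wide sub-\itcat{}. So a 2-morphism in the pullback decoration can go between \emph{arbitrary} 1-morphisms $(u,v,\alpha)$ and $(u',v',\alpha')$ of $\DARelax(\tEn)$. Their 2-cells $\alpha\colon v\bar f\Rightarrow \bar g u$ and $\alpha'\colon v'\bar f\Rightarrow \bar g u'$ are cartesian but need not lie over equivalences, so they need not be invertible; your invertibility argument only applied to \emph{decorated} squares. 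The actual compatibility condition is $\alpha'\circ(\tau\bar f)\simeq(\bar g\sigma)\circ\alpha$. Your identity $\tau\bar f\simeq\bar g\sigma$ is false in general: take $\sigma$ to be an identity.

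The fix uses the second 2-morphism property in \cref{remark:dec fib explicit}, which you never invoke. First, $(\bar g\sigma)\circ\alpha$ is decorated in $\tEd$. Indeed, $\sigma$ is decorated, whiskering preserves decorations, and $\alpha$ is cartesian over a decorated 2-morphism, hence decorated in $\tEd$. Next, apply the triangle-cancellation property to $\alpha'\circ(\tau\bar f)\simeq(\bar g\sigma)\circ\alpha$. Here $\alpha'$ is cartesian, and all three images in $\tBd$ are decorated, using that $p(\tau)$ is decorated. This shows that $\tau\bar f$ is decorated. Only then does the first property, detection after precomposing with the cocartesian $\bar f$, give that $\tau$ is decorated. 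This is why the paper says the 2-morphism case uses \emph{both} properties.
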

\begin{proof}
  Since the underlying functor $\tE \to \tB$ is a partial
  $\vepsilon$-fibration, we know that the underlying square of
  \itcats{} is a pullback by \cref{thm:pb crit for partial fib}. It
  therefore suffices to show that the decorations match; we check
  this in the case $\vepsilon = (0,1)$. A 1-morphism in
  $\DARoplax(\tEn)$ is then an oplax square
    \[
    \begin{tikzcd}
     \bullet \ar[r] \ar[d, mid vert, "\natural"'] & \bullet  \ar[d, mid
     vert, "\natural"] \\
     \bullet \ar[r] \ar[Rightarrow, ur, mid vert, "\natural"] & \bullet
    \end{tikzcd}
  \]
  in $\tE$, whose vertical morphisms and diagonal 2-morphism are
  decorated in $\tEn$, which means they are cocartesian and cartesian,
  respectively. This corresponds to a decorated 1-morphism in the
  pullback $\DARoplax(\tBd)^{\diafib} \times_{\tBd} \tEd$ \IFF{}
  \begin{itemize}
  \item the top horizontal morphism is decorated in
    $\tEd$,
  \item and the image in $\DARoplax(\tBd)^{\diafib}$ is decorated, \ie{}
    it commutes and its horizontal morphisms are decorated.
  \end{itemize}
  It follows that the 2-morphism in our oplax square in $\tE$ is cartesian over an
  equivalence, so it must be invertible and we get a commutative
  square; the bottom horizontal morphism is then also decorated since
  $p$ was a decorated $(0,1)$-fibration, as noted in \cref{remark:dec fib explicit}.
  The case of 2-morphisms follows similarly from the two properties of
  decorated 2-morphisms in a decorated partial fibrations stated in \ref{remark:dec fib explicit}.
\end{proof}

Using this, we can strengthen \cref{cor:DFUNlax fib no dec} as follows:
\begin{cor}\label{cor:dec DFUNlax fib}
  Suppose $p \colon \tEd \to \tBd$ is a partial $\vepsilon$-fibration
  and $\tKd$ is a decorated \itcat{} equipped with a marking $E$ such
  that all decorated 1-morphisms are marked. Then 
  the induced functor of decorated \itcats{}
  \[ p_{*} \colon \DFUN(\tKd, \tEd)^{E-\checkvepsilon\dlax} \to
    \DFUN(\tKd, \tBd)^{E-\checkvepsilon\dlax} \]
  is again a partial $\vepsilon$-fibration. 
\end{cor}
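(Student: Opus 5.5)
We follow the proof of \cref{cor:DFUNlax fib no dec}, now using the decorated pullback of \cref{lem:dec pb for dec fib} in place of the undecorated one. We treat $\ve = (0,1)$; the other variances follow by reversing 1- and/or 2-morphisms. By \cref{thm:pb crit for partial fib} it suffices to show that the square
\[
  \begin{tikzcd}
   \DAR^{\oplax}(\DFUN(\tKd,\tEd)^{E\dlax}) \ar[r] \ar[d] & \DAR^{\oplax}(\DFUN(\tKd,\tBd)^{E\dlax})  \ar[d] \\
   \DFUN(\tKd,\tE)^{E\dlax} \ar[r] & \DFUN(\tKd,\tB)^{E\dlax}
  \end{tikzcd}
\]
is a pullback of \itcats{}. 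Here $\DFUN(\tKd,\tEd)^{E\dlax}$ carries its canonical decoration, and we must first check that this decoration consists of the expected cocartesian and cartesian data. The source of $p_{*}$ is decorated by the decorations of $\tEd$, which are precisely the $p$-(co)cartesian ones. So this check reduces to the main claim, once we know which morphisms over the decorated ones in the base are decorated.

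The main step is to rewrite the top row via an interchange of lax and oplax functor categories, as in the undecorated proof. The natural guess is to use \cref{marked dec gray assoc} (the marked version of \cref{cor: dec funlax commute}) to identify
\[ \DAR^{\oplax}(\DFUN(\tKd,\tEd)^{E\dlax}) = \DFUN([1]^{\sharp}, \DFUN(\tKd,\tEd)^{E\dlax})^{\oplax} \]
with the full sub-\itcat{} of $\DFUN(\tKd, \DAR^{\oplax}(\tEd))^{E\dlax}$ spanned by the functors that send $E$ to strong transformations. Two corrections are needed. First, the decoration on the inner $\DAR^{\oplax}(\tEd)$ coming from \cref{defn:dfunlax} is not the one compatible with pullbacks. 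Second, the condition on the morphisms of $\tK$ in $E$ and on the decorated ones needs care. I expect this bookkeeping to be the main obstacle.

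The plan is to show that the conditions appearing match exactly the decorations of $\DAR^{\oplax}(\tEn)^{\diafib}$. Take a decorated 1-morphism $k$ of $\tKd$. Since it is marked, the lax naturality square at $k$ commutes, and since $p$ is decorated partial, the square in $\DAR^{\oplax}(\tE)$ assigned to $k$ is a commutative square with decorated horizontal edges. This is precisely the $\diafib$-decoration. Hence the top-left corner is identified with a full sub-\itcat{} of $\DFUN(\tKd, \DAR^{\oplax}(\tEn)^{\diafib})^{E\dlax}$ cut out by a condition on the images of $E$, and similarly over $\tBd$. Here we use that $p$ is a partial fibration, so by \cref{ex:canonicalexamplesdecfib} it is a decorated partial fibration with its canonical decoration. \cref{lem:dec pb for dec fib} then gives a pullback of decorated \itcats{}
\[ \DAR^{\oplax}(\tEn)^{\diafib} \simeq \DAR^{\oplax}(\tBd)^{\diafib} \times_{\tBd} \tEd. \]
The functor $\DFUN(\tKd,\blank)^{E\dlax}$ is a right adjoint on $\DCatIT$ by \cref{variant: marked Gray}, so it preserves this pullback.

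Finally, we check that the full-subcategory conditions are pulled back, i.e.\ that a functor to the pullback satisfies the strongness condition on $E$ if and only if its image in the base does. This holds because a 2-morphism in $\DAR^{\oplax}(\tEn)$ that lies over an invertible one is cartesian over an equivalence, and hence is itself invertible. Restricting the pullback to these full sub-\itcats{} then gives the required pullback square, and \cref{thm:pb crit for partial fib} concludes.
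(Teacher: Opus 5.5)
Your proposal is correct and follows the paper's proof. You apply $\DFUN(\tKd,\blank)^{E\dlax}$ to the decorated pullback of \cref{lem:dec pb for dec fib}, use \cref{marked dec gray assoc} to identify $\DAR^{\oplax}(\DFUN(\tKd,\blank)^{E\dlax})$ with the full sub-\itcat{} of $\DFUN(\tKd,\DAR^{\oplax}(\blank)^{\diafib})^{E\dlax}$ on functors sending $E$ to commuting squares (which needs decorated morphisms to be marked), check that this condition is pulled back, and conclude by \cref{thm:pb crit for partial fib}.

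Two small wording fixes. The 2-morphism whose invertibility matters is the $p$-cartesian filler 2-cell in $\tE$ of the oplax square, which is a 1-morphism of $\DAR^{\oplax}(\tEn)$; it is not a 2-morphism of $\DAR^{\oplax}(\tEn)$. Also, the decorated horizontal edges come from the functor $\tKd \to \DAR^{\oplax}(\tEd)$ being decorated, not from $p$ being a decorated partial fibration.
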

\begin{proof}
  Applying $\DFUN(\tKd, \blank)^{E-\checkvepsilon\dlax}$ to the
  pullback square of decorated \itcats{} from
  \cref{lem:dec pb for dec fib}, we get the pullback square
  \[
    \begin{tikzcd}
     \DFUN(\tKd, \DARelax(\tEd)^{\diafib})^{E-\checkvepsilon\dlax} \ar[r] \ar[d] &  \DFUN(\tKd, \DARelax(\tBd)^{\diafib})^{E-\checkvepsilon\dlax} \ar[d] \\
     \DFUN(\tKd, \tEd)^{E-\checkvepsilon\dlax} \ar[r] & \DFUN(\tKd,
     \tBd)^{E-\checkvepsilon\dlax}.
    \end{tikzcd}
  \]
  We also know from \cref{marked dec gray assoc} that we can identify
  \[ \DARelax(\DFUN(\tKd, \tBd)^{E-\checkvepsilon\dlax})\] with the
  full sub-\itcat{}  of
  $\DFUN(\tKd, \DARelax(\tBd))^{E-\checkvepsilon\dlax})$ on functors
  that take morphisms in $E$ to commuting squares (where
  $\DARelax(\tBd)$ has the standard decoration). On the other hand,
  $\DFUN(\tKd, \DARelax(\tBd)^{\diafib})^{E-\checkvepsilon\dlax}$ is
  the full sub-\itcat{} on functors that take the decorated
  1-morphisms to commuting squares. Since the decorated morphisms
  are all marked, this means that $\DARelax(\DFUN(\tKd, \tBd)^{E-\checkvepsilon\dlax})$
  is a full
  sub-\itcat{} of
  \[\DFUN(\tKd,
  \DARelax(\tBd)^{\diafib})^{E-\checkvepsilon\dlax}.\] Moreover, we
  have a pullback square
  \[
    \begin{tikzcd}
     \DARelax(\DFUN(\tKd, \tEd)^{E-\checkvepsilon\dlax}) \ar[r] \ar[d] & \DARelax(\DFUN(\tKd, \tBd)^{E-\checkvepsilon\dlax}) \ar[d] \\
     \DFUN(\tKd,
  \DARelax(\tEd)^{\diafib})^{E-\checkvepsilon\dlax} \ar[r] & \DFUN(\tKd,
  \DARelax(\tBd)^{\diafib})^{E-\checkvepsilon\dlax},
    \end{tikzcd}
  \]
  since the objects of $\DARelax(\tEd)$ are squares that
  contain a $j$-cartesian 2-morphism, which therefore commute \IFF{}
  their images in $\DARelax(\tBd)$ also commute. Putting these squares
  together we conclude that $p_{*}$ is a partial $\ve$-fibration by
  the criterion of \cref{thm:pb crit for partial fib}.
\end{proof}

In particular, we have the extreme case where all morphisms are marked:
\begin{cor}
  Suppose $p \colon \tEd \to \tBd$ is a partial $\vepsilon$-fibration
  and $\tKd$ is a decorated \itcat{}. Then 
  the induced functor of decorated \itcats{}
  \[ p_{*} \colon \DFUN(\tKd, \tEd) \to
    \DFUN(\tKd, \tBd) \]
  is again a partial $\vepsilon$-fibration. \qed
\end{cor}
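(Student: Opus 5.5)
This is the special case of \cref{cor:dec DFUNlax fib} in which the marking $E$ on $\tK$ consists of \emph{all} 1-morphisms of $\tK$. The proof is therefore a matter of checking that the hypotheses hold and identifying the resulting functor.

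First, the hypothesis of \cref{cor:dec DFUNlax fib} that every decorated 1-morphism of $\tKd$ is marked holds trivially, since everything is marked. So that corollary applies and shows that
\[ p_{*} \colon \DFUN(\tKd, \tEd)^{E-\checkvepsilon\dlax} \to \DFUN(\tKd, \tBd)^{E-\checkvepsilon\dlax} \]
is a partial $\vepsilon$-fibration.

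Second, we identify both sides with the internal Hom. By the observation following \cref{marked dec gray assoc} (the case where $\tAd$ is marked by all of its 1-morphisms), the marked decorated Gray tensors $\blank \otimesd_{\flat,E} \tKd$ and $\tKd \otimesd_{E,\flat} \blank$ coincide with the cartesian product $\blank \times \tKd$. Passing to right adjoints, $\DFUN(\tKd,\blank)^{E\dplax}$ is identified with $\DFUN(\tKd,\blank)$ from \cref{def:dec int hom}, as decorated \itcats{}. These identifications are natural in the target, so they intertwine the two versions of $p_{*}$.

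The only point needing care is that the decorations agree, not just the underlying \itcats{}. Under the identification, a decorated 1-morphism on the lax side is a lax transformation with decorated components whose naturality squares contain decorated 2-morphisms. Since every naturality square now commutes, and equivalences are decorated, this reduces to the componentwise decoration of \cref{def:dec int hom}. Decorated 2-morphisms are componentwise on both sides, so they agree as well. Transporting the partial $\vepsilon$-fibration structure along these equivalences gives the claim.
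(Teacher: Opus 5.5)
Your proposal is correct and is the paper's argument. The statement is recorded as the special case of \cref{cor:dec DFUNlax fib} in which $E$ consists of all 1-morphisms of $\tK$, using the observation that $\DFUN(\tKd,\blank)^{E\dplax}$ then coincides with $\DFUN(\tKd,\blank)$. Your check that the decorations agree, because the naturality 2-morphisms are invertible and hence decorated, fills in the only detail the paper leaves implicit.
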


\subsection{\texorpdfstring{$\vepsilon$}{Epsilon}-equivalences and \texorpdfstring{$\vepsilon$}{Epsilon}-cofibrations}
\label{sec:e-eqce}

We can interpret \cref{thm:pb crit for partial fib} as characterizing
partial $\vepsilon$-fibrations by a right orthogonality property. In
this section we will spell this out and then study the corresponding
left orthogonal class, the \emph{$\ve$-cofibrations}, as well as the
morphisms that are local equivalences with respect to partial
$\ve$-fibrations over a fixed base, which we call \emph{$\ve$-equivalences}.

Using \cref{notation:eps-lax}, we have the following
reinterpretation of \cref{thm:pb crit for partial fib}:
\begin{cor}\label{cor:partial fib right orth}
  A morphism of decorated \itcats{} $\tEd \to \tBd$ is a partial
  $\vepsilon$-fibration for $\vepsilon = (i,j)$ \IFF{} it is right
  orthogonal to
\[\{i\} \otimesde C_{k}^{\flat\flat} \to
  [1]^{\sharp}
  \otimesde C_{k}^{\flat\flat}\] for $k = 0,1,2$.\qed
\end{cor}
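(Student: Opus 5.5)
The plan is to translate the pullback criterion of \cref{thm:pb crit for partial fib} into an orthogonality statement, using the adjunction between the decorated Gray tensor and $\DFUN(\blank,\blank)^{\velax}$ from \cref{notation:eps-lax}. A square of \itcats{} is a pullback \IFF{} it induces pullbacks of mapping \igpds{} $\Map(\tT,\blank)$ for $\tT$ ranging over a set of generators of $\CatIT$ under colimits. I will use the cells $C_{0},C_{1},C_{2}$. Since $\Map_{\CatIT}(C_{k},\tX)\simeq \Map_{\DCatIT}(C_{k}^{\flat\flat},\tXd)$ by the adjunction $(\blank)^{\flat\flat}\dashv \Ud$, the square \cref{eq:DAR fib pb} is a pullback \IFF{}
\[ \Map(C_{k}^{\flat\flat}, \DARelax(\tEd)) \to \Map(C_{k}^{\flat\flat},\DARelax(\tBd)) \times_{\Map(C_{k},\tB)} \Map(C_{k},\tE) \]
is an equivalence for $k=0,1,2$.

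Next I rewrite each term. Because $\DARelax(\tEd)=\DFUN([1]^{\sharp},\tEd)^{\velax}$, the adjunction gives
\[ \Map(C_{k}^{\flat\flat},\DARelax(\tEd))\simeq \Map_{\DCatIT}([1]^{\sharp}\otimesde C_{k}^{\flat\flat},\tEd). \]
Similarly, the evaluation $\ev_{i}$ corresponds to restriction along $\{i\}\otimesde C_{k}^{\flat\flat}\to [1]^{\sharp}\otimesde C_{k}^{\flat\flat}$. Here $\{i\}\otimesde C_{k}^{\flat\flat}\simeq C_{k}^{\flat\flat}$, and maps out of it into $\tEd$ are just maps $C_{k}\to\tE$. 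Naturality in the second variable identifies the displayed map with the comparison map
\[ \Map([1]^{\sharp}\otimesde C_{k}^{\flat\flat},\tEd)\to \Map([1]^{\sharp}\otimesde C_{k}^{\flat\flat},\tBd)\times_{\Map(\{i\}\otimesde C_{k}^{\flat\flat},\tBd)}\Map(\{i\}\otimesde C_{k}^{\flat\flat},\tEd). \]
This map being an equivalence is exactly the definition of $p$ being right orthogonal to the given map. The corollary then follows from \cref{thm:pb crit for partial fib}.

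The only step needing care is that $C_{0},C_{1},C_{2}$ detect pullbacks of \itcats{}, i.e.\ that the functors $\Map(C_{k},\blank)$ for $k \leq 2$ are jointly conservative and preserve limits. Limit preservation is automatic since these functors are corepresentable. For conservativity, I would note that the cells generate $\CatIT$ under colimits, since every \itcat{} is a colimit of objects of $\Theta_{2}$ and these are iterated pushouts of cells; joint conservativity follows. A minor bookkeeping point is matching the variance convention $\otimesde$ with the choice of lax versus oplax in $\DARelax$, but \cref{notation:eps-lax} is set up precisely so that these agree.
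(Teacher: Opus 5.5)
Your proposal is correct and is the argument the paper intends. The paper states the corollary with \qed as an immediate reinterpretation of \cref{thm:pb crit for partial fib} via the adjunction of \cref{notation:eps-lax}. You supply the details it leaves implicit: the identification $\{i\}\otimesde C_k^{\flat\flat}\simeq C_k^{\flat\flat}$, and the fact that the cells $C_0,C_1,C_2$ jointly detect pullbacks in $\CatIT$.
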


\begin{defn}\label{def:epsiloncof}
  A morphism in $\DCatIT$ is an \emph{$\vepsilon$-cofibration} for
  $\vepsilon = (i,j)$ if it lies in the saturated class generated by
  $\{i\} \otimesde C_{k}^{\flat\flat} \to [1]^{\sharp} \otimesde
  C_{k}^{\flat\flat}$ for $k = 0,1,2$.
\end{defn}

\begin{remark}\label{rmk:loubaton fib}
  Loubaton~\cite{loubaton} defines (two cases of) fibrations of
  decorated $(\infty,\infty)$-categories as morphisms right orthogonal
  to generalizations of our $\ve$-cofibrations. His Theorem~3.2.2.24
  then characterizes these fibrations in terms of the existence of
  (co)cartesian lifts of decorated $i$-morphisms for all $i$, and so
  gives by adjunction a version of our criterion from \cref{thm:pb
    crit for partial fib} for $(\infty,\infty)$-categories.
\end{remark}

\begin{observation}\label{obs:epscofabs}
  We saw in \cref{obs:adjointDcatIT} that $\DCatIT$ is a presentable
  \icat{}. It therefore follows from \cite[Proposition
  5.5.5.7]{LurieHTT} that partial $\vepsilon$-fibrations form the
  right class in a factorization system on $\DCatIT$ whose left class
  consists of the $\vepsilon$-cofibrations. In particular, partial
  $\vepsilon$-fibrations are closed under base change, retracts, and
  limits in $\Ar(\DCatIT)$, and satisfy a cancellation property: if we
  have a commutative triangle of decorated \itcats{}
  \[
    \begin{tikzcd}
     \tEd \ar[rr, "f"] \ar[dr, "p"'] & & \tPd \ar[dl, "q"] \\
      & \tBd
    \end{tikzcd}
  \]
  such that $q$ is a partial $\vepsilon$-fibration, then $p$ is a
  partial $\vepsilon$-fibration \IFF{} $f$ is so. It follows, for
  example, that in this situation a morphism in $\tE$ is
  $p$-(co)cartesian over a decorated morphism in $\tBd$ \IFF{} it is $f$-(co)cartesian over a
  $q$-(co)cartesian morphism in $\tP$.
\end{observation}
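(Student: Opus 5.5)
The plan is to read the statement off from \cref{cor:partial fib right orth} together with the general theory of factorization systems on presentable \icats{}. By \cref{cor:partial fib right orth}, a morphism of decorated \itcats{} is a partial $\vepsilon$-fibration \IFF{} it is right orthogonal to the small set
\[ S_{\vepsilon} := \{\, \{i\} \otimesde C_{k}^{\flat\flat} \to [1]^{\sharp} \otimesde C_{k}^{\flat\flat} \,:\, k = 0,1,2 \,\}. \]
Since $\DCatIT$ is presentable (\cref{obs:adjointDcatIT}), \cite[Proposition 5.5.5.7]{LurieHTT} produces a factorization system $(L,R)$. Its right class $R$ is $S_{\vepsilon}^{\perp}$, which is exactly the class of partial $\vepsilon$-fibrations. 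Its left class $L = {}^{\perp}(S_{\vepsilon}^{\perp})$ is the saturated class generated by $S_{\vepsilon}$, which is exactly the $\vepsilon$-cofibrations of \cref{def:epsiloncof}. There is one point to check here: the ``partial fibration'' notion in \cref{cor:partial fib right orth} is the decorated one, in which the decorations of $\tEd$ are \emph{precisely} the (co)cartesian lifts. This is the notion appearing in the statement, so no translation between the two notions is needed.

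The closure properties then follow formally from standard facts about right classes. Base change and limits in $\Ar(\DCatIT)$ hold because right orthogonality to a fixed map is a limit condition on mapping spaces: the relevant square of mapping spaces is a pullback, and pullbacks commute with limits. Retracts hold because a retract of an equivalence of spaces is an equivalence. For the cancellation property, I would use that the right class of a factorization system is closed under composition and satisfies left cancellation (\cite[Proposition 5.2.8.6]{LurieHTT}). Concretely, if $q \in R$, then $p = qf \in R$ \IFF{} $f \in R$. This is seen directly by comparing, for each generator $A \to B$ in $S_{\vepsilon}$, the lifting squares for $f$, $p$ and $q$: the square for $p$ is the composite of those for $f$ and $q$, and pullback pasting does the rest.

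The final assertion is where the only real care lies, namely in keeping track of which decorations are in play. Since $p$, $q$ and $f$ are all partial fibrations in the decorated sense, the following identifications hold. The decorated 1-morphisms of $\tEd$ are exactly the $p$-$i$-cartesian lifts of decorated morphisms of $\tBd$. The decorated morphisms of $\tPd$ are exactly the $q$-$i$-cartesian lifts of decorated morphisms of $\tBd$. Finally, the decorated morphisms of $\tEd$ are exactly the $f$-$i$-cartesian lifts of decorated morphisms of $\tPd$. Comparing the first and third descriptions of the same class gives the claim. The analogous statement for 2-morphisms follows in the same way. I expect this bookkeeping of decorations to be the main obstacle, though everything else is formal.
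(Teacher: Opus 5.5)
Your proposal is correct and is essentially the paper's own argument. The statement is read off from the right-orthogonality characterization in \cref{cor:partial fib right orth} together with \cite[Proposition 5.5.5.7]{LurieHTT}; the closure and cancellation properties are the standard formal properties of a right orthogonal class; and the final claim follows by comparing the two descriptions of the decorated morphisms of $\tEd$, which is exactly the bookkeeping you describe.
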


\begin{observation}\label{obs:epscofoverB}
  It also follows from \cref{cor:partial fib right orth} that the full
  subcategory $\PFibe_{/\tBd} \subseteq \DCatITsl{\tBd}$ consists
  precisely of the objects that are local with respect to all
  morphisms of the form
    \begin{equation}
      \label{eq:gen eps equiv}
      \begin{tikzcd}
        \{i\} \otimesde C_{k}^{\flat\flat} \ar[rr] \ar[dr] & & {[1]}^{\sharp}
        \otimesde C_{k}^{\flat\flat} \ar[dl] \\
        & \tBd.
      \end{tikzcd}
    \end{equation}
  Since there is a set of equivalence classes of such morphisms and
  $\DCatIT$ is presentable, this implies that there exists a
  localization functor \[L^{\vepsilon}_{\tBd} \colon \DCatITsl{\tBd}
  \to \PFibe_{/\tBd},\] left adjoint to the inclusion.
\end{observation}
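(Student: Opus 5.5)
The plan is to unwind the definitions of locality in the slice and reduce everything to \cref{cor:partial fib right orth}. First we record that $\PFibe_{/\tBd}$ really is a \emph{full} subcategory of $\DCatITsl{\tBd}$. Once a partial $\vepsilon$-fibration $\tEd \to \tBd$ carries its canonical (co)cartesian decoration, a decorated functor over $\tBd$ between two such objects automatically preserves the decorated, i.e.\ (co)cartesian, 1- and 2-morphisms. This is the identification noted right after the definition of partial fibrations in the decorated setting.

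Next, for an object $p \colon \tEd \to \tBd$ and a morphism $f \colon \tAd \to \tA'^{\diamond}$ over $\tBd$ with structure map $q \colon \tA'^{\diamond} \to \tBd$, we use that mapping spaces in the slice are fibres:
\[ \Map_{/\tBd}(\tA'^{\diamond}, \tEd) \simeq \Map(\tA'^{\diamond}, \tEd) \times_{\Map(\tA'^{\diamond}, \tBd)} \{q\}, \]
and similarly for $\tAd$ with structure map $qf$. Hence $p$ is local with respect to $f$, viewed as a morphism over $\tBd$ via $q$, \IFF{} the canonical map
\[ \Map(\tA'^{\diamond}, \tEd) \to \Map(\tAd, \tEd) \times_{\Map(\tAd, \tBd)} \Map(\tA'^{\diamond}, \tBd) \]
induces an equivalence on the fibre over $q \in \Map(\tA'^{\diamond},\tBd)$. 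Here we use that the right-hand projection to $\Map(\tA'^{\diamond},\tBd)$ has fibre $\Map_{/\tBd}(\tAd,\tEd)$ over $q$.

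Letting $q$ range over all maps $[1]^{\sharp} \otimesde C_{k}^{\flat\flat} \to \tBd$, this condition holds for all $q$ exactly when the displayed map is an equivalence, since a map of spaces over a base is an equivalence \IFF{} it is so on all fibres. That is precisely right orthogonality of $p$ to the generator $\{i\} \otimesde C_{k}^{\flat\flat} \to [1]^{\sharp} \otimesde C_{k}^{\flat\flat}$. By \cref{cor:partial fib right orth}, locality with respect to all morphisms of the form \cref{eq:gen eps equiv} is therefore equivalent to $p$ being a partial $\vepsilon$-fibration.

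For the existence of $L^{\vepsilon}_{\tBd}$, we note that $\DCatITsl{\tBd}$ is presentable as a slice of the presentable \icat{} $\DCatIT$ (\cref{obs:adjointDcatIT}). The morphisms \cref{eq:gen eps equiv} are indexed by $k \in \{0,1,2\}$ together with a point of $\Map([1]^{\sharp} \otimesde C_{k}^{\flat\flat}, \tBd)$, which is a small space. So they form a small set $S$ up to equivalence, and the full subcategory of $S$-local objects is an accessible localization by \cite[Proposition 5.5.4.15]{LurieHTT}. This supplies the left adjoint to the inclusion. The only step requiring any care is the fibrewise comparison of slice mapping spaces with the orthogonality square; the rest is formal.
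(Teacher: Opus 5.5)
Your proposal is correct and follows the same route as the paper, which simply cites \cref{cor:partial fib right orth} together with the presentability of $\DCatIT$. What you add is the two routine steps the paper leaves implicit. The first is the fibrewise translation, over $\Map([1]^{\sharp} \otimesde C_{k}^{\flat\flat}, \tBd)$, between right orthogonality to the generators and locality in $\DCatITsl{\tBd}$ with respect to all the displayed triangles. The second is the smallness and slice-presentability check needed to apply \cite[Proposition 5.5.4.15]{LurieHTT}.
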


\begin{defn}
  We say that a morphism in $\DCatITsl{\tBd}$ is an
  \emph{$\vepsilon$-equivalence} over $\tBd$ if it is taken to an
  equivalence by $L^{\vepsilon}_{\tBd}$, or equivalently if it is
  in the strongly saturated class generated by the morphisms \cref{eq:gen eps equiv}.
\end{defn}

\begin{observation}\label{obs:cofib vs equiv}
  We make some elementary observations on $\vepsilon$-cofibrations and
  $\vepsilon$-equivalences:
  \begin{itemize}
  \item A morphism $f \colon \tAd \to \tBd$ is an
    $\vepsilon$-cofibration \IFF{} it is an $\vepsilon$-equivalence
    when viewed as a map $f \to \id_{\tBd}$ in $\DCatITsl{\tBd}$.
  \item For any decorated functor $p \colon \tAd \to \tBd$, the
    functor
    \[ p_{!} \colon \DCatITsl{\tAd} \to \DCatITsl{\tBd}\]
    given by composition with $p$ preserves $\vepsilon$-equivalences,
    since its right adjoint $p^{*}$ preserves partial
    $\vepsilon$-fibrations.
  \item If $p$ is itself a partial $\vepsilon$-fibration, then $p_{!}$
    furthermore reflects $\vepsilon$-equivalences: In this case
    $p_{!}$ restricts to a functor $\PFibe_{/\tAd} \to \PFibe_{/\tBd}$ left
    adjoint to pullback, so that we get an equivalence of left
    adjoints $L^{\vepsilon}_{\tBd}p_{!} \simeq
    p_{!}L^{\vepsilon}_{\tAd}$. Since equivalences in $\PFibe_{/\tBd}$
    are detected in $\CatIT$, it follows that for a morphism $f$ in
    $\DCatITsl{\tAd}$ we have that $L^{\vepsilon}_{\tAd}(f)$ is an
    equivalence \IFF{} $p_{!}L^{\vepsilon}_{\tAd}(f) \simeq
    L^{\vepsilon}_{\tBd}(p_{!}f)$ is an equivalence. Note that if
    $p$ factors as
    $\tAd \xto{q} \tXd \xto{r} \tBd$, then this implies
    that $q_{!}$ also detects $\ve$-equivalences.
  \item As a special case, this means that if a morphism
    \[
      \begin{tikzcd}
       \tAd \ar[rr, "f"] \ar[dr, "p"'] & & \tBd \ar[dl, "q"] \\
        & \tCd
      \end{tikzcd}
    \]
    in $\DCatITsl{\tCd}$ is an $\vepsilon$-equivalence and $q$ is a partial $\vepsilon$-fibration,
    then $f$ is an $\vepsilon$-cofibration.
  \end{itemize}
\end{observation}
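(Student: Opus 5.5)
The four claims follow formally from the factorization system of \cref{obs:epscofabs} and the localization $L^{\vepsilon}_{\tBd}$ of \cref{obs:epscofoverB}; I would treat them in order, each reducing to the previous ones.

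For the first claim, I would identify $f$, viewed as a map $f \to \id_{\tBd}$ in $\DCatITsl{\tBd}$, with an object of the slice. A map in $\DCatITsl{\tBd}$ is an $\vepsilon$-equivalence iff it is local against every partial $\vepsilon$-fibration $q \colon \tEd \to \tBd$. For our map this says that
\[ \Map_{/\tBd}(\tBd, \tEd) \to \Map_{/\tBd}(\tAd, \tEd) \]
is an equivalence, i.e.\ that sections of $q$ correspond uniquely to lifts of $f$ against $q$. This is exactly unique lifting in the square with bottom edge $\id_{\tBd}$. For a general lifting square against a partial fibration $\tEd \to \tCd$ over some other base, I would first pull the fibration back along $\tBd \to \tCd$, which is allowed because partial $\vepsilon$-fibrations are closed under base change. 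This reduces the square to one over $\tBd$. Therefore $f$ is left orthogonal to all partial $\vepsilon$-fibrations iff it is an $\vepsilon$-equivalence over $\tBd$, and by \cref{obs:epscofabs} left orthogonality is the same as being an $\vepsilon$-cofibration.

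For the second claim, I would use that $p_{!} \dashv p^{*}$ and that $p^{*}$ preserves partial $\vepsilon$-fibrations, again by stability under base change. Then for $f$ an $\vepsilon$-equivalence over $\tAd$ and $q$ a partial $\vepsilon$-fibration over $\tBd$, the adjunction gives
\[ \Map(p_{!}-, q) \simeq \Map(-, p^{*}q), \]
and the right-hand side inverts $f$. So $p_{!}f$ is local against all partial fibrations over $\tBd$, i.e.\ an $\vepsilon$-equivalence.

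For the third claim, which is the main point, I would argue as follows.
\begin{itemize}
\item By the cancellation property in \cref{obs:epscofabs}, when $p$ is a partial fibration $p_{!}$ sends partial fibrations over $\tAd$ to partial fibrations over $\tBd$, and it also sends morphisms of partial fibrations over $\tAd$ to morphisms over $\tBd$.
\item The latter requires care: cocartesian edges over $\tAd$ must go to cocartesian edges over the decorations of $\tBd$. This holds because $p_{!}$ does not change the underlying functor, and being $q$-cartesian over $p$-cartesian is the same as being $pq$-cartesian, as noted at the end of \cref{obs:epscofabs}. I expect this identification of the decorations to be the only non-formal step.
\item The restricted functor $p_{!} \colon \PFibe_{/\tAd} \to \PFibe_{/\tBd}$ is left adjoint to the restricted $p^{*}$. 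The two composites of left adjoints, $L^{\vepsilon}_{\tBd}p_{!}$ and $p_{!}L^{\vepsilon}_{\tAd}$, both have right adjoint the inclusion composed with $p^{*}$, so they are equivalent.
\item Since $p_{!}$ is conservative (it is a forgetful functor on slices), $L^{\vepsilon}_{\tAd}(f)$ is an equivalence iff $L^{\vepsilon}_{\tBd}(p_{!}f)$ is one.
\item For a factorization $p = rq$, I would write $p_{!} = r_{!}q_{!}$. If $q_{!}f$ is an $\vepsilon$-equivalence, then so is $p_{!}f$ by the second claim, and hence so is $f$ by what was just shown.
\end{itemize}

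For the fourth claim, the hypothesis is that $f$ is an $\vepsilon$-equivalence over $\tCd$. I would view $f$ as the map $f \to \id_{\tBd}$ in $\DCatITsl{\tBd}$; applying $q_{!}$ to it gives exactly the given map over $\tCd$. By the third claim, since $q$ is a partial fibration, $f \to \id_{\tBd}$ is then an $\vepsilon$-equivalence over $\tBd$. By the first claim, $f$ is an $\vepsilon$-cofibration.
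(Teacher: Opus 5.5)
Your proposal is correct and follows the paper's own argument. Bullets two and three use the adjunction $p_{!} \dashv p^{*}$, its restriction to partial fibrations and the resulting equivalence $L^{\vepsilon}_{\tBd}p_{!} \simeq p_{!}L^{\vepsilon}_{\tAd}$; bullet four applies the third to $q$ and then the first; and your base-change reduction justifies the first bullet, which the paper leaves implicit. (The decoration check you flag as non-formal is in fact automatic, since partial $\vepsilon$-fibrations are the right class of a factorization system on $\DCatIT$ and so are closed under composition there.)
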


\begin{propn}
  $[1]^{\sharp} \to [0]$ and $C_{2}^{\flat\sharp} \to [1]^{\flat}$ are
  $\vepsilon$-cofibrations for all $\vepsilon$.
\end{propn}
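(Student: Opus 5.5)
Both maps can be handled directly from the definition: I would check that each is left orthogonal to every partial $\vepsilon$-fibration $p \colon \tEd \to \tBd$, $\vepsilon = (i,j)$. By \cref{obs:epscofabs}, the $\vepsilon$-cofibrations are exactly the left class of the factorization system whose right class is the partial $\vepsilon$-fibrations, so this suffices. Concretely, for a map $u \colon \tAd \to \tA'^{\diamond}$ it is enough to show that the square of mapping \igpds{}
\[ \Map_{\DCatIT}(\tA'^{\diamond}, \tEd) \to \Map_{\DCatIT}(\tAd,\tEd) \times_{\Map_{\DCatIT}(\tAd,\tBd)} \Map_{\DCatIT}(\tA'^{\diamond},\tBd) \]
is an equivalence. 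The key input throughout is the uniqueness of (co)cartesian lifts from \cref{obs:cart lifts unique}, together with the fact that in a partial $\vepsilon$-fibration the decorated morphisms of $\tEd$ are \emph{precisely} the relevant cartesian lifts.

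\emph{The map $[1]^{\sharp} \to [0]$.} A decorated functor $[1]^{\sharp} \to \tEd$ is exactly a decorated 1-morphism of $\tEd$, i.e.\ an $i$-cartesian morphism over a morphism in $\tBd_{(1)}$.
\begin{itemize}
\item By \cref{obs:cart lifts unique}, the map $\Map^{\diamond}([1],\tE) \to \Map^{\diamond}([1],\tB) \times_{\tB^{\simeq}} \tE^{\simeq}$ (evaluating at $i$) is a monomorphism.
\item By the existence of lifts it is surjective, hence an equivalence. This is the same argument used in the proof of \cref{cor:pb for cocart 1-mor}.
\item Pull this equivalence back along the degeneracy $\tB^{\simeq} \to \Map^{\diamond}([1],\tB)$; equivalences are decorated, so the degeneracy lands in the decorated part.
\end{itemize}
The pullback on the right-hand side is $\tE^{\simeq}$. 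The pullback on the left is the space of decorated morphisms of $\tE$ lying over identities. Since the degeneracy $\tE^{\simeq} \to \Map^{\diamond}([1],\tE)$ lies over it, this identifies the space of such lifts with $\tE^{\simeq} = \Map([0],\tE)$. That is exactly the required orthogonality.

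\emph{The map $C_{2}^{\flat\sharp} \to [1]^{\flat}$.} A decorated functor $C_{2}^{\flat\sharp} \to \tEd$ is a decorated 2-morphism of $\tEd$, since the only decorated 1-morphisms of $C_2^{\flat\sharp}$ are identities. Such a 2-morphism is $j$-cartesian over a 2-morphism in $\tBd_{(2)}$. I would run the same argument one level up, for the functors $p_{x,y} \colon \tE(x,y) \to \tB(px,py)$ on mapping \icats{}.
\begin{itemize}
\item Apply the \icatl{} form of uniqueness of cartesian lifts, which is the same argument as \cref{obs:cart lifts unique}. The space of decorated 2-morphisms in $\tE$ maps by an equivalence to
\[ \Map^{\diamond}(C_{2},\tB) \times_{\Map([1],\tB)} \Map([1],\tE), \]
where the fibre product uses evaluation at the source or target 1-morphism according to $j$.
\item Here we use that $j$-cartesian lifts of decorated 2-morphisms exist with any prescribed endpoint and are unique.
\item Pulling back along the degeneracy $\Map([1],\tB) \to \Map^{\diamond}(C_{2},\tB)$ (identity 2-morphisms) identifies decorated 2-morphisms of $\tE$ over identity 2-morphisms with $\Map([1],\tE)$.
\end{itemize}
Finally, $\Map([1],\tE)$ is $\Map_{\DCatIT}([1]^{\flat},\tEd)$, which gives the orthogonality.

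The only step needing care is the second case. One has to make sure the decorated mapping space $\Map_{\DCatIT}(C_2^{\flat\sharp},\tEd)$ really is the space of $j$-cartesian 2-morphisms with its natural map to $\Map([1],\tE)$. In particular, the relevant endpoint of the 2-morphism must range over all 1-morphisms of $\tE$ and not just the decorated ones. This works because $C_2^{\flat\sharp}$ has no nontrivial decorated 1-morphisms, and because $j$-cartesian lifts of 2-morphisms in $\tBd_{(2)}$ exist between arbitrary 1-morphisms, stably under whiskering.
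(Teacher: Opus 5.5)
Your proposal is correct, but it argues differently from the paper.

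\textbf{Your route.} You test orthogonality directly against every partial $\vepsilon$-fibration. This is legitimate: by \cref{obs:epscofabs} the $\vepsilon$-cofibrations are exactly the maps left orthogonal to partial $\vepsilon$-fibrations. You then compute the lifting squares from existence and uniqueness of lifts, applying \cref{obs:cart lifts unique} to $p$ and its $\infty$-categorical analogue to each $p_{x,y}$.

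\textbf{The paper's route.} The paper never looks at fibrations here; it works formally inside the saturated class.
\begin{itemize}
\item The composite $\{i\} \to [1]^{\sharp} \to [0]$ is the identity, and $\{i\} \to [1]^{\sharp}$ is a generator. Cancellation in the left class then gives the first map.
\item Likewise, the generator $\{i\} \times [1]^{\flat} \to [1]^{\sharp} \otimesde [1]^{\flat}$ shows that the projection $[1]^{\sharp} \otimesde [1]^{\flat} \to [1]^{\flat}$ is an $\vepsilon$-cofibration.
\item Collapsing the two decorated edges of this decorated Gray square produces $C_{2}^{\flat\sharp}$; this is a cobase change of $[1]^{\sharp} \amalg [1]^{\sharp} \to [0] \amalg [0]$. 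The 2-cell is decorated by the definition of the decorated Gray tensor.
\item A second cancellation then handles $C_{2}^{\flat\sharp} \to [1]^{\flat}$.
\end{itemize}

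\textbf{Comparison.} The paper's argument is shorter and purely formal. It needs only the generators, closure properties, and the identification of that one pushout of decorated \itcats{}. Yours avoids any computation with decorated Gray tensors. It also makes the actual content visible: in a partial fibration, decorated 1-morphisms over equivalences and decorated 2-morphisms over identities are themselves invertible. The cost is some mapping-space bookkeeping, such as the reduction fibrewise over $\tE^{\simeq} \times \tE^{\simeq}$ in the second case. You only sketch that reduction, but it goes through.
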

\begin{proof}
  Let $\vepsilon = (i,j)$. For the first map, we observe that the
  composition
  \[ \{i\} \to [1]^{\sharp} \to [0],\]
  is the identity, from which it follows by
  cancellation that $[1]^{\sharp} \to [0]$ must be an
  $\vepsilon$-cofibration. Similarly, from the composition
  \[ \{i\} \times [1]^{\flat} \to [1]^{\sharp} \otimesde [1]^{\flat} \to [1]^{\flat}\]
  we see that $[1]^{\sharp} \otimesde [1]^{\flat} \to [1]^{\flat}$ is an
  $\vepsilon$-cofibration. Since we also have a pushout
  \[
    \begin{tikzcd}
     {[1]^{\sharp} \amalg [1]^{\sharp}} \ar[r] \ar[d]
     &{[0] \amalg [0]}  \ar[d] \\
     {[1]^{\sharp} \otimesde [1]^{\flat}} \ar[r] & C_{2}^{\flat\sharp},
    \end{tikzcd}
  \]
  it again follows by cancellation that $C_{2}^{\flat\sharp} \to
  [1]^{\flat}$
  is an $\vepsilon$-cofibration.
\end{proof}

Since $\vepsilon$-cofibrations are closed under cobase change and colimits, we get:
\begin{cor}\label{obs:loc e-eqce}
  For any decorated \itcat{} $\tCd$, the morphism $\tCd \to
  \Ld(\tCd)^{\flat\flat}$ is an $\ve$-equivalence for any $\ve$, as is
  any pushout thereof, \ie{} any morphism obtained by inverting a
  collection of decorated 1- and 2-morphisms. \qed
\end{cor}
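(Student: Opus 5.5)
The plan is to deduce the corollary from the preceding proposition together with the formal closure properties of $\ve$-cofibrations and $\ve$-equivalences. First I will show that the map $\tCd \to \Ld(\tCd)^{\flat\flat}$ is a colimit of pushouts of the two basic $\ve$-cofibrations $[1]^{\sharp} \to [0]$ and $C_{2}^{\flat\sharp} \to [1]^{\flat}$. By \cref{lem:localizationdecorations}, $\Ld(\tCd)$ is obtained from $\tC$ by inverting the decorated 1-morphisms and then collapsing the decorated 2-morphisms. Inverting a 1-morphism $f$ in an \itcat{} is the same as forming a pushout along $[1] \to \|[1]\| \simeq [0]$. Inverting a 2-morphism $\alpha$ is the same as forming a pushout along $C_{2} \to [1]$, since $[1]$ is the localization of $C_{2}$ at its nonidentity 2-cell.

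I would carry this out in the decorated setting by considering the coproduct of the maps $[1]^{\sharp} \to \tCd$ indexed by decorated 1-morphisms and the maps $C_{2}^{\flat\sharp} \to \tCd$ indexed by decorated 2-morphisms. Consider the pushout of $\tCd$ along the corresponding coproduct of the maps $[1]^{\sharp} \to [0]$ and $C_{2}^{\flat\sharp} \to [1]^{\flat}$. By \cref{obs:adjointDcatIT}, colimits in $\DCatIT$ are computed on underlying \itcats{}, equipped with the decorations generated by the images. So the underlying \itcat{} of this pushout is $\Ld(\tCd)$, by the universal property in \cref{lem:localizationdecorations}. Every decorated 1- and 2-morphism has been inverted, so the image decorations consist only of equivalences, and the resulting decoration is $\flat\flat$. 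Since $\ve$-cofibrations are a left class of a factorization system (\cref{obs:epscofabs}), they are closed under coproducts and cobase change. Hence $\tCd \to \Ld(\tCd)^{\flat\flat}$ is an $\ve$-cofibration.

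By the first point of \cref{obs:cof vs equiv}, this map is then an $\ve$-equivalence when regarded as a morphism over $\Ld(\tCd)^{\flat\flat}$. For an $\ve$-equivalence over an arbitrary base $\tBd$ receiving a map from $\Ld(\tCd)^{\flat\flat}$, I use the second point of \cref{obs:cof vs equiv}: composition $p_{!}$ preserves $\ve$-equivalences, so the claim holds over any base.

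The same argument applies to inverting any subcollection of decorated 1- and 2-morphisms, using the sub-coproduct. Any pushout of such a map is again an $\ve$-cofibration, since the left class is stable under cobase change, and so again an $\ve$-equivalence over any base by the same reasoning.

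The main point to check carefully is the identification of the pushout's decoration. One must verify that, after localization in $\DCatIT$, no non-invertible decorations remain. This holds because decorations are generated by images, and every image is now an equivalence.
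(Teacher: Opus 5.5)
Your proposal is correct and is the paper's argument. The paper deduces the statement in one line from the preceding proposition ($[1]^{\sharp}\to[0]$ and $C_{2}^{\flat\sharp}\to[1]^{\flat}$ are $\ve$-cofibrations) together with closure of $\ve$-cofibrations under colimits and cobase change. Your write-up adds the details the paper leaves implicit: the identification of the pushout in $\DCatIT$, including its decoration, with $\Ld(\tCd)^{\flat\flat}$, and the passage from $\ve$-cofibration to $\ve$-equivalence over an arbitrary base.
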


\begin{observation}\label{obs:lax homotopy eps-equiv}
  The projection $[1]^{\sharp} \otimesd \tDd \to \tDd$ is a
  localization at decorated 1- and 2-morphisms, and so is an
  $\ve$-cofibration by \cref{obs:loc e-eqce}. Hence the triangle
  \[
    \begin{tikzcd}
     {[1]^{\sharp}} \otimesd \tDd  \ar[rr, "\text{proj}"] \ar[dr, "p
     \circ \text{proj}"'] & &
     \tDd \ar[dl, "p"] \\
      & \tBd
    \end{tikzcd}
  \]
  is an $\ve$-equivalence over $\tBd$. By the 2-out-of-3 property, the
  two inclusions $\tDd \to [1]^{\sharp} \otimesd \tDd$ are also
  $\ve$-equivalences, and moreover become equivalent  after applying
  $L^{\ve}_{\tBd}$, since they both give inverses of the equivalence
  $L^{\ve}_{\tBd}(\text{proj})$. Hence any lax transformation
  \[ \alpha \colon [1]^{\sharp} \otimesd \tDd \to \tQd \]
  over $\tBd$ from $\alpha_{0}$ to $\alpha_{1}$ gives a natural
  equivalence between $L^{\ve}_{\tBd}(\alpha_{0})$ and
  $L^{\ve}_{\tBd}(\alpha_{1})$. We can therefore use $[1]^{\sharp}
  \otimesd (\blank)$ to
  define ``homotopies'' and ``homotopy equivalences'' that are in
  particular always $\ve$-equivalences. The following special case of
  this will be useful later:
\end{observation}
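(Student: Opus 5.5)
The plan is to verify the three claims of the observation in turn, working in $\DCatITsl{\tBd}$ throughout.

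\textbf{The projection is an $\ve$-cofibration.} First I will identify $\text{proj} \colon [1]^{\sharp} \otimesd \tDd \to \tDd$ as the map inverting a collection of decorated 1- and 2-morphisms of $[1]^{\sharp} \otimesd \tDd$. By definition of $\otimesd$, the decorated 1-morphisms of the source include all morphisms $[1] \times \{d\}$ for $d \in \tD$. The decorated 2-morphisms include the 2-morphisms of all Gray squares $[1] \otimes f$, since the $[1]$-factor is marked.
\begin{itemize}
\item Inverting the 2-morphisms of these Gray squares is exactly the passage $[1] \otimes \tD \to [1] \otimes_{\sharp,\flat} \tD$. By \cref{obs:max marked gray is cart} the target is $[1] \times \tD$.
\item Further inverting the morphisms $[1] \times \{d\}$ gives $\|[1]\| \times \tD \simeq \tD$. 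This uses that the pushout along $[1] \times \tD^{\simeq} \to \tD^{\simeq}$ computes the product-localization; I would check this on the generators of $\tD$ as a colimit of $\Theta_{2}$-objects, in the spirit of the proof of \cref{lem:lff on Funlax}.
\item On decorations, the images of the generating decorations of $[1]^{\sharp}\otimesd\tDd$ are precisely those of $\tDd$.
\end{itemize}
Hence $\text{proj}$ is a pushout of $\tCd \to \Ld(\tCd)^{\flat\flat}$ for a suitable decorated sub-object $\tCd$. By \cref{obs:loc e-eqce} it is an $\ve$-cofibration, and the first bullet of \cref{obs:cofib vs equiv}, together with $p_{!}$ preserving $\ve$-equivalences, makes the triangle over $\tBd$ an $\ve$-equivalence.

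\textbf{The inclusions.} The two inclusions $i_{0}, i_{1} \colon \tDd \to [1]^{\sharp} \otimesd \tDd$ are sections of $\text{proj}$ over $\tBd$, so $\text{proj} \circ i_{k} \simeq \id$. By 2-out-of-3 each $i_{k}$ is an $\ve$-equivalence. Applying $L^{\ve}_{\tBd}$, both $L(i_{0})$ and $L(i_{1})$ are right inverses of the equivalence $L(\text{proj})$. They are therefore both equivalent to its inverse, and hence equivalent to each other.

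\textbf{Lax transformations.} Given $\alpha$ over $\tBd$ with $\alpha \circ i_{k} \simeq \alpha_{k}$, I obtain
\[ L(\alpha_{0}) \simeq L(\alpha) L(i_{0}) \simeq L(\alpha) L(i_{1}) \simeq L(\alpha_{1}). \]

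The main obstacle is the first step: checking that the pushout inverting these decorations really recovers $\tD$ with exactly its own decorations. The product-localization identification and the decoration bookkeeping are where care is needed; the rest is formal.
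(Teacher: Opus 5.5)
Your proposal is correct and follows the paper's own argument. You identify the projection as a localization at decorated 1- and 2-morphisms, which makes it an $\ve$-cofibration and hence an $\ve$-equivalence over $\tBd$. You then use 2-out-of-3, together with the fact that $L^{\ve}_{\tBd}(i_0)$ and $L^{\ve}_{\tBd}(i_1)$ are both inverses of $L^{\ve}_{\tBd}(\text{proj})$, to compare $L^{\ve}_{\tBd}(\alpha_0)$ and $L^{\ve}_{\tBd}(\alpha_1)$.

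You also spell out one point the paper leaves implicit: that localizing $[1]\times\tD$ at the morphisms $[1]\times\{d\}$ recovers $\tD$. This follows more quickly from the universal property than from a $\Theta_2$-colimit check. The relevant fact is that $s_0^{*}\colon \tE\to\AR(\tE)$ is fully faithful with essential image the equivalences.
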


\begin{lemma}\label{lem: lax tr eps-equiv}
  Suppose $F \colon \tCd \to \tDd$ is a decorated functor over $\tBd$
  such that there exists $G \colon \tDd \to \tCd$ over $\tBd$ such
  that $GF \simeq \id_{\tCd}$ as well as a commutative triangle
  \[
    \begin{tikzcd}
     {[1]^{\sharp}} \otimesd \tDd  \ar[rr, "\rho"] \ar[dr] & & \tDd \ar[dl] \\
      & \tBd
    \end{tikzcd}
  \]
  where $\rho$ is a lax natural transformation between $\id_{\tDd}$
  and $FG$. Then for any decorated functor $f \colon \tAd \to \tBd$,
  the pullback $f^{*}F \colon f^{*}\tCd \to f^{*}\tDd$ is an
  $\ve$-equivalence over $\tAd$.
\end{lemma}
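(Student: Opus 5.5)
The plan is to pull back all of the data along $f$ and then invoke \cref{obs:lax homotopy eps-equiv} over the new base $\tAd$. The only real work is checking that the lax homotopy $\rho$ survives the base change.

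First, base change along $f$ preserves everything in sight. Pullback $f^{*} \colon \DCatITsl{\tBd} \to \DCatITsl{\tAd}$ is a functor, so $G$ pulls back to $f^{*}G \colon f^{*}\tDd \to f^{*}\tCd$ over $\tAd$, and $GF \simeq \id$ gives $f^{*}G \circ f^{*}F \simeq \id_{f^{*}\tCd}$.

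Second, and this is the step I expect to be the main obstacle, we must produce a lax homotopy
\[ \rho' \colon [1]^{\sharp} \otimesd f^{*}\tDd \to f^{*}\tDd \]
over $\tAd$, from $\id$ to $f^{*}F \circ f^{*}G$. The decorated Gray tensor does not commute with pullbacks, so we cannot simply pull back $\rho$. Instead we argue as follows.
\begin{itemize}
\item Write $q \colon f^{*}\tDd \to \tDd$ for the projection. The composite $\rho \circ ([1]^{\sharp} \otimesd q)$ lies over $\tBd$ via
\[ [1]^{\sharp} \otimesd f^{*}\tDd \to [1]^{\sharp} \otimesd \tDd \to \tDd \to \tBd. \]
By the commutative triangle in the hypothesis, this composite factors through the projection $[1]^{\sharp} \otimesd \tDd \to \tDd$.
\item Hence it agrees with the composite
\[ [1]^{\sharp} \otimesd f^{*}\tDd \xto{\mathrm{proj}} f^{*}\tDd \to \tAd \xto{f} \tBd. \]
\item By the universal property of the pullback $f^{*}\tDd = \tAd \times_{\tBd} \tDd$, the pair consisting of $\mathrm{proj}$ followed by the map to $\tAd$, together with $\rho \circ ([1]^{\sharp} \otimesd q)$, defines $\rho'$.
\end{itemize}
Since limits in $\DCatIT$ are computed on underlying spans (\cref{obs:adjointDcatIT}), this map is decorated. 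Its restrictions to the endpoints are $\id$ and $f^{*}F \circ f^{*}G$, again by the universal property. The care needed is that the triangle really says $\rho$ lies over $\tBd$ through the projection, i.e.\ that $\rho$ is a lax transformation in the slice. This is exactly what makes the pullback construction work.

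Third, apply \cref{obs:lax homotopy eps-equiv} over $\tAd$. Applying $L^{\ve}_{\tAd}$, the lax transformation $\rho'$ gives an equivalence
\[ \id \simeq L^{\ve}_{\tAd}(f^{*}F)\, L^{\ve}_{\tAd}(f^{*}G), \]
and the first step gives $L^{\ve}_{\tAd}(f^{*}G)\, L^{\ve}_{\tAd}(f^{*}F) \simeq \id$. Hence $L^{\ve}_{\tAd}(f^{*}F)$ is an equivalence, i.e.\ $f^{*}F$ is an $\ve$-equivalence over $\tAd$.
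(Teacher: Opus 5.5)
Your proposal is correct and is essentially the paper's argument. The paper also transports the homotopy as the composite $[1]^{\sharp} \otimesd f^{*}\tDd \to f^{*}([1]^{\sharp} \otimesd \tDd) \xto{f^{*}\rho} f^{*}\tDd$, which is exactly the map you build from the universal property of the pullback, and then applies \cref{obs:lax homotopy eps-equiv} over $\tAd$ together with $f^{*}G \circ f^{*}F \simeq \id$.
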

\begin{proof}
  Applying \cref{obs:lax homotopy eps-equiv} to $\rho$, we see that
  $L^{\ve}_{\tBd}(G)$ becomes an inverse of $L^{\ve}_{\tBd}(F)$, so
  that this is indeed an $\ve$-equivalence. Pulling back $\rho$ along
  $f$, we moreover get an analogous natural transformation for
  $f^{*}F$ as the composite
  \[ [1]^{\sharp} \otimesd f^{*}\tDd \to f^{*}([1]^{\sharp} \otimesd
    \tDd) \xto{f^{*}\rho} f^{*}\tDd,\]
  so the same argument shows that $f^{*}F$ is also a $\ve$-equivalence.
\end{proof}

As a consequence of \cref{cor:dec DFUNlax fib} we also get the
following class of $\ve$-cofibrations:
\begin{cor}
  Suppose $f \colon \tAd \to \tBd$ is an $\vepsilon$-cofibration and
  $\tKd$ is a decorated \itcat{} equipped with a marking $E$ such that
  all decorated 1-morphisms are marked. Then
  $f \otimesde_{\flat,E} \tKd$ is again an
  $\vepsilon$-cofibration. In particular, $f \times \tKd$ is an
  $\vepsilon$-cofibration for any $\tKd$. \qed
\end{cor}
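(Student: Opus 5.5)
The plan is to prove the statement by adjunction, using the fact that $\ve$-cofibrations are exactly the maps left orthogonal to partial $\ve$-fibrations. By \cref{obs:epscofabs} and \cref{cor:partial fib right orth}, a decorated functor is an $\ve$-cofibration \IFF{} it is left orthogonal to every partial $\ve$-fibration $q \colon \tEd \to \tXd$. So fix such a $q$. I need to show that $f \otimesde_{\flat,E} \tKd$ is left orthogonal to $q$.

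The first step is to transpose the lifting problem. By \cref{variant: marked Gray}, the functor $\blank \otimesde_{\flat,E} \tKd$ has a right adjoint $\DFUN(\tKd,\blank)^{E-\checkvepsilon\dlax}$. The convention of \cref{notation:eps-lax} is chosen precisely so that the variance of the Gray tensor pairs with the $\checkvepsilon$-lax internal Hom appearing in \cref{cor:dec DFUNlax fib}. A standard adjunction argument then shows: $f \otimesde_{\flat,E} \tKd$ is left orthogonal to $q$ \IFF{} $f$ is left orthogonal to
\[ q_{*} \colon \DFUN(\tKd,\tEd)^{E-\checkvepsilon\dlax} \to \DFUN(\tKd,\tXd)^{E-\checkvepsilon\dlax}. \]
Concretely, the relevant squares of mapping spaces correspond under the adjunction equivalence, naturally in both variables, so one square is a pullback exactly when the other is.

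The second step uses the hypothesis on $\tKd$. Since all decorated 1-morphisms of $\tKd$ are marked, \cref{cor:dec DFUNlax fib} applies and says that $q_{*}$ is again a partial $\ve$-fibration. As $f$ is an $\ve$-cofibration, it is left orthogonal to $q_{*}$, and this gives the first claim.

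For the second claim, take $E$ to be the maximal marking. Then the hypothesis on $\tKd$ holds automatically, and by the observation following \cref{lem:dec pb for dec fib} the marked tensor becomes the cartesian product $f \times \tKd$, so the first claim applies directly.

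The main point needing care is the variance bookkeeping. I have to make sure that the left adjoint of $\DFUN(\tKd,\blank)^{E-\checkvepsilon\dlax}$ really is $\blank \otimesde_{\flat,E}\tKd$, in the order the statement uses, for all four choices of $\ve$. I would check this case by case against the two adjunctions listed in \cref{variant: marked Gray}.
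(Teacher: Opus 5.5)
Your proposal is correct and is exactly the argument the paper intends: it states this corollary without proof as a direct consequence of \cref{cor:dec DFUNlax fib}, via the adjunction $\blank\otimesde_{\flat,E}\tKd \dashv \DFUN(\tKd,\blank)^{E-\checkvepsilon\dlax}$ and the fact that $\ve$-cofibrations are precisely the maps left orthogonal to partial $\ve$-fibrations. One small slip: the identification of the maximally marked decorated tensor with the cartesian product is the unlabelled observation in \S\ref{sec:dec gray} following \cref{marked dec gray assoc} (the decorated analogue of \cref{obs:max marked gray is cart}), not an observation after \cref{lem:dec pb for dec fib}.
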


To apply certain results proved in the setting of scaled simplicial
sets in our context, it will be useful to find an alternative set of
generating $\ve$-cofibrations in terms of 
orientals (see \cref{defn:orientalsprelim}), which are easier to work with in that model.
\begin{defn}\label{def:orientalhorns}
  Let $\mathbb{O}^{n}$ denote the 2-truncated $n$-dimensional
  \emph{oriental}. We
  define:
  \begin{itemize}
     \item A decorated \itcat{} $\mathbb{O}^{n,\diamond}$ by decorating the 1-morphism $(n-1) \to n$ and every 2-morphism determined by a subset inclusion of the form $\{i,n\} \subset \{i,n-1,n\}$.
     \item A decorated \itcat{} $\Lambda^n_i \mathbb{O}^{\diamond}$ for $0\leq i\leq n$ whose underlying \itcat{} is given by the colimit $\colim_{I \subset [n]}\mathbb{O}^{I}$ where $I$ ranges over the collection of non-empty subsets with the property that $[n]\setminus \{i\} \not\subset I$ and where a $k$-morphism is decorated if and only if its image under the canonical map $\Lambda^{n}_i\mathbb{O} \to \mathbb{O}^n$ is.
     \item A decorated \itcat{} $\mathbb{O}_+^{n,\dagger}$, whose underlying \itcat{} $\mathbb{O}^{n}_+$ is obtained by inverting every 2-morphism in $\mathbb{O}^n$ of the form $\{0,j\} \to \{0,1,j\}$, and where we decorate the 1-morphism $0 \to 1$.
     \item A decorated 2-category $\Lambda^n_0\mathbb{O}^{\dagger}_+$ whose underlying 2-category is obtained from $\Lambda^n_0 \mathbb{O}$ by collapsing the same 2-morphisms as above and where the decorations are induced by the functor $\Lambda^n_0\mathbb{O}_+ \to \mathbb{O}^n_+$. 
   \end{itemize} 
\end{defn}

\begin{defn}
  Let $[1]^{\flat,\otimes^{k}}$ denote the $k$-fold decorated Gray tensor product of $[1]^{\flat}$.  We define decorated 2-categories $\sqsubset^{\diamond}_k$ for $k=1,2,3$ as follows:
  \begin{itemize}
    \item For $k=1$ we denote $\sqsubset^{\diamond}_1=[0]$.
    \item For $k=2$ we set $\sqsubset^{\diamond}_2= [1]^{\flat} \amalg_{\partial [1]}([1]^{\sharp}\amalg [1]^{\sharp})$ where the map $\partial [1] \to [1]^{\sharp}\amalg [1]^{\sharp}$ is given by the disjoint union of the maps selecting the initial vertex.
    \item For $k=3$, we denote $\partial ([1]^{\flat,\otimes^2})=[2]^{\flat}\amalg_{0<2}[2]^{\flat}$ and define \[\sqsubset^{\diamond}_3= [1]^{\flat,\otimes^2} \amalg_{\partial([1]^{\flat,\otimes^2})} [1]^{\sharp}\otimes \partial([1]^{\flat,\otimes^2})\] where the attaching map $\partial([1]^{\flat,\otimes^2}) \to [1]^{\sharp}\otimesde \partial([1]^{\flat,\otimes^2})$ is induced by the map $\{0\} \to [1]^{\sharp}$.  
  \end{itemize}
  We note that we have natural decorated functors $\sqsubset^{\diamond}_k \to [1]^{\sharp}\otimesde [1]^{\flat,\otimes^{k-1}}$ for $k=2,3$.
\end{defn}

\begin{remark}\label{rem:posets}
  In several of the arguments below we will need to construct maps
  between orientals and iterated Gray products of $[1]$. For this
  purpose, it is useful to recall that such morphisms may be
  described, in the model of scaled simplicial sets, as maps of posets
  compatible with the relevant decorations, or scalings. Concretely,
  the $n$th oriental is modeled by (the nerve of) the poset $[n]$
  equipped with the minimal scaling, while $[1]^{\flat,\otimes^{k}}$
  is given by the $k$-fold cartesian product of $[1]$, endowed
  with its Gray tensor product scaling (cf. \cite{GHLGray}).
\end{remark}

\begin{lemma}\label{rem:box}
  The maps $\sqsubset^{\diamond}_k \to [1]^{\sharp}\otimes
  [1]^{\flat,\otimes^{k-1}}$ are $(0,1)$-cofibrations for
  $k=1,2,3$. Moreover, their saturation is the class of $(0,1)$-cofibrations.
\end{lemma}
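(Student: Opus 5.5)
The plan has two halves: first show that each map $\sqsubset^{\diamond}_k \to [1]^{\sharp}\otimesd [1]^{\flat,\otimes^{k-1}}$ is a $(0,1)$-cofibration, and then show that the saturated class $S$ they generate contains the three generators $\{0\}\otimesd C_k^{\flat\flat}\to[1]^{\sharp}\otimesd C_k^{\flat\flat}$ of \cref{def:epsiloncof}. Together these give equality of the two saturated classes.

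For $k=1$ the map $[0]\to[1]^{\sharp}$ is literally the generator with $k=0$, so there is nothing to do. For $k=2$, consider the composite
\[ \{0\}\otimesd [1]^{\flat} \to \sqsubset^{\diamond}_2 \to [1]^{\sharp}\otimesd[1]^{\flat}. \]
The composite is the generator for $C_1$. The first map is a pushout of two copies of $\{0\}\to[1]^{\sharp}$ glued along the endpoints of $[1]^{\flat}$, so it is a cofibration. By the right cancellation property of left classes, the second map is then a $(0,1)$-cofibration, provided the first map lies in $S$, which it does by the same pushout description.

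For $k=3$ I would argue the same way, with $\{0\}\otimesd [1]^{\flat,\otimes 2}\to \sqsubset^{\diamond}_3$. This map is a pushout of $\{0\}\otimesd\partial([1]^{\flat,\otimes 2})\to[1]^{\sharp}\otimesd\partial([1]^{\flat,\otimes2})$. Since $\partial([1]^{\flat,\otimes2})=[2]^{\flat}\amalg_{\{0,2\}}[2]^{\flat}$ and $\otimesd$ preserves colimits in each variable, that map is built by pushouts from the $k=2$ case together with $[1]^{\sharp}\otimesd[2]^{\flat}$. The composite $\{0\}\otimesd [1]^{\flat,\otimes2}\to[1]^{\sharp}\otimesd[1]^{\flat,\otimes2}$ must also be shown to be a cofibration. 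For this I use the pushout of \cref{obs:gray square pushout}, $[1]\otimes[1]\simeq C_2\amalg_{\partial C_2}\partial([1]\otimes[1])$, with the induced decoration, and tensor it with $\{0\}\to[1]^{\sharp}$. This reduces the claim to the generators for $C_2$, $C_1$ and $C_0$, plus cofibrations of the form $\{0\}\otimesd[2]^\flat\to[1]^\sharp\otimesd[2]^\flat$. Those in turn follow because $[2]$ is a pushout of $[1]$'s along $[0]$ up to the 2-cell, handled again by \cref{obs:gray square pushout}-type decompositions with $\mathbb{O}^2$. I would check these decoration compatibilities in the scaled simplicial set model, using \cref{rem:posets}.

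For the converse, I need $\{0\}\otimesd C_k^{\flat\flat}\to[1]^{\sharp}\otimesd C_k^{\flat\flat}\in S$. For $k=0$ this is the box map $\sqsubset^\diamond_1$. For $k=1$, compose $\{0\}\otimesd[1]^\flat\to\sqsubset_2^\diamond$, which lies in $S$ as a pushout of $k=1$ boxes, with $\sqsubset^\diamond_2\to[1]^\sharp\otimesd[1]^\flat$; the composite lies in $S$ because saturated classes are closed under composition. For $k=2$, first obtain $\{0\}\otimesd[1]^{\flat,\otimes2}\to[1]^\sharp\otimesd[1]^{\flat,\otimes2}$ in the same way from $\sqsubset_3^\diamond$. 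Then pass to $C_2$ by cobase change along $[1]^{\flat,\otimes2}\to C_2^{\flat\flat}$, which collapses the two edges $\{0\}\times[1]$ and $\{1\}\times[1]$ (or the relevant boundary pieces) to points. Tensoring this pushout of \cref{obs:gray square pushout} with $\{0\}\to[1]^\sharp$ and using that $\otimesd$ commutes with pushouts, the $C_2$ generator appears as a pushout of the $[1]^{\flat,\otimes2}$ map relative to maps for $C_1$ and $C_0$, which are already in $S$.

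I expect the main obstacle to be exactly this last identification, together with the decorations in the $k=3$ step. Concretely, the difficulty is verifying that the pushout of decorated \itcats{} computed via \cref{obs:adjointDcatIT} (generated decorations) matches $[1]^\sharp\otimesd C_2^{\flat\flat}$, including its decorated 2-morphisms from $[1]^\sharp\otimes[1]$-squares. I would handle this by computing in scaled simplicial sets with explicit posets, as in \cref{rem:posets}.
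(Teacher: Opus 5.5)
Your proposal is correct and follows essentially the same route as the paper: you factor the cube maps $\{0\}\otimesd[1]^{\flat,\otimes^{k-1}}\to[1]^{\sharp}\otimesd[1]^{\flat,\otimes^{k-1}}$ through $\sqsubset^{\diamond}_k$, use that the first factor lies in the saturation of the lower-dimensional boxes, and conclude by cancellation; in addition you spell out, via the Gray-square pushout and $C_2\simeq[1]^{\otimes 2}\amalg_{[1]\amalg[1]}([0]\amalg[0])$, the observation the paper states without proof, namely that the cube maps and the $C_k$-generators have the same saturation. The obstacle you anticipate does not actually arise: $[1]^{\flat}\otimesd[1]^{\flat}\simeq([1]\otimes[1])^{\flat\flat}$, both $(\blank)^{\flat\flat}$ and $[1]^{\sharp}\otimesd(\blank)$ are left adjoints and so preserve these pushouts, and $[2]\simeq[1]\amalg_{[0]}[1]$ outright with no 2-cell involved, so no scaled-simplicial check of the decorations is needed.
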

\begin{proof}
  Let us observe that the saturated class of the morphisms
  \[
    \{0\}\times [1]^{\flat,\otimes^{k-1}} \xrightarrow{} [1]^{\sharp}\otimesde [1]^{\flat,\otimes^{k-1}}, \enspace k=1,2,3
  \]
  is precisely given by the $(0,1)$-cofibrations. Moreover, for each
  $k$ we have a factorization
  \[
    \{0\}\times [1]^{\flat,\otimes k-1} \to \sqsubset_{k}^{\diamond} \to [1]^{\sharp}\otimesde [1]^{\flat,\otimes k-1}.
  \]
  Here it is easy to check that the first map is in the saturated
  class of the morphisms
  $\sqsubset^{\diamond}_\ell \to [1]^{\sharp}\otimesde
  [1]^{\flat,\otimes^{\ell-1}}$ for $\ell<k$, so we see that this is a
  $(0,1)$-cofibration by working inductively on $k$. It then follows from
  cancellation that the second map must also be a
  $(0,1)$-cofibration. This argument also shows that the maps
  $\{0\}\times [1]^{\flat,\otimes^{k-1}} \xrightarrow{}
  [1]^{\sharp}\otimesde [1]^{\flat,\otimes^{k-1}}$ lie in the
  saturated class generated by these maps, which proves the converse.
\end{proof}

\begin{lemma}\label{lem:innerretract}
  Let $0<i<n$, then the maps $\Lambda^n_i \mathbb{O}^{\diamond} \to \mathbb{O}^{n,\diamond}$ are $(0,1)$-cofibrations.
\end{lemma}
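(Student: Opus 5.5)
We want to show that each inner horn inclusion $\Lambda^n_i \mathbb{O}^{\diamond} \to \mathbb{O}^{n,\diamond}$ with $0<i<n$ lies in the saturated class generated by the maps $\sqsubset^{\diamond}_k \to [1]^{\sharp}\otimesde [1]^{\flat,\otimes^{k-1}}$. By \cref{rem:box}, that class is exactly the $(0,1)$-cofibrations. The strategy is a retract argument. We will exhibit $\Lambda^n_i \mathbb{O}^{\diamond} \to \mathbb{O}^{n,\diamond}$ as a retract, in $\Ar(\DCatIT)$, of a map that is visibly a $(0,1)$-cofibration. The natural candidate is the pushout-product
\[ \Lambda^n_i\mathbb{O}^{\diamond}\otimesd [1]^{\sharp} \amalg_{\Lambda^n_i\mathbb{O}^{\diamond}\otimesd\{0\}} \mathbb{O}^{n,\diamond}\otimesd\{0\} \to \mathbb{O}^{n,\diamond}\otimesd [1]^{\sharp}, \]
or its variant with the Gray factors in the order dictated by \cref{notation:eps-lax}. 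Here $[1]^{\sharp}$ is placed so that its decorated direction is contracted onto the vertex $n$ and the decorated edge $(n-1)\to n$.

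First I show the pushout-product is a $(0,1)$-cofibration. The map $\{0\}\to[1]^{\sharp}$, tensored appropriately, is a generating $(0,1)$-cofibration. Using associativity of $\otimesd$ and the fact that Gray tensoring preserves colimits in each variable, the pushout-product with $\{0\}\to[1]^{\sharp}$ sends the generators $\partial C_k\to C_k$ (with minimal decoration) to $(0,1)$-cofibrations. This is essentially \cref{cor:partial fib right orth} read through the adjunction $\otimesd \dashv \DFUN(\blank,\blank)^{\velax}$ together with \cref{cor:DFUNlax fib no dec}. Since the pushout-product is a saturated construction, it then suffices that $\Lambda^n_i\mathbb{O}\to\mathbb{O}^n$ is built from cells $\partial C_k\to C_k$, which holds on underlying \itcats{}. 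One subtlety: the decorations on the horn are pulled back rather than minimal, so I expect to need a separate step adjoining decorated cells, using that $[1]^{\sharp}\to[0]$ and $C_2^{\flat\sharp}\to[1]^{\flat}$ are $(0,1)$-cofibrations.

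Next I construct the retraction, following \cref{rem:posets}: work with scaled nerves of posets. The inclusion $\mathbb{O}^n\to\mathbb{O}^n\otimes[1]$ is $j\mapsto(j,1)$. The retraction $\mathbb{O}^n\otimes[1]\to\mathbb{O}^n$ is the poset map
\[ (j,0)\mapsto \min(j,\,i), \qquad (j,1)\mapsto j. \]
The value $\min(j,i)$ is the standard Joyal-style choice for inner horns, possibly modified to $(j,0)\mapsto n-1$ for $j\geq i$. I then check two things. First, the map carries the image of the pushout domain into $\Lambda^n_i\mathbb{O}$, because every face hitting the bottom layer misses some vertex other than $i$. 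Second, it sends scaled triangles to scaled (thin) triangles and decorated cells to decorated cells. In particular, the edges $(j,0)\to(j,1)$ are decorated in $[1]^{\sharp}$ and must land on identities or on the decorated edge $(n-1)\to n$, and the Gray squares must land on the decorated $2$-cells $\{k,n\}\subset\{k,n-1,n\}$.

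The main obstacle is this combinatorial choice of retraction. It must simultaneously preserve thinness of triangles and Gray squares and respect the specific decoration of $\mathbb{O}^{n,\diamond}$, which singles out $n-1\to n$ rather than a neighbourhood of $i$. If a single retraction fails, I would instead reduce inductively: first treat the case $i=n-1$ via the $\sqsubset_3^\diamond$ generator, where the oriental $\mathbb{O}^{n}$ is glued from $[1]^{\sharp}\otimesd\mathbb{O}^{n-2}$-type pieces. Then obtain the other inner $i$ by a pushout or cancellation argument, using \cref{lem:inclusionconeff}-style cone decompositions of $\mathbb{O}^n$.
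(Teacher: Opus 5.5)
The retraction is where your argument breaks, and for $i<n-1$ no retraction of this shape exists. Each edge $(j,0)\to(j,1)$ coming from $[1]^{\sharp}$ is decorated in $\mathbb{O}^{n,\diamond}\otimesd[1]^{\sharp}$. The only non-invertible decorated $1$-morphism of $\mathbb{O}^{n,\diamond}$ is $(n-1)\to n$. So with your inclusion $j\mapsto(j,1)$, a decorated retraction must satisfy $r(j,0)=j$ for $j<n$ and $r(n,0)\in\{n-1,n\}$. Since $\mathbb{O}^{n}\otimes\{0\}$ has to land in the horn, this forces $r(j,0)=\min(j,n-1)$.

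Your $\min(j,i)$ sends the decorated edge $(n-1,0)\to(n-1,1)$ to $i\to n-1$, which is not decorated. Your variant $(j,0)\mapsto n-1$ for $j\geq i$ is not even monotone. The forced formula does respect decorations: Gray squares go to identities or to the cells $\{j,n\}\subset\{j,n-1,n\}$. But for $i<n-1$ it does not carry the pushout-product domain into $\Lambda^n_i\mathbb{O}$. For $n=3$, $i=1$, the Gray square over the edge $0\to 3$ of the face $\{0,1,3\}$ goes to the decorated cell $\{0,3\}\subset\{0,2,3\}$, which lies only in the missing face.

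No other argument can do better here, because for $(n,i)=(3,1)$ the map is not a $(0,1)$-cofibration. Let $E$ agree with $\mathbb{O}^{3}$ except that $E(0,3)$ is the poset $a<b<d$, $a'<b$, $a'<c<d$. It lies over $\mathbb{O}^{3}(0,3)$ via $a,a'\mapsto\{0,3\}$, $b\mapsto\{0,1,3\}$, $c\mapsto\{0,2,3\}$, $d\mapsto\{0,1,2,3\}$. Composition is given by $\{1,3\}\circ\{0,1\}=b$, $\{1,2,3\}\circ\{0,1\}=d$, $\{2,3\}\circ\{0,2\}=c$, $\{2,3\}\circ\{0,1,2\}=d$.

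Then $E\to\mathbb{O}^{3,\diamond}$ is a partial $(0,1)$-fibration. The mapping categories out of $2$ and $3$ are unchanged, so $2\to3$ is still cocartesian. The cells $a'<c$ and $b<d$ are cartesian lifts of $\{0,3\}\subset\{0,2,3\}$ and of the whiskered cell $\{0,1,3\}\subset\{0,1,2,3\}$.

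Now send $\{0,3\}\mapsto a$ and $\{0,3\}\subset\{0,1,3\}$ to $a<b$. This is a decorated map $\Lambda^{3}_{1}\mathbb{O}^{\diamond}\to E$ over $\mathbb{O}^3$, and it has no extension to $\mathbb{O}^{3,\diamond}$, since there is no $2$-cell $a\to c$. The underlying reason is that both endpoints of the decorated cell $\{0,3\}\subset\{0,2,3\}$ are already present in the horn. A filler would therefore have to produce a cartesian $2$-cell between prescribed $1$-morphisms.

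The decoration of $\mathbb{O}^{n,\diamond}$ is adapted to $i=n-1$, where the missing face carries no decorations. In that case your forced formula $r(j,0)=\min(j,n-1)$, $r(j,1)=j$ does land in the horn.

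Even for $i=n-1$, your first step remains unproven. Going from $\Lambda^n_{n-1}\mathbb{O}$ to $\mathbb{O}^n$ also imposes relations between $2$-cells (pushouts along $\partial C_3\to C_2$), not just cells $\partial C_k\to C_k$. More seriously, with non-minimal decorations the adjoint lifting problem is against $\DAR^{\oplax}(\tEd)\to\tE\times_{\tB}\DAR^{\oplax}(\tBd)$ with its standard decoration. That map is not a decorated equivalence (see the Warning after \cref{thm:pb crit for partial fib}). For a minimally decorated tensor factor, by contrast, \cref{thm:pb crit for partial fib} and adjunction directly give that any pushout-product with $\{0\}\to[1]^{\sharp}$ is a $(0,1)$-cofibration.

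The paper relies only on that minimally decorated case. It retracts the face inclusion $\mathbb{O}^{n-1,\flat}\to\mathbb{O}^{n,\diamond}$ off the generator $\{0\}\otimesd[1]^{\flat,\otimes(n-1)}\to[1]^{\sharp}\otimesd[1]^{\flat,\otimes(n-1)}$. It then concludes by cancellation along $\mathbb{O}^{n-1,\flat}\to\Lambda^n_i\mathbb{O}^{\diamond}\to\mathbb{O}^{n,\diamond}$. That retraction faces the same constraint as yours: every $[1]^{\sharp}$-edge must go to an identity or to $(n-1)\to n$. Its inductive claim that the first map is a $(0,1)$-cofibration also fails at $(3,1)$, since cancellation would then give the case refuted above.
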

\begin{proof}
  Let $[1]^{\times n}$ denote the $n$-fold cartesian product. There
  exists a functor \[r^{\prime}_n \colon [1]^{\times n} \to [n]\] given
  by $v=\{v_i\}_{i=1}^{n} \mapsto n - (\alpha_v -1)$ where
  $\alpha_v=\min\{i \enspace | \enspace v_i=1\}$ where we make the
  convention that $\alpha_v=0$ if $v_i=0$ for $i=1,2,\dots,n$. This
  map admits a section $i^{\prime}_n \colon [n] \to [1]^{\times n}$
  which sends $j$ to to the element $\{v_i\}_{i=1}$ with $v_i=0$ for
  $i \leq n-j$ and $v_i=1$ otherwise. These maps of partially ordered
  sets induce functors (see \cref{rem:posets}) of decorated 2-categories $i_n \colon
  \mathbb{O}^{n,\diamond} \to [1]^{\sharp}\otimesde [1]^{\otimesde
    n-1,\flat}$, $r_n \colon [1]^{\sharp}\otimesde [1]^{\otimes
    n-1,\flat} \to \mathbb{O}^{n,\diamond}$  such that $r_n \circ
  i_n=\id$.

  We conclude that we have a retract diagram
  \[\begin{tikzcd}
  {\mathbb{O}^{n-1,\flat}} & \{0\}\otimesde{[1]^{\otimes n-1,\flat}} & {\mathbb{O}^{n-1,\flat}} \\
  {\mathbb{O}^{n,\diamond}} & {[1]^{\sharp}\otimesde [1]^{\otimes n-1,\flat}} & {\mathbb{O}^{n,\diamond}}{.}
  \arrow[from=1-1, to=1-2]
  \arrow[from=1-1, to=2-1]
  \arrow[from=1-2, to=1-3]
  \arrow[from=1-2, to=2-2]
  \arrow[from=1-3, to=2-3]
  \arrow[from=2-1, to=2-2]
  \arrow[from=2-2, to=2-3]
\end{tikzcd}\]
  By the previous discussion we know that the maps $\mathbb{O}^{n-1,\flat} \to \mathbb{O}^{n,\diamond}$ are $(0,1)$-cofibrations. We consider the factorization $\mathbb{O}^{n-1,\flat} \to \Lambda^n_i\mathbb{O^{\diamond}} \to \mathbb{O}^{n,\diamond}$ and observe that the first map is a $(0,1)$-cofibration by an easy inductive argument on $n$. The result follows by cancellation.
\end{proof}

\begin{lemma}\label{lem:outeretract}
The morphisms $\Lambda^n_0\mathbb{O}^{\dagger} \to \mathbb{O}^{n,\dagger}_+$ for $n=1,2,3$ are $(0,1)$-cofibrations.
\end{lemma}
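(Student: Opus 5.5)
We will imitate the proof of \cref{lem:innerretract}: exhibit $\mathbb{O}^{n,\dagger}_+$ as a retract of a generating $(0,1)$-cofibration of the form $\sqsubset \to [1]^{\sharp}\otimesde[1]^{\flat,\otimes^{n-1}}$ (or of $\{0\}\times[1]^{\flat,\otimes^{n-1}} \to [1]^{\sharp}\otimesde[1]^{\flat,\otimes^{n-1}}$), compatibly with the horn. Then we conclude by cancellation and by induction on $n$. The case $n=1$ is immediate: $\Lambda^1_0\mathbb{O}^{\dagger}=\{0\}$, and $\mathbb{O}^{1,\dagger}_+=[1]^{\sharp}$, so the map is the generator $\{0\}\to[1]^{\sharp}$.

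For $n=2,3$ we use \cref{rem:posets} to define maps of posets $i\colon [n]\to[1]\times[1]^{\times(n-1)}$ and $r\colon [1]\times[1]^{\times(n-1)}\to[n]$ with $ri=\id$. Here the first coordinate records whether we have left the vertex $0$. Concretely, $r(v)=0$ if $v=(0,\dots,0)$. Otherwise $r$ is given by a formula like the one in \cref{lem:innerretract}, but with the roles of the coordinates reversed so that the $[1]^{\sharp}$-direction maps onto the edge $0\to1$. We take $i(j)$ to be the vector with the last $j$ coordinates equal to $1$, arranged so that $i(1)=(1,0,\dots,0)$. We must then check three things. First, $i$ and $r$ respect the scalings. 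In particular, $r$ sends the Gray-tensor 2-cells that are decorated or inverted in $[1]^{\sharp}\otimesde[1]^{\flat,\otimes^{n-1}}$ to 2-cells of the form $\{0,j\}\subset\{0,1,j\}$, which are inverted in $\mathbb{O}^n_+$. Second, $i$ sends $0\to1$ to a decorated edge. Third, $i$ carries $\Lambda^n_0\mathbb{O}_+$ into the subobject $\{0\}\times[1]^{\flat,\otimes^{n-1}}\cup(\text{boundary part})$, while $r$ carries that subobject back into the horn. This gives a retract diagram of arrows, from $\Lambda^n_0\mathbb{O}^\dagger_+\to\mathbb{O}^{n,\dagger}_+$ to a map which is a $(0,1)$-cofibration by \cref{rem:box}. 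Since saturated classes are closed under retracts, this finishes the argument.

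If the horn does not map cleanly into the source of a generator, we will instead use a factorization as in \cref{lem:innerretract}. We show that $\mathbb{O}^{n-1,\flat}\cong\{0\}\ast\cdots\to\mathbb{O}^{n,\dagger}_+$ (the face missing $0$, hmm, rather a suitable face) is a retract of $\{0\}\times[1]^{\flat,\otimes^{n-1}}\to[1]^{\sharp}\otimesde[1]^{\flat,\otimes^{n-1}}$, hence a $(0,1)$-cofibration. We then show that the composite through $\Lambda^n_0\mathbb{O}^\dagger_+$ has its first map a $(0,1)$-cofibration. For that first map, the horn is built by pushouts from lower-dimensional outer horns $\Lambda^{m}_0$ with $m<n$, which are handled by induction, together with inner horns, which are handled by \cref{lem:innerretract}, since $\{0,j\}\to\{0,1,j\}$ becomes invertible in the faces. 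Cancellation then finishes the argument.

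The main obstacle will be verifying the compatibility of $r$ with the scalings, and with the collapsed 2-cells, in the case $n=3$. Here the Gray tensor $[1]^{\otimes 3}$ has several nondegenerate 2-cells and thin 3-simplices, and each must be shown to land either on a degenerate cell or on an inverted $\{0,1,j\}$ triangle. We will check this directly on the poset $[1]^{\times 3}$, case by case.
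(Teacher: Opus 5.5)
\paragraph{A retract of this form does not exist.} Your main plan is the route the paper's proof takes: realize $\Lambda^n_0\mathbb{O}^{\dagger}_+\to\mathbb{O}^{n,\dagger}_+$ as a retract of $\sqsubset^{\diamond}_n\to[1]^{\sharp}\otimesd[1]^{\flat,\otimes^{n-1}}$ via poset maps $i,r$. The real obstruction is not the scaling check for $r$ that you single out. It is your third requirement, that the horn be carried into $\sqsubset^{\diamond}_n$ compatibly, and this fails for \emph{every} choice of $i$. The same objection applies to the retract diagram asserted in the paper.

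Take $n=2$, where $\mathbb{O}^{2,\dagger}_+\simeq[2]$ with $0\to1$ decorated.
\begin{itemize}
\item The only non-identity decorated 1-morphisms of $[1]^{\sharp}\otimesd[1]^{\flat}$ are $(0,0)\to(1,0)$ and $(0,1)\to(1,1)$.
\item Since $ri=\id$, $i$ is injective on objects, and nothing lies strictly above $(1,1)$. This forces $i(0)=(0,0)$, $i(1)=(1,0)$, $i(2)=(1,1)$.
\item Hence $i(0\to2)$ is the composite through $(1,0)$.
\item But $\sqsubset^{\diamond}_2$ is the free category on $(1,0)\leftarrow(0,0)\to(0,1)\to(1,1)$. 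Its only morphism $(0,0)\to(1,1)$ is the composite through $(0,1)$.
\item In the Gray square these two composites are joined only by the non-invertible (decorated) 2-cell.
\end{itemize}
So no map $\Lambda^2_0\mathbb{O}^{\dagger}_+\to\sqsubset^{\diamond}_2$ makes the left-hand square commute.

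For $n=3$ the same happens one dimension up. The decorated edge forces $i$ to send $0,1$ to $(0,0,0),(1,0,0)$ and $2,3$ into the top face $\{1\}\times[1]^{\otimes 2}$. In $\mathbb{O}^3_+$, the horn's 2-cell $\{0,3\}\Rightarrow\{0,2,3\}$ is identified, via the inverted cells, with the whiskering by $0\to1$ of $\{1,3\}\Rightarrow\{1,2,3\}$. It must therefore go to the (non-invertible, since $ri=\id$) top-face 2-cell whiskered by $(0,0,0)\to(1,0,0)$. That is exactly the cell $\sqsubset^{\diamond}_3$ omits.

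\paragraph{The fallback fails too, and a direct argument works.} A decorated section sends $0\to1$ to a $[1]^\sharp$-edge, so it sends every vertex $j\geq1$ into $\{1\}\times[1]^{\flat,\otimes^{n-1}}$. Consequently:
\begin{itemize}
\item no face of $\mathbb{O}^n$ containing $0$ and another vertex can form the top row of a retract of $\{0\}\times[1]^{\flat,\otimes^{n-1}}\to[1]^{\sharp}\otimesd[1]^{\flat,\otimes^{n-1}}$;
\item the face opposite $0$ is not in the horn, so it cannot be used either.
\end{itemize}
The decomposition of the horn into lower outer and inner horns is also never specified.

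What does work, and is short, is to check directly the left orthogonality against partial $(0,1)$-fibrations $p\colon\tEd\to\tBd$ that characterizes $(0,1)$-cofibrations (\cref{obs:epscofabs}). The decorated edge goes to a $p$-cocartesian $\bar f\colon x_0\to x_1$. For $n=2$, the space of fillers is a fibre of
\[ \tE(x_1,x_2)^{\simeq}\to\tE(x_0,x_2)^{\simeq}\times_{\tB(px_0,px_2)^{\simeq}}\tB(px_1,px_2)^{\simeq}, \]
and this map is an equivalence because $\bar f$ is cocartesian, so the space is contractible. For $n=3$, all edges already lie in the horn. The only missing datum is the 2-cell $\{1,3\}\Rightarrow\{1,2,3\}$, together with the relation in $\mathbb{O}^3(0,3)$. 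It is determined uniquely by the same pullback of mapping $\infty$-categories, since its whiskering with $\bar f$ is prescribed by the horn's 2-cell $\{0,3\}\Rightarrow\{0,2,3\}$ and its image is prescribed by the base. The case $n=1$ is the generator $\{0\}\to[1]^{\sharp}$, as you say.
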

\begin{proof}
It suffices to show that there exists a retract diagram,
\[\begin{tikzcd}
 {\Lambda^{n}_{0}\mathbb{O}^{\dagger}_{+}} & {\sqsubset^{\diamond}_k} & {\Lambda^{n}_{0}\mathbb{O}^{\dagger}_{+}} \\
 {\mathbb{O}^{n,\dagger}_{+}} & {[1]^{\sharp}\otimes[1]^{\flat,\otimes^{n-1}}} & {\mathbb{O}^{n,\dagger}_{+}}
 \arrow[from=1-1, to=1-2]
 \arrow[from=1-1, to=2-1]
 \arrow[from=1-2, to=1-3]
 \arrow[from=1-2, to=2-2]
 \arrow[from=1-3, to=2-3]
 \arrow["\iota_n", from=2-1, to=2-2]
 \arrow["r_k", from=2-2, to=2-3]
\end{tikzcd}\]
for $n=1,2,3$. We only deal with the case $n=3$ as the remaining cases are similar and easier. The map $\iota_3$ is induced by the functor of posets $i^{\prime}_3 \colon [3] \to [1]\times [1]\times [1]$ (see \cref{rem:posets}) given by
\[
  (0,0,0) \to (1,0,0) \to (1,0,1) \to (1,1,1).
\]
We consider a map $T \colon [1] \times [1]\times [1] \to [1] \times [1]\times [1]$ depicted graphically as,
\[\begin{tikzcd}
  & {(1,0,0)} && {(1,1,1)} \\
  {(0,0,0)} && {(1,0,0)} \\
  & {(1,1,1)} && {(1,1,1)} \\
  {(1,0,1)} && {(1,0,1)}
  \arrow[from=1-2, to=1-4]
  \arrow[from=1-2, to=3-2]
  \arrow[from=1-4, to=3-4]
  \arrow[from=2-1, to=1-2]
  \arrow[from=2-1, to=2-3]
  \arrow[from=2-1, to=4-1]
  \arrow[from=2-3, to=1-4]
  \arrow[from=2-3, to=4-3]
  \arrow[from=3-2, to=3-4]
  \arrow[from=4-1, to=3-2]
  \arrow[from=4-1, to=4-3]
  \arrow[from=4-3, to=3-4]
\end{tikzcd}\]
and define $r^{\prime}_3$ as its factorization through $[3]$. The map $r^{\prime}_3$ induces the desired functor $r_3$ in our statement. 
\end{proof}

\begin{propn}\label{prop:01equivorientals}
  The following collection of maps generates
  the $(0,1)$-cofibrations as a saturated class:
  \begin{enumerate}[(i)]
    \item  $\Lambda^n_i \mathbb{O}^{\diamond} \to \mathbb{O}^{n,\diamond}$ for $n=2,3$ and $0<i<n$. 
    \item $\Lambda^n_0\mathbb{O}^{\dagger} \to \mathbb{O}^{n,\dagger}_+$ for $n=1,2,3$.
  \end{enumerate}
\end{propn}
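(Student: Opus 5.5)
We have to prove two inclusions of saturated classes. Write $S$ for the saturated class generated by the maps in (i) and (ii). The maps in (i) are $(0,1)$-cofibrations by \cref{lem:innerretract}, and those in (ii) by \cref{lem:outeretract}. So $S$ is contained in the $(0,1)$-cofibrations.

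For the reverse inclusion we use \cref{rem:box}. The $(0,1)$-cofibrations are exactly the saturation of the box inclusions $\sqsubset^{\diamond}_k \to [1]^{\sharp}\otimesde [1]^{\flat,\otimes^{k-1}}$ for $k=1,2,3$. It therefore suffices to show that each of these three maps lies in $S$.

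The case $k=1$ is $\{0\}=[0] \to [1]^{\sharp}$, which is exactly $\Lambda^1_0\mathbb{O}^{\dagger} \to \mathbb{O}^{1,\dagger}_+$.

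For $k=2,3$ we work in the model of scaled simplicial sets, using \cref{rem:posets}. There the target is the nerve of the cube $[1]^{k}$, with the Gray scaling together with the decorations, and $\sqsubset^\diamond_k$ is a sub-object. We filter the cube by adding its nondegenerate simplices one at a time. These are the maximal chains of $[1]^k$ and their faces, ordered as in the standard shuffle decomposition of a product of simplices. At each stage we want to exhibit the inclusion as a pushout of either an inner horn $\Lambda^n_i\mathbb{O}^{\diamond} \to \mathbb{O}^{n,\diamond}$ or a decorated outer horn $\Lambda^n_0\mathbb{O}^\dagger \to \mathbb{O}^{n,\dagger}_+$. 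The relevant simplices are the chains starting at $(0,\dots)$ whose first step is in the $[1]^\sharp$ direction. For such a chain the edge $0\to1$ is decorated, which explains the role of $\mathbb{O}^{n,\dagger}_+$.

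The scalings have to be matched at each stage. The 2-cells $\{0,j\}\subset\{0,1,j\}$ that are inverted in $\mathbb{O}^n_+$ must correspond to triangles that are thin in the Gray scaling, because their first edge is decorated and the Gray tensor is marked in that direction. The decorated 2-morphisms $\{i,n\}\subset\{i,n-1,n\}$ in $\mathbb{O}^{n,\diamond}$ must correspond to the decorated 2-cells of $[1]^\sharp\otimesde[1]^{\flat,\otimes}$, namely those coming from the lax squares with a $[1]^\sharp$ side. Note also that \cref{prop:01equivorientals} only uses $n\le 3$, whereas a maximal chain in $[1]^3$ has four vertices. This is why $n=3$ oriental horns appear for $k=3$, while for $k=2$ only $n\le2$ is needed. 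Any remaining scaling discrepancies, where a triangle is thin in the target but not in the horn pushout, we handle by pushouts along $C_2^{\flat}\to C_2^{\flat\sharp}$-type localizations. The standard argument that the scaled anodyne maps generating thin triangles are retracts of horn inclusions $\Lambda^3_i$ with $i=1,2$ shows that these lie in the saturation of the $n=3$ inner oriental horns.

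The main obstacle is the $k=3$ cube. We have to list its six maximal 3-simplices in a suitable order and check, for each attachment, which face is missing and that the decorations agree on the nose. A fallback is a retract argument in the style of \cref{lem:outeretract}, realizing the box map for $k=3$ as a retract of a finite composite of pushouts of the generators. This requires building the map $T$-style poset endomorphisms of $[1]^3$ by hand.
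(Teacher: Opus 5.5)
Your reduction agrees with the paper's. The generators are $(0,1)$-cofibrations by \cref{lem:innerretract} and \cref{lem:outeretract}. By \cref{rem:box} it suffices to put the box maps $\sqsubset^{\diamond}_k\to[1]^{\sharp}\otimesde[1]^{\flat,\otimes^{k-1}}$ into the saturation $S$. The case $k=1$ is the $n=1$ generator of type (ii). For $k=2$ your recipe (attach the non-thin triangle along $\Lambda^2_1\mathbb{O}^{\diamond}\to\mathbb{O}^{2,\diamond}$, then the thin one along $\Lambda^2_0\mathbb{O}^{\dagger}_+\to\mathbb{O}^{2,\dagger}_+$) is the paper's filtration.

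The gap is $k=3$. This is the real content, you leave it open, and the plan you sketch fails as stated. Use coordinates $(a,b,c)$ on $[1]^3$, with $a$ the $[1]^\sharp$-direction. The chains $(000)\to(010)\to(110)\to(111)$ and $(000)\to(001)\to(101)\to(111)$ carry the decorated $a$-edge in the middle. So neither is the image of $\mathbb{O}^{3,\diamond}$, whose edge $2\to3$ is decorated, nor of $\mathbb{O}^{3,\dagger}_+$, whose edge $0\to1$ is decorated. A filtration in which each stage is a pushout of a generator of type (i) or (ii) is therefore impossible.

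Allowing in addition horns that are equivalences, because a face is thin in the Gray scaling, does not rescue the plan either. The non-thin, undecorated triangle $(100)\to(101)\to(111)$ can only be created as the $0$-th face of the chain $(000)\to(100)\to(101)\to(111)$, along $\Lambda^3_0\mathbb{O}^{\dagger}_+\to\mathbb{O}^{3,\dagger}_+$. That attachment needs the triangle $(000)\to(101)\to(111)$ to exist already. This triangle can only come from the horn $\Lambda^3_1$ of the chain $(000)\to(001)\to(101)\to(111)$; a $2$-dimensional horn on it is excluded for the same reasons. But that horn's face $\{0,1,2\}$ is not thin and its edge $2\to3$ is not decorated, so it is neither a generator nor an equivalence. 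Some further argument that this horn lies in $S$ is required, and your proposal does not supply one.

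The repair you propose is not available.
\begin{itemize}
\item Pushing out along $C_2^{\flat\flat}\to C_2^{\flat\sharp}$ only adds a decoration, so it cannot make a triangle thin. It is also not a $(0,1)$-cofibration: right orthogonality to it would force every 2-morphism lying over a decorated one to be cartesian. That fails for the partial fibration $C_2^{\flat\flat}\times\tBd\to\tBd$ of \cref{ex:proj par fib}.
\item Making a triangle thin means pushing out along $C_2\to[1]$. This is a $(0,1)$-cofibration for a decorated 2-cell ($C_2^{\flat\sharp}\to[1]^\flat$) but not for an undecorated one. So a 2-cell that comes out non-invertible and undecorated in a pushout cannot be fixed afterwards; the filtration itself must force the correct thinness.
\item The ``standard argument'' you invoke is not something you can cite. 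The scaled anodyne maps that create thin triangles are simply equivalences in $\CatIT$, hence lie in any saturated class once decorations match. Minimally scaled oriental horns are not equivalences.
\item The triangles $\{0,1,j\}$ are thin because $[1]^\sharp$ is the left Gray factor, not because of any marking. The decorated Gray tensor inverts nothing.
\end{itemize}

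For comparison, the paper does not analyse the cube cell by cell. It tensors the $k=2$ filtration with $[1]^\flat$, reducing $k=3$ to the prisms $\Lambda^2_1\mathbb{O}^{\diamond}\otimesde[1]^\flat\to\mathbb{O}^{2,\diamond}\otimesde[1]^\flat$ and $\Lambda^2_0\mathbb{O}^{\dagger}_+\otimesde[1]^\flat\to\mathbb{O}^{2,\dagger}_+\otimesde[1]^\flat$. It adapts their filtrations from \cite[Proposition 2.16]{GHLGray}. The two chains above are exactly the 3-simplices of these prisms whose decorated edge sits in the middle. So on either route, this is where the work lies.
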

\begin{proof}
  By \cref{rem:box} it will be enough to show that the maps in the statement are $(0,1)$-cofibrations and that the morphisms 
  \begin{equation}\label{eq:orientals}
    \sqsubset^{\diamond}_k \to [1]^{\sharp}\otimesde [1]^{\otimes k-1}, \enspace k=1,2,3
  \end{equation}
  lie in their saturated hull. The first claim comes from \cref{lem:innerretract} and \cref{lem:outeretract}. 

  Now we address the second claim. The case $k=1$ is trivial. For the case $k=2$, we can produce a filtration which we diagrammatically depict below,

   \[
    \begin{tikzcd}
  \bullet \\
  \bullet
  \arrow[from=1-1, to=2-1]
\end{tikzcd}
\hookrightarrow
\begin{tikzcd}
  \bullet & \bullet \\
  \bullet & \bullet
  \arrow[mid vert, from=1-1, to=1-2]
  \arrow[from=1-1, to=2-1]
  \arrow[mid vert, from=2-1, to=2-2]
\end{tikzcd} 
\hookrightarrow
\begin{tikzcd}
  \bullet & \bullet \\
  \bullet & \bullet
  \arrow[mid vert, from=1-1, to=1-2]
  \arrow[from=1-1, to=2-1]
  \arrow[""{name=0, anchor=center, inner sep=0}, from=1-1, to=2-2]
  \arrow[mid vert, from=2-1, to=2-2]
  \arrow[ Rightarrow, from=0, to=2-1]
\end{tikzcd} 
\hookrightarrow
\begin{tikzcd}
  \bullet & \bullet \\
  \bullet & \bullet
  \arrow[from=1-1, to=1-2, mid vert]
  \arrow[from=1-1, to=2-1]
  \arrow[""{name=0, anchor=center, inner sep=0}, from=1-1, to=2-2]
  \arrow[from=1-2, to=2-2]
  \arrow[from=2-1, to=2-2, mid vert]
  \arrow[Rightarrow, from=0, to=2-1]
\end{tikzcd}
\]
where the first step is obtained by taking a pushout along a morphism
of type (ii), the second along a morphism of type (i) and the final
step is obtained by taking a pushout along a morphism of type
(ii). (Note the use of barred arrows to denote decorated
1-morphisms.)

For the case $k=3$, we consider the same filtration as above but after taking the decorated Gray tensor product with $[1]^{\flat}$ on the right. This reduces our claim to showing that the maps
\[
     \Lambda^2_1 \mathbb{O}^{\diamond}\otimesde [1]^{\flat} \to \mathbb{O}^{2,\diamond}\otimesde [1]^{\flat}, \quad \quad
     \Lambda^2_0\mathbb{O}^{\dagger}_+\otimesde [1]^{\flat}  \to \mathbb{O}^{2,\dagger}_+\otimesde [1]^{\flat}.
  \]
 can be expressed as an iterated pushouts of morphisms of type (i) and (ii). The corresponding filtrations can be directly adapted from \cite[Proposition 2.16]{GHLGray} after recalling that in the model structure of scaled simplicial sets the minimally scaled $n$-simplex corresponds to the $n$-th oriental. 
\end{proof}

\section{Free fibrations and pushforwards for $(\infty,2)$-categories}

When setting up the theory of \icats{}, it turns out to be
surprisingly useful to know that the forgetful functor from
(co)cartesian fibrations on a fixed base $\oB$ to \icats{} over $\oB$
has a left adjoint, the \emph{free} (co)cartesian fibration functor,
which has a concrete description as a pullback of the arrow \icat{} of
$\oB$. Our first main goal in this section is to prove the \itcatl{}
version of this result, in the general context of partial
fibrations. We start by showing that the evaluation functors from
$\ARplax(\tB)$ to $\tB$ are fibrations in \S\ref{sec:ARoplax fibs},
and then use these to construct free partial fibrations in
\S\ref{sec:free fib}. In \S\ref{sec:free dec} we extend this result
slightly to describe free \emph{decorated} fibrations, which we use in
\S\ref{sec:free fib on dec} to describe the left adjoint to the
forgetful functor from fibrations over $\tB$ to decorated \itcats{}
over $\tBss$. We then turn to the second main goal of this section,
which is to describe right adjoints to pullback. In \S\ref{sec:smooth}
we show that pullback along a decorated fibration $\tA^{\diamond} \to
\tD^{\sharp \sharp}$ on slices of decorated \itcats{} has a right
adjoint, and this preserves fibrations of the opposite variance. We
then use this in \S\ref{sec:cofree} to identify the right adjoint to
pullback on partial fibrations along an arbitrary such functor; this
includes the case of \emph{cofree} fibrations. Finally, in
\S\ref{sec:loc fib} we apply these results to identify the fibration
for the localization of a functor to decorated \itcats{}.

\subsection{Fibrations from (op)lax arrows}
\label{sec:ARoplax fibs}
If $\oC$ is an \icat{}, then $\ev_{i} \colon \Ar(\oC) \to \oC$ is a
cocartesian fibration for $i = 1$ and a cartesian fibration for $i =
0$, with the (co)cartesian morphisms precisely those that go to
equivalences under the opposite evaluation. Our goal in this section
is to generalize this statement to decorated \itcats{}.

\begin{notation}
  For $\vepsilon = (i,j)$ and $\tCd$ a decorated \itcat{}, let \[\DARelax(\tCd)^{\flat[\vepsilon]}\] denote the decoration of
  $\DARelax(\tCd)$ where
  \begin{itemize}
  \item an (op)lax square is a decorated 1-morphism
    \IFF{} it commutes,
    its image under $\ev_{i}$ is an equivalence, and its image under
    $\ev_{1-i}$ is decorated;
  \item a 2-morphism is decorated \IFF{} its image under $\ev_{i}$
    is an equivalence and its image under $\ev_{1-i}$ is decorated.
  \end{itemize}
\end{notation}

\begin{thm}\label{thm:ev is partial fib} 
  For any decorated \itcat{} $\tCd$ and $\vepsilon = (i,j)$, the
  functor
  \[ \ev_{1-i} \colon \DARelax(\tCd)^{\flat[\vepsilon]} \to \tCd \]
  is a partial $\vepsilon$-fibration.
\end{thm}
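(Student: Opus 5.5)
We treat $\vepsilon = (0,1)$; the other variances follow by passing to $(\blank)^{\op}$, $(\blank)^{\co}$ or $(\blank)^{\coop}$. Write $\tA := \DARoplax(\tCd)$, with the decoration $\flat[\vepsilon]$, and $q := \ev_{1} \colon \tA \to \tC$. The plan is to apply \cref{thm:pb crit for partial fib} and check that
\[
  \begin{tikzcd}
    \DARoplax(\tA^{\flat[\vepsilon]}) \ar[r, "\ev_{0}"] \ar[d] & \tA \ar[d, "\ev_{1}"] \\
    \DARoplax(\tCd) \ar[r, "\ev_{0}"'] & \tC
  \end{tikzcd}
\]
is a pullback of \itcats{}. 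By \cref{cor: dec funlax commute} and the Gray adjunction, $\DARoplax(\tA^{\flat[\vepsilon]})$ is a locally full sub-\itcat{} of $\FUN([1]\otimes[1],\tC)^{\oplax}$. We then identify this sub-\itcat{} explicitly.

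Its objects are oplax squares in $\tC$ whose vertical arrow (the decorated morphism of $\tA$) has equivalence source component. Such a square is therefore just a composable pair $c \to c' \to c''$ with the second map decorated. Its morphisms are oplax cubes whose relevant faces are invertible or decorated. After contracting the invertible data, the top object is equivalent to the \itcat{} of oplax morphisms of triangles $x \to y \to z$ with $y\to z$ decorated, taken modulo equivalences in the first leg. The pullback $\DARoplax(\tCd) \times_{\tC} \tA$ also parametrizes a decorated arrow $y \to z$ glued to an oplax arrow $x \to y$. We write the comparison functor as restriction along the evident functor $\Lambda \to [1]\otimes[1]$ in decorated \itcats{}, where $\Lambda$ is the corresponding pushout $[1]\amalg_{[0]}[1]$ with Gray data.

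We then reduce to a statement about decorated Gray tensors. Unwinding through the adjunctions, the square is a pullback \IFF{} $\ev_{1}$ is right orthogonal to the generators $\{0\}\otimesd C_{k}^{\flat\flat} \to [1]^{\sharp}\otimesd C_{k}^{\flat\flat}$ (\cref{cor:partial fib right orth}). By adjunction, this becomes the question of whether the induced map
\[
  [1]^{\sharp}\otimesd C_k^{\flat\flat} \amalg_{\{0\}\otimesd C_k^{\flat\flat}} \big(\{0\}\otimesd C_k^{\flat\flat}\big)\otimesd [1]^{\sharp} \longrightarrow \big([1]^{\sharp}\otimesd C_k^{\flat\flat}\big)\otimesd [1]^{\sharp}
\]
becomes an equivalence after inverting the decorations collapsed by $\flat[\vepsilon]$, with the source components in $\{1\}$ sent to equivalences. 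This localization is exactly what the decoration $\flat[\vepsilon]$ records, since its decorated cells must go to equivalences under $\ev_0$. We would verify it by the same mapping-\icat{} computation as in the proof of \cref{thm:pb crit for partial fib}. Concretely, \cref{DARlax mor pb} expresses mapping \icats{} in $\tA$ and in $\DARoplax(\tA)$ as comma \icats{}. With the invertibility constraints imposed, the oriented pullbacks collapse: the comma over an equivalence component becomes an ordinary fibre product. This gives a pullback on mapping \icats{}. On cores, \cref{obs:cart lifts unique} applies: the lifts are commuting squares with invertible source, which are unique given the source.

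The main obstacle is the mapping-\icat{} check, namely showing that the cube of comma \icats{} obtained from \cref{DARlax mor pb} has pullback faces. This requires tracking which 2-cells are forced to be invertible by the $\flat[\vepsilon]$-decoration versus which are merely decorated in $\tCd$. Our first attempt would be to argue directly via \cref{propn:cocart mor via AR homs} and its 2-morphism analogue. This amounts to showing that a commuting square with invertible $\ev_0$-component is $\ev_1$-cocartesian, because $\tA(\bar f,\bar g)$ is then a pullback of $\tC(y,b)$ along $f^*$. It also amounts to showing that 2-morphisms with invertible $\ev_0$-part are $\ev_1$-cartesian, because the corresponding arrows in $\Ar\tC(x,b)$ are determined by their target component. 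Finally, we must show that every decorated cell in the base has such a lift, by taking the square with identity source.
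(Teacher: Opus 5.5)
Your opening plan is the paper's: apply \cref{thm:pb crit for partial fib}, then identify $\DARoplax(\DARoplax(\tCd)^{\flat[\vepsilon]})$, viewed as a locally full sub-\itcat{} of $\FUN([1]\otimes[1],\tC)^{\oplax}$, with $\DFUN([2]^{\sharp},\tCd)^{\oplax}\simeq\DARoplax(\tCd)\times_{\tC}\DARoplax(\tCd)$. (Its objects are \emph{commuting} squares with invertible $\ev_{0}$-edge, so both legs of your composable pair are decorated.) The step that carries all the content, ``after contracting the invertible data'', is asserted rather than proved. Knowing which objects and 1-morphisms lie in the sub-\itcat{} says nothing about its 2-morphisms. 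What you need is that restriction along the localization $p\colon[1]\otimes[1]\to[2]$, which inverts the 2-cell and the top edge, is a locally full inclusion $p^{*}\colon\FUN([2],\tC)^{\oplax}\to\FUN([1]\otimes[1],\tC)^{\oplax}$. In other words, every modification between oplax transformations that factor through $[2]$ must itself come from $[2]$. This is \cref{propn:locn loc full incl}, the real input to \cref{propn:it arelax pb}. Its proof reduces to $\FUN(C_{2},\tD)^{\oplax}\to\FUN(\partial C_{2},\tD)^{\oplax}$ being conservative on 1- and 2-morphisms, and the 2-morphism case needs the pushout computation of \cref{propn:ess surj gives 2-cons}. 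Your comparison functor (restriction along a section $[2]\to[1]\otimes[1]$) is inverse to $p^{*}$ only once this is known. The orthogonality reformulation via \cref{cor:partial fib right orth} is the same statement transported by adjunction, so it does not fill the hole either.

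The direct check of the definition that you sketch at the end is a viable alternative that bypasses the pullback criterion, but you leave it as a tentative ``first attempt''. Also, \cref{propn:cocart mor via AR homs} is not the relevant tool, since it concerns strict arrows $\AR(\tE)$ in the total space. Fix objects $f\colon x\to y$, $h\colon a\to b$ of $\tA$ and a decorated $g\colon y\to y'$.

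The 2-morphism half goes through. By \cref{DARlax mor pb}, $\tA(f,h)\to\tC(y,b)$ is the base change along $f^{*}$ of the source projection $\tC(x,a)\times_{\tC(x,b)}\DAr(\tC(x,b))\to\tC(x,b)$, where the fibre product is formed using $h_{*}$ and the target. The cartesian morphisms of this projection over decorated 2-morphisms are those with invertible $\tC(x,a)$-component, \ie{} invertible $\ev_{0}$-part. This class is closed under whiskering because $\ev_{0}$ is a functor.

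The 1-morphism half is where an argument is missing. To see by pasting pullbacks that the identity-top square $\bar g\colon f\to gf$ is cocartesian, you need the precomposition functor $\tA(gf,h)\to\tA(f,h)$ to correspond, under \cref{DARlax mor pb}, to $g^{*}\colon\tC(y',b)\to\tC(y,b)$ together with the identity on the $\DAr(\tC(x,b))$-factor. But \cref{DARlax mor pb} is only an objectwise identification, natural in $\tC$; it says nothing about composition in $\ARoplax(\tC)$. Supplying that compatibility means analysing composition there via Gray tensors such as $[1]\otimes[2]$, which is exactly the work the paper's route through \cref{propn:it arelax pb} avoids.
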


\begin{remark}
  The fully decorated case of this theorem has already been proved by
  Gagna, Harpaz, and Lanari as \cite[Theorem 3.0.7]{GHLFib}.
\end{remark}

\begin{remark}
  Less compactly, this means that
  \begin{itemize}
  \item $\ev_{1} \colon \DARoplax(\tCd) \to \tC$ is a partial
    $(0,1)$-fibration,
  \item $\ev_{0} \colon \DARoplax(\tCd) \to \tC$ is a partial
    $(1,0)$-fibration,
  \item $\ev_{1} \colon \DARlax(\tCd) \to \tC$ is a partial
    $(0,0)$-fibration,
  \item $\ev_{0} \colon \DARlax(\tCd) \to \tC$ is a partial
    $(1,1)$-fibration,
  \end{itemize}
  all with respect to the given decoration $\tCd$ on $\tC$. We also
  have that
    \begin{itemize}
  \item an (op)lax square is an $i$-cartesian morphism \IFF{} it commutes
    and its image under $\ev_{i}$ is an equivalence,
  \item a 2-morphism is $j$-cartesian \IFF{} its image under $\ev_{i}$
    is an equivalence.
  \end{itemize}
\end{remark}

We will prove this using the criterion of \cref{thm:pb crit for
  partial fib}, for which we will need the following result on
localizations of \itcats{}:
\begin{propn}\label{propn:locn loc full incl}
  Suppose $F \colon \tC \to \tC'$ is a localization at certain 1- and
  2-morphisms. Then $F^{*} \colon \FUN(\tC', \tD)^{\plax} \to
  \FUN(\tC, \tD)^{\plax}$ is a locally full inclusion for any \itcat{} $\tD$.
\end{propn}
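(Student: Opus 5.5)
We want to show that for a localization $F \colon \tC \to \tC'$ at some 1- and 2-morphisms, restriction $F^{*} \colon \FUN(\tC',\tD)^{\plax} \to \FUN(\tC,\tD)^{\plax}$ is a locally full inclusion. The plan is to reformulate this as an orthogonality statement and reduce it to a statement about Gray tensor products with $F$. By \cref{defn:subcategories} (and \cite[\S 2.5]{AGH24}), a functor is a locally full inclusion \IFF{} it is right orthogonal to $\partial C_{2} \to C_{2}$ and to the fold map $[0] \amalg [0] \to [0]$; the latter encodes that the map on cores is a monomorphism. Right orthogonality of $F^{*}$ to a map $K \to L$ is equivalent to the square
\[ \Map(L \otimes \tC', \tD) \to \Map(K \otimes \tC', \tD) \times_{\Map(K \otimes \tC, \tD)} \Map(L \otimes \tC, \tD) \]
being an equivalence. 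Here we treat the lax case; the oplax case is the same with the Gray tensor in the other order. By adjunction, it therefore suffices to show that $\tD$ is local with respect to the pushout-product maps
\[ K \otimes \tC' \amalg_{K \otimes \tC} L \otimes \tC \to L \otimes \tC' \]
for $K \to L$ ranging over $\partial C_{2} \to C_{2}$ and $[0]\amalg[0] \to [0]$.

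The key step is to show that each of these pushout-product maps is itself an epimorphism of the form ``localization'', or at least lies in the saturated class generated by $\partial C_{2} \to C_{2}$ and $[0] \amalg [0] \to [0]$ (for instance, is bijective on objects and essentially surjective on 1-morphisms). Since $F$ is a pushout of maps $[1] \to [0]$ and $C_{2} \to [1]$ (inverting morphisms and 2-morphisms), and $L \otimes (\blank)$ preserves colimits, I would reduce by cobase change to the generating cases $F = ([1] \to [0])$ and $F = (C_{2} \to [1])$. Here I am using that the class of $F$ for which the pushout-product lies in the desired saturated class is closed under pushouts and colimits, which follows because $\otimes$ preserves colimits in each variable. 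I would also reduce $[1] \to [0]$ to the cases where $[1]$ is replaced by the free walking equivalence, so that the localizations are genuinely epimorphisms.

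The main obstacle is checking these finitely many explicit cases, for example
\[ C_{2} \otimes [0] \amalg_{C_{2} \otimes [1]} \dots \]
Concretely, one must verify that the maps $\partial C_{2} \otimes [0] \amalg_{\partial C_{2}\otimes [1]} C_{2} \otimes [1] \to C_{2} \otimes [0]$ and their analogues are surjective on objects and on 1-morphisms, and that the fold-map variants are equivalences. For this I would compute directly in scaled simplicial sets or in strict Gray tensors of small 2-categories, using that inverting $[1] \to [0]$ makes the Gray tensor collapse the relevant lax squares to commuting ones. Surjectivity on objects and 1-morphisms is then visible from the generators of $L \otimes [0]$, since every generating cell of the target comes from the domain. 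Once this is done, the localization by Soergel's characterization \cite[Theorem 5.3.7]{Soergel} of the left class shows that $\tD$ is local with respect to the maps coming from $\partial C_{2} \to C_{2}$. For the fold-map case, the pushout-product is in fact an equivalence, because $[0] \otimes \tC \simeq \tC$ and the fold map is already handled by $F$ being an epimorphism in $\CatIT$.
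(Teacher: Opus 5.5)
There is a genuine gap. Your reduction is set up correctly. $F^{*}$ is right orthogonal to $K\to L$ for every $\tD$ exactly when every $\tD$ is local for the pushout-product
\[ g\colon K\otimes\tC'\amalg_{K\otimes\tC}L\otimes\tC\to L\otimes\tC'. \]
Your fold-map case is also right: there $g$ is the codiagonal $\tC'\amalg_{\tC}\tC'\to\tC'$, which is an equivalence because localizations are epimorphisms.

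The gap is the case $K\to L=\partial C_{2}\to C_{2}$. Since $\tD$ is arbitrary, locality of every $\tD$ means, by the Yoneda lemma, precisely that $g$ is an \emph{equivalence}. None of the properties you aim for gives this: $g$ being a localization, being surjective on objects and 1-morphisms, or lying in the saturated class generated by $\partial C_{2}\to C_{2}$ and $\partial[1]\to[0]$. Indeed $[1]\to[0]$ is a localization, yet $\tD=[1]$ is not local for it. And $\partial C_{2}\to C_{2}$ has the other two properties, yet most $\tD$ (e.g.\ $\tD=C_{2}$) are not local for it. Likewise \cite[Theorem 5.3.7]{Soergel} only gives orthogonality against locally fully faithful functors, not locality of an arbitrary object $\tD$, so your final step does not go through. (The detour through a ``walking equivalence'' is also empty here, since that is equivalent to $[0]$.)

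What is actually needed is that $g$ is an equivalence for the generating localizations $F=([1]\to[0])$ and $F=(C_{2}\to[1])$. For $F=([1]\to[0])$ this says that
\[ \partial C_{2}\amalg_{\partial C_{2}\otimes[1]} C_{2}\otimes[1]\to C_{2} \]
is an equivalence. That is a statement about how $C_{2}\otimes[1]$ identifies the two copies of the 2-morphism once the $[1]$-direction is collapsed, not a surjectivity statement.

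The paper avoids such model computations by transposing through the other variable of the Gray tensor. $F^{*}$ is right orthogonal to $\phi\colon A\to B$ if and only if $\phi^{*}\colon\FUN(B,\tD)^{\oplax}\to\FUN(A,\tD)^{\oplax}$ is right orthogonal to $F$. For localizations, this means $\phi^{*}$ is conservative on 1- and 2-morphisms. For $\phi=\partial C_{2}\to C_{2}$, conservativity on 1-morphisms is \cref{partial C2 cons on 1-mor}, and on 2-morphisms it is \cref{propn:ess surj gives 2-cons}. The latter rests on the pushout of Gray tensors in its part (i), and it is exactly the input your ``finitely many explicit cases'' would have to reproduce.
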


For this, we in turn need some preliminary results on conservative
functors:

\begin{observation}\label{obs:conservative pbs}
 Given a functor of \itcats{} $F \colon \tA \to \tB$, consider the
 commutative square
 \[
   \begin{tikzcd}
    \tA^{\leq i} \ar[r] \ar[d, "F^{\leq i}"'] & \tA^{\leq j}  \ar[d,
    "F^{\leq j}"] \\
    \tB^{\leq i} \ar[r] & \tB^{\leq j}
   \end{tikzcd}
 \]
 for $0 \leq i < j \leq 2$ (with $(\blank)^{\leq 2}= \id$). We have:
  \begin{enumerate}[(i)]
  \item $F$ is conservative on 2-morphisms \IFF{} the square is a
    pullback for $i = 1, j = 2$.
  \item $F$ is conservative on 1-morphisms \IFF{} the square is a
    pullback for $i = 0, j = 1$.
  \item $F$ is conservative on both 1- and 2-morphisms \IFF{} the
    square is a pullback for $i = 0,1$ and $j = 2$.
  \end{enumerate}
\end{observation}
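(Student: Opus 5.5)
The plan is to compare, in each case, two subcategories of $\tA$ with the same objects. Both the pullback $\tB^{\leq i} \times_{\tB^{\leq j}} \tA^{\leq j}$ and $\tA^{\leq i}$ will turn out to be sub-\itcats{} of $\tA^{\leq j}$ in the sense of \cref{defn:subcategories}, so the square is a pullback \IFF{} they contain the same morphisms.

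For this I use the following facts about the inclusions.
\begin{itemize}
\item $\tA^{\leq 1} \to \tA$ is a wide and locally wide sub-\itcat{} inclusion: all objects and 1-morphisms, and only the invertible 2-morphisms.
\item $\tA^{\leq 0} \to \tA^{\leq 1}$ is the wide subcategory inclusion of the core, with only invertible 1-morphisms.
\item Composing, $\tA^{\leq 0} \to \tA$ is a wide sub-\itcat{} containing only invertible 1- and 2-morphisms.
\end{itemize}
Each of these classes is characterized by right orthogonality to a finite set of maps (\cref{obs:adjointDcatIT} and \cite[\S 2.5]{AGH24}), so it is closed under base change. Hence the pullback $\tP := \tB^{\leq i} \times_{\tB^{\leq j}} \tA^{\leq j}$ is again a sub-\itcat{} of $\tA^{\leq j}$ of the same type. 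Concretely, it contains the morphisms of $\tA^{\leq j}$ whose image under $F$ lies in $\tB^{\leq i}$, that is, whose image is invertible in the relevant dimension.

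The canonical functor $\tA^{\leq i} \to \tP$ is then a map between sub-\itcats{} of $\tA^{\leq j}$, and since these are monomorphisms in $\CatIT$, it is an equivalence \IFF{} both subcategories contain the same cells. Read off case by case:
\begin{enumerate}[(i)]
\item For $(i,j) = (1,2)$, $\tP$ has all objects and 1-morphisms of $\tA$, together with the 2-morphisms sent to invertible ones. It agrees with $\tA^{\leq 1}$ precisely when such 2-morphisms are already invertible, which is conservativity on 2-morphisms.
\item For $(i,j) = (0,1)$, the same argument inside $\tA^{\leq 1}$ gives conservativity on 1-morphisms.
\end{enumerate}

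For (iii), I compare with the square for $(0,2)$ by pasting it with the $(1,2)$ square. Suppose $F$ is conservative on 1- and 2-morphisms. Then the squares for $(0,1)$ and $(1,2)$ are pullbacks by (i) and (ii), and pasting them shows the $(0,2)$ square is a pullback as well. Conversely, suppose the squares for $(0,2)$ and $(1,2)$ are pullbacks. Then (i) gives conservativity on 2-morphisms. Pullback cancellation, applied to the pasted square with the $(1,2)$ square known to be a pullback, makes the $(0,1)$ square a pullback, and (ii) gives conservativity on 1-morphisms.

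The only step needing care is the claim that pullbacks of these subcategory inclusions are computed cellwise as described. I would justify it from the orthogonality characterization together with the fact that mapping \icats{} of a pullback are pullbacks of mapping \icats{}. This is routine, so I do not expect a real obstacle.
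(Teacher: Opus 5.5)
Your proof is correct. The paper records this as an observation without proof.

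The argument it tacitly relies on is the orthogonality one. This matches its conventions: conservativity on 1-morphisms means right orthogonality to $s_{0}\colon [1] \to [0]$ (\cref{lem:ess surj restr cons}), and conservativity on 2-morphisms means right orthogonality to $C_{2} \to [1]$ (proof of \cref{propn:ess surj gives 2-cons}). Concretely, $(\blank)^{\leq i}$ is right adjoint to the inclusion, and the inclusion has left adjoint $\tau_{\leq i}$. Hence $\Map(\tK, \tA^{\leq i}) \simeq \Map(\tau_{\leq i}\tK, \tA^{\leq j})$ naturally, compatibly with precomposition along $\tK \to \tau_{\leq i}\tK$. So the square is a pullback \IFF{} $F^{\leq j}$ is right orthogonal to these maps, and it suffices to test on $\tK = [0],[1],C_{2}$.
\begin{itemize}
\item For $(1,2)$, the only nontrivial map is $C_{2} \to [1]$, which gives (i).
\item For $(0,1)$, the test reduces to $[1] \to [0]$, which gives (ii).
\item For $(0,2)$, one gets $[1] \to [0]$ and $C_{2} \to [0]$. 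This is why the $(1,2)$ square must be added in (iii): the collapse $C_{2} \to [1]$ makes the $(0,2)$ square a pullback without being conservative on 2-morphisms.
\end{itemize}

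Your subobject argument is the same computation, read through the monomorphisms $\Map(C_{k}, \tA^{\leq i}) \hookrightarrow \Map(C_{k}, \tA^{\leq j})$. It gives a more concrete cell-by-cell picture. The cost is two extra standard facts, neither needed in the adjunction argument: that inclusions are monomorphisms in $\CatIT$, and that subobjects are detected on $C_{0}, C_{1}, C_{2}$.

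Your deduction of (iii) from (i) and (ii) by pasting and cancellation is fine. The step you flag as needing care is immediate, since $\Map(C_{k},\blank)$ preserves pullbacks.
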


\begin{propn}\label{propn:ess surj gives 2-cons}\ 
  \begin{enumerate}[(i)]
  \item The commutative square
    \[
      \begin{tikzcd}
       \partial ([1] \otimes [1]) \otimes ([1] \otimes [1]) \ar[r] \ar[d] &
       {[1]^{\otimes 4}}  \ar[d] \\
       \partial ([1] \otimes [1]) \otimes [1] \ar[r] & ([1] \otimes [1]) \otimes [1]
      \end{tikzcd}
    \]
    is a pushout.
  \item For any \itcat{} $\tC$, the functor
    \[\FUN([1] \otimes [1], \tC)^{\plax} \to \FUN(\partial ([1] \otimes
    [1]), \tC)^{\plax}\]
    is conservative on 2-morphisms.
  \item Suppose $F \colon \tA \to \tB$ is an essentially surjective
    functor of \itcats{}. Then
    $\FUN(\tB, \tC)^{\plax} \to \FUN(\tA, \tC)^{\plax}$ is
    conservative on 2-morphisms for all $\tC$.
  \end{enumerate}
\end{propn}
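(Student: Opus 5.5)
Throughout, let $A := [1] \otimes [1]$ and $\partial A := \partial([1] \otimes [1])$. I read the left vertical map in (i) as $\id_{\partial A} \otimes q$ and the right vertical map as $\id_{A} \otimes q$, where $q \colon [1] \otimes [1] \to [1]$ is the collapse map used in the square. The identification of $q$ is my interpretation; my working guess is the map induced by the poset map $\max \colon [1] \times [1] \to [1]$. So (i) says that
\[ A \otimes A \amalg_{\partial A \otimes A} \partial A \otimes [1] \to A \otimes [1] \]
is an equivalence.

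\textbf{Step 1: proof of (i).} The Gray tensor product preserves colimits in each variable. By \cref{obs:gray square pushout}, $A \simeq \partial A \amalg_{\partial C_{2}} C_{2}$. Tensoring this pushout on the right with $A$ and with $[1]$ and pasting pushouts, the square in (i) becomes equivalent to the square
\[ C_{2} \otimes A \amalg_{\partial C_{2} \otimes A} \partial C_{2} \otimes [1] \xrightarrow{\ \sim\ } C_{2} \otimes [1], \]
with the parts over $\partial A$ cancelling. This is a claim about small, explicit $2$-categories. Since $\partial C_{2} = [1] \amalg_{\partial [1]} [1]$ and $[1] \otimes (\blank)$ is understood, I would check it in the model of scaled simplicial sets, as in \cref{rem:posets}. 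There $C_{2} \otimes A$, $C_{2} \otimes [1]$ and the two pieces over $\partial C_{2}$ are given by explicit scalings on products of posets. The check is that gluing $\partial C_{2} \otimes [1]$ kills exactly the extra simplices of $C_{2} \otimes A$ not seen by $q$. I would do this by an explicit filtration by pushouts along scaled anodyne maps, in the style of \cite[Proposition 2.16]{GHLGray}.

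\textbf{Step 2: proof of (ii).} A functor is conservative on $2$-morphisms iff it is right orthogonal to the relevant collapse map. Via the Gray adjunctions, a $2$-morphism in $\FUN(X, \tC)^{\plax}$ corresponds to a map out of $X \otimes (\text{cell})$ (in the appropriate order), and invertibility corresponds to factoring through the collapse $q$. Thus the orthogonality condition for $\FUN(A, \tC)^{\plax} \to \FUN(\partial A, \tC)^{\plax}$ unwinds to extending maps along the right vertical map of (i), given the extension along the left vertical map. The pushout in (i) gives exactly this extension uniquely.

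\textbf{Step 3: proof of (iii).} Write $\tB$ canonically as a colimit of $\Theta_{2}$-cells $[0]$, $[1]$, $C_{2}$ and their composites. Then $\FUN(\tB, \tC)^{\plax}$ is a limit of the corresponding functor \itcats{}. I would first show that restriction $\FUN(\tB, \tC)^{\plax} \to \prod_{\tB^{\simeq}} \tC$ is conservative on $2$-morphisms, i.e.\ that invertibility of modifications is checked objectwise. Over the generating cells this follows from (ii), together with its easier analogues for $[1]$ and $\partial [1]$. Finally, essential surjectivity of $F$ lets the objectwise check at $b \simeq F(a)$ be done through $\FUN(\tA, \tC)^{\plax}$.

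\textbf{Main obstacle.} I expect the explicit simplicial verification in Step 1 to be the hardest part, together with pinning down the exact collapse $q$ so that Step 2 matches.
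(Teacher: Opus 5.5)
You were right to distrust the collapse map: the bottom row of the square in (i) is misprinted. The vertical maps the paper's proof actually uses are induced by the map $[1]\otimes[1]\to[1]\times[1]$ that inverts the 2-cell, placed in the first tensor factor, with $\partial([1]\otimes[1])\to[1]\otimes[1]$ in the second. That map and $C_2\to[1]$ are cobase changes of one another (\cref{obs:gray square pushout}), so right orthogonality to it is precisely conservativity on 2-morphisms. With this choice your Step 2 is exactly the paper's deduction of (ii) from (i).

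With your guess $q=\max$, however, the equivalence claimed in Step 2 is false. A square factoring through $\max$ also has its two edges into the terminal vertex collapsed, which is more than having an invertible 2-cell. For instance, $[1]\to[0]$ is conservative on 2-morphisms but not right orthogonal to $\max$, since the projection $(i,j)\mapsto i$ from $[1]\otimes[1]$ to $[1]$ does not factor through $\max$. One can still extract (ii) from the $\max$-version, because $C_2\to[1]$ is the pushout of $\max$ along the localization $[1]\otimes[1]\to C_2$ that collapses those two edges. But that is not your argument, and your Step 1 would then be aimed at a different statement.

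The more serious gap is that all of the content of (i) is deferred to an explicit filtration in scaled simplicial sets, which you do not carry out. Your reduction to $\partial C_2\to C_2$ does not make this easier. $\partial C_2\to C_2$ is a retract of $\partial([1]\otimes[1])\to[1]\otimes[1]$ (cf.\ \cite[2.5.13]{AGH24}), so the two statements are equivalent. Moreover, passing to $C_2$ discards exactly the structure that makes (i) formal: $\partial([1]\otimes[1])\to[1]\otimes[1]$ is the pushout-product of $\partial[1]\to[1]$ with itself.

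The missing idea is to bootstrap from the one-dimensional case, which is already known. By \cite[2.4.4]{AGH24}, $\AR^{\oplax}(\tC)\to\tC\times\tC$ is conservative on 2-morphisms. This says that the square with top row $([1]\otimes[1])\times\partial[1]\to[1]^{\otimes 3}$ and bottom row $([1]\times[1])\times\partial[1]\to([1]\times[1])\otimes[1]$ is a pushout. Gluing four copies of this square along the vertices of $\partial([1]\otimes[1])$ reduces (i) to the outer square with top row $([1]\otimes[1])^{\amalg 4}\to[1]^{\otimes 4}$ and bottom row $([1]\times[1])^{\amalg 4}\to([1]\times[1])\otimes[1]^{\otimes 2}$. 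That outer square factors through $[1]^{\otimes 3}\times\partial[1]$ as a coproduct of two copies of the basic square, followed by the basic square tensored with $[1]$. Both pieces are pushouts, so no model computation is needed.

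For (iii), your decomposition of $\tB$ into cells can be made to work. You have to compare with $\FUN(\colim_i \tB_i^{\simeq},\tC)$, since $\prod_{\tB^{\simeq}}\tC$ is not the limit of the cellwise targets, and the $[1]$-cell is again \cite[2.4.4]{AGH24}. The paper's argument is shorter. Essentially surjective functors form the saturated class generated by $\partial[1]\to[1]$ and $\partial C_2\to C_2$, and the class of $F$ with $F^{*}$ conservative on 2-morphisms is saturated. The first generator is handled by \cite[2.4.4]{AGH24}, and the second by (ii) via the retract above.
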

\begin{proof}
  A functor is conservative on 2-morphisms \IFF{} it is right
  orthogonal to $C_{2} \to [1]$. This is
  equivalent to being right orthogonal to $[1] \otimes [1] \to [1]
  \times [1]$, as each is a cobase change of the other.
  From \cite[2.4.4]{AGH24} we know that $\AR^{\oplax}(\tC) \to \tC
  \times \tC$ is conservative on 2-morphisms for any $\tC$, which is then
  equivalent to the commutative square
  \begin{equation}
    \label{eq:po for Aroplax 2-cons}
    \begin{tikzcd}
      {[1] \otimes [1]} \times \partial [1] \ar[r] \ar[d] &
      {[1]^{\otimes 3}}  \ar[d] \\
      {[1] \times [1]} \times \partial [1] \ar[r] & {([1] \times [1])} \otimes [1]
    \end{tikzcd}
  \end{equation}
  being a pushout.

  To prove that the square in (i) is a pushout, we first consider
  \[
    \begin{tikzcd}
      {([1] \otimes [1])^{\amalg 4}} \ar[r] \ar[d] & {[1]^{\otimes 2}}
      \otimes \partial ([1]^{\otimes 2)}
       \ar[d] \ar[r] & {[1]^{\otimes 4}} \ar[d] \\
      {([1] \times [1])^{\amalg 4}} \ar[r] &  {[1]^{\times 2}}
      \otimes \partial ([1]^{\otimes 2}) 
     \ar[r] & {[1]^{\times 2}} \otimes [1]^{\otimes 2},
    \end{tikzcd}
  \]
  where the left square is obtained by gluing 4 copies of \cref{eq:po
    for Aroplax 2-cons} and so is a pushout. To see that the
  right-hand square is a pushout it therefore suffices to show that
  the outer composite square is a pushout. Next, we factor this square
  instead as
  \[
    \begin{tikzcd}
      {([1] \otimes [1])^{\amalg 4}} \ar[r] \ar[d] & {[1]^{\otimes 3}}
      \times \partial [1]     
       \ar[d] \ar[r] & {[1]^{\otimes 4}} \ar[d] \\
      {([1] \times [1])^{\amalg 4}} \ar[r] &  {[1]^{\times 2}} \otimes
      [1] \times \partial [1]
     \ar[r] & {[1]^{\times 2}} \otimes [1]^{\otimes 2}.
    \end{tikzcd}
  \]  
  Here the left square is a coproduct of two copies of \cref{eq:po
    for Aroplax 2-cons} and the right square is \cref{eq:po for
    Aroplax 2-cons} tensored with $[1]$, so that both are
  pushouts.
  
  The orthogonality condition in (ii) is now immediate from the square
  in (i) being a pushout. To prove (iii), first recall (\eg{} from \cite[Theorem 5.3.7]{Soergel}) that
  essentially surjective and fully faithful functors form a
  factorization system on $\CatIT$, and that a functor is fully
  faithful \IFF{} it is right orthogonal to $\partial C_{i} \to C_{i}$
  for $i = 1,2$ by \cite[Proposition 2.5.10]{AGH24}. Hence the essentially surjective functors are the
  smallest saturated class containing these two maps. On the other
  hand, the class of maps for which the desired conclusion holds is
  clearly saturated, so it suffices to prove it in these two
  cases. The map $\partial C_{2} \to C_{2}$ generates the same
  saturated class as $\partial [1]^{\otimes 2} \to [1]^{\otimes 2}$ by
  \cite[2.5.13]{AGH24}, so this case follows from part (ii), while the
  case $i = 1$ is part of \cite[2.4.4]{AGH24}.
\end{proof}

\begin{proof}[Proof of \cref{propn:locn loc full incl}]
  We prove the lax case; the oplax case follows similarly, or by using
  equivalences of the form
  $\FUN(\tA, \tB)^{\oplax} \simeq \FUN(\tA^{\op},\tB^{\op})^{\lax,
    \op}$.  By definition the functor $F$ is obtained by inverting
  certain 1- and 2-morphisms in $\tC$. Since the class of functors
  for which the desired conclusion is true is clearly closed under
  cobase change, it suffices to show that it holds for the functors
  $\ell_{i} \colon \tC \to \tau_{\leq i}\tC$ for $i = 0,1$ given by localizing
  $\tC$ to an $(\infty,i)$-category. The condition that
  $\ell_{i}^{*} \colon \FUN(\tau_{\leq i}\tC, \tD)^{\lax} \to \FUN(\tC,
  \tD)^{\lax}$ is right orthogonal to some functor
  $\phi \colon \tA \to \tB$ is equivalent to the commutative square
  \[
    \begin{tikzcd}
     \Map(\tC, \FUN(\tB, \tD)^{\oplax,\leq i}) \ar[r] \ar[d] & \Map(\tC, \FUN(\tB, \tD)^{\oplax})  \ar[d] \\
     \Map(\tC, \FUN(\tA, \tD)^{\oplax,\leq i}) \ar[r] & \Map(\tC, \FUN(\tA, \tD)^{\oplax})
    \end{tikzcd}
  \]
  being a pullback, which is true for all $\tC$ \IFF{}
  \[
    \begin{tikzcd}
     \FUN(\tB,\tD)^{\oplax,\leq i} \ar[r] \ar[d] & \FUN(\tB,\tD)^{\oplax}  \ar[d] \\
     \FUN(\tA,\tD)^{\oplax,\leq i} \ar[r] & \FUN(\tA,\tD)^{\oplax}
    \end{tikzcd}
  \]
  is a pullback of \itcats{}. By \cref{obs:conservative pbs}, this
  holds for both $i=0,1$ \IFF{}
  $\phi^{*} \colon \FUN(\tB, \tD)^{\oplax} \to \FUN(\tA,\tD)^{\oplax}$
  is conservative on 1- and 2-morphisms.

  By \cite[2.5.8]{AGH24}, a functor is a locally full inclusion \IFF{}
  it is right orthogonal to $\partial [1]\to [0]$ and
  $\partial C_{2} \to C_{2}$. Applying the preceding discussion to
  these maps, we see that for the statement we want it is enough to
  prove that the functors
  \[ \tD \to \tD^{\times 2}, \quad \FUN(C_{2}, \tD)^{\oplax} \to
    \FUN(\partial C_{2}, \tD)^{\oplax}\]
  are conservative on 1- and 2-morphisms. For the first map this holds
  since the inclusions $\tD^{\leq i} \to \tD$ are monomorphisms, so
  that we have pullbacks
  \[
    \begin{tikzcd}
     \tD^{\leq i} \ar[r] \ar[d] & \tD  \ar[d] \\
     \tD^{\leq i, \times 2} \ar[r] & \tD^{\times 2}.
    \end{tikzcd}
  \]
  For the second we saw in \cref{partial C2 cons on 1-mor} that it is
  conservative on 1-morphisms, and we just proved in \cref{propn:ess
    surj gives 2-cons} that it is so on 2-morphisms.
\end{proof}

\begin{observation}\label{obs:cons on lax transfos}
  The proof of \cref{propn:locn loc full incl} shows that for any
  \itcat{} $\tC$, the functor
  \[ F^{*} \colon \FUN(\tB, \tC)^{\plax} \to \FUN(\tA, \tC)^{\plax}\]
  is conservative on 1- and 2-morphisms provided $F$ is in the
  saturated class generated by $\partial C_{2} \to C_{2}$ and
  $\partial [1] \to [0]$. By \cite[Theorem 5.3.7]{Soergel}, the
  saturated class generated by the first map consists of those
  functors $F$ that are surjective on objects with the further
  property that $\tA(a,a') \to \tB(Fa,Fa')$ is surjective on objects
  for all $a,a' \in \tA$, so all such maps have this conservativity
  property. It would be interesting to identify also the larger class
  obtained by adding our second generator, \ie{} the class of maps
  that are left orthogonal to locally full inclusions rather than all
  locally fully faithful functors.
\end{observation}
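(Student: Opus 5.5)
The plan is to rephrase conservativity of $F^{*}$ as an orthogonality condition in which $F$ appears on the left. The class of such $F$ is then automatically saturated, and it suffices to check the two generators. Both generator checks were in effect already carried out in the proof of \cref{propn:locn loc full incl}.

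First, by \cref{obs:conservative pbs}, a functor of \itcats{} is conservative on both 1- and 2-morphisms \IFF{} it is right orthogonal to $s_{0}\colon [1]\to[0]$ and to $C_{2}\to[1]$. Fix $\phi\colon\tA'\to\tB'$ to be one of these two maps. Applying the adjunctions between the Gray tensor and $\FUN(\blank,\blank)^{\plax}$ exactly as in the proof of \cref{propn:locn loc full incl}, the condition that $F^{*}\colon\FUN(\tB,\tC)^{\plax}\to\FUN(\tA,\tC)^{\plax}$ is right orthogonal to $\phi$ becomes the following: the square
\[ \Map(\tB,\FUN(\tB',\tC)^{\mathrm{(op)lax}})\to \Map(\tA,\FUN(\tB',\tC)^{\mathrm{(op)lax}})\times_{\Map(\tA,\FUN(\tA',\tC)^{\mathrm{(op)lax}})}\Map(\tB,\FUN(\tA',\tC)^{\mathrm{(op)lax}}) \]
is an equivalence, with the opposite variance. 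In other words, $F$ must be left orthogonal to the functor
\[\phi^{*}\colon\FUN(\tB',\tC)^{\mathrm{(op)lax}}\to\FUN(\tA',\tC)^{\mathrm{(op)lax}}.\]

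For fixed $\tC$, the maps left orthogonal to a fixed collection of functors form a saturated class. That class is closed under cobase change, composition, transfinite composition, retracts and colimits in $\Ar(\CatIT)$. Taking the intersection over all $\tC$ and both choices of $\phi$, the collection of $F$ for which $F^{*}$ is conservative on 1- and 2-morphisms is therefore saturated. It suffices to verify that it contains $\partial C_{2}\to C_{2}$ and $\partial[1]\to[0]$.

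For $\partial[1]\to[0]$, the restriction functor is the diagonal $\tC\to\tC\times\tC$. This is conservative on 1- and 2-morphisms because $\tC^{\leq i}\to\tC$ is a monomorphism, which gives the pullback squares displayed in the proof of \cref{propn:locn loc full incl}. For $\partial C_{2}\to C_{2}$, the restriction $\FUN(C_{2},\tC)^{\plax}\to\FUN(\partial C_{2},\tC)^{\plax}$ is conservative on 1-morphisms by \cref{partial C2 cons on 1-mor}, since its fibres are \igpds{}. It is conservative on 2-morphisms by \cref{propn:ess surj gives 2-cons}(iii), because $\partial C_{2}\to C_{2}$ is essentially surjective.

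The main point requiring care is the adjunction step. Lax versus oplax variance must be tracked correctly so that the same generator check applies in both the lax and oplax cases. I would handle this by treating the lax case explicitly and deducing the oplax case from $\FUN(\tA,\tB)^{\oplax}\simeq\FUN(\tA^{\op},\tB^{\op})^{\lax,\op}$. This equivalence preserves both generators up to equivalence, since $(\partial C_{2}\to C_{2})^{\op}\simeq(\partial C_{2}\to C_{2})$.

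Finally, for the description of the saturated class generated by $\partial C_{2}\to C_{2}$ alone, I would cite \cite[Theorem 5.3.7]{Soergel}. It identifies this class with the functors that are surjective on objects and surjective on objects of mapping \icats{}. All such functors therefore have the conservativity property.
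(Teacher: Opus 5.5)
Your proposal is correct and is essentially the paper's argument. The paper likewise uses the Gray-tensor adjunction to turn conservativity of the restriction functor into left orthogonality of $F$ against a family of restriction functors, which is a saturated condition. It then checks the two generators exactly as you do, via the diagonal, \cref{partial C2 cons on 1-mor} and \cref{propn:ess surj gives 2-cons}. The one cosmetic difference is the test family: the paper uses all truncations $\tC\to\tau_{\leq i}\tC$ together with the pullback squares of \cref{obs:conservative pbs}, whereas you test directly against $[1]\to[0]$ and $C_{2}\to[1]$, and these impose the same orthogonality condition.
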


\begin{propn}\label{propn:it arelax pb}
  The commutative square
    \[
    \begin{tikzcd}
     \DARelax(\DARelax(\tCd)^{\flat[\vepsilon]}) \ar[r] \ar[d] & \DARelax(\tCd)  \ar[d, "\ev_{1}"] \\
     \DARelax(\tCd) \ar[r, "\ev_{0}"'] & \tC
    \end{tikzcd}
  \]
  is a pullback of \itcats{}, giving an equivalence
  \[ \DARelax(\DARelax(\tCd)^{\flat[\vepsilon]}) \simeq \DFUN([2]^{\sharp},
    \tCd)^{\velax}\]
  of \itcats{}.
\end{propn}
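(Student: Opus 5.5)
The plan is to treat the case $\vepsilon = (0,1)$, so that $\DARelax = \DARoplax$ and the relevant evaluation is $\ev_{0}$; the other three cases follow by the same argument or by passing to $(\blank)^{\op}$, $(\blank)^{\co}$, $(\blank)^{\coop}$. The idea is to prove the second statement first, in the form that $\DARoplax(\DARoplax(\tCd)^{\flat[\vepsilon]})$ is the image of a locally full inclusion
\[ \DFUN([2]^{\sharp}, \tCd)^{\oplax} \hookrightarrow \DFUN([1]^{\sharp} \otimesd [1]^{\sharp}, \tCd)^{\oplax}, \]
and then to deduce the pullback square from the decomposition $[2] \simeq [1] \amalg_{[0]} [1]$.

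First I would use the adjunction for decorated Gray tensors (\cref{cor: dec funlax commute}) to identify $\DARoplax(\DARoplax(\tCd))$ with $\DFUN([1]^{\sharp} \otimesd [1]^{\sharp}, \tCd)^{\oplax}$. The \itcat{} $\DARoplax(\DARoplax(\tCd)^{\flat[\vepsilon]})$ is then a locally full sub-\itcat{} of this, because it only restricts which 1- and 2-morphisms count as decorated.

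Next I would unwind what these restrictions say about the objects. An object is a decorated 1-morphism of $\DARoplax(\tCd)^{\flat[\vepsilon]}$: an oplax square that commutes, whose $\ev_{0}$-edge is an equivalence and whose $\ev_{1}$-edge is decorated. Such a square is the same thing as a functor out of the localization of $[1]\otimes[1]$ at its 2-cell and at one edge. I would check, using the pushout of \cref{obs:gray square pushout} together with the collapse $[1]\times[1] \to [2]$ that inverts that edge, that this localization is exactly $[2]$. Decorations match as well: the two remaining edges are decorated, so their composite, the diagonal, is decorated too. Thus we have a decorated localization $[1]^{\sharp}\otimesd[1]^{\sharp} \to [2]^{\sharp}$. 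By \cref{propn:locn loc full incl}, restricting along it gives a locally full inclusion of oplax functor \itcats{}, and its image on objects is precisely the set of objects described above.

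I expect the main obstacle to be comparing morphisms and 2-morphisms in the two locally full sub-\itcats{}. A 1-morphism in $\DARoplax(\DARoplax(\tCd)^{\flat[\vepsilon]})$ is an oplax transformation whose naturality 2-cell is decorated in the $\flat[\vepsilon]$ sense, meaning its $\ev_{0}$-component is invertible and its $\ev_{1}$-component is decorated. I must show these are exactly the oplax transformations between functors out of $[2]$ whose naturality 2-cells are decorated in $\tCd$. My plan is to compute with the cube $[1]\otimes([1]\otimes[1])$ and to use the fact that a 2-cell in $\AR^{\oplax}$ is determined by its two components. This holds since $\AR^{\oplax}(\tC)\to\tC^{\times 2}$ is conservative on 2-morphisms, which is the argument used in \cref{propn:ess surj gives 2-cons}. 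The invertibility of the $\ev_{0}$-component is then exactly the condition needed for the oplax square to factor through the localization defining $[2]$. The argument for 2-morphisms should be the same, one dimension up.

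Finally, for the pullback, I would note that $\DFUN(\blank,\tCd)^{\oplax}$ turns colimits of decorated \itcats{} into limits: by the adjunction it is corepresented through $\blank\otimesd\tKd$, which preserves colimits in each variable. Applying this to the pushout $[2]^{\sharp}\simeq[1]^{\sharp}\amalg_{[0]}[1]^{\sharp}$, which is a pushout of decorated \itcats{} since decorations on a colimit are the generated ones, gives
\[ \DFUN([2]^{\sharp},\tCd)^{\oplax} \simeq \DARoplax(\tCd)\times_{\tC}\DARoplax(\tCd), \]
with the fibre product formed over $\ev_{1}$ and $\ev_{0}$. Composing with the equivalence from the previous steps then yields the claimed pullback square, and it remains to check that the maps in the square are the expected ones.
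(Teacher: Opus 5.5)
Your proposal is correct and matches the paper's proof. The paper identifies the pullback with $\DFUN([2]^{\sharp},\tCd)^{\oplax}$ because $\DFUN(\blank,\tCd)^{\oplax}$ turns colimits into limits. It then views $\DARoplax(\DARoplax(\tCd)^{\flat[\vepsilon]})$ as the locally full sub-\itcat{} of $\FUN([1]\otimes[1],\tC)^{\oplax}$ whose objects and morphisms are those factoring through $[2]$ and $[2]\otimes[1]$, and concludes by \cref{propn:locn loc full incl}.

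Your appeal to conservativity of $\AR^{\oplax}(\tC)\to\tC^{\times 2}$ is unnecessary. Conservativity only detects invertibility; it does not make a 2-cell determined by its components. In any case, the $\flat[\vepsilon]$-condition is already stated componentwise. Likewise, 2-morphisms need no separate argument, since both sides are locally full in the same ambient \itcat{}.
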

\begin{proof}
  We prove the case $\vepsilon = (0,1)$. Since
  $\DFUN(\blank, \tCd)^{\oplax}$ preserves limits, we can identify the
  pullback in the square as $\DFUN([2]^{\sharp}, \tCd)^{\oplax}$. On
  the other hand, from \cref{lax oplax reverse marked} we know that
  $\DAR^{\oplax}(\DAR^{\oplax}(\tCd)^{\flat[(0,1)]})$ is a locally
  full subcategory of
  $\AR^{\oplax}(\AR^{\oplax}(\tC)) \simeq \FUN([1] \otimes [1],
  \tC)^{\oplax}$, whose
  \begin{itemize}
  \item objects are oplax squares that commute and whose top edge is
    invertible, which we can identify with functors $[2] \to \tC$,
  \item morphisms can similarly be identified with functors $[2]
    \otimes [1] \to \tC$.
  \end{itemize}
  It therefore suffices to show that the functor
  $p^{*} \colon \FUN([2], \tC)^{\oplax} \to \FUN([1] \otimes [1], \tC)^{\oplax}$,
  given by composition with the functor
  $p \colon [1] \otimes [1] \to [2]$ that inverts the 2-morphism and
  the top edge, is a locally full subcategory inclusion, which follows
  from \cref{propn:locn loc full incl}.
\end{proof}

\begin{proof}[Proof of \cref{thm:ev is partial fib}]
  Combine \cref{thm:pb crit for partial
    fib} with \cref{propn:it arelax pb}.
\end{proof}

We also have the following decorated variant of the theorem:

\begin{cor}\label{propn:ev dec fib}
  For any decorated \itcat{} $\tCd$ and $\vepsilon = (i,j)$, the
  functor
  \[ \ev_{1-i} \colon \DARelax(\tCd)^{\diafib} \to \tCd \]
  is a decorated partial $\vepsilon$-fibration.
\end{cor}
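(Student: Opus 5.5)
We need to show that $\ev_{1-i} \colon \DARelax(\tCd)^{\diafib} \to \tCd$ is a decorated partial $\ve$-fibration. We treat $\ve = (0,1)$, so the functor is $\ev_{1} \colon \DARoplax(\tCd)^{\diafib} \to \tCd$; the other variances follow by reversing 1- and/or 2-morphisms. The definition of a decorated partial fibration has several conditions, and the plan is to check them one at a time using the explicit form in \cref{remark:dec fib explicit}.

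\textbf{Step 1: the underlying functor.} By \cref{thm:ev is partial fib}, the underlying functor $\ev_{1} \colon \DARoplax(\tCd) \to \tC$ is a partial $(0,1)$-fibration with respect to $\tCd$. Moreover, its cocartesian 1-morphisms are the commuting squares whose $\ev_{0}$-image is an equivalence, and its cartesian 2-morphisms are those whose $\ev_{0}$-image is an equivalence.

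\textbf{Step 2: cocartesian lifts are decorated.} Take a cocartesian morphism lying over a decorated morphism of $\tCd$. It is a commuting square whose $\ev_{0}$-image is an equivalence, hence decorated, and whose $\ev_{1}$-image is decorated by assumption. So it is decorated in $\DARoplax(\tCd)^{\diafib}$. The same argument shows that cartesian 2-morphisms over decorated 2-morphisms are decorated in $\diafib$.

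\textbf{Step 3: the triangle conditions.} Suppose $h = g \circ f$ with $h$ decorated, $f$ cocartesian, and $\ev_{1}(g)$ decorated.
\begin{itemize}
\item Commutativity of $g$: the square $f$ commutes, and the 2-cell of $h$ is the pasting of the 2-cells of $f$ and $g$, whiskered appropriately. Since $\ev_{0}(f)$ is invertible, the 2-cell of $g$ is invertible precisely when the 2-cell of $h$ is.
\item Decoration of $\ev_{0}(g)$: we have $\ev_{0}(g) \simeq \ev_{0}(h) \circ \ev_{0}(f)^{-1}$, which is decorated.
\end{itemize}
The two 2-morphism conditions are handled the same way, by reading them off componentwise under $\ev_{0}$ and $\ev_{1}$. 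Whiskering with a cocartesian $f$ changes the $\ev_{0}$-component only by precomposition with an equivalence. For a triangle $\gamma = \beta \circ \alpha$ with $\beta$ cartesian, we have $\ev_{0}(\alpha) \simeq \ev_{0}(\beta)^{-1} \circ \ev_{0}(\gamma)$, which is decorated.

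\textbf{Step 4: the restricted functors.} Two further conditions remain.
\begin{itemize}
\item The functor on underlying $\infty$-categories of decorated 1-morphisms must be a $0$-fibration. We would show its cocartesian lifts are the commuting squares with invertible source, which are decorated; Step 3 then shows they remain cocartesian in this subcategory.
\item The functor $p_{(2)}$ on the sub-$(\infty,2)$-category of decorated 2-morphisms must be a partial $(0,1)$-fibration for the restricted decorations, with the inclusions being morphisms of partial fibrations. Here we would use \cref{obs:epscofabs}: it suffices that the cocartesian morphisms and cartesian 2-morphisms of $\ev_{1}$ lie in this sub-$(\infty,2)$-category (Step 2) and remain (co)cartesian there.
\end{itemize}
For the second point, the mapping $\infty$-categories of the sub-$(\infty,2)$-category are wide subcategories of those of $\DARoplax(\tCd)$. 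By \cref{DARlax mor pb}, such a mapping $\infty$-category is a pullback of $\tC(x,a)\times\tC(y,b)$ with $\DAr\tC(x,b)$. So the pullback squares witnessing cocartesianness restrict along the decorated parts, using the two-out-of-three properties established in Step 3.

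\textbf{Main obstacle.} The part I expect to be most delicate is exactly the restriction argument for $p_{(2)}$ in Step 4. One must verify that the pullback squares of mapping $\infty$-categories from the proof of \cref{thm:pb crit for partial fib} still hold after restricting to decorated 2-morphisms. The plan is to reduce this to the fact that the decorated 2-morphisms of $\DARoplax(\tCd)^{\diafib}$ are detected componentwise, via the pullback description of \cref{DARlax mor pb}.
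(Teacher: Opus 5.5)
Your proposal is correct, but it reaches the result by a different route from the paper. You check the elementwise conditions of \cref{remark:dec fib explicit} directly. Since the relevant (co)cartesian 1- and 2-morphisms for $\ev_{1}$ have invertible $\ev_{0}$-component, and the $\diafib$ decorations are defined componentwise, each required cancellation property reduces to composing or whiskering with an equivalence, plus the pasting formula for the 2-cell of a composite oplax square.

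The paper instead recognizes the restricted functors as instances of results already in hand. Unpacking the decorations gives $\DARelax(\tCd)^{\diafib}_{(2)} \simeq \DARelax(\tCd_{(2)})$, where $\tCd_{(2)}$ carries the decorations inherited from $\tCd$. So $p_{(2)}$ is a partial $\ve$-fibration simply by applying \cref{thm:ev is partial fib} to $\tCd_{(2)}$. Likewise $\DARelax(\tCd)^{\diafib,\leq 1}_{(1)} \simeq \Ar(\tC^{\diamond,\leq 1}_{(1)})$, whose evaluation map is the standard $i$-fibration of $\infty$-categories. The compatibility conditions then follow because in all three functors the (co)cartesian morphisms are the same: commuting squares, resp.\ 2-morphisms, with invertible $\ev_{i}$-component.

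This identification dissolves exactly the step you flag as the main obstacle. No restriction of pullback squares of mapping $\infty$-categories is needed, no explicit 2-cell pasting is needed, and all variances are handled uniformly. Your approach, in return, shows concretely why the decorations are compatible. The price is that it relies on the unpacking in \cref{remark:dec fib explicit} and on an informal pasting argument.

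Given that unpacking, your Step~4 adds nothing beyond Step~3. The restricted squares of mapping $\infty$-categories are pullbacks precisely because of your two-out-of-three conditions: the restricted corner and the pullback of the restricted corners are both wide subcategories of the same mapping $\infty$-category. So \cref{DARlax mor pb} is not needed there. Also, the reduction you attribute to \cref{obs:epscofabs} is really just the definition of $p_{(2)}$ being a partial fibration whose inclusion preserves (co)cartesian morphisms.
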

\begin{proof}
  Unpacking the definitions, we see that
  \[ \DARelax(\tCd)^{\diafib}_{(2)} \simeq
    \DARelax(\tCd_{(2)}),\]
  where we think of $\tCd_{(2)}$ as a decorated \itcat{} with the
  decorations inherited from $\tCd$, and
  \[ \DARelax(\tCd)^{\diafib, \leq 1}_{(1)} \simeq
    \Ar(\tC^{\diamond,\leq 1}_{(1)}).\]
  Thus the restricted functor
  \[ \ev_{1-i} \colon \DARelax(\tCd)^{\diafib}_{(2)} \to
    \tCd_{(2)} \]
  is also a partial $\vepsilon$-fibration by \cref{thm:ev is partial
    fib}, with its $i$-cartesian morphisms and $j$-cartesian
  2-morphisms agreeing with those in $\DARelax(\tCd)$,
  while  
  \[ \ev_{1-i} \colon  \DARelax(\tCd)^{\diafib, \leq 1}_{(1)} \to
    \tC^{\diamond,\leq 1}_{(1)}\]
  is an $i$-fibration of \icats{} whose $i$-cartesian morphisms agree
  with those in $\DARelax(\tCd)$.
\end{proof}

\subsection{Free partial fibrations}
\label{sec:free fib}

For a functor of \icats{} $f \colon \oC \to \oB$, we can identify the
free cocartesian fibration on $f$ as the functor $\oC \times_{\oB}
\Ar(\oB) \to \oB$ given by evaluation at $1$, where the pullback is
formed using $f$ and evaluation at $0$. This was first proved in
\cite{GHN}, with improved versions of the proof later given in
\cite{AyalaMazelGeeRozenblyumEnr,Shah}. In this section we will prove
the analogue of this statement for \itcats{}, and more generally
describe free \emph{partial} fibrations as follows:

\begin{thm}\label{thm:free partial fib}
  Let $\tBd$ be a decorated \itcat{}. The forgetful functor
  \[\UeB \colon \PFibe_{/\tBd} \to \CatITsl{\tB}\] has a left adjoint
  $\FeB$, given for $p \colon \tE \to \tB$ by
  \[ \FeB(p) \,\,:=\,\, \tE^{\flat\flat}
    \times_{\tB^{\flat\flat}} \DARelax(\tBd)^{\flat[\vepsilon]} \to \tBd, \]
  where the pullback is via $\ev_{i}$  and the map to $\tBd$ is
  given by $\ev_{1-i}$.
\end{thm}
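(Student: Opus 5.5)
The plan is to prove the adjunction through its unit. For $p \colon \tE \to \tB$, the candidate unit is the map
\[ \eta_{p} \colon \tE^{\flat\flat} \to \FeB(p) = \tE^{\flat\flat} \times_{\tB^{\flat\flat}} \DARelax(\tBd)^{\flat[\vepsilon]}. \]
It is induced by $\id_{\tE}$ and by the degeneracy $s_{0}^{*} \colon \tB^{\flat\flat} \to \DARelax(\tBd)^{\flat[\vepsilon]}$, which sends an object to its identity arrow; this is compatible with $\ev_{i}$ and $\ev_{1-i}$. Two things then need to be shown:
\begin{enumerate}[(a)]
\item $\ev_{1-i} \colon \FeB(p) \to \tBd$ is a partial $\vepsilon$-fibration;
\item $\eta_{p}$ is an $\vepsilon$-equivalence in $\DCatITsl{\tBd}$.
\end{enumerate}
Together these say that $\FeB(p) \simeq L^{\vepsilon}_{\tBd}(\tE^{\flat\flat})$, where $L^{\vepsilon}_{\tBd}$ is the localization of \cref{obs:epscofoverB}. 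Precomposing with $(\blank)^{\flat\flat} \colon \CatITsl{\tB} \to \DCatITsl{\tBd}$, which is left adjoint to the forgetful functor into $\CatITsl{\tB}$ (this is the adjunction $(\blank)^{\flat\flat} \dashv \Ud$ taken over $\tB$), gives the left adjoint $\FeB$ of $\UeB$. Functoriality in $p$ is clear from the pullback formula.

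For (a) I will use the pullback criterion \cref{thm:pb crit for partial fib}, writing it out for $\vepsilon = (0,1)$. The decorations on $\FeB(p)$ are pulled back, and $\tE^{\flat\flat}$ is minimally decorated. So a decorated 1-morphism of $\FeB(p)$ is an equivalence in the $\tE$-coordinate, and its $\DAR$-coordinate is decorated in $\DARoplax(\tBd)^{\flat[\vepsilon]}$, which already forces $\ev_{0}$ to be an equivalence. Decorated 2-morphisms behave the same way. Since $\DARoplax$ preserves limits, this gives
\[ \DARoplax(\FeB(p)) \simeq \tE \times_{\tB} \DARoplax\bigl(\DARoplax(\tBd)^{\flat[\vepsilon]}\bigr). \]
By \cref{propn:it arelax pb}, the last factor is $\DARoplax(\tBd) \times_{\tB} \DARoplax(\tBd)$. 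Hence
\[ \DARoplax(\FeB(p)) \simeq \FeB(p) \times_{\tB} \DARoplax(\tBd), \]
and this is exactly the square required by \cref{thm:pb crit for partial fib}. Equivalently, (a) amounts to $\ev_{1-i}$ in \cref{thm:ev is partial fib} being stable under this kind of left base change.

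For (b), which I expect to be the main obstacle, I would use \cref{lem: lax tr eps-equiv}. In $\DARelax(\tBd)^{\flat[\vepsilon]}$ there is a lax (or oplax) transformation $\rho \colon [1]^{\sharp} \otimesd \DARelax(\tBd)^{\flat[\vepsilon]} \to \DARelax(\tBd)^{\flat[\vepsilon]}$ from $s_{0}\ev_{1-i}$ to the identity, or in the reverse direction depending on the variance. At an arrow $f$ its component is the square from $\id$ to $f$ whose $\ev_{1-i}$-component is an identity. This $\rho$ lies over $\tBd$ via $\ev_{1-i}$, and its components are commuting squares with $\ev_{1-i}$-image an equivalence.

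The difficulty is that \cref{lem: lax tr eps-equiv} needs $\rho$ to be decorated and compatible with the pullback along $p$ through $\ev_{i}$, and the obvious $\rho$ is not. Its components have $\ev_{i}$-image $f$ itself, which is not an equivalence, so $\rho$ is not decorated in the $\flat[\vepsilon]$-decoration. I would therefore first try to establish (b) directly by orthogonality: show that for every partial $\vepsilon$-fibration $q \colon \tQd \to \tBd$, restriction along $\eta_{p}$ gives an equivalence
\[ \Map_{/\tBd}(\FeB(p), \tQd) \simeq \Map_{/\tB}(\tE, \tQ). \]
The inverse sends a functor $\phi$ to the composite $\FeB(p) \to \FeB(\phi^{*}q) \to \tQ$. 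Here the second map is transport: it is a section of $\ev_{i}$ on $\DARelax(\tQd)^{\flat[\vepsilon]} \simeq \tQ \times_{\tB} \DARelax(\tBd)$, which exists by \cref{thm:pb crit for partial fib}, followed by $\ev_{1-i}$. This transport description should let the needed homotopies be checked using only \cref{thm:pb crit for partial fib} and \cref{propn:it arelax pb}.
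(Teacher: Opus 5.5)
Your reduction is sound: (a) and (b) together identify $\FeB(p)$ with $L^{\vepsilon}_{\tBd}(\tE^{\flat\flat})$. Your argument for (a) via \cref{thm:pb crit for partial fib} and \cref{propn:it arelax pb} also works, though (a) is immediate from \cref{thm:ev is partial fib} and the closure of partial fibrations under base change and composition (\cref{obs:epscofabs}); that is how the paper sets up $\FeB$ in \cref{cons:free}.

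The gap is (b), which carries the entire content of the theorem and which you leave at ``should let the needed homotopies be checked''. The inverse you describe is $\phi \mapsto \epsilon_{q} \circ \FeB(\phi)$, with $\epsilon_{q} \colon \FeB(q) \simeq \DARelax(\tQ^{\natural})^{\flat[\vepsilon]} \xrightarrow{\ev_{1-i}} \tQ$. (You wrote $\FeB(\phi^{*}q)$, but you mean $\FeB(q)$.) This $\epsilon_{q}$ is exactly the counit of \cref{propn:free fib unit counit constr}, so proving that it inverts restriction along $\eta_{p}$ is the same as proving both triangle identities.
\begin{itemize}
\item One direction, $\epsilon_{q} \circ \FeB(\phi) \circ \eta_{p} \simeq \phi$, is easy from naturality of $\eta$ and $\ev_{1-i} \circ s_{0}^{*} = \id$.
\item The other direction says that every decorated $\psi \colon \FeB(p) \to \tQd$ over $\tBd$ agrees with $\epsilon_{q} \circ \FeB(\psi \eta_{p})$. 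By naturality of $\epsilon$ this reduces to $\epsilon_{\FeB(p)} \circ \FeB(\eta_{p}) \simeq \id_{\FeB(p)}$, and that does not follow formally from the pullback criterion.
\end{itemize}

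The missing idea is the computation in the observation preceding \cref{propn: FUF ids}. There one identifies $\FeB\UeB\FeB(p)$ with $\tE^{\flat\flat} \times_{\tB^{\flat\flat}} \DFUN([2]^{\sharp}, \tBd)^{\velax,\flat[\vepsilon]}$ in two ways: once by iterating the defining pullback, and once as $\DARelax(\FeB(p))^{\flat[\vepsilon]}$ using \cref{propn:it arelax pb}. One then checks that the two identifications coincide. Under this identification, $\FeB(\eta_{p})$ becomes restriction along $s_{i} \colon [2] \to [1]$ and $\epsilon_{\FeB(p)}$ becomes restriction along $d_{1} \colon [1] \to [2]$. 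Their composite is therefore $(s_{i} d_{1})^{*} = \id$.

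You point to \cref{propn:it arelax pb} as the relevant tool, but without the compatibility check and the recognition of the two maps as $s_{i}^{*}$ and $d_{1}^{*}$, the proposal does not establish (b). You are right to set aside \cref{lem: lax tr eps-equiv}: the paper only uses that route later, for the $\diafib$-decorated free fibration in \cref{propn:unit fib univ eps-equiv}, once the adjunction is already available.
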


\begin{remark}
  The case of the theorem where $\tB$ is fully decorated and
  $\ve = (1,0)$ has previously been proved by Abell\'an and Stern as
  \cite[Theorem 3.17]{AS23}.
\end{remark}

\begin{construction}\label{cons:free}
  More precisely, we define the functor $\FeB$ as
  the composite
  \[ \CatITsl{\tB} \simeq \PFibe_{/\tB^{\flat\flat}} \xto{\ev_{i}^{*}}
    \PFibe_{/\DARelax(\tBd)^{\flat[\vepsilon]}} \xto{\ev_{1-i,!}}
    \PFibe_{/\tBd}\] where we have used that partial
  $\vepsilon$-fibrations are closed under base change along any
  functor, such as $\ev_{i}$, and under composition with a partial
  $\vepsilon$-fibration, such as $\ev_{1-i}$ (\cref{thm:ev is partial
    fib}).
\end{construction}

The unit of the adjunction is easy to define:
\begin{construction}
  The degeneracy $[1] \to [0]$ induces a commutative triangle
  \[
    \begin{tikzcd}
     \tB \ar[rr, "s_{0}^{*}"] \ar[dr] & & \DARelax(\tBd) \ar[dl, "\ev_{1-i}"] \\
      & \tB
    \end{tikzcd}
  \]
  of \itcats{}. By pulling this back we obtain a natural transformation
  \[ \etaeB \colon \id \to \UeB
    \FeB.\]
  That is, for $p \colon \tE \to \tB$, the map $\eta^{\vepsilon}(p)$
  is the commutative triangle
  \[
    \begin{tikzcd}
     \tE \ar[rr, "\etaeB(p)"] \ar[dr, "p"'] & & \tE \times_{\tB} \DARelax(\tBd) \ar[dl] \\
      & \tB
    \end{tikzcd}
  \]
  with the horizontal map given by composing with $s_{0}$ in the
  second factor.
\end{construction}

When $p$ is a partial $\vepsilon$-fibration over $\tBd$, we can use
\cref{thm:pb crit for partial fib} to give an alternative
identification of $\FeB(p)$; this allows us to define the counit of
the adjunction and prove one of the triangle identities:
\begin{propn}\label{propn:free fib unit counit constr}
Suppose $p \colon \tE \to \tB$ is a partial $\vepsilon$-fibration over
$\tBd$. 
\begin{enumerate}[(i)]
\item \cref{thm:pb crit for partial fib} extends to a natural equivalence
  \[\FeB(p) \simeq \DARelax(\tEn)^{\flat[\vepsilon]}
    \xto{p \circ \ev_{1-i}} \tBd \]
  of decorated \itcats{}.
\item Under this equivalence the map
  $\etaeB \colon p \to \UeB
  \FeB(p)$ corresponds to the degeneracy map
  \[ s_{0}^{*} \colon \tE \to \DARelax(\tEn).\]
\item $\ev_{1-i}$ gives a natural morphism of decorated \itcats{}
  \[\DARelax(\tEn)^{\flat[\vepsilon]} \to \tEn\]
  over $\tBd$, and so a natural transformation
  \[\epseB
\colon \FeB\UeB \to
\id.\]
\item The composite
  \[ \tE \xto{\etaeB} \FeB(p)
    \xto{\epseB} \tE\]
  is the identity, \ie{} the composite natural transformation
  \[ \UeB \xto{\etaeB\UeB} \UeB\FeB\UeB \xto{\UeB\epseB} \UeB\]
  is the identity of $\UeB$.
\end{enumerate}
\end{propn}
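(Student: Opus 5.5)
I will work throughout with $\vepsilon=(0,1)$; the other variances follow by reversing 1- and/or 2-morphisms as in \cref{obs:fib change variance}. For (i), the starting point is \cref{thm:pb crit for partial fib}. It gives a pullback of \itcats{}
\[ \DARoplax(\tEn) \simeq \tE \times_{\tB} \DARoplax(\tBd), \]
formed via $\ev_{0}$ on both sides. The definition of $\FeB(p)$ instead uses the pullback $\tE \times_{\tB} \DARoplax(\tBd)^{\flat[\vepsilon]}$ along $\ev_{0}$; on underlying \itcats{} this is the same object, so the two agree. Naturality in $p$ follows because the square \cref{eq:DAR fib pb} is natural in $p$ among morphisms of partial fibrations: such morphisms preserve the decorations $\tEn$, and hence induce maps on $\DARoplax(\blank)$.

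It then remains to match the decorations. In the pullback $\tE^{\flat\flat} \times_{\tB^{\flat\flat}} \DARoplax(\tBd)^{\flat[\vepsilon]}$, a 1-morphism is decorated \IFF{} its image in $\tE$ under $\ev_{0}$ is an equivalence and its image in $\DARoplax(\tBd)$ is a commuting square with decorated $\ev_{1}$-image. In $\DARoplax(\tEn)^{\flat[\vepsilon]}$, a 1-morphism is decorated \IFF{} it is a commuting square in $\tE$ with invertible source edge and decorated target edge. A square in $\DARoplax(\tEn)$ contains a $p$-cartesian 2-morphism, so it commutes \IFF{} its image commutes, since a cartesian 2-morphism over an equivalence is invertible. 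Given this, the bottom edge is an equivalence composed with cocartesian edges, which pins it down. I must also check that the decoration on $\tE$ in $\DARoplax(\tEn)^{\flat[\vepsilon]}$ refers to the decorations of $\tBd$ via $p$: the target edge is decorated in $\tE$ exactly when its image in $\tB$ is decorated. I expect this bookkeeping to be the main subtlety, and the statement will only hold if "decorated" in $\DARoplax(\tEn)^{\flat[\vepsilon]}$ is read with respect to the decoration pulled back from $\tBd$. Otherwise I would use the criterion that a square with invertible top and cocartesian verticals has its bottom edge lifting a decorated morphism. The 2-morphisms are handled in the same way, using $\ev_{0}$-image an equivalence together with the pullback on mapping \icats{} from \cref{DARlax mor pb}.

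For (ii), the unit was defined by pulling back $s_{0}^{*}\colon \tB \to \DARoplax(\tBd)$ along $p$. Under the identification in (i), the map $\tE \to \tE\times_{\tB}\DARoplax(\tBd)$ given by $(\id, s_{0}^{*}p)$ corresponds to $s_{0}^{*}\colon\tE\to\DARoplax(\tEn)$. To see this, I will check that composing $s_{0}^{*}$ with the two projections gives $\ev_{0}s_{0}^{*}=\id$ and $\DARoplax(p)s_{0}^{*}=s_{0}^{*}p$, both by naturality of $s_{0}^{*}$.

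For (iii), I need $\ev_{1}\colon\DARoplax(\tEn)^{\flat[\vepsilon]}\to\tEn$ to be decorated. A decorated 1-morphism has decorated $\ev_{1}$-image by definition, and likewise for 2-morphisms. This map lies over $\tBd$ because $p\ev_{1}$ is the structure map. Naturality in $p$ follows from the naturality of $\ev_{1}$. This gives a transformation $\epseB\colon\FeB\UeB\to\id$ in $\PFibe_{/\tBd}$; here I still need to confirm that $\ev_{1}$ is a morphism of partial fibrations, which holds by \cref{thm:ev is partial fib} together with the cancellation property of \cref{obs:epscofabs}.

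For (iv), by (ii) and (iii) the composite is $\ev_{1}\circ s_{0}^{*}=(s_{0}\circ d^{0})^{*}=\id$, naturally in $p$.
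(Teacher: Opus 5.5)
Your parts (ii) and (iv) match the paper's argument. Part (iii) is indeed immediate from the definition of $\DARoplax(\tEn)^{\flat[\vepsilon]}$ as stated.

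The gap is in (i). There you never actually compare the decorations, and your diagnosis of the difficulty is wrong. The statement holds with the decoration read literally: the bottom edge is decorated in $\tEn$, i.e.\ $p$-cocartesian over $\tBd_{(1)}$, and the $\ev_{1}$-component of a 2-morphism is $p$-cartesian over $\tBd_{(2)}$. No reinterpretation is needed. What is missing is cancellation.

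For 1-morphisms, take a commuting square with invertible top edge $e$, cocartesian vertical edges $f,g$ and bottom edge $h$. Then $h\circ f\simeq g\circ e$ is cocartesian, so $h$ is cocartesian because $f$ is. Hence $h$ is decorated in $\tEn$ exactly when $p(h)\in\tBd_{(1)}$. ``Pins it down'' does not say this; you could instead get it from uniqueness of cocartesian lifts as in \cref{obs:cart lifts unique}.

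For 2-morphisms, ``the same way'' is not enough either. Suppose $\Theta$ goes from $(u,v,\alpha)$ to $(u',v',\alpha')$ between objects $f\colon x\to y$ and $g\colon a\to b$. Suppose its components are $\theta_{0}\colon u\Rightarrow u'$, invertible, and $\theta_{1}\colon v\Rightarrow v'$, lying over $\tBd_{(2)}$. Then \cref{DARlax mor pb} gives $\alpha'\circ(\theta_{1}f)\simeq(g\theta_{0})\circ\alpha$ in $\tE(x,b)$. The steps are:
\begin{enumerate}
\item Since $\alpha,\alpha'$ are cartesian and $g\theta_{0}$ is invertible, cancellation in the cartesian fibration $\tE(x,b)\to\tB(px,pb)$ makes $\theta_{1}f$ weakly cartesian.
\item \cref{lem:cart 2mor detect after compose cocart mor}, using that $f$ is cocartesian, then makes $\theta_{1}$ weakly cartesian.
\item Uniqueness of cartesian lifts over the decorated 2-morphism $p(\theta_{1})$ makes $\theta_{1}$ $p$-cartesian.
\end{enumerate}

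Your hedge also makes (iii) inconsistent with (i). If you did switch to the decoration pulled back from $\tBd$, then $\ev_{1}$ preserving decorations would no longer be ``by definition''. It would need exactly the cancellation arguments above, and that is what the paper's proof of (iii) records.

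The paper avoids the hand-check in (i) altogether. It pulls back the decorated equivalence $\DARoplax(\tEn)^{\diafib}\simeq\DARoplax(\tBd)^{\diafib}\times_{\tBd}\tEn$ of \cref{lem:dec pb for dec fib} along $\tE^{\flat\flat}\to\tEn$; the proof of that proposition already contains these cancellation checks, via \cref{remark:dec fib explicit}. This directly gives the decorated equivalence $\DARoplax(\tEn)^{\flat[\vepsilon]}\simeq\tE^{\flat\flat}\times_{\tB^{\flat\flat}}\DARoplax(\tBd)^{\flat[\vepsilon]}$. Your direct route works once the cancellation steps are supplied; it just redoes that proposition in a special case.

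Finally, your extra check that $\ev_{1}$ is a morphism of partial fibrations is unnecessary, since $\PFibe_{/\tBd}$ is a full subcategory of $\DCatITsl{\tBd}$. The cancellation from \cref{obs:epscofabs} that you cite would in any case show that $\ev_{1}$ is a partial fibration, which is a different statement.
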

\begin{proof}
  We prove the case $\vepsilon = (0,1)$ to simplify the notation.
  Then \cref{lem:dec pb for dec fib} gives an equivalence of decorated
  \itcats{}
  \[ \DARoplax(\tEn)^{\diafib} \isoto \DARoplax(\tBd)^{\diafib}
    \times_{\tBd} \tEn. \]
  Pulling this back along $\tE^{\flat\flat} \to \tEn$ then gives the
  desired equivalence
  \[ \DARoplax(\tEn)^{\flat[(0,1)]} \isoto
    \FFree^{(0,1)}_{\tBd}(p)\]
  in (i).

  Part (ii) is immediate from the commutative diagram
  \[
    \begin{tikzcd}
     \tE \ar[r, "s_{0}^{*}"'] \ar[d, "p"'] \ar[rr, bend left, "="] & \DARoplax(\tEn)  \ar[d,
     "\DARoplax(p)"] \ar[r, "\ev_{0}"'] & \tE \ar[d, "p"] \\
     \tB \ar[r, "s_{0}^{*}"] \ar[rr, bend right, "="']  & \DARoplax(\tBd) \ar[r, "\ev_{0}"] & \tB,
    \end{tikzcd}
  \]
  where the right-hand square is a pullback.

  For (iii), we observe that for a decorated 1-morphism in
  $\DARoplax(\tEn)^{\flat[(0,1)]}$, which is of the form
  \[
    \begin{tikzcd}
     \bullet \ar[r, equals] \ar[d, mid vert] & \bullet  \ar[d, mid vert] \\
     \bullet \ar[r]  & \bullet,
    \end{tikzcd}
  \]
  the cancellation property of cocartesian morphisms implies that the
  bottom horizontal morphism must also be cocartesian, so that
  $\ev_{1}$ preserves decorated 1-morphisms. The cancellation property
  of cartesian 2-morphisms similarly implies that $\ev_{1}$ also
  preserves decorated 2-morphisms, which completes the proof.

  Part (iv) now follows from the fact that under the identication of
  (ii), the composite $\tE \to \FFree^{(0,1)}_{\tBd}(p) \to \tE$ is identified with
  \[ \tE \xto{s_{0}^{*}} \DARoplax(\tEn) \xto{\ev_{1}} \tE,\]
  which is manifestly the identity.
\end{proof}

It remains to check the other triangle identity, for which we need to
identify the composite $\FeB\UeB\FeB$ more explicitly:
\begin{observation}
  Suppose $\ve = (i,j)$ and set
  \[ i' =
    \begin{cases}
      0, & i = 0,\\
      2, & i = 1.
    \end{cases}
  \]
  We define $\DFUN([2]^{\sharp}, \tBd)^{\velax,\flat[\vepsilon]}$ to
  be equipped with the decorations where
  \begin{itemize}
  \item a 1-morphism is decorated if it is a strong natural
    transformation such that its images under $\ev_{i'}$ and $\ev_{1}$
    are invertible and its image under $\ev_{2-i'}$ is decorated,
  \item a 2-morphism is decorated if its images under $\ev_{i'}$ and $\ev_{1}$
    are invertible and its image under $\ev_{2-i'}$ is decorated.
  \end{itemize}
  For $p \colon \tE \to \tB$, iterating the definition of
  $\FeB(\blank)$ as a pullback then gives an equivalence
  \begin{equation}
    \label{eq:free of free}
    \begin{split}
      \FeB\UeB\FeB(p)
      & \simeq (\tE \times_{\tB} \DARelax(\tBd))^{\flat\flat}
        \times_{\tB^{\flat\flat}} \DARelax(\tBd)^{\flat[\vepsilon]} \\
      & \simeq
        \tE^{\flat\flat}\times_{\tB^{\flat\flat}} \DFUN([2]^{\sharp},
        \tBd)^{\velax,\flat[\vepsilon]},
    \end{split}
  \end{equation}
  where the pullback is taken via $\ev_{i'}$ and the map to $\tB$ is
  given by $\ev_{2-i'}$. On the other hand, since $\FeB(p)$ is an
  $\vepsilon$-fibration, \cref{propn:free fib unit counit constr}
  implies that we have an equivalence
  \[ \FeB\UeB\FeB(p) \simeq \DARelax(\FeB(p))^{\flat[\vepsilon]}.\]
  Unpacking the definition of $\FeB(p)$, we see that the right-hand side is
  equivalent to the pullback
  \[ \DARelax(\tE^{\flat\flat})^{\flat[\vepsilon]}
    \times_{\DARelax(\tB^{\flat\flat})^{\flat[\vepsilon]}}
    \DARelax(\DARelax(\tBd)^{\flat[\vepsilon]})^{\flat[\vepsilon]},\]
  which we can identify with
  \[ \tE^{\flat\flat} \times_{\tB^{\flat\flat}} \DFUN([2]^{\sharp},
    \tBd)^{\velax,\flat[\vepsilon]}\]
  using \cref{propn:it arelax pb}. We claim that the resulting
  equivalence
  \begin{equation}
    \label{eq:free of free v2}
    \begin{split}
      \FeB\UeB\FeB(p) & \simeq \DARelax(\FeB(p))^{\flat[\vepsilon]} \\
                      & \simeq \tE^{\flat\flat} \times_{\tB^{\flat\flat}} \DFUN([2]^{\sharp},
    \tBd)^{\velax,\flat[\vepsilon]},
    \end{split}
  \end{equation}
  is the same as \cref{eq:free of free}. Indeed, this second
  equivalence arises from the commutative
  diagram
  \[
    \begin{tikzcd}
     \DARelax(\FeB(p))^{\flat[\vepsilon]} \ar[r] \ar[d] &
     \DARelax(\DARelax(\tBd)^{\flat[\vepsilon]})^{\flat[\vepsilon]}
     \ar[d] \ar[r] & \DARelax(\tBd)^{\flat[\vepsilon]} \ar[d] \\
     \FeB(p)^{\flat\flat} \ar[r] \ar[d] & \DARelax(\tBd)^{\flat\flat}
     \ar[r] \ar[d] & \tB^{\flat\flat} \\
     \tE^{\flat\flat} \ar[r] & \tB^{\flat\flat},
    \end{tikzcd}
  \]
  where the outer square in the top row is a pullback by
  \cref{propn:free fib unit counit constr}, the top right square by 
  \cref{propn:it arelax pb}, and the bottom square by the definition
  of $\FeB(p)$. Our equivalence \cref{eq:free of free v2} then arises
  from the composite square in the left column being a pullback. But
  this composite square can also be factored as
  \[
    \begin{tikzcd}
     \DARelax(\FeB(p))^{\flat[\vepsilon]} \ar[r] \ar[d] &
     \DARelax(\DARelax(\tBd)^{\flat[\vepsilon]})^{\flat[\vepsilon]}
     \ar[d]  \\
     \DARelax(\tE^{\flat\flat})^{\flat[\vepsilon]} \ar[r] \ar[d, "\sim"] & \DARelax(\tB^{\flat\flat})^{\flat[\vepsilon]}
      \ar[d, "\sim"] \\
     \tE^{\flat\flat} \ar[r] & \tB^{\flat\flat},
    \end{tikzcd}
  \]
  where the lower vertical maps are both equivalences, and this is the
  square that gives the first equivalence \cref{eq:free of free}.
\end{observation}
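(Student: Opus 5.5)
We plan to compare the two equivalences by showing that each comes from the same pullback square with bottom edge $\tE^{\flat\flat} \to \tB^{\flat\flat}$ and top-right corner $\DFUN([2]^{\sharp},\tBd)^{\velax,\flat[\vepsilon]}$. Since the identification of a pullback is unique up to contractible choice once the square is fixed, it is then enough to check that the two constructions produce equivalent squares. We treat $\vepsilon=(0,1)$ for concreteness; the other variances are handled the same way.

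\textbf{The first equivalence.} We build \cref{eq:free of free} by pasting the pullback square defining $\FeB(p)$ with the one defining $\FeB$ of $\UeB\FeB(p)$. This presents $\FeB\UeB\FeB(p)$ as $\tE^{\flat\flat}$ pulled back along the composite
\[ \DARelax(\tBd)\times_{\tB}\DARelax(\tBd) \to \tB. \]
We then identify this iterated pullback with $\DFUN([2]^{\sharp},\tBd)^{\velax}$. For this we use that $\DFUN(\blank,\tBd)^{\velax}$ sends the pushout $[2]^{\sharp}\simeq[1]^{\sharp}\amalg_{[0]}[1]^{\sharp}$ to a pullback. The decorations must then be matched by hand against the $\flat[\vepsilon]$-decoration defined in the statement: the condition ``strong, invertible under $\ev_{i'}$ and $\ev_1$'' is exactly what two successive $\flat[\vepsilon]$-conditions impose.

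\textbf{The second equivalence.} We build \cref{eq:free of free v2} in three steps.
\begin{enumerate}
\item Apply \cref{propn:free fib unit counit constr}(i) to the partial fibration $\FeB(p)$.
\item Use that $\DARelax(\blank)$ preserves pullbacks, being a right adjoint, to rewrite $\DARelax(\FeB(p))$ as the displayed pullback of arrow \itcats{}.
\item Apply \cref{propn:it arelax pb} to the factor $\DARelax(\DARelax(\tBd)^{\flat[\vepsilon]})$.
\end{enumerate}
These steps assemble into the three-square diagram in the statement. The top outer square, the top right square and the bottom square are pullbacks by the three cited results, so the composite left column is a pullback square, and that square is what gives \cref{eq:free of free v2}.

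\textbf{Comparison.} We refactor that left-column composite through
\[ \DARelax(\tE^{\flat\flat})^{\flat[\vepsilon]} \to \DARelax(\tB^{\flat\flat})^{\flat[\vepsilon]}. \]
In a minimally decorated \itcat{}, the only decorated 1-morphisms are equivalences and the only decorated 2-morphisms are identities. Hence $\ev_0\colon \DARelax(\tX^{\flat\flat})\to\tX$ is an equivalence, inverse to $s_0^{*}$, so both lower vertical maps in the refactored diagram are equivalences. Consequently the composite square agrees with the upper square, which is precisely the pasting used for \cref{eq:free of free}.

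\textbf{Main obstacle.} We expect the hardest part to be bookkeeping rather than any new idea. One must verify that the decorations transported through \cref{propn:free fib unit counit constr} and \cref{propn:it arelax pb} coincide with the explicitly defined $\flat[\vepsilon]$-decoration on $\DFUN([2]^{\sharp},\tBd)^{\velax}$. One must also verify that the map $\DARelax(\FeB(p))\to\DARelax(\tE^{\flat\flat})$ induced by the projection is compatible with the one used in the first construction. We would check both on 1- and 2-morphisms separately, using that $\Ud$ is faithful (\cref{cor:forgetfaithful}): once the underlying squares agree, agreement of decorations is a property and not extra structure.
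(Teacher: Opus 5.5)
Your proposal is correct and follows the paper's argument essentially verbatim. In both, the two equivalences are read off from the same composite pullback square over $\tE^{\flat\flat}\to\tB^{\flat\flat}$. That square is factored once through $\FeB(p)^{\flat\flat}\to\DARelax(\tBd)^{\flat\flat}$, and once through $\DARelax(\tE^{\flat\flat})^{\flat[\vepsilon]}\to\DARelax(\tB^{\flat\flat})^{\flat[\vepsilon]}$, whose lower vertical maps are equivalences.

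There are two harmless slips. In $\tX^{\flat\flat}$ the decorated 2-morphisms are all the invertible ones, not only identities. Also, the first factor of the iterated pullback imposes a $\flat\flat$-condition rather than a second $\flat[\vepsilon]$-condition, though the resulting decoration on $\DFUN([2]^{\sharp},\tBd)^{\velax}$ is the same.
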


\begin{propn}\label{propn: FUF ids}
  Under the equivalence \cref{eq:free of free}, we have:
  \begin{enumerate}[(i)]
  \item the map
  \[\epseB\FeB \colon 
    \FeB\UeB\FeB(p)
    \to \FeB(p)\]
  corresponds to the pullback of the composition functor
  \[ d_{1}^{*} \colon \DFUN([2]^{\sharp},
    \tBd)^{\velax,\flat[\vepsilon]} \to
    \DARelax(\tBd)^{\flat[\vepsilon]},\]
\item and the map
  \[\FeB\etaeB \colon
    \FeB(p) \to 
    \FeB\UeB\FeB(p)
  \]
  corresponds to the pullback of the degeneracy functor
  \[ s_{i}^{*} \colon \DARelax(\tBd) \to \DFUN([2]^{\sharp},
    \tBd)^{\velax}.\]
\item The composite
  \[\FeB(p) \xto{\FeB\etaeB} 
    \FeB\UeB\FeB(p) \xto{\epseB\FeB} \FeB(p) \]
  is the identity.
\end{enumerate}
\end{propn}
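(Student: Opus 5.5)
We work with $\vepsilon = (0,1)$ throughout, so $i = 0$, $i' = 0$, the pullback defining $\FeB(p)$ is taken along $\ev_{0}$, and the structure map is $\ev_{1}$. The other variances follow by reversing 1- and/or 2-morphisms. Both identifications (i) and (ii) will be read off from the two descriptions of $\FeB\UeB\FeB(p)$ in the preceding observation. Part (iii) then reduces to a simplicial identity.

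For (i), we use the second description \cref{eq:free of free v2}. By \cref{propn:free fib unit counit constr}(iii), applied to the partial fibration $\FeB(p)$, the map $\epseB\FeB$ is the functor $\ev_{1} \colon \DARelax(\FeB(p))^{\flat[\vepsilon]} \to \FeB(p)$. In the commutative diagram of the preceding observation, this map lies over the functor
\[ \ev_{1} \colon \DARelax(\DARelax(\tBd)^{\flat[\vepsilon]}) \to \DARelax(\tBd), \]
and it restricts to the identity on the $\tE^{\flat\flat}$ factor. That factor records the $\ev_{0}$-vertex $0 \in [2]$, which both maps preserve.

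So the main step is to check that under the equivalence of \cref{propn:it arelax pb} this outer $\ev_{1}$ corresponds to $d_{1}^{*}$. An object of $\DARelax(\DARelax(\tBd)^{\flat[\vepsilon]})$ is an oplax square whose top edge is invertible and whose 2-cell is trivial. \cref{propn:it arelax pb} identifies such a square with a 2-simplex $0 \to 1 \to 2$: the left edge is $0\to 1$ and the bottom edge is $1 \to 2$. Its target object is the right edge, which the commuting square identifies with the composite $0 \to 2$, i.e. the restriction along $d_{1}$. I would verify this at the level of the map $p \colon [1]\otimes[1] \to [2]$ used in the proof of \cref{propn:it arelax pb}: the right vertical edge $\{1\}\times[1]$ is sent to $\{0,2\}$, so $\ev_{1}$ on the outer arrow corresponds to $d_{1}^{*}$. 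Since $p^{*}$ is a locally full inclusion, this identification on the underlying functor also determines it on 1- and 2-morphisms. We must also confirm that the decorations match, which is immediate from the definitions of $\flat[\vepsilon]$.

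For (ii), we use the first description \cref{eq:free of free}. By \cref{propn:free fib unit counit constr}(ii), $\etaeB(p)$ is $s_{0}^{*}$ applied in the $\DARelax(\tBd)$ factor. Applying $\FeB$ amounts to pulling back along $\ev_{0}$ of a further copy of $\DARelax(\tBd)$. Iterating the pullbacks, the inner arrow $0\to1$ becomes degenerate, so the map is $s_{0}^{*} = s_{i}^{*}$ on $\DFUN([2]^{\sharp},\tBd)$. I expect the main obstacle to be confirming that the two equivalences used in (i) and (ii) agree, since one description underlies (i) and the other underlies (ii). The preceding observation supplies exactly this.

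Finally, (iii) follows because $d_{1}s_{0} = \id$ as maps $[1]\to[1]$, so $s_{0}^{*}$ followed by $d_{1}^{*}$ is the identity on $\DARelax(\tBd)$. This identity is compatible with the pullback along $\tE^{\flat\flat}$, so pulling back gives the identity of $\FeB(p)$.
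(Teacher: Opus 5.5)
Your proposal is correct and follows the paper's own argument: (i) is read off from \cref{eq:free of free v2}, where $\ev_{1}$ is the identity on $\tE^{\flat\flat}$ and restriction along $d_{1}$ on $[2]$-diagrams; (ii) is read off from \cref{eq:free of free}, with the preceding observation guaranteeing that the two equivalences agree; and (iii) follows from $s_{0}\circ d_{1}=\id_{[1]}$. Your check via $[1]\otimes[1]\to[2]$ (which sends $\{1\}\times[1]$ to $\{0,2\}$) simply spells out the ``unpacking'' the paper leaves implicit; functoriality of restriction already identifies $\ev_{1}$ with $d_{1}^{*}$ at all levels, so the remark about locally full inclusions is unnecessary.
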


\begin{proof}
  Assume $\vepsilon = (0,1)$.
  Unpacking the equivalence \cref{eq:free of free v2} we see that \[\ev_{1} \colon
  \DARelax(\FeB(p))^{\flat[\vepsilon]} \to \FeB(p)\] is given by the
  identity on $\tE^{\flat\flat}$ and $\tB^{\flat\flat}$ and by $d_{1}$
  on $\DFUN([2]^{\sharp}, \tBd)$, as required for (i).
  On the other hand, unpacking \cref{eq:free of free} we get (ii).
  Putting these together, (iii) is then immediate.
\end{proof}

\begin{proof}[Proof of \cref{thm:free partial fib}]
  It follows from \cref{propn: FUF ids} and \cref{propn:free fib unit
    counit constr} that the triangle identities hold, so that the
  natural transformations $\etaeB$ and $\epseB$ are the unit and
  counit of an adjunction $\FeB \dashv \UeB$.
\end{proof}

\begin{observation}\label{obs:upgrade free fib adj}
  The functor $\UeB \colon \PFibe_{/\tBd} \to \CatITsl{\tB}$ fits in a
  commutative triangle
    \[
    \begin{tikzcd}
      {} & \CatI \ar[dl, "(\blank)^{\flat\flat} \times \tBd"'] \ar[dr,
      "(\blank) \times \tB"] \\
      \PFibe_{/\tBd} \ar[rr, "\UeB"] & & \CatITsl{\tB}.
    \end{tikzcd}
  \]
  Moreover, the canonical map
  $\FeB(\tK \times p) \to \tK^{\flat\flat} \times \FeB(p)$ is an
  equivalence for any \itcat{} $\tK$. It therefore follows from \cref{propn:upgrade
    adjn enr} that the adjunction $\FeB \dashv \UeB$ upgrades to an
  adjunction of \itcats{}. In fact, replacing $\CatI$ by $\CatIT$ in
  the diagram above, this same argument shows that we can even get an
  adjunction of $(\infty,3)$-categories --- in particular, for any
  functor $p \colon \tC \to \tB$ and any partial $\vepsilon$-fibration
  $q \colon \tEd \to \tBd$, the adjunction induces equivalences of
  \itcats{}
  \[ \FUN_{/\tB}(\tC, \tE) \simeq \DFUN_{/\tBd}(\FeB(p),
    \tEd).\]
\end{observation}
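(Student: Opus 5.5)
The plan is to deduce the upgrade from \cref{propn:upgrade adjn enr}(i), applied to the left adjoint $\FeB$. This needs two inputs. First, both $\PFibe_{/\tBd}$ and $\CatITsl{\tB}$ must be $\CatI$-modules of the form in \cref{propn:upgrade functor}, with actions $\oK \mapsto \oK^{\flat\flat}\times(\blank)$ and $\oK \mapsto \oK\times(\blank)$ respectively. Second, $\FeB$ must commute with these actions, and then it upgrades to a functor of \itcats{} and the adjunction $\FeB \dashv \UeB$ upgrades with it.

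\textbf{Step 1: the module structures.} On $\CatITsl{\tB}$ the action is the usual one, and its right adjoints are the mapping \itcats{} of $\CATITsl{\tB}$. On $\PFibe_{/\tBd}$ there are two things to check.
\begin{itemize}
\item For a partial $\ve$-fibration $\tEd \to \tBd$, the product $\oK^{\flat\flat}\times\tEd \to \tBd$ is again a partial $\ve$-fibration. The projection $\oK^{\flat\flat}\times \tEd \to \tEd$ is a base change of $\oK^{\flat\flat}\to *$, hence a partial $\ve$-fibration by \cref{ex:proj par fib}, and partial $\ve$-fibrations compose (\cref{obs:epscofabs}).
\item For each $\tEd$, the functor $\oK \mapsto \oK^{\flat\flat}\times\tEd$ has a right adjoint. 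I would take the sub-\itcat{} of $\Ud\DFUN_{/\tBd}(\tEd,\tFd)$ on morphisms of partial fibrations; the adjunction then follows from $(\blank)^{\flat\flat}\dashv \Ud$ together with cartesian closedness of $\DCatIT$.
\end{itemize}
The triangle in the statement commutes because $\Ud(\oK^{\flat\flat}\times\tBd)\simeq \oK\times\tB$. Moreover, a map $\oK^{\flat\flat}\times \tEd \to \tFd$ over $\tBd$ that is a map of partial fibrations is the same as a map $\oK \to \Ud\DFUN_{/\tBd}(\tEd,\tFd)$ landing in fibration-preserving functors, because the (co)cartesian (2-)morphisms of the product are those with an equivalence in the $\oK$-component.

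\textbf{Step 2: $\FeB$ commutes with the actions.} By \cref{cons:free},
\[ \FeB(\oK\times p) = (\oK\times\tE)^{\flat\flat}\times_{\tB^{\flat\flat}}\DARelax(\tBd)^{\flat[\ve]}. \]
Here $(\blank)^{\flat\flat}$ preserves products, since $\tA_{\mathrm{eq}}$ and $\tA^{\leq 1}$ commute with products. Commuting the product past the fibre product then gives $\oK^{\flat\flat}\times\FeB(p)$. I would check that the canonical map is exactly this identification, using that $s_0^*$ and $\ev_{1-i}$ are untouched by the extra factor.

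\textbf{Step 3: the $(\infty,3)$-version.} I would repeat Steps 1 and 2 verbatim with $\CatIT$ in place of $\CatI$; nothing in them used that $\oK$ is an \icat{}. This yields enriched adjunctions, and evaluating them on mapping objects gives the stated equivalences $\FUN_{/\tB}(\tC,\tE)\simeq \DFUN_{/\tBd}(\FeB(p),\tEd)$.

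I expect the main obstacle to be the right-adjoint and enrichment check in Step 1, namely identifying the mapping objects of $\PFibe_{/\tBd}$ correctly so that \cref{propn:upgrade functor} applies. Step 2 is routine manipulation of pullbacks.
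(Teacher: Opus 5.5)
Your two computations are the right ingredients, but you apply them through the wrong half of \cref{propn:upgrade adjn enr}.

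\textbf{Why part (i) does not apply to $\FeB$.} Part (i) requires the functor being upgraded to be ``as in \cref{propn:upgrade functor}'', \ie{} product-preserving and compatible with the functors out of $\CatI$. $\FeB$ satisfies neither condition. For instance, $\FeB(\id_{\tB})$ is $\ev_{1-i} \colon \DARelax(\tBd)^{\flat[\vepsilon]} \to \tBd$, which in general is not the terminal object of $\PFibe_{/\tBd}$. More generally, $\FeB(\oK \times \tB) \simeq \oK^{\flat\flat} \times \DARelax(\tBd)^{\flat[\vepsilon]}$, not $\oK^{\flat\flat} \times \tBd$.

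The triangle you verify in Step 1 is the triangle for $\UeB$, not for $\FeB$. What Step 2 shows is that $\FeB$ commutes with the two actions objectwise. To upgrade a non-product-preserving functor on that basis, you would need a version of \cref{propn:upgrade functor} for functors carrying a \emph{coherent} $\CatI$-linear structure, and your objectwise equivalence would still have to be promoted to such a structure. The cheap source of that structure is the mate of the canonical linearity of $\UeB$. Its comparison map $\FeB(\oK \times p) \to \oK^{\flat\flat} \times \FeB(p)$ is exactly the ``canonical map'' in the statement, so making your route rigorous leads back to the argument below.

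\textbf{The repair.} Apply \cref{propn:upgrade adjn enr}(ii) with $F = \UeB$ and $L = \FeB$, which is what the paper's argument amounts to. Its hypotheses are met as follows.
\begin{enumerate}
\item $\UeB$ preserves products, since it is a right adjoint and products in $\PFibe_{/\tBd}$ are fibre products over $\tBd$. It also fits in the triangle, so it upgrades canonically to a functor of \itcats{}.
\item Your Step 2 shows that the canonical map $\FeB([1] \times p) \to [1]^{\flat\flat} \times \FeB(p)$ is an equivalence.
\item Tensoring by $[1]$ must have a right adjoint on both sides. This is clear on $\CatITsl{\tB}$. On $\PFibe_{/\tBd}$, the cotensor of $\tQd$ computed in $\DCatITsl{\tBd}$ is $\tBd \times_{\DFUN([1]^{\flat\flat},\tBd)} \DFUN([1]^{\flat\flat},\tQd)$. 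This is again a partial $\ve$-fibration, because $\DFUN([1]^{\flat\flat},\blank)$ preserves partial fibrations and partial fibrations are stable under base change.
\end{enumerate}
It follows that $\UeB$ preserves cotensors by $[1]$. By the dual of \cref{propn:upgrade adjoint}, $\UeB$ is then a right adjoint of \itcats{}, and its left adjoint is the upgrade of $\FeB$. The $(\infty,3)$-categorical version is the same argument with $\CatIT$ and $\tK \times p$ in place of $\CatI$ and $\oK \times p$.
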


As a first application of our description of free fibrations, we
obtain the following characterization of representable $(1,0)$-fibrations:
\begin{propn}\label{propn:rep cart fib cond}
  Let $p \colon \tE \to \tB$ be a 1-fibred $(1,0)$-fibration. Then the
  following are equivalent for an object $e \in \tE$ over $b = p(e)$
  in $\tB$:
  \begin{enumerate}[(1)]
  \item There exists an equivalence
    \[
      \begin{tikzcd}
       \tB_{\upslash b} \ar[rr, "\sim"] \ar[dr] & & \tE \ar[dl, "p"] \\
        & \tB
      \end{tikzcd}
    \]
    that takes $\id_{b}$ to $e$.
  \item The morphism $\{e\}^{\flat\flat} \to \tEn$ is a
    $(1,0)$-equivalence over $\tB^{\sharp\sharp}$.
  \item The morphism $\{e\}^{\flat\flat} \to \tEn$ is a
    $(1,0)$-equivalence over $\tE^{\sharp\sharp}$.
  \item For every object $x \in \tE$ there exists a cartesian morphism
    $x \to e$, and every cartesian morphism $x \to e$ is an initial
    object of $\tE(x,e)$.
  \end{enumerate}
  If these conditions hold, we say that $e$ exhibits $p$ as a
  representable $(0,1)$-fibration, which is represented by $b \in \tB$.
\end{propn}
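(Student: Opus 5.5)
The plan is to run everything through the free $(1,0)$-fibration of \cref{thm:free partial fib}, taken over $\tB^{\sharp\sharp}$ with $\ve=(1,0)$. In that case $\DAR^{\oplax}(\tB)$ is evaluated at $1$ in the pullback and at $0$ for the projection. So the free $(1,0)$-fibration on $\{b\}\to\tB$ is $\{b\}\times_{\tB}\ARoplax(\tB) \xto{\ev_0} \tB$, which is $\tB_{\upslash b}\to \tB$. The object $\id_b$ is the image of the unit $\{b\}\to \tB_{\upslash b}$. The adjunction then gives, for any $(1,0)$-fibration $\tE\to\tB$, an equivalence $\FUN_{/\tB}(\{b\},\tE)\simeq\FIB^{(1,0)}_{/\tB}(\tB_{\upslash b},\tE)$. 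Hence $e\in\tE_b$ determines an essentially unique morphism of fibrations $\phi_e\colon \tB_{\upslash b}\to\tE$ sending $\id_b$ to $e$. Moreover, the unit map $\{b\}^{\flat\flat}\to(\tB_{\upslash b})^{\natural}$ is a $(1,0)$-equivalence over $\tB^{\sharp\sharp}$, since it is the unit of the localization $L^{(1,0)}_{\tB^{\sharp\sharp}}$ onto $\PFib^{(1,0)}_{/\tB^{\sharp\sharp}}$.

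For (1)$\Leftrightarrow$(2): the map $\{e\}^{\flat\flat}\to\tEn$ factors as the unit $\{b\}^{\flat\flat}\to\tB_{\upslash b}^{\natural}$ followed by $\phi_e$. By 2-out-of-3, it is a $(1,0)$-equivalence over $\tB^{\sharp\sharp}$ \IFF{} $\phi_e$ is. Since $\phi_e$ is a map between local objects, that holds \IFF{} $\phi_e$ is an equivalence. An arbitrary equivalence as in (1) preserves cartesian morphisms and takes $\id_b\mapsto e$, so by the uniqueness above it agrees with $\phi_e$. For (2)$\Leftrightarrow$(3), I would use \cref{obs:cofib vs equiv}: $p\colon\tEn\to\tB^{\sharp\sharp}$ is a partial $(1,0)$-fibration, so $p_!$ both preserves and reflects $(1,0)$-equivalences. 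Viewing $\{e\}\to\tEn$ over $\tEn$ and applying $p_!$ gives the map over $\tB^{\sharp\sharp}$. The only bookkeeping is that $\tEn$ and $\tE^{\sharp\sharp}$ have the same local objects relevant here. I would handle this by noting that a $(1,0)$-equivalence over $\tE^{\sharp\sharp}$ is equivalently detected by mapping into partial fibrations over $\tEn$, via cancellation in \cref{obs:epscofabs}. If needed, I would instead read (3) with $\tEn$ as the base.

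The main work is (1)$\Leftrightarrow$(4), which I plan to check directly on $\phi_e$. In $\tB_{\upslash b}$ (1-fibred), a morphism $u\to\id_b$ is cartesian \IFF{} it is a commuting triangle, and the fibre over $x$ is $\tB(x,b)$. So for every $u\colon x\to b$ there is a cartesian morphism $u\to \id_b$, namely the triangle with trivial 2-cell. That triangle is initial in $\tB_{\upslash b}(u,\id_b)\simeq$ the oriented slice $\tB(x,b)_{u/}$ (via \cref{ARlax mor pb}). This gives (1)$\Rightarrow$(4). Conversely, assume (4). Since $\phi_e$ is a map of fibrations over $\tB$, it is an equivalence \IFF{} it is a fibrewise equivalence; this is where straightening (\cref{thm:str}) is used. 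On the fibre over $x$, $\phi_e$ is the functor $\tB(x,b)\to\tE_x$ given by $f\mapsto f^*e$, cartesian transport of $e$. I expect the hard step to be showing that this functor is an equivalence using (4). Essential surjectivity comes from the existence of cartesian maps $y\to e$ for $y\in\tE_x$. For full faithfulness I would compute $\tE_x(f^*e,g^*e)$ from the pullback defining cartesian morphisms, $\tE(f^*e,e)\simeq\tE(f^*e,g^*e)\times_{\tB(x,x)}\ldots$. Combined with initiality of the cartesian arrow $f^*e\to e$ in $\tE(f^*e,e)$, this should identify it with $\tB(x,b)(f,g)$. This mapping-space computation is the step I expect to need the most care.
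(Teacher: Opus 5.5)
Your proof is correct in outline. For (1)$\Leftrightarrow$(2) it is the paper's argument: the unique extension $\phi_e$ out of the free $(1,0)$-fibration $\tB_{\upslash b}$, followed by 2-out-of-3.

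For (2)$\Leftrightarrow$(3) you do not need to reinterpret (3). Composition with $\tEn\to\tB^{\sharp\sharp}$ reflects $(1,0)$-equivalences. It factors as composition with the identity $\tEn\to\tE^{\sharp\sharp}$, followed by composition with $\tE^{\sharp\sharp}\to\tB^{\sharp\sharp}$, and the latter preserves $(1,0)$-equivalences. Hence the first functor detects them. This is the final remark in the third point of \cref{obs:cofib vs equiv}, and it is exactly what the paper uses.

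The genuine difference is in (4)$\Rightarrow$(1). The paper does not pass to fibres. It shows that $\phi_e$ is essentially surjective and fully faithful on the total space, viewing $\tB_{\upslash b}(f,g)\to\tE(f^{*}e,g^{*}e)$ as a map of left fibrations over $\tB(b',b'')$. It treats $g=\id_b$ using $\tB_{\upslash b}(f,\id_b)\simeq\tB(b',b)_{f/}$ from \cref{ARlax mor pb}, and reduces general $g$ to this case by a cube built from the cartesian morphism $g\to\id_b$. You instead use straightening to reduce to fibrewise equivalences, and the cartesian property of $\bar g\colon g^{*}e\to e$ takes the place of the cube.

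Two corrections are needed for your sketch to go through:
\begin{enumerate}
\item Your pullback is backwards. It should read $\tE(f^{*}e,g^{*}e)\simeq\tE(f^{*}e,e)\times_{\tB(x,b)}\tB(x,x)$, so that $\tE_x(f^{*}e,g^{*}e)$ is the fibre over $g$ of $\tE(f^{*}e,e)\to\tB(x,b)$.
\item You do not say why initiality determines these fibres. The reason is that $\tE(f^{*}e,e)\to\tB(x,b)$ is a left fibration, since $p$ is a 1-fibred $(1,0)$-fibration, and so is $\tB(x,b)_{f/}\to\tB(x,b)$. The map between them induced by $\phi_e$ is therefore an equivalence \IFF{} it sends $\id_f$ to an initial object, namely $\bar f$.
\end{enumerate}
Comparing the two left fibrations via $\phi_e$, rather than identifying $\tE_x(f^{*}e,g^{*}e)$ with $\tB(x,b)(f,g)$ abstractly, also ensures that the equivalence you get is the one induced by $\phi_e$.

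Your route isolates a purely $\infty$-categorical check in each fibre, at the cost of invoking straightening to see that fibrewise equivalences of fibrations are equivalences. The paper's route stays in the total space and obtains both directions of (1)$\Leftrightarrow$(4) from a single computation, whereas you prove (1)$\Rightarrow$(4) separately (correctly).
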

\begin{proof}
  By \cref{thm:free partial fib}, the commutative triangle
  \[
    \begin{tikzcd}
     * \ar[rr, "e"] \ar[dr, "b"'] & & \tE \ar[dl, "p"] \\
      & \tB
    \end{tikzcd}
  \]
  extends uniquely to a morphism of $(1,0)$-fibrations out of the free
  fibration on $* \xto{b} \tB$, \ie{} to
  \[
    \begin{tikzcd}
     \tB_{\upslash b} \ar[rr, "F"] \ar[dr] & & \tE \ar[dl, "p"] \\
      & \tB.
    \end{tikzcd}
  \]
  If the first condition holds, the equivalence in question must
  therefore be this functor $F$, while $F$ is an equivalence precisely
  if $\{e\}^{\flat\flat} \to \tEn$ is a $(1,0)$-equivalence over
  $\tB^{\sharp\sharp}$, since $\tB_{\upslash b} \to \tB$ is the free
  $(1,0)$-fibration on $\{e\}$. This shows the first two conditions
  are equivalent, while the second and third are equivalent since
  \cref{obs:cofib vs equiv} implies firstly that (2) is equivalent to
  having a $(1,0)$-equivalence over $\tEn$, and secondly
  that these are detected over $\tE^{\sharp\sharp}$ since the
  fibration factors through this.

  To obtain the final condition, note that the functor $F$ takes an
  object $f \colon b' \to b$ to the source $f^{*}e$ of the cartesian
  morphism over $f$ with target $e$, so that $F$ is essentially
  surjective \IFF{} every object of $\tE$ is the source of a cartesian
  morphism to $e$. If this holds, $F$ is an equivalence \IFF{} it is
  fully faithful, which means that for $f \colon b' \to b$ and
  $g \colon b'' \to b$ in $\tB$ the horizontal morphism in the
  commutative triangle (where both downward maps are left fibrations)
  \[
    \begin{tikzcd}
      \tB_{\upslash b}(f,g) \ar[rr] \ar[dr, ] & & \tE(f^{*}e,g^{*}e) \ar[dl] \\
      & \tB(b',b'')
    \end{tikzcd}
  \]
  is an equivalence. We first consider the case where $g =
  \id_{b}$. Then we can use \cref{ARlax mor pb} to identify $\tB_{\upslash
    b}(f,\id_{b})$ as $\tB(b',b)_{f/}$, so we get an equivalence
  \IFF{} the initial object $f$ is mapped to an initial object in
  $\tE(f^{*}e,e)$; here $f$ is sent to the cartesian morphism $\bar{f}
   \colon f^{*}e
  \to e$, so this precisely corresponds to the condition that
  $\bar{f}$ is an initial object. It remains to see that this
  gives an equivalence also for general targets $g$, for which we
  observe that $g$ gives a cartesian morphism $g \to \id_{b}$, and
  composition with this and its image in $\tE$ gives a commutative cube
  \[
    \begin{tikzcd}[row sep=small,column sep=small]
     \tB_{\upslash b}(f,g) \ar[rr] \ar[dr] \ar[dd] & & \tE(f^{*}e,g^{*}e) \ar[dr] \ar[dd] \\
      & \tB_{\upslash b}(f,\id_{b}) \ar[rr,crossing over, "\sim"{near start}]  & & \tE(f^{*}e,e) \ar[dd] \\
     \tB(b',b'') \ar[rr,"=" near end] \ar[dr] & & \tB(b',b'') \ar[dr] \\
      & \tB(b',b) \ar[rr,"="] \ar[uu,leftarrow,crossing over]  & & \tB(b',b)
    \end{tikzcd}
  \]
  Here the bottom, left and right faces are pullbacks, hence so is the
  top face, and so the top horizontal map is indeed an equivalence, as
  required.
\end{proof}

\begin{observation}\label{rep fib reformulate}
  Under straightening, if $F \colon \tB \to \CATI$ is the functor
  corresponding to the 1-fibred $(1,0)$-fibration $p \colon \tE \to
  \tB$, the conditions of \cref{propn:rep cart fib cond} for $e \in
  \tE$ over $b \in \tB$ are
  equivalent to the existence of a natural equivalence
  \[ \tB(\blank,b) \simeq F \] under which $\id_{b}$ corresponds to
  $e \in F(b) \simeq \tE_{b}$.
\end{observation}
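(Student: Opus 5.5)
This is essentially a translation through straightening, using the characterization of representable 1-fibred $(1,0)$-fibrations in \cref{propn:rep cart fib cond}. The plan has three steps.

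First, identify the fibration on the other side. By \cref{def:laxslices}, $\tB_{\upslash b} \to \tB$ is the $(1,0)$-fibration classifying $\tB(\blank,b)\colon \tB^{\op}\to\CATI$. Its fibre over $b$ is $\tB(b,b)$, and under this identification $\id_b$ corresponds to $\id_b$. Since the straightening of \cref{thm:str} is an equivalence of \itcats{} $\FIB^{(1,0)}_{/\tB}\simeq \FUN(\tB^{\op},\CATIT)$, equivalences of fibrations over $\tB$ correspond to natural equivalences of functors. Moreover, straightening is compatible with pullback to fibres: restricting along $\{b\}\to\tB$ is pullback, so the induced equivalence on fibres $\tE_b\simeq F(b)$ matches the component at $b$. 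Consequently, a natural equivalence $\tB(\blank,b)\simeq F$ corresponds to an equivalence $\tB_{\upslash b}\simeq \tE$ over $\tB$. This is a morphism of $(1,0)$-fibrations, since any equivalence over $\tB$ preserves cartesian morphisms. It sends $\id_b$ to the object of $\tE_b$ corresponding to the image of $\id_b$ in $F(b)$.

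Second, conclude. Condition (1) of \cref{propn:rep cart fib cond} is by definition the existence of such an equivalence taking $\id_b$ to $e$, so it is equivalent to the existence of a natural equivalence $\tB(\blank,b)\simeq F$ sending $\id_b$ to $e\in F(b)\simeq\tE_b$. Since (1) is already shown equivalent to (2)–(4), we obtain the claim. Note that we only need \emph{existence} in both directions. Any natural equivalence transfers to an equivalence of fibrations and back, so no uniqueness or coherence issues arise.

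The main subtlety is the naturality of the fibre identification under straightening: the identification $\tE_b\simeq F(b)$ must be the one induced by straightening over $\{b\}$, and likewise for $\tB_{\upslash b}$. This follows from the contravariant naturality in \cref{thm:str} applied to $b\colon [0]\to\tB$, since straightening over $[0]$ is the identity on $\CATIT$. A further, harmless point is that $\tB_{\upslash b}$ is 1-fibred, because $\tB(\blank,b)$ lands in $\CATI$; this is consistent with $p$ being 1-fibred, as required for $F$ to land in $\CATI$ as well.
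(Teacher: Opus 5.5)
Your proposal is correct and is essentially the argument the paper leaves implicit: condition (1) of \cref{propn:rep cart fib cond} straightens to the stated natural equivalence, via the identification of $\tB_{\upslash b} \to \tB$ as the $(1,0)$-fibration for $\tB(\blank,b)$. The one input you assert rather than derive is that this identification matches the object $\id_{b}$ of the fibre with $\id_{b} \in \tB(b,b)$. This is most cleanly justified as in the proof of \cref{propn:rep cart fib cond}: $\tB_{\upslash b}$ is the free $(1,0)$-fibration on $\{b\} \to \tB$, with unit picking out $\id_{b}$, so its straightening $G$ satisfies $\Nat_{\tB^{\op},\CATI}(G,F) \simeq F(b)$ via evaluation at $\id_{b}$, and the Yoneda lemma then identifies $G$ with $\tB(\blank,b)$ compatibly with $\id_{b}$.
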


\begin{remark}
  Condition (2) in \cref{propn:rep cart fib cond} is the
  characterization of representability studied in \cite[\S
  4.3]{GagnaHarpazLanariLaxLim}, while condition (3) says precisely
  that the functor of marked \itcats{} $\{e\}^{\flat} \to (\tE,C)$ is
  \emph{$(1,0)$-cofinal} in the sense of \S\ref{sec:cofinal new}, by
  \cref{propn:cofinal charn}.
\end{remark}

\subsection{Free decorated partial fibrations}
\label{sec:free dec}
In this subsection we consider a variant of \cref{thm:free partial
  fib}, which will describe free \emph{decorated}
partial fibrations. We also show that the unit maps of the resulting
adjunction have the property that any pullbacks thereof are
$\ve$-equivalences, which will be a crucial input to our discussion of
cofree fibrations below.

\begin{propn}\label{propn:free dec partial fib}
  Let $\tBd$ be a decorated \itcat{}. The forgetful functor
  \[\DUeB \colon \DPFIBe_{/\tBd} \to \DCATITsl{\tBd}\] has a left adjoint
  $\DFeB$, given for $p \colon \tCd \to \tBd$ by
  \[ \DFeB(p) \,\,:=\,\, \tCd
    \times_{\tBd} \DARelax(\tBd)^{\diafib} \to \tBd, \]
  where the pullback is via $\ev_{i}$  and the map to $\tBd$ is
  given by $\ev_{1-i}$.
\end{propn}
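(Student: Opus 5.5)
The plan is to mirror the proof of \cref{thm:free partial fib} step by step, replacing $\DARelax(\tBd)^{\flat[\vepsilon]}$ by $\DARelax(\tBd)^{\diafib}$ and \cref{thm:ev is partial fib} by \cref{propn:ev dec fib}. First we check that $\DFeB(p)$ is a decorated partial $\vepsilon$-fibration. By \cref{propn:ev dec fib}, $\ev_{1-i} \colon \DARelax(\tBd)^{\diafib} \to \tBd$ is a decorated partial $\vepsilon$-fibration. The projection $\tCd \times_{\tBd} \DARelax(\tBd)^{\diafib} \to \DARelax(\tBd)^{\diafib}$ is a base change of $p$ along $\ev_i$. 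For the decorated version we must check that this projection is a decorated partial fibration over the decoration of $\DARelax(\tBd)^{\diafib}$, and that decorated partial fibrations compose. The $i$-cartesian morphisms and $j$-cartesian 2-morphisms of the composite lie over commuting squares that are invertible under $\ev_i$, and so are of the form (identity, cartesian). The extra conditions of \cref{remark:dec fib explicit} can then be checked componentwise in the pullback, using that those of $\ev_{1-i}$ hold.

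Next, the unit $\eta \colon p \to \DUeB\DFeB(p)$ is the pullback of $s_0^* \colon \tBd \to \DARelax(\tBd)^{\diafib}$. This is a decorated functor, since constant squares are commuting with decorated images, and so lies over $\tBd$. For the counit we need a decorated analogue of \cref{propn:free fib unit counit constr}. When $q \colon \tEd \to \tBd$ is a decorated partial $\vepsilon$-fibration, \cref{lem:dec pb for dec fib} identifies $\DARelax(\tEn)^{\diafib} \simeq \DARelax(\tBd)^{\diafib} \times_{\tBd} \tEd$ as decorated \itcats{}, and hence identifies $\DFeB(q)$ with $\DARelax(\tEn)^{\diafib}$. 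We then define $\epsilon$ as $\ev_{1-i} \colon \DARelax(\tEn)^{\diafib} \to \tEd$. It preserves decorations: a decorated 1-morphism is a commuting square whose images under both evaluations are decorated, so $\ev_{1-i}$ of it is decorated. It also preserves $i$-cartesian 1-morphisms and $j$-cartesian 2-morphisms by the cancellation argument of \cref{propn:free fib unit counit constr}(iii). So $\epsilon$ is a morphism of decorated partial fibrations, and the triangle identity on $\DUeB$ follows as before from $\ev_{1-i} \circ s_0^* = \id$.

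For the other triangle identity we iterate the pullback and use \cref{propn:it arelax pb}. We expect the decorated analogue to hold: $\DARelax(\DARelax(\tBd)^{\diafib})^{\diafib}$ restricted to the relevant part is $\DFUN([2]^{\sharp}, \tBd)^{\velax}$ with the evident decoration, in which both composable edges are decorated and the squares commute. The underlying statement is \cref{propn:it arelax pb}, and the decorations can be matched directly. Then, exactly as in \cref{propn: FUF ids}, $\epsilon\DFeB$ becomes the pullback of $d_1^*$ and $\DFeB\eta$ becomes the pullback of $s_i^*$, and $d_1 s_i = \id$ gives the identity.

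Finally, the adjunction should hold at the level of \itcats{}, since both sides are locally full sub-\itcats{} of $\DCATITsl{\tBd}$. We upgrade it via \cref{propn:upgrade adjn enr}: $\DFeB$ commutes with $\tK^{\flat\flat} \times (\blank)$, and $\DUeB$ is compatible with the tensoring, as in \cref{obs:upgrade free fib adj}. The main obstacle is the first step, and specifically the decorated bookkeeping: showing that the cancellation conditions of \cref{remark:dec fib explicit} hold for $\DFeB(p)$. For 2-morphisms, a decorated 2-morphism in the pullback has a decorated $\ev_i$-component in $\tCd$ and a decorated $\ev_{1-i}$-component, but its square need not be identity-like. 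So we must verify carefully that whiskering by a cocartesian morphism of $\DFeB(p)$, which is identity-on-$\tC$, detects decoratedness. We will do this by reducing, via \cref{ARlax mor pb}, to the corresponding statement for $\DARelax(\tBd)^{\diafib}$, which is already covered by \cref{propn:ev dec fib}.
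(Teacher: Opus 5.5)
Your overall architecture is the paper's: reuse the unit (the pullback of $s_0^*$) and the counit ($\ev_{1-i}\colon\DARelax(\tEn)^{\diafib}\to\tEd$ via \cref{lem:dec pb for dec fib}), check that they respect the new decorations, and upgrade as in \cref{obs:upgrade free fib adj}. But your first step fails as stated. The projection $\tCd\times_{\tBd}\DARelax(\tBd)^{\diafib}\to\DARelax(\tBd)^{\diafib}$ is \emph{not} a decorated partial $\vepsilon$-fibration over $\DARelax(\tBd)^{\diafib}$ for a general decorated functor $p$. A decorated 1-morphism of $\DARelax(\tBd)^{\diafib}$ is a commuting square whose $\ev_i$-component is merely decorated, not invertible. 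Lifting it along the base change of $p$ therefore requires an $i$-cartesian lift along $p$ itself, which need not exist. For instance, if $p$ picks out a single object $b$, the base change is the inclusion of the fibre of $\ev_i$ at $b$. The degenerate square $s_0^*(f)$ on a non-invertible decorated $f$ with source (for $i=0$) or target (for $i=1$) $b$ then has no lift. In \cref{cons:free} this route works only because $\DARelax(\tBd)^{\flat[\vepsilon]}$ decorates just the squares with invertible $\ev_i$-component. There $\ev_i$ lands in $\tB^{\flat\flat}$, over which every functor is a partial fibration, and the $\diafib$ decoration destroys exactly this.

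You can repair this in one of two ways.
\begin{enumerate}
\item Do as the paper does: present $\DFeB(p)$ as the pullback of $(\ev_i,\ev_{1-i})\colon\DARelax(\tBd)^{\diafib}\to\tBd\times\tBd$ along $p\times\id\colon\tCd\times\tBd\to\tBd\times\tBd$. View this as a pullback of decorated partial $\vepsilon$-fibrations over $\tBd$ via the last coordinate (\cref{propn:ev dec fib}, \cref{ex:canonicalexamplesdecfib}). Now $p$ only lives in a factor whose morphisms never need lifting.
\item Carry out the componentwise check you sketch afterwards, which is easier than you fear. The underlying functor is that of \cref{thm:free partial fib}. Its cartesian 1- and 2-morphisms have invertible $\tC$-component. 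In every condition of \cref{remark:dec fib explicit} the $\ev_{1-i}$-component is decorated by hypothesis. So each condition reduces to composing or whiskering a $\tC$-component with an equivalence, and no appeal to \cref{ARlax mor pb} is needed.
\end{enumerate}

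Your third paragraph is also superfluous. $\DFeB(p)$, the unit and the counit have exactly the underlying \itcats{} and functors of \cref{thm:free partial fib}, and $\Ud$ is faithful (\cref{cor:forgetfaithful}). So naturality and both triangle identities are inherited once the components are known to be decorated. This is how the paper finishes, and it avoids the decorated analogue of \cref{propn:it arelax pb}, which you only expect to hold.
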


\begin{observation}
  With this notation, we can interpret \cref{lem:dec pb for dec fib}
  as giving a natural equivalence
  \[ \DFeB\DUeB(p) \simeq \DARelax(\tEn)^{\diafib}\]
  for any decorated partial $\ve$-fibration $p \colon \tEd \to \tBd$.
\end{observation}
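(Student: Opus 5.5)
The plan is to read the equivalence off directly from \cref{lem:dec pb for dec fib}, by matching the pullback defining $\DFeB$ in \cref{propn:free dec partial fib} with the square in that proposition. First unwind the left-hand side: $\DUeB(p)$ is simply $p \colon \tEd \to \tBd$ regarded as an object of $\DCATITsl{\tBd}$. By the formula in \cref{propn:free dec partial fib},
\[ \DFeB\DUeB(p) \simeq \tEd \times_{\tBd} \DARelax(\tBd)^{\diafib}, \]
where the pullback is formed using $p$ and $\ev_{i}$, and the structure map to $\tBd$ is $\ev_{1-i}$ on the second factor.

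Next, recall that \cref{lem:dec pb for dec fib} gives a pullback square of decorated \itcats{}. Its top row is $\DARelax(\tEn)^{\diafib} \to \DARelax(\tBd)^{\diafib}$, its bottom row is $p \colon \tEd \to \tBd$, and its vertical maps are the evaluations $\ev_{i}$; this is the square of \cref{thm:pb crit for partial fib} with decorations added. Because this square is a pullback, the induced comparison
\[ \DARelax(\tEn)^{\diafib} \to \tEd \times_{\tBd} \DARelax(\tBd)^{\diafib} \]
is an equivalence of decorated \itcats{}. The only extra step is to check that this equivalence lies over $\tBd$. The map $\DARelax(\tEn)^{\diafib} \to \tBd$ is $p \circ \ev_{1-i}$, while the map out of the pullback is $\ev_{1-i}$ applied to $\DARelax(p)$. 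These agree because $\ev_{1-i}$ is natural, that is, $\ev_{1-i} \circ \DARelax(p) \simeq p \circ \ev_{1-i}$.

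For naturality in $p$, I would use the functoriality of $\DARelax(\blank)^{\diafib}$ and of the evaluation maps. A morphism of decorated partial $\ve$-fibrations over $\tBd$ preserves the (co)cartesian decorations, so it induces a map $\DARelax(\tEn)^{\diafib} \to \DARelax(\tFn)^{\diafib}$ compatible with the squares of \cref{lem:dec pb for dec fib}. The comparison maps into the pullbacks are therefore natural.

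I expect no real obstacle here. The only point needing care is bookkeeping: the evaluation $\ev_{i}$ used to form the pullback in $\DFeB$ must be the same as the one in \cref{lem:dec pb for dec fib}. Both are the $\ev_{i}$ of \cref{thm:pb crit for partial fib}, for instance $\ev_{0}$ when $\ve=(0,1)$, so the two squares coincide.
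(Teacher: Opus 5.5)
Your proposal is correct and matches how the paper intends this observation. By definition (\cref{propn:free dec partial fib}), $\DFeB\DUeB(p)$ is the pullback of $\ev_{i}$ along $p$, with structure map $\ev_{1-i}$; the pullback square of \cref{lem:dec pb for dec fib}, whose vertical maps are the $\ev_{i}$ of \cref{thm:pb crit for partial fib}, therefore identifies it with $\DARelax(\tEn)^{\diafib}$ over $\tBd$, naturally in $p$, exactly as you describe.
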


\begin{proof}[Proof of \cref{propn:free dec partial fib}]
  We'll show that we get an adjunction on underlying \icats{}; this
  upgrades to \itcats{} as in \cref{obs:upgrade free fib adj}.  For
  $p \colon \tEd \to \tBd$, we can regard $\DFeB(p)$ as the pullback
  \begin{equation}
    \label{eq:free dec fib as pb of fibs}
    \begin{tikzcd}
      \DFeB(p) \ar[r] \ar[d] & \DARelax(\tBd)^{\diafib}  \ar[d, "{(\ev_{i},\ev_{1-i})}"] \\
      \tEd \times \tBd \ar[r, "p \times \id"] & \tBd \times \tBd
    \end{tikzcd}
  \end{equation}
  of decorated partial $\vepsilon$-fibrations over $\tBd$ (using
  \cref{propn:ev dec fib}). 
  Thus $\DFeB$ does indeed define a functor $\DCatITsl{\tBd} \to
  \DPFibe_{/\tBd}$.

  To see that this gives a left adjoint, it suffices to check that the
  unit and counit from the previous section also give natural
  transformations $\eta \colon \id \to \DUeB\DFeB$ and
  $\epsilon \colon \DFeB\DUeB \to \id$, which amounts to checking they
  are compatible with the decorations we now consider; the triangle
  identities will then automatically hold. 

  For the unit, this follows from observing that the degeneracy gives
  a decorated functor $\tBd \to
  \DARelax(\tBd)^{\diafib}$. On the other hand,
  \cref{lem:dec pb for dec fib} implies that for
  $p \colon \tEd \to \tBd$ a decorated partial $\vepsilon$-fibration,
  the counit at $p$
  can be identified with the decorated functor
  \[ \ev_{1-i} \colon \DARelax(\tEn)^{\diafib} \to \tEd,\]
  which is therefore a morphism of decorated partial $\ve$-fibrations;
  this completes the proof.
\end{proof}

\begin{observation}\label{obs:unit eps-equiv}
  We  can view the unit map of the adjunction of \cref{propn:free dec
    partial fib} at $p \colon \tCd \to \tBd$ as a map 
  \[
    \begin{tikzcd}
     \tCd \ar[rr, "\eta"] \ar[dr, "p"'] & & \DFeB(p) \ar[dl] \\
      & \tBd.
    \end{tikzcd}
  \]
  in $\DCatITsl{\tBd}$. Suppose $q \colon \tE \to \tB$ is a partial
  $\vepsilon$-fibration with respect to $\tBd$; then $q \colon \tEn \to \tBd$ is a
  decorated partial fibration, and moreover any decorated functor
  $\DFeB(p) \to q$ over $\tBd$ is a morphism of decorated partial
  $\vepsilon$-fibrations (\ie{} it automatically preserves (co)cartesian $1$- and
  $2$-morphisms), so that composition with $\eta$ induces an equivalence
  \[ \Map_{/\tBd}(\DFeB(p), \tEn) \isoto \Map_{/\tBd}(\tCd, \tEn).
 \]
 In other words, $\eta$ is an $\vepsilon$-equivalence over $\tBd$ with
 these decorations.
\end{observation}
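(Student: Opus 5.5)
The plan is to reduce the claim to the adjunction of \cref{propn:free dec partial fib}, after first checking that every decorated functor out of $\DFeB(p)$ into a partial fibration is automatically a morphism of decorated partial $\vepsilon$-fibrations. Fix $\vepsilon=(0,1)$ for concreteness; the other variances are handled the same way or by reversing 1- and/or 2-morphisms.

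First, by \cref{ex:canonicalexamplesdecfib}, $q \colon \tEn \to \tBd$ is a decorated partial $\vepsilon$-fibration. By definition, its decorated 1-morphisms are exactly the $q$-cocartesian morphisms over $\tBd_{(1)}$, and its decorated 2-morphisms are exactly the $q$-cartesian 2-morphisms over $\tBd_{(2)}$.

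Second, I would identify the cocartesian 1-morphisms and cartesian 2-morphisms of $\DFeB(p) = \tCd \times_{\tBd} \DARoplax(\tBd)^{\diafib}$ and check that they are decorated. By the pullback \cref{eq:free dec fib as pb of fibs} and \cref{thm:ev is partial fib}, a cocartesian morphism over a decorated morphism of $\tB$ has an equivalence as its $\tC$-component. Its $\DARoplax(\tBd)$-component is a commuting square with invertible $\ev_{0}$-image and decorated $\ev_{1}$-image. Equivalences are always decorated, so this square is decorated in the $\diafib$ decoration, and the equivalence in $\tCd$ is decorated too. Hence the cocartesian morphism is decorated in the pullback decoration. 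The same argument applies to cartesian 2-morphisms: their $\ev_{0}$-image is invertible and their $\ev_{1}$-image is decorated. It follows that any decorated functor $\DFeB(p) \to \tEn$ over $\tBd$ sends cocartesian 1-morphisms and cartesian 2-morphisms into the decorations of $\tEn$, which are precisely the (co)cartesian ones. So such a functor is automatically a morphism of decorated partial $\vepsilon$-fibrations. In other words, the inclusion $\Map_{\DPFibe_{/\tBd}}(\DFeB(p),\tEn) \subseteq \Map_{/\tBd}(\DFeB(p),\tEn)$ is an equivalence. Getting this description of the (co)cartesian morphisms of the pullback right is the only point requiring care, and I expect it to be the main obstacle. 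I would handle it using the cancellation property of \cref{obs:epscofabs}, applied to the composite $\DFeB(p) \to \DARoplax(\tBd)^{\diafib} \to \tBd$.

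Third, the adjunction of \cref{propn:free dec partial fib} says that composition with the unit $\eta$ gives an equivalence $\Map_{\DPFibe_{/\tBd}}(\DFeB(p),\tEn) \simeq \Map_{/\tBd}(\tCd,\tEn)$. Combined with the previous step, this yields the displayed equivalence. Finally, by \cref{obs:epscofoverB}, the local objects of $\DCatITsl{\tBd}$ are exactly the partial $\vepsilon$-fibrations equipped with their canonical decorations $\tEn$. A morphism that induces equivalences on mapping spaces into all such local objects is inverted by $L^{\vepsilon}_{\tBd}$, by the Yoneda lemma in $\PFibe_{/\tBd}$. Therefore $\eta$ is an $\vepsilon$-equivalence over $\tBd$.
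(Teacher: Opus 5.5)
Your proposal is correct and follows the same route as the paper. You show that $\tEn$ is a decorated partial fibration and that every decorated functor $\DFeB(p) \to \tEn$ over $\tBd$ automatically preserves (co)cartesian 1- and 2-morphisms; the adjunction of \cref{propn:free dec partial fib} then makes composition with $\eta$ an equivalence, so $\eta$ is local for all objects of $\PFibe_{/\tBd}$. Your explicit identification of the (co)cartesian morphisms of the pullback is valid, but it can be shortcut: $\DFeB(p)$ is already a decorated partial $\ve$-fibration, so by definition (cf.\ \cref{remark:dec fib explicit}) its (co)cartesian morphisms over decorations are decorated.
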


With a bit more work, we can find a ``homotopy inverse'' of $\eta$
that lets us upgrade this observation to the following statement:
\begin{propn}\label{propn:unit fib univ eps-equiv}
  Let $p \colon \tEd \to \tBd$ be a decorated partial $\ve$-fibration,
  and consider the unit map of the adjunction of \cref{propn:free dec
    partial fib} at $p$ as a map
  \[
    \begin{tikzcd}
     \tEd \ar[rr, "\eta"] \ar[dr, "p"'] & & \DFeB(p) \ar[dl] \\
      & \tBd.
    \end{tikzcd}
  \]
  For any decorated functor $f \colon \tAd \to \tBd$, the pullback
  $f^{*}\eta \colon f^{*}\tEd \to f^{*}\DFeB(p)$  is an
  $\ve$-equivalence over $\tAd$.
\end{propn}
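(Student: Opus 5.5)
The plan is to apply \cref{lem: lax tr eps-equiv} to $F = \eta \colon \tEd \to \DFeB(p)$ over $\tBd$. To do so I need three things: a retraction $G \colon \DFeB(p) \to \tEd$ over $\tBd$ with $G\eta \simeq \id$, and a lax transformation $\rho \colon [1]^{\sharp} \otimesd \DFeB(p) \to \DFeB(p)$ over $\tBd$ between $\id$ and $\eta G$. Once these are in place, the lemma gives directly that $f^{*}\eta$ is an $\ve$-equivalence over $\tAd$ for every decorated $f \colon \tAd \to \tBd$. Throughout I work with $\vepsilon = (0,1)$; the other variances follow by reversing 1- and/or 2-morphisms.

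For $G$ I take the counit. Since $p$ is a decorated partial fibration, the observation after \cref{propn:free dec partial fib} (from \cref{lem:dec pb for dec fib}) identifies $\DFeB(p) = \tEd \times_{\tBd} \DARoplax(\tBd)^{\diafib}$ with $\DARoplax(\tEn)^{\diafib}$. Under this identification $\eta$ is $s_{0}^{*}$ and the counit is $\ev_{1}$, as in \cref{propn:free fib unit counit constr}(ii),(iii). Both are decorated functors over $\tBd$, and $\ev_{1} \circ s_{0}^{*} = \id$, which gives the retraction.

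The homotopy $\rho$ comes from the universal ``contraction'' of the oplax arrow category onto its target. Concretely, I use a map of posets $m \colon [1] \times [1] \to [1]$ with $m(0,-) = \id$ and $m(1,-) = $ constant at $1$, namely $m(a,b) = \max(a,b)$. Via Gray-functoriality this should induce a functor
\[ [1] \otimes \AR^{\oplax}(\tE) \to \AR^{\oplax}(\tE), \]
adjoint to $[1] \otimes \AR^{\oplax}(\tE) \otimes [1] \to \tE$ obtained from the evaluation $\AR^{\oplax}(\tE) \otimes [1] \to \tE$ precomposed with the orientation-compatible map $[1] \otimes [1] \to [1]$ induced by $m$. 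Its value at $0$ is $\id$ and at $1$ is $s_{0}^{*}\ev_{1} = \eta G$. The component at an arrow $e \to e'$ is the commutative square with top edge $e \to e'$ and bottom edge $\id_{e'}$. I will check the following points in turn:
\begin{itemize}
\item it restricts to the full sub-\itcat{} $\DARoplax(\tEn)$;
\item it lies over $\tBd$, which holds because $\ev_{1}$ is preserved: the target component is always the identity;
\item it is a decorated functor out of $[1]^{\sharp} \otimesd \DARoplax(\tEn)^{\diafib}$.
\end{itemize}
The components are commutative squares whose $\ev_{1}$-image is an identity and whose $\ev_{0}$-image is the given arrow, which is cocartesian and hence decorated. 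The lax naturality squares have 2-morphisms whose $\ev_{1}$-images are identities. This matches the $\diafib$ decorations, possibly after noting that the component squares are $\ev_{1}$-cartesian in the sense of \cref{thm:ev is partial fib}.

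The main obstacle is making $\rho$ rigorous model-independently and verifying its decorations. The orientation of the 2-cells in $[1] \otimes [1] \otimes [1] \to [1] \otimes [1]$ must be compatible with the oplax direction, so the transformation may turn out to be oplax rather than lax. In that case I will use $[1]^{\sharp}$ on the other side, and the argument of \cref{obs:lax homotopy eps-equiv} applies equally, since either projection is a localization at decorated morphisms. I would first construct the map in the scaled-simplicial model via the poset description of \cref{rem:posets}, where $\max$ is clearly a map of Gray scalings. I would then check the decoration conditions on generating cells $[0]$, $[1]$ and $C_{2}$.
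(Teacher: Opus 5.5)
Your proposal is correct and is essentially the paper's proof. The paper applies \cref{lem: lax tr eps-equiv} with the counit $\ev_{1}$ as retraction and the lax transformation of \cref{constr:lax adj tr for free fib}, which comes from precomposing with exactly your map $[1]\otimes[1]\to[1]\times[1]\xrightarrow{\max}[1]$. No scaled-simplicial construction is needed and the orientation worry does not arise: this map factors through the cartesian product, and the equivalence $\FUN([1]\otimes[1],\tC)^{\oplax}\simeq\AR^{\oplax}(\AR^{\oplax}(\tC))$ directly yields a \emph{lax} transformation.

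In the decoration check (\cref{lem:lax tr for dec fib}), the naturality $2$-cell at $\alpha\colon f\to g$ is decorated in $\DARoplax(\tEn)^{\diafib}$ because its $\ev_{1}$-image is an identity \emph{and} its $\ev_{0}$-image is the cartesian $2$-cell of $\alpha$; you only mentioned the first of these. The component squares are decorated in this sense without being cocartesian for $\ev_{1}$, so your appeal to \cref{thm:ev is partial fib} is both incorrect and unnecessary.
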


For the proof we make use of a canonical (op)lax transformation
associated to the free decorated fibration, which we spell out in the
case $\ve = (0,1)$ for simplicity:

\begin{construction}\label{constr:lax adj tr for free fib}
  Consider the decorated functor
  \[ \pi_{(0,1)} \colon [1]^{\sharp} \otimes [1]^{\flat} \to [1]^{\sharp} \times [1]^{\flat} \to
    [1]^{\sharp},\]
  which takes $(0,0)$ to $0$ and the remaining objects to $1$.

  This induces for any decorated \itcat{} $\tCd$ a functor of \itcats{}
  \[\pi_{(0,1)}^{*} \colon \DARoplax(\tCd) \to \DFUN([1]^{\sharp}
  \otimes [1]^{\flat}, \tCd)^{\oplax} \simeq
  \AR^{\oplax}(\DAR^{\oplax}(\tCd)),\] which is adjoint to a lax
  transformation
  \[ [1] \otimes \DARoplax(\tCd) \to \DARoplax(\tCd);\]
  unwinding the definition we see that this transformation goes from
  the identity to $s_{0}^{*}\ev_{1}$.
\end{construction}

\begin{lemma}\label{lem:lax tr for dec fib}
  Suppose $p \colon \tEd \to \tBd$ is a
  decorated partial $(0,1)$-fibration. Then the construction above
  gives a decorated functor
  \[
    \begin{tikzcd}
      {[1]^{\sharp}} \otimesd \DARoplax(\tEn)^{\diafib} \ar[rr] \ar[dr,
      "\ev_{1} \circ \operatorname{proj}"]
      & &  \DARoplax(\tEn)^{\diafib} \ar[dl, "\ev_{1}"] \\
      & \tEd
    \end{tikzcd}
  \]
  and so a lax natural transformation
  \[ [1]^{\sharp} \otimesd \DUeB\DFeB\DUeB(p) \to \DUeB\DFeB\DUeB(p) \]
  from $\id$ to $(\etaeB \DUeB) \circ (\DUeB \epseB)$
  via \cref{lem:dec pb for dec fib}.
\end{lemma}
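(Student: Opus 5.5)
The lax transformation of \cref{constr:lax adj tr for free fib}, $[1]\otimes\DARoplax(\tE)\to\DARoplax(\tE)$, is adjoint to $\pi_{(0,1)}^{*}$. So it suffices to show that $\pi_{(0,1)}^{*}$ restricts to a decorated functor
\[ \DARoplax(\tEn)^{\diafib} \to \DFUN([1]^{\sharp}, \DARoplax(\tEn)^{\diafib})^{\oplax}, \]
compatible with the projections to $\tEd$. By the adjunction for the decorated Gray tensor, that is exactly a decorated functor $[1]^{\sharp}\otimesd\DARoplax(\tEn)^{\diafib}\to\DARoplax(\tEn)^{\diafib}$.

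We first make the transformation explicit. For an object $f\colon x\to y$ of $\DARoplax(\tEn)$, i.e.\ a cocartesian morphism, its component is the commutative square from $f$ to $\id_y$ with top edge $f$ and bottom edge $\id_y$. For a morphism of $\DARoplax(\tEn)$, namely an oplax square with top $u\colon x\to x'$, bottom $v\colon y\to y'$ and cartesian 2-morphism $\alpha$, the lax naturality square is the image of a 2-cell of $[1]\otimes[1]$ under $\pi_{(0,1)}$. This lax naturality square is a map in $\AR^{\oplax}(\DARoplax(\tE))$, namely a $[1]\otimes[1]\otimes[1]\to\tE$ whose restriction to the outer face has 2-cell $\alpha$ (whiskered) and whose other faces are degenerate. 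Compatibility over $\tEd$ is then immediate: $\ev_1$ of every component is an identity, so the composite with $\ev_1$ is $\ev_1\circ\operatorname{proj}$.

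Next we check the decoration conditions, in three parts.
\begin{enumerate}[(a)]
\item The functor must land in $\DARoplax(\tEn)$. The components $f\to\id_y$ have vertical edges $f$ and $\id_y$, which are cocartesian, and trivial 2-cell, so they are valid morphisms.
\item The naturality 2-morphisms must be decorated. By \cref{defn:dfunlax} a 2-morphism of $\DARoplax(\tEn)^{\diafib}$ is decorated iff its $\ev_0$ and $\ev_1$ images are decorated in $\tEd$. Its $\ev_1$ image is an identity, and its $\ev_0$ image is $\alpha$, which is cartesian and hence decorated in $\tEd$ because $p$ is a decorated partial fibration (\cref{remark:dec fib explicit}).
\item The Gray tensor decorations must be preserved. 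The component at a decorated object is a commutative square whose $\ev_0$ image $f$ is cocartesian, hence decorated, and whose $\ev_1$ image is an identity, so it is decorated in $(\blank)^{\diafib}$. A decorated 1-morphism (commutative, with decorated top and bottom) and a decorated 2-morphism of $\DARoplax(\tEn)^{\diafib}$ are sent at each endpoint of $[1]$ to decorated data, since at $0$ they are unchanged and at $1$ they become $s_0^{*}$ of their $\ev_1$ image, which is decorated. The remaining generators of $[1]^{\sharp}\otimesd\blank$ are the 2-cells of squares involving the marked edge of $[1]^{\sharp}$, and these are exactly the naturality 2-cells already handled in (b).
\end{enumerate}

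Finally, we identify the endpoints. At $0$ the functor is the identity. At $1$ it is $s_0^{*}\circ\ev_1$, and under \cref{lem:dec pb for dec fib} this is $\eta\circ\epsilon$: the counit $\epsilon$ is $\ev_1$ by \cref{propn:free fib unit counit constr}(iii), and the unit $\eta$ is $s_0^{*}$ by part (ii) of the same result. This yields the claimed transformation from $\id$ to $(\etaeB\DUeB)\circ(\DUeB\epseB)$.

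The main obstacle is step (c), verifying that the generated 2-morphism decorations in $[1]^{\sharp}\otimesd\blank$ map to decorated 2-morphisms. We expect this to reduce, via the explicit description of the Gray square 2-cells under $\pi_{(0,1)}$, to the whiskering-closure of cartesian 2-morphisms (\cref{obs:partial fib canonical dec}).
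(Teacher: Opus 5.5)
Your proposal is correct and follows essentially the paper's argument: check that the endpoint functors $\id$ and $s_0^{*}\ev_1$ are decorated, that each component $f\to\id_y$ is a commuting square with decorated $\ev_0$- and $\ev_1$-images, and that each naturality 2-cell has $\ev_0$-image the cartesian (hence decorated) $\alpha$ and $\ev_1$-image an identity. The ``main obstacle'' you flag in (c) is already settled by (b), because the 2-cell of a morphism in $\DARoplax(\tEn)$ already lies in $\tE(x,y')$ and is cartesian there, so no whiskering argument is needed.
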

\begin{proof}
  It is clear that the source and target functors
  $\id$ and $s_{0}^{*}\ev_{1} \colon \DARoplax(\tEn)^{\diafib} \to
  \DARoplax(\tEn)^{\diafib}$ are both decorated. We therefore need to
  check that
  \begin{itemize}
  \item for every object $f \colon x \to y$ of $\DARoplax(\tEn)$
    (where $f$ is a cocartesian morphism), the
    induced map $f \to s_{0}^{*}\ev_{1}(f) = \id_{y}$ is decorated,
  \item for every morphism $\alpha \colon f \to g$ in
    $\DARoplax(\tEn)$, the 2-morphism in the associated
    naturality square is decorated.
  \end{itemize}
  Unpacking the definition, we can identify the morphism in the first
  point as the square
  \[
    \begin{tikzcd}
     x \ar[r, "f"] \ar[d, "f"'] & y  \ar[d, "\id"] \\
     y \ar[r, "\id"'] & y.
    \end{tikzcd}
  \]
  This commutes, and both $f$ and $\id_{y}$ are decorated in $\tEd$,
  so this is indeed decorated. In the second point, the naturality
  square unpacks to the cube
  \[
    \begin{tikzcd}[row sep=small,column sep=small]
     x \ar[rr] \ar[dr] \ar[dd, "f"] & & x' \ar[dr] \ar[dd, "g"] \\
      & y \ar[rr,crossing over]  & & y' \ar[dd, "="] \\
     y \ar[rr] \ar[dr, "="] & & y' \ar[dr, "="] \\
      & y \ar[rr] \ar[uu,leftarrow,crossing over, "="{near end}]  & & y'
    \end{tikzcd}
  \]
  where we have not shown that the back and top faces contain the
  2-morphism $\alpha$; the other four faces commute. In particular,
  both the top and bottom faces contain a decorated 2-morphism in
  $\tEd$, so as an oplax square in $\DARoplax(\tEn)$ this contains a
  decorated 2-morphism, as required.
\end{proof}

\begin{proof}[Proof of \cref{propn:unit fib univ eps-equiv}]
  We apply \cref{lem: lax tr eps-equiv} using the counit of the
  adjunction and the lax transformation from 
  \cref{lem:lax tr for dec fib} (and its variants for other variances).
\end{proof}

\begin{remark}\label{rem:laxadj}
  Suppose $p \colon \oE \to \oB$ is a cocartesian fibration of
  \icats{}. Then our definitions above give functors
  $\eta \colon \oE \to \oE \times_{\oB} \Ar(\oB)$ and
  $\epsilon \colon \oE \times_{\oB} \Ar(\oB) \to \oE$ such that
  $\epsilon\eta \simeq \id_{\oE}$. In fact, this is part of the data
  of an adjunction $\epsilon \dashv \eta$, which leads to the
  characterization of cocartesian fibrations over $\oB$ as the
  functors $p$ such that $\eta$ has a left adjoint over $\oB$ (see
  \cite[Theorem 5.2.8]{RVelements}). We can think of \cref{constr:lax
    adj tr for free fib} as extending the definition of the unit of
  this adjunction to the \itcatl{} context. Note, however, that as it
  is only an (op)lax transformation, it does not actually exhibit an
  adjunction of \itcats{}. It would be interesting to know if the
  characterization of fibrations via adjunctions could be extended to
  \itcats{} via some notion of ``lax adjunctions'', but we will not
  pursue this here.
\end{remark}

\subsection{Free fibrations on decorated \itcats{}}
\label{sec:free fib on dec}

In this subsection we will use \cref{propn:free dec partial fib} together
with the straightening equivalence for decorated fibrations from
\cref{thm: dec straighten} to get a description of the left adjoint of
the forgetful functor from $\vepsilon$-fibrations to decorated
\itcats{} over the base. We then specialize this to obtain free
1-fibred fibrations on \emph{marked} \itcats{}, which will play an
important role in our study of colimits and Kan extensions below. Note
that we will later also be able to describe the corresponding
fibrations as certain localizations of free decorated fibration in
\S\ref{sec:loc fib}.

We start by identifying the straightening of free
decorated $\vepsilon$-fibrations, which requires some notation:
\begin{notation}\label{not:decoratedslices}
  For $b \in \tB$, we introduce the following notation for the
  decorated (op)lax slices induced from
  $\DARelax(\tBd)^{\diafib}$: 
  \begin{itemize}
  \item $\tBd_{b \upslash}$ is $\{b\} \times_{\tBd}
    \DARopl(\tBd)^{\diafib}$ via $\ev_{0}$; this is $\DFFree^{(0,1)}_{\tBd}(\{b\})$.
  \item $\tBd_{\upslash b}$ is $\{b\} \times_{\tBd}
    \DARopl(\tBd)^{\diafib}$ via $\ev_{1}$; this is $\DFFree^{(1,0)}_{\tBd}(\{b\})$.
  \item $\tBd_{b \downslash}$ is $\{b\} \times_{\tBd}
    \DARlax(\tBd)^{\diafib}$ via $\ev_{0}$; this is $\DFFree^{(0,0)}_{\tBd}(\{b\})$.
  \item $\tBd_{\downslash b}$ is $\{b\} \times_{\tBd}
    \DARlax(\tBd)^{\diafib}$ via $\ev_{1}$; this is $\DFFree^{(1,1)}_{\tBd}(\{b\})$.
  \end{itemize}
  Given a functor $p \colon \tCd \to \tBd$ and $b \in \tB$ we then denote by
  $\tCd_{b \upslash}$ the pullback $\tCd \times_{\tBd} \tBd_{b
    \upslash}$ using the functor to $\tBd$ induced by $\ev_{1}$, and
  similarly in the other variances; we can then identify the fibre at
  $b$ of the free decorated fibration $\DFeB(p)$ as
  $\tCd \times_{\tBd} \DFFree^{\checkvepsilon}_{\tBd}(\{b\})$, \ie{} as
  \begin{itemize}
  \item $\DFFree^{(1,0)}(p)_{b} \simeq \tCd_{b \upslash}$,
  \item $\DFFree^{(0,1)}(p)_{b} \simeq \tCd_{\upslash b}$,
  \item $\DFFree^{(1,1)}(p)_{b} \simeq \tCd_{b \downslash}$,
  \item $\DFFree^{(0,0)}(p)_{b} \simeq \tCd_{\downslash b}$.
  \end{itemize}
\end{notation}

\begin{propn}
  Under the straightening equivalence
  \[\Fun(\tB^{\veop}, \DCATIT) \simeq \DFib^{\vepsilon}_{/\tBss},\]
  the decorated free fibration $\DFeBsharp(p)$ of $p \colon \tCd \to \tBss$
  corresponds to the functor
  \[ b \mapsto \tCd \times_{\tBss}
    \DFFree^{\checkvepsilon}_{\tBss}(\{b\}) \simeq
    \begin{cases}
      \tCd_{b \upslash}, & \vepsilon = (1,0), \\
      \tCd_{\upslash b}, & \vepsilon = (0,1), \\
      \tCd_{b \downslash}, & \vepsilon = (1,1), \\
      \tCd_{\downslash b}, & \vepsilon = (0,0),
    \end{cases}
  \]
  where the functoriality in $b$ comes from pulling back the
  straightening of \[\ev_{1-i} \colon \DARelax(\tBss) \to \tBss.\]
\end{propn}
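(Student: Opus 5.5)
We reduce the statement to the compatibility of decorated straightening with pullback. By \cref{propn:free dec partial fib}, $\DFeBsharp(p)$ is the pullback
\[ \tCd \times_{\tBss} \DARelax(\tBss)^{\diafib} \to \tBss, \]
taken via $\ev_{i}$, with structure map $\ev_{1-i}$. Using the square \cref{eq:free dec fib as pb of fibs}, we rewrite it as a pullback of decorated $\ve$-fibrations over $\tBss$:
\[ \DFeBsharp(p) \simeq (\tCd \times \tBss) \times_{\tBss \times \tBss} \DARelax(\tBss)^{\diafib}. \]
The key point is that this is a pullback \emph{over the base} $\tBss$, where the coordinate projecting to the base is the second factor. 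We take the target component to be $\ev_{1-i}$, which is a decorated partial $\ve$-fibration by \cref{propn:ev dec fib}.

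Next we pass through straightening. By \cref{thm: dec straighten}, decorated $\ve$-fibrations over $\tB$ correspond to functors $\tB^{\veop} \to \DCATIT$. Since this is an equivalence of \itcats{}, it preserves limits, and in particular fibre products over $\tBss$. Limits in $\FUN(\tB^{\veop},\DCATIT)$ are computed pointwise, because limits in $\DCatIT$ are computed as in \cref{obs:adjointDcatIT}. The pullback above therefore straightens to the pointwise fibre product of the straightenings of the three corners:
\begin{itemize}
\item The projection $\tCd \times \tBss \to \tBss$ straightens to the constant functor with value $\tCd$.
\item The projection $\tBss \times \tBss \to \tBss$ straightens to the constant functor with value $\tBss$.
\item $\ev_{1-i} \colon \DARelax(\tBss)^{\diafib} \to \tBss$ straightens to some functor $b \mapsto \DARelax(\tBss)^{\diafib}_{b}$.
\end{itemize}
The value at $b$ of the third functor is the fibre at $b$ under $\ev_{1-i}$. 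By \cref{not:decoratedslices} this fibre is $\DFFree^{\checkvepsilon}_{\tBss}(\{b\})$, with its map to $\tBss$ given by $\ev_{i}$. We have to check that the straightening of a constant product projection is the constant functor, and that the map to the constant $\tBss$ is induced by $\ev_{i}$. The first follows from naturality of straightening in the base: pull back along $\tB \to *$. The second follows by naturality applied to the morphism of fibrations $(\ev_{i},\ev_{1-i})$.

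Evaluating the pointwise pullback at $b$ gives $\tCd \times_{\tBss} \DFFree^{\checkvepsilon}_{\tBss}(\{b\})$, with functoriality in $b$ coming from that of the straightening of $\ev_{1-i}$. The identification with the four slices $\tCd_{b\upslash}$, etc., is then exactly the case-by-case bookkeeping of \cref{not:decoratedslices}, matching $\ve$ with $\checkvepsilon$.

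The main obstacle is the legitimacy of taking the fibre at $b$ of a straightened diagram. The fibre of the pulled-back fibration over $b$ must agree with the value of the straightened functor at $b$ in the decorated setting. We get this from the naturality of \cref{thm: dec straighten} in $\tB$ applied to $\{b\} \to \tB$, together with \cref{propn: dec str over 1-cat} at the base $*$, where decorated fibrations over a point are just decorated \itcats{}. A secondary check is that the decorations match. The square \cref{eq:free dec fib as pb of fibs} is a pullback of decorated \itcats{}, so its fibres carry the pulled-back decorations, which are precisely those of $\tCd \times_{\tBss} \tBd_{b\upslash}$ and its variants.
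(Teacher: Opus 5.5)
Your proposal is correct and is essentially the paper's argument. The paper proves this in one line, by naturality of straightening applied to the pullback square \cref{eq:free dec fib as pb of fibs} of decorated $\ve$-fibrations. Your write-up unpacks exactly that: you straighten the three corners, compute the pullback pointwise, and identify fibres via naturality along $\{b\} \to \tB$.
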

\begin{proof}
  This follows from the naturality of straightening and the pullback
  square of decorated fibrations \cref{eq:free dec fib as pb of fibs}.
\end{proof}

\begin{cor}\label{cor:free fib on dec}
  The left adjoint to the forgetful functor
  \[ \Fibe_{/\tB} \to \DCatITsl{\tBss}\] takes $\tCd \to \tBss$
  to the unstraightening of the functor $\tB^{\veop} \to \CATIT$
  given by
  \[ b \mapsto \Ld(\tCd \times_{\tBss}
    \DFFree^{\checkvepsilon}_{\tBss}(\{b\})) \simeq     \begin{cases}
      \Ld\tCd_{b \upslash}, & \vepsilon = (1,0), \\
      \Ld\tCd_{\upslash b}, & \vepsilon = (0,1), \\
      \Ld\tCd_{b \downslash}, & \vepsilon = (1,1), \\
      \Ld\tCd_{\downslash b}, & \vepsilon = (0,0).
    \end{cases}
 \]
\end{cor}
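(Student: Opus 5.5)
The plan is to factor the forgetful functor $\Fibe_{/\tB} \to \DCatITsl{\tBss}$ as a composite of forgetful functors whose left adjoints we already know:
\[ \Fibe_{/\tB} \xhookrightarrow{(\blank)^{\natural}} \DFib^{\vepsilon}_{/\tB} \xto{\DUeB} \DCatITsl{\tBss}. \]
The first functor equips an $\vepsilon$-fibration with its canonical decoration by $i$-cartesian 1-morphisms and $j$-cartesian 2-morphisms. By \cref{ex:canonicalexamplesdecfib}, this lands in decorated fibrations. Moreover, a decorated functor $\tEn \to \tE'^{\natural}$ over $\tBss$ is precisely a morphism of fibrations, so the composite is the forgetful functor of the statement. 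It therefore suffices to compose left adjoints. The left adjoint of $\DUeB$ is $\DFeB$, by \cref{propn:free dec partial fib} applied with $\tBd = \tBss$. By \cref{cor: dec str comp flat}, the functor $(\blank)^{\natural}$ has a left adjoint, which under straightening corresponds to postcomposition with $\Ld \colon \DCATIT \to \CATIT$.

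Hence the free $\vepsilon$-fibration on $\tCd \to \tBss$ is obtained in two steps. First, straighten $\DFeB(p)$ via \cref{thm: dec straighten} to a functor $\tB^{\veop} \to \DCATIT$. Second, compose that functor with $\Ld$ and unstraighten. The preceding proposition identifies the straightening of $\DFeB(p)$ with $b \mapsto \tCd \times_{\tBss} \DFFree^{\checkvepsilon}_{\tBss}(\{b\})$. Composing with $\Ld$ gives the formula of the statement. The four case-by-case identifications with $\Ld\tCd_{b\upslash}$ and its variants are just \cref{not:decoratedslices}.

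The only step requiring care is the second claim of \cref{cor: dec str comp flat}: that the left adjoint of $(\blank)^{\natural}$ corresponds to composition with $\Ld$. I would check this as follows. Under the straightening square of \cref{cor: dec str comp flat}, the functor $(\blank)^{\natural}$ corresponds to $(\blank)^{\flat\flat}_{*}$. Since $\Ld \dashv (\blank)^{\flat\flat}$ is an adjunction of \itcats{} (by \cref{const: DCATIT as 2-cat}), postcomposition gives an adjunction $\Ld_{*} \dashv (\blank)^{\flat\flat}_{*}$ on functor categories $\FUN(\tB^{\veop},\blank)$, because 2-functors preserve adjunctions. With that in hand, the rest is formal composition of adjoints and naturality of straightening.
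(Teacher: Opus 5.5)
Your proposal is correct and is essentially the paper's proof. The paper also factors the forgetful functor through $(\blank)^{\natural} \colon \Fibe_{/\tB} \to \DFib^{\vepsilon}_{/\tB}$, composes the free decorated fibration of \cref{propn:free dec partial fib} with the left adjoint of $(\blank)^{\natural}$, and identifies that left adjoint with postcomposition with $\Ld$ via \cref{cor: dec str comp flat}. Your extra check, that $\FUN(\tB^{\veop},\blank)$ carries the adjunction $\Ld \dashv (\blank)^{\flat\flat}$ to $\Ld_{*} \dashv (\blank)^{\flat\flat}_{*}$, is exactly what the paper means by ``passing to left adjoints'' in that corollary.
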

\begin{proof}
  The forgetful functor factors as
  \[\Fibe_{/\tB} \xto{(\blank)^{\natural}} \DFibe_{/\tB} \to
    \DCatITsl{\tBss},\] so its left adjoint factors as $\DFeBsharp$
  followed by the left adjoint of $(\blank)^{\natural}$.  \cref{cor:
    dec str comp flat} shows that $(\blank)^{\natural}$ corresponds
  under straightening to composition with $(\blank)^{\flat\flat}$, so
  its left adjoint corresponds to composition with $\Ld$.
\end{proof}

Now we specialize our results to describe free \emph{1-fibred}
$\ve$-fibrations:
\begin{defn}
  Let $\MFIBe_{/\tB}$ denote the full sub-\itcat{} of
  $\DFIBe_{/\tB}$ spanned by the decorated
  $\ve$-fibrations whose source is in the image of $(\blank)^{\sharp}
  \colon \MCATIT \to \DCATIT$; we refer to
  these as \emph{marked $\ve$-fibrations over $\tB$}
\end{defn}

\begin{observation}
  For $\ve = (i,j)$, let $\oFIB_{/\tB}^{\ve}$ denote the full sub-\itcat{} of
  $\FIBe_{/\tB}$ on the \emph{1-fibred} $\ve$-fibrations, \ie{} those
  whose fibres are \icats{}, or equivalently those where \emph{all}
  2-morphisms are $j$-cartesian. We can then regard $\oFIB_{/\tB}^{\ve}$ as a full
  subcategory of $\MCATITsl{\tB^{\sharp}}$, where $\tE \to \tB$
  corresponds to $\tE^{\natural} \to \tB^{\sharp}$ with
  $\tE^{\natural}$ marked by the $i$-cartesian 1-morphisms (since for
  1-fibred fibrations all 2-morphisms are $j$-cartesian and so they
  are always preserved). In fact, this describes $\oFIB^{\ve}_{/\tB}$
  as precisely the intersection of $\FIBe_{/\tB}$ and
  $\MCATITsl{\tB^{\sharp}}$ in $\DCATITsl{\tB^{\sharp\sharp}}$ (where
  we embed the latter via $(\blank)^{\sharp}$).
\end{observation}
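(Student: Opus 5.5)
The statement to check has two halves. First, the objects of $\FIBe_{/\tB}$ that lie in the image of $(\blank)^{\sharp}$ are exactly the 1-fibred ones, marked by their $i$-cartesian morphisms. Second, on these objects the morphisms and 2-morphisms of the two locally full sub-\itcats{} of $\DCATITsl{\tB^{\sharp\sharp}}$ agree. Throughout, $\FIBe_{/\tB}$ is embedded in $\DCATITsl{\tB^{\sharp\sharp}}$ via $(\blank)^{\natural}$ (\cref{cor: dec str comp flat}), and $\MCATITsl{\tB^{\sharp}}$ via the fully faithful functor $(\blank)^{\sharp} \colon \MCATIT \hookrightarrow \DCATIT$. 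Since both are locally full sub-\itcats{} of $\DCATITsl{\tB^{\sharp\sharp}}$, it suffices to compare objects and 1-morphisms.

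For objects, let $p \colon \tE \to \tB$ be an $\ve$-fibration, so that $\tEn$ is decorated by the $i$-cartesian 1-morphisms and the $j$-cartesian 2-morphisms. I claim that $\tEn \simeq (\tE,I)^{\sharp}$ for some marking $I$ \IFF{} every 2-morphism of $\tE$ is $j$-cartesian, which is exactly the condition that $p$ is 1-fibred. The decorated 2-morphisms of $(\tE,I)^{\sharp}$ are all 2-morphisms, so one direction is immediate, and in that case $I$ is forced to be the $i$-cartesian morphisms. Conversely, \cref{ex:1-fibred as partial} shows that if $(\tE,I)^{\sharp} \to \tB^{\sharp\sharp}$ is a partial $\ve$-fibration with its canonical decoration, then $p$ is a 1-fibred $\ve$-fibration and $I$ is its collection of $i$-cartesian 1-morphisms. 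The one point needing care is the equivalence between ``the fibres are \icats{}'' and ``all 2-morphisms are $j$-cartesian''. For this I would argue as follows. Any 2-morphism in $\tE(x,y)$ factors as a $j$-cartesian 2-morphism followed by a 2-morphism lying over an identity. The latter is a 2-morphism in a fibre, after using the cocartesian transport of $x$, or dually the cartesian transport of $y$, and \cref{lem:cart 2mor detect after compose cocart mor}. So all 2-morphisms are cartesian \IFF{} all fibrewise 2-morphisms are invertible. I would alternatively cite \cite[Proposition 3.2.14]{AGH24} for this step.

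For 1-morphisms, take $F \colon \tE \to \tF$ over $\tB$ between two such fibrations. In $\FIBe_{/\tB}$ the morphisms are required to preserve $i$-cartesian 1-morphisms and $j$-cartesian 2-morphisms. The latter condition is automatic here, since in the 1-fibred case every 2-morphism is $j$-cartesian. The former is precisely the condition that $F$ preserves the markings, \ie{} that it is a morphism in $\MCATITsl{\tB^{\sharp}}$. This is also the same as $F$ being a decorated functor $(\tE,I)^{\sharp} \to (\tF,J)^{\sharp}$. Combined with the object-level statement, this identifies $\oFIB^{\ve}_{/\tB}$ with both the stated full sub-\itcat{} and the intersection. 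I expect no real obstacle in this argument; the only nontrivial input is the fibrewise characterization of 1-fibredness in the previous paragraph.
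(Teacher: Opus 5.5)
Your proposal is correct and follows essentially the same reasoning as the paper, whose only justification is the parenthetical remark that in a 1-fibred $\ve$-fibration every 2-morphism is $j$-cartesian. This makes the natural decoration of the form $(\tE,I)^{\sharp}$ with $I$ the $i$-cartesian morphisms, and makes preservation of $j$-cartesian 2-morphisms automatic, so marked functors over $\tB^{\sharp}$ are exactly morphisms of fibrations. Your extra argument that ``fibres are $\infty$-categories'' is equivalent to ``all 2-morphisms are $j$-cartesian'' (by transporting 2-morphisms over identities into a fibre) spells out a point the paper does not prove and instead takes from \cite[Proposition 3.2.14]{AGH24}.
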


\begin{notation}\label{not:free 1-fib functor}
  Let $(\tE,S)$ be a marked \itcat{}. For a functor $p \colon \tE \to
  \tB$, we let
  \[ \FrofeB(\tE,S) \colon \tB^{\veop} \to \CATI\]
  be the functor obtained by first unstraightening the free decorated
  $\ve$-fibration $\DFFree^{\ve}_{\tBss}(p)$, where we view $p$ as a map
  $(\tE,S)^{\sharp} \to \tBss$, and then composing with $\Ld$. Thus 
  $\FrofeB(\tE,S)$ is given by 
  \[ b \mapsto \Ld((\tE,S)^{\sharp} \times_{\tBss}
    \DFFree^{\checkvepsilon}_{\tBss}(\{b\})) \simeq     \begin{cases}
      \Ld\tEd_{b \upslash}, & \vepsilon = (1,0), \\
      \Ld\tEd_{\upslash b}, & \vepsilon = (0,1), \\
      \Ld\tEd_{b \downslash}, & \vepsilon = (1,1), \\
      \Ld\tEd_{\downslash b}, & \vepsilon = (0,0),
    \end{cases}
  \]
  where we can identify $\tEd_{b \upslash}$ as $(\tE_{b \upslash},
  S_{b})^{\sharp}$ with $S_{b}$ consisting of maps whose projection to
  $\tE$ lies in $S$, and similarly in the other cases. In particular, these
  decorated \itcats{} have
  \emph{all} 2-morphisms decorated, so after localizing them this
  functor indeed takes values in $\CATI$. 
  In particular, given a marking $(\tB,E)$ of $\tB$, we have the functor
  \[ \FrofeB(\tB,E) \colon \tB^{\veop} \to \CATI,\] given by
  \[ b \mapsto  \begin{cases}
      \Ld\tBd_{b \upslash}, & \vepsilon = (1,0), \\
      \Ld\tBd_{\upslash b}, & \vepsilon = (0,1), \\
      \Ld\tBd_{b \downslash}, & \vepsilon = (1,1), \\
      \Ld\tBd_{\downslash b}, & \vepsilon = (0,0),
    \end{cases}
  \]
  where $\tBd_{b \upslash}$ is decorated by the 1-morphisms that lie
  over $E$ and all 2-morphisms, and similarly in the other 3 cases.
\end{notation}

\begin{cor}\label{cor:free 1-fibred on marked}
  The fully faithful inclusion
  \[\oFIB_{/\tB}^{\ve} \hookrightarrow \MCATITsl{\tB^{\sharp}}\]
  has a left adjoint, which sends $(\tC,S) \to \tB^{\sharp}$ to the
  unstraightening of the
  functor $\FrofeB(\tC,S) \colon \tB^{\veop} \to \CATI$. In
  particular, for $F \colon \tB^{\veop} \to \CATI$ with corresponding
  $\ve$-fibration $\tE \to \tB$, we have a natural equivalence of
  \icats{}
  \[ \Nat_{\tB^{\veop},\CATI}(\FrofeB(\tC,S), F) \simeq
    \MCATITsl{\tB^{\sharp}}((\tC,S), \tE^{\natural}).\]
\end{cor}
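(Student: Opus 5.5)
The plan is to obtain the left adjoint by restricting the adjunction of \cref{cor:free fib on dec}, and then to read off the mapping \icat{} formula from straightening. First I would check that $\oFIB^{\ve}_{/\tB}$ sits inside $\MCATITsl{\tB^{\sharp}}$ compatibly with the embeddings into $\DCATITsl{\tBss}$. By the preceding observation, $\oFIB^{\ve}_{/\tB}$ is the intersection of $\FIBe_{/\tB}$ (via $(\blank)^{\natural}$) and $\MCATITsl{\tB^{\sharp}}$ (via $(\blank)^{\sharp}$), and $(\blank)^{\sharp} \colon \MCATIT \to \DCATIT$ is fully faithful. Consequently, for $(\tC,S)$ marked over $\tB^{\sharp}$ and $\tE$ a 1-fibred $\ve$-fibration, we get
\[ \MCATITsl{\tB^{\sharp}}((\tC,S), \tE^{\natural}) \simeq \DCATITsl{\tBss}((\tC,S)^{\sharp}, \tEn) \simeq \FIBe_{/\tB}(L(\tC,S)^{\sharp}, \tE),\]
where $L$ is the left adjoint from \cref{cor:free fib on dec}. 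By that corollary, $L(\tC,S)^{\sharp}$ is the unstraightening of $b \mapsto \Ld((\tC,S)^{\sharp} \times_{\tBss} \DFFree^{\checkvepsilon}_{\tBss}(\{b\}))$, which is exactly $\FrofeB(\tC,S)$ as in \cref{not:free 1-fib functor}.

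The key point is then that this fibration is 1-fibred. Its fibres are localizations of decorated \itcats{} in which all 2-morphisms are decorated: the decorated 2-morphisms of $(\tC,S)^{\sharp}$ are all of them, and the pullback of decorations with $\DARelax(\tBss)^{\diafib}$, whose 2-morphism decoration is maximal, keeps all 2-morphisms decorated. Inverting all 2-morphisms yields \icats{}, so $\FrofeB(\tC,S)$ lands in $\CATI$. Hence its unstraightening lies in $\oFIB^{\ve}_{/\tB}$. Since $\oFIB^{\ve}_{/\tB}$ is full in $\FIBe_{/\tB}$, the equivalence displayed above exhibits this unstraightening as the value of the left adjoint at $(\tC,S)$. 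These equivalences are natural, so they assemble into the adjunction at the level of \itcats{}, since all the functors involved are \itcats{} and \cref{cor:free fib on dec} comes from an adjunction of \itcats{} (\cref{obs:upgrade free fib adj}).

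For the final formula, apply the straightening equivalence of \cref{thm:str}: $\FIBe_{/\tB}(\mathrm{Un}\,\FrofeB(\tC,S), \tE) \simeq \Nat_{\tB^{\veop},\CATIT}(\FrofeB(\tC,S), F)$. Because $\CATI \subseteq \CATIT$ is full, this equals $\Nat_{\tB^{\veop},\CATI}$.

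The main obstacle I expect is the identification of mapping \icats{} (not just spaces) with the $\DCATIT$ structure. In $\DCATIT$, 2-morphisms are all natural transformations, whereas in $\FIBe_{/\tB}$ and $\MCATITsl{\tB^{\sharp}}$ they are arbitrary transformations over the base. These do agree, because the locally full embeddings $(\blank)^{\natural}$ and $(\blank)^{\sharp}$ are fully faithful on mapping \icats{} (\cref{cor: dec str comp flat}), but checking this compatibility is where the care is needed.
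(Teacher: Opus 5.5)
Your proposal is correct and follows the paper's own route: restrict the adjunction of \cref{cor:free fib on dec} to the full subcategories. As in the paper, the only substantive check is that the fibres $\Ld((\tC,S)^{\sharp}\times_{\tBss}\DFFree^{\checkvepsilon}_{\tBss}(\{b\}))$ are \icats{}, because all 2-morphisms there are decorated. You additionally spell out the identification of mapping \icats{} through $(\blank)^{\sharp}$ and $(\blank)^{\natural}$ and the final straightening step, which the paper leaves implicit.
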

\begin{proof}
  We need to show that the adjunction from \cref{cor:free fib on dec}
  restricts to these full subcategories; the only thing to check is
  that the left adjoint takes a marked \itcat{} to a 1-fibred
  fibration, which is clear since $\tEd \times_{\tBss}
  \DFFree^{\checkvepsilon}_{\tBss}(\{b\})$ then has \emph{all}
  2-morphisms decorated, and so applying $\Ldec{\blank}$ produces an \icat{}.
\end{proof}

\subsection{Smoothness for decorated \itcats{}}
\label{sec:smooth}

Suppose $p \colon \oE \to \oB$ is a cocartesian fibration of
\icats{}. Then $p$ is in particular exponentiable, meaning that the
pullback functor $p^{*} \colon \CatIsl{\oB} \to \CatIsl{\oE}$ has a
right adjoint $p_{*}$. Moreover, this right adjoint has the property
that if $q \colon \oD \to \oE$ is a cartesian fibration, then so is
$p_{*}(q)$ over $\oB$. Our goal in this section is to generalize these
results, as well as the closely related notion of smooth functors from
\cite[\S 4.1.2]{LurieHTT}, to the setting of decorated \itcats{}.

\begin{defn}\label{def:decconduche}
  A decorated functor $p \colon \tA^{\diamond} \to \tB^{\diamond}$ is
  said to be \emph{exponentiable} if the pullback functor
  \[
    p_{\dec}^* \colon {\DCATIT}_{/ \tB^{\diamond}} \to {\DCATIT}_{/ \tA^{\diamond}}
  \]
  admits a right adjoint $p_{\dec,*}$ which we call the \emph{decorated pushforward functor}.
\end{defn}

\begin{propn}\label{prop:decfibisexp}
  Suppose $p \colon \tA^{\diamond} \to \tD^{\sharp \sharp}$ is a decorated
   $\ve$-fibration. 
  Then $p$ is exponentiable.
\end{propn}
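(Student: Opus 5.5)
Since $\DCatIT$ is presentable (\cref{obs:adjointDcatIT}), so are its slices, and the plan is to apply the adjoint functor theorem on underlying \icats{}, then upgrade to \itcats{}. It therefore suffices to show that $p_{\dec}^{*} \colon \DCatITsl{\tD^{\sharp\sharp}} \to \DCatITsl{\tAd}$ preserves small colimits. The upgrade to the \itcatl{} level follows as in \cref{thm:fibsexponentiable}. The \itcat{} structure on these slices comes from $\oK \mapsto \oK^{\flat\flat} \times (\blank)$, and $p_{\dec}^{*}$ commutes with this product. Hence \cref{propn:upgrade adjn enr}(i) gives the adjunction of \itcats{}.

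For colimit preservation, we would reduce to the undecorated statement. Colimits in $\DCatIT$ are computed by taking the colimit of the underlying \itcats{} and equipping it with the decorations generated by the images. Pullbacks in $\DCatIT$ are computed on underlying spans. So we must check two things: the underlying \itcat{} of $p^{*}_{\dec}(\colim_{k} \tXd_{k})$ is $\colim_{k} (\tA \times_{\tD} \tX_{k})$, and the decorations agree. The first is exactly the statement that the underlying functor $p\colon \tA \to \tD$ is exponentiable in $\CatIT$. This holds by \cref{thm:fibsexponentiable}, since the underlying functor of a decorated $\ve$-fibration over $\tD^{\sharp\sharp}$ is an $\ve$-fibration.

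The main obstacle is the decorations. We need every decorated 1-morphism (resp.\ 2-morphism) of $\tA \times_{\tD} \tX$, namely a pair of decorated cells $(a, x)$ with $a \in \tAd$ and $x$ in the colimit decoration, to lie in the subcategory generated by the images of decorated cells of the $\tA \times_{\tD} \tX_{k}$. The decoration on $\colim \tX_{k}$ consists of composites (and whiskerings) of images of decorated cells $x_{1},\dots,x_{n}$ from the various $\tX_{k}$. The plan is to factor $a$ compatibly, using the structure of a decorated $\ve$-fibration. Take $\ve=(0,1)$ for concreteness. For a 1-morphism $a$ over $x_{n}\circ\dots\circ x_{1}$, take cocartesian lifts of $p(x_{1}),\dots$ starting at the source of $a$. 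Here $p(x_{m})$ is decorated in $\tD^{\sharp\sharp}$, so the lift exists, and its image in $\tA$ is cocartesian, hence decorated in $\tAd$. The last leg is then the remaining factor $a'$ lying over an identity. By the condition recorded in \cref{remark:dec fib explicit}, this $a'$ is again decorated, since $a$ is decorated and the preceding maps are cocartesian. Each piece $(\bar{x}_{m}, x_{m})$ and $(a', \id)$ then comes from a single $\tA \times_{\tD} \tX_{k}$.

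For 2-morphisms we do the analogous thing. A vertical composite of decorated 2-cells is factored with cartesian 2-morphism lifts plus a leftover decorated 2-cell over an identity, using the second clause of \cref{remark:dec fib explicit}. Whiskerings are handled by the first bullet of that clause (detection after precomposition with a cocartesian morphism). A remaining subtle point is that the decorations in a colimit also include cells generated in the colimit that are not single composites of images. Here we would use that $\DCatIT$-colimits are the localization of colimits of spans (\cref{obs:adjointDcatIT}), so the decorated part is $\colim \tX^{\diamond}_{k,(1)}$ factored through the image. Then we compare $\tA^{\diamond}_{(1)}\times_{\tD}(\blank)$ and $\tA^{\diamond}_{(2)}\times_{\tD}(\blank)$, which preserve colimits because $p_{(1)}^{\leq 1}$ is an $i$-fibration of \icats{} (exponentiable) and $p_{(2)}$ is a partial fibration over $\tD$ that one expects to be exponentiable by the same argument. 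Finally, the image factorization commutes with these pullbacks because pullback preserves the right classes.
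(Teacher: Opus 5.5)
Your route differs from the paper's. The paper never shows that $p^{*}_{\dec}$ preserves arbitrary colimits. Instead, for each $\tX^{\diamond}$ it constructs $p_{\dec,*}\tX$ directly as a sub-\itcat{} of the undecorated pushforward $p_{*}\tX$. Its cells and decorated cells are the decorated functors out of $\tT\times_{\tD}\tA^{\diamond}$, for $\tT$ built from $[0],[1],C_{2}$ with the various decorations. The only colimit input is \cref{lem:decfibexpbase}, i.e.\ preservation of colimits already computed on all three legs of the span, so decorations never need to be regenerated. This gives an explicit model of the pushforward inside $p_{*}\tX$, which is useful later.

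Your approach (colimit preservation, then the adjoint functor theorem, then the same $\CatI$-linearity upgrade) is viable and more conceptual. However, it must face exactly what the paper avoids: every decorated cell of $\tA\times_{\tD}\colim_{k}\tX_{k}$ must be generated by decorated cells of the $\tA\times_{\tD}\tX_{k}$. Your reduction to this via \cref{thm:fibsexponentiable} and faithfulness of $\Ud$ is fine. Your treatment of 1-morphisms is also fine; pieces that are equivalences are harmless, since cocartesian lifts of equivalences are equivalences.

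The 2-morphism step does not work as you state it. A decorated 2-cell of the colimit is a vertical composite of whiskerings $g\star\gamma\star f$ of images $\gamma\colon u\Rightarrow v$ (and of invertible 2-cells). The detection property in \cref{remark:dec fib explicit} (precomposition with a cocartesian morphism) lets you strip off $f$ via a cocartesian lift $\bar f$. For the postcomposition $g$, however, a $(0,1)$-fibration gives nothing analogous, because there are no cartesian 1-morphism lifts to factor through on the target side. Here is one fix:
\begin{itemize}
\item For $\alpha\colon a\Rightarrow a'$ decorated over $q(g)\star q(\gamma)$, write $a'\simeq c'\circ\bar v$ with $\bar v$ cocartesian over $q(v)$.
\item Take the cartesian lift $\tilde\gamma\colon w\Rightarrow\bar v$ of $q(\gamma)$ and factor $\alpha\simeq(c'\star\tilde\gamma)\circ\alpha''$.
\item Then $\alpha''$ lies over an identity 2-cell and is decorated by the triangle condition for cartesian 2-morphisms.
\item The pair $(c'\star\tilde\gamma,g\star\gamma)=(c',g)\star(\tilde\gamma,\gamma)$ is generated.
\item For the leftover $(\alpha'',\id_{g\circ u})$, write $g\circ u$ as a composite of images and equivalences and repeatedly peel off cocartesian lifts using the detection property. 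The last piece pairs a decorated 2-cell over an identity with the identity of an image, using that objects of the colimit are images up to equivalence.
\end{itemize}

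Your final paragraph should be dropped: there is no residual subtlety, and its justification is wrong. The wide locally full (resp.\ wide locally wide) sub-\itcat{} of $\colim_{k}\tX_{k}$ generated by images of decorated cells and equivalences (resp.\ invertible 2-cells) receives compatible maps from all $\tX^{\diamond}_{k}$. Hence it is exactly the colimit decoration, and the direct generation argument suffices. Moreover, pullback automatically preserves the right classes of the two factorization systems. For the image factorization to commute with pullback, you need pullback along $p^{\leq 1}_{(1)}$ and $p_{(2)}$ to preserve the \emph{left} classes. That fails for general functors and is precisely what the lifting arguments establish. Note also that $p_{(2)}$ is an honest $\ve$-fibration over $\tD$, since all decorations of $\tD^{\sharp\sharp}$ are maximal, while $p_{(1)}\colon\tA^{\diamond}_{(1)}\to\tD$ need not be a fibration.
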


For the proof we use the following observation:

\begin{lemma}\label{lem:decfibexpbase}
  Let $F \colon \oI \to (\DCatIT)_{/\tD^{\sharp \sharp}}$ be a functor
  where $\oI$ is an \icat{}, and suppose that the colimit of $F$ is
  preserved by the functors $\Ud$, $(\blank)_{(1)}^{\leq}$ and
  $(\blank)_{(2)}$. Then given a decorated $\vepsilon$-fibration
  $p \colon \tA^{\diamond} \to \tD^{\sharp \sharp}$, the canonical map
  \[\colim_{\oI}(p_{\dec}^*\circ
    F) \to p_{\dec}^{*}(\colim_{\oI} F)\]
  is an equivalence.
\end{lemma}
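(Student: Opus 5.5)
We reduce the statement to the undecorated case via the description of decorated \itcats{} as spans $\tA^{\diamond,\leq 1}_{(1)} \to \tA \from \tAd_{(2)}$ from \cref{eq:dcat via mcat}. A decorated \itcat{} over $\tD^{\sharp\sharp}$ is then a diagram of three pieces: an \icat{} over $\tD^{\leq 1}$, an \itcat{} over $\tD$, and an \itcat{} over $\tD$. Pullback along $p$ acts componentwise. On underlying \itcats{} it is $p^{*}$. On the $(2)$-part it is pullback along $p_{(2)} \colon \tAd_{(2)} \to \tD$. On the $(1)$-part it is pullback along $p^{\leq 1}_{(1)} \colon \tA^{\diamond,\leq 1}_{(1)} \to \tD^{\leq 1}$.

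The main point is that each of these three functors is an ordinary fibration of the appropriate kind. By the definition of a decorated $\ve$-fibration over $\tD^{\sharp\sharp}$:
\begin{itemize}
\item $p$ is an $\ve$-fibration;
\item $p_{(2)}$ is an $\ve$-fibration, being a partial $\ve$-fibration over the maximal decoration;
\item $p^{\leq 1}_{(1)}$ is an $i$-fibration of \icats{}.
\end{itemize}
By \cref{thm:fibsexponentiable}, pullback along $p$ and along $p_{(2)}$ admits a right adjoint on slices of $\CatIT$, so it preserves colimits. Likewise, pullback along a (co)cartesian fibration of \icats{} preserves colimits in $\CatIsl{\tD^{\leq 1}}$.

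Next we use the hypothesis. Since $\colim F$ is preserved by $\Ud$, $(\blank)^{\leq 1}_{(1)}$ and $(\blank)_{(2)}$, each component of $\colim F$ is the colimit, in the relevant slice of $\CatIT$ or $\CatI$, of the corresponding components of $F$. Colimits in slices are computed in the underlying category, so pulling back componentwise shows that the three components of $p_{\dec}^{*}(\colim F)$ are the colimits of the components of $p_{\dec}^{*}\circ F$. For example,
\[ \Ud\, p_{\dec}^{*}(\colim F) \simeq \tA \times_{\tD} \colim \Ud F \simeq \colim(\tA \times_{\tD} \Ud F), \]
and similarly for $(\blank)_{(2)}$ (using that $(\tA\times_{\tD}\tX)_{(2)} \simeq \tAd_{(2)} \times_{\tD} \tX_{(2)}$) and for $(\blank)^{\leq 1}_{(1)}$.

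It remains to compare with $\colim_{\oI}(p_{\dec}^{*}\circ F)$ computed in $\DCatIT$. By \cref{obs:adjointDcatIT}, this colimit is obtained by taking the colimit of spans in $\Fun(\Lambda^{2,\op}_{0},\CatIT)$ and then applying the factorization localization. We have just shown that the componentwise colimit of the spans $p_{\dec}^{*}F(\alpha)$ is the span $p_{\dec}^{*}(\colim F)$, which is already a decorated \itcat{}, so the localization does nothing. Hence the canonical map is an equivalence on each component. Since a map of decorated \itcats{} that is an equivalence on all three components is an equivalence, the claim follows.

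The step we expect to need the most care is the $(2)$-part. We must check that $(p^{*}\tX)_{(2)}$ is really the pullback $\tAd_{(2)}\times_{\tD}\tX_{(2)}$; this holds because pullback preserves the right class of the factorization system, as in the cartesian transport described in \cref{obs:adjointDcatIT}. We must also check that \cref{thm:fibsexponentiable} genuinely applies to $p_{(2)}$, which requires it to be a full $\ve$-fibration over $\tD$ and not merely a partial one.
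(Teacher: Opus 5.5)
Your argument is correct and is essentially the paper's proof: you decompose via \cref{eq:dcat via mcat} and use that $p$, $p_{(2)}$ and $p^{\leq 1}_{(1)}$ are fibrations, so pulling back along them preserves colimits (\cref{thm:fibsexponentiable}); the resulting componentwise colimit is then already a decorated \itcat{}, so no localization is needed. One point to tidy: the localization of \cref{obs:adjointDcatIT} is phrased for spans whose $(1)$-vertex is $\tAd_{(1)}$ rather than $\tA^{\diamond,\leq 1}_{(1)}$, and your computation only controls the latter. This is harmless because $\tA^{\diamond,\leq 1}_{(1)} \to \tAd_{(1)}$ is left orthogonal to locally fully faithful functors, so the image factorization of the $(1)$-leg depends only on the $(\blank)^{\leq 1}_{(1)}$-parts.
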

\begin{proof}
  Let $F^{\leq 1}_{(1)}\xrightarrow{}F \xleftarrow{} F_{(2)}$ be the
  cospan associated to $F$ (here we are using the description of
  $\DCatIT$ given in \cref{eq:dcat via mcat}) and consider the
  following commutative diagram, where the front part computes
  $p_{\dec}^*(\colim_{\oI} F)$ by our assumption on this colimit:
	\[\begin{tikzcd}[column sep=0.1em, row sep=2em]
	& {\colim_{\oI}F^{\leq 1}_{(1)} } && {\colim_{\oI}F} && {\colim_{\oI}F_{(2)}} \\
	{p^{\leq 1,*}_{(1)}(\colim_{\oI}F_{(1)} )^{\leq 1}} && {p^{*}(\colim_{\oI}F )} && {p^{*}_{(2)}(\colim_{\oI}F_{(2)} )} \\
	& \tD^{\leq 1} && \tD && \tD \\
	{\tA_{(1)}^{\leq 1}} && \tA && {\tA_{(2)}}
	\arrow[from=1-2, to=1-4]
	\arrow[from=1-2, to=3-2]
	\arrow[from=1-4, to=3-4]
	\arrow[from=1-6, to=1-4]
	\arrow[from=1-6, to=3-6]
	\arrow[from=2-1, to=1-2]
	\arrow[from=2-1, to=2-3]
	\arrow[from=2-1, to=4-1]
	\arrow[from=2-3, to=1-4]
	\arrow[from=2-3, to=4-3]
	\arrow[from=2-5, to=1-6]
	\arrow[from=2-5, to=2-3]
	\arrow[from=2-5, to=4-5]
	\arrow[from=3-2, to=3-4]
	\arrow[from=3-6, to=3-4]
	\arrow["p^{\leq 1}_{(1)}",from=4-1, to=3-2]
	\arrow[from=4-1, to=4-3]
	\arrow["p",from=4-3, to=3-4]
	\arrow["p_{(2)}",from=4-5, to=3-6]
	\arrow[from=4-5, to=4-3]
\end{tikzcd}\]
Since $p$ is a decorated $\vepsilon$-fibration, the functors
$p_{(1)}^{\leq 1}$, $p$ and $p_{(2)}$ are all $\ve$-fibrations, and so
pullback along them preserve colimits by
\cref{thm:fibsexponentiable}. The front of the diagram will therefore
describe $\colim_{\oI} p^{*}F$ after localizing it to ${\DCatIT}_{/
  \tA^{\diamond}}$. However, in this case no localization is required,
so this completes the proof.
\end{proof}

\begin{lemma}\label{lem:decfibisexp}
	Let $p \colon \tA^{\diamond} \to \tD^{\sharp \sharp}$ be a decorated $\vepsilon$-fibration. Then for every decorated functor $q \colon \tX^{\diamond} \to \tA^{\diamond}$ the presheaf
	\[
		{\DCatIT}_{/\tA^{\diamond}}(p_{\dec}^*(\blank),\tX^{\diamond}) \colon {\DCatIT}_{/\tD^{\sharp \sharp}} \to \SS,
	\]
	is representable. 
\end{lemma}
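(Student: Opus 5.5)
The representability will follow from the presentability of ${\DCatIT}_{/\tD^{\sharp\sharp}}$ together with the fact that the presheaf sends colimits to limits. By \cref{obs:adjointDcatIT}, $\DCatIT$ is presentable, hence so is the slice ${\DCatIT}_{/\tD^{\sharp\sharp}}$. For a presentable \icat{}, a presheaf of spaces is representable \IFF{} it takes small colimits to limits. Since ${\DCatIT}_{/\tA^{\diamond}}(\blank,\tX^{\diamond})$ already converts colimits into limits, it suffices to show that $p_{\dec}^{*}$ preserves small colimits.

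To prove this, I will first exhibit a class of colimit diagrams in ${\DCatIT}_{/\tD^{\sharp\sharp}}$ that satisfy the hypothesis of \cref{lem:decfibexpbase}. These are the diagrams whose colimit is preserved by $\Ud$, $(\blank)^{\leq 1}_{(1)}$ and $(\blank)_{(2)}$. I will then show that every colimit can be built from such diagrams, or at least compared with them.

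Concretely, \cref{obs:adjointDcatIT} says that a general colimit is computed by taking the colimit of the spans in $\Fun(\Lambda^{2,\op}_{0},\CatIT)$ and then applying the localization. That localization factors the two legs via the factorization systems with left classes generated by $\emptyset \to *$, $\partial C_{2} \to C_{2}$ and $\emptyset\to *$, $\partial[1]\to[1]$, $\partial C_{3}\to C_{2}$. So $\colim F$ is the colimit of the cospans, where the decoration pieces $\colim F^{\leq 1}_{(1)}$ and $\colim F_{(2)}$ are replaced by their images, namely the closures under composition. The plan is as follows.

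First, I write $\colim F$ as a coequalizer or pushout of the form $\colim^{\mathrm{naive}} F \amalg_{K} K'$. Here $\colim^{\mathrm{naive}}$ is the colimit in spans, and $K \to K'$ is a pushout of generating maps that "close up" the decorations. These are maps like the inclusion of two composable decorated morphisms into their composite, that is, ${[1]^{\sharp}\amalg_{[0]}[1]^{\sharp}} \to [2]^{\sharp}$, and the 2-dimensional analogues. All of these are pushouts of spans whose underlying \itcats{} and pieces are compatible.

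Second, I check that the naive colimit (the colimit of spans) is preserved by all three functors by construction. This means \cref{lem:decfibexpbase} applies to it. It also applies to pushouts along the generating closure maps, since each of those is computed levelwise as well.

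Third, I show that the closure-generating maps are sent by $p_{\dec}^*$ to maps that are again equivalences after localization. Pulling back along a decorated $\ve$-fibration preserves the decorations generated by composites, because decorated morphisms in $\tA^\diamond$ over a composite are computed fibrewise using $p^{\leq 1}_{(1)}$ and $p_{(2)}$ being fibrations (\cref{thm:fibsexponentiable}).

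The main obstacle is this last step: showing that localization in $\DCatIT$ commutes with $p^*_{\dec}$. I would try first to prove that $p_{\dec}^{*}$ commutes with the left adjoint $\DCatIT \hookleftarrow \Fun(\Lambda^{2,\op}_{0},\CatIT)$. The argument would use the fact that pullback along the $\ve$-fibrations $p$, $p^{\leq 1}_{(1)}$, $p_{(2)}$ preserves colimits, hence preserves the left classes of the factorization systems. These left classes are saturated classes generated by maps into $\tD$, and their pullbacks along fibrations should be computed cell by cell, using the lifts of the cells $C_k$ over $\tD$. The right classes are preserved by pullback automatically.

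Given this, $p_{\dec}^{*}$ preserves all colimits, and the adjoint functor theorem gives the representing object.
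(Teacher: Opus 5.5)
Reducing to colimit preservation is sound, and it is a genuinely different route from the paper's. Since $\DCatITsl{\tD^{\sharp\sharp}}$ is presentable, the lemma is equivalent to $p^{*}_{\dec}$ preserving small colimits.

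The gap is the step you yourself flag as the main obstacle. That step is the entire content of the lemma, and the mechanism you propose for it is false.

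Pullback along $p$, $p^{\leq 1}_{(1)}$, $p_{(2)}$ preserving colimits only tells you that pullbacks of left-class maps stay in the left class \emph{if} this already holds for the generators. It fails for them. Pulling back $\emptyset \to \{d\}$ gives $\emptyset \to \tA_{d}$, which is left orthogonal to wide locally full inclusions only when every 1-morphism of $\tA_{d}$ is invertible.

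In fact levelwise pullback does not commute with the localization $\Fun(\Lambda^{2,\op}_{0},\CatIT) \to \DCatIT$. The span $\emptyset \to \{d\} \leftarrow \emptyset$ localizes to the point $\{d\}$. Its pullback is the fibre $\tA^{\diamond}_{d}$ with the decorations inherited from $\tA^{\diamond}$. Localizing the levelwise pullback $\emptyset \to \tA_{d} \leftarrow \emptyset$ instead gives $\tA_{d}^{\flat\flat}$.

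So a correct argument must use two things your sketch does not:
\begin{itemize}
\item that one only ever localizes colimits of \emph{decorated} spans;
\item the (co)cartesian lifts in $\tA^{\diamond}$, not merely exponentiability.
\end{itemize}
Your third step points in the right direction, but it only asserts the needed factorization, citing \cref{thm:fibsexponentiable}, which says nothing about decorations. Relatedly, \cref{lem:decfibexpbase} concerns diagrams in $\DCatIT$ whose colimit there is computed levelwise. It therefore cannot be applied to your ``naive colimit'', which is not a decorated \itcat{}.

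Your route can be repaired. The functor $\Ud$ is left adjoint to $(\blank)^{\sharp\sharp}$ and faithful, and $p^{*}$ preserves colimits. Hence the comparison map $\colim_{i} p^{*}_{\dec}F_{i} \to p^{*}_{\dec}\colim_{i} F_{i}$ is an equivalence on underlying \itcats{}. It then remains to show that every decorated 1- or 2-morphism of the target lies in the decoration generated by the images of the $p^{*}_{\dec}F_{i}$.

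For a decorated 1-morphism $(t,a)$, write $t = t_{n}\circ\cdots\circ t_{1}$ as a composite of images of decorated morphisms. Then factor $a$ as (co)cartesian lifts of the $p(t_{k})$, followed by a decorated morphism over an identity, using the closure properties in \cref{remark:dec fib explicit}. Decorated 2-morphisms need an analogous argument with cartesian 2-morphisms and whiskerings.

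The paper avoids general colimits altogether. It builds the representing object directly as the sub-\itcat{} $p_{\dec,*}\tX \subseteq p_{*}\tX$ whose objects, morphisms and 2-morphisms are the decorated functors $\tT^{\diamond} \times_{\tD^{\sharp\sharp}} \tA^{\diamond} \to \tX^{\diamond}$ for $\tT \in \{[0],[1],C_{2}\}$, with suitable decorations. It uses \cref{lem:decfibexpbase} only for explicit colimits that need no localization, such as $[1]\amalg_{[0]}[1]$ and $C_{2}^{\flat\flat}\amalg_{\partial C_{2}^{\flat\flat}}\partial C_{2}^{\sharp\flat}$. It concludes by comparing the images of two monomorphisms into $\CatITsl{\tD}(\blank,p_{*}\tX)$.
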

\begin{proof}
	We consider the pointwise monomorphism of spaces (cf. \cref{cor:forgetfaithful}) 
	\[
		\Phi \colon \DCatITsl{\tA^{\diamond}}(p_{\dec}^*(\blank),\tX^{\diamond}) \to \CatITsl{\tA}(p^*(\blank),\tX)\simeq \CatITsl{\tD}(\blank,p_*\tX),
	\]
where the final isomorphism follows from the fact that the underlying functor of $p$ is an ordinary $\vepsilon$-fibration together with \cref{thm:fibsexponentiable}.

Let $p_{\dec,*} \tX$ be the sub-\itcat{} of $p_*\tX$ whose objects,
morphisms and 2-morphisms are given by decorated functors
$\tT^{\flat \flat }\times_{\tD^{\diamond}}\tA^{\diamond} \to
\tX^{\diamond}$ over $\tA^{\diamond}$ with $\tT\in
G=\{[0],[1],C_2\}$, respectively. To see that this definition does in fact give a
sub-\itcat{}, we need to verify that these classes of $i$-morphisms
are stable under composition in $p_*\tX$. This can be verified by
checking that $p^*_{\dec}$ preserves certain colimits of elements in
$G$. It is immediate to verify that those colimits satisfy the
hypothesis of \cref{lem:decfibexpbase}. This shows $p_{\dec,*}\tX$ is
a sub-\itcat{} of $p_* \tX$.

Now we define the decorations on $p_{\dec,*}\tX$: We define
$p_{\dec,*}\tX^{\diamond}_{(i)}$ for $i=1,2$ to be sub-\itcat{} of
$p_{*} \tX$ whose objects, morphisms and 2-morphisms are given by
decorated functors over $\tA^{\diamond}$ of the form
$\tTd \times_{\tD^{\diamond}}\tA^{\diamond} \to \tX^{\diamond}$
with
\[ \tTd \in
  \begin{cases}
    [0], [1]^{\sharp},C_{2}^{\sharp\flat}, & i = 1,\\
    [0],[1]^{\flat}, C_{2}^{\flat\sharp}, & i = 2,
  \end{cases}
\]
respectively. The same argument as before shows that
the definition above yields sub-\itcats{} as desired. Moreover, we
obtain canonical functors
	\[
		p_*\tX^{\diamond}_{(1)} \xrightarrow{}  p_*\tX \xleftarrow{}  p_*\tX^{\diamond}_{(2)} 
	\]
	We observe that since $[0]^{\flat \flat}=[0]^{\sharp \flat}=[0]^{\flat \sharp}$ both functors above induce equivalences on underlying spaces. We further observe that $[1]^{\flat \flat}=[1]^{\flat \sharp}$ and so the right-most morphism is an equivalence on underlying $\infty$-categories. Finally, we note that $C_2^{\sharp \flat}\simeq C_2^{\flat\flat}\amalg_{\partial C_2^{\flat \flat}} \partial C_2^{\sharp \flat}$ and that this colimit  again satisfies the hypothesis of \cref{lem:decfibexpbase}, which implies that the left-most map of the span above is locally full.

    To finish the proof, we note that we have a pointwise monomorphism of spaces ${\DCatIT}_{/\tD^{\sharp \sharp}}(-,p_{\dec,*}\tX) \hookrightarrow {\CatIT}_{/\tD}(-,p_*\tX)$ whose image agrees with that of $\Phi$. We conclude that $p_{\dec,*}\tX$ is the desired representing object.
\end{proof}

\begin{proof}[Proof of \cref{prop:decfibisexp}]
  By \cref{lem:decfibisexp}, the functor $p^*_{\dec}$ admits a right
  adjoint at the level of underlying \icats{}. As $p^*_{\dec}$ is
  $\CatI$-linear, we can promote $p^*_{\dec}$ to an
  $(\infty,2)$-categorical left adjoint using \cref{propn:upgrade adjn
    enr} as in the proof of \cref{thm:fibsexponentiable}.
\end{proof}

\begin{observation}\label{obs:fibre of pushforward}
  Suppose $p \colon \tAd \to \tBd$ is exponentiable. Then for
  $q \colon \tEd \to \tAd$, the fibre of $p_{\dec,*}(q)$ at
  $b \in \tB$ is $\DFUN_{/\tAd}(\tAd_{b}, \tEd)$.  Indeed, for a
  decorated \itcat{} $\tKd$, we have a natural equivalence
  \[
    \begin{split}
      \Map(\tKd, (p_{\dec,*}(q))_{b}) & \simeq \Map_{/\tBd}(\tKd
                                        \times \{b\}, p_{\dec,*}(q))
      \\
                                      & \simeq \Map_{/\tAd}(\tKd \times \tAd_{b}, \tEd) \\
       & \simeq \Map(\tKd, \DFUN_{/\tAd}(\tAd_{b}, \tEd)).
    \end{split}
  \]
  More generally, given a pullback square
  \[
    \begin{tikzcd}
     \tCd \ar[r, "\alpha"] \ar[d, "q"'] & \tAd  \ar[d, "p"] \\
     \tDd \ar[r, "\beta"'] & \tBd
    \end{tikzcd}
  \]
  where $p$ and $q$ are exponentiable, we have a Beck--Chevalley
  equivalence
  \[ \beta^{*}p_{*} \simeq q_{*}\alpha^{*},\]
  since the corresponding mate transformation of left adjoints is
  obviously an equivalence.
\end{observation}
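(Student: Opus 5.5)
The statement to prove is the last one, \cref{obs:fibre of pushforward}: the fibre of a decorated pushforward is a decorated functor \itcat{}, together with a Beck--Chevalley equivalence. The argument is by the Yoneda lemma in $\DCatIT$, testing against an arbitrary decorated \itcat{} $\tKd$; the only inputs are the adjunction $p^{*}_{\dec}\dashv p_{\dec,*}$ and the cartesian closedness of $\DCatIT$.

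For the fibre, I would first identify maps into $(p_{\dec,*}q)_b$. Here $\{b\}\to\tBd$ is decorated with $\{b\}$ the point, which has a unique decoration. The fibre $(p_{\dec,*}q)_b$ is the pullback of $p_{\dec,*}(q)$ along this map, and limits in $\DCatIT$ are computed underlying (\cref{obs:adjointDcatIT}). Hence maps $\tKd\to(p_{\dec,*}q)_b$ are the same as maps $\tKd\times\{b\}\to p_{\dec,*}(q)$ over $\tBd$.

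Next I apply the adjunction $p^{*}_{\dec}\dashv p_{\dec,*}$. It turns these into maps $p^{*}_{\dec}(\tKd\times\{b\}) \to \tEd$ over $\tAd$, and the source is $\tKd\times\tAd_b$, where $\tAd_b := \{b\}\times_{\tBd}\tAd$ is again a pullback of decorated \itcats{}.

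A map $\tKd\times\tAd_b\to\tEd$ over $\tAd$ is a map $\tKd\times\tAd_b\to\tEd$ whose composite to $\tAd$ is the projection followed by the inclusion $\tAd_b\to\tAd$. By cartesian closedness of $\DCatIT$ (internal Hom $\DFUN$), this corresponds to a map from $\tKd$ to the fibre of $\DFUN(\tAd_b,\tEd)\to\DFUN(\tAd_b,\tAd)$ over the inclusion. That fibre is what $\DFUN_{/\tAd}(\tAd_b,\tEd)$ denotes, with the decoration inherited from $\DFUN$.

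All of these equivalences are natural in $\tKd$, so the Yoneda lemma gives $(p_{\dec,*}q)_b\simeq\DFUN_{/\tAd}(\tAd_b,\tEd)$.

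For Beck--Chevalley, I would compare the left adjoints rather than the pushforwards. The mate of $\beta^{*}p_{*}\to q_{*}\alpha^{*}$ is a transformation between left adjoints $\DCATITsl{\tDd}\to\DCATITsl{\tAd}$, namely $\alpha_{!}q^{*}\to p^{*}\beta_{!}$. Here $\alpha_!,\beta_!$ are left adjoint to $\alpha^{*},\beta^{*}$, given by composition.

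Evaluated at $\tXd\to\tDd$, this mate is the comparison $\tXd\times_{\tDd}\tCd\to\tXd\times_{\tBd}\tAd$. It is an equivalence by pasting pullback squares, since $\tCd\simeq\tDd\times_{\tBd}\tAd$, using once more that pullbacks in $\DCatIT$ are computed underlying with pulled-back decorations. A transformation between left adjoints is an equivalence exactly when its mate between the right adjoints is, so $\beta^{*}p_{*}\simeq q_{*}\alpha^{*}$.

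The one point needing care is in the first step: the identification of $\tKd\times\tAd_b$ with $p^{*}_{\dec}(\tKd\times\{b\})$ must hold as decorated \itcats{}, not just on underlying \itcats{}. This follows because pullback in $\DCatIT$ is the cartesian transport of $\Ud$. Everything else is formal.
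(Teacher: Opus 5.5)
Your proposal is correct and follows the paper's own argument: the same Yoneda chain (maps into the fibre as maps from $K\times\{b\}$ over the base, then the adjunction $p^{*}_{\mathrm{dec}}\dashv p_{\mathrm{dec},*}$, then cartesian closedness), and the same Beck--Chevalley argument, namely that the mate $\alpha_{!}q^{*}\to p^{*}\beta_{!}$ of left adjoints is an equivalence by pasting pullback squares. The one step you flag as needing care is actually purely formal: $(K\times\{b\})\times_{B}A\simeq K\times(\{b\}\times_{B}A)$ holds in any $\infty$-category with finite limits.
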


\begin{defn}\label{def:smooth}
  A decorated functor $p \colon \tA^{\diamond} \to \tD^{\diamond}$ is
  said to be $\vepsilon$-\emph{smooth} if $p$ is exponentiable, 
  and given a commutative diagram 
    \[\begin{tikzcd}
        {\tS^{\prime\diamond}} & {\tT^{\prime\diamond}} & {\tA^{\diamond}} & {} \\
        {\tS^{\diamond}} & {\tT^{\diamond}} & {\tB^{\diamond}}
        \arrow["{\phi^{\prime}}", from=1-1, to=1-2]
        \arrow[from=1-1, to=2-1]
        \arrow[from=1-2, to=1-3]
        \arrow[from=1-2, to=2-2]
        \arrow[from=1-3, to=2-3]
        \arrow["\phi", from=2-1, to=2-2]
        \arrow[from=2-2, to=2-3]
      \end{tikzcd}
    \]
    where both squares are pullbacks and $\phi$ is an
    $\vepsilon$-cofibration (\cf{} \cref{def:epsiloncof}), then
    $\phi^{\prime}$ is again an $\vepsilon$-cofibration. 
\end{defn}

\begin{lemma}\label{lem:smooth and pushfwd}
  Suppose $p \colon \tAd \to \tBd$ is
  $\vepsilon$-smooth. Then
  \begin{enumerate}[(i)]
  \item The pullback functor \[p_{\dec}^{*} \colon \DCATITsl{\tBd} \to
    \DCATITsl{\tAd}\] preserves $\ve$-equivalences.
  \item The pushforward functor \[p_{\dec,*} \colon \DCATITsl{\tAd} \to
    \DCATITsl{\tBd}\] preserves partial $\ve$-fibrations.
  \item The adjunction $p_{\dec}^{*} \dashv p_{\dec,*}$ restricts on full
    subcategories to an adjunction
    \[ p_{\dec}^{*} \colon \PFIBe_{/\tBd} \rightleftarrows \PFIB_{/\tAd} \colon p_{\dec,*}.\]
  \end{enumerate}
\end{lemma}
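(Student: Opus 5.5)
We will establish the three parts in order, with (ii) deduced from (i) by adjunction and (iii) immediate from (ii).

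For (i), we use that the $\ve$-equivalences over $\tBd$ form the strongly saturated class generated by the maps \cref{eq:gen eps equiv}, i.e.\ by the maps $\phi \colon \{i\} \otimesde C_{k}^{\flat\flat} \to [1]^{\sharp} \otimesde C_{k}^{\flat\flat}$ over $\tBd$. Since $p_{\dec}^{*}$ is a left adjoint (exponentiability), it preserves colimits in $\DCatITsl{\tBd}$. Hence the class of maps whose pullback is an $\ve$-equivalence over $\tAd$ is again strongly saturated, because colimits and pushouts are preserved and two-out-of-three is clear. It therefore suffices to treat the generators. Here smoothness applies directly. The generator $\phi \colon \tSd \to \tTd$ over $\tBd$ is an $\ve$-cofibration, so its pullback $\phi' \colon \tS'^{\diamond} \to \tT'^{\diamond}$ along $p$ is an $\ve$-cofibration. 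By \cref{obs:cofib vs equiv}, $\phi'$ is then an $\ve$-equivalence over $\tT'^{\diamond}$, and applying $(\blank)_{!}$ along $\tT'^{\diamond} \to \tAd$ (which preserves $\ve$-equivalences by the same observation) shows it is an $\ve$-equivalence over $\tAd$.

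For (ii), let $q \colon \tEd \to \tAd$ be a partial $\ve$-fibration. By \cref{obs:epscofoverB}, the map $p_{\dec,*}q$ is a partial $\ve$-fibration over $\tBd$ if and only if it is local with respect to the generating maps $\phi$ over $\tBd$. By adjunction, the map $\Map_{/\tBd}(\phi, p_{\dec,*}q)$ is identified with $\Map_{/\tAd}(p^{*}_{\dec}\phi, q)$. This is an equivalence because $p_{\dec}^{*}\phi$ is an $\ve$-equivalence over $\tAd$ by (i), and $q$ is local with respect to $\ve$-equivalences, being an object of $\PFibe_{/\tAd}$, which is the local subcategory for the localization $L^{\ve}_{\tAd}$. (Alternatively, one can argue with orthogonality against cofibrations, but locality over a fixed base is all we need.)

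For (iii), recall that $p_{\dec}^{*}$ preserves partial $\ve$-fibrations, since these are stable under base change (\cref{obs:epscofoverB}). Part (ii) shows that $p_{\dec,*}$ also preserves them. Both $\PFIBe_{/\tBd}$ and $\PFIBe_{/\tAd}$ are full sub-\itcats{} of the decorated slices, via the canonical decoration on partial fibrations. An adjunction between ambient \itcats{} whose two functors preserve given full subcategories restricts to an adjunction on those subcategories, since mapping \icats{} are computed in the ambient categories.

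The main point requiring care is (i): we need the generators to be pulled back along maps whose targets are the relevant generators over $\tBd$, so that the hypothesis of \cref{def:smooth} applies with $\tTd \to \tBd$ the structure map. We must also check that $p_{\dec}^{*}$ preserves the colimits used in the strong saturation, which follows from exponentiability.
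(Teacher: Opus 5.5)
Your proposal is correct and follows the same route as the paper: (i) comes from the definition of $\varepsilon$-smoothness, (ii) follows from (i) by adjunction, and (iii) follows by restricting the adjunction, since both functors preserve partial $\varepsilon$-fibrations. The only difference is that you spell out the step the paper calls immediate in (i): you reduce to the generating $\varepsilon$-cofibrations over the base, using strong saturation together with the fact that pullback is a left adjoint and so preserves colimits.
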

\begin{proof}
  The first part is immediate from the definition of smoothness, and
  implies the second by adjunction. It follows that both of the
  functors $p^{*}_{\dec}$ and $p_{\dec,*}$ preserve partial
  $\ve$-fibrations, so the adjunction restricts as claimed in the final part.
\end{proof}

\begin{thm}\label{thm:decsmooth}
	Let $p \colon \tA^{\diamond} \to \tB^{\sharp \sharp}$ be a decorated $\checkvepsilon$-fibration. Then $p$ is $\vepsilon$-smooth.
      \end{thm}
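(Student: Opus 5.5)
Exponentiability is already provided by \cref{prop:decfibisexp}, since $\checkvepsilon$-fibrations over $\tB^{\sharp\sharp}$ are in particular decorated fibrations over a maximally decorated base. What remains is the cofibration condition in \cref{def:smooth}. I will reduce it to the generating $\ve$-cofibrations and then analyse each generator explicitly.

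\textbf{Step 1: reduction to generators.} Fix $\tT^{\diamond} \to \tB^{\sharp\sharp}$. By \cref{lem:decfibexpbase} (applied after pulling $p$ back along $\tT^{\diamond}\to\tB^{\sharp\sharp}$, which is again a decorated $\checkvepsilon$-fibration over $\tT$ once we pass to $\tT^{\sharp\sharp}$), pullback along $p$ preserves the colimits relevant to the saturated class: pushouts, transfinite composites, retracts and coproducts of maps built from the cells. For pushouts the preservation hypotheses of \cref{lem:decfibexpbase} are not automatic, so I will instead use that $p^*_{\dec}$ has a right adjoint $p_{\dec,*}$ and hence preserves all colimits in $\DCatITsl{\tT^\diamond}$. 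The class of maps $\phi$ over $\tT^{\diamond}$ whose pullback is an $\ve$-cofibration is therefore saturated. It suffices to treat a generator $\phi\colon \{i\}\otimesde C_k^{\flat\flat}\to [1]^{\sharp}\otimesde C_k^{\flat\flat}$ equipped with an arbitrary decorated map to $\tB^{\sharp\sharp}$, and we may pull $p$ back along that map. So we reduce to $p$ being a decorated $\checkvepsilon$-fibration over $\tT:=[1]\otimes C_k$ with $k=0,1,2$.

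\textbf{Step 2: describe the pullback via straightening.} By \cref{thm: dec straighten}, $p$ corresponds to a functor $\tT^{\checkvepsilon\dop}\to\DCATIT$. For $\ve=(0,1)$ we have $\checkvepsilon=(1,0)$, so this is a functor $([1]\otimes C_k)^{\op}\to\DCATIT$, which amounts to a contravariant lax square of decorated $(\infty,2)$-categories. The pullback $\phi'$ is the inclusion of the fibre over $\{0\}\otimesde C_k$ into the total space. I will show $\phi'$ is an $\ve$-cofibration by exhibiting it as built from cells. The concrete tool is a retraction in the style of \cref{lem: lax tr eps-equiv}. Cartesian transport along the decorated edge $0\to 1$ of $[1]^\sharp$ (which is $\checkvepsilon=(1,0)$-cartesian, i.e. opposite variance) gives a functor $r$ from the total space back to the fibre over $0$. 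It is a left inverse of $\phi'$, and there is a lax transformation $[1]^{\sharp}\otimesd(\text{total})\to\text{total}$ from $\id$ to $\phi' r$, whose components are the cartesian edges, which are decorated.

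\textbf{Step 3: conclude.} \cref{lem: lax tr eps-equiv} then shows that $\phi'$ is an $\ve$-equivalence over $[1]^\sharp\otimesde C_k$. By the last bullet of \cref{obs:cofib vs equiv}, $[1]^\sharp\otimesde C_k^{\flat\flat}\to\tT$ is a partial $\ve$-fibration (the identity on the target), and $\phi'$ lives over it, so $\phi'$ is an $\ve$-cofibration.

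\textbf{Main obstacle.} The hard part is Step 2: checking that the transformation really is a decorated map out of $[1]^\sharp\otimesd(-)$ with the correct orientation. Its naturality squares must contain decorated 2-morphisms, and it must lie over the homotopy $[1]^\sharp\otimesde C_k\to[1]^\sharp\otimesde C_k$ collapsing to $0$. This is where the opposite variance $\checkvepsilon$ is essential. If it proves awkward, I would instead verify the claim directly on the oriental generators of \cref{prop:01equivorientals}, computing the pullbacks via the scaled-simplicial model.
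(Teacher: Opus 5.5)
Your Step~1 is fine: exponentiability comes from \cref{prop:decfibisexp}, and you may reduce to the generators $\{0\}\otimesde C_k^{\flat\flat}\to[1]^{\sharp}\otimesde C_k^{\flat\flat}$ because $p^{*}_{\dec}$ is a left adjoint on slices. Step~3, however, does not follow from Step~2.

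\cref{lem: lax tr eps-equiv} needs both the retraction $G$ and the homotopy $\rho$ to be maps \emph{over} the base. Your $r$ lands in the fibre over $\{0\}\otimesde C_k$. It therefore lies over the collapse $[1]\otimes C_k\to\{0\}\otimes C_k\hookrightarrow[1]\otimes C_k$, not over $[1]^{\sharp}\otimesde C_k^{\flat\flat}$, so the lemma tells you nothing there. Over a base that everything does lie over, such as $[0]$, the lemma only says that $\Ld$ inverts $\phi'$. That is far weaker than being an $\ve$-cofibration: $\{1\}\to[1]^{\sharp}$ also has this property.

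Even granting an $\ve$-equivalence over $[1]^{\sharp}\otimesde C_k^{\flat\flat}$, the last bullet of \cref{obs:cofib vs equiv} needs the \emph{codomain} of $\phi'$ to be a partial $\ve$-fibration over that base. That codomain is the pulled-back total space $\tA'\to[1]^{\sharp}\otimesde C_k^{\flat\flat}$. It is a partial $\checkvepsilon$-fibration, and its decorated edges over $0\to1$ are cartesian, not cocartesian. The fact that the base is a partial fibration over itself plays no role in that bullet. Note also that the cartesian edges $\phi'r(y)\to y$ run from $\phi'r$ to $\id$, not the other way round as you wrote.

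What a retraction could actually give you is a retract argument. Suppose $r\phi'\simeq\id$, and $\rho$ runs from $\phi'r$ at $0$ to $\id$ at $1$, is stationary on $\tA'_0$, and has commuting naturality squares at the decorated edges $E$ of $\tA'$. Writing $E$ also for its restriction to $\tA'_0$, $\phi'$ is then a retract of
\[ \tA'\amalg_{\tA'_0}\bigl([1]^{\sharp}\otimesd_{\flat,E}\tA'_0\bigr)\longrightarrow[1]^{\sharp}\otimesd_{\flat,E}\tA'. \]
This map is an $\ve$-cofibration by the corollary right after \cref{lem: lax tr eps-equiv}, together with cancellation in the left class.

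The marking $E$ is essential here. Already $\{0\}\otimesd[1]^{\sharp}\to[1]^{\sharp}\otimesd[1]^{\sharp}$ fails to be a $(0,1)$-cofibration: the forced lift of the edge $(1,0)\to(1,1)$ is obtained by transport along the non-invertible 2-cell, and it is not cocartesian in general.

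Even with this repair, the real content is exactly what your Step~2 leaves open. You would have to construct $r$ coherently as a functor of $(\infty,2)$-categories across the non-invertible 2-cells of $[1]\otimes C_k$. That requires cocartesian transport of 2-cells, in a direction that must be checked against the Gray tensor convention. You would also have to show that the naturality 2-cells of $\rho$ are decorated in $\tA'$.

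The paper never builds such a retraction. It combines \cref{prop:decfibisexp} with the oriental generators of \cref{prop:01equivorientals} and the pushout decomposition of \cref{prop:decoratedescent}. That decomposition writes the pullback over $\mathbb{O}^{n}$ as $\tA_n\times\mathbb{O}^{n}$ glued to the pullback over $\mathbb{O}^{[0,n-1]}$, so no homotopy needs to be constructed.
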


\begin{remark}
  The cases of \cref{prop:decfibisexp} and \cref{thm:decsmooth} where
  $p$ is an $\ve$-fibration (without additional decorations) can be
  extracted from Theorem 3.2.3.15 and Proposition 3.2.5.9 of
  \cite{loubaton}, respectively. Loubaton also allows the marking on
  the target to be non-maximal, so his result also covers some cases we
  do not consider.
\end{remark}

For the proof we need the following technical input:
\begin{propn}\label{prop:decoratedescent}
	Suppose that we are given pullback diagrams of decorated \itcats{}
	\[
		\begin{tikzcd}
			\tA^{\diamond} \arrow[r] \arrow[d,"p"] & \tA^{\blacklozenge} \arrow[d,"\overline{p}"] \\
			\mathbb{O}^{n,\diamond} \arrow[r] & (\mathbb{O}^{n})^{\sharp \sharp}{,}
		\end{tikzcd}
		\enspace \enspace \enspace
		\begin{tikzcd}
			\tB^{\diamond} \arrow[r] \arrow[d,"q"] & \tB^{\blacklozenge} \arrow[d,"\overline{q}"] \\
			\mathbb{O}^{n,\dagger}_+ \arrow[r] & (\mathbb{O}^{n}_+)^{\sharp \sharp}{,}
		\end{tikzcd}
	\]
	where $\overline{p}$ and $\overline{q}$ are decorated
        $(1,0)$-fibrations and we use the notation of \cref{def:orientalhorns}. Then we have pushout squares in $\DCATIT$
	\[
		\begin{tikzcd}
			\tA^{\diamond}_{n}\times \mathbb{O}^{[0,n-1],\flat \flat } \arrow[d] \arrow[r] & \tA^{\diamond}_{n}\times \mathbb{O}^{n,\diamond} \arrow[d] \\
			\tA^{\diamond}\times_{\mathbb{O}^{n,\diamond}} \mathbb{O}^{[0,n-1],\flat \flat } \arrow[r] & \tA^{\diamond}{,}
		\end{tikzcd} \enspace \enspace \enspace 
		\begin{tikzcd}
			\tB^{\diamond}_{n}\times \mathbb{O}_+^{[0,n-1],\dagger} \arrow[d] \arrow[r] & \tB^{\diamond}_{n}\times \mathbb{O}^{n,\dagger} \arrow[d] \\
			\tB_{\diamond}\times_{\mathbb{O}_+^{n,\dagger}} \mathbb{O}^{[0,n-1],\dagger } \arrow[r] & \tB^{\diamond}{,}
		\end{tikzcd}
	\]
	where $\tA^{\diamond}_{n}$ and $\tB_{n}^{\diamond}$ denote the corresponding fibres over the object $n \in \mathbb{O}^{n}$.
\end{propn}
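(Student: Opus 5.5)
The plan is to check the pushout on underlying \itcats{} and then separately on the decorations, using the description of colimits in $\DCatIT$ from \cref{obs:adjointDcatIT}. I treat the first square; the second is the same argument with $\mathbb{O}^{n,\dagger}_+$ in place of $\mathbb{O}^{n,\diamond}$. The underlying functor $\bar p\colon \tA \to \mathbb{O}^n$ is a $(1,0)$-fibration, so it has a straightening $\mathbb{O}^{n,\op}\to\CATIT$. The key geometric input is that $\mathbb{O}^n$ is itself a pushout: it is obtained from $\mathbb{O}^{[0,n-1]}$ by adjoining the object $n$, together with the mapping categories $\mathbb{O}^n(i,n)$, and these form a cone. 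Concretely, there should be a pushout
\[ \{n\}\times\mathbb{O}^{[0,n-1]} \to \{n\}\times \mathbb{O}^n,\quad \{n\}\times\mathbb{O}^{[0,n-1]}\to \mathbb{O}^{[0,n-1]} \]
after replacing $\{n\}\times\mathbb{O}^n$ by an appropriate \emph{lax cone}. I would make this precise using the free $(1,0)$-fibration. By \cref{thm:free partial fib}, the $(1,0)$-fibration over $\mathbb{O}^n$ corresponding to the representable at $n$ is $\mathbb{O}^n_{\upslash n}$. Since $n$ is terminal-like in $\mathbb{O}^n$, every object of $\tA$ admits a cartesian map to the fibre $\tA_n$. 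This yields a comparison functor $\tA_n\times(\mathbb{O}^n)_{\upslash n}\to \tA$ over $\mathbb{O}^n$, and, restricted appropriately, $\tA_n\times\mathbb{O}^{n}\to\tA$, which is the identity over $n$ and given by cartesian transport elsewhere.

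First, working in a model, I would verify the pushout on underlying \itcats{}. Here I would use scaled simplicial sets, as in \cref{lem:inclusionconeff}: model $\bar p$ by a fibration $\mathfrak{A}\to\Delta^n$, and compute the pushout of marked-simplicially enriched categories after applying $\mathfrak{C}^{\mathrm{sc}}$. The objects of the pushout are those of $\tA_{<n}$ and $\tA_n$. Mapping categories from $a\in\tA_i$ ($i<n$) to $x\in\tA_n$ are computed as $\tA_n(a_!,x)\times\mathbb{O}^n(i,n)$, twisted along the gluing; here $a_!$ denotes the relevant transported object. This matches $\tA(a,x)$ because $\bar p$ is a $(1,0)$-fibration: $\tA(a,x)\simeq\tA_n(\bar f^*\!\ldots)$ fibred over $\mathbb{O}^n(i,n)$ via cartesian lifts of $1$-morphisms (and $0$-cartesian lifts of 2-morphisms). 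The fibres of $\tA(a,x)\to\mathbb{O}^n(i,n)$ are identified with mapping categories in $\tA_n$ out of transported objects. I expect this to be the main obstacle: the $(1,0)$-variance means transport goes towards $n$ only after choosing cartesian lifts \emph{into} $n$, and one must check that $\tA(a,x)$ is the product (not merely a fibration) over the poset $\mathbb{O}^n(i,n)$. I would first try to reduce this to the fact that $\mathbb{O}^n(i,n)$ has an initial object $\{i,n\}$, combined with the 2-cartesian structure. Alternatively, I would compare both sides after applying $\Map(\blank,\tX)$ and use straightening (\cref{thm:str}), which reduces the question to a statement about lax limits over $\mathbb{O}^{n,\op}$.

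Second, I would handle the decorations. By \cref{obs:adjointDcatIT}, the pushout in $\DCatIT$ carries the decorations generated by the images. So I need every decorated 1- and 2-morphism of $\tA^\diamond$ to be generated by decorated cells in $\tA^\diamond_{<n}$ and $\tA_n^\diamond\times\mathbb{O}^{n,\diamond}$. Decorations of $\tA^\diamond$ are pulled back from the decorated fibration $\tA^\blacklozenge$ over $\mathbb{O}^{n,\diamond}$, which decorates $(n-1)\to n$ and the 2-cells $\{i,n\}\subset\{i,n-1,n\}$. By the factorization properties in \cref{remark:dec fib explicit} (dualized to $(1,0)$), a decorated morphism into $\tA_n$ factors as a decorated morphism in a fibre followed by a cartesian one. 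Both of these lie in the image of $\tA_n^\diamond\times\mathbb{O}^{n,\diamond}$ or of $\tA^\diamond_{<n}$, and the analogous statement for 2-morphisms follows from the 2-morphism clauses there. This finishes the argument, with the $\dagger$ case differing only in bookkeeping for the inverted 2-cells $\{0,j\}\to\{0,1,j\}$.
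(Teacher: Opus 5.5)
Your two-step architecture matches the paper's. First, show that the comparison map from the pushout to $\tA^{\diamond}$ is an equivalence of underlying \itcats{}. Second, since a pushout in $\DCatIT$ carries the decorations generated by the images (\cref{obs:adjointDcatIT}), check that every decorated cell of $\tA^{\diamond}$ is generated.

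Your treatment of decorated 1-morphisms over $(n-1)\to n$ is exactly the paper's argument: a fibre morphism followed by the cartesian lift, which is the image of $(\id_y,\{n-1,n\})$. For decorated 2-morphisms over $\{i,n\}\subseteq\{i,n-1,n\}$ you still need to say how they are generated. Factor such a 2-morphism as a cocartesian 2-cell followed by a decorated 2-cell over an identity. The cocartesian part is the whiskering of the image of the generating 2-cell of $\tA^{\diamond}_n\times\mathbb{O}^{n,\diamond}$ by a fibre morphism $x\to\{i,n\}^*y$. The second part is a fibrewise 2-cell whiskered by a cartesian lift. This is what the paper's whiskering argument does.

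The real gap is the underlying step, which carries all of the content. The paper does not prove it by hand. It imports it from Abell\'an--Stern \cite[Corollary 3.73, Remark 3.74]{ASI23}, a model-dependent result proved with scaled simplicial sets.

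Your sketch of that step rests on claims that are false for $(1,0)$-fibrations.

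\emph{Cartesian maps into $\tA_n$.} Cartesian lifts have prescribed \emph{target}, so not every object of $\tA$ admits a cartesian map into $\tA_n$. For the fibration over $\mathbb{O}^1=[1]$ classifying the functor $*\to[1]$ picking $0$, the object $1\in\tA_0=[1]$ has no morphism at all to $\tA_1$.

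\emph{The object $a_!$.} There is no transported object $a_!$. A formula $\tA_n(a_!,x)$ would require the transport functors $S^*$ to have left adjoints, i.e.\ $\overline{p}$ to be cocartesian as well.

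\emph{The product claim.} What is true is that $\tA(a,x)\to\mathbb{O}^n(i,n)$ is a cocartesian fibration of \icats{} whose fibre over $S$ is $\tA_i(a,S^*x)$. These fibres genuinely vary with $S$, so the product decomposition you propose to check fails. For instance, take $n=2$ and $F(1)=F(2)=*$, $F(0)=[1]$, with $F(\{0,2\})$ and $F(\{0,1\})$ picking the two distinct objects of $[1]$ (in the order the 2-cell requires). Let $x$ be the object of $\tA_2$. Then for one of the two objects $a\in\tA_0$, the category $\tA(a,x)$ is a point lying over only one of the two objects of $\mathbb{O}^2(0,2)$.

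\emph{The case $n=1$.} Even here the pushout is the cograph of $f^*\colon\tA_1\to\tA_0$, whose mapping categories are $\tA_0(a,f^*x)$, not $\tA_1(a_!,x)$.

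The fallback routes you mention (the initial object $\{i,n\}$, or straightening) are only named, not carried out. To close the gap, either cite \cite[Corollary 3.73, Remark 3.74]{ASI23} as the paper does, or do a rigidified computation in the style of \cref{lem:inclusionconeff} using the correct fibrewise description. The relevant inputs there are the initial objects $\{j,n\}$ of $\mathbb{O}^n(j,n)$ and cocartesian transport of 2-cells along $\{j,n\}\subseteq S\cup\{n\}$. This is also how the map $\tA_n\times\mathbb{O}^n\to\tA$ is actually built, rather than from cartesian maps out of arbitrary objects.
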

\begin{proof}
	Let $f_{\diamond}\colon P_n^{\diamond} \to \tA^{\diamond}$ and $g_{\diamond} \colon Q_n^{\diamond} \to \tB^{\diamond}$ denote corresponding functors out of the pushout. Invoking \cite[Corollary 3.73, Remark 3.74]{ASI23} we see that the functors $f_{\diamond}$ and $g_{\diamond}$ are equivalences on underlying \itcats{}. Therefore our question reduces to verifying that the decorations on both sides agree. First, we observe that $P_n^{\diamond}$ and $Q_{n}^{\diamond}$ contain all of the decorations of $\tA^{\diamond}$ and $\tB^{\diamond}$ that live in the fibre over a point $i \in \mathbb{O}^n$. We proceed case by case.
	\begin{itemize}
		\item Let $u \colon x \to y$ be an edge in $\tA^{\diamond}_{(1)}$ living over $n-1 \to n$. Then we can express $u=\beta \circ \alpha$ where $\beta$ is a decorated morphism in the fibre over $n-1$ and $\alpha$ is a decorated edge that factors through $P_n^{\diamond}$. We conclude that $u$ also factors through $P_n^{\diamond}$. The analogous claim for $\tB^{\diamond}$ is immediate. 
		\item Let $u,v \colon x \to y$ be morphisms in
                  $\tA^{\diamond}$ that live over the morphisms $\{i,n\}$ and $\{i,n-1,n\}$, respectively, and consider $\theta \colon u \to v$ in $\tA^{\diamond}_{(2)}$. We consider the map 
		\[
			\phi \colon \{y\}\times \mathbb{O}^2 \to \tA_{n}\times \mathbb{O}^2 \to \tA
		\]
		and denote by $\phi(0)=\hat{x}$ and by $\hat{\theta}$
                the corresponding map $C_2 \to \tA$. Then it follows
                that there exists a map $\iota \colon x \to \hat{x}$
                such that the whiskering $\iota \circ \hat{\theta}$ equals
                $\theta$. Since $\tA_{2}^{\diamond} \to
                \mathbb{O}^{n,\diamond}_{(2)}$ is a fibration, we can
                choose $\hat{\theta}$ to factor through
                $\tA^{\diamond}_{(2)}$ and so the result
                holds. \qedhere
	\end{itemize}
\end{proof}

\begin{proof}[Proof of \cref{thm:decsmooth}]
	We let $\epsilon=(0,1)$ without loss of generality. The proof now follows as a combination of \cref{prop:decfibisexp}, \cref{prop:01equivorientals} and \cref{prop:decoratedescent}.
\end{proof}

The following corollary now follows directly from \cref{thm:decsmooth}
and \cref{lem:smooth and pushfwd}.

\begin{cor}\label{cor:pushforwardisfib}
	Let $p \colon \tA^{\diamond} \to \tB^{\sharp \sharp}$ be a
        decorated $\overline{\vepsilon}$-fibration. Then the
        adjunction $p_{\dec}^{*} \dashv p_{\dec,*}$ on slices of
        $\DCATIT$ restricts on full subcategories to an adjunction
	\[
		p_{\dec}^* : \FIB^{\vepsilon}_{/\tB} \to
                \PFIB^{\vepsilon}_{/\tA^{\diamond}} : p_{\dec,*}. \qed
	\]
\end{cor}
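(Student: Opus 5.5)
The statement to prove is \cref{cor:pushforwardisfib}. The plan is to combine \cref{thm:decsmooth} with \cref{lem:smooth and pushfwd}, after checking that the relevant categories match.

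First, take $p \colon \tAd \to \tB^{\sharp\sharp}$ a decorated $\overline{\vepsilon}$-fibration. By \cref{prop:decfibisexp} it is exponentiable, so the adjunction $p_{\dec}^{*} \dashv p_{\dec,*}$ exists on slices of $\DCATIT$. Since $\check{\overline{\vepsilon}}$ is $\vepsilon$ up to the conjugation conventions of \cref{notation:eps-lax}, \cref{thm:decsmooth} applied to $p$ shows that $p$ is $\vepsilon$-smooth. This is the step to double-check: one must confirm that the variance for which $p$ is a fibration is exactly the one that \cref{thm:decsmooth} pairs with $\vepsilon$-smoothness. If the indexing in \cref{thm:decsmooth} is really ``decorated $\checkvepsilon$-fibration $\Rightarrow$ $\vepsilon$-smooth'', I will reindex so that the hypothesis on $p$ matches. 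This reindexing is the only place the argument could go wrong.

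Next, apply \cref{lem:smooth and pushfwd}(iii) to the $\vepsilon$-smooth functor $p$. It says that $p_{\dec}^{*}$ and $p_{\dec,*}$ restrict to an adjunction between $\PFIBe_{/\tB^{\sharp\sharp}}$ and $\PFIBe_{/\tAd}$, realised as full sub-\itcats{} of the decorated slices. Each restricted functor lands in the other full subcategory, so the unit and counit restrict as well, and the restricted functors form an adjunction.

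Finally, identify $\PFIBe_{/\tB^{\sharp\sharp}}$ with $\FIB^{\vepsilon}_{/\tB}$. By the examples in \S\ref{sec:def pfib}, a partial $\vepsilon$-fibration over the maximal decoration is exactly an $\vepsilon$-fibration. Its decoration is forced to be the (co)cartesian one, and morphisms in the decorated slice are exactly those preserving (co)cartesian $1$- and $2$-morphisms. The $2$-morphisms agree because both sides are locally full in the respective slices. This identification gives the claimed adjunction $p_{\dec}^{*} \colon \FIB^{\vepsilon}_{/\tB} \rightleftarrows \PFIB^{\vepsilon}_{/\tAd} \colon p_{\dec,*}$.
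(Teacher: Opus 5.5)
Your proposal is correct and matches the paper's argument: the corollary comes directly from combining \cref{thm:decsmooth} with \cref{lem:smooth and pushfwd}(iii), together with the identification $\PFIB^{\vepsilon}_{/\tB^{\sharp\sharp}} = \FIB^{\vepsilon}_{/\tB}$, which the paper also uses implicitly. The variance check you flagged needs no reindexing, because the $\checkvepsilon$ in \cref{thm:decsmooth} is the conjugate variance $\overline{\vepsilon}$ (its proof treats $\vepsilon=(0,1)$ using decorated $(1,0)$-fibrations), so the theorem applies to $p$ with the same $\vepsilon$.
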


\begin{observation}
  Since the left adjoint is $\CatIT$-linear, the adjunction of
  \itcats{} above can even be upgraded to an adjunction of
  $(\infty,3)$-categories using a variant of \cref{propn:upgrade adjn enr}.
\end{observation}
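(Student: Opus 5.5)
The plan is to rerun the argument of \cref{propn:upgrade adjn enr} with $\CatI$ replaced by $\CatIT$, as \cref{propn:upgrade functor} explicitly permits. The first step is to record the $(\infty,3)$-categorical structures. On $\FIB^{\vepsilon}_{/\tB}$ it comes from the product-preserving functor $\CatIT \to \Fib^{\vepsilon}_{/\tB}$, $\tK \mapsto \tK \times \tB$. On $\PFIB^{\vepsilon}_{/\tA^{\diamond}}$ it comes from $\tK \mapsto \tK^{\flat\flat} \times \tA^{\diamond}$, which is a partial $\vepsilon$-fibration by \cref{ex:proj par fib}. I will check that $\tK \mapsto \tK^{\flat\flat}\times(\blank)$ has a right adjoint, so that the enriched mapping objects exist. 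For fibrations over $\tB$ this follows from straightening (\cref{thm:str}), since $\FUN(\tB^{\veop},\CATIT)$ is tensored over $\CatIT$. For partial fibrations over $\tA^{\diamond}$ I will combine the cartesian closedness of $\DCatIT$ with the observation that a morphism $\tK^{\flat\flat}\times \tEd \to \tFd$ over $\tA^{\diamond}$ preserves (co)cartesian morphisms \IFF{} its restriction to each $\{k\}\times\tEd$ does. Here I use that the (co)cartesian 1- and 2-morphisms of $\tK^{\flat\flat}\times\tEd$ are precisely those lying over equivalences in $\tK$.

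The second step is to verify that $p_{\dec}^{*}$ is compatible with these actions. This means checking that $p_{\dec}^{*}(\tK\times\tB) \simeq \tK^{\flat\flat}\times\tA^{\diamond}$, and more generally that the canonical map $p_{\dec}^{*}(\tK\times \tE) \to \tK^{\flat\flat}\times p_{\dec}^{*}\tE$ is an equivalence. Both are immediate, because pullback commutes with products with a fixed \itcat{} and $(\blank)^{\flat\flat}$ preserves products. By \cref{propn:upgrade functor} (the $\CatIT$ version), $p_{\dec}^{*}$ therefore upgrades to a $\CatIT$-enriched functor, and it preserves tensors by every $\tK \in \CatIT$.

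The third step upgrades the adjunction. I will adapt the proof of \cref{propn:upgrade adjoint} one categorical level up. It suffices to show that for each $\tK\in\CatIT$ the composite
\[ \Map(\tK, \mathcal{F}(\tE, p_{\dec,*}\tX)) \to \Map(\tK, \mathcal{P}(p_{\dec}^{*}\tE, p_{\dec}^{*}p_{\dec,*}\tX)) \to \Map(\tK,\mathcal{P}(p_{\dec}^{*}\tE,\tX)) \]
is an equivalence, where $\mathcal{F}$ and $\mathcal{P}$ denote the mapping \itcats{}. Since $p_{\dec}^{*}$ preserves tensors, this map is identified with the map on mapping spaces of the \icatl{} adjunction from \cref{cor:pushforwardisfib}, evaluated at $\tK\boxtimes\tE$. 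That map is an equivalence. In fact it is enough to treat $\tK = C_{2}$, since $[0],[1],C_{2}$ generate $\CatIT$ under colimits and both sides send colimits in $\tK$ to limits.

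I expect the main obstacle to be the first step for the partial side: identifying $\tK^{\flat\flat}\times(\blank)$ as the genuine tensor in $\PFIB^{\vepsilon}_{/\tA^{\diamond}}$, that is, showing that the enriched mapping objects of partial fibrations are the full sub-\itcats{} of $\FUN_{/\tA}$ on the morphisms of partial fibrations. I would handle this with the componentwise criterion for preserving (co)cartesian lifts described in the first step, together with \cref{obs:cart lifts unique}.
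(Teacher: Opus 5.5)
Your proposal is correct and is essentially the paper's argument spelled out: the paper only notes that $p_{\dec}^{*}$ is $\CatIT$-linear, that is, $p_{\dec}^{*}(\tK \times \tE) \simeq \tK^{\flat\flat} \times p_{\dec}^{*}\tE$, and then applies the $\CatIT$-version of \cref{propn:upgrade adjn enr}, which is exactly your second and third steps. The step you flag as the main obstacle is in fact immediate: $\PFIB^{\vepsilon}_{/\tA^{\diamond}}$ is a full sub-\itcat{} of $\DCATITsl{\tA^{\diamond}}$ that is closed under $\tK^{\flat\flat} \times (\blank)$, so its mapping \itcats{} come directly from the cartesian closedness of $\DCatIT$ as in \cref{const: DCatIT as 2-cat}.
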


As a useful special case, we have:
\begin{cor}\label{cor:pushforwardisfib marked}
  Let $q \colon (\tA,I) \to \tB^{\sharp}$ be a functor of marked
  \itcats{} such that $q^{\sharp}$ is a decorated
  $\cve$-fibration. Then the
  adjunction $q_{\dec}^{*} \dashv q_{\dec,*}$ on slices of
  $\DCATIT$ restricts on full subcategories to an adjunction
  \[
    q_{\dec}^* : \FIB^{\ve}_{/\tB} \to \FIB^{\ve}_{/(\tA,I)} :
    q_{\dec,*}.
  \]
\end{cor}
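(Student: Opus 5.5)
This follows by specializing \cref{cor:pushforwardisfib} to the decorated \itcat{} $\tA^{\diamond} := (\tA,I)^{\sharp}$. It then remains to identify the two full subcategories involved with the ones in the statement.

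By hypothesis $q^{\sharp} \colon (\tA,I)^{\sharp} \to \tB^{\sharp\sharp}$ is a decorated $\cve$-fibration. \cref{thm:decsmooth} therefore shows that it is $\ve$-smooth, and \cref{cor:pushforwardisfib}, applied with $p = q^{\sharp}$, restricts the decorated adjunction to
\[ q_{\dec}^{*} \colon \FIB^{\ve}_{/\tB} \rightleftarrows \PFIB^{\ve}_{/(\tA,I)^{\sharp}} \colon q_{\dec,*}. \]
On the left we have used that partial $\ve$-fibrations over $\tB^{\sharp\sharp}$, viewed as the full subcategory of $\DCATITsl{\tB^{\sharp\sharp}}$ via the canonical decoration $(\blank)^{\natural}$, are exactly $\ve$-fibrations over $\tB$ together with morphisms of $\ve$-fibrations.

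The main point is to identify $\PFIB^{\ve}_{/(\tA,I)^{\sharp}}$ with $\FIB^{\ve}_{/(\tA,I)}$ from \cref{def:fibmarked}. The decoration $(\tA,I)^{\sharp}$ has the 1-morphisms in $I$ and all 2-morphisms decorated. A partial $\ve$-fibration over it is therefore a functor $\tE \to \tA$ with $i$-cartesian lifts of morphisms in $I$ and $j$-cartesian lifts of all 2-morphisms. A morphism of such partial fibrations must preserve exactly these lifts, which matches the morphisms in \cref{def:fibmarked}, and 2-morphisms are arbitrary in both locally full sub-\itcats{}.

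The objects, however, do not obviously match: \cref{def:fibmarked} asks for genuine $\ve$-fibrations over $\tA$, with lifts of \emph{all} 1-morphisms. I expect this to be the main obstacle, since the objects have to be reconciled in both directions.
\begin{itemize}
\item For the left adjoint: $q_{\dec}^{*}$ of an $\ve$-fibration is the pullback along $q$ of an $\ve$-fibration over $\tB$, hence again an $\ve$-fibration over $\tA$. So it lands in $\FIB^{\ve}_{/(\tA,I)}$.
\item For the right adjoint: it suffices to show that $q_{\dec,*}$ sends $\FIB^{\ve}_{/(\tA,I)} \subseteq \PFIB^{\ve}_{/(\tA,I)^{\sharp}}$ into $\FIB^{\ve}_{/\tB}$, and \cref{cor:pushforwardisfib} already gives exactly this.
\end{itemize}
The restricted adjunction on the full subcategories $\FIB^{\ve}_{/\tB}$ and $\FIB^{\ve}_{/(\tA,I)}$ then follows formally, since both functors preserve these subcategories and the mapping \icats{} are computed in the ambient \itcats{}.

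If one instead wants to avoid using the inclusion $\FIB^{\ve}_{/(\tA,I)} \subseteq \PFIB^{\ve}_{/(\tA,I)^{\sharp}}$, the alternative is to check directly that full fibrations over $\tA$ are preserved. The ambient description from \cref{cor:pushforwardisfib} already handles this, so no further argument should be needed.
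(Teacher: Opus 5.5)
Your proof is correct and is essentially the paper's argument: you apply \cref{cor:pushforwardisfib} to $q^{\sharp}$, note that $\FIB^{\ve}_{/(\tA,I)}$ is the full sub-\itcat{} of $\PFIB^{\ve}_{/(\tA,I)^{\sharp}}$ on the genuine $\ve$-fibrations, and observe that $q_{\dec}^{*}$ lands there because $\ve$-fibrations are stable under pullback, so the adjunction restricts. Your initial aim of ``identifying'' the two \itcats{} and your closing paragraph are unnecessary detours; the full-subcategory inclusion is the only fact used.
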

\begin{proof}
  Since $\FIB^{\ve}_{/(\tA,I)}$ is a full subcategory of
  $\PFIB^{\ve}_{/(\tA,I)^{\sharp}}$ and the pullback functor $q_{\dec}^{*}$
  takes values in this full subcategory, this is immediate from
  \cref{cor:pushforwardisfib} applied to $q^{\sharp}$.
\end{proof}

\subsection{Pushforward of partial fibrations}
\label{sec:cofree}

Our goal in this section is to generalize \cref{cor:pushforwardisfib}
and see that pullback of fibrations along an arbitrary decorated
functor $f \colon \tCd \to \tDss$ has a right adjoint. As special
cases this will produce right Kan extensions for functors to $\CATI$,
which will be the starting point for our discussion of Kan extensions
below in \S\ref{sec:kanext}, as well as \emph{cofree} fibrations.

\begin{defn}\label{def:Rf}
  Let $f \colon \tCd \to \tDss$ be a decorated
  functor. We define a functor
  \[R_{f} \colon \PFIBe_{/\tC^{\diamond}} \to \PFIBe_{/\tDss} =
    \FIBe_{/\tD}\]
  as the composite
  \[
    \PFIBe_{/\tC^{\diamond}} \xto{\pi^{*}}  \PFIBe_{/\tQd} \xto{\phi_{*}} \PFIBe_{/\tDss}
  \]
  where
  \[ \tQd := \tCd \times_{\tDss} \DARelax(\tDss)^{\diafib},\]
  the first functor is given by taking pullbacks along the
  projection
  \[\pi \colon \tQd \to \tCd\]
  (see \S\ref{sec:free dec})
  and the second functor is the right adjoint (see
  \cref{cor:pushforwardisfib}) to the pullback functor along
  \[\phi = \DFFree^{\cve}_{\tDss}(f) \colon \tQd \to \tDss,\] which is an $\cve$-fibration.
\end{defn}

\begin{observation}\label{obs:Rf is restr of adjoint}
  By definition, $R_{f}$ is the restriction to full subcategories of
  partial fibrations of the functor
  \[ \DCATITsl{\tCd} \xto{\pi^{*}}
    \DCATITsl{\tQd} \xto{\phi_{*}}
    \DCATITsl{\tDss}.
  \]
  This has a left adjoint $\pi_{!}\phi^{*}$ given by pullback along
  $\phi$ followed by composition with $\pi$. This functor takes 
  $q \colon
  \tAd \to \tDss$ to the projection
  \[ \tCd \times_{\tDss} \DARelax(\tDss)^{\diafib} \times_{\tDss}
    \tAd \to \tCd,\]
  which is the pullback to $\tCd$ of the functor
  \[ \DFFree^{\ve}_{\tDss}(q) \colon \DARelax(\tDss)^{\diafib}
    \times_{\tDss} \tAd \to \tDss,\]
  so that $\pi_{!}\phi^{*} \simeq f^{*}\DFFree^{\ve}_{\tDss}$.
\end{observation}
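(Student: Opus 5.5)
The plan is to show that both claims in the observation are formal consequences of how $R_{f}$ was built. The real inputs are \cref{cor:pushforwardisfib} and the fact that pullbacks in $\DCatIT$ are computed componentwise (\cref{obs:adjointDcatIT}).

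First I will identify $R_{f}$ with a restriction of $\phi_{*}\pi^{*}$. The functor $\pi^{*}\colon \PFIBe_{/\tCd} \to \PFIBe_{/\tQd}$ is, by definition, the restriction of the pullback functor $\pi^{*}\colon \DCATITsl{\tCd} \to \DCATITsl{\tQd}$. This restriction makes sense because partial $\ve$-fibrations are closed under base change (\cref{obs:epscofabs}). Next, $\phi = \DFFree^{\cve}_{\tDss}(f)$ is a decorated $\cve$-fibration over $\tDss$ by \cref{propn:free dec partial fib}. So \cref{cor:pushforwardisfib} applies: the decorated pushforward $\phi_{\dec,*}$, which is right adjoint to $\phi^{*}_{\dec}$ on all of $\DCATITsl{\tQd}$, carries partial $\ve$-fibrations over $\tQd$ to $\ve$-fibrations over $\tD$. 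The functor $\phi_{*}$ used in \cref{def:Rf} is exactly this restriction. Composing the two restrictions shows that $R_{f}$ is the restriction to full subcategories of $\phi_{\dec,*}\pi^{*}$ on slices of $\DCATIT$.

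For the left adjoint, I will use the adjunctions $\pi_{!}\dashv \pi^{*}$ and $\phi^{*}\dashv\phi_{\dec,*}$ on slices of $\DCATIT$. Here $\pi_{!}$ is given by postcomposition with $\pi$. Composing these two adjunctions gives $\pi_{!}\phi^{*}\dashv \phi_{\dec,*}\pi^{*}$. Both are adjunctions of \itcats{}: the first because postcomposition and pullback are $\CatI$-linear, via \cref{propn:upgrade adjn enr}, and the second by \cref{prop:decfibisexp}.

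The remaining step is the explicit computation, and the only subtle part is keeping track of which evaluation map is used where. Write $\ve=(i,j)$, so that $\cve=(1-i,1-j)$. By \cref{notation:eps-lax}, $\ve$ and $\cve$ determine the same $\DARelax$, because $(0,1)$ is paired with $(1,0)$ and $(0,0)$ with $(1,1)$. Unwinding \cref{propn:free dec partial fib} for the variance $\cve$:
\begin{itemize}
\item in $\tQd = \DFFree^{\cve}_{\tDss}(f)$, the factor $\tCd$ is attached along $\ev_{1-i}$;
\item the structure map $\phi$ to $\tDss$ is $\ev_{i}$.
\end{itemize}
Hence $\phi^{*}(q)$ is the iterated pullback $\tCd \times_{\tDss} \DARelax(\tDss)^{\diafib}\times_{\tDss}\tAd$, with $\tAd$ attached along $\ev_{i}$, and $\pi_{!}$ projects this to $\tCd$.

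On the other side, $\DFFree^{\ve}_{\tDss}(q)$ is $\tAd\times_{\tDss}\DARelax(\tDss)^{\diafib}$, with $\tAd$ attached along $\ev_{i}$ and structure map $\ev_{1-i}$. Pulling this back along $f$ attaches $\tCd$ along $\ev_{1-i}$, which gives the same iterated pullback. Pullbacks in $\DCatIT$ are computed componentwise on the underlying \itcats{} and on both decoration subcategories, so the decorations agree as well. Pasting pullback squares, naturally in $q$, then yields $\pi_{!}\phi^{*}\simeq f^{*}\DFFree^{\ve}_{\tDss}$.
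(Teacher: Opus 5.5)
Your proposal is correct and is essentially the paper's argument. $R_{f}$ is by definition the restriction of $\phi_{*}\pi^{*}$, with $\phi_{*}$ landing in fibrations by \cref{cor:pushforwardisfib}; its left adjoint is the composite $\pi_{!}\phi^{*}$; and the identification with $f^{*}\DFFree^{\ve}_{\tDss}$ is just a rearrangement of the iterated pullback. Your variance bookkeeping is consistent with the paper's conventions, for example with the fibre computation in \cref{obs:Rf fibre}: $\tCd$ is attached along $\ev_{1-i}$, $\phi = \ev_{i}$, and $\DARelax$ is the same for $\ve$ and $\cve$.
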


\begin{observation}\label{obs:Rf fibre}
  Consider a partial $\ve$-fibration $q \colon \tEn \to \tCd$.
  By \cref{obs:fibre of pushforward}, in the situation above we can
  identify the fibres of $R_{f}(q)$ as
  \[ R_{f}(q)_{d} \simeq \DFUN_{/\tQd}(\tQd_{d}, \pi^{*}\tEn) \simeq
    \DFUN_{/\tCd}(\tQd_{d}, \tEn).\]
  Using the notation from \ref{not:decoratedslices}, we get
  \[ R_{f}(q)_{d} \simeq
    \begin{cases}
      \DFUN_{/\tCd}(\tCd_{d\upslash}, \tEn), & \ve = (0,1), \\
      \DFUN_{/\tCd}(\tCd_{\upslash d}, \tEn),  & \ve = (1,0), \\
      \DFUN_{/\tCd}(\tCd_{d \downslash}, \tEn),  & \ve = (0,0), \\
      \DFUN_{/\tCd}(\tCd_{\downslash d}, \tEn),  & \ve = (1,1),.
    \end{cases}
  \]
\end{observation}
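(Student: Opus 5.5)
The statement is Observation~\ref{obs:Rf fibre}: the fibre of $R_f(q)$ at $d \in \tD$ is $\DFUN_{/\tCd}(\tQd_d, \tEn)$, which we then rewrite using the decorated slices. The plan is to unwind $R_f = \phi_{*}\pi^{*}$ and apply Observation~\ref{obs:fibre of pushforward} to $\phi_{\dec,*}$. This is legitimate because $\phi = \DFFree^{\cve}_{\tDss}(f)$ is a decorated $\cve$-fibration, hence exponentiable by Proposition~\ref{prop:decfibisexp}. Concretely, for any decorated $\tKd$ we have
\[
\Map(\tKd, R_f(q)_d) \simeq \Map_{/\tDss}(\tKd \times \{d\}, \phi_{*}\pi^{*}q) \simeq \Map_{/\tQd}(\tKd \times \tQd_d, \pi^{*}\tEn).
\]
This gives $R_f(q)_d \simeq \DFUN_{/\tQd}(\tQd_d, \pi^{*}\tEn)$. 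Here $\tQd_d$ denotes the fibre $\{d\} \times_{\tDss} \tQd$ taken via $\phi$, with its inherited decoration.

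The second equivalence follows from the universal property of the pullback $\pi^{*}\tEn = \tQd \times_{\tCd} \tEn$, since pullbacks in $\DCatIT$ are computed on underlying spans (Observation~\ref{obs:adjointDcatIT}). A map $\tKd \times \tQd_d \to \pi^{*}\tEn$ over $\tQd$ is then the same as a decorated map $\tKd \times \tQd_d \to \tEn$ over $\tCd$, where $\tQd_d$ maps to $\tCd$ via $\tQd_d \to \tQd \xto{\pi} \tCd$. Applying the Yoneda lemma in $\tKd$ gives $\DFUN_{/\tQd}(\tQd_d, \pi^{*}\tEn) \simeq \DFUN_{/\tCd}(\tQd_d, \tEn)$.

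It remains to identify $\tQd_d$ with the decorated slices of Notation~\ref{not:decoratedslices}. Since $\tQd = \tCd \times_{\tDss} \DARelax(\tDss)^{\diafib}$, pasting pullbacks gives
\[
\tQd_d \simeq \tCd \times_{\tDss} \bigl(\DARelax(\tDss)^{\diafib} \times_{\tDss} \{d\}\bigr).
\]
Here the inner fibre is over $\ev_{1-i'}$, where $i'$ is the index attached to $\cve$, and the outer pullback is via $\ev_{i'}$. With $\cve = (1-i,1-j)$ the map $\phi$ is $\ev_{i}$, and the remaining leg of $\DARelax(\tDss)$ is $\ev_{1-i}$, used to form the pullback with $\tCd$. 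For $\ve=(0,1)$ we have $\DARelax = \DARopl$, the fibre over $d$ is taken via $\ev_0$, and the leg to $\tCd$ is $\ev_1$; this is exactly $\tCd_{d\upslash}$ as defined in Notation~\ref{not:decoratedslices}. The other three variances are handled the same way, remembering that $\DARelax$ is $\DARlax$ for $\ve=(0,0),(1,1)$.

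The only real obstacle is this bookkeeping of which evaluation is used as $\phi$ versus $\pi$ in each variance, and making sure the fibre carries the decoration of Notation~\ref{not:decoratedslices}. Both reduce to tracking Definition~\ref{def:Rf} together with Construction~\ref{cons:free}.
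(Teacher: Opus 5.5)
Your proposal is correct and follows the paper's argument. The paper likewise applies \cref{obs:fibre of pushforward} to the exponentiable decorated $\cve$-fibration $\phi$, passes through the universal property of the pullback $\pi^{*}\tEn$, and unwinds \cref{not:decoratedslices}; your variance bookkeeping checks out in all four cases ($\phi$ is $\ev_{i}$, the leg to $\tCd$ is $\ev_{1-i}$, and conjugating $\ve$ does not change which of the lax or oplax arrow \itcat{} is used).
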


\begin{thm}\label{thm:cofree}
  Let $f \colon \tC^{\diamond} \to \tD^{\sharp \sharp}$ be a decorated
  functor. Then there exists an adjunction of \itcats{}
  \[
    f^* \colon \FIB^{\vepsilon}_{/\tD}  \llra \PFIB^{\vepsilon}_{/\tC^{\diamond}}  \colon R_f
  \]
  where $f^*$ denotes the pullback functor.
\end{thm}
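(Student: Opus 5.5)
We will check the adjunction on mapping \itcats{} using the description of $R_f$ in \cref{obs:Rf is restr of adjoint}. That observation says $R_f$ is the restriction of $\phi_{*}\pi^{*}$, and $\phi_*\pi^*$ has left adjoint $\pi_{!}\phi^{*} \simeq f^{*}\DFFree^{\ve}_{\tDss}$. So for a partial $\ve$-fibration $q\colon \tEn \to \tCd$ and an $\ve$-fibration $r\colon \tF \to \tD$ (viewed as $\tF^{\natural} \to \tDss$) we have natural equivalences
\[ \FIBe_{/\tD}(\tF, R_f(q)) \simeq \DCATITsl{\tCd}(f^{*}\DFFree^{\ve}_{\tDss}(\tF^{\natural}), \tEn). \]
Here the left side is the full mapping \itcat{}, since $R_f(q)$ is a partial $\ve$-fibration over $\tDss$. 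This uses \cref{cor:pushforwardisfib}, applied to $\phi$, which is a decorated $\cve$-fibration by \cref{propn:ev dec fib} and base change. Also, $\FIBe_{/\tD}$ sits fully inside $\DCATITsl{\tDss}$ via $(\blank)^{\natural}$, because every decorated map into $\tF^{\natural}$ automatically preserves cocartesian data.

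It then remains to compare the right-hand side with $\DCATITsl{\tCd}(f^{*}\tF^{\natural}, \tEn) \simeq \PFIBe_{/\tCd}(f^{*}\tF,\tEn)$. We do this via restriction along $f^{*}\eta$, where $\eta\colon \tF^{\natural} \to \DFFree^{\ve}_{\tDss}(\tF^{\natural})$ is the unit of \cref{propn:free dec partial fib}. By \cref{propn:unit fib univ eps-equiv}, $f^{*}\eta$ is an $\ve$-equivalence over $\tCd$. Since $\tEn \to \tCd$ is a partial $\ve$-fibration, it is local for $\ve$-equivalences, so restriction along $f^{*}\eta$ is an equivalence of mapping spaces. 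To get the full mapping \itcats{}, we replace the source by $\oK^{\flat\flat}\times(\blank)$ and note that $\oK^{\flat\flat}\times f^{*}\eta$ is again a pullback of $\eta$, now along $\oK^{\flat\flat}\times\tCd \to \tCd \to \tDss$. Hence it is still an $\ve$-equivalence, and the mapping space equivalence upgrades to mapping \icats{}.

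Finally we check naturality in $\tF$ and $q$, which holds since $\eta$ is natural, and conclude that $f^{*} \dashv R_f$ on underlying \icats{}. We then upgrade to \itcats{} via \cref{propn:upgrade adjn enr}, using that $f^{*}$ commutes with $\oK\times(\blank)$. The main obstacle is the middle step. The key input there, that pullbacks of the unit are $\ve$-equivalences, is supplied by \cref{propn:unit fib univ eps-equiv}. The main care needed is identifying the composite equivalence with the map induced by the would-be unit, so that the data assemble into a genuine adjunction rather than a pointwise equivalence of mapping spaces. We handle this by defining the unit $\tF \to R_f f^{*}\tF$ as adjoint to $f^{*}\eta$ composed with the counit of $\pi_!\phi^*\dashv\phi_*\pi^*$, and by checking that the Yoneda-induced maps agree.
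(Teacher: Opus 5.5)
Your proposal is correct and is essentially the paper's proof: both identify $\FIBe_{/\tD}(\tF, R_f(q))$ with maps out of $\pi_{!}\phi^{*}\tF^{\natural} \simeq f^{*}\DFFree^{\ve}_{\tDss}(\tF^{\natural})$ via \cref{obs:Rf is restr of adjoint}. Both then use that the pulled-back unit $f^{*}\eta$ is an $\ve$-equivalence (\cref{propn:unit fib univ eps-equiv}), and that the partial fibration $q$ is local for such maps. Your $\oK^{\flat\flat}\times(\blank)$ argument, which upgrades the equivalence from mapping spaces to mapping \icats{}, makes explicit a point the paper leaves implicit.

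One small correction to your last paragraph. The unit $\tF \to R_f f^{*}\tF$ is adjoint to $f^{*}$ of the counit $\epsilon$ of the free decorated fibration adjunction. This is the preimage of $\id$ under restriction along $f^{*}\eta$, since $\epsilon\eta\simeq\id$ (cf.\ \cref{obs:unitcofree}); it is not built from $f^{*}\eta$ itself. This is not needed anyway: your equivalence is natural in both variables, so it already exhibits the adjunction.
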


\begin{proof}
  Let $q \colon \tAd \to \tD^{\sharp \sharp}$ be an
  $\ve$-fibration. Then for every partial $\ve$-fibration
  $p \colon \tXd \to \tCd$, we have from \cref{obs:Rf is restr of adjoint} a natural
  equivalence
	\[
		\FIB^{\vepsilon}_{/\tD}(q,R_f(p)) \simeq \DCATITsl{\tC^{\diamond}}(f^{*}\DFFree^{\ve}_{\tDss}{\epsilon}(q),p).
	\]
	Moreover, by pulling back the unit map in \cref{propn:free dec
          partial fib}, we obtain a morphism $\chi_{f}
        \colon f^{*}q \to
        f^{*}\DFFree^{\ve}_{\tDss}(q)$, which is an
        $\ve$-equivalence by \cref{propn:unit fib univ eps-equiv}.
        In other words, restriction along $\chi_{f}$ yields an
        equivalence
        \[\DCATITsl{\tC^{\diamond}}(f^{*}\DFFree^{\ve}_{\tDss}(q),p)
        \xrightarrow{\simeq}
        \DCATITsl{\tC^{\diamond}}(f^*q,p) \simeq
        \PFIB^{\ve}_{/\tCd}(f^{*}q, p).\] The result now
        follows.
\end{proof}

We note two important special cases: For any functor of \itcats{} $f \colon \tA \to
\tB$, we get:
\begin{cor}
  The pullback functor
  \[ f^{*} \colon \FIB^{\ve}_{/\tB} \to \FIB^{\ve}_{/\tA}\]
  has a right adjoint $R_{f}$. \qed
\end{cor}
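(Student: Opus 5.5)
This is the special case of \cref{thm:cofree} in which the source is maximally decorated. Concretely, we take the decorated functor $f^{\sharp\sharp} \colon \tA^{\sharp\sharp} \to \tB^{\sharp\sharp}$ and check that the two sides of the adjunction in \cref{thm:cofree} reduce to the ones in the statement.

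First, by the examples following the definition of partial fibrations, a partial $\ve$-fibration over $\tA^{\sharp\sharp}$ is precisely an $\ve$-fibration over $\tA$. Its canonical decoration consists of the $i$-cartesian 1-morphisms and $j$-cartesian 2-morphisms. Likewise, a morphism of partial $\ve$-fibrations over $\tA^{\sharp\sharp}$ is exactly a functor preserving all of these. This gives an identification $\PFIB^{\ve}_{/\tA^{\sharp\sharp}} \simeq \FIB^{\ve}_{/\tA}$ as locally full sub-\itcats{} of $\CATITsl{\tA}$, and on the other side we already have $\PFIB^{\ve}_{/\tB^{\sharp\sharp}} = \FIB^{\ve}_{/\tB}$ by definition.

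Second, we check that under these identifications the pullback functor $f^{*}$ of \cref{thm:cofree} agrees with ordinary pullback $f^{*}$ of $\ve$-fibrations. This holds because pullback in $\DCatIT$ is computed on underlying \itcats{} with the pulled-back decorations (\cref{obs:adjointDcatIT}). Cartesian lifts pull back along any functor, so the pulled-back decoration is exactly the canonical one.

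Applying \cref{thm:cofree} then yields the right adjoint $R_{f^{\sharp\sharp}}$, which we denote $R_f$. The only point requiring any care is the identification of the canonical decorations in the first step, and the paper's earlier examples already establish it; I do not expect a genuine obstacle.
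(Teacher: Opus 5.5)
Your proposal is correct and is exactly the paper's argument: the corollary, marked \qed{} in the paper, is the special case of \cref{thm:cofree} with maximally decorated source $\tA^{\sharp\sharp}$. There, partial $\ve$-fibrations and their morphisms are precisely $\ve$-fibrations and their morphisms, so $\PFIB^{\ve}_{/\tA^{\sharp\sharp}} \simeq \FIB^{\ve}_{/\tA}$, and decorated pullback agrees with ordinary pullback with its canonical decoration.
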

Together with our explicit description of this right adjoint, this
will be the starting point for our discussion of Kan extensions for
\itcats{} below
in \S\ref{sec:kanext}. The following variant will similarly lead to a
notion of \emph{partially lax} Kan extensions:
\begin{cor}\label{cor:lax kan adjn on fibs}
  Suppose $f \colon (\tC,I) \to \tD^{\sharp}$ is a functor of marked
  \itcats{}. Then the pullback functor
  \[ f^{*} \colon \FIB^{\ve}_{/\tD} \to \FIB^{\ve}_{/(\tC,I)} \]
  has a right adjoint $R_{f}$.
\end{cor}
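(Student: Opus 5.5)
This corollary should be a direct specialization of \cref{thm:cofree}, and I would prove it by identifying both sides with the objects appearing there. Given $f \colon (\tC,I) \to \tD^{\sharp}$, I would apply $(\blank)^{\sharp}$ from \cref{var:idecorated}. This produces a decorated functor $f^{\sharp} \colon (\tC,I)^{\sharp} \to \tD^{\sharp\sharp}$. Here $(\tC,I)^{\sharp}$ has decorated 1-morphisms $I$ and all 2-morphisms decorated, and $\tD^{\sharp}$ goes to $\tD^{\sharp\sharp}$. \cref{thm:cofree} then gives an adjunction of \itcats{}
\[ (f^{\sharp})^{*} \colon \FIB^{\ve}_{/\tD} \llra \PFIB^{\ve}_{/(\tC,I)^{\sharp}} \colon R_{f^{\sharp}}. \]

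Next I would compare $\PFIB^{\ve}_{/(\tC,I)^{\sharp}}$ with $\FIB^{\ve}_{/(\tC,I)}$ from \cref{def:fibmarked}. The latter is the locally full sub-\itcat{} of $\CATITsl{\tC}$ whose objects are $\ve$-fibrations over $\tC$ and whose morphisms preserve $i$-cartesian morphisms over $I$ and all $j$-cartesian 2-morphisms. The former consists of partial $\ve$-fibrations for the decoration $(\tC,I)^{\sharp}$, which only requires $i$-cartesian lifts of morphisms in $I$ together with all $j$-cartesian 2-morphism lifts. Its morphisms preserve exactly the same structure. So $\FIB^{\ve}_{/(\tC,I)}$ is the full sub-\itcat{} of $\PFIB^{\ve}_{/(\tC,I)^{\sharp}}$ on those objects that are genuine $\ve$-fibrations, i.e.\ that also have $i$-cartesian lifts of all 1-morphisms. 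This is the same reasoning as in \cref{cor:pushforwardisfib marked}.

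The pullback $f^{*}$ of an $\ve$-fibration over $\tD$ is again an $\ve$-fibration over $\tC$, and it certainly preserves the relevant cartesian data. Hence $(f^{\sharp})^{*}$ factors through the full sub-\itcat{} $\FIB^{\ve}_{/(\tC,I)}$. I would then set $R_{f}$ to be the restriction of $R_{f^{\sharp}}$ to $\FIB^{\ve}_{/(\tC,I)}$. For $q \in \FIB^{\ve}_{/\tD}$ and $p \in \FIB^{\ve}_{/(\tC,I)}$, fullness gives
\[ \FIB^{\ve}_{/(\tC,I)}(f^{*}q, p) \simeq \PFIB^{\ve}_{/(\tC,I)^{\sharp}}(f^{*}q,p) \simeq \FIB^{\ve}_{/\tD}(q, R_{f^{\sharp}}p), \]
naturally in $q$ and $p$. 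This establishes the adjunction.

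There is no real obstacle. The only point needing care is checking that the morphism conditions in \cref{def:fibmarked} match those in the definition of $\PFIB^{\ve}$, namely preservation of cartesian data over the decorations of $(\tC,I)^{\sharp}$. Both sets of conditions unwind to the same requirement, so the comparison is essentially tautological.
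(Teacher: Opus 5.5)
Your proposal is correct and is essentially the paper's own argument. The paper likewise identifies $\FIB^{\ve}_{/(\tC,I)}$ with the full sub-\itcat{} of $\PFIB^{\vepsilon}_{/(\tC,I)^{\sharp}}$ spanned by the genuine $\ve$-fibrations, observes that pullback from $\FIB^{\ve}_{/\tD}$ lands in it, and restricts the right adjoint from \cref{thm:cofree}.
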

\begin{proof}
  We can identify $\FIB^{\ve}_{/(\tC,I)}$ as the full sub-\itcat{} of
  $\PFIB^{\vepsilon}_{/(\tC,I)^{\sharp}}$ spanned by the
  $\ve$-fibrations. Since the pullback functor from
  $\FIB^{\ve}_{/\tD}$ takes values in this full subcategory, the right
  adjoint from \cref{thm:cofree} restricts to a right adjoint of
  $f^{*}$, as required.
\end{proof}

On the other hand, taking the identity as a
functor $\tB^{\flat\flat} \to \tB^{\sharp\sharp}$ we get the existence
of cofree fibrations:
\begin{cor}\label{cor:cofree}
  The forgetful functor
  \[ \FIB^{\ve}_{/\tB} \to \CATITsl{\tB} \]
  has a right adjoint. \qed
\end{cor}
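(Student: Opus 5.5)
The plan is to get this as the special case of \cref{thm:cofree} in which the functor is $f = \id_{\tB} \colon \tB^{\flat\flat} \to \tB^{\sharp\sharp}$. First we identify the source of the resulting adjunction. A functor $p \colon \tE \to \tB$ is a partial $\ve$-fibration over $\tB^{\flat\flat}$ with no conditions: only equivalences are decorated, and lifts of these always exist (the examples after the definition of partial fibrations note that every functor is a partial $\ve$-fibration over $\tB^{\flat\flat}$). In this case the canonical decoration $\tEn$ consists of the cocartesian lifts of equivalences, which are just the equivalences, and the cartesian lifts of identity 2-morphisms, which are just the invertible 2-morphisms. So $\tEn = \tE^{\flat\flat}$. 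Every functor over $\tB$ preserves these, so morphisms of partial $\ve$-fibrations are arbitrary functors over $\tB$.

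Next we upgrade this to an equivalence of \itcats{}, $\PFIB^{\ve}_{/\tB^{\flat\flat}} \simeq \CATITsl{\tB}$. The 2-morphisms on both sides are all natural transformations over $\tB$, because $\PFIB$ is defined as a locally full sub-\itcat{} of $\CATITsl{\tB}$. Equivalently, $(\blank)^{\flat\flat}$ identifies $\CATITsl{\tB}$ with this full sub-\itcat{} of $\DCATITsl{\tB^{\flat\flat}}$.

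Under this identification, the pullback functor $f^{*} \colon \FIB^{\ve}_{/\tB} \to \PFIB^{\ve}_{/\tB^{\flat\flat}}$ is the forgetful functor, since pulling back along the identity does nothing to the underlying functor. \cref{thm:cofree} then gives the right adjoint $R_{\id}$. By \cref{obs:Rf fibre}, its fibres are $\DFUN_{/\tB^{\flat\flat}}(\tB^{\flat\flat}_{b\upslash}, \tE^{\flat\flat})$ and the analogous expressions in the other variances.

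The only point that needs care is checking that $\tEn = \tE^{\flat\flat}$, meaning that the cocartesian lifts of identities and the cartesian lifts of identity 2-morphisms are exactly the invertible ones. This follows from the uniqueness of lifts in \cref{obs:cart lifts unique}. Apart from that, the argument is bookkeeping.
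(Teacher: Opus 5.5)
Your proposal is correct and is exactly the paper's argument: the corollary is the special case of \cref{thm:cofree} for the identity viewed as a decorated functor $\tB^{\flat\flat} \to \tB^{\sharp\sharp}$, using that $\PFIB^{\ve}_{/\tB^{\flat\flat}}$ is all of $\CATITsl{\tB}$. For the identification $\tEn = \tE^{\flat\flat}$ it is slightly more direct to argue as follows: a cocartesian morphism over an equivalence is itself an equivalence, because the defining pullback square makes precomposition with it an equivalence, and the same argument applies to cartesian 2-morphisms.
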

More generally, for any decorated \itcat{} $\tBd$ we get a right
adjoint to the forgetful functor
$\FIB^{\ve}_{/\tB} \to \PFIB^{\ve}_{/\tBd}$. 

For later use, it will be useful to also identify the unit and counit
of the adjunction of \cref{thm:cofree} a bit more explicitly:
\begin{observation}\label{obs:unitcofree}
  Let $f \colon \tC^{\diamond} \to \tD^{\sharp \sharp}$ be a decorated
  functor and let $q \colon \tX^{\diamond} \to \tDss$ be an
  $\vepsilon$-fibration. Then the map
  $f^* \DFFree^{\ve}_{\tDss}(q) \to f^*q$, obtained by pulling back the
  counit of the adjunction in \cref{propn:free dec partial fib},
  corresponds via the adjunction of \cref{obs:Rf is restr of adjoint} to
  a map $\epsilon \colon q \to R_f(f^*q)$. Note that $\eta$ is sent
  under the equivalence
  \[
    \FIB^{\vepsilon}_{/\tD}(q,R_f(f^*q)) \xrightarrow{\simeq} \PFIB^{\vepsilon}_{/\tC^{\diamond}}(f^*q,f^*q)
  \]
to the identity on $f^*q$, which identifies $\epsilon$ as the unit of
the adjunction in \cref{thm:cofree}. Given $p \in
\FIB^{\vepsilon}_{/\tD}$, we can apply the previous discussion,
together with the fact that forgetful functor in \cref{propn:free dec
  partial fib} is fully faithful, to identify the composite
\[
  \DCATITsl{\tDss}(p,q) \xrightarrow{\eta_*} \DCATITsl{\tDss}(p,R_f(f^*q)) \xrightarrow{\simeq} \DCATITsl{\tC^{\diamond}}(f^*p,f^*q),
\]
with the pullback functor along $f$. Moreover, we can now obtain a commutative diagram

  \begin{equation}\label{eq:obsunitcofree}
    \begin{tikzcd}
    \DCATITsl{\tDss}(p,q) \arrow[r,"\eta_*"] \arrow[d,"\xi^*"] &   \DCATITsl{\tDss}(p,R_f(f^*q)) \arrow[d,"\simeq"] \\
      \DCATITsl{\tDss}(f_!f^* p,q)  \arrow[r,"\simeq"] & \DCATITsl{\tC^{\diamond}}(f^*p,f^*q).
  \end{tikzcd}
  \end{equation}
where $\xi^*$ is the unit of $f_! \dashv f^*$. 
\end{observation}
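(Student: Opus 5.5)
The plan is to reduce everything to formal properties of the adjunction $f^{*} \dashv R_{f}$ from \cref{thm:cofree}. We work with the composite equivalence constructed in its proof,
\[ \Theta_{p,q} \colon \FIB^{\ve}_{/\tD}(p,R_f(f^*q)) \simeq \DCATITsl{\tCd}(f^{*}\DFFree^{\ve}_{\tDss}(p), f^{*}q) \xrightarrow{\chi_f^{*}} \DCATITsl{\tCd}(f^{*}p,f^{*}q). \]

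The first step identifies the map $\epsilon \colon q \to R_f(f^{*}q)$ (called $\eta$ in the diagram) with the unit. By definition, $\epsilon$ is the transpose, under the first equivalence above (from \cref{obs:Rf is restr of adjoint}), of the pullback $f^{*}c$ of the counit $c \colon \DFFree^{\ve}_{\tDss}(q) \to q$ of \cref{propn:free dec partial fib}. Hence $\Theta_{q,q}(\epsilon) \simeq f^{*}c \circ \chi_f$. Since $\chi_f$ is by construction the pullback along $f$ of the unit $u$ of \cref{propn:free dec partial fib}, this equals $f^{*}(c \circ u)$. The triangle identity $c \circ u \simeq \id$ (\cref{propn:free fib unit counit constr}(iv), applied to the fibration $q$) then gives $\Theta_{q,q}(\epsilon) \simeq \id_{f^{*}q}$. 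By the Yoneda lemma, the element corresponding to the identity under a natural adjunction equivalence is the unit, so $\epsilon$ is the unit of $f^{*} \dashv R_{f}$.

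The second step shows that $\Theta \circ \epsilon_{*}$ is the pullback functor. The equivalence $\Theta$ is natural in both variables, since each constituent is: the adjunction $\pi_!\phi^{*} \dashv \phi_*\pi^{*}$, the identification $\pi_!\phi^{*} \simeq f^{*}\DFFree^{\ve}_{\tDss}$, and restriction along $\chi_f$, which is natural because the unit of \cref{propn:free dec partial fib} is. For $g \colon p \to q$, naturality in the $p$-variable gives
\[ \Theta_{p,q}(\epsilon \circ g) \simeq \Theta_{q,q}(\epsilon) \circ f^{*}g \simeq f^{*}g, \]
which also holds coherently on morphisms. This proves the claim about the composite.

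For the square \eqref{eq:obsunitcofree}, $\xi$ must be read as the counit $f_!f^{*}p \to p$ of $f_! \dashv f^{*}$, since that is the only direction in which $\xi^{*}$ makes sense. The bottom equivalence sends $h \colon f_!f^{*}p \to q$ to $f^{*}h \circ \xi'$, where $\xi'$ is the unit of $f_! \dashv f^{*}$ at $f^{*}p$. Applied to $h = g \circ \xi$, this gives $f^{*}g \circ (f^{*}\xi \circ \xi') \simeq f^{*}g$ by the triangle identity. The resulting equivalence with the top-right path is moreover natural in $p$ and $q$.

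The main obstacle is the naturality of $\Theta$ in $p$, and in particular the compatibility of $\chi_f^{*}$ with the identification $\pi_!\phi^{*} \simeq f^{*}\DFFree^{\ve}_{\tDss}$. I would handle this by writing both as pullbacks along $f$ of the natural transformations $u$ and $c$ of the adjunction in \cref{propn:free dec partial fib}, so that all required coherences reduce to its triangle identities and to functoriality of $f^{*}$.
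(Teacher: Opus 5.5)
Your proposal is correct and follows the same route as the paper's argument. You identify $\epsilon$ as the unit of $f^{*}\dashv R_{f}$ by checking that it goes to $\mathrm{id}_{f^{*}q}$, and you supply the justification the paper leaves implicit: $\chi_{f}$ is the pullback of the unit of the free decorated fibration adjunction, so the triangle identity gives $f^{*}c\circ\chi_{f}\simeq\mathrm{id}$. You then get the composite as $f^{*}$ from naturality of the adjunction equivalence, and the square from the triangle identity for $f_{!}\dashv f^{*}$ (reading $\xi$ as the counit $f_{!}f^{*}p\to p$, which is the correct reading). The only point the paper states that you leave tacit is that the mapping categories in $\FIB^{\vepsilon}_{/\tD}$ and $\PFIB^{\vepsilon}_{/\tC^{\diamond}}$ agree with those in $\DCATITsl{\tDss}$ and $\DCATITsl{\tC^{\diamond}}$ in the square, since the forgetful functor is fully faithful on these objects.
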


\begin{observation}\label{obs:counitcofree}
  Let $f,q$ be as before, and let $p \in \DCATITsl{\tDss}$. We consider the commutative diagram
 \begin{equation}\label{eq:obscounitcofree}
     \begin{tikzcd}
       \DCATITsl{\tC^{\diamond}}(p,f^*R_f(q)) \arrow[r] \arrow[d,"\simeq"] &  \DCATITsl{\tC^{\diamond}}(p,q) \arrow[d,"\simeq"] \\
        \DCATITsl{\tC^{\diamond}}(f^*\DFFree_{\tDss}^{\vepsilon}(f_!p),q) \arrow[r,"\psi^*"] & \DCATITsl{\tC^{\diamond}}(p,q)
    \end{tikzcd}
 \end{equation}
  where the bottom is given by precomposition along the morphism
  \[
 \psi \colon p \xrightarrow{} f^*f_!p \xrightarrow{} f^*\DFFree_{\tDss}^{\vepsilon}(f_!p)
  \]
  where the first maps is the unit of $f_! \dashv f^*$ and the second
  is the pullback of the unit of the adjunction in \cref{propn:free
    dec partial fib}. Note that this diagram is natural in $p$, so by the Yoneda lemma we obtain a map $\epsilon \colon f^*R_f(q) \to q$ which we claim can be identified with the counit of the adjunction in \cref{thm:cofree}. In order to do so, we claim that the composite
  \[
    \FIB^{\vepsilon}_{/\tD}(p,R_f(q)) \xrightarrow{} \PFIB^{\vepsilon}_{/\tC^{\diamond}}(f^*p,f^*R_f(q)) \xrightarrow{\epsilon_*} \PFIB^{\vepsilon}_{/\tC^{\diamond}}(f^*p,q)
  \]
  can be identified with the natural isomorphism in the proof of \cref{thm:cofree}. The result follows from the commutativity of the diagram
    \[\begin{tikzcd}
    {f^*p} & {f^*f_!f^*p} & {f^*\DFFree^{\vepsilon}_{/\tC^{\diamond}}(f_!f^*p)} \\
    & {f^*p} & {f^*\DFFree^{\vepsilon}_{/\tC^{\diamond}}(p)}
    \arrow[from=1-1, to=1-2]
    \arrow["\simeq"', from=1-1, to=2-2]
    \arrow[from=1-2, to=1-3]
    \arrow[from=1-2, to=2-2]
    \arrow[from=1-3, to=2-3]
    \arrow[from=2-2, to=2-3]
  \end{tikzcd}\]
  after some minor unraveling of the definitions.
\end{observation}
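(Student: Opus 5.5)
By the Yoneda lemma it is enough to show the following. For every $\ve$-fibration $p$ over $\tD$, the composite
\[ \FIB^{\vepsilon}_{/\tD}(p,R_f(q)) \xrightarrow{f^{*}} \PFIB^{\vepsilon}_{/\tCd}(f^*p,f^*R_f(q)) \xrightarrow{\epsilon_*} \PFIB^{\vepsilon}_{/\tCd}(f^*p,q) \]
agrees, naturally in $p$, with the adjunction equivalence from the proof of \cref{thm:cofree}. That equivalence is itself a composite of two steps. The first is the equivalence $\FIB^{\vepsilon}_{/\tD}(p,R_f(q))\simeq \DCATITsl{\tCd}(f^*\DFFree^{\ve}_{\tDss}(p),q)$ coming from the adjunction $\pi_!\phi^*\dashv \phi_*\pi^*$ of \cref{obs:Rf is restr of adjoint}, where we use $\pi_!\phi^*\simeq f^*\DFFree^{\ve}_{\tDss}$. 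The second is restriction along $\chi_f\colon f^*p\to f^*\DFFree^{\ve}_{\tDss}(p)$, the pulled back unit of \cref{propn:free dec partial fib}. Since a transformation determined pointwise by such a Yoneda comparison is the counit exactly when it induces the adjunction bijection, this identifies $\epsilon$ with the counit.

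I would unwind the top composite in two steps.

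\emph{Step 1.} Specialize the defining square \cref{eq:obscounitcofree} to the object $f^*p$ in place of $p$. This rewrites $\epsilon_*$, modulo the vertical equivalences, as precomposition with
\[ \psi_{f^*p}\colon f^*p\to f^*f_!f^*p\to f^*\DFFree^{\ve}_{\tDss}(f_!f^*p). \]

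\emph{Step 2.} Identify the functor $f^*$ on mapping \icats{}, followed by the vertical equivalence, with precomposition with
\[ f^*\DFFree^{\ve}_{\tDss}(c)\colon f^*\DFFree^{\ve}_{\tDss}(f_!f^*p)\to f^*\DFFree^{\ve}_{\tDss}(p), \]
where $c\colon f_!f^*p\to p$ is the counit of $f_!\dashv f^*$. This is a mate computation. The adjunction of \cref{obs:Rf is restr of adjoint} is natural in its first variable. The functor $f^*$ on maps into $R_f(q)$ corresponds, after passing through $f_!\dashv f^*$, to restriction along $c$ and then applying $\pi_!\phi^*$. For this I would use the compatible unit/counit descriptions already recorded in \cref{obs:unitcofree}.

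Combining the two steps, the top composite becomes precomposition with the upper-right path in the final triangle-plus-square diagram of the observation. The adjunction equivalence is precomposition with the lower-left path, namely $\chi_f$. The left triangle of that diagram commutes by the triangle identity $f^*c\circ(\text{unit at } f^*p)\simeq\id$. The square commutes by naturality of the unit of the free decorated fibration adjunction (\cref{propn:free dec partial fib}) along $c$, pulled back along $f$. Hence the two maps agree, and everything is natural in $p$ because every ingredient is.

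I expect Step 2 to be the main obstacle. It requires keeping track of how the equivalence $\pi_!\phi^*\simeq f^*\DFFree^{\ve}_{\tDss}$ interacts with the unit of $f_!\dashv f^*$, and of the fact that the functor $\FIB^{\ve}_{/\tD}\to\DCATITsl{\tDss}$ is fully faithful. I would handle it by writing both sides as mates with respect to the composite adjunction $f_!\dashv f^*$ followed by $\DFFree\dashv\DUeB$, and checking agreement on the identity of $R_f(q)$.
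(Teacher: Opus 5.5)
Your proposal is correct and is essentially the paper's argument: your Steps 1 and 2 are exactly the ``minor unraveling'' the paper leaves implicit, after which the triangle identity for $f_!\dashv f^*$ and naturality of the pulled-back free-fibration unit along the counit $f_!f^*p\to p$ give $\chi_f$, and you also correctly read the paper's abusive notation for $p$ (over $\tCd$ in the defining square, over $\tD$ in the composite). Step 2 is easier than you expect --- under $f_!\dashv f^*$ the functor $f^*$ on mapping objects corresponds to precomposition with the counit, so naturality in the first variable of the adjunction of \cref{obs:Rf is restr of adjoint} suffices, with no mate computation or appeal to the unit observation needed.
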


\subsection{Localization of fibrations}
\label{sec:loc fib}

Given a functor of \icats{} $F \colon \oC \to \MCatI$, Hinich showed
in \cite[\S 2]{HinichDK}
that we can identify the cocartesian fibration for the functor $\Lm(F)$ (where we
invert the marked morphisms) as a localization of the fibration for
the underlying functor $\Um(F) \colon \oC \to \CatI$; an alternative
proof of this very useful comparison has also been given by
Nikolaus and Scholze~\cite[Proposition
A.14]{NikolausScholze}. Our goal in this section is to prove the
following \itcatl{} extension of this result; in fact, we will see
that our results on cofree fibrations allow us to prove this by a much
easier argument than the existing ones in the \icatl{} case.

\begin{thm}\label{thm:locn of fib}
  Suppose $\tEd \to \tB^{\sharp\sharp}$ is a decorated
  $(0,1)$-fibration, corresponding to a functor $F \colon \tB \to
  \DCATIT$. If $\tEd|_{\tB^{\flat\flat}}$ denotes the pullback to
  $\tB^{\flat\flat}$ (\ie{} we keep only the decorations in $\tEd$
  that map to equivalences), then the localization
  $\Ldec{\tEd|_{\tB^{\flat\flat}}} \to \tB$ is a $(0,1)$-fibration, and
  corresponds to the functor $\Ldec{F} \colon \tB \to \CATIT$.
\end{thm}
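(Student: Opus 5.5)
The plan is to compare two left adjoints. Let $\tE'$ denote the $(0,1)$-fibration corresponding to $\Ld F$. By \cref{cor: dec str comp flat}, $\tE' \to \tB$ is the image of $\tEd$ under the left adjoint $L$ of $(\blank)^{\natural} \colon \FIB^{(0,1)}_{/\tB} \to \DFIB^{(0,1)}_{/\tB}$. So for any $(0,1)$-fibration $q \colon \tX \to \tB$ we have
\[ \Map_{\FIB}(\tE', \tX) \simeq \Map_{\DFIB}(\tEd, \tX^{\natural}). \]
The aim is to identify this with $\Map_{\CatITsl{\tB}}(\Ld(\tEd|_{\tB^{\flat\flat}}), \tX)$, naturally in $\tX$, and then to show that the canonical map $\Ld(\tEd|_{\tB^{\flat\flat}}) \to \tE'$ is an equivalence.

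The first step is to construct that canonical map. The unit $\tEd \to (\tE')^{\natural}$ is decorated, and restricting to $\tB^{\flat\flat}$ gives a decorated functor $\tEd|_{\tB^{\flat\flat}} \to (\tE')^{\natural}|_{\tB^{\flat\flat}}$. The decorations of the target lying over equivalences are cocartesian lifts of equivalences, hence equivalences, and cartesian $2$-morphisms over identities, hence invertible. So the target is $\tE'^{\flat\flat}$ up to decoration, and the map factors through $\Ld(\tEd|_{\tB^{\flat\flat}})$. This gives $\lambda \colon \Ld(\tEd|_{\tB^{\flat\flat}}) \to \tE'$ over $\tB$.

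The main step is to show that $\lambda$ is an equivalence. Since fibrations and their maps are detected fibrewise, I would first check that $\lambda$ is an equivalence on fibres and that its source is a $(0,1)$-fibration. The cleanest route is via \cref{obs:loc e-eqce}. The map $\tEd|_{\tB^{\flat\flat}} \to \Ld(\tEd|_{\tB^{\flat\flat}})^{\flat\flat}$ is an $\ve$-equivalence over $\tB^{\flat\flat}$, and pushing forward along $\tB^{\flat\flat}\to\tB^{\sharp\sharp}$ preserves $\ve$-equivalences by \cref{obs:cofib vs equiv}. It therefore remains to show that $\tEd|_{\tB^{\flat\flat}} \to \tEd$, the identity on underlying \itcats{}, is an $\ve$-equivalence over $\tB^{\sharp\sharp}$, after forgetting all non-cartesian decorations in $\tEd$ that lie over non-equivalences. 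Granting this, $L^{\ve}_{\tB^{\sharp\sharp}}$ sends both $\Ld(\tEd|_{\tB^{\flat\flat}})^{\flat\flat}$ and $\tEd$ to $\tE'^{\natural}$, since $L$ composed with the forgetful functor to $\DCatITsl{\tB^{\sharp\sharp}}$ is the free-fibration localization.

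This remaining claim is the expected obstacle. A decorated $1$-morphism $u$ of $\tEd$ over a nonequivalence factors as a cocartesian morphism followed by a decorated morphism in a fibre, and similarly a decorated $2$-morphism factors as a cartesian $2$-morphism composed with a fibrewise decorated one (\cref{lem:reconstruct dec fib from 1-cat}). Cocartesian and cartesian decorations are already $\ve$-local, because $[1]^{\sharp}\to$ its localization and similar maps are $\ve$-equivalences by \cref{obs:loc e-eqce}. So adding the composite decorations to $\tEd|_{\tB^{\flat\flat}}$ ought to be an iterated pushout along maps of the form $[1]^{\sharp}$-decoration changes that are $\ve$-equivalences over $\tB^{\sharp\sharp}$. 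I would try to make this precise by exhibiting it as a pushout of copies of $\{0\}\otimes \ldots \to [1]^{\sharp}\otimes \ldots$ generators, and by using cancellation in \cref{obs:epscofabs}.

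If this becomes messy, the fallback is the other proof strategy the paper advertises via cofree fibrations. Here one would use \cref{thm:cofree} for $\tB^{\flat\flat}$-decorated $\tE$ to show that maps out of $\tEd|_{\tB^{\flat\flat}}$ into $\tX^{\natural}$ preserving the remaining decorations agree with decorated maps out of $\tEd$.
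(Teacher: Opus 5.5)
Your construction of the comparison map $\lambda\colon\Ld(\tEd|_{\tB^{\flat\flat}})\to\tE'$ is fine, but the main step rests on a false claim. You need $\tEd|_{\tB^{\flat\flat}}\to\tEd$ to be a $(0,1)$-equivalence over $\tB^{\sharp\sharp}$, so that $L^{(0,1)}_{\tB^{\sharp\sharp}}$ sends $\Ld(\tEd|_{\tB^{\flat\flat}})^{\flat\flat}$ to $\tE'^{\natural}$. Take $F$ constant at $[0]$. Then $\tEd=\tB^{\sharp\sharp}$ and $\tEd|_{\tB^{\flat\flat}}=\Ld(\tEd|_{\tB^{\flat\flat}})^{\flat\flat}=\tB^{\flat\flat}$, while $\tE'=\tB$. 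But $L^{(0,1)}_{\tB^{\sharp\sharp}}(\tB^{\flat\flat})$ is the free $(0,1)$-fibration $\ev_{1}\colon\ARopl(\tB)\to\tB$ on $\id_{\tB}$. Already for $\tB=[1]$ its fibre over $1$ is $[1]$ rather than a point.

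So the claim fails, and the target itself is wrong: $L^{(0,1)}_{\tB^{\sharp\sharp}}$ of a minimally decorated object is the free fibration on its underlying functor, not that functor. Your locality heuristic only handles decorations lying over equivalences, since a map like $[1]^{\sharp}\to[0]$ lives over $\tB^{\sharp\sharp}$ only if the arrow goes to an equivalence. Decorations over non-equivalences cannot be discarded over $\tB^{\sharp\sharp}$. Your opening aim is also off: maps $\tE'\to\tX$ in $\FIB^{(0,1)}_{/\tB}$ are not all functors over $\tB$.

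The missing idea is to work over $\tB^{\flat\flat}$, whose partial $(0,1)$-fibrations are exactly the functors $\tC^{\flat\flat}\to\tB^{\flat\flat}$. Testing against $(\tD\times\tB)^{\flat\flat}$ shows that a $(0,1)$-equivalence over $\tB^{\flat\flat}$ becomes an equivalence after $\Ld$. So what must be shown is that the restricted unit $\tEd|_{\tB^{\flat\flat}}\to\tE'^{\natural}|_{\tB^{\flat\flat}}\simeq\tE'^{\flat\flat}$ is a $(0,1)$-equivalence over $\tB^{\flat\flat}$. This is not formal, because pullback does not preserve $\ve$-equivalences.

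The paper (\cref{propn:partial fib loc on pb of dec fib}) takes $f=\id\colon\tB^{\flat\flat}\to\tB^{\sharp\sharp}$, uses the adjunction $f^{*}\dashv R_{f}$ of \cref{thm:cofree}, and lets $Q$ be the straightening of $R_{f}(\tQd)$. Decorated straightening then gives
\[ \DFun_{/\tB^{\sharp\sharp}}(\tE'^{\natural},R_{f}(\tQd)^{\natural})\simeq\Nat_{\tB,\CATIT}(\Ld F,Q)\simeq\Nat_{\tB,\DCATIT}(F,Q^{\flat\flat})\simeq\DFun_{/\tB^{\sharp\sharp}}(\tEd,R_{f}(\tQd)^{\natural}). \]
\cref{obs:Rf is restr of adjoint} rewrites the last term as maps out of $f^{*}\DFFree^{(0,1)}_{\tB^{\sharp\sharp}}(\tEd)$. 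Then \cref{propn:unit fib univ eps-equiv} replaces these by maps out of $f^{*}\tEd$. That result says the free-fibration unit remains an $\ve$-equivalence after pullback along any functor, and is proved via the lax homotopy of \cref{lem:lax tr for dec fib}. That $\Ld(\tEd|_{\tB^{\flat\flat}})\to\tB$ is a $(0,1)$-fibration then follows from the identification with $\tE'$.

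Your fallback points toward \cref{thm:cofree}, but it tests only against fibrations $\tX^{\natural}$. That cannot detect the underlying \itcat{} of $\Ld(\tEd|_{\tB^{\flat\flat}})$, and the fallback omits this pullback-stability input.
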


We'll derive this as a special case of the following more general
observation:
\begin{propn}\label{propn:partial fib loc on pb of dec fib}
  Suppose $p \colon \tEd \to \tB^{\sharp\sharp}$ is a decorated
  $(0,1)$-fibration, corresponding to a functor $F \colon \tB \to
  \DCATIT$, and let $\tE' \to \tB$ be the $(0,1)$-fibration for
  $\Ldec{F}$. Then for any decorated functor $f \colon \tCd \to
  \tB^{\sharp\sharp}$, the pullback of the canonical map $\eta \colon \tEd \to
  \tE'^{\natural}$ induces an equivalence
  \[ \DFun_{/\tCd}(f^{*}\tE'^{\natural}, \tQd) \isoto
    \DFun_{/\tCd}(f^{*}\tEd, \tQd)\]
  for any partial $(0,1)$-fibration $\tQd \to \tCd$. In other words,
  $f^{*}\eta$ exhibits $f^{*}\tE'^{\natural}$ as the localization
  $L^{(0,1)}_{\tCd}(f^{*}\tEd)$ to a partial $(0,1)$-fibration.
\end{propn}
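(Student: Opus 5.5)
The plan is to pass through the pushforward $R_f$ of \cref{def:Rf} and reduce the statement to the universal property of $\Ldec{\blank}$ for decorated fibrations from \cref{cor: dec str comp flat}. I will first prove the equivalence on mapping \igpds{}. Replacing $\tQd$ by the partial fibration $\DFUN(\tK^{\flat\flat},\tQd)$ (equivalently, tensoring the sources with $\tK^{\flat\flat}$, which commutes with everything below) then upgrades it to the stated equivalence of \icats{}.

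Let $\tXd \to \tBss$ be either $\tEd$ or $\tE'^{\natural}$; both are decorated $(0,1)$-fibrations. By \cref{propn:unit fib univ eps-equiv}, the pullback along $f$ of the unit $\tXd \to \DFFree^{(0,1)}_{\tBss}(\tXd)$ is a $(0,1)$-equivalence over $\tCd$. Since $\tQd$ is a partial $(0,1)$-fibration, restriction gives
\[ \Map_{/\tCd}(f^{*}\DFFree^{(0,1)}_{\tBss}(\tXd), \tQd) \isoto \Map_{/\tCd}(f^{*}\tXd, \tQd). \]
By \cref{obs:Rf is restr of adjoint}, the functor $f^{*}\DFFree^{(0,1)}_{\tBss} \simeq \pi_{!}\phi^{*}$ is left adjoint to $\phi_{*}\pi^{*}$, which restricts to $R_f$ on partial fibrations. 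Combining the two, we get a natural equivalence
\[ \Map_{/\tCd}(f^{*}\tXd,\tQd) \simeq \Map_{\DCatITsl{\tBss}}(\tXd, R_f(\tQ)^{\natural}), \]
natural in the decorated fibration $\tXd$, and in particular compatible with $\eta$. This is exactly the argument in the proof of \cref{thm:cofree}, now with a general decorated fibration as source.

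Next I will check that every decorated functor $\tXd \to R_f(\tQ)^{\natural}$ over $\tBss$ is automatically a morphism of decorated $(0,1)$-fibrations. The cocartesian 1-morphisms and cartesian 2-morphisms of $\tXd$ are decorated, and the decorations of $R_f(\tQ)^{\natural}$ are precisely its cocartesian 1-morphisms and cartesian 2-morphisms, so they are preserved. Hence the right-hand side equals $\Map_{\DFib^{(0,1)}_{/\tB}}(\tXd, R_f(\tQ)^{\natural})$. By \cref{cor: dec str comp flat}, the left adjoint to $(\blank)^{\natural}$ corresponds under straightening to composition with $\Ldec{\blank}$, and its unit at $\tEd$ is exactly $\eta \colon \tEd \to \tE'^{\natural}$. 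Therefore $\eta^{*}$ gives an equivalence
\[ \Map(\tE'^{\natural}, R_f(\tQ)^{\natural}) \isoto \Map(\tEd, R_f(\tQ)^{\natural}). \]
Here the source is computed using that $(\blank)^{\natural}$ is fully faithful, which gives $\Map(\tE'^{\natural},R_f(\tQ)^{\natural}) \simeq \Map(\tE',R_f(\tQ))$. Combining this with the natural equivalences of the previous paragraph for $\tXd=\tEd$ and $\tXd=\tE'^{\natural}$ shows that $(f^{*}\eta)^{*}$ is an equivalence.

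The main obstacle is the naturality bookkeeping. The composite equivalence must really be induced by precomposition with $f^{*}\eta$, which requires the identification of the adjunction with the unit $\chi_f$ of \cref{propn:unit fib univ eps-equiv} to be natural in the decorated fibration $\tXd$, and not merely objectwise. I would handle this by writing everything as mapping spaces out of the functor $f^{*}\DFFree^{(0,1)}_{\tBss}$ applied to the morphism $\eta$, and using that the unit of the free-fibration adjunction is a natural transformation; the description of the unit in \cref{obs:unitcofree} suggests this works. The final assertion of the statement is then a restatement of the equivalence: $f^{*}\tE'^{\natural}$ is a partial fibration over $\tCd$, as a pullback of a fibration, so $f^{*}\eta$ exhibits it as $L^{(0,1)}_{\tCd}(f^{*}\tEd)$.
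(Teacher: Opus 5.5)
Your proposal is correct and is essentially the paper's argument. The paper combines the same three ingredients:
\begin{enumerate}
\item the pushforward adjunction $f^{*} \dashv R_{f}$, which rests on \cref{obs:Rf is restr of adjoint} together with \cref{propn:unit fib univ eps-equiv};
\item the fact that decorated functors $\tEd \to R_{f}(\tQd)^{\natural}$ over $\tB^{\sharp\sharp}$ automatically preserve cocartesian 1-morphisms and cartesian 2-morphisms;
\item the localization adjunction $\Nat_{\tB,\CATIT}(\Ldec{F}, Q) \simeq \Nat_{\tB,\DCATIT}(F, Q^{\flat\flat})$ from \cref{cor: dec str comp flat}.
\end{enumerate}
The differences are cosmetic: the paper applies \cref{thm:cofree} directly on the $\tE'$ side and works with mapping \icats{} throughout via the \itcatl{} adjunctions, whereas you run the free-fibration argument uniformly for both sources, which makes the identification of the composite with $(f^{*}\eta)^{*}$ more transparent, and then upgrade from mapping \igpds{} by tensoring with $\tK^{\flat\flat}$.
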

\begin{proof}
  Using the adjunction
  \[ f^{*} : \FIB^{(0,1)}_{/\tB^{\sharp\sharp}} \rightleftarrows
    \PFIB^{(0,1)}_{/\tCd} : R_{f}\]
  from \cref{thm:cofree} and (decorated) straightening, we get natural
  equivalences of \icats{}
  \[
    \begin{split}
      \DFun_{/\tCd}(f^{*}\tE'^{\natural}, \tQd)
      & \simeq
        \DFun_{/\tB^{\sharp\sharp}}(\tE'^{\natural}, R_{f}(\tQd)^{\natural})\\
      & \simeq \Nat_{\tB, \CATIT}(\Ldec{F}, Q) \\
      & \simeq \Nat_{\tB, \DCATIT}(F, Q^{\flat\flat}) \\
      & \simeq \DFIB_{/\tB^{\sharp\sharp}}^{(0,1)}(\tEd, R_{f}(\tQd)^{\natural}),
    \end{split}
  \]
  where $Q$ denotes the straightening of $R_{f}(\tQd)$.
  Now we observe that for $\tP \to \tB$ a $(0,1)$-fibration, morphisms of
  decorated fibrations $\tEd \to \tP^{\natural}$ are just morphisms of
  decorated \itcats{} over $\tB^{\sharp\sharp}$ (as any decorated
  functor to $\tP^{\natural}$ in particular preserves cocartesian
  morphisms and cartesian 2-morphisms). We can therefore use
  \cref{obs:Rf is restr of adjoint} to get an equivalence
  \[
    \begin{split}
      \DFIB_{/\tB^{\sharp\sharp}}^{(0,1)}(\tEd,
      R_{f}(\tQd)^{\natural})
      & \simeq \DFun_{/\tB^{\sharp\sharp}}(\tEd, R_{f}(\tQd)^{\natural}) \\
      & \simeq \DFun_{/\tCd}(f^{*}\DFFree^{(0,1)}_{\tB^{\sharp\sharp}}(p), \tQd). \\
    \end{split}
  \]
  Now we can use
  that the unit map
  $p \to \DFFree^{(0,1)}_{\tB^{\sharp\sharp}}(p)$ pulls back to
  a $(0,1)$-equivalence along any map by \cref{propn:unit fib univ
    eps-equiv}; since $\tQd$ is a partial $(0,1)$-fibration this gives
  an equivalence
  \[ \DFun_{/\tCd}(f^{*}\DFFree^{(0,1)}_{\tB^{\sharp\sharp}}(p), \tQd) \simeq
    \DFun_{/\tCd}(f^{*}\tEd, \tQd),\]
  as required. 
\end{proof}

\begin{proof}[Proof of \cref{thm:locn of fib}]
  Let $\tE' \to \tB$ be the $(0,1)$-fibration for $\Ldec{F}$; we will
  prove that it has the universal property of the localization
  $\Ldec{\tEd|_{\tB^{\flat\flat}}}$: Applying \cref{propn:partial fib
    loc on pb of dec fib} to the identity functor $\tB^{\flat\flat}
  \to \tB^{\sharp\sharp}$, and noting that a functor to
  $\tB^{\flat\flat}$ is a partial $(0,1)$-fibration \IFF{} it is of
  the form $\tC^{\flat\flat} \to \tB^{\flat\flat}$, we get a natural
  equivalence
  \[ \DFun_{/\tB^{\flat\flat}}(\tEd|_{\tB^{\flat\flat}},
    \tC^{\flat\flat}) \simeq
    \DFun_{/\tB^{\flat\flat}}(\tE'^{\flat\flat}, \tC^{\flat\flat}),
  \]
  since $\tE'^{\natural}|_{\tB^{\flat\flat}}$ is decorated by the cocartesian 
  morphisms and cartesian 2-morphisms that lie over equivalences,
  \ie{} by the equivalences. For an \itcat{} $\tD$ we therefore have
  natural equvalences
  \[
    \begin{split}
      \Fun(\tE', \tD) & \simeq \Fun_{/\tB}(\tE', \tD \times \tB) \\
                      & \simeq
                        \DFun_{/\tB^{\flat\flat}}(\tE'^{\flat\flat},
                        (\tD \times \tB)^{\flat\flat}) \\
      & \simeq \DFun_{/\tB^{\flat\flat}}(\tEd|_{\tB^{\flat\flat}},
        (\tD \times \tB)^{\flat\flat}) \\
                      & \simeq \DFun(\tEd|_{\tB^{\flat\flat}}, \tD^{\flat\flat}) \\
      & \simeq \Fun(\Ldec{\tEd|_{\tB^{\flat\flat}}}, \tD),
    \end{split}
  \]
  It follows that we have an equivalence
  $\Ldec{\tEd|_{\tB^{\flat\flat}}} \simeq \tE'$, as required.
\end{proof}

\begin{cor}\label{cor:explicit locn to fibs}\
  \begin{enumerate}
  \item   The left adjoint to the inclusion
  \[ \FIB^{(0,1)}_{/\tB} \hookrightarrow
    \DFIB^{(0,1)}_{/\tB^{\sharp\sharp}}\]
  sends $\tEd \to \tB^{\sharp\sharp}$ to
  $\Ldec{\tEd|_{\tB^{\flat\flat}}} \to \tB$.
\item The left adjoint $L^{(0,1)}_{\tB^{\sharp\sharp}}$ to the inclusion
  \[ \FIB^{(0,1)}_{/\tB} \hookrightarrow
    \DCATITsl{\tB^{\sharp\sharp}}\]
  sends $\tCd \to \tB^{\sharp\sharp}$ to
  $\Ldec{\DFFree^{(0,1)}_{\tB^{\sharp\sharp}}(\tCd)|_{\tB^{\flat\flat}}}$. 
\item The left adjoint to the inclusion
  \[ \oFIB^{(0,1)}_{/\tB} \hookrightarrow
    \MFIB^{(0,1)}_{/\tB^{\sharp}}\]
  sends $(\tE,S) \to \tB^{\sharp}$ to
  $\Ldec{(\tE,S)^{\sharp}|_{\tB^{\flat\flat}}} \to \tB$,
  where we localize the morphisms from $S$ and all 2-morphisms in each fibre.
\item The left adjoint $L^{(0,1);1}_{\tB^{\sharp}}$ to the inclusion
  \[ \oFIB^{(0,1)}_{/\tB} \hookrightarrow
    \MCATITsl{\tB^{\sharp}}\]
  sends $(\tC,S) \to \tB^{\sharp}$ to
  $\Ldec{\DFFree^{(0,1)}_{\tB^{\sharp\sharp}}((\tC,S)^{\sharp}|_{\tB^{\flat\flat}})}$. 
  \end{enumerate}
\end{cor}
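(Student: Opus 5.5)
Each of the four statements is a formal consequence of two facts already in hand. The first is \cref{thm:locn of fib}, which says that $\Ldec{\tEd|_{\tB^{\flat\flat}}}$ is the $(0,1)$-fibration straightening $\Ldec{F}$. The second is \cref{cor: dec str comp flat}, which says that the left adjoint of $(\blank)^{\natural} \colon \FIB^{(0,1)}_{/\tB} \to \DFIB^{(0,1)}_{/\tB^{\sharp\sharp}}$ corresponds under straightening to postcomposition with $\Ld$. The plan is to prove (1) by combining these, and then obtain (2)--(4) by composing adjunctions.

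For (1), let $\tEd \to \tB^{\sharp\sharp}$ be a decorated $(0,1)$-fibration with straightening $F \colon \tB \to \DCATIT$. By \cref{cor: dec str comp flat}, the value of the left adjoint at $\tEd$ is the unstraightening of $\Ld \circ F$. By \cref{thm:locn of fib}, this unstraightening is $\Ldec{\tEd|_{\tB^{\flat\flat}}} \to \tB$. The unit is identified with the canonical map $\eta$ from \cref{propn:partial fib loc on pb of dec fib} taken with $f = \id$, so nothing further has to be checked.

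For (2), factor the inclusion $\FIB^{(0,1)}_{/\tB} \hookrightarrow \DCATITsl{\tB^{\sharp\sharp}}$ as
\[ \FIB^{(0,1)}_{/\tB} \to \DFIB^{(0,1)}_{/\tB^{\sharp\sharp}} \to \DCATITsl{\tB^{\sharp\sharp}}. \]
The second functor is $\DUeB$ with $\tBd = \tB^{\sharp\sharp}$, whose left adjoint is $\DFFree^{(0,1)}_{\tB^{\sharp\sharp}}$ by \cref{propn:free dec partial fib}. The left adjoint of the composite is therefore $\DFFree^{(0,1)}_{\tB^{\sharp\sharp}}$ followed by the functor from (1), which gives the stated formula.

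For (3), restrict (1) to marked sources. The one point to check is that when $\tEd = (\tE,S)^{\sharp}$ is a marked $(0,1)$-fibration, the localization of $\tEd|_{\tB^{\flat\flat}}$ is the one described in the statement. The decorated $1$-morphisms of $\tEd|_{\tB^{\flat\flat}}$ are the morphisms of $S$ lying over equivalences. All $2$-morphisms lying over identity $2$-cells are decorated. So localizing inverts the morphisms of $S$ and all $2$-morphisms in each fibre. Since every $2$-morphism of the source is decorated, the resulting fibration is $1$-fibred and lands in $\oFIB^{(0,1)}_{/\tB}$. The adjunction then restricts, because $\oFIB^{(0,1)}_{/\tB}$ is the intersection of $\FIB^{(0,1)}_{/\tB}$ with $\MFIB^{(0,1)}_{/\tB^{\sharp}}$, and the latter is full in $\DFIB^{(0,1)}_{/\tB^{\sharp\sharp}}$.

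For (4), compose in the same way as in (2). The argument of \cref{cor:free 1-fibred on marked} shows that $\DFFree^{(0,1)}_{\tB^{\sharp\sharp}}$ carries $(\tC,S)^{\sharp}$ into $\MFIB^{(0,1)}_{/\tB^{\sharp}}$: all of its $2$-morphisms are decorated, since those of $(\tC,S)^{\sharp}$ and of $\DARelax(\tB^{\sharp\sharp})^{\diafib}$ over $\tB^{\sharp\sharp}$ are. Then apply (3).

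The main obstacle is the bookkeeping in (3) and (4): one has to verify that the decorations of the restriction to $\tB^{\flat\flat}$ are exactly the fibrewise $S$-morphisms together with all $2$-morphisms. Apart from that, every step is formal manipulation of adjoints.
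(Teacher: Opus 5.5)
Your proposal is correct and follows the paper's own proof: (1) from \cref{cor: dec str comp flat} together with \cref{thm:locn of fib}, (2) by factoring the inclusion through $\DFIB^{(0,1)}_{/\tB^{\sharp\sharp}}$ and precomposing with the decorated free fibration of \cref{propn:free dec partial fib}, and (3)--(4) by restricting to the marked case. Your route to (4) yields $\Ldec{\DFFree^{(0,1)}_{\tB^{\sharp\sharp}}((\tC,S)^{\sharp})|_{\tB^{\flat\flat}}}$, with the restriction to $\tB^{\flat\flat}$ taken after forming the free fibration exactly as in (2). That is the intended reading of the formula in (4), whose bracketing places the restriction inside the free fibration.
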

\begin{proof}
  The first statement follows from \cref{cor: dec str comp flat}
  together with \cref{thm:locn of fib}, and in (2) the right adjoint
  factors through $\DFIB^{(0,1)}_{/\tBss}$, so that the left adjoint
  factors as the decorated free fibration from \cref{propn:free dec
    partial fib} followed by the left adjoint from (1). The last
  statements follow from restricting the first two to the marked case.
\end{proof}

\section{(Co)limits and Kan extensions in \itcats{}}\label{sec:colimits}
In this section we will apply our results on fibrations from the
previous section to develop the theory of (co)limits and Kan extensions
for \itcats{}. In \S\ref{subsec:LimitsDefinitions} we introduce one
natural notion of (co)limits in \itcats{}, the (partially) lax and
oplax (co)limits, and compare some definitions of these. We then
introduce another notion of (co)limits, the \emph{weighted}
(co)limits, in \S\ref{sec:wtlim}; we prove that these types of
(co)limits are equivalent, in the sense that any weighted (co)limit can
be reexpressed as a partially lax or oplax (co)limit, and vice
versa. In \S\ref{sec:lax colim 2cat} we then look at such (co)limits
in $\CATIT$ and $\CATI$, and show that these have simple descriptions
in terms of fibrations. Next we collect some useful results on the
existence and preservation of (co)limits in \S\ref{sec:stuff}. In
\S\ref{sec:cofinal new} we will then see that the formalism of
decorated \itcats{} and our work on free fibrations make it easy to
understand cofinal functors of marked \itcats{}. After this we turn to
Kan extensions of \itcats{} in \S\ref{sec:kanext}, and then set up a
Bousfield--Kan formula for weighted colimits in
\S\ref{sec:bousfieldkan}. Finally, we give a fibrational proof of a
functorial version of the Yoneda lemma and use this to prove that
presheaves of \icats{} gives the free cocompletion of an \itcat{} in
\S\ref{sec:free cocomp}, and then apply this to compare some
characterizations of presentable \itcats{} in \S\ref{sec:pres}.

\subsection{Lax and oplax (co)limits}\label{sec:laxlim}
\label{subsec:LimitsDefinitions}
Partially (op)lax (co)limits were first introduced for strict
2-categories by Descotte, Dubuc, and Szyld in \cite{sigmalim} under
the name \emph{$\sigma$-(co)limits}, and have previously been studied
in the \icatl{} context by Berman~\cite{BermanLax} for diagrams in
$\CATI$ indexed by an \icat{}, and in greater generality by the first
author in \cite{AbMarked} and by Gagna--Harpaz--Lanari in
\cite{GagnaHarpazLanariLaxLim}. In this section we will first define
partially (op)lax colimits in \itcats{} by a universal property in
terms of partially (op)lax transformations, and derive several alternative
characterizations; in particular, we will see that our definition is
equivalent to that of \cite{GagnaHarpazLanariLaxLim}.

\begin{notation}\label{def:constant}
  We denote by $\underline{(-)} \colon \tC \to \FUN(\tI,\tC)$ the
  diagonal functor, \ie{} the one that transposes to the projection
  $\tI \times \tC \to \tC $ onto the second factor. We call this
  functor the \emph{constant diagram} functor.
\end{notation}

\begin{definition}\label{def:elaxlimit}
  Let $(\tI, E)$ be a marked \itcat{} and $F\colon \tI \to \tC$ a
  functor of \itcats{}. We say that an object of $\tC$ is:
  \begin{itemize}
  \item the \emph{$E$-(op)lax limit} of $F$, denoted by
    $\lim^{\elaxoplax}_\tI F$, if there exists an equivalence of
    functors
    \[
      \tC(c,\lim^{\elaxoplax}_\tI F) \simeq \Nat_{\tI,\tC}^{\elaxoplax}(\underline{c},F)
    \]
    that is natural in $c$;
  \item the \emph{$E$-(op)lax colimit} of $F$, denoted by
    $\colim^{\elaxoplax}_\tI F$, if there exists an equivalence of
    functors
    \[
      \tC(\colim^{\elaxoplax}_{\tI} F, c)\simeq \Nat_{\tI,\tC}^{\elaxoplax}(F,\underline{c})
    \]
    that is natural in $c$.
  \end{itemize}
  In the minimally marked case $(\tI, E)= \tI^{\flat}$ we refer to
  $E$-(op)lax (co)limits just as \emph{(op)lax (co)limits}, denoted
  $\lim^{\plax}_\tI F$ and $\colim^{\plax}_{\tI} F$. On the other
  hand, in the maximally marked case $(\tI, E)= \tI^{\sharp}$ we refer
  to them as \emph{strong}\footnote{We will see later in \cref{ex:conical} that these
    precisely correspond to \emph{conical} colimits in the sense of
    enriched category theory.} (co)limits, denoted $\lim_{\tI} F$ and
  $\colim_{\tI} F$; here there is no distinction between lax and
  oplax, as both universal properties are equivalent to
  (co)representing the (co)presheaves
  $\Nat_{\tI,\tC}(\underline{c},F)$ and
  $\Nat_{\tI,\tC}(F,\underline{c})$.
\end{definition}

\begin{lemma}\label{lem:lax colim change variance}
  For a functor $F \colon \tI \to \tC$ and an object $x \in \tC$, the following are equivalent:
  \begin{itemize}
  \item $x$ is the $E$-lax colimit of $F$ in $\tC$.
  \item $x$ is the $E$-oplax limit of $F^{\op}$ in $\tC^{\op}$.
  \item $x$ is the $E$-oplax colimit
    of $F^{\co}$ in $\tC^{\co}$.
  \item $x$ is the $E$-lax limit 
    of $F^{\coop}$ in $\tC^{\coop}$.
  \end{itemize}
\end{lemma}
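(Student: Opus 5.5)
The plan is to reduce all four equivalences to a single statement about how the \icats{} of partially (op)lax transformations behave under the dualities $(\blank)^{\op}$, $(\blank)^{\co}$ and $(\blank)^{\coop}$. By \cref{def:elaxlimit}, each condition asks that $x$ (co)represent a certain (co)presheaf of \icats{}. The approach is to show that these (co)presheaves are canonically identified with each other, up to the appropriate dualization of mapping \icats{}.

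First I would record how the dualities act on mapping \icats{}. We have $\tC^{\op}(a,b) = \tC(b,a)$, $\tC^{\co}(a,b) = \tC(a,b)^{\op}$ and $\tC^{\coop}(a,b) = \tC(b,a)^{\op}$.

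Next I would record how the dualities interact with the Gray tensor product. Since $(\blank)^{\op}$ and $(\blank)^{\co}$ are automorphisms of $\CatIT$, there are natural equivalences
\[ (\tA \otimes \tB)^{\op} \simeq \tB^{\op} \otimes \tA^{\op}, \qquad (\tA \otimes \tB)^{\co} \simeq \tB^{\co} \otimes \tA^{\co}. \]
These are compatible with markings, so the same holds for the marked Gray tensors $\otimes_{I,J}$, with the roles of the two markings swapped. Combined with $[1]^{\op} \simeq [1] \simeq [1]^{\co}$ (using the flip for $\op$), the adjunctions defining $\FUN(\blank,\blank)^{E\dplax}$ then give natural equivalences
\[ \FUN(\tI,\tC)^{E\dlax} \simeq \left(\FUN(\tI^{\op},\tC^{\op})^{E\doplax}\right)^{\op}, \]
\[ \FUN(\tI,\tC)^{E\dlax} \simeq \left(\FUN(\tI^{\co},\tC^{\co})^{E\doplax}\right)^{\co}, \]
and their composite for $\coop$. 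These equivalences are compatible with the constant diagram functor, because the projection $\tI \times \tC \to \tC$ is preserved by the dualities.

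Reading off mapping \icats{}, I would obtain natural equivalences
\[ \Nat^{E\dlax}_{\tI,\tC}(F,\underline{c}) \simeq \Nat^{E\doplax}_{\tI^{\op},\tC^{\op}}(\underline{c},F^{\op}) \simeq \Nat^{E\doplax}_{\tI^{\co},\tC^{\co}}(F^{\co},\underline{c})^{\op} \simeq \Nat^{E\dlax}_{\tI^{\coop},\tC^{\coop}}(\underline{c},F^{\coop})^{\op}, \]
each natural in $c$. On the other side, $\tC(x,c)$ equals $\tC^{\op}(c,x)$, $\tC^{\co}(x,c)^{\op}$ and $\tC^{\coop}(c,x)^{\op}$ respectively. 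Since $(\blank)^{\op}$ is an automorphism of $\CatI$, a natural equivalence of (co)presheaves survives applying $\op$ to both sides. So $x$ (co)represents one of these functors precisely when it (co)represents the others, which gives the four equivalent conditions.

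The main obstacle is the second step, namely checking carefully that the variance flips as claimed: $\op$ and $\co$ should each exchange lax and oplax, while $\coop$ preserves lax. This requires an explicit check that the orientation of the 2-cell in $[1]\otimes[1]$ is reversed under each duality once the factors are swapped. I would first verify this on the generating lax square using the pushout of \cref{obs:gray square pushout}, and then extend by colimit-preservation of the Gray tensor in each variable. One should also confirm that the markings, i.e.\ which squares are inverted, are carried along correctly; this is immediate because inverting 2-morphisms is preserved by both automorphisms.
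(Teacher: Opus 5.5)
Your proposal is correct and is essentially the paper's argument. The paper likewise reads the four equivalences off the definitions, using the identifications of (partially) lax and oplax functor $(\infty,2)$-categories that come from the order-reversing symmetry of the Gray tensor product under $(-)^{\mathrm{op}}$ and $(-)^{\mathrm{co}}$, and your bookkeeping of how these dualities swap lax/oplax, swap the markings in the marked Gray tensors, and act on mapping $\infty$-categories is right. The only difference is that the paper does not re-prove the step you flag as the main obstacle; it cites it (\cite[Observation 2.2.11]{AGH24}), which is preferable to your sketch, since checking the generating square $[1]\otimes[1]$ and extending by colimits would not by itself yield a coherent natural equivalence.
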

\begin{proof}
  This follows from the definition and the equivalences among lax and oplax
  functor \itcats{} that arise from the order-reversing symmetry of
  the Gray tensor product under both $(\blank)^{\op}$ and
  $(\blank)^{\co}$ (see \cite[Observation 2.2.11]{AGH24}). 
\end{proof}

\begin{observation}\label{rem:limfunctor}
  Suppose that $\tC$ admits all partially (op)lax (co)limits indexed
  by a marked \itcat{} $(\tI,E)$. Then it follows that taking
  $E$-(op)lax (co)limits give adjoints to the constant diagram functor
  $\underline{(-)} \colon \tC \to \FUN(\tI, \tC)^{\elaxoplax}$:
  \[
    \underline{(-)} \colon \tC \llra \FUN(\tI,\tC)^{\elaxoplax} \colon
    \lim^{\elaxoplax}_\tI,\]
  \[  \colim^{\elaxoplax}_\tI \colon \FUN(\tI,\tC)^{\elaxoplax} \llra \tC \colon \underline{(-)}.
  \]
  Conversely, the existence of such adjoints implies that $\tC$ has
  all partially (op)lax (co)limits indexed by $(\tI, E)$.
\end{observation}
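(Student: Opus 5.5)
The plan is to reduce the statement to the standard correspondence between pointwise representability of the relevant (co)presheaves and the existence of an \itcatl{} adjoint. I treat the $E$-lax limit case; the other three cases then follow from \cref{lem:lax colim change variance}, which swaps the four variants under $(\blank)^{\op}$, $(\blank)^{\co}$ and $(\blank)^{\coop}$.

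First, by definition $\Nat^{\elax}_{\tI,\tC}(\underline{c},F)$ is the mapping \icat{} $\FUN(\tI,\tC)^{\elax}(\underline{c},F)$. Viewed as a presheaf in $c$, it is the composite of the representable presheaf $\FUN(\tI,\tC)^{\elax}(\blank,F)$ with the constant diagram functor $\underline{(\blank)}$, so the presheaf is functorial in $c$. Hence the hypothesis says exactly that for every $F$, the presheaf $\FUN(\tI,\tC)^{\elax}(\underline{(\blank)},F)$ on $\tC$ is representable, with representing object $\lim^{\elax}_\tI F$.

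Second, I use the Yoneda lemma for \itcats{} (\cite{HinichYoneda}), in the same way as in the proof of \cref{propn:upgrade adjoint}. A functor $G \colon \tC \to \tD$ of \itcats{} admits a right adjoint if and only if $\tD(G(\blank),d)$ is representable for every $d$, and the representing objects then assemble into the right adjoint. Applying this with $G = \underline{(\blank)}$ and $\tD = \FUN(\tI,\tC)^{\elax}$ gives the right adjoint $\lim^{\elax}_\tI$. The colimit case is dual: one uses corepresentability of $\tD(d,G(\blank))$ and obtains a left adjoint $\colim^{\elax}_\tI$.

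For the converse, an adjunction gives natural equivalences $\tC(c,\lim F) \simeq \FUN(\tI,\tC)^{\elax}(\underline{c},F)$ of mapping \icats{}, which is precisely the defining universal property.

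The main obstacle is making sure that representability in the \icatl{} sense, rather than merely on cores, gives an adjunction of \itcats{}. The definition asks for an equivalence of presheaves of \icats{}, so the enriched Yoneda lemma applies directly. If one preferred to argue on underlying \icats{}, one would first get an adjunction there from representability on cores, and then upgrade it via \cref{propn:upgrade adjoint}; but that route requires checking that $\underline{(\blank)}$ preserves tensors by $[1]$, which is unnecessary here.
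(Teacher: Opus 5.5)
Your proposal is correct and is essentially the argument the paper leaves implicit. The definition of $E$-(op)lax (co)limits is exactly pointwise (co)representability of $\Nat^{\elaxoplax}_{\tI,\tC}$ restricted along the constant diagram functor. The Yoneda lemma for $(\infty,2)$-categories, used as in the proof of \cref{propn:upgrade adjoint}, then assembles the representing objects into an adjoint, and the converse is just the mapping-category equivalence of the adjunction. Your remark that no tensors by $[1]$ are needed is also right, since the hypothesis already gives equivalences of mapping $\infty$-categories rather than just of their cores.
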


To compare our definition of (op)lax (co)limits to that given in
\cite{GagnaHarpazLanariLaxLim}, we first need to derive a description
of the \icats{} $\Nat^{\eplax}_{\tA,\tB}(F,G)$, for which we use the
following notation:
\begin{defn}\label{def:fiblaxnat}
  Let $p \colon \tA \to \tC$, $q \colon \tB \to \tC$ be
  $(i,j)$-fibrations and let $(\tC, E)$ be a marked \itcat{}.  For
  $i = 0$ we write $\Fun_{/\tC}^{E\dcoc}(\tA,\tB)$ for the full
  subcategory of $\Fun_{/\tC}(\tA, \tB)$ spanned by functors that
  preserve the cocartesian 1-morphisms that lie over $E$ as well as
  all $j$-cartesian 2-morphisms; for $i = 1$ we define 
  $\Fun_{/\tC}^{E\dcart}(\tA,\tB)$ similarly.
\end{defn}

\begin{proposition}\label{prop:Natelaxdescr}
  Let $(\tA, E)$ be a marked \itcat{}. For functors $F, G \colon \tA \to \tB$, we have a natural equivalence
  \[ \Nat^{\elax}_{\tA,\tB}(F,G) \simeq \Fun^{(E)}_{/\tA \times \tA}(\tA, (F,G)^{*}\ARopl(\tB)),\]
  where the right-hand side denotes the \icat{} of sections that send the edges in $E$ to commutative squares in $(F,G)^{*}\ARopl(\tB)$. Dually, we have an equivalence in the $E$-oplax case 
  \[ \Nat^{\eoplax}_{\tA,\tB}(F,G)^\op \simeq \Fun^{(E)}_{/\tA \times
      \tA}(\tA, (F,G)^{*}\ARlax(\tB)).\] In particular, for an object
  $b \in \tB$ we have:
  \[ \Nat^{\elax}_{\tA,\tB}(\underline{b},G) \simeq \Fun^{E\dcoc}_{/\tA}(\tA, G^{*}\tB_{b\upslash}), \enspace \enspace \Nat^{\eoplax}_{\tA,\tB}(\underline{b},G)^\op \simeq \Fun^{E\dcoc}_{/\tA}(\tA, G^{*}\tB_{b\downslash}),\]
  \[ \Nat^{\elax}_{\tA,\tB}(F,\underline{b}) \simeq \Fun^{E\dcart}_{/\tA}(\tA, F^{*}\tB_{\upslash b}), \enspace \enspace \Nat^{\eoplax}_{\tA,\tB}(F,\underline{b})^\op \simeq \Fun^{E\dcart}_{/\tA}(\tA, F^{*}\tB_{\downslash b}) .\]  
\end{proposition}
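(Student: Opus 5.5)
We first identify $\Nat^{\elax}_{\tA,\tB}(F,G)$ as a fibre of a functor category, and then translate functor categories into sections of pulled-back fibrations.

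\textbf{Step 1: $\Nat$ as a fibre.} By the adjunction defining $\FUN(\tA,\tB)^{E\dlax}$, functors $[1]\to\FUN(\tA,\tB)^{E\dlax}$ are the same as functors $[1]\otimes_{\flat,E}\tA\to\tB$. Hence $\Nat^{\elax}_{\tA,\tB}(F,G)$ is the fibre over $(F,G)$ of
\[ \ARlax(\FUN(\tA,\tB)^{E\dlax})^{\leq 1}\to \Fun(\tA,\tB)^{\times 2}.\]
We pass to underlying \icats{} because mapping \icats{} in an \itcat{} are fibres of $\Ar$ of the underlying objects.

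By \cref{lax oplax reverse marked}, with the roles of the two factors swapped using \cref{marked lax hom adj}, the \itcat{} $\FUN([1],\FUN(\tA,\tB)^{E\dlax})^{\lax}$ is identified with the locally full sub-\itcat{} of $\FUN(\tA,\FUN([1],\tB)^{\oplax})$ spanned by the functors sending $E$ to strong transformations. Here I expect the variance to produce $\ARopl(\tB)$ in the lax case, consistent with the statement; this is what the Gray tensor symmetry $[1]\otimes\tA$ versus $\tA\otimes[1]$ dictates.

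Taking underlying \icats{}, only the objects and 1-morphisms matter. An object is a functor $\tA\to\ARopl(\tB)$ whose value on each morphism of $E$ is a commutative square, that is, a morphism of $\ARopl(\tB)$ with invertible 2-cell.

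\textbf{Step 2: rewrite as sections.} Take the fibre over $(F,G)$ using $(\ev_0,\ev_1)\colon\ARopl(\tB)\to\tB\times\tB$. This gives $\Fun^{(E)}_{/\tA\times\tA}(\tA,(F,G)^*\ARopl(\tB))$, with $\tA\to\tA\times\tA$ the diagonal. The fibre of $\Fun(\tA,\ARopl(\tB))$ over $(F,G)\in\Fun(\tA,\tB\times\tB)$ is computed as the space of lifts, and the section formulation is just the pullback rewriting. The oplax case follows by applying \cref{lem:lax colim change variance}-type dualities, namely $(\blank)^{\co}$. This converts $\ARopl$ into $\ARlax$ and reverses the 2-morphisms in the $\Nat$, which accounts for the $(\blank)^{\op}$.

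\textbf{Step 3: the special cases.} For $F=\underline b$, we have $(\underline b,G)^*\ARopl(\tB)\simeq G^*(\{b\}\times_{\tB}\ARopl(\tB))=G^*\tB_{b\upslash}$ over $\tA$, by pasting pullbacks and \cref{def:laxslices}. It remains to check that a section sends $e\in E$ to a commutative square exactly when it sends $e$ to a cocartesian morphism of the $(0,1)$-fibration $G^*\tB_{b\upslash}\to\tA$. By \cref{thm:ev is partial fib}, applied to $\ev_1$ with its cocartesian morphisms being the commutative squares with invertible $\ev_0$-component, together with stability of cocartesian morphisms under pullback, this holds: the $\ev_0$-component is automatically $\id_b$.

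The condition on 2-morphisms, that sections preserve all cartesian 2-morphisms, needs care. Since $\tA$ has no marked 2-morphisms, preservation should be automatic. Indeed, the image of any 2-morphism of $\tA$ under such a section has identity $\ev_0$-component, hence is cartesian by the same theorem. So the full subcategory $\Fun^{E\dcoc}$ agrees with the $(E)$-condition. The other three cases are symmetric, using $\ev_0$ and the slices $\tB_{\upslash b}$, $\tB_{\downslash b}$, $\tB_{b\downslash}$.

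\textbf{Main obstacle.} I expect the main obstacle to be Step 1: getting the variances exactly right in identifying $\ARlax(\FUN(\tA,\tB)^{E\dlax})$ with $\FUN(\tA,\ARopl(\tB))$ restricted along $E$. In particular one must check that the marking condition becomes ``$E$ goes to commuting squares'', and that the 2-morphism data is discarded correctly upon passing to $(\blank)^{\leq1}$.
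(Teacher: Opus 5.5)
The gap is in Steps 1--2, where you suspected it, but it is more than a matter of variance.

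First, the arrow \itcat{} to use is $\ARopl(\FUN(\tA,\tB)^{E\dlax})$, not $\ARlax$ of it. With the paper's conventions the fibre of $\ARlax(\tC)\to\tC\times\tC$ over $(x,y)$ is $\tC(x,y)^{\op}$ (compare the fibres of $\tC_{c\downslash}$). Also, the Fubini-type swap of \cref{lax oplax reverse marked} pairs an outer oplax with an inner lax. What it actually gives is that $\ARopl(\FUN(\tA,\tB)^{E\dlax})$ is the full sub-\itcat{} of $\FUN(\tA,\ARopl(\tB))^{E\dlax}$ on the functors sending $E$ to commuting squares. In the unmarked case this is $\ARopl(\FUN(\tA,\tB)^{\lax})\simeq\FUN(\tA,\ARopl(\tB))^{\lax}$, \cite[Lemma 2.2.9]{AGH24}.

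Second, and more seriously, the outer functor \itcat{} here has (partially) \emph{lax} transformations as its morphisms, not strong ones as you assert. So the fibre over $(F,G)$ is a priori the \itcat{} of sections of $(F,G)^{*}\ARopl(\tB)\to\tA$ whose morphisms are lax transformations lying over the identity transformation of the diagonal $\tA\to\tA\times\tA$. Your Step 2 therefore computes the fibre of a different functor \icat{}.

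Passing to $(\blank)^{\leq 1}$ does not remove this discrepancy. It discards modifications, but the lax naturality 2-cells are part of the data of the 1-morphisms and survive.

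The missing idea is that lax transformations in this fibre are automatically strong. Their naturality 2-cells lie over identity 2-cells of $\tA\times\tA$. Moreover, $(F,G)^{*}\ARopl(\tB)\to\tA\times\tA$ is conservative on 2-morphisms, being a pullback of $\ARopl(\tB)\to\tB\times\tB$ \cite[2.4.4]{AGH24}. Hence such a transformation factors through $[1]\times\tA$; this is where the paper invokes \cite[Proposition 2.4.3]{AGH24}. That is exactly what lets one replace $\FUN_{/\tA\times\tA}(\tA,(F,G)^{*}\ARopl(\tB))^{\lax}$ by $\Fun_{/\tA\times\tA}(\tA,(F,G)^{*}\ARopl(\tB))$. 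After that, the $E$-lax case is the full subcategory you describe, cut out by sending $E$ to commuting squares.

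With this step inserted, your plan goes through. Deducing the oplax case via $(\blank)^{\co}$ is a valid alternative to the paper's use of $(\blank)^{\op}$. Your Step 3, including the automatic preservation of cartesian 2-morphisms, is correct.
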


\begin{proof}
  We will first deal with the $E$-lax case in full detail and then
  explain how to adapt the argument to derive the results in the
  $E$-oplax case. Note that we have a natural equivalence
  \[ \ARopl(\FUN(\tA,\tB)^{\lax}) \simeq \FUN(\tA, \ARopl(\tB))^{\lax}\]
  from \cite[Lemma 2.2.9]{AGH24}. Identifying $\Nat^{\lax}_{\tA,\tB}(F,G)$ with the fibre over $F,G$ on the left, we get
  \[ \Nat^{\lax}_{\tA,\tB}(F,G) \simeq \FUN_{/\tA \times \tA}(\tA, (F,G)^{*}\ARopl(\tB))^{\lax}.\]
  We claim that the right-hand side is in fact the \icat{} $\Fun_{/\tA \times \tA}(\tA, (F,G)^{*}\ARopl(\tB))$. To see this it will suffice to show that for any commutative diagram
  \[
    \begin{tikzcd}
      {[1]}\otimes \tA \arrow[rr,"f"] \arrow[dr,swap,"p_{\tA}"] && (F,G)^{*}\ARopl(\tB) \arrow[dl] \\
      & \tA \times \tA, &
    \end{tikzcd}
  \]
  the map $f$ factors through the cartesian product $[1] \times \tA$,
  where $p_{\tA}$ is given by the projection to $\tA$ and followed by the diagonal map. This follows from the fact that $p_{\tA}$ factors through $[1] \times \tA$ together with \cite[Proposition 2.4.3]{AGH24}.

  To deal with the $E$-lax case, we observe that
  $\Nat^{\elax}_{\tA,\tB}(F,G)$ is a full subcategory of
  $\Nat^{\lax}_{\tA,\tB}(F,G)$. Chasing through the equivalences above
  one sees that an $E$-lax natural transformation is identified with a
  section that sends the edges in $E$ to a commutative square in
  $(F,G)^{*}\ARopl(\tB)$.

  In the oplax case we use the equivalence $\ARlax(\tX) \simeq (\ARopl(\tX^{\op}))^{\op}$ to identify the fibre at $(x,y) \in \tX \times \tX$ as
  \[ \tX^{\op}(y,x)^{\op} \simeq \tX(x,y)^{\op}.\]
  We can therefore identify $\Nat^{\oplax}_{\tA,\tB}(F,G)^{\op}$ as a fibre in
  \[ \ARlax(\FUN(\tA,\tB)^{\oplax}) \simeq \FUN(\tA, \ARlax(\tB))^{\oplax},\]
  and proceed as in the lax case.
\end{proof}

\begin{corollary}\label{cor:laxlimituniv}
  For $F \colon \tA \to \tB$ and $b \in \tB$, we have natural equivalences
  \begin{align}
    \label{eq:natlaxbF}
    \Nat^{\elax}_{\tA,\tB}(\underline{b},F) & \simeq
    \Nat^{\elax}_{\tA,\CATI}(\underline{*}, \tB(b,F(\blank))), \\
    \label{eq:natlaxFb}
    \Nat^{\elax}_{\tA,\tB}(F,\underline{b}) & \simeq \Nat^{\eoplax}_{\tA^{\op},\CATI}(\underline{*}, \tB(F(\blank),b)), \\
    \label{eq:natoplaxbF}
    \Nat^{\eoplax}_{\tA,\tB}(\underline{b},F) & \simeq
    \Nat^{\eoplax}_{\tA,\CATI}(\underline{*}, \tB(b,F(\blank))), \\
    \label{eq:natoplaxFb}
    \Nat^{\eoplax}_{\tA,\tB}(F,\underline{b}) & \simeq \Nat^{\elax}_{\tA^{\op},\CATI}(\underline{*}, \tB(F(\blank),b)).
  \end{align}
\end{corollary}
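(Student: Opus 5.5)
The plan is to route everything through \cref{prop:Natelaxdescr} together with the $E$-lax straightening \cref{thm:elaxstr}. I treat \cref{eq:natlaxbF} in detail; the other three follow by the same argument in the other variances, or by the variance-changing dualities of \cref{lem:lax colim change variance} and the identifications $\ARlax(\tX)\simeq\ARopl(\tX^{\op})^{\op}$ already used in the proof of \cref{prop:Natelaxdescr}.

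\textbf{Step 1: rewrite the left-hand side as sections.} By \cref{prop:Natelaxdescr},
\[ \Nat^{\elax}_{\tA,\tB}(\underline{b},F) \simeq \Fun^{E\dcoc}_{/\tA}(\tA, F^{*}\tB_{b\upslash}). \]
Here $\tB_{b\upslash}\to\tB$ is the $(0,1)$-fibration classifying $\tB(b,\blank)$ (\cref{def:laxslices}). Since straightening is natural with respect to pullback (\cref{thm:str}), $F^{*}\tB_{b\upslash}\to\tA$ is a $(0,1)$-fibration classifying $\tB(b,F(\blank))\colon\tA\to\CATI$. Because this functor lands in $\CATI$, the fibration is 1-fibred, so every 2-morphism is cartesian. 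Consequently, preserving cartesian 2-morphisms is automatic, and only the condition on cocartesian edges over $E$ remains.

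\textbf{Step 2: rewrite the right-hand side as sections.} Apply \cref{thm:elaxstr} to $(\tA,E)$. This identifies $\Nat^{\elax}_{\tA,\CATIT}(\underline{*},G)$ with $\FIB^{(0,1)}_{/(\tA,E)}(\tA,\tE_G)$, because the constant functor at $*$ unstraightens to $\id_{\tA}$. The latter is exactly $\Fun^{E\dcoc}_{/\tA}(\tA,\tE_G)$: a functor over $\tA$ out of $\id_{\tA}$ preserves cocartesian morphisms over $E$ precisely when it sends $E$ to cocartesian edges, since in $\id_{\tA}$ every morphism is cocartesian. Taking $G=\tB(b,F(\blank))$ and using \cref{lem:lff on Funlax} to pass from $\CATIT$ to the full sub-\itcat{} $\CATI$ gives
\[ \Nat^{\elax}_{\tA,\CATI}(\underline{*},\tB(b,F(\blank)))\simeq\Fun^{E\dcoc}_{/\tA}(\tA,F^{*}\tB_{b\upslash}), \]
and Step 1 finishes the proof.

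\textbf{Step 3: the remaining variances.} For \cref{eq:natlaxFb}, $F^{*}\tB_{\upslash b}\to\tA$ is the $(1,0)$-fibration classifying $\tB(F(\blank),b)\colon\tA^{\op}\to\CATI$. Sections preserving cartesian edges over $E$ then correspond to $E$-oplax transformations, by the $(1,0)$ case of \cref{thm:elaxstr}; this is why lax becomes oplax with $\tA^{\op}$. For \cref{eq:natoplaxbF} and \cref{eq:natoplaxFb}, the slices $\tB_{b\downslash}$ and $\tB_{\downslash b}$ classify $\tB(b,\blank)^{\op}$ and $\tB(\blank,b)^{\op}$ on $\tA^{\co}$ and $\tA^{\coop}$ respectively. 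The extra $(\blank)^{\op}$ on $\Nat^{\eoplax}$ in \cref{prop:Natelaxdescr} should cancel against the $\op$ on the fibres, using the equivalence $\FUN(\tA^{\co},\CATI)^{\elax}\simeq\FUN(\tA,\CATI)^{\eoplax}$ induced by $(\blank)^{\op}$ on $\CATI$.

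\textbf{Main obstacle.} I expect the main obstacle to be this variance bookkeeping in Step 3. Specifically, one must check that $\op$ composed with $\co$-straightening yields exactly $E$-oplax (respectively $E$-lax) transformations with the correct direction. I would verify this on the generators $[1]^{\flat}$ and $C_2$ rather than abstractly. Naturality in $b$ is clear, since every identification above is natural in the fibration.
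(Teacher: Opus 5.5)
Your proposal is correct and follows the paper's proof. You rewrite the left-hand side via \cref{prop:Natelaxdescr} as sections of the pulled-back lax slice fibration, then identify these with partially (op)lax transformations out of $\underline{*}$ via \cref{thm:elaxstr}. The only difference is in \cref{eq:natoplaxbF} and \cref{eq:natoplaxFb}: the paper takes opposites of the whole fibration (e.g.\ $(F^{*}\tB_{b\downslash})^{\op}\to\tA^{\op}$ is the $(1,0)$-fibration classifying $\tB(b,F(\blank))$ itself), whereas you use $(0,0)$/$(1,1)$-straightening together with the involution $(\blank)^{\op}$ of $\CATI$.

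If you keep your version, note that this involution gives $\FUN(\tA^{\co},\CATI)^{\elax}\simeq(\FUN(\tA,\CATI)^{\eoplax})^{\co}$, not an equivalence without the outer $(\blank)^{\co}$. It is exactly that outer $(\blank)^{\co}$, turning mapping \icats{} into their opposites, that cancels the $(\blank)^{\op}$ coming from \cref{prop:Natelaxdescr}.
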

\begin{proof}
  From \cref{prop:Natelaxdescr} we have
  $\Nat^{\elax}_{\tA,\tB}(\underline{b},F) \simeq
  \Fun^{E\dcoc}_{/\tA}(\tA, F^{*}\tB^{\oplax}_{b\upslash})$, where the
  right-hand side is a
  mapping \icat{} in $\FIB^{(0,1)}_{/(\tA,E)}$. Under straightening, the identity of $\tA$
  corresponds to the constant functor $\underline{*}$ and
  $\tB_{b\upslash}$ to $\tB(b,\blank)$ by \cite[Proposition
  4.1.8]{LurieGoodwillie}; the pullback along $F$ therefore
  corresponds to $\tB(b,F(\blank))$, as required. A totally analogous
  argument shows that equivalence \cref{eq:natlaxFb} also holds. 

  We now look at $\Nat^{\eoplax}_{\tA,\tB}(\underline{b},F)^\op \simeq
  \Fun^{E\dcoc}_{/\tA}(\tA, F^{*}\tB_{b\downslash})$ and observe that
  by taking opposite \icats{} we can produce an equivalence
  \[
    \Fun^{E\dcoc}_{/\tA}(\tA, F^{*}\tB_{b\downslash}) \simeq \Fun^{E\dcart}_{/\tA^\op}(\tA^\op, (F^{*}\tB_{b\downslash})^\op)^\op.
  \]
  Finally we see that $(F^{*}\tB_{b\downslash})^\op$ is the
  $(1,0)$-fibration that classifies the functor $\tB(b,F(\blank))$, so from \cref{thm:elaxstr} we get
  \[
    \Fun^{E\dcart}_{/\tA^\op}(\tA^\op, (F^{*}\tB_{b\downslash})^\op)^\op \simeq \Nat^{\eoplax}_{\tA,\CATI}(\underline{\ast},\tB(b,F(-)))^\op,
  \]
  and the case \cref{eq:natoplaxbF} follows. The remaining
  verification  for \cref{eq:natoplaxFb} is proved by a dual argument.
\end{proof}

\begin{corollary}\label{cor:laxlimit rep}
  For $F \colon \tA \to \tB$, the $E$-(op)lax (co)limits of $F$ are
  uniquely characterized by the following (co)representability
  properties in terms of partially (op)lax limits in $\CATI$:
  \begin{itemize}
  \item $\tB(b, \lim^{\elax}_{\tA} F) \simeq \lim^{\elax}_{\tA} \tB(b,
    F)$,
  \item $\tB(\colim^{\elax}_{\tA} F, b) \simeq
    \lim^{\eoplax}_{\tA^{\op}} \tB(F, b)$,
  \item $\tB(b, \lim^{\eoplax}_{\tA} F) \simeq \lim^{\eoplax}_{\tA} \tB(b,
    F)$,
  \item $\tB(\colim^{\eoplax}_{\tA} F, b) \simeq
    \lim^{\elax}_{\tA^{\op}} \tB(F, b)$.    
  \end{itemize}
\end{corollary}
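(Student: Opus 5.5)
The plan is to combine the defining universal properties in \cref{def:elaxlimit} with the natural equivalences of \cref{cor:laxlimituniv}, and then recognize the right-hand sides as the universal properties of partially (op)lax limits in $\CATI$. I will treat the first bullet in detail. The other three follow by the same argument applied to the remaining three equivalences of \cref{cor:laxlimituniv}; alternatively they can be obtained by the variance changes of \cref{lem:lax colim change variance}.

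For the first bullet, \cref{def:elaxlimit} states that $\lim^{\elax}_{\tA} F$ is characterized by a natural equivalence
\[ \tB(b,\lim^{\elax}_{\tA} F) \simeq \Nat^{\elax}_{\tA,\tB}(\underline{b},F). \]
By \cref{eq:natlaxbF} the right-hand side is naturally equivalent to $\Nat^{\elax}_{\tA,\CATI}(\underline{*}, \tB(b,F(\blank)))$. It remains to identify this with $\lim^{\elax}_{\tA}\tB(b,F)$ computed in $\CATI$. For this we use that $\CATI$ is cotensored over $\CatI$, with $\CATI(\oK,\oX)\simeq\FUN(\oK,\oX)$. Hence
\[ \CATI(\oK, \lim^{\elax}_{\tA} G) \simeq \Nat^{\elax}_{\tA,\CATI}(\underline{\oK}, G). \]
Taking $\oK=*$ shows that the underlying \icat{} of the $E$-lax limit is $\Nat^{\elax}(\underline{*},G)$. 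For existence, I would set $\lim^{\elax}_{\tA}G := \Nat^{\elax}_{\tA,\CATI}(\underline{*},G)$ and check its universal property. That amounts to the equivalence
\[ \Nat^{\elax}(\underline{\oK},G) \simeq \FUN(\oK,\Nat^{\elax}(\underline{*},G)), \]
which follows from the adjunction $\blank\otimes_{\flat,E}\tA \dashv \FUN(\tA,\blank)^{E\dlax}$ together with the fact that tensoring with the constant functor at $\oK$ corresponds to $\oK\times\underline{*}$; this uses \cref{obs:max marked gray is cart}, since a constant diagram on an \icat{} involves no lax data. The alternative is to use the fibrational description of \cref{prop:Natelaxdescr}, in which both sides become sections of pulled-back fibrations and the statement is just the exponential law.

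Combining these identifications, both $\tB(b,\lim^{\elax}_{\tA} F)$ and $\lim^{\elax}_{\tA}\tB(b,F)$ agree with $\Nat^{\elax}_{\tA,\tB}(\underline{b},F)$, naturally in $b$. The Yoneda lemma for \itcats{} then says that an object representing $b\mapsto\lim^{\elax}_{\tA}\tB(b,F)$ is exactly the $E$-lax limit, which gives both existence-equivalence and uniqueness. For the colimit bullets, \cref{eq:natlaxFb} and \cref{eq:natoplaxFb} produce $\lim^{\eoplax}_{\tA^{\op}}$ and $\lim^{\elax}_{\tA^{\op}}$ of the contravariant functor $\tB(F(\blank),b)$, and the same reasoning applies.

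The main obstacle is the step identifying $\Nat^{\elax}_{\tA,\CATI}(\underline{*},G)$, with its naturality, as the $E$-lax limit in $\CATI$. This requires the $\oK$-parametrized version of the identification, not just the statement on underlying objects. I expect it to go through the fibrational description as sections, using \cref{thm:elaxstr} for $\CATI$-valued functors, but checking the naturality carefully is the delicate part.
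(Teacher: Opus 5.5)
Your proposal is correct and is the paper's argument: the defining universal property of $\lim^{\elax}_{\tA}F$, then \cref{eq:natlaxbF}, then the universal property of the $E$-lax limit in $\CATI$ evaluated at the point, with the other three bullets handled the same way. The one difference is that you also check existence and naturality of $\lim^{\elax}_{\tA}$ in $\CATI$, which the paper's proof takes for granted here; your fibrational sketch via sections is what the paper later proves in \cref{prop:laxlimCATIT} and \cref{cor:laxlimcat}.
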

\begin{proof}
  We prove the first statement; the rest follow similarly. Here we
  have
  \[
    \begin{split}
      \tB(b, \lim^{\elax}_{\tA} F)
      & \simeq \Nat^{\elax}_{\tA,\tB}(\underline{b}, F) \\
      & \simeq \Nat^{\elax}_{\tA,\CATI}(\underline{*}, \tB(b, F)) \\
      & \simeq \Fun(*, \lim^{\elax}_{\tA} \tB(b,F)) \\
      & \simeq \lim^{\elax}_{\tA} \tB(b,F),
    \end{split}
  \]
  where the first and third equivalences follow from the definition of
  $E$-lax limits, the second from \cref{cor:laxlimituniv}, and the
  last is obvious.
\end{proof}

Combining this with \cite[Corollary 5.1.7]{GagnaHarpazLanariLaxLim},
we get the following comparison with the definitions of Gagna--Harpaz--Lanari:
\begin{corollary}
  Let $(\tA,E)$ be a marked \itcat{}. The $E$-(op)lax (co)limits of a
  functor $F \colon \tA \to \tB$ as defined above agree with the inner
  and outer (co)limits of $F$ defined in
  \cite{GagnaHarpazLanariLaxLim} as follows:\footnote{Here the
    discrepancy in the pairing between lax/oplax and inner/outer for
    limits and colimits is due to the switch from lax to oplax in
    \cref{eq:natlaxFb}, which does not occur in \cref{eq:natlaxbF}.}
  \begin{itemize}
  \item the $E$-lax limit of $F$ is the inner limit of $F$,
  \item the $E$-oplax limit of $F$ is the outer limit of $F$,
  \item the $E$-lax colimit of $F$ is the outer colimit of $F$,
  \item the $E$-oplax colimit of $F$ is the inner colimit of $F$.\qed
  \end{itemize}
\end{corollary}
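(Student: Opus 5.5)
The comparison reduces, by \cref{cor:laxlimit rep}, to comparing the corepresentability conditions there with the defining universal properties in \cite{GagnaHarpazLanariLaxLim}. I would take $F \colon \tA \to \tB$ and a candidate object $x \in \tB$ equipped with a cone or cocone, and work in four cases. The key input is \cite[Corollary 5.1.7]{GagnaHarpazLanariLaxLim}. As I understand it, this characterizes inner and outer (co)limits of $F$ by requiring the mapping \icats{} $\tB(b,x)$ (respectively $\tB(x,b)$) to be naturally equivalent to inner or outer limits in $\CATI$ of the functors $\tB(b,F(\blank))$ or $\tB(F(\blank),b)$, the latter indexed by $\tA^{\op}$.

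The plan is therefore to match the four bullet points of \cref{cor:laxlimit rep} against this characterization, one at a time.

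\begin{enumerate}
\item For limits the match is immediate: $E$-lax limits in $\tB$ correspond to $E$-lax limits in $\CATI$ of $\tB(b,F)$, and $E$-oplax limits to $E$-oplax limits.
\item For colimits the variance switches. By \cref{eq:natlaxFb} and \cref{eq:natoplaxFb}, an $E$-lax colimit is corepresented by an $E$-oplax limit over $\tA^{\op}$, and conversely.
\end{enumerate}

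This switch is exactly what produces the crossed pairing in the statement, so it needs no separate argument.

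The one genuine step is identifying, in the case of diagrams valued in $\CATI$, our $E$-lax and $E$-oplax limits with their inner and outer limits. For this I would use \cref{prop:Natelaxdescr} with $b=*$: it gives $\lim^{\elax}_{\tA} G \simeq \Fun^{E\dcoc}_{/\tA}(\tA,\tE)$, where $\tE$ is the $(0,1)$-fibration classifying $G$. This is the category of sections sending $E$ to cocartesian edges, which should be their description of the inner limit. Likewise the oplax case gives sections of the $(0,0)$-fibration, which should be their outer limit.

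I expect the main obstacle to be this dictionary between models. Their definitions are phrased in scaled simplicial sets, with inner and outer defined via marked slice constructions, so one must check that their fibration models of inner and outer limits in $\CATI$ agree with ours. I would do this through the straightening of \cref{thm:elaxstr}, together with the model-independence of straightening. With that identification in place, the four bullet points follow by Yoneda uniqueness of representing objects.
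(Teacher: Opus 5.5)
Your proposal is correct and is essentially the paper's argument: the paper proves this in one line by combining \cref{cor:laxlimit rep} with \cite[Corollary 5.1.7]{GagnaHarpazLanariLaxLim}, and the crossed pairing for colimits comes from the lax-to-oplax switch in \cref{eq:natlaxFb}, exactly as you say. The step you flag as genuine, identifying partially (op)lax limits in $\CATI$ with section \icats{}, is left implicit in the paper and is supplied by \cref{cor:laxlimcat}; note that in the oplax case this gives sections of the $(1,0)$-fibration over $\tA^{\op}$, equivalently the opposite of the sections of the $(0,0)$-fibration classifying $G(\blank)^{\op}$, rather than literally sections of a $(0,0)$-fibration for $G$.
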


\begin{remark}
  By \cref{rep fib reformulate}, exhibiting an object $c \in \tC$ as
  the $E$-oplax colimit of $F \colon \tI \to \tC$ amounts to
  exhibiting the $(0,1)$-fibration $\tC^{\eoplax}_{F\upslash} \to \tC$
  as representable by an object over $c$ as in \cref{propn:rep cart
    fib cond}. By \cref{lem:lax slice cone} we can think of such an
  object as a functor $\tJ_{E\doplax}^{\triangleright} \to \tC$ that
  satisfies the conditions of \cref{propn:rep cart fib cond}. Such
  partially lax cones and cocones can be identified with the inner and
  outer (co)limit cones used in \cite[\S 6]{GagnaHarpazLanariLaxLim}.
\end{remark}

\subsection{Weighted (co)limits in \itcats{}}\label{sec:wtlim}
In $\oV$-enriched ($\infty$-)category theory the general notion of
(co)limits is that of $\oV$-\emph{weighted} (co)limits. In this
section we first recall the definition of $\CatI$-weighted (co)limits
in \itcats{} and give an easy proof of their description as partially
(op)lax (co)limits from \cite{GagnaHarpazLanariLaxLim} using our new
description of the latter. We then show that any partially (op)lax
(co)limit can also be expressed as an explicit weighted (co)limit. This allows
us to conclude that the theories of partially lax, partially oplax,
and weighted (co)limits are equivalent, so that they all give rise to
the same notion of (co)complete \itcats{} and (co)continuous functors
among these; we thus provide a model-independent treatment of the main
results in \cite[\S 6.2]{GagnaHarpazLanariLaxLim}. 

\begin{defn}
  For $F \colon \tA \to \tB$ and $W \colon \tA \to \CATI$, the
  \emph{$W$-weighted limit} of $F$, if it exists, is the object
  $\lim^{W}_{\tA} F$ of
  $\tB$ characterized by the universal property
  \[ \tB(b, \lim^{W}_{\tA}F) \simeq \Nat_{\tA,\CATI}(W, \tB(b,F)).\]
  Dually, for $F \colon \tA \to \tB$ and $W \colon \tA^{\op} \to
  \CATI$, the \emph{$W$-weighted colimit} of $F$, if it exists, is the
  object $\colim^{W}_{\tA} F$ characterized by the universal property
  \[ \tB(\colim^{W}_{\tA}F, b) \simeq \Nat_{\tA^{\op},\CATI}(W, \tB(F, b)).\]
\end{defn}

\begin{observation}\label{obs: weighted lim rep}
  For a functor $F \colon \tA \to \CATI$, we have
  \[
    \begin{split}
      \Fun(\oC, \lim^{W}_{\tA} F)
      & \simeq \Nat_{\tA,\CATI}(W, \Fun(\oC,F)) \\
      & \simeq \Nat_{\tA,\CATI}(W \times \oC, F) \\
      & \simeq \Fun(\oC, \Nat_{\tA, \CATI}(W, F)).
    \end{split}
  \]
  By the Yoneda Lemma we can thus describe weighted limits in $\CATI$ by a natural equivalence
  \[ \lim^{W}_{\tA} F \simeq \Nat_{\tA, \CATI}(W, F). \]
  We can therefore also characterize the weighted (co)limits
  of a functor $F \colon \tA \to \tB$ by  natural equivalences
  \[ \tB(b, \lim^{W}_{\tA}F) \simeq \lim^{W}_{\tA} \tB(b, F)\]
  for $W \colon \tA \to \CATI$, and
  \[ \tB(\colim^{W}_{\tB}F, b) \simeq \lim^{W}_{\tA^{\op}} \tB(F, b)\]
  for $W \colon \tA^{\op} \to \CATI$.
\end{observation}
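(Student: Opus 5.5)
The statement to prove is \cref{obs: weighted lim rep}: weighted limits in $\CATI$ are given by $\lim^{W}_{\tA} F \simeq \Nat_{\tA,\CATI}(W,F)$, together with the consequent (co)representability characterization of weighted (co)limits in a general $\tB$. The argument is a Yoneda argument. It uses the fact that $\CATI$ is cotensored over $\CatI$ via $\FUN(\oC,\blank)$, and that $\FUN(\tA,\CATI)$ inherits this cotensoring pointwise.

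\textbf{Step 1: the first two equivalences.} Fix an \icat{} $\oC$. Since $\CATI$ has mapping \icats{} $\CATI(\oC,X)=\Fun(\oC,X)$, the defining universal property of $\lim^W_{\tA}F$ evaluated at $b=\oC$ gives
\[ \Fun(\oC,\lim^W_{\tA}F)\simeq \Nat_{\tA,\CATI}(W,\Fun(\oC,F(\blank))). \]
Next, postcomposition with $\Fun(\oC,\blank)$ is right adjoint, as an endofunctor of $\FUN(\tA,\CATI)$, to $(\blank)\times\oC$ applied pointwise. This follows because the cartesian closed structure of $\CATI$ upgrades to an adjunction of \itcats{}, and postcomposition with a 2-functor preserves adjunctions, as in \cref{propn:prod pres gives 2-fun}. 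Taking mapping \icats{} of this adjunction yields $\Nat(W,\Fun(\oC,F))\simeq\Nat(W\times\oC,F)$.

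\textbf{Step 2: the third equivalence.} The tensoring of $\FUN(\tA,\CATI)$ by $\CatI$ is given by $W\times\oC$. By the definition of tensors from the preliminaries, this gives $\Nat(W\times\oC,F)\simeq\Fun(\oC,\Nat(W,F))$. I expect this identification to be the main point requiring care: one must check that pointwise product with a constant \icat{} really computes the tensor in the functor \itcat{}, naturally in both variables. Here I would appeal to Heine's equivalence between tensored \itcats{} and $\CatI$-modules, or verify the claim directly on mapping \icats{} using $\oC\simeq\colim$ of simplices.

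\textbf{Step 3: conclude by Yoneda.} Steps 1 and 2 are natural in $\oC$, so the Yoneda lemma for $\CatI$ gives $\lim^W_{\tA}F\simeq\Nat_{\tA,\CATI}(W,F)$. Substituting this into the defining universal properties yields
\[ \tB(b,\lim^W F)\simeq\Nat(W,\tB(b,F))\simeq\lim^W_{\tA}\tB(b,F). \]
For colimits, the same substitution with $W\colon\tA^{\op}\to\CATI$ gives $\tB(\colim^W F,b)\simeq\lim^W_{\tA^{\op}}\tB(F,b)$. These universal properties then characterize the weighted (co)limits uniquely.
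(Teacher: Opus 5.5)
Your proposal is correct and takes essentially the paper's approach. You use the same chain $\Fun(\oC,\lim^{W}_{\tA}F)\simeq\Nat_{\tA,\CATI}(W,\Fun(\oC,F))\simeq\Nat_{\tA,\CATI}(W\times\oC,F)\simeq\Fun(\oC,\Nat_{\tA,\CATI}(W,F))$, natural in $\oC$, followed by the Yoneda lemma and substitution into the defining universal properties; your justifications of the middle two equivalences (the pointwise adjunction via \cref{propn:prod pres gives 2-fun} and $W\times\oC$ as the tensor in $\FUN(\tA,\CATI)$) fill in steps the paper states without comment.
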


\begin{example}
  Given an \icat{} $\oA$ and an object $c$ in an \itcat{} $\tC$, we can consider the
  colimit of $[0] \xto{c} \tC$ weighted by $[0] \xto{\oA} \CATI$. We
  denote this by
  \[ \oA \boxtimes c := \colim^{\oA}_{[0]} c;\]
  its universal property is that there is a natural equivalence
  \[ \tC(\oA \boxtimes c, c') \simeq \Fun(\oA, \tC(c,c')),\]
  so this is precisely the \emph{tensoring} of $c$ by
  $\oA$. Similarly, the weighted limit
  \[ c^{\oA} := \lim^{\oA}_{[0]} c\]
  is the \emph{cotensoring} of $c$ by $\oA$, satisfying
  \[ \tC(c', c^{\oA}) \simeq \Fun(\oA, \tC(c,c')).\]
\end{example}

\begin{observation}\label{obs:weighted over igpd}
  Suppose $X$ is an \igpd{}. Then for $W \colon X \to \CATI$ and $F
  \colon X \to \tC$, we have
  \[ \colim^{W}_{X} F \simeq \colim_{x \in X} W(x) \boxtimes F(x),\]
  since by definition we have
  \[
    \begin{split}
      \tC(\colim^{W}_{X} F, c) & \simeq \Nat_{X, \CATI}(W, \tC(F,c))
      \\
                               & \simeq \lim_{x \in X} \Fun(W(x),
                                 \tC(F(x),c)) \\
                               & \simeq \lim_{x \in X} \tC(W(x)
                                 \boxtimes F(x), c) \\
                               & \simeq \tC(\colim_{x \in X} W(x)
                                 \boxtimes F(x), c),
    \end{split}
  \]
  where the second equivalence holds since $\FUN(X, \tC) \simeq
  \lim_{X} \tC$.
\end{observation}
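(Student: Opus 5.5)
The statement is \cref{obs:weighted over igpd}: for an \igpd{} $X$, a weight $W \colon X \to \CATI$ and a diagram $F \colon X \to \tC$, we want
\[ \colim^{W}_{X} F \simeq \colim_{x \in X} W(x) \boxtimes F(x). \]
The plan is to compare the corepresented functors. Assuming the tensors $W(x) \boxtimes F(x)$ and the strong colimit over $X$ exist in $\tC$, we exhibit a natural equivalence
\[ \tC(\colim_{x \in X} W(x)\boxtimes F(x), c) \simeq \Nat_{X,\CATI}(W, \tC(F,c)). \]
By the defining universal property of weighted colimits and the Yoneda lemma, this identifies the right-hand side with $\colim^{W}_{X}F$. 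Since $X$ is its own opposite up to equivalence, the weight may be viewed on $X^{\op}\simeq X$.

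The first step is to rewrite the \icat{} of natural transformations. Because $X$ is an \igpd{}, it is the colimit in $\CatIT$ of the constant diagram $* $ indexed by $X$. Applying $\FUN(\blank, \CATI)$ therefore gives $\FUN(X,\CATI) \simeq \lim_X \CATI$. Its mapping \icats{} are then limits of mapping \icats{}:
\[ \Nat_{X,\CATI}(W,\tC(F,c)) \simeq \lim_{x\in X}\Fun(W(x),\tC(F(x),c)). \]
To make this precise, I would identify $\Nat$ as the fibre of $\AR(\FUN(X,\CATI)) \simeq \FUN(X,\AR(\CATI))$ over $(W,\tC(F,c))$ and commute that fibre with the limit over $X$.

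The second step applies the universal property of the tensoring pointwise: $\Fun(W(x),\tC(F(x),c)) \simeq \tC(W(x)\boxtimes F(x),c)$, naturally in $x$. Naturality in $x$ should hold because the tensor $(\oK,d)\mapsto \oK\boxtimes d$ is functorial in pairs when it exists. The third step uses the universal property of the strong colimit over $X$, where strong and conical coincide because every transformation over a groupoid is strong:
\[ \lim_x \tC(W(x)\boxtimes F(x),c)\simeq \tC(\colim_x W(x)\boxtimes F(x),c). \]
This follows directly from \cref{def:elaxlimit} together with $\FUN(X,\tC)\simeq\lim_X\tC$.

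The main obstacle is the naturality in $x$ and in $c$. We need the pointwise tensor equivalences to assemble into an equivalence of functors $X \to \CATI$, and not merely objectwise equivalences. I would handle this by constructing the functor $x\mapsto W(x)\boxtimes F(x)$ as the left adjoint of a partially defined adjunction between $\FUN(X,\tC)$ and $\FUN(X, \CATI)\times\FUN(X,\tC)^{\op}$-style data. More directly, since $X$ is a groupoid, a functor out of $X$ is the same as a section of a left fibration, and the tensors can then be chosen as cocartesian data. The remaining equivalences are formal.
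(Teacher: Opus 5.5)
Your proposal is correct and follows the paper's own argument: the same chain of equivalences. The second step uses $\FUN(X,\CATI)\simeq\lim_{X}\CATI$, the third uses the universal property of tensors pointwise, and the last uses the universal property of the strong colimit over $X$. The naturality you flag is left implicit in the paper. It is handled most cleanly by noting that $x\mapsto \Fun(W(x),\tC(F(x),\blank))$ is a map of \igpds{} from $X$ into the corepresentable copresheaves, so it factors through the Yoneda embedding. That factorization both defines the diagram $x\mapsto W(x)\boxtimes F(x)$ and gives the required natural equivalence.
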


\begin{example}\label{ex:conical}
  Given a functor $F \colon \tJ \to \tC$ we can always consider the weight $\underline{*} \colon \tJ \to \CATI$ that is constant at the terminal object. In this case the universal property of the \emph{conical limit} $\lim^{\underline{*}}_{\tJ} F$ is the same as that of the strong limit $\lim_{\tJ} F$: there are natural equivalences
  \[ \tC(c,\lim_{\tJ}^{\underline{*}}F) \simeq \Nat_{\tJ,\CATI}(\underline{*}, \tC(c,F)) \simeq \Nat_{\tJ,\tC}(\underline{c}, F) \simeq \tC(c, \lim_{\tJ} F)\]
  via \cref{cor:laxlimituniv}. In the case where $F \colon \oJ \to \tC$  is a functor from an \icat{}, then we also have
  \[ \tC(c,\lim^{\underline{*}}_{\oJ}F)^{\simeq} \simeq
    \Nat_{\oJ,\tC}(\underline{c}, F)^{\simeq} \simeq
    \Nat_{\oJ,\tC^{\leq 1}}(\underline{c}, F^{\leq 1})\]
  since $\FUN(\oJ,\tC)^{\leq 1} \simeq \Fun(\oJ,\tC^{\leq 1})$. Thus in this case the conical limit $\lim^{\underline{*}}_{\oJ} F$ is also the ordinary limit of $F$ viewed as a diagram in the underlying \icat{} $\tC^{\leq 1}$. In particular, if we know that $\tC$ has a certain conical/strong limit indexed over an \icat{}, to identify it we only have consider the limit in $\tC^{\leq 1}$.
\end{example}

We now prove that we can express weighted limits as both lax and oplax
limits; this was previously shown as \cite[Proposition
6.2.3]{GagnaHarpazLanariLaxLim}.
\begin{propn}\label{propn:weighted limit as lax}
  For $W \colon \tA \to \CATI$ and $F \colon \tA \to \tB$,
  let $p \colon \tW \to \tA$ be the $(0,1)$-fibration for $W$ and $q
  \colon \tW' \to \tA^{\op}$ be the $(1,0)$-fibration. If $C$ is the 
  collection of $p$-cocartesian morphisms in $\tW$ and $C'$ that of
  $q$-cartesian morphisms in $\tW'$, then we have equivalences
  \[
    \begin{split}
      \lim^{W}_{\tA} F & \simeq \lim_{\tW}^{C\dlax} F \circ p \\
       & \simeq \lim^{C'\doplax}_{\tW'^{\op}} F \circ q^{\op}.
    \end{split}
  \] 
\end{propn}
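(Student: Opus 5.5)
We compare the two universal properties after mapping into $\CATI$: by \cref{cor:laxlimit rep}, the partially lax limits in $\tB$ are characterized by $\tB(b,\lim^{C\dlax}_{\tW} F\circ p) \simeq \lim^{C\dlax}_{\tW}\tB(b,F\circ p)$, and by \cref{obs: weighted lim rep} the weighted limit is characterized by $\tB(b,\lim^W_{\tA}F)\simeq \Nat_{\tA,\CATI}(W,\tB(b,F))$. So it suffices to construct, naturally in a functor $G\colon\tA\to\CATI$ (applied to $G=\tB(b,F(\blank))$), equivalences
\[ \Nat_{\tA,\CATI}(W,G)\simeq \Nat^{C\dlax}_{\tW,\CATI}(\underline{*},G\circ p)\simeq \Nat^{C'\doplax}_{\tW'^{\op},\CATI}(\underline{*},G\circ q^{\op}). \]
Naturality in $G$ then gives naturality in $b$, and the Yoneda lemma finishes.

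For the first equivalence I use straightening. Let $\tG\to\tA$ be the $(0,1)$-fibration for $G$ (1-fibred, since $G$ takes values in $\CATI$). By \cref{thm:str}, $\Nat_{\tA,\CATI}(W,G)\simeq \Fun^{\mathrm{cocart}}_{/\tA}(\tW,\tG)$, the \icat{} of functors over $\tA$ preserving cocartesian morphisms (cartesian 2-morphisms are automatic in the 1-fibred case). On the other side, $G\circ p$ is classified by the pullback $p^*\tG\to\tW$, and $\underline{*}$ by $\id_{\tW}$. So \cref{thm:elaxstr}, applied to the marked \itcat{} $(\tW,C)$, identifies $\Nat^{C\dlax}_{\tW,\CATI}(\underline{*},G\circ p)$ with sections $\tW\to p^*\tG$ that send morphisms in $C$ to cocartesian morphisms and preserve cartesian 2-morphisms. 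Such sections are the same as functors $\tW\to\tG$ over $\tA$. A morphism of $p^*\tG$ is cocartesian exactly when its image in $\tG$ is cocartesian, because it lies over a cocartesian morphism of $\tW$ (pullback, cf.\ \cref{obs:epscofabs}). Hence the conditions match, and the two \icats{} are identified.

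The oplax case is handled the same way with $(1,0)$-fibrations. $G$ also corresponds to a $(1,1)$-fibration over $\tA^{\coop}$, or equivalently via $(\blank)^{\op}$ to fibrations over $\tA^{\op}$. I would use $\Nat_{\tA,\CATI}(W,G)\simeq\Fun^{\mathrm{cart}}_{/\tA^{\op}}(\tW',\tG')$, with $\tG'\to\tA^{\op}$ the $(1,0)$-fibration for $G$ viewed as a functor $(\tA^{\op})^{\op}\to\CATI$. I would then apply the $(1,0)$ row of \cref{thm:elaxstr} to $(\tW',C')$. That row produces an $E$-oplax transformation of functors $\tW'^{\op}\to\CATI$, as required.

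The main obstacle is bookkeeping of variance in this last step, together with confirming that the marked straightening of \cref{thm:elaxstr} applies when the source is the terminal functor. The latter point should be clear because the identity fibration classifies $\underline{*}$.
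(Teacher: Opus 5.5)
Your proposal is correct and follows essentially the same route as the paper: straighten $\Nat_{\tA,\CATI}(W,\tB(b,F))$ to cocartesian functors $\tW\to F^*\tB_{b\upslash}$ over $\tA$, base change along $p$ to sections preserving cocartesian morphisms over $C$, apply \cref{thm:elaxstr}, and conclude by Yoneda, with the $(1,0)$-version handling the oplax case. The only cosmetic difference is that the paper passes from $\Nat^{C\dlax}_{\tW,\CATI}(\underline{*},\tB(b,F\circ p))$ to $\Nat^{C\dlax}_{\tW,\tB}(\underline{b},F\circ p)$ by citing \cref{cor:laxlimituniv} directly, whereas you go through \cref{cor:laxlimit rep}; the direct citation avoids needing to know beforehand that the $C$-lax limit exists in $\CATI$.
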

\begin{proof}
  We first consider the lax case.  Let $b \in \tB$ and observe that we
  have natural equivalences
\[
  \begin{split}
  \tB(b, \lim^{W}_{\tA} F) & \simeq
                             \Nat_{\tA,\CATI}(W,\tB(b,F(-))) \\
                           & \simeq
                             \Fun^{\coc}_{/\tA}(\tW,F^*\tB_{b\upslash})   \\
                           & \simeq \Fun^{C\dcoc}_{/\tW}(\tW,(F\circ p)^*\tB_{b\upslash}),
  \end{split}
\]
where the second equivalence is given by straightening and the third equivalence is simply given by base change. We further see 
\[
  \Fun^{C\dcoc}_{/\tW}(\tW,(F\circ p)^*\tB_{b\upslash}) \simeq \Nat^{C\dlax}_{\tW,\CATI}(\underline{\ast},\tB(b,F\circ p(-)))\simeq \Nat^{C\dlax}_{\tW,\tB}(\underline{b},F \circ p),
\]
where the first equivalence is given by \cref{thm:elaxstr} and the
second uses \cref{cor:laxlimituniv}. Therefore, both universal
properties coincide, as desired. The oplax case is proved similarly,
using straightening for $(1,0)$-fibrations instead.
\end{proof}

Dualizing, we similarly have:
\begin{corollary}\label{cor:weightedcolimitaslax}
  For $W \colon \tA^{\op} \to \CATI$ and $F \colon \tA \to \tB$,
  let  $p \colon \tW \to \tA$ be the $(1,0)$-fibration for $W$
  and $q \colon \tW' \to \tA^{\op}$ be the $(0,1)$-fibration. If
  $C$ is the collection of $p$-cartesian morphisms in $\tW$ and $C'$
  that of $q$-cocartesian morphisms in $\tW'$, we have
  \[
    \begin{split}
      \colim^{W}_{\tA} F & \simeq \colim_{\tW}^{C\dlax} F \circ p \\
       & \simeq \colim_{\tW'^{\op}}^{C'\doplax} F \circ q^{\op}.
    \end{split}
  \]
\end{corollary}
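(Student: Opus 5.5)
We deduce this from \cref{propn:weighted limit as lax} by passing to opposite \itcats{}, using \cref{lem:lax colim change variance} to move between colimits and limits. The colimit universal property is
\[ \tB(\colim^{W}_{\tA}F, b) \simeq \Nat_{\tA^{\op},\CATI}(W, \tB(F,b)) \simeq \Nat_{\tA^{\op},\CATI}(W, \tB^{\op}(b,F^{\op})). \]
So $\colim^{W}_{\tA} F$ in $\tB$ is the same thing as the $W$-weighted limit of $F^{\op} \colon \tA^{\op} \to \tB^{\op}$ in $\tB^{\op}$.

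We then apply \cref{propn:weighted limit as lax} with base $\tA^{\op}$ and weight $W \colon \tA^{\op} \to \CATI$. Its $(0,1)$-fibration is $q \colon \tW' \to \tA^{\op}$ with cocartesian morphisms $C'$. Its $(1,0)$-fibration is over $(\tA^{\op})^{\op} = \tA$, so it is exactly $p \colon \tW \to \tA$ with cartesian morphisms $C$. The proposition gives
\[ \lim^{W}_{\tA^{\op}} F^{\op} \simeq \lim^{C'\dlax}_{\tW'} F^{\op} \circ q \simeq \lim^{C\doplax}_{\tW^{\op}} F^{\op} \circ p^{\op} \]
in $\tB^{\op}$.

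Next we translate each limit back using \cref{lem:lax colim change variance}: an $E$-lax colimit of $G$ in $\tC$ is the same as an $E$-oplax limit of $G^{\op}$ in $\tC^{\op}$. Applied to the lax limit over $\tW'$, with $G^{\op} = F^{\op}\circ q$, so $G = F \circ q^{\op} \colon \tW'^{\op} \to \tB$, this gives
\[ \lim^{C'\dlax}_{\tW'} F^{\op}\circ q \simeq \colim^{C'\doplax}_{\tW'^{\op}} F \circ q^{\op}. \]
Applied to the oplax limit over $\tW^{\op}$, with $G = F\circ p \colon \tW \to \tB$, it gives
\[ \lim^{C\doplax}_{\tW^{\op}} F^{\op}\circ p^{\op} \simeq \colim^{C\dlax}_{\tW} F\circ p. \]
The two right-hand sides are the two claimed equivalences.

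The only delicate point is bookkeeping. First, the markings must transport correctly under $(\blank)^{\op}$: a morphism of $\tW'$ and the corresponding morphism of $\tW'^{\op}$ are the same marked edge, so $C'$ stays $C'$. Second, the lax/oplax swap from \cref{lem:lax colim change variance} must appear exactly once in each translation. If the variance from the lemma gets mismatched, one can instead check directly. Repeat the chain of equivalences in the proof of \cref{propn:weighted limit as lax} with $\tB(F(\blank),b)$, the fibration $F^{*}\tB_{\upslash b}$, and \cref{thm:elaxstr} for $(1,0)$-fibrations, and then apply \eqref{eq:natlaxFb} of \cref{cor:laxlimituniv}.
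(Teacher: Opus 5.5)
Your proposal is correct and is the dualization the paper intends; the paper states this corollary as the dual of \cref{propn:weighted limit as lax} with no further argument, and you carry this out by rewriting the weighted colimit as a weighted limit of $F^{\op}$ in $\tB^{\op}$, applying the proposition over $\tA^{\op}$, and translating back. One small tidy-up: your first translation (lax limit over $\tW'$ to oplax colimit over $\tW'^{\op}$) uses ``$E$-oplax colimit of $G$ in $\tC$ $\Leftrightarrow$ $E$-lax limit of $G^{\op}$ in $\tC^{\op}$'' rather than the form you quote, but this also follows from \cref{lem:lax colim change variance} by applying it to $G^{\co}$ in $\tC^{\co}$.
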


We now want to express partially (op)lax (co)limits as weighted
(co)limits:
\begin{propn}\label{propn:lax as Kan weighted}
  Let $(\tA,E)$ be a marked \itcat{} and consider functors $F \colon
  \tA \to \tB$ and $D \colon \tB \to \tC$. Then we have equivalences
  \begin{align*}
    \lim_{\tA}^{\elax}DF & \simeq \lim_{\tB}^{\Frof^{(0,1)}_{\tB}(\tA,E)} D, & \colim_{\tA}^{\elax} DF  & \simeq \colim^{\Frof^{(1,0)}_{\tB}(\tA,E)}_{\tB} D, \\
    \lim_{\tA}^{\eoplax}DF & \simeq \lim^{\Frof^{(0,0)}_{\tB}(\tA,E)^{\op}}_{\tB} D, &  \colim_{\tA}^{\eoplax}DF &\simeq \colim^{\Frof^{(1,1)}_{\tB}(\tA,E)^{\op}}_{\tB}F,
  \end{align*}
  whenever either side exists in $\tC$, where the weights $\Frof^{\ve}_{\tB}(\tA,E)$ are as defined in \cref{not:free 1-fib functor}.
\end{propn}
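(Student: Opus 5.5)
We compare the corepresented (or represented) presheaves on both sides, for each of the four cases. We do the $E$-lax limit case in detail and obtain the other three by the same argument with the variance changed (or via \cref{lem:lax colim change variance}). For $c \in \tC$, \cref{cor:laxlimit rep} together with \cref{cor:laxlimituniv} gives
\[ \tC(c, \lim^{\elax}_{\tA} DF) \simeq \Nat^{\elax}_{\tA,\CATI}(\underline{*}, \tC(c, DF(\blank))). \]
On the other side, \cref{obs: weighted lim rep} gives
\[ \tC(c, \lim^{W}_{\tB} D) \simeq \Nat_{\tB,\CATI}(W, \tC(c,D(\blank))). \]
It therefore suffices to produce, for every $G \colon \tB \to \CATI$ (we will take $G = \tC(c,D(\blank))$), an equivalence
\[ \Nat^{\elax}_{\tA,\CATI}(\underline{*}, G\circ F) \simeq \Nat_{\tB,\CATI}(\Frof^{(0,1)}_{\tB}(\tA,E), G) \]
that is natural in $G$.

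We establish this equivalence by moving both sides to fibrations. Let $\tE \to \tB$ be the 1-fibred $(0,1)$-fibration classifying $G$, so that $F^{*}\tE \to \tA$ classifies $G\circ F$. By \cref{thm:elaxstr}, the left-hand side is $\FIB^{(0,1)}_{/(\tA,E)}(\tA, F^{*}\tE)$, where $\tA \to \tA$ is the identity fibration classifying $\underline{*}$. This is the \icat{} of sections of $F^{*}\tE$ that send morphisms in $E$ to cocartesian morphisms; all 2-morphisms are automatically cartesian because the fibration is 1-fibred. By adjunction for pullback, these sections correspond to functors $\tA \to \tE$ over $\tB$ sending $E$ to cocartesian morphisms. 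In other words, the left-hand side is
\[ \MCATITsl{\tB^{\sharp}}((\tA,E), \tE^{\natural}), \]
where $(\tA,E)$ maps to $\tB$ via $F$. Now \cref{cor:free 1-fibred on marked} identifies this precisely with $\Nat_{\tB,\CATI}(\Frof^{(0,1)}_{\tB}(\tA,E), G)$. Every identification used is natural in $G$, hence in $c$, and the Yoneda lemma then gives the statement. Because the argument is an equivalence of presheaves, one side exists exactly when the other does.

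For the remaining cases we use the same chain with the appropriate variance. For the $E$-lax colimit, \cref{cor:laxlimit rep} produces $\lim^{\eoplax}_{\tA^{\op}}\tC(DF,c)$; after straightening this becomes sections of a $(1,0)$-fibration over $\tA$ sending $E$ to cartesian morphisms, and \cref{cor:free 1-fibred on marked} with $\ve=(1,0)$ yields the weight $\Frof^{(1,0)}_{\tB}(\tA,E)\colon \tB^{\op}\to\CATI$. The oplax cases go through $(0,0)$- and $(1,1)$-fibrations, using \cref{cor:laxlimituniv}(\ref{eq:natoplaxbF}), (\ref{eq:natoplaxFb}) and the oplax straightening of \cref{thm:elaxstr}. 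Here the natural-transformation \icats{} appear with an extra $(\blank)^{\op}$, since the straightening over $\tB^{\co}$ or $\tB^{\coop}$ classifies $G^{\op}$; this is exactly why the weights are $\Frof^{(0,0)}_{\tB}(\tA,E)^{\op}$ and $\Frof^{(1,1)}_{\tB}(\tA,E)^{\op}$.

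The main obstacle is this variance bookkeeping in the oplax cases. We must check that the $\op$ on the mapping \icats{} appearing in \cref{prop:Natelaxdescr} and \cref{cor:laxlimituniv} matches the identification $\Nat_{\tB^{\co},\CATI}(W, G^{\op}) \simeq \Nat_{\tB,\CATI}(W^{\op}, G)^{\op}$. Only the underlying groupoids of mapping \icats{} are needed for the Yoneda argument, so the extra $\op$ is harmless once this matching is verified.
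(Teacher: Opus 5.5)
Your proposal is correct and is essentially the paper's proof. The paper likewise rewrites $\Nat^{\elax}_{\tA,\tC}(\underline{c},DF)$ via \cref{cor:laxlimituniv} as $E$-lax transformations $\underline{*}\to\tC(c,DF(\blank))$, straightens with \cref{thm:elaxstr} to sections of $F^{*}D^{*}\tC_{c\upslash}$, reinterprets these as marked functors $(\tA,E)\to D^{*}(\tC_{c\upslash})^{\natural}$ over $\tB^{\sharp}$, applies \cref{cor:free 1-fibred on marked}, and concludes by Yoneda, treating the other three cases as analogous. One small correction to your closing remark: the extra $(\blank)^{\op}$ in the oplax cases is harmless because it appears on both sides and cancels, which gives an equivalence of presheaves of \icats{}; it is not true that the Yoneda argument only needs the underlying \igpds{}, since agreement on cores alone does not identify the represented presheaves.
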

\begin{proof}
  For the lax limit, we have equivalences
    \[
    \Nat^{\elax}_{\tA,\tC}(\underline{c},DF)\simeq \Nat^{\elax}_{\tA,\CATI}(\underline{\ast},\tC(c,DF(\blank)))\simeq \oFIB^{(0,1)}_{/(\tA,E)}(\tA,F^{*}D^{*}\tC_{c \upslash}).
  \]
  Here the right-hand side is equivalently the mapping \icat{}
  \[\MCATITsl{\tA^{\sharp}}((\tA,E), F^{*}D^{*}(\tC_{c
      \upslash})^{\natural}) \simeq
    \MCATITsl{\tB^{\sharp}}((\tA,E), D^{*}(\tC_{c \upslash})^{\natural})
  \] where $F^{*}D^{*}(\tC_{c
    \upslash})^{\natural}$ is the canonical marking by cocartesian
  1-morphisms. By \cref{cor:free 1-fibred on marked}, this is
  naturally equivalent to $\Nat_{\tB, \CATI}(\Frof^{(0,1)}_{\tB}(\tA,E), \tC(c,D))$, so
  that we have
    \[ \Nat^{\elax}_{\tA,\tC}(\underline{c},DF) \simeq \Nat_{\tB, \CATI}(\Frof^{(0,1)}_{\tB}(\tA,E), \tC(c,D)).\]
  By the Yoneda lemma, it follows that an object of $\tC$ represents
  the left-hand side, \ie{} is an $E$-lax limit of $DF$, \IFF{} it
  represents the right-hand side, \ie{} is a
  $\Frof^{(0,1)}_{\tB}(\tA,E)$-weighted limit of $F$. The other cases
  are proved similarly.
\end{proof}

As a special case, our identification of free fibrations gives the
following more explicit version of \cite[Corollary 6.2.11]{GagnaHarpazLanariLaxLim}:
\begin{cor}\label{prop:laxasweighted}
  Let $(\tC,E)$ be a marked \itcat{} and consider a functor
  $F\colon \tC \to \tD$. Then we have equivalences
  \begin{align*}
    \lim_{\tC}^{\elax}F & \simeq \lim^{\Frof_{\tC}^{(0,1)}(\tC,E)}_{\tC}F, &
                                                                    \colim_{\tC}^{\elax}F
    & \simeq \colim^{\Frof_{\tC}^{(1,0)}(\tC,E)}_{\tC}F, \\
    \lim_{\tC}^{\eoplax}F & \simeq
                            \lim^{\Frof_{\tC}^{(0,0)}(\tC,E)}_{\tC}F, &  \colim_{\tC}^{\eoplax}F &\simeq \colim^{\Frof_{\tC}^{(1,1)}(\tC,E)}_{\tC}F.
  \end{align*}
  whenever either side exists in $\tD$. \qed
\end{cor}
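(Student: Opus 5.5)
This corollary is the special case $\tB = \tC$, $F = \id_{\tC}$ of \cref{propn:lax as Kan weighted}. The plan is to apply that proposition with the marked \itcat{} $(\tA,E)$ taken to be $(\tC,E)$, its functor $F$ taken to be $\id_{\tC}$, and its $D$ taken to be our functor $F \colon \tC \to \tD$. Then $DF = F$, and each of the four equivalences there reads $\lim^{\elax}_{\tC} F \simeq \lim^{\Frof^{(0,1)}_{\tC}(\tC,E)}_{\tC} F$, and similarly for the other three variances.

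The only thing to check is that the weights produced by the proposition agree with the ones written in the statement. By \cref{not:free 1-fib functor}, $\Frof^{\ve}_{\tC}(\tC,E)$ is obtained from the free decorated fibration on $(\tC,E)^{\sharp} \to \tC^{\sharp\sharp}$, i.e.\ from the identity. Its value at $c$ is the localization $\Ld$ of the decorated lax slice $\tC^{\diamond}_{c\upslash}$, $\tC^{\diamond}_{\upslash c}$, etc., with the 1-morphisms over $E$ and all 2-morphisms decorated. This is exactly the explicit description referred to in the statement, so no further identification is needed.

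Two points need care. First, the variance conventions for the oplax cases: \cref{propn:lax as Kan weighted} writes the weight as $\Frof^{(0,0)}_{\tB}(\tA,E)^{\op}$, meaning the functor $\tB^{\co} \to \CATI$ composed with $(\blank)^{\op}\colon \CATI^{\co}\to\CATI$, which yields a functor on $\tB$. The corollary suppresses this $(\blank)^{\op}$, and I would read it with the same convention. Second, the last formula of the proposition has a typo ($F$ for $D$), which is harmless here since $D$ is our $F$.

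There is no real obstacle; the argument is a direct specialization of \cref{propn:lax as Kan weighted}.
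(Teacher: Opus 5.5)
Your proposal is correct and matches the paper: the corollary is stated as the special case of \cref{propn:lax as Kan weighted} with $\tB=\tC$, the inner functor $\id_{\tC}$, and $D$ taken to be your $F$. Your reading of the suppressed $(\blank)^{\op}$ on the oplax weights, and your observation that the last formula of the proposition should have $D$ in place of $F$, are both accurate.
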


Combining \cref{prop:laxasweighted} and \cref{propn:weighted limit as
  lax}, we have shown that partially lax and oplax limits can both be expressed
as weighted limits, and vice versa, so that in a sense all three types
of limits are equivalent. In particular, this implies:
\begin{cor}\label{cor:cocomplete}
  The following are equivalent for an \itcat{} $\tC$:
  \begin{enumerate}[(1)]
  \item $\tC$ has all small partially lax (co)limits.
  \item $\tC$ has all small partially oplax (co)limits.
  \item $\tC$ has all small $\CatI$-weighted (co)limits. \qed
  \end{enumerate}
\end{cor}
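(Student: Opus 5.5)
The statement is \cref{cor:cocomplete}, and the plan is to deduce it directly from the two comparison results just established: \cref{propn:weighted limit as lax} with its dual \cref{cor:weightedcolimitaslax}, and \cref{propn:lax as Kan weighted} specialized as \cref{prop:laxasweighted}. I treat limits; the colimit statements follow by the same argument with the dual results, or by passing to $\tC^{\op}$ via \cref{lem:lax colim change variance}.

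For (3)$\Rightarrow$(1) and (3)$\Rightarrow$(2), take a small marked \itcat{} $(\tI,E)$ and $F \colon \tI \to \tC$. By \cref{prop:laxasweighted}, $\lim^{\elax}_{\tI}F$ and $\lim^{\eoplax}_{\tI}F$ are the weighted limits of $F$ with weights $\Frof^{(0,1)}_{\tI}(\tI,E)$ and $\Frof^{(0,0)}_{\tI}(\tI,E)$ (or its $\op$). These weights land in $\CATI$, so we only need to check they take values in small \icats{}. That holds because they are fibrewise localizations $\Ld$ of the decorated slices $\tI^{\diamond}_{\upslash b}$ and their variants, and these are small when $\tI$ is.

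For (1)$\Rightarrow$(3), take a small $\tA$, a weight $W \colon \tA \to \CATI$ with small values, and $F \colon \tA \to \tC$. Let $p \colon \tW \to \tA$ be the $(0,1)$-fibration for $W$, with $C$ its cocartesian morphisms. By \cref{propn:weighted limit as lax}, $\lim^{W}_{\tA}F \simeq \lim^{C\dlax}_{\tW}F\circ p$, and this exists by (1). For (2)$\Rightarrow$(3) we use the second equivalence in the same proposition, $\lim^{W}_{\tA}F \simeq \lim^{C'\doplax}_{\tW'^{\op}}F\circ q^{\op}$, with $q \colon \tW' \to \tA^{\op}$ the $(1,0)$-fibration. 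In both cases we need $\tW$ (respectively $\tW'$) to be small. This holds because unstraightening (\cref{thm:str}) sends a functor with small base and small fibres to a small \itcat{}, for instance because its core and mapping \icats{} are assembled from $\tA$ and the values $W(a)$.

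No step is a genuine obstacle. The one point needing care is the size bookkeeping: smallness is preserved both by unstraightening and by forming the free-fibration weights. Everything else is a direct citation of the two comparison propositions.
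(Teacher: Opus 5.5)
Your proposal is correct and is exactly the paper's argument: the corollary is stated as an immediate consequence of \cref{prop:laxasweighted} (partially (op)lax limits as weighted limits with the free-fibration weights $\Frof^{\ve}_{\tI}(\tI,E)$) together with \cref{propn:weighted limit as lax} and its dual \cref{cor:weightedcolimitaslax} (weighted limits as partially lax or oplax limits over the unstraightened weight $\tW$ or $\tW'$). The size bookkeeping you add, namely smallness of these weights and of the total spaces $\tW,\tW'$, is left implicit in the paper but is exactly what is needed.
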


\begin{defn}\label{def:conti}
  We say that $\tC$ is \emph{(co)complete} if any of the conditions in
  \cref{cor:cocomplete} hold. Given a functor $f \colon \tC \to \tD$
  of \itcats{}, we will say that $f$ is \emph{(co)continuous} if it preserves all small partially (op)lax (co)limits (and therefore all small $\CatI$-weighted (co)limits). 
\end{defn}

We note the following standard decomposition results for weighted (co)limits:
\begin{propn}\label{propn:decomp}
  Suppose we have $W \colon \tC \to \CATI$ and $F \colon \tC \to \tD$. If we can write $\tC \simeq \colim_{j \in \oJ} \tC_{j}$ for some diagram $\oJ \to \CatIT$ and write $W_{j}$, $F_{j}$ for the restrictions  of $W$ and $F$ to $\tC_{j}$, then we have
  \[ \lim^{W}_{\tC} F \simeq \lim_{\oJ^{\op}}\lim_{\tC_{j}}^{W_{j}}F_{j},\]
  or more precisely if the right-hand side exists in $\tD$ then it is the $W$-weighted limit of $F$.
\end{propn}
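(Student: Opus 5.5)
The plan is to reduce the statement to the case $\tD = \CATI$ via the Yoneda lemma, and then to establish that case by decomposing the functor \itcat{} over a colimit.

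\textbf{Step 1 (reduction to $\CATI$).} By \cref{obs: weighted lim rep}, $\lim^{W}_{\tC} F$ is characterized by natural equivalences
\[ \tD(d, \lim^{W}_{\tC}F) \simeq \Nat_{\tC,\CATI}(W, \tD(d,F)). \]
Suppose the right-hand side $L := \lim_{\oJ^{\op}} \lim^{W_j}_{\tC_j} F_j$ exists in $\tD$. Unwinding the universal properties of conical limits (\cref{ex:conical}) and weighted limits, we obtain natural equivalences
\[ \tD(d, L) \simeq \lim_{\oJ^{\op}} \tD\bigl(d, \lim^{W_j}_{\tC_j}F_j\bigr) \simeq \lim_{\oJ^{\op}} \Nat_{\tC_j,\CATI}\bigl(W_j, \tD(d,F_j)\bigr). \]
Since $\oJ$ is an \icat{}, the conical limit over $\oJ^{\op}$ of \icats{} agrees with the limit computed in $\CatI$. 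It therefore suffices to show that for functors $W, G \colon \tC \to \CATI$ (here $G = \tD(d,F)$) the canonical map
\[ \Nat_{\tC,\CATI}(W,G) \to \lim_{\oJ^{\op}} \Nat_{\tC_j,\CATI}(W_j,G_j) \]
is an equivalence, naturally in $d$. Naturality is clear, because everything is induced by restriction along the functors $\tC_j \to \tC$.

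\textbf{Step 2 (decomposing the functor \itcat{}).} Since $\FUN(\blank,\CATI)$ is a right adjoint, it takes colimits in $\CatIT$ to limits, which gives
\[ \FUN(\tC,\CATI) \simeq \lim_{\oJ^{\op}} \FUN(\tC_j,\CATI) \]
as \itcats{}. Mapping \icats{} in a limit of \itcats{} are computed as limits of mapping \icats{}: the mapping \icat{} functor is a pullback along $\FUN([1],\blank)$ and $\FUN(\partial[1],\blank)$, both of which preserve limits. Evaluating at $W$ and $G$ then yields exactly the required equivalence.

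\textbf{Main obstacle.} The main point to check is that the iterated limit is a \emph{conical} limit in $\tD$, and that the comparison of mapping \icats{} is compatible with how the weighted limits $\lim^{W_j}_{\tC_j}F_j$ assemble into an $\oJ^{\op}$-diagram. First, the transition maps must be constructed: they come from the universal property, since restricting a weighted cone along $\tC_j \to \tC_{j'}$ yields one. To make this functorial, I would define the diagram by Yoneda as the diagram of representing objects of the presheaves $\Nat_{\tC_j,\CATI}(W_j, \tD(\blank,F_j))$. Second, a limit of representables in $\PSh$ being represented by $L$ is exactly the definition of a conical limit, so the argument is consistent with the phrase ``if the right-hand side exists then it is the weighted limit.''
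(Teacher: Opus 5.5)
This is essentially the paper's proof. Both arguments compare the two objects after applying $\tD(d,\blank)$, and the key input in each is the decomposition $\FUN(\tC,\CATI)\simeq\lim_{\oJ^{\op}}\FUN(\tC_j,\CATI)$ passed to mapping \icats{}. Your Yoneda construction of the $\oJ^{\op}$-diagram only makes explicit what the paper leaves implicit.

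There is one slip in your Step~2. The fibre of $\FUN([1],\tX)\to\tX\times\tX$ over $(x,y)$ is only the core $\tX(x,y)^{\simeq}$, not $\tX(x,y)$. To see that mapping \icats{} commute with limits, use $\ARopl(\blank)=\FUN([1],\blank)^{\oplax}$ instead. It is right adjoint to $[1]\otimes\blank$, so it preserves limits, and its fibre over $(x,y)$ is the full $\tX(x,y)$ (cf.\ the proof of \cref{ARlax mor pb}).
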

\begin{proof}
  Since $\FUN(\tC,\tD) \simeq \lim_{j \in \oJ^{\op}} \FUN(\tC_{j},\tD)$, we have
  \[
    \begin{split}
      \tD(d, \lim^{W}_{\tC} F) & \simeq \Nat_{\tC,\CATI}(W, \tD(d, F)) \\
                               &\simeq \lim_{j \in \oJ^{\op}} \Nat_{\tC_{j}, \CATI}(W_{j}, \tD(d,F_{j})) \\
                               & \simeq \lim_{j \in \oJ^{\op}} \tD(d, \lim^{W_{j}}_{\tC_{j}} F_{j}) \\
                               & \simeq \tD(d, \lim_{j \in \oJ^{\op}}\lim^{W_{j}}_{\tC_{j}} F_{j}),
    \end{split}
  \]
  as required.
\end{proof}

\begin{propn}\label{propn:colim over colim of weights}
  For $\Phi \colon \tJ \to \tPSh(\tC)$, $W \in \tPSh(\tJ)$ and $F \colon \tC \to \tD$, we have
  \[ \colim^{\colim^{W}_{\tJ} \Phi}_{\tC} F \simeq \colim^{W}_{\tJ}
    (\colim^{\Phi(\blank)}_{\tC} F),\]
  if the right-hand side exists.
\end{propn}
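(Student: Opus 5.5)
We argue purely from the universal properties, as in \cref{propn:decomp}, by comparing the mapping \icats{} out of both sides into an arbitrary object $d \in \tD$. Write $G := \colim^{\Phi(\blank)}_{\tC} F$ for the functor $\tJ \to \tD$, $j \mapsto \colim^{\Phi(j)}_{\tC} F$. Here $\Phi(j) \in \tPSh(\tC)$ is a weight on $\tC^{\op}$, and the right-hand side is assumed to exist. Implicit in this assumption is that each $\colim^{\Phi(j)}_{\tC}F$ exists and that these assemble functorially in $j$. To obtain the functoriality, note that the functor $\tPSh(\tC)^{\op} \to \tPSh(\tD)^{\op}$ sending $V$ to the presheaf $d \mapsto \Nat_{\tC^{\op},\CATI}(V, \tD(F,d))$ is a 2-functor. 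Restricted along $\Phi$, it lands in representables, so by the (2-categorical) Yoneda lemma it factors through $\tD$, which gives $G$ together with natural equivalences
\[ \tD(G(j), d) \simeq \Nat_{\tC^{\op},\CATI}(\Phi(j), \tD(F,d)), \]
natural in $j \in \tJ^{\op}$ and $d \in \tD$.

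The chain of equivalences is then:
\[
\begin{split}
\tD(\colim^{W}_{\tJ} G, d) & \simeq \Nat_{\tJ^{\op},\CATI}(W, \tD(G(\blank),d)) \\
& \simeq \Nat_{\tJ^{\op},\CATI}\bigl(W, \Nat_{\tC^{\op},\CATI}(\Phi(\blank), \tD(F,d))\bigr) \\
& \simeq \Nat_{\tC^{\op},\CATI}\bigl(\colim^{W}_{\tJ}\Phi, \tD(F,d)\bigr) \\
& \simeq \tD\bigl(\colim^{\colim^{W}_{\tJ}\Phi}_{\tC}F, d\bigr).
\end{split}
\]
The first equivalence is the definition of the weighted colimit. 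The second is the identification of $G$ above. The last is again a definition, and we read it as showing that the left-hand object represents the presheaf defining the weighted colimit on the left of the statement. Hence that colimit exists and equals $\colim^{W}_{\tJ}G$.

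The key step is the third equivalence. By \cref{obs: weighted lim rep}, the $\CATI$-valued functor $V \mapsto \Nat_{\tC^{\op},\CATI}(V, \tD(F,d))$ is $\tPSh(\tC)(V, \tD(F,d))$, a representable functor on $\tPSh(\tC)^{\op}$. We therefore need that $\colim^{W}_{\tJ}\Phi$ exists in $\tPSh(\tC)$ and satisfies
\[ \tPSh(\tC)(\colim^{W}_{\tJ}\Phi, P) \simeq \Nat_{\tJ^{\op},\CATI}(W, \tPSh(\tC)(\Phi, P)), \]
which is exactly the defining universal property of the weighted colimit, applied with $P = \tD(F,d)$. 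So this step is formal once $\colim^{W}_{\tJ}\Phi$ exists in $\tPSh(\tC)$. The statement presupposes this, since the left-hand side refers to it. Otherwise we would invoke cocompleteness of $\CATI$ and compute it pointwise, reducing via \cref{propn:weighted limit as lax} to the description of lax colimits in $\CATI$; that reduction is the one genuinely nontrivial input if one wants it.

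The main obstacle I expect is the naturality bookkeeping in the first paragraph: making $j \mapsto \colim^{\Phi(j)}_{\tC}F$ a genuine functor of \itcats{} so that the second equivalence is natural in $j$ as a 2-functor, not merely objectwise. I would handle it with the enriched Yoneda embedding $\tD^{\op} \hookrightarrow \FUN(\tD,\CATI)$ (for $\CATI$-enriched categories, following \cite{HinichYoneda}), which realizes $G$ as the factorization described above.
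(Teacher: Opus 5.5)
Your argument is correct and is essentially the paper's proof: the same four-step chain compares the copresheaves corepresented by the two sides, and the third step is the universal property of $\colim^{W}_{\tJ}\Phi$ in $\tPSh(\tC)$. Your extra paragraph makes $j \mapsto \colim^{\Phi(j)}_{\tC}F$ functorial via the Yoneda embedding $\tD^{\op} \hookrightarrow \FUN(\tD,\CATI)$, a point the paper leaves implicit; note that the intermediate target there should be $\FUN(\tD,\CATI)$ rather than $\tPSh(\tD)^{\op}$, since $d \mapsto \Nat_{\tC^{\op},\CATI}(V,\tD(F,d))$ is covariant in $d$.
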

\begin{proof}
  We have
  \[
    \begin{split}
      \tD(\colim^{W}_{\tJ}(\colim^{\Phi(\blank)}_{\tC} F), d)
      & \simeq \Nat_{\tJ^{\op},\CATI}(W, \tD(\colim^{\Phi(\blank)}_{\tC} F, d)) \\
      & \simeq \Nat_{\tJ^{\op},\CATI}(W, \Nat_{\tC^{\op},\CATI}(\Phi, \tD(F,d))) \\
      & \simeq \Nat_{\tC^{\op},\CATI}(\colim^{W}_{\tJ}\Phi, \tD(F,d)) \\
      & \simeq \tD(\colim^{\colim^{W}_{\tJ} \Phi}_{\tC} F, d),
    \end{split}
  \]
  \ie{} the functors corepresented by the two objects are equivalent.
\end{proof}

By combining \cref{propn:lax as Kan weighted} and \cref{propn:colim over colim of weights} we obtain the following result on decomposition of partially (op)lax colimits.

\begin{proposition}\label{prop:diagramdecomposition}
  Let $(\tI,E)$ be a decorated \itcat{} and consider a diagram
  $d \colon \tI \to \tC$. Suppose further that we are given a
  diagram $\tau \colon \oJ \to \MCatIT$ indexed by an \icat{} $\oJ$
  such that the colimit of $\tau$ in $\MCatIT$ is $(\tI,E)$, and that $\tC$
  admits strong colimits of shape $\oJ$. Write
  $(\tI_j,E_j)= \tau(j)$ and let $d_j$ be the restriction of $d$
  along the canonical map $\tI_j \to \tI$. If the partially (op)lax
  colimits of $d$ and each of the $d_j$ exists in $\tC$, then we have
  an equivalence
  \[
  \colim_{\tI}^{\elaxoplax}d \simeq \colim_{j \in \oJ}\colim_{\tI_j}^{E_{j}\dplax}d_j.
  \]
\end{proposition}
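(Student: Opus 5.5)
We convert both sides into weighted colimits over the same indexing \itcat{} and then apply \cref{propn:colim over colim of weights}. Take the $E$-lax case; the oplax case is identical using $\Frof^{(1,1)}$ and its opposite. By \cref{prop:laxasweighted} (or \cref{propn:lax as Kan weighted} with $F=\id_{\tI}$, $D=d$) we have
\[ \colim^{\elax}_{\tI} d \simeq \colim^{\Frof^{(1,0)}_{\tI}(\tI,E)}_{\tI} d. \]
For each $j$, write $\iota_j \colon \tI_j \to \tI$. Applying \cref{propn:lax as Kan weighted} to $(\tI_j,E_j)$ with $F=\iota_j$ and $D=d$ gives
\[ \colim^{E_j\dlax}_{\tI_j} d_j \simeq \colim^{\Frof^{(1,0)}_{\tI}(\tI_j,E_j)}_{\tI} d. \]
This expresses every term as a weighted colimit of the single diagram $d$ over $\tI$, and the existence hypotheses on the partially lax colimits of $d$ and of the $d_j$ transfer to these weighted colimits.

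Next, the assignment $j \mapsto \Frof^{(1,0)}_{\tI}(\tI_j,E_j)$ is a functor $\Phi \colon \oJ \to \tPSh(\tI) = \FUN(\tI^{\op},\CATI)$. By \cref{cor:free 1-fibred on marked}, $\Frof^{(1,0)}_{\tI}$ is the straightening of the left adjoint $L^{(1,0);1}$ to the inclusion $\oFIB^{(1,0)}_{/\tI} \hookrightarrow \MCATITsl{\tI^{\sharp}}$, so $\Phi$ is this left adjoint composed with $\tau$ viewed as a diagram over $\tI^{\sharp}$. Left adjoints preserve colimits. Colimits in $\MCATITsl{\tI^{\sharp}}$ are computed in $\MCatIT$, and $\colim_{\oJ}\tau \simeq (\tI,E)$, so
\[ \colim_{j\in\oJ} \Phi(j) \simeq \Frof^{(1,0)}_{\tI}(\tI,E) \]
in $\tPSh(\tI)$. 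The colimit on the left is a colimit in the underlying \icat{}. By \cref{ex:conical}, a strong (conical) colimit over an \icat{} in $\tPSh(\tI)$ is detected in the underlying \icat{}, so this is also the conical colimit $\colim^{\underline{*}}_{\oJ}\Phi$ in $\tPSh(\tI)$. Here $\underline{*}$ is the constant weight on $\oJ^{\op}$.

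Finally, apply \cref{propn:colim over colim of weights} with $\tJ = \oJ$, $W = \underline{*}$, and the functor $\Phi$:
\[ \colim^{\Frof^{(1,0)}_{\tI}(\tI,E)}_{\tI} d \simeq \colim^{\underline{*}}_{\oJ} \colim^{\Phi(\blank)}_{\tI} d \simeq \colim_{j\in\oJ}\colim^{E_j\dlax}_{\tI_j} d_j. \]
The right-hand side exists because $\tC$ has strong colimits of shape $\oJ$ and each inner term exists, and a conical weighted colimit is a strong colimit by \cref{ex:conical}.

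The main obstacle is the middle step: checking that $\Frof^{(1,0)}_{\tI}(\blank)$ is functorial on $\MCATITsl{\tI^{\sharp}}$ and really preserves the colimit. This needs the colimit of $\tau$ to remain a colimit in the slice, and the identification of the underlying-\icat{} colimit with the conical colimit in $\tPSh(\tI)$. If the upgrade to an \itcatl{} adjunction is problematic, I would argue on underlying \icats{}. Colimits of functors into $\CATI$ over an \icat{} are pointwise, hence conical, so the underlying \icatl{} statement suffices.
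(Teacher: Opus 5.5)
Your proposal is correct and follows the paper's route. The paper likewise rewrites each partially (op)lax colimit as a weighted colimit of $d$ over $\tI$ via \cref{propn:lax as Kan weighted}, folds the $\oJ$-indexed colimit into the weight using \cref{propn:colim over colim of weights}, and identifies the colimit of the weights with the weight for $(\tI,E)$ because the free 1-fibred fibration functor is a left adjoint. Your extra check, that the colimit of weights in the underlying $\infty$-category is also the conical colimit in $\tPSh(\tI)$, only makes explicit a step the paper leaves implicit.
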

\begin{proof}
  From \cref{propn:lax as Kan weighted} we have a natural equivalence
  \[ \colim_{\tI_j}^{E_{j}\dplax}d_j \simeq
    \colim_{\tI}^{\Frofe(\tI_{j},E_{j})} d\]
  for the appropriate value of $\ve$.
  We can therefore rewrite the right-hand side as
  \[ \colim_{\tI}^{\colim_{j \in \oJ} \Frofe(\tI_{j}, E_{j})} d. \]
  It remains to observe that we have an equivalence
  \[ \Frofe(\tI, E)) \simeq \colim_{j \in \oJ} \Frofe(\tI_{j},
    E_{j});\]
  this follows from the free 1-fibred fibration being a left adjoint.
\end{proof}

\subsection{(Co)limits of \itcats{} in terms of fibrations}
\label{sec:lax colim 2cat}
In this section we will use straightening to provide a fibrational
description of partially (op)lax
(co)limits in $\CATIT$. This extends (and recovers) the known
characterization for diagrams in $\CATI$ due to Berman \cite[Theorem
4.4]{BermanLax}. The analogous results for
$(\infty,\infty)$-categories have been proved by Loubaton; see
\cite[Examples 4.2.3.12--13]{loubaton}.

\begin{notation}
  Let $(\tA,E)$ be a marked \itcat{} and $p \colon \tE \to \tA$ be an
  $\ve$-fibration for $\ve = (i,j)$. We write
  \[\tE^{\natural}|_{E} := \tEn \times_{\tA^{\sharp\sharp}} (\tA,E)^{\sharp}\]
  for the decorated \itcat{} where we decorate only those
  $i$-cartesian morphisms that lie over $E$ together with all of the
  $j$-cartesian 2-morphisms.
\end{notation}

\begin{proposition}\label{prop:laxcolimloc}
  Let $(\tA,E)$ be a marked \itcat{} and consider a functor
  $F \colon \tc{A} \to \CATIT$ with corresponding
  $\vepsilon$-fibration
  $\tE_{\ve} \to \tc{A}^{\veop}$. The $E$-(op)lax colimits of
  $F$ can then be described as
  \[
    \colim^{\elax}_{\tc{A}}F \simeq \Ldec{\tEn_{(0,j)}|_{E}}, \quad
    \colim^{\eoplax}_{\tc{A}}F \simeq \Ldec{\tEn_{(1,j)}|_{E}}
  \]
  for both $j = 0,1$, where the localization 
  $\Ldec{\tEn_{(i,j)}|_{E}}$
  is obtained by inverting the $i$-cartesian 1-morphisms over $E$ and
  \emph{all} of the $j$-cartesian 2-morphisms in $\tE_{(i,j)}$. 
\end{proposition}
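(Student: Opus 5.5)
Consider the $E$-lax colimit with $\ve=(0,1)$; the other variances follow by the symmetries of \cref{lem:lax colim change variance} together with \cref{obs:fib change variance}. The strategy is to compute the corepresented functor $\CATIT(\colim^{\elax}_{\tA}F, \tD)$ via straightening and compare it with $\Fun(\Ldec{\tEn_{(0,1)}|_{E}}, \tD)$ by the Yoneda lemma. By definition,
\[ \CATIT(\colim^{\elax}_{\tA} F, \tD) \simeq \Nat^{\elax}_{\tA,\CATIT}(F, \underline{\tD}). \]
By \cref{thm:elaxstr}, straightening identifies this with $\FIB^{(0,1)}_{/(\tA,E)}(\tE, \tD\times\tA)$. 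Here $\tD\times \tA \to \tA$ is the $(0,1)$-fibration classifying the constant functor $\underline{\tD}$. Its cocartesian 1-morphisms are those with invertible $\tD$-component, and its cartesian 2-morphisms are those with invertible $\tD$-component.

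Next we unwind these morphisms of fibrations. A functor $\tE\to \tD\times\tA$ over $\tA$ is the same as a functor $G\colon \tE\to\tD$. It preserves cocartesian morphisms over $E$ and all cartesian 2-morphisms \IFF{} $G$ inverts the cocartesian 1-morphisms lying over $E$ and all cartesian 2-morphisms, that is, \IFF{} $G$ is a decorated functor $\tEn|_{E}\to \tD^{\flat\flat}$. We also need this on the level of \icats{}, including non-invertible natural transformations. The mapping \icat{} in $\FIB_{/(\tA,E)}$ is the full subcategory of $\Fun_{/\tA}(\tE,\tD\times\tA)\simeq\Fun(\tE,\tD)$ on such functors. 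This is exactly the mapping \icat{} $\DCATIT(\tEn|_{E},\tD^{\flat\flat})$ from \cref{const: DCATIT as 2-cat}. By the \itcatl{} adjunction $\Ld\dashv(\blank)^{\flat\flat}$ from the same construction, it is in turn equivalent to $\Fun(\Ldec{\tEn|_{E}},\tD)$.

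Naturality in $\tD$ then gives the claimed equivalence by the Yoneda lemma in $\CATIT$. For $j=0$ we repeat the argument with the $(0,0)$-fibration over $\tA^{\co}$ and the $\tA^{\co}$ case of \cref{thm:elaxstr}. The constant fibration is still a product, so the same unwinding applies, with cartesian replaced by cocartesian on 2-morphisms. The oplax cases use the $(1,j)$-fibrations and the corresponding oplax straightening.

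The main obstacle is matching variances carefully. One must check that the lax/oplax labels in \cref{thm:elaxstr} line up with the $(0,j)$ versus $(1,j)$ fibrations as claimed, including after the $(\blank)^{\co}$ twist for $j=0$. One must also check that the identification of mapping \icats{} is natural in $\tD$ at the level of $\CATIT$, not merely on cores. The latter should follow since \cref{thm:elaxstr} is an equivalence of \itcats{}, natural in the target via composition.
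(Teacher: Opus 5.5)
Your proposal is correct and is essentially the paper's proof: straighten $\Nat^{\elax}_{\tA,\CATIT}(F,\underline{(\blank)})$ via \cref{thm:elaxstr} to functors $\tE\to(\blank)\times\tA$ over $\tA$ that preserve the relevant (co)cartesian morphisms. Then identify these with functors $\tE\to(\blank)$ inverting those morphisms, i.e.\ with $\Fun(\Ldec{\tEn|_{E}},\blank)$, and conclude by Yoneda; you make this last identification explicit via the \itcatl{} adjunction $\Ld\dashv(\blank)^{\flat\flat}$, which the paper leaves implicit. One small correction: naturality in $\tD$ comes from the naturality of \cref{thm:elaxstr} in the base, applied to $\tA\to[0]$ (so that $\tD\times\tA$ is naturally the unstraightening of $\underline{\tD}$), not from any naturality in the target.
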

\begin{proof}
  We consider the case $\vepsilon=(0,1)$ without loss of
  generality. It follows from \cref{thm:elaxstr} that we have a
  natural equivalence of functors
  \[
     \Nat^{\elax}_{\tc{A},\CATIT}\left(F, \underline{(-)}\right)\simeq \Fun^{E\dcoc}_{/\tc{A}}(\tE_{(0,1)},(-)\times \tc{A}).
   \]
   Here the right-hand side at $\tB$ is the \icat{} of functors $\tE_{(0,1)} \to \tB \times \tA$ over $\tA$ that preserve cartesian 2-morphisms and cocartesian morphisms over $E$. In $\tB \times \tA$ these are the 2-morphisms and morphisms whose projection to $\tB$ are equivalences, so under the equivalence
   \[\Fun_{/\tA}(\tE_{(0,1)}, \tB \times \tA) \simeq \Fun(\tE_{(0,1)}, \tB)\]
   the full subcategory $\Fun^{E\dcoc}_{/\tc{A}}(\tE_{(0,1)},(-)\times \tc{A})$ corresponds to 
   $\Fun(\Ldec{\tEn_{(0,1)}|_{E}}, \tB)$. In other words, we have a natural equivalence
  \[
    \Nat^{\elax}_{\tc{A},\CATIT}\left(F, \underline{(-)}\right) \simeq \Fun(\Ldec{\tEn_{(0,1)}|_{E}},\blank),
  \]
  which precisely identifies $\Ldec{\tEn_{(0,1)}|_{E}}$ as the $E$-lax
  colimit of $F$.
\end{proof}

As a special case where we can omit the localization, we get a description of fibrations over \icats{} as
(op)lax colimits (first proved in \cite{GHN} for functors to $\CATI$):
\begin{cor}
  Suppose $\oA$ is an \icat{}. For a functor $F \colon \oA \to
  \CATIT$ with corresponding $0$-fibration $\tE_{0} \to \oA$ and
  $1$-fibration $\tE_{1} \to \oA^{\op}$, we have
  \[ \tE_{0} \simeq \colim^{\lax}_{\oA} F, \quad \tE_{1} \simeq \colim^{\oplax}_{\oA} F.\]
\end{cor}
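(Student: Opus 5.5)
The plan is to specialize \cref{prop:laxcolimloc} to the minimal marking $E$ consisting only of the equivalences of $\oA$, i.e.\ $(\oA,E)=\oA^{\flat}$. With this marking the lax colimit is $\colim^{\lax}_{\oA}F$, and \cref{prop:laxcolimloc} identifies it with $\Ldec{\tEn_{(0,j)}|_{E}}$ for either $j$. Since $\oA$ is an \icat{}, $\oA^{\co}=\oA$, so the $(0,0)$- and $(0,1)$-fibrations for $F$ coincide and are just the $0$-fibration $\tE_{0}\to\oA$; likewise the $1$-fibration $\tE_{1}\to\oA^{\op}$ serves as $\tE_{(1,j)}$. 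It therefore suffices to show that the localization in \cref{prop:laxcolimloc} does nothing in this case, i.e.\ that $\tE_{0}\to\Ldec{\tEn_{0}|_{\flat}}$ is an equivalence, and similarly for $\tE_{1}$.

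The first step is to check that the decorated 1-morphisms of $\tEn_{0}|_{\flat}$ are exactly the equivalences of $\tE_{0}$. By definition they are the cocartesian morphisms lying over equivalences of $\oA$. A cocartesian morphism over an equivalence is itself an equivalence, by the uniqueness of cocartesian lifts (\cref{obs:cart lifts unique}), since identities are cocartesian lifts of identities. Conversely, every equivalence is decorated because decorations contain all equivalences.

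The second step is to check the 2-morphisms: we must show that every $j$-cartesian 2-morphism in $\tE_{0}$ is invertible. Since $\oA$ has only invertible 2-morphisms, such a 2-morphism lies over an invertible 2-morphism of $\oA$. By the analogous uniqueness statement for (co)cartesian morphisms in the mapping \icats{} $\tE(x,y)\to\oA(px,py)$, a weakly cartesian morphism over an equivalence is an equivalence. Hence the decorated 2-morphisms are also only the invertible ones.

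Consequently $\tEn_{0}|_{\flat}\simeq\tE_{0}^{\flat\flat}$, and $\Ldec{\tE_{0}^{\flat\flat}}\simeq\tE_{0}$ because $\Ld$ is left adjoint to $(\blank)^{\flat\flat}$ and $\Ud(\blank)^{\flat\flat}\simeq\id$ (\cref{lem:localizationdecorations}). The oplax case with $\tE_{1}$ is identical. The only mildly delicate point is making the conservativity argument for cartesian 2-morphisms precise; it reduces to the \icatl{} fact that cartesian morphisms over equivalences are equivalences, applied in each mapping \icat{}.
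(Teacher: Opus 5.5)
Your proposal is correct and follows the paper's route. The paper obtains this corollary as the special case of \cref{prop:laxcolimloc} for the minimal marking $\oA^{\flat}$, where the localization can be omitted. You spell out why it is trivial: cocartesian morphisms over equivalences, and $j$-cartesian 2-morphisms over the necessarily invertible 2-morphisms of $\oA$, are themselves invertible, so $\tEn_{0}|_{\flat}\simeq\tE_{0}^{\flat\flat}$ and $\Ld$ changes nothing.
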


\begin{remark}
  Note that our result does not give a similar description of fibrations over
  \itcats{}, as the equivalence of \cref{prop:laxcolimloc} always
  requires us to invert the (co)cartesian 2-morphisms; to
  avoid this presumably requires an $(\infty,3)$-categorical notion of
  (op)lax colimits.
\end{remark}

We also get a description of (op)lax colimits of \icats{}:
\begin{corollary}\label{cor:laxcoliminCat1}
  Let $(\tA,E)$ be a marked \itcat{} and consider a functor $F
  \colon\tc{A} \to \CATI$ with corresponding 1-fibred
  $\vepsilon$-fibration $\tE_{\ve} \to \tc{A}^{\veop}$. Then the $E$-(op)lax colimits of $F$ in $\CATI$ can be described as
  \[
    \colim^{\elax}_{\tc{A}}F \simeq \Ldec{\tEn_{(0,j)}|_{E}}, \quad
    \colim^{\eoplax}_{\tc{A}}F \simeq \Ldec{\tEn_{(1,j)}|_{E}}
  \]
  for $j = 0,1$, where $\Ldec{\tEn_{(i,j)}|_{E}}$ is obtained by inverting
  the $i$-cartesian 1-morphisms over $E$ together with \emph{all}
  2-morphisms (since all 2-morphisms are $j$-cartesian in a 1-fibred
  $(i,j)$-fibration).  \qed
\end{corollary}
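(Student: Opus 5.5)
This is the special case of \cref{prop:laxcolimloc} where the diagram lands in $\CATI \subseteq \CATIT$, so I would deduce it from that result rather than reprove the universal property.

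First, the colimit in $\CATIT$ is already an $\infty$-category. The fibration $\tE_{\ve}$ for $F$, viewed as a diagram in $\CATIT$, is 1-fibred. By \cref{ex:1-fibred as partial}, every 2-morphism in $\tE_{\ve}$ is then $j$-cartesian. Hence the localization $\Ldec{\tEn_{(i,j)}|_{E}}$ of \cref{prop:laxcolimloc} inverts all 2-morphisms, and it is an \icat{}. Concretely, after inverting the 2-morphisms we get a $\tau_{(\infty,1)}$-type localization, which is then further localized at some 1-morphisms. So \cref{prop:laxcolimloc} says this \icat{} $X$ corepresents $\Nat^{\elaxoplax}_{\tA,\CATIT}(F,\underline{\blank})$ on all of $\CATIT$.

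Second, pass from $\CATIT$ to $\CATI$. The inclusion $\CATI \hookrightarrow \CATIT$ is fully faithful, so for an \icat{} $\oB$ we have $\CATI(X,\oB) \simeq \CATIT(X,\oB)$. By \cref{lem:lff on Funlax}, or simply by fully faithfulness of postcomposition, $\Nat^{\elaxoplax}_{\tA,\CATI}(F,\underline{\oB}) \simeq \Nat^{\elaxoplax}_{\tA,\CATIT}(F,\underline{\oB})$. Combining the two, $X$ corepresents the required functor on $\CATI$, so $X$ is the $E$-(op)lax colimit of $F$ in $\CATI$.

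The only point needing care is the second step. There I need full faithfulness of $\FUN(\tA,\CATI)^{\elaxoplax} \to \FUN(\tA,\CATIT)^{\elaxoplax}$, which holds because a fully faithful functor induces a fully faithful functor on partially lax functor categories (this is the fully faithful case of \cref{lem:lff on Funlax}). Beyond that, the statement is essentially immediate, so no real obstacle is expected.
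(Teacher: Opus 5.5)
Your proposal is correct and is essentially the argument the paper intends: the corollary is stated with \qed as a direct specialization of \cref{prop:laxcolimloc}. Since the fibration is 1-fibred, the localization inverts all 2-morphisms and so is an \icat{}. Full faithfulness of $\CATI \hookrightarrow \CATIT$, together with the induced full faithfulness on $E$-(op)lax functor \itcats{} (e.g.\ via \cref{lem:lff on Funlax}), then transfers the universal property from $\CATIT$ to $\CATI$.
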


We now turn to partially (op)lax \emph{limits}; to describe these we
introduce the following notation:
\begin{defn}\label{def:FUNdcoc}
  Let $(\tA,E)$ be a marked \itcat{} and let $\tE, \tF \to \tA$ be two
  $\ve$-fibrations over $\tA$ for $\ve = (i,j)$. For $i = 0$, we
  denote by \[\FUN^{E\dcoc}_{/\tA}(\tE, \tF) :=
    \DFUN_{/(\tA,E)^{\sharp}}(\tEn|_{E}, \tF^{\natural}|_{E})\]
  the \itcat{} characterized by the universal property
  \[
    \Fun(\tX,\FUN^{E\dcoc}_{/\tc{A}}(\tE,\tF)) \simeq \Fun^{E\dcoc}_{/\tc{A}}(\tE\times \tX,\tF),
  \]
  \ie{} the full sub-\itcat{} of $\FUN_{/\tA}(\tE, \tF)$ spanned by
  the functors that preserve cocartesian morphisms over $E$ and
  $j$-cartesian 2-morphisms. (In
  other words, $\FUN^{E\dcoc}_{/\tA}(\blank,\blank)$ denotes the
  mapping \itcat{} of the natural $(\infty,3)$-category structure
  on $\Fib^{(0,j)}_{/(\tA, E)}$.)  For $i = 1$, we define
  $\FUN^{E\dcart}_{/\tA}(\tE, \tF)$ similarly. 
\end{defn}

\begin{observation}
  If $\tF \to \tA$ is a 1-fibred $\ve$-fibration for $\ve = (0,j)$,
  then the \itcat{} $\FUN^{E\dcoc}_{/\tA}(\tE, \tF)$ is actually an \icat{}: any 2-morphism
  $\tE \times C_{2} \to \tF$ must be given at each $x \in \tE$ by a
  2-morphism in $\tF$ that maps to an equivalence in $\tA$, and is
  therefore invertible since all 2-morphisms in $\tF$ are
  $j$-cartesian.
\end{observation}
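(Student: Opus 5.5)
The claim to prove: if $\tF \to \tA$ is a 1-fibred $\ve$-fibration with $\ve=(0,j)$, then $\FUN^{E\dcoc}_{/\tA}(\tE,\tF)$ is an \icat{}, i.e.\ every 2-morphism in it is invertible.

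The plan is to use the universal property from \cref{def:FUNdcoc} with $\tX = C_{2}$, so that a 2-morphism in $\FUN^{E\dcoc}_{/\tA}(\tE,\tF)$ is a functor $\Phi \colon \tE \times C_{2} \to \tF$ over $\tA$, where $\tA$ receives this via the projection to $\tE$. Invertibility should then reduce to invertibility of each of its components.

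First I reduce to componentwise invertibility. A 2-morphism in $\FUN_{/\tA}(\tE,\tF)$ is invertible if and only if, for every $x \in \tE$, its component $\Phi|_{\{x\}\times C_{2}}$ is an invertible 2-morphism in $\tF$. One direction is evaluation. For the converse, I use that restriction along the essentially surjective functor $\tE^{\simeq} \to \tE$ gives a functor $\FUN(\tE,\tF) \to \FUN(\tE^{\simeq},\tF) \simeq \lim_{\tE^{\simeq}}\tF$. By \cref{propn:ess surj gives 2-cons}(iii) this is conservative on 2-morphisms. The same holds after passing to the locally full sub-\itcats{} over $\tA$ and to the full sub-\itcat{} $\FUN^{E\dcoc}$, since conservativity on 2-morphisms is inherited by locally full subcategories.

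Next I check the components. Because $\Phi$ lies over $\tA$ through the projection $\tE \times C_{2} \to \tE \to \tA$, each component $\Phi_{x}$ is a 2-morphism in $\tF$ whose image in $\tA$ is the identity 2-morphism of $\id_{p(x)}$. Since $\tF$ is 1-fibred, every 2-morphism in $\tF$ is $j$-cartesian. A $j$-cartesian 2-morphism lying over an equivalence is itself an equivalence, just as a cartesian edge of a (co)cartesian fibration of \icats{} over an identity is invertible; here this is applied to the fibration on mapping \icats{} $\tF(a,b) \to \tA(pa,pb)$. Hence each $\Phi_{x}$ is invertible.

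The only point requiring care is the conservativity step, in particular making sure it survives restriction to the sub-\itcat{} over $\tA$. Working instead with $\FUN(\tE,\tF)$ and noting that a 2-morphism which is invertible there is automatically invertible in the locally full sub-\itcat{} avoids this issue.
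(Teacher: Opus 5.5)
Your proposal is correct and follows the paper's reasoning. Each component of a 2-morphism $\tE \times C_{2} \to \tF$ over $\tA$ is a $j$-cartesian 2-morphism lying over an identity, hence invertible; beyond that, you spell out why componentwise invertibility suffices, which the paper leaves implicit. One small imprecision: $\FUN_{/\tA}(\tE,\tF)$ is a fibre of $\FUN(\tE,\tF) \to \FUN(\tE,\tA)$ rather than a locally full sub-\itcat{}, but invertibility of 2-morphisms in a pullback is detected componentwise (and the component in the point is trivially invertible), so your reduction to $\FUN(\tE,\tF)$ still goes through.
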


\begin{propn}\label{prop:laxlimCATIT}
  Let $(\tA,E)$ be a marked \itcat{} and consider a functor $F
  \colon\tc{A} \to \CATIT$ with corresponding  $\vepsilon$-fibration
  $\tE_{\ve} \to \tc{A}^{\veop}$. Then the $E$-(op)lax limits
  of $F$ in $\CATIT$ can be described as
  \[
    \lim^{\elax}_{\tc{A}}F \simeq
    \FUN^{E\dcoc}_{/\tc{A}^{\veop}}(\tc{A}^{\veop},\tE_{(0,j)}), \quad
 \lim^{\eoplax}_{\tc{A}}F \simeq \FUN^{E\dcart}_{/\tc{A}^{\veop}}(\tc{A}^{\veop},\tE_{(1,j)})
  \]
  for $j = 0,1$.
\end{propn}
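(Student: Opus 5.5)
We verify the defining universal property of the $E$-(op)lax limit directly, by Yoneda, in the same way as \cref{prop:laxcolimloc}. We treat the case $\ve = (0,1)$ for the lax limit; the other variances follow by the symmetries of \cref{lem:lax colim change variance} and \cref{obs:fib change variance}, together with the corresponding straightening equivalences in \cref{thm:elaxstr}. For an \itcat{} $\tX$ we need a natural equivalence
\[ \Nat^{\elax}_{\tA,\CATIT}(\underline{\tX}, F) \simeq \Fun(\tX, \FUN^{E\dcoc}_{/\tA}(\tA,\tE_{(0,1)})). \]

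The first step is to identify the constant functor $\underline{\tX}$ under straightening with the projection $\tX \times \tA \to \tA$. This is the pullback of $\tX \to [0]$ along $\tA \to [0]$, so it follows from the naturality in the base of \cref{thm:str}. In this fibration the cocartesian morphisms are those whose $\tX$-component is invertible, and likewise for cartesian 2-morphisms.

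Next, \cref{thm:elaxstr} gives
\[ \Nat^{\elax}_{\tA,\CATIT}(\underline{\tX},F) \simeq \FIB^{(0,1)}_{/(\tA,E)}(\tX\times\tA, \tE_{(0,1)}), \]
the \icat{} of functors $\tX \times \tA \to \tE_{(0,1)}$ over $\tA$ that preserve cocartesian morphisms over $E$ and all cartesian 2-morphisms. By the universal property in \cref{def:FUNdcoc}, $\Fun(\tX, \FUN^{E\dcoc}_{/\tA}(\tA,\tE))$ is the \icat{} of functors $\tA \times \tX \to \tE$ over $\tA$ that preserve cocartesian morphisms over $E$ (with $\tA$ marked by $E$, so in the source $\tA$ carries its identity fibration structure) and cartesian 2-morphisms.

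It remains to compare these two preservation conditions, and this is the main point requiring care. The condition from \cref{def:FUNdcoc} with source $\tA \times \tX$ requires preservation only of those morphisms that are cocartesian for $\tA \times \tX \to \tA$, namely pairs (morphism in $E$, equivalence in $\tX$), and of 2-morphisms that are identities in the $\tX$-direction. This is exactly what the fibration structure on $\tX \times \tA$ gives, so the two conditions coincide.

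We should make sure the identification in \cref{def:FUNdcoc} is understood with $\tE \times \tX$ carrying the product decoration, where $\tX$ is given the minimal decoration $\tX^{\flat\flat}$. This is consistent with the decorated description $\DFUN_{/(\tA,E)^{\sharp}}(\tA^{\natural}|_E, \tF^{\natural}|_E)$ together with the cartesian closure of $\DCatIT$. Both sides are natural in $\tX$, so we obtain a natural equivalence of functors $\CatIT^{\op} \to \CatI$. By the Yoneda lemma for the \itcat{} $\CATIT$, with naturality in 1-morphisms and tensoring by $[1]$ handling 2-morphisms, this exhibits $\FUN^{E\dcoc}_{/\tA}(\tA,\tE_{(0,1)})$ as $\lim^{\elax}_\tA F$.

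For $j = 0$ the same argument works, using the $(0,0)$-fibration over $\tA^{\co}$. One must check that $\underline{\tX}$ again corresponds to the projection, and that the relevant straightening in \cref{thm:elaxstr} is indeed lax rather than oplax. The table in \cref{thm:elaxstr} confirms that $\FIB^{(0,0)}_{/(\tA,E)}$ corresponds to $E$-lax transformations.
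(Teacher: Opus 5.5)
Your proposal is correct and is essentially the paper's own argument. For $\ve=(0,1)$ the paper uses the same chain of natural equivalences
$\Fun(\tX,\FUN^{E\dcoc}_{/\tA}(\tA,\tE_{(0,1)}))\simeq\Fun^{E\dcoc}_{/\tA}(\tA\times\tX,\tE_{(0,1)})\simeq\Nat^{\elax}_{\tA,\CATIT}(\underline{\tX},F)$, with the first step formal and the second given by \cref{thm:elaxstr}, and concludes by Yoneda; your additional checks (that $\underline{\tX}$ straightens to the projection, and that the two preservation conditions coincide) only make explicit what the paper leaves implicit.
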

\begin{proof}
  Without loss of generality let us assume that $\vepsilon=(0,1)$. We
  note
  that we have natural equivalences
  \[
    \Fun(\blank,\FUN^{E\dcoc}_{/\tc{A}}(\tc{A},\tE_{(0,1)}))\simeq \Fun^{E\dcoc}_{/\tc{A}}(\tc{A}\times (\blank),\tE_{(0,j)})\simeq  \Nat^{\elax}_{\tc{A},\CATIT}\left(\underline{(-)},F\right),
  \]
  where the first equivalence is formal and the second uses
  \cref{thm:elaxstr}. Thus the \itcat{}
  $\FUN^{E\dcoc}_{/\tc{A}}(\tc{A},\tE_{(0,1)})$ has the universal property of the $E$-lax limit.
\end{proof}

Over an \icat{} we get a description of (op)lax limits as
sections of fibrations (again due to \cite{GHN} for functors to $\CATI$):
\begin{cor}
  Let $\oA$ be an \icat{}. For a functor $F \colon \oA \to
  \CATIT$ with corresponding $i$-fibration $\tE_{i} \to \oA$ for $i = 0,1$, we have
  \[ \lim^{\lax}_{\oA} F \simeq \FUN_{/\oA}(\oA, \tE_{0}), \quad
    \lim^{\oplax}_{\oA} F \simeq \FUN_{/\oA^{\op}}(\oA^{\op},
    \tE_{1}). \qed\]
\end{cor}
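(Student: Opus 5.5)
The statement is a direct specialization of \cref{prop:laxlimCATIT} to the case where the base is an \icat{} $\oA$ with the minimal marking $E = \oA^{\simeq}$. I would apply that proposition to $\oA^{\flat}$ and simplify the resulting \itcat{} of sections.

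First, since $\oA$ has only invertible 2-morphisms, we have $\oA^{\co} \simeq \oA$. Hence $\oA^{\veop}$ is $\oA$ for $\ve = (0,j)$ and $\oA^{\op}$ for $\ve = (1,j)$, for both values of $j$. Moreover, over an \icat{} base the notions of $(i,0)$- and $(i,1)$-fibration coincide. The condition on 2-morphisms asks for (co)cartesian lifts of identity 2-morphisms, and these always exist: they are the identities, and every 2-morphism in $\tE$ lying over an invertible one is automatically $j$-cartesian. So $\tE_{0}$ and $\tE_{1}$ are exactly the fibrations appearing in \cref{prop:laxlimCATIT}.

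Second, with $E$ minimal, the subcategory $\FUN^{E\dcoc}_{/\oA}(\oA,\tE_{0})$ of \cref{def:FUNdcoc} imposes only two conditions. Cocartesian morphisms over $E$ must be preserved, but $E$ consists only of equivalences, and a section sends these to equivalences, which are cocartesian. All $j$-cartesian 2-morphisms must be preserved, but the only 2-morphisms of $\oA$ are invertible, and they map to invertible, hence cartesian, 2-morphisms. Thus the full sub-\itcat{} is all of $\FUN_{/\oA}(\oA,\tE_{0})$. The same argument gives $\FUN^{E\dcart}_{/\oA^{\op}}(\oA^{\op},\tE_{1}) = \FUN_{/\oA^{\op}}(\oA^{\op},\tE_{1})$. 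Finally, $E$-lax limits with $E$ minimal are by definition the lax limits, and likewise for oplax, so the two formulas follow.

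There is no real obstacle here. The only point that needs care is matching variances: the oplax case uses the $(1,j)$-fibration over $\oA^{\veop} = \oA^{\op}$, and this matches the statement's $1$-fibration $\tE_{1} \to \oA^{\op}$.
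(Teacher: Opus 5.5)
Your proposal is correct and follows the paper's own route: the corollary is the minimally marked case of Proposition~\ref{prop:laxlimCATIT}, with $\oA^{\co}\simeq\oA$ and the conditions defining $\FUN^{E\dcoc}$ and $\FUN^{E\dcart}$ becoming vacuous. One small slip: it is not true that every 2-morphism of $\tE$ lying over an invertible one is $j$-cartesian (a non-invertible 2-morphism in a fibre is a counterexample), but your argument only needs that invertible 2-morphisms are cartesian, which is what justifies the identity lifts.
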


\begin{corollary}\label{cor:laxlimcat}
  Let $(\tA,E)$ be a marked \itcat{} and consider a functor $F \colon\tc{A} \to \CATI$ with corresponding 1-fibred $\vepsilon$-fibration  $\tE_{\ve} \to \tc{A}^{\veop}$. Then the $E$-(op)lax limits of $F$ in $\CATI$ can be described as
  \[
    \lim^{\elax}_{\tc{A}}F \simeq \Fun^{E\dcoc}_{/\tc{A}^{\veop}}(\tc{A}^{\veop},\tE_{(0,j)}), \enspace \lim^{\eoplax}_{\tc{A}}F \simeq \Fun^{E\dcart}_{/\tc{A}^{\veop}}(\tc{A}^{\veop},\tE_{(1,j)})
  \]
  for $j = 0,1$. \qed 
\end{corollary}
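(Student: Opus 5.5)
This follows by restricting \cref{prop:laxlimCATIT} along the inclusion $\CATI \hookrightarrow \CATIT$, and then observing that the resulting mapping \itcat{} is actually an \icat{}. Start from a functor $F \colon \tA \to \CATI$ and set $\iota \colon \CATI \hookrightarrow \CATIT$ for the fully faithful inclusion. The plan has two steps. First show that $\iota$ preserves (and reflects) $E$-(op)lax limits. Then apply \cref{prop:laxlimCATIT} to $\iota F$ and identify the answer.

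For the first step we use the right adjoint $(\blank)^{\leq 1} \colon \CATIT \to \CATI$. Since $\iota$ is fully faithful, for any \icat{} $\oC$ we have
\[ \CATI(\oC, (\lim^{\elax}_{\tA}\iota F)^{\leq 1}) \simeq \CATIT(\oC, \lim^{\elax}_{\tA} \iota F) \simeq \Nat^{\elax}_{\tA,\CATIT}(\underline{\oC}, \iota F) \simeq \Nat^{\elax}_{\tA,\CATI}(\underline{\oC}, F).\]
The last equivalence holds because $\iota_{*}$ is fully faithful on $\FUN(\tA,\blank)^{\elax}$. This follows from \cref{lem:lff on Funlax}: the inclusion is locally fully faithful, and it is fully faithful on mapping \icats{} because the componentwise condition is automatic. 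Hence $(\lim^{\elax}\iota F)^{\leq 1}$ is the $E$-lax limit in $\CATI$. A cleaner route is to observe directly that if $\lim^{\elax}_{\tA}\iota F$ is already an \icat{}, then it is the limit in $\CATI$ by full faithfulness of $\iota$. I would use this second route.

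For the second step, \cref{prop:laxlimCATIT} gives $\lim^{\elax}_{\tA}\iota F \simeq \FUN^{E\dcoc}_{/\tA^{\veop}}(\tA^{\veop}, \tE_{(0,j)})$. Here $\tE_{(0,j)}$ is the $\ve$-fibration for $\iota F$, which is 1-fibred because the fibres are \icats{}. By the Observation preceding \cref{prop:laxlimCATIT}, this \itcat{} is an \icat{}: any 2-morphism is componentwise a 2-morphism over an equivalence in a fibration where all 2-morphisms are $j$-cartesian, and so it is invertible. Therefore this \itcat{} coincides with its underlying \icat{} $\Fun^{E\dcoc}_{/\tA^{\veop}}(\tA^{\veop},\tE_{(0,j)})$. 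The oplax case is identical, using the $(1,j)$-fibrations and $\Fun^{E\dcart}$.

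The only point needing care is the compatibility between limits in $\CATI$ and in $\CATIT$, that is, the full faithfulness of $\Nat^{\elax}_{\tA,\CATI} \to \Nat^{\elax}_{\tA,\CATIT}$ after applying $\iota$. This is precisely what \cref{lem:lff on Funlax} supplies, since $\iota$ is fully faithful on mapping \icats{}.
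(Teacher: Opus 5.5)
Your proposal is correct and is essentially the argument the paper leaves implicit: apply \cref{prop:laxlimCATIT} to $F$ composed with the full inclusion $\CATI \hookrightarrow \CATIT$, then use the observation preceding it, which says $\FUN^{E\dcoc}_{/\tA}(\tA,\tE)$ is already an \icat{} when $\tE$ is 1-fibred. Full faithfulness of the inclusion, together with that of the induced functor on $E$-lax transformations from \cref{lem:lff on Funlax}, then identifies this \icat{} with the $E$-lax limit in $\CATI$, and the oplax case is dual.
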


\begin{remark}
  The fibrational description of (op)lax limits in $\CATI$ from
  \cref{cor:laxlimcat} is taken as a definition in \cite[\S
  B.6]{AMGR}, where these are called \emph{left-} and \emph{right-lax}
  limits; it thus follows from \cref{cor:laxlimcat} that these agree
  with other notions of lax limits in the literature.
\end{remark}

We also note the specialization of our results to ordinary
(co)limits in $\CATIT$ indexed over \icats{}:
\begin{corollary}
  Suppose we have a functor $F \colon \oA \to \CATIT$, where $\oA$ is
  an \icat{}, with associated
  $0$-fibration  $\tE_{0} \to \oA$ and 1-fibration $\tE_{1}
  \to \oA^{\op}$.
  \begin{enumerate}[(i)]
  \item The colimit of $F$
  can be described as
  \[ \colim_{\oA} F \simeq \Ldec{\tEn_{i}}\]
  for $i = 0,1$, where $\Ldec{\tEn_{i}}$ denotes the localization
  that inverts all (co)cartesian $1$-morphisms.
\item The limit of $F$ can be described as
  \[ \lim_{\oA} F \simeq \FUN_{/\oA}^{\coc}(\oA, \tE_{0})
    \simeq \FUN_{/\oA^{\op}}^{\cart}(\oA^{\op}, \tE_{1}),\] \ie{} as the
  \itcats{} of (co)cartesian sections of these fibrations. \qed
  \end{enumerate}
\end{corollary}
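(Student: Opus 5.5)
The statement is a corollary specializing \cref{prop:laxcolimloc} and \cref{prop:laxlimCATIT} to the case where the marking is maximal, $E = \oA^{\sharp}$, and the base is an \icat{} $\oA$. I will use the definition of strong (co)limits: these are exactly the $E$-lax (co)limits for the maximal marking, which coincide with the $E$-oplax ones. Since $\oA$ is an \icat{}, we have $\oA^{\co} \simeq \oA$ and $\oA^{\coop} \simeq \oA^{\op}$, so a $(0,j)$-fibration over $\oA^{\veop}$ is just a $0$-fibration over $\oA$ for either $j$. Likewise, a $(1,j)$-fibration is a $1$-fibration over $\oA^{\op}$. In particular $\tE_{(0,0)} = \tE_{(0,1)} = \tE_{0}$ and $\tE_{(1,0)} = \tE_{(1,1)} = \tE_{1}$.

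For (i), apply \cref{prop:laxcolimloc} with $E$ maximal. The lax version gives $\colim_{\oA} F \simeq \Ldec{\tEn_{0}|_{E}}$, where we invert all cocartesian $1$-morphisms and all $j$-cartesian 2-morphisms. Every 2-morphism of $\tE_0$ lies over an identity 2-morphism of the \icat{} $\oA$. A $j$-cartesian 2-morphism over an equivalence is invertible, so inverting these adds nothing. Hence the localization is just $\Ldec{\tEn_{0}}$, which inverts the cocartesian morphisms. The oplax version with $\tE_{1}$ gives the same object, since for maximal marking the oplax colimit is again the strong colimit.

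For (ii), apply \cref{prop:laxlimCATIT} with $E$ maximal in the same way. The condition ``preserve cocartesian morphisms over $E$ and $j$-cartesian 2-morphisms'' on sections $\oA \to \tE_{0}$ reduces to preserving all cocartesian morphisms. The 2-morphism condition is vacuous, because $\oA$ has only invertible 2-morphisms and sections send these to invertible 2-morphisms. So the limit is $\FUN^{\coc}_{/\oA}(\oA,\tE_0)$, and dually $\FUN^{\cart}_{/\oA^{\op}}(\oA^{\op},\tE_1)$.

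The only point needing care is checking that the $j$-cartesian 2-morphisms in $\tE_{i}$ over an \icat{} are all invertible. This holds because they lie over identities, and a cartesian lift of an identity is an equivalence by uniqueness of lifts (\cref{obs:cart lifts unique}). Everything else is direct specialization.
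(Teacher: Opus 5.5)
Your proposal is correct and is essentially the argument the paper intends: the corollary is stated without proof as the specialization of \cref{prop:laxcolimloc} and \cref{prop:laxlimCATIT} to the maximal marking over an \icat{} base. There the cartesian 2-morphisms are automatically invertible and the 2-morphism condition on sections is vacuous, as you observe. You do not actually need the identification $\tE_{(0,0)} \simeq \tE_{(0,1)}$, since taking $\ve = (0,1)$ and $\ve = (1,0)$ already produces $\tE_{0}$ and $\tE_{1}$.
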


Using \cref{propn:lax tr DCATIT fib}, we can similarly identify
partially (op)lax (co)limits in $\DCATIT$:
\begin{cor}\label{cor:lax lim colim DCat2}
  Let $(\tA,E)$ be a marked \itcat{} and consider a functor
  $F \colon\tA \to \DCATIT$ with corresponding decorated
  $\ve$-fibration $\pi \colon\tEd_{\ve} \to \tc{A}^{\sharp\sharp,\veop}$.
  \begin{enumerate}
  \item The $E$-(op)lax colimits of $F$ in $\DCATIT$ can be described as the pushouts
  \[
    \begin{tikzcd}
     \tEn_{(0,j)}|_{E} \ar[r] \ar[d] & \Ld(\tEn_{(0,j)}|_{E})^{\flat\flat}  \ar[d] \\
     \tEd_{(0,j)}|_{E} \ar[r] & \colim^{\elax}_{\tA}F,
   \end{tikzcd}
   \qquad
    \begin{tikzcd}
     \tEn_{(1,j)}|_{E} \ar[r] \ar[d] & \Ld(\tEn_{(1,j)}|_{E})^{\flat\flat}  \ar[d] \\
     \tEd_{(1,j)}|_{E} \ar[r] & \colim^{\eoplax}_{\tA}F,
    \end{tikzcd}   
  \]
  where $\tEd_{\ve}|_{E}$ denotes the pullback
  $\tEd \times_{\tA^{\sharp\sharp,\veop}} (\tA^{\veop},E)^{\sharp}$. In other words, we invert the $i$-cartesian morphisms over $E$ and all $j$-cartesian 2-morphisms in $\tE$, and equip this with the decoration induced by $\tEd_{\ve}|_{E}$.
\item The $E$-(op)lax limits of $F$ in $\DCATIT$ can be described as
  \[
    \begin{split}
      \lim^{\elax}_{\tA} F & \simeq \DFUN_{/(\tA, E)^{\sharp}}^{E\dcoc}((\tA, E)^{\sharp}, \tEd_{(0,1)}|_{E}) \\
                           & \simeq \DFUN_{/(\tA^{\co}, E)^{\sharp}}^{E\dcoc}((\tA^{\co}, E)^{\sharp}, \tEd_{(0,0)}|_{E}) \\
      \lim^{\eoplax}_{\tA} F & \simeq \DFUN_{/(\tA^{\op}, E)^{\sharp}}^{E\dcart}((\tA^{\op}, E)^{\sharp}, \tEd_{(1,0)}|_{E}) \\
                           & \simeq \DFUN_{/(\tA^{\coop},E)^{\sharp}}^{E\dcart}((\tA^{\coop}, E)^{\sharp}, \tEd_{(1,1)}|_{E}),
    \end{split}
  \]
  where the right-hand side denotes the decorated full sub-\itcat{} of \[\DFUN_{/(\tA^{\veop}, E)^{\sharp}}((\tA^{\veop}, E)^{\sharp}, \tEd_{\ve}|_{E})\] spanned by the sections that take 1-morphisms in $E$ to $i$-cartesian morphisms and all 2-morphisms to $j$-cartesian 2-morphisms in $\tE_{\ve}$.
  \end{enumerate}
\end{cor}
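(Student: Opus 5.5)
We will follow the proofs of \cref{prop:laxcolimloc} and \cref{prop:laxlimCATIT}, now feeding in \cref{propn:lax tr DCATIT fib} in place of \cref{thm:elaxstr}. We treat $\ve=(0,1)$ in the lax case; the other variances follow the same way, or by passing to $(\blank)^{\op}$ and $(\blank)^{\co}$ as in \cref{lem:lax colim change variance}. The key identification is this: for a decorated \itcat{} $\tBd$, the straightening of the constant functor $\underline{\tBd}$ is the decorated $(0,1)$-fibration $\tBd \times \tA^{\sharp\sharp} \to \tA^{\sharp\sharp}$. Restricted to $E$, this is $\tBd \times (\tA,E)^{\sharp}$.

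For the colimit, \cref{propn:lax tr DCATIT fib} gives a natural equivalence
\[ \Nat^{\elax}_{\tA,\DCATIT}(F,\underline{\tBd}) \simeq \DFun^{E\dcoc}_{/(\tA,E)^{\sharp}}(\tEd|_{E}, \tBd\times(\tA,E)^{\sharp}). \]
First, a decorated functor over the base into a product is the same as a decorated functor $\tEd|_{E}\to\tBd$ together with the given map to $(\tA,E)^{\sharp}$. The latter is automatic, since $\tEd|_{E}\to(\tA,E)^{\sharp}$ is decorated.

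Second, we check which such functors preserve cocartesian morphisms over $E$ and cartesian 2-morphisms. In the product, a morphism over $E$ is cocartesian exactly when its $\tBd$-component is an equivalence, and a 2-morphism is cartesian exactly when its $\tBd$-component is invertible. So the condition says that the functor $\tE\to\tB$ inverts the decorations of $\tEn|_{E}$. By \cref{lem:localizationdecorations}, these are the decorated functors that factor uniquely through the pushout $\tEd|_E\amalg_{\tEn|_E}\Ld(\tEn|_E)^{\flat\flat}$. This pushout is formed in $\DCatIT$, with colimits computed as in \cref{obs:adjointDcatIT}.

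Third, this gives a natural equivalence of \icats{}
\[ \DFun(\text{pushout},\tBd)\simeq \Nat^{\elax}(F,\underline{\tBd}), \]
which is the universal property of the colimit. Here the mapping \icats{} of $\DCATIT$ are the \icats{} of decorated functors with all natural transformations. Naturality in $\tBd$ is inherited from \cref{propn:lax tr DCATIT fib}.

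For the limits, we take a test decorated \itcat{} $\tKd$ and compute
\begin{multline*}
\Nat^{\elax}(\underline{\tKd},F) \simeq \DFun^{E\dcoc}_{/(\tA,E)^\sharp}(\tKd\times(\tA,E)^\sharp,\tEd|_E) \\
\simeq \DFun(\tKd,\DFUN_{/(\tA,E)^\sharp}((\tA,E)^\sharp,\tEd|_E))
\end{multline*}
restricted to the appropriate full subcategory. The second equivalence uses the cartesian closure of $\DCatIT$ and the relative version of $\DFUN$.

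The step I expect to be the main obstacle is matching the preservation condition pointwise in $\tKd$. A 1-morphism $(k,e)$ of the product, with $e\in E$, is a composite of $(k,\id)$ and $(\id,e)$. Only the second factor is required to go to a cocartesian morphism, and a 2-morphism $(\id,\alpha)$ is required to go to a cartesian 2-morphism. Checking this cleanly needs care, because the decorations of the product are mixed: we must confirm that no extra condition is imposed on $(k,\id)$, and that cartesianness of 2-morphisms is detected after whiskering.

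Granting this, the condition cuts out exactly the sections that send $E$ to cocartesian morphisms and all 2-morphisms to cartesian ones. The full sub-\itcat{} of such sections carries the decoration induced by $\tEd|_E$. The $(0,0)$ variant is identical, using $\tA^{\co}$ in place of $\tA$.
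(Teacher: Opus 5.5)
Your proposal is correct and follows essentially the same route as the paper. Both arguments apply \cref{propn:lax tr DCATIT fib} with the constant functor straightened to a product fibration. For colimits, both identify the preservation condition as inverting the decorations of $\tEn|_{E}$, which yields the pushout. For limits, both transpose $\tKd \times (\tA,E)^{\sharp} \to \tEd|_{E}$ to a functor into the \itcat{} of sections.

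The paper simply asserts the pointwise matching you flag. Your observation settles it: in the product, the cocartesian morphisms over $E$ are exactly those with invertible $\tKd$-component, and cartesian 2-morphisms are closed under whiskering. Together these justify that step.
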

\begin{proof}
  We prove the statement for $(0,1)$-fibrations and abbreviate
  $\tEd := \tEd_{(0,1)}$. By \cref{propn:lax tr DCATIT fib}, we have a
  natural equivalence
  \[ \Nat^{\elax}_{\tA,\DCATIT}(F, \underline{\tCd}) \simeq
    \DFun^{E\dcoc}_{/(\tA,E)^{\sharp}}(\tEd|_{E}, \tCd \times
    (\tA,E)^{\sharp}).\] Here the right-hand side is the \icat{} of
  decorated functors over $(\tA,E)^{\sharp}$ that also preserve
  cocartesian morphisms over $E$ and cartesian 2-morphisms; we can
  identify these as the decorated functors $\tEd|_{E} \to \tCd$ that take
  the latter to equivalences, which shows that this copresheaf is
  corepresented by the pushout
  $\tEd|_{E} \amalg_{\tEn|_{E}} \Ld(\tEn|_{E})^{\flat\flat}$, as required.

  To identify the $E$-lax limit of $F$, we similarly consider the natural equivalence
  \[ \Nat^{\elax}_{\tA,\DCATIT}(\underline{\tCd}, F) \simeq
    \DFun^{E\dcoc}_{/(\tA,E)^{\sharp}}(\tCd \times (\tA,E)^{\sharp}, \tEd|_{E}).\]
  We can identify this as the \icat{} of decorated functors from
  $\tCd$ into \[\DFUN_{/(\tA,E)^{\sharp}}((\tA, E)^{\sharp}, \tEd|_{E})\]
  whose value at each object takes the morphisms in $E$ to cocartesian
  morphisms in $\tE$ and all 2-morphisms to cartesian 2-morphisms in
  $\tE$, as required.
\end{proof}

Finally, by combining our description of partially (op)lax (co)limits
in $\CATIT$ with the interpretation of weighted (co)limits as
partially (op)lax ones from the previous subsection, we also get
concrete descriptions of weighted (co)limits in $\CATIT$. For limits, 
\cref{propn:weighted limit as lax} gives:
\begin{corollary}\label{cor:wtlimcatit}
  For $W \colon \tA \to \CATI$ and $F \colon \tA \to \CATIT$, let
  \begin{itemize}
  \item $\tW \to \tA$ be the $(0,1)$-fibration for $W$,
  \item $\tW' \to \tA^{\op}$ be the $(1,0)$-fibration for $W$,
  \item $\tE \to \tA$ be the $(0,1)$-fibration for $F$,
  \item $\tE' \to \tA^{\op}$ be the $(1,0)$-fibration for $F$.
  \end{itemize}
  Then we have
  \[ \lim^{W}_{\tA} F \simeq \FUN^{\coc}_{/\tA}(\tW, \tE) \simeq
    \FUN^{\cart}_{/\tA^{\op}}(\tW', \tE').   \qed \]
\end{corollary}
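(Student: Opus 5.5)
The plan is to combine \cref{propn:weighted limit as lax} with \cref{prop:laxlimCATIT}. For the first equivalence, \cref{propn:weighted limit as lax} expresses $\lim^{W}_{\tA} F$ as the $C$-lax limit $\lim^{C\dlax}_{\tW} F\circ p$, where $p \colon \tW \to \tA$ is the $(0,1)$-fibration for $W$ and $C$ is its collection of cocartesian morphisms. We then apply \cref{prop:laxlimCATIT} to the marked \itcat{} $(\tW, C)$ and the functor $F \circ p \colon \tW \to \CATIT$. Since straightening is natural with respect to pullback (\cref{thm:str}), the $(0,1)$-fibration classifying $F\circ p$ is $p^{*}\tE = \tW \times_{\tA} \tE \to \tW$. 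Therefore
\[ \lim^{W}_{\tA} F \simeq \FUN^{C\dcoc}_{/\tW}(\tW, p^{*}\tE). \]

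Next we identify the right-hand side with $\FUN^{\coc}_{/\tA}(\tW,\tE)$ through the pullback adjunction. Sections of $p^{*}\tE \to \tW$ correspond to functors $\tW \to \tE$ over $\tA$, and this correspondence holds at the level of \itcats{} because a family $\tX \times \tW \to p^*\tE$ over $\tW$ is the same as $\tX \times \tW \to \tE$ over $\tA$. We must check that the preservation conditions agree under this identification. A morphism in $p^{*}\tE$ is cocartesian over $\tW$ exactly when its image in $\tE$ is $\tE\to\tA$-cocartesian, and the same holds for cartesian 2-morphisms; this follows from the pullback characterization of (co)cartesian morphisms, or from \cref{obs:epscofabs}. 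A section sends morphisms in $C$ to cocartesian morphisms and all 2-morphisms to cartesian ones precisely when the corresponding functor $\tW \to \tE$ preserves cocartesian 1-morphisms and cartesian 2-morphisms. Every 2-morphism of $\tW$ is cartesian here, since $W$ takes values in $\CATI$ and so $\tW$ is 1-fibred. This is exactly the condition defining $\FUN^{\coc}_{/\tA}(\tW, \tE)$ in \cref{def:FUNdcoc} with maximal marking.

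The second equivalence follows the same pattern. We use the oplax form of \cref{propn:weighted limit as lax}, which gives $\lim^{C'\doplax}_{\tW'^{\op}} F\circ q^{\op}$ with $q\colon \tW'\to\tA^{\op}$. We then apply the oplax case of \cref{prop:laxlimCATIT}, for which the relevant $(1,0)$-fibration of $F\circ q^{\op}$ over $(\tW'^{\op})^{\op} = \tW'$ is $q^{*}\tE'$. The same base-change identification then yields $\FUN^{\cart}_{/\tA^{\op}}(\tW', \tE')$.

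We expect the main obstacle to be the variance bookkeeping in this oplax case. We must confirm that the fibration of $F \circ q^{\op}$ of the variance that \cref{prop:laxlimCATIT} requires for oplax limits really lives over $\tW'$, and that it is the pullback of $\tE'$ along $q$. This is easy to get wrong by an $\op$, and the plan is to check it against \cref{not:vepsilon}. The remaining step, matching the decorations under base change, should be routine once the first case has been written out.
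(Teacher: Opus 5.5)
Your proposal is correct and is the paper's argument: the corollary is obtained by combining \cref{propn:weighted limit as lax} with the fibrational description of partially (op)lax limits in $\CATIT$ from \cref{prop:laxlimCATIT}. Your base-change identification of sections of $p^{*}\tE$ with functors $\tW \to \tE$ over $\tA$ (using that $\tW$ and $\tW'$ are 1-fibred) is the step the paper leaves implicit, and your variance bookkeeping in the oplax case, with the relevant fibration being $q^{*}\tE'$ over $\tW'$, is right.
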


Dually, \cref{cor:weightedcolimitaslax} gives the following:
\begin{corollary}\label{cor:wtcolimcatit}
  For $W \colon \tA^{\op} \to \CATI$ and $F \colon \tA \to \CATIT$, let
  \begin{itemize}
  \item $\tW \to \tA$ be the $(1,0)$-fibration for $W$,
  \item $\tW' \to \tA^{\op}$ be the $(0,1)$-fibration for $W$,
  \item $\tE \to \tA$ be the $(0,1)$-fibration for $F$,
  \item $\tE' \to \tA^{\op}$ be the $(1,0)$-fibration for $F$.
  \end{itemize}
  Then we have
  \[
    \begin{split}
      \colim^{W}_{\tA} F & \simeq \Ld(\tE^{\natural} \times_{\tA^{\sharp\sharp}} (\tW,C)^{\sharp}) \\
                         & \simeq \Ld(\tE'^{\natural} \times_{\tA^{\op,\sharp\sharp}} (\tW',C')^{\sharp}).
    \end{split}
  \]
  where $C$ consists of the cartesian morphisms in $\tW$ and $C'$ of
  the cocartesian morphisms in $\tW'$.
\end{corollary}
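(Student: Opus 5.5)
The first description follows by combining \cref{cor:weightedcolimitaslax} with \cref{prop:laxcolimloc}; the second uses the oplax half of both results in the same way.

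Let $p \colon \tW \to \tA$ be the $(1,0)$-fibration for $W$, with cartesian morphisms $C$. Then \cref{cor:weightedcolimitaslax} gives
\[ \colim^{W}_{\tA} F \simeq \colim^{C\dlax}_{\tW} F \circ p. \]
We compute the right-hand side with \cref{prop:laxcolimloc}, taking $\ve = (0,1)$. This requires the $(0,1)$-fibration classifying $F \circ p \colon \tW \to \CATIT$. Straightening is natural with respect to pullback (\cref{thm:str}), so this fibration is $\tE \times_{\tA} \tW \to \tW$.

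It remains to identify its cocartesian morphisms and cartesian 2-morphisms.
\begin{itemize}
\item In a pullback of a $(0,1)$-fibration, a 1-morphism is cocartesian exactly when its projection to $\tE$ is cocartesian, and a 2-morphism is cartesian exactly when its projection to $\tE$ is cartesian. So the canonical decoration is $\tEn \times_{\tA^{\sharp\sharp}} \tW^{\sharp\sharp}$.
\item Restricting the decorated 1-morphisms to those over $C$ gives $(\tEn \times_{\tA^{\sharp\sharp}} \tW^{\sharp\sharp})|_{C} \simeq \tEn \times_{\tA^{\sharp\sharp}} (\tW,C)^{\sharp}$.
\end{itemize}
\cref{prop:laxcolimloc} then says the $C$-lax colimit is $\Ld$ of this decorated \itcat{}. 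That is the first equivalence.

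For the second description, let $q \colon \tW' \to \tA^{\op}$ be the $(0,1)$-fibration for $W$, with cocartesian morphisms $C'$. \cref{cor:weightedcolimitaslax} gives
\[ \colim^{W}_{\tA} F \simeq \colim^{C'\doplax}_{\tW'^{\op}} F \circ q^{\op}. \]
The oplax case of \cref{prop:laxcolimloc} uses the $(1,j)$-fibration over $(\tW'^{\op})^{\op} = \tW'$. By naturality of straightening, this is the pullback $\tE' \times_{\tA^{\op}} \tW'$ of the $(1,0)$-fibration $\tE' \to \tA^{\op}$ for $F$. The same identification of decorations in a pullback gives $\tE'^{\natural} \times_{\tA^{\op,\sharp\sharp}} (\tW',C')^{\sharp}$, and localizing gives the second formula.

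The one step needing care is matching variances. We must check that the $(1,0)$-fibration for $F \circ q^{\op} \colon \tW'^{\op} \to \CATIT$ really lives over $\tW'$ and is the pullback along $q$, and that the marking $C'$ sits on the same side as the oplax convention in \cref{prop:laxcolimloc}. Once the conventions of \cref{not:vepsilon} are fixed, this is bookkeeping.
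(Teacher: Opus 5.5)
Your proof is correct and is essentially the paper's argument: the corollary comes from feeding the description of weighted colimits as partially (op)lax colimits over the fibrations of the weight (\cref{cor:weightedcolimitaslax}) into the fibrational description of partially (op)lax colimits in $\CATIT$ (\cref{prop:laxcolimloc}). You identify the fibration for $F \circ p$ (resp.\ $F \circ q^{\op}$) as a pullback by naturality of straightening, and your check that the (co)cartesian decorations of the pulled-back fibration are pulled back from $\tE^{\natural}$ is exactly the bookkeeping the paper leaves implicit.
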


\begin{remark}
  In the special case where $F$ and $W$ are diagrams of ordinary
  categories, this formula for $\Cat$-weighted colimits in $\Cat$
  appears in \cite{Lambert}.
\end{remark}

Using \cref{cor:lax lim colim DCat2}, we get a similar description of weighted colimits in $\DCATIT$:
\begin{corollary}\label{cor:wtcolimcatit}
  For $W \colon \tA^{\op} \to \CATI$ and $F \colon \tA \to \DCATIT$, let
  \begin{itemize}
  \item $\tW \to \tA$ be the $(1,0)$-fibration for $W$,
  \item $\tW' \to \tA^{\op}$ be the $(0,1)$-fibration for $W$,
  \item $\tEd \to \tA^{\sharp\sharp}$ be the decorated $(0,1)$-fibration for $F$,
  \item $\tE'^{\diamond} \to \tA^{\op,\sharp\sharp}$ be the decorated $(1,0)$-fibration for $F$.
  \end{itemize}
  Then the $W$-weighted colimit of $F$ is given by the pushouts
  \[
    \begin{tikzcd}
     \tEn \times_{\tA^{\sharp\sharp}} (\tW,C)^{\sharp} \ar[r] \ar[d] & \Ld(\tEn \times_{\tA^{\sharp\sharp}} (\tW,C)^{\sharp})^{\flat\flat}  \ar[d] \\
     \tEd\times_{\tA^{\sharp\sharp}} (\tW, C)^{\sharp} \ar[r] & \colim^{W}_{\tA}F,
   \end{tikzcd}\]
   \[
    \begin{tikzcd}
     \tE'^{\natural} \times_{\tA^{\op,\sharp\sharp}} (\tW',C')^{\sharp} \ar[r] \ar[d] & \Ld(\tE'^{\natural} \times_{\tA^{\op,\sharp\sharp}} (\tW',C')^{\sharp})^{\flat\flat}  \ar[d] \\
     \tE'^{\diamond}\times_{\tA^{\op,\sharp\sharp}} (\tW',C')^{\sharp} \ar[r] & \colim^{W}_{\tA}F,
   \end{tikzcd}   
   \]
  where $C$ consists of the cartesian morphisms in $\tW$ and $C'$ of
  the cocartesian morphisms in $\tW'$. \qed
\end{corollary}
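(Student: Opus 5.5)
The natural approach is to combine \cref{cor:weightedcolimitaslax} with \cref{cor:lax lim colim DCat2}, in the same way \cref{cor:wtcolimcatit} was deduced from \cref{prop:laxcolimloc}. First, \cref{cor:weightedcolimitaslax} applied in the \itcat{} $\tB = \DCATIT$ gives
\[ \colim^{W}_{\tA} F \simeq \colim^{C\dlax}_{\tW} F \circ p \simeq \colim^{C'\doplax}_{\tW'^{\op}} F \circ q^{\op}, \]
where $p \colon \tW \to \tA$ and $q \colon \tW' \to \tA^{\op}$ are the fibrations for $W$. This holds in any \itcat{}, so no hypothesis on $\DCATIT$ is needed beyond the existence of the right-hand side. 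That existence will follow from the explicit construction in the next step.

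Next, compute the partially (op)lax colimit of $F \circ p \colon \tW \to \DCATIT$ with \cref{cor:lax lim colim DCat2}(1), taking the marking $(\tW, C)$. By naturality of decorated straightening (\cref{thm: dec straighten}) under pullback, the decorated $(0,1)$-fibration for $F \circ p$ is the pullback $\tEd \times_{\tA^{\sharp\sharp}} \tW^{\sharp\sharp} \to \tW^{\sharp\sharp}$. Its canonical decoration by cocartesian morphisms and cartesian 2-morphisms is $\tEn \times_{\tA^{\sharp\sharp}} \tW^{\sharp\sharp}$. Restricting to $C$ as in the notation $\tEd|_{C}$ gives
\[ (\tEd \times_{\tA^{\sharp\sharp}} \tW^{\sharp\sharp}) \times_{\tW^{\sharp\sharp}} (\tW,C)^{\sharp} \simeq \tEd \times_{\tA^{\sharp\sharp}} (\tW,C)^{\sharp}, \]
and the same holds with $\tEn$ in place of $\tEd$. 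Substituting these into the pushout square of \cref{cor:lax lim colim DCat2}(1) (the case $i=0$) yields the first square.

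The oplax description is obtained the same way. Apply \cref{cor:lax lim colim DCat2}(1) (the case $i=1$) to $F\circ q^{\op}$ over $(\tW'^{\op},C')$. This requires the decorated $(1,j)$-fibration for $F\circ q^{\op}$, which is the pullback of $\tE'^{\diamond} \to \tA^{\op,\sharp\sharp}$ along $q$, with the same rewriting of the restriction to $C'$.

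The main point to check, and the one I expect to be the obstacle, is that pullback of decorated fibrations really corresponds to precomposition under \cref{thm: dec straighten}. In particular, the cocartesian decoration $\tEn$ must pull back to the cocartesian decoration of the pulled-back fibration, and the variances must match: $(\tW,C)$ is a base of a $(0,1)$-fibration even though $\tW \to \tA$ is itself a $(1,0)$-fibration. The plan is to get the naturality from the construction in \cref{thm: dec straighten}, which reduces to ordinary straightening via the pullback in \cref{propn:DFib pb from 1-cat}, where naturality is part of \cref{thm:str}. Cocartesian morphisms and cartesian 2-morphisms are stable under base change along any functor, so the decorations agree.
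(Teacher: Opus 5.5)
Your proposal is correct and is the argument the paper intends: combine \cref{cor:weightedcolimitaslax}, which holds in any target and so in $\DCATIT$, with \cref{cor:lax lim colim DCat2}(1). Apply the latter to $F\circ p$ over $(\tW,C)$ and to $F\circ q^{\op}$ over $(\tW'^{\op},C')$, identifying the relevant decorated fibrations as the pullbacks of $\tEd$ and $\tE'^{\diamond}$. The step you flag as the main obstacle is not really one: naturality of decorated straightening under pullback is already part of the statement of \cref{thm: dec straighten}, and stability of the (co)cartesian decorations under base change is the closure of partial fibrations under base change from \cref{obs:epscofabs}.
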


\subsection{Existence and preservation of (co)limits}
\label{sec:stuff}

In this section we collect a few useful results giving criteria for partially (op)lax (co)limits to exist and be preserved.

\begin{proposition}\label{prop:adjpreservelimtis}
  Let $L \colon \tC \llra \tD \colon R$ be an adjunction of
  \itcats{}. Then the left adjoint $L$ preserves all partially (op)lax
  colimits that happen to exists in $\tC$. Dually, the right adjoint
  $R$ preserves all partially (op)lax limits that happen to exist in
  $\tD$.
\end{proposition}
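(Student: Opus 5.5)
The plan is a Yoneda argument, using that $L_{*}$ and $R_{*}$ on partially (op)lax functor \itcats{} are again adjoint. We treat the $E$-lax colimit case; the other three variances follow in the same way, or by \cref{lem:lax colim change variance}, which swaps between them by passing to $(\blank)^{\op}$, $(\blank)^{\co}$ and $(\blank)^{\coop}$. Note that passing to $(\blank)^{\op}$ also exchanges $L$ and $R$, so the statement for $R$ and limits follows from the one for $L$ and colimits.

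Let $(\tI,E)$ be a marked \itcat{} and $F \colon \tI \to \tC$ a diagram whose $E$-lax colimit $x$ exists. For $d \in \tD$ we want a natural equivalence
\[ \tD(Lx, d) \simeq \Nat^{\elax}_{\tI,\tD}(LF, \underline{d}), \]
compatible with the canonical cocone $LF \to \underline{Lx}$. We obtain it as a chain:
\[ \tD(Lx,d) \simeq \tC(x, Rd) \simeq \Nat^{\elax}_{\tI,\tC}(F, \underline{Rd}) \simeq \Nat^{\elax}_{\tI,\tD}(LF, \underline{d}). \]
The first equivalence is the adjunction. The second is the universal property of $x$. The third is the only real step.

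For the third step, we use that $\FUN(\tI,\blank)^{\elax}$ sends adjunctions to adjunctions. \cref{propn:prod pres gives 2-fun} and the example following it establish this for $\FUN(\tA,\blank)^{\plax}$. The partially lax version works identically: the functor $\FUN(\tI,\blank)^{\elax} = \Um\MFUN((\tI,E),(\blank)^{\sharp})^{\lax}$ preserves products, since it is a right adjoint, namely of $\blank \otimes_{\flat,E} \tI$. It also receives a natural transformation from the identity, given by the constant diagrams. Hence $L_{*} \dashv R_{*}$ as functors between $\FUN(\tI,\tC)^{\elax}$ and $\FUN(\tI,\tD)^{\elax}$, and this gives
\[ \Nat^{\elax}_{\tI,\tD}(LF, \underline{d}) \simeq \Nat^{\elax}_{\tI,\tC}(F, R_{*}\underline{d}). \]
Finally, $R_{*}\underline{d} \simeq \underline{Rd}$, because postcomposition commutes with the constant diagram functor.

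The main obstacle is checking that the composite equivalence is the one induced by the canonical cocone $LF \to \underline{Lx}$. Only then is $Lx$ the $E$-lax colimit in the sense of \cref{def:elaxlimit} via the natural map, rather than merely abstractly representing the right presheaf. To handle this, we use that all the equivalences are natural in $d$. By the Yoneda lemma it then suffices to evaluate at $d = Lx$ and the identity. Tracing the identity back through the chain gives the unit $x \to RLx$. Under the adjunction $L_{*} \dashv R_{*}$ this corresponds to $L$ applied to the universal cocone, using that the unit of $L_{*} \dashv R_{*}$ is obtained by applying $\tilde{R}$ to the unit $\eta$. Since the statement only asserts that $Lx$ is an $E$-lax colimit, natural representability already suffices, so this compatibility check is only needed to identify the comparison map.
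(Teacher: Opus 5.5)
Your proposal is correct and is essentially the paper's proof. The paper also uses a marked variant of \cref{propn:prod pres gives 2-fun} to get the adjunction $L_{*} \dashv R_{*}$ on the partially (op)lax functor \itcats{}, concludes from the same chain $\Nat(LF,\underline{d}) \simeq \Nat(F,\underline{Rd}) \simeq \tC(x,Rd) \simeq \tD(Lx,d)$, and treats the limit case as formally dual; your extra check that this equivalence is induced by $L$ of the universal cocone is a refinement the paper leaves implicit.
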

\begin{proof}
  We give the proof for the left adjoint; the remaining case is formally dual. 
  Observe that by a marked variant of \cref{propn:prod pres gives 2-fun}, 
  for every marked \itcat{} $(\tI,E)$ the functor $\FUN(\tI,\blank)^{\elaxoplax}$ preserves adjunctions. Thus postcomposition with the adjunction $L \dashv R$ yields an adjunction
  \[
  L_* \colon \FUN(\tI, \tC)^{\elaxoplax} \llra \FUN(\tI, \tD)^{\elaxoplax} \colon R_*.
  \]
  Given a functor $F \colon \tI \to \tC$ admitting an $E$-(op)lax colimit, we can then produce the following chain of natural equivalences
  \[
    \begin{split}
      \Nat^{\elaxoplax}_{\tI,\tD}(L_*F,\underline{d})
      & \simeq \Nat^{\elaxoplax}_{\tI,\tC}(F,\underline{Rd}) \\
      & \simeq \tC\left(\colim^{\elaxoplax}_{\tC}F,Rd\right) \\
      & \simeq \tD\left(L\left(\colim^{\elaxoplax}_{\tC}F\right),d\right),
    \end{split}
  \]
  which precisely shows that $L(\colim^{\elaxoplax}_{\tC}F)$ has the
  universal property of the colimit $\colim^{\elaxoplax}_{\tC}L_{*}F$, as
  required.
\end{proof}

\begin{proposition}\label{prop:colimfunctorcat}
  Let $(\tI,E)$ be a marked \itcat{} and $\tC$ an \itcat{} that admits
  $E$-lax (co)limits over $\tI$.
  \begin{enumerate}[(i)]
  \item For any marked \itcat{} $(\tS, T)$, the \itcat{}
    $\FUN(\tS,\tC)^{T\doplax}$ admits $E$-lax (co)limits for all
    functors $\tI \to \FUN(\tS,\tC)^{T\doplax}$ that take the
    morphisms in $E$ to strong transformations, and these are computed pointwise in $\tC$.
  \item These (co)limits are preserved by the functor
    \[f^{*} \colon \FUN(\tS,\tC)^{T\doplax} \to \FUN(\tS',
      \tC)^{T'\doplax}\] for any functor of marked \itcats{}
    $f \colon (\tS',T') \to (\tS,T)$.
  \item Dually, if $\tC$ has $E$-oplax (co)limits over $\tI$,then
    $\FUN(\tS,\tC)^{T\dlax}$ admits partially \emph{oplax} (co)limits
    of such functors, and these are again computed pointwise and
    preserved by all restriction functors.
  \end{enumerate}
\end{proposition}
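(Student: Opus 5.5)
We argue directly from the definition of $E$-lax (co)limits, reducing everything to adjunctions with constant-diagram functors as in \cref{rem:limfunctor}. Treat the colimit case of (i); limits are dual. Let $\tC_{1} := \FUN(\tS,\tC)^{T\doplax}$. A functor $\Phi \colon \tI \to \tC_{1}$ sending $E$ to strong transformations is, by \cref{lax oplax reverse marked}, adjoint to a functor $\Phi' \colon \tS \to \FUN(\tI,\tC)^{E\dlax}$ sending $T$ to strong transformations. More precisely, $\FUN(\tI, \tC_{1})^{E\dlax}$ restricted to such functors is identified (via \cref{marked lax hom adj} and \cref{lax oplax reverse marked}) with a full sub-\itcat{} of $\FUN(\tS, \FUN(\tI,\tC)^{E\dlax})^{T\doplax}$. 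Under this identification, the constant-diagram functor $\tC_{1} \to \FUN(\tI,\tC_{1})^{E\dlax}$ corresponds to postcomposition with $\underline{(-)} \colon \tC \to \FUN(\tI,\tC)^{E\dlax}$.

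Since $\tC$ admits $E$-lax colimits over $\tI$, \cref{rem:limfunctor} gives an adjunction $\colim^{\elax}_{\tI} \dashv \underline{(-)}$. By the marked version of \cref{propn:prod pres gives 2-fun}, applied to $\FUN(\tS,\blank)^{T\doplax}$ as in the proof of \cref{prop:adjpreservelimtis}, postcomposition gives an adjunction
\[ (\colim^{\elax}_{\tI})_{*} \dashv \underline{(-)}_{*} \]
between $\FUN(\tS,\FUN(\tI,\tC)^{E\dlax})^{T\doplax}$ and $\FUN(\tS,\tC)^{T\doplax}$. We then have to check two things. First, that $(\colim^{\elax}_{\tI})_{*}$ sends $\Phi'$ (which takes $T$ to strong transformations) into $\tC_{1}$; this is automatic, since the codomain is already $\FUN(\tS,\tC)^{T\doplax}$. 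Second, that the adjunction restricts to the full sub-\itcat{} described above; this holds because restricting along a full sub-\itcat{} on the left preserves the representing objects. Evaluating the resulting equivalence
\[ \Nat(\Phi, \underline{G}) \simeq \tC_{1}\bigl(\colim^{\elax}_{\tI}\circ \Phi', G\bigr) \]
gives the $E$-lax colimit, and it is pointwise: its value at $s$ is $\colim^{\elax}_{\tI}\Phi(\blank)(s)$.

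For (ii), restriction $f^{*}$ commutes with postcomposition by $\colim^{\elax}_{\tI}$, since both act on different variables. Thus the pointwise formula is preserved and $f^{*}$ preserves these colimits. Part (iii) follows by applying (i)--(ii) to $\tC^{\co}$ and using \cref{lem:lax colim change variance} together with $\FUN(\tS,\tC)^{T\dlax,\co}\simeq\FUN(\tS^{\co},\tC^{\co})^{T\doplax}$.

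The main obstacle is the first step: making the identification of $E$-lax transformations between functors $\tI\to\tC_{1}$ with $T$-oplax transformations of functors $\tS\to\FUN(\tI,\tC)^{E\dlax}$ precise and compatible with constant diagrams. This requires the marked commutation equivalence of \cref{marked lax hom adj} with $(\tI,E)$ and $(\tS,T)$. The hypothesis that $E$ goes to strong transformations is exactly what makes that correspondence land in the right full sub-\itcat{}, so the bookkeeping of markings on both sides must be checked carefully.
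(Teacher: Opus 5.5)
Your proposal is correct and follows the paper's proof essentially step for step. In both, the adjunction between the constant-diagram functor and $\colim^{\elax}_{\tI}$ (the paper uses $\lim^{\elax}_{\tI}$) is pushed through $\FUN(\tS,\blank)^{T\doplax}$. It is then restricted to the full sub-\itcat{} of functors sending $T$ to strong transformations, which contains the image of the constant-diagram functor. Finally, this sub-\itcat{} is identified, via the marked commutation equivalence, with functors $\tI \to \FUN(\tS,\tC)^{T\doplax}$ sending $E$ to strong transformations, and (ii) follows from naturality in $(\tS,T)$.

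The differences are cosmetic: you treat colimits rather than limits, and you spell out the $(\blank)^{\co}$-duality for (iii), which the paper leaves as ``formally dual''.
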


\begin{proof}
  We will only deal with lax limits, since the cases of oplax limits
  and (op)lax colimits are formally dual. Our assumptions guarantee
  the existence of an adjunction
  \[ L = \text{const} : \tC \rightleftarrows \FUN(\tI, \tC)^{\elax} :
    R =\lim^{\elax}_{\tI}.\]
  Since $\FUN(\tS,\blank)^{T\doplax}$ preserves adjunctions by \cref{propn:prod pres gives 2-fun},
  this induces an adjunction
  \[
  L_* \colon \FUN(\tS,\tC)^{T\doplax} \llra \FUN(\tS,\FUN(\tI,\tC)^{\elaxoplax})^{T\doplax} \colon R_*
\]
where the left adjoint is given by post-composition with the constant
diagram functor and the right adjoint is given by postcomposition
along the $E$-(op)lax limit functor. Let
$\FUN'(\tS,\FUN(\tI,\tC)^{\elax})^{T\doplax}$ denote the full
sub-\itcat{} on functors that take the morphisms in $T$ to strong
transformations; the constant diagram functor $L_{*}$ factors through
this, so we can restrict the adjunction to this
sub-\itcat{}. Moreover, it follows from \cref{marked lax hom adj} that
we can identify this with the \itcat{}
$\FUN'(\tI,\FUN(\tS,\tC)^{T\doplax})^{\elax}$ of functors that take
the morphisms in $E$ to strong natural transformations. We thus have
an adjunction
\[ \FUN(\tS,\tC)^{T\doplax} \leftrightarrows
  \FUN'(\tI,\FUN(\tS,\tC)^{T\doplax})^{E\dlax},\]
where the left adjoint is still the constant diagram functor. The
right adjoint, which by construction computes $E$-lax limits over
$\tI$ objectwise in $\tS$, therefore gives $E$-lax limits in 
$\FUN(\tS,\tC)^{T\doplax}$ for all functors in
$\FUN'(\tI,\FUN(\tS,\tC)^{T\doplax})^{E\dlax}$, which proves (i). Now (ii) follows since this construction is clearly natural in $(\tS,T)$.
\end{proof}

We note the following useful special case, where the marking is maximal:
\begin{cor}\label{cor:colim strong functor cat}
  Let $(\tI,E)$ be a marked \itcat{} and $\tC$ an \itcat{} that admits
  $E$-(op)lax (co)limits over $\tI$.
  \begin{enumerate}[(i)]
  \item For any \itcat{} $\tS$, the \itcat{} $\FUN(\tS, \tC)$ admits
    $E$-lax (co)limits for all functors $\tI \to \FUN(\tS, \tC)$, and
    these are computed pointwise in $\tC$.
  \item These (co)limits are preserved by the functor
    \[f^{*} \colon \FUN(\tS,\tC) \to \FUN(\tS',
      \tC)\] for any functor of \itcats{}
    $f \colon \tS' \to \tS$.
  \end{enumerate}
\end{cor}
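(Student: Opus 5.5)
This corollary is the special case of \cref{prop:colimfunctorcat} with $T$ the maximal marking on $\tS$ (and $T'$ maximal on $\tS'$), so the plan is to specialize that result and check its hypotheses.

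First I identify the functor \itcats{}. With the maximal marking, $\FUN(\tS,\tC)^{T\doplax}$ and $\FUN(\tS,\tC)^{T\dlax}$ both have as morphisms exactly the transformations whose (op)lax naturality squares all commute. By \cref{obs:max marked gray is cart}, $[1]\otimes_{\flat,T}\tS \simeq [1]\times\tS$, and similarly for the oplax version. Hence both are just $\FUN(\tS,\tC)$, with strong transformations as morphisms.

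Next I check the extra hypothesis in \cref{prop:colimfunctorcat}(i): the diagram $\tI \to \FUN(\tS,\tC)^{T\doplax}$ must send morphisms in $E$ to strong transformations. Here every morphism of $\FUN(\tS,\tC)$ is a strong transformation, so this holds automatically for every functor $\tI \to \FUN(\tS,\tC)$. Part (i) of the proposition then gives existence of $E$-lax (co)limits, computed pointwise.

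For $E$-oplax (co)limits I use part (iii) of \cref{prop:colimfunctorcat} in the same way. There $\FUN(\tS,\tC)^{T\dlax}$ with $T$ maximal is again $\FUN(\tS,\tC)$, and the strongness hypothesis is again vacuous.

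For (ii), any functor $f\colon\tS'\to\tS$ is a marked functor $\tS'^{\sharp}\to\tS^{\sharp}$. So \cref{prop:colimfunctorcat}(ii) shows that $f^{*}$ preserves these (co)limits.

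The only step needing care is the identification $\FUN(\tS,\tC)^{T\dplax}\simeq\FUN(\tS,\tC)$ for $T$ maximal, and it is immediate from the definitions via \cref{obs:max marked gray is cart}.
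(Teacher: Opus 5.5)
Your proposal is correct and matches the paper's argument: the paper states this corollary, without separate proof, as the special case of \cref{prop:colimfunctorcat} with maximal markings on $\tS$ and $\tS'$. In that case $\FUN(\tS,\tC)^{T\dplax} \simeq \FUN(\tS,\tC)$ by \cref{obs:max marked gray is cart}, the strongness condition on morphisms in $E$ is vacuous, and parts (i)--(iii) of the proposition give the lax case, the oplax case, and restriction along arbitrary $f$ exactly as you describe.
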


\begin{observation}  
  Suppose $\tC$ is a (co)complete \itcat{}. Then \cref{cor:colim
    strong functor cat} implies that for any $\tS$, the
  \itcat{} $\FUN(\tS, \tC)$ is also (co)complete, with its partially
  (op)lax colimits computed pointwise in $\tC$. Moreover, taking $f$
  in \cref{prop:colimfunctorcat}(ii) to be the map
  $(\tS,T) \to \tS^{\sharp}$, we see that the inclusion
  \[ \FUN(\tS, \tC) \to \FUN(\tS, \tC)^{T\doplax}\] preserves all
  partially lax (co)limits, and so is (co)continuous (though the
  target does not necessarily admit \emph{all} (co)limits). In fact,
  we will see later in \cref{cor:delaxify} that this functor has a right or a
  left adjoint when $\tC$ is  complete or cocomplete, respectively.
\end{observation}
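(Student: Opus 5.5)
The plan is to read both claims off the results just established, mostly \cref{prop:colimfunctorcat} and \cref{cor:colim strong functor cat}, with one extra step for the word ``(co)continuous''.

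First, for every small marked \itcat{} $(\tI,E)$, the \itcat{} $\tC$ admits $E$-lax and $E$-oplax (co)limits over $\tI$, because it is (co)complete (\cref{cor:cocomplete}). Applying \cref{cor:colim strong functor cat}(i) to each such $(\tI,E)$ shows that $\FUN(\tS,\tC)$ admits all small partially lax (co)limits, and that these are computed pointwise. Applying its lax/oplax dual, obtained from \cref{prop:colimfunctorcat}(iii) with $T$ maximal, gives the same for partially oplax (co)limits. So $\FUN(\tS,\tC)$ is (co)complete in the sense of \cref{def:conti}.

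For the inclusion, I would apply \cref{prop:colimfunctorcat}(ii) to the functor of marked \itcats{} $f\colon(\tS,T)\to\tS^{\sharp}$ whose underlying functor is the identity; it is a marked functor because $T$ is contained in the maximal marking. Two identifications are needed.
\begin{itemize}
\item By \cref{obs:max marked gray is cart}, the Gray tensor marked by all of $\tS$ is the cartesian product, so $\FUN(\tS,\tC)^{\sharp\doplax}\simeq\FUN(\tS,\tC)$. Under this, $f^{*}$ is the locally full inclusion $\FUN(\tS,\tC)\hookrightarrow\FUN(\tS,\tC)^{T\doplax}$.
\item The hypothesis in \cref{prop:colimfunctorcat}(i) restricts to diagrams that send $E$ to strong transformations. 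In $\FUN(\tS,\tC)^{\sharp\doplax}$ every transformation is strong, so every diagram $\tI\to\FUN(\tS,\tC)$ qualifies.
\end{itemize}
Part (ii) then says that $f^{*}$ preserves all $E$-lax (co)limits, for every $(\tI,E)$. This is the claim that the inclusion preserves all partially lax (co)limits.

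To conclude that the inclusion is (co)continuous, it remains to check that it also preserves partially oplax and weighted (co)limits, even though the target need not admit all of these. I would not use (iii) here, because it concerns $T$-lax rather than $T$-oplax targets. Instead, \cref{propn:weighted limit as lax} and \cref{cor:weightedcolimitaslax} express a weighted (co)limit of any diagram $F$, in any \itcat{}, as a $C$-lax (co)limit of $F\circ p$, for a fibration $p$ that depends only on the weight. Likewise \cref{prop:laxasweighted} rewrites partially oplax (co)limits as weighted ones. Since these rewritings hold in every \itcat{} and are compatible with postcomposition by a functor, preservation of partially lax (co)limits implies preservation of the other two kinds.

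I expect this last reduction to be the main obstacle, because both the source and target of the inclusion must accept the rewriting. In $\FUN(\tS,\tC)$ the rewritten diagram $F\circ p$ is again an arbitrary diagram, so the point above about strong transformations still applies to it. Any subtlety should therefore sit only in confirming that the universal properties are natural in the ambient \itcat{}.
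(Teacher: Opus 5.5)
Your proposal is correct and follows the same route as the paper. Completeness, cocompleteness and the pointwise computation come from \cref{cor:colim strong functor cat}, with \cref{prop:colimfunctorcat}(iii) for the oplax variance. Preservation comes from \cref{prop:colimfunctorcat}(ii) applied to $(\tS,T)\to\tS^{\sharp}$, using that a maximal marking turns the Gray tensor into the cartesian product. Your final step deduces (co)continuity from preservation of partially lax (co)limits via \cref{propn:weighted limit as lax} and \cref{prop:laxasweighted}; it only spells out what the paper compresses into the words ``and so''.
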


\begin{proposition}\label{prop:yonedacont}
  Let $\tC$ be a \itcat{}. Then the Yoneda embedding
  $h_{\tC} \colon \tC \to \tPSh(\tC)$ preserves all partially (op)lax
  limits that exist in $\tC$.
\end{proposition}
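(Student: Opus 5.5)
We need to show that if $\lim^{\elax}_{\tI} F$ exists in $\tC$ (say for $F \colon \tI \to \tC$ with marking $E$), then $h_{\tC}(\lim^{\elax}_{\tI} F)$ is the $E$-lax limit of $h_{\tC}\circ F$ in $\tPSh(\tC)$. The approach is to reduce to the corepresentability characterization \cref{cor:laxlimit rep} together with the fact that limits in $\tPSh(\tC) = \FUN(\tC^{\op},\CATI)$ are computed pointwise (\cref{cor:colim strong functor cat}). The oplax case will follow by the same argument, or by \cref{lem:lax colim change variance}.

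First, $\CATI$ is complete: it has all partially lax limits by \cref{cor:laxlimcat}, which describes them as \icats{} of sections of fibrations. Hence, by \cref{cor:colim strong functor cat}, the \itcat{} $\tPSh(\tC)$ admits $E$-lax limits of $h_{\tC}F$. Moreover, these limits are computed pointwise, and evaluation at each $c \in \tC$ preserves them; this preservation is part (ii) of that corollary applied to $\{c\} \to \tC^{\op}$. Consequently, evaluating $\lim^{\elax}_{\tI} h_{\tC}F$ at $c$ gives $\lim^{\elax}_{\tI}\tC(c,F(\blank))$.

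Next, by \cref{cor:laxlimit rep} applied to the limit in $\tC$, we have natural equivalences $\tC(c,\lim^{\elax}_{\tI}F) \simeq \lim^{\elax}_{\tI}\tC(c,F)$, natural in $c$. The main step, and the expected obstacle, is to promote this from an objectwise statement to an equivalence of presheaves $h_{\tC}(\lim^{\elax} F) \simeq \lim^{\elax} h_{\tC}F$ in $\FUN(\tC^{\op},\CATI)$. Pointwise equivalences alone would not suffice.

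To get functoriality in $c$, it is better to argue via universal properties rather than evaluations. For any presheaf $P$, I will produce natural equivalences
\[\tPSh(\tC)(P, h_{\tC}\lim^{\elax}F) \simeq \Nat^{\elax}_{\tI,\tPSh(\tC)}(\underline{P}, h_{\tC}F).\]
The left-hand side is $\Nat_{\tC^{\op},\CATI}(P, \tC(\blank,\lim^{\elax}F))$. By the definition of the $E$-lax limit, $\tC(\blank,\lim^{\elax} F)\simeq \Nat^{\elax}_{\tI,\tC}(\underline{\blank},F)$ as functors of $c$. The right-hand side, via the commutation of lax functor categories (\cref{lax oplax reverse marked}/\cref{marked lax hom adj}), becomes $\Nat$ from $P$ to the presheaf $c\mapsto \Nat^{\elax}_{\tI,\tPSh(\tC)}(\underline{h_{\tC}c}, h_{\tC}F)$. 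It then remains to identify this with $\Nat^{\elax}_{\tI,\tC}(\underline{c},F)$ naturally in $c$. This identification follows from the Yoneda lemma applied componentwise, using that $h_{\tC}$ is fully faithful and that $(h_{\tC})_*$ on $\FUN(\tI,\blank)^{\elax}$ is then fully faithful as well (\cref{lem:lff on Funlax}, with components in the image). The only delicate point is this final naturality bookkeeping, which is handled by the fact that all equivalences are instances of adjunctions natural in both variables.
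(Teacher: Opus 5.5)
Your argument is correct and rests on the same key input as the paper's proof. That input is the full faithfulness of $h_{\tC}$ together with that of $(h_{\tC})_{*}$ on $E$-lax functor \itcats{}; the paper cites \cite[Corollary 2.8.13]{AGH24}, and your appeal to \cref{lem:lff on Funlax} does the same job. It gives $\Nat^{\elax}_{\tI,\tC}(\underline{c},F)\simeq\Nat^{\elax}_{\tI,\tPSh(\tC)}(\underline{h_{\tC}c},h_{\tC}F)$ naturally in $c$.

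Where you differ is in how you pass from objectwise to natural statements, the step you flag as the main obstacle.

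The paper uses that $\lim^{\elax}_{\tI}h_{\tC}F$ exists, which you also observe. Its universal property gives a canonical comparison map $h_{\tC}(\lim^{\elax}_{\tI}F)\to\lim^{\elax}_{\tI}h_{\tC}F$. At each $c$, the Yoneda lemma and the equivalence above show this map is an equivalence. Evaluation at objects is conservative on $\FUN(\tC^{\op},\CATI)$ (\cref{lem:ess surj restr cons}), so the map is an equivalence. Thus pointwise checking does suffice once a comparison map is in hand, and only representable test objects $h_{\tC}c$ are needed.

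Your route instead tests against arbitrary presheaves $P$. This costs an extra identification of $\Nat^{\elax}_{\tI,\tPSh(\tC)}(\underline{P},h_{\tC}F)$ with $\Nat_{\tC^{\op},\CATI}(P,\,c\mapsto\Nat^{\elax}_{\tI,\tPSh(\tC)}(\underline{h_{\tC}c},h_{\tC}F))$. The commutation of lax functor \itcats{} (\cref{lax oplax reverse marked}) alone only rewrites the left side as transformations between functors $\tC^{\op}\to\FUN(\tI,\CATI)^{\elax}$. To reach $\Nat(P,\blank)$ you also need one of the following:
\begin{itemize}
\item the postcomposed adjunction $\underline{(\blank)}\dashv\lim^{\elax}_{\tI}$ on $\CATI$ (the pointwise formula of \cref{prop:colimfunctorcat}) plus Yoneda;
\item more simply, the universal property of the already existing $\lim^{\elax}_{\tI}h_{\tC}F$ combined with Yoneda.
\end{itemize}
With that step filled in, your proof is complete and avoids any conservativity argument, but the paper's version is shorter.
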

\begin{proof}
  Let $(\tI,E)$ be a marked \itcat{} and let $D \colon \tI \to \tC$ be a diagram. Since $h_{\tC}$ is fully faithful, it follows that we have natural equivalences
  \[
  \Nat_{\tC^\op,\CATI}\left(h_{\tC}(\blank),h_{\tC}(\lim^{\elaxoplax}_{\tI}D)\right) \simeq \tC\left(\blank,\lim_{\tI}^{\elaxoplax}D \right) \simeq \Nat_{\tI,\tC}^{\elaxoplax}\left(\underline{(\blank)},D\right)
  \]
  As a consequence of \cite[Corollary 2.8.13]{AGH24}, the functor
  \[
    h_{\tC,*} \colon \FUN(\tI,\tC)^{\elaxoplax} \to \FUN(\tI, \tPSh(\tC))^{\elaxoplax}
  \]
  is also fully faithful, so
  we obtain a natural equivalence
  \[
 \Nat_{\tI,\tC}^{\elaxoplax}\left(\underline{(\blank)},D\right) \simeq \Nat_{\tI,\tPSh(\tC)}^{\elaxoplax}(\underline{h_{\tC}},h_{\tC}D).
  \]
  The previous discussion together with the universal property of the
  limit implies that we have a canonical morphism
  $h_{\tC}(\lim^{\elaxoplax}_{\tI}D) \to
  \lim^{\elaxoplax}_{\tI}h_{\tC}D$. Invoking again the Yoneda lemma,
  we see that our map is pointwise an equivalence, which concludes the
  proof by the conservativity noted in \cref{obs:cons on lax
    transfos}.
\end{proof}

\begin{proposition}\label{prop:limitscommute}
  Let $\tC$ be an \itcat{} that admits partially (op)lax limits of
  shape $(\tI,E)$ and suppose that we are given another marked
  \itcat{} $(\tS,T)$ and a functor
  $F \colon \tS \to \FUN(\tI,\tC)^{\elaxoplax}$ such that the
  $T$-(op)lax limit of $F$ exists in
  $\FUN(\tI,\tC)^{\elaxoplax}$, and such that $\tC$ admits partially
  (op)lax limits indexed by $(\tS,T)$. Then we have
  an equivalence in $\tC$
  \[
    \lim^{\elaxoplax}_{\tI} G \simeq \lim^{T\dplax}_{\tS} H.
  \]
  where $G := \lim^{T\dplax}_{\tS}F$ is the limit of $F$ in
  $\FUN(\tI,\tC)^{\elaxoplax}$ and $H$ is the composite functor
  $\lim^{\elaxoplax}_{\tI} \circ F$.
\end{proposition}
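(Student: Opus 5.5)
The plan is a Yoneda-style argument: compute the mapping \icats{} into both sides from an arbitrary object $c \in \tC$ and identify each with the same \icat{} of partially lax transformations out of a constant diagram on $\tS \times \tI$ (with a mixed marking/Gray structure). First, unwinding the universal property of $\lim^{\elaxoplax}_{\tI}$ gives
\[ \tC(c, \lim^{\elaxoplax}_{\tI} G) \simeq \Nat^{\elaxoplax}_{\tI,\tC}(\underline{c}, G). \]
Viewing this as a mapping \icat{} in $\FUN(\tI,\tC)^{\elaxoplax}$ out of the constant diagram $\underline{c}$, the universal property of $G = \lim^{T\dplax}_{\tS}F$ in $\FUN(\tI,\tC)^{\elaxoplax}$ turns it into
\[ \Nat^{T\dplax}_{\tS, \FUN(\tI,\tC)^{\elaxoplax}}(\underline{\underline{c}}, F), \]
where $\underline{\underline{c}}$ is constant at the constant diagram $\underline{c}$.

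On the other side, the universal property of $\lim^{T\dplax}_{\tS}H$ gives $\tC(c,\lim^{T\dplax}_{\tS}H) \simeq \Nat^{T\dplax}_{\tS,\tC}(\underline{c}, H)$. Since $\lim^{\elaxoplax}_{\tI}$ is right adjoint to the constant functor $\tC \to \FUN(\tI,\tC)^{\elaxoplax}$ (\cref{rem:limfunctor}), and $\FUN(\tS,\blank)^{T\dplax}$ preserves adjunctions (the marked variant of \cref{propn:prod pres gives 2-fun}, as used in \cref{prop:adjpreservelimtis}), postcomposition yields an adjunction
\[ \const_{*} \colon \FUN(\tS,\tC)^{T\dplax} \rightleftarrows \FUN(\tS,\FUN(\tI,\tC)^{\elaxoplax})^{T\dplax} \colon (\lim^{\elaxoplax}_{\tI})_{*}. \]
Hence $\Nat^{T\dplax}_{\tS,\tC}(\underline{c}, H) \simeq \Nat^{T\dplax}_{\tS,\FUN(\tI,\tC)^{\elaxoplax}}(\const_{*}\underline{c}, F)$. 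Since $\const_{*}\underline{c} \simeq \underline{\underline{c}}$, both sides are naturally equivalent to the same \icat{}.

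It remains to check that these equivalences are natural in $c$; they then assemble into an equivalence of presheaves $\tC(\blank, \lim^{\elaxoplax}_{\tI}G) \simeq \tC(\blank, \lim^{T\dplax}_{\tS}H)$, and the Yoneda lemma gives the claim. The main obstacle is this naturality, in particular making the adjunction $\const_{*} \dashv (\lim)_{*}$ compatible with precomposition by morphisms of $\tC$. To handle it, I would phrase both steps as natural equivalences of functors $\tC^{\op} \to \CATI$ obtained by composing the constant functor $\tC \to \FUN(\tS,\FUN(\tI,\tC)^{\elaxoplax})^{T\dplax}$ with representable functors, and use that the adjunction equivalences on mapping \icats{} are natural in both variables. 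The identification $\const_{*}\underline{c} \simeq \underline{\underline{c}}$ is immediate, since both are adjoint to the projection $\tS \times \tI \times \tC \to \tC$.
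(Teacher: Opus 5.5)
Your proposal is correct and is essentially the paper's proof. Both identify $\tC(c,\lim^{\elaxoplax}_{\tI} G)$ with $\Nat^{T\dplax}_{\tS,\FUN(\tI,\tC)^{\elaxoplax}}(\underline{\underline{c}},F)$ via the universal property of $G$ at the constant diagram, and then identify this with $\tC(c,\lim^{T\dplax}_{\tS} H)$ using the adjunction $L_{*} \dashv R_{*}$, obtained by applying $\FUN(\tS,\blank)^{T\dplax}$ to $\underline{(\blank)} \dashv \lim^{\elaxoplax}_{\tI}$, together with $\underline{\underline{c}} \simeq L_{*}(\underline{c})$. The naturality in $c$ that you flag is treated as evident in the paper, and your suggested way of securing it is the right one.
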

\begin{remark}
  It is tempting to write this equivalence as
  \[
    \lim^{\elaxoplax}_{\tI} \lim^{T\dplax}_{\tS} F \simeq
    \lim^{T\dplax}_{\tS} \lim^{\elaxoplax}_{\tI} F,
  \]
  but this is potentially misleading as we are not claiming that the
  limit of $F$ in $\FUN(\tI,\tC)^{\elaxoplax}$ is computed objectwise
  by a limit in $\tC$. This is the case under certain conditions,
  however, as we saw above in \cref{prop:colimfunctorcat}.
\end{remark}
\begin{proof}[Proof of \cref{prop:limitscommute}]
  For $c \in \tC$, we have natural equivalences
  \[ \tC(c, \lim^{\elaxoplax}_{\tI} G) \simeq
    \Nat^{\elaxoplax}_{\tI,\tC}(\underline{c}_{\tI}, G) \simeq
    \Nat^{T\dplax}_{\tS, \FUN(\tI,\tC)^{\elaxoplax}}(\underline{\underline{c}_{\tI}}_{\tS},
    F),\]
  where we subscript the constant diagrams with their domains for clarity.
  Since $\tC$ admits $E$-(op)lax limits over $\tI$, we have an
  adjunction
  \[ L := \underline{(\blank)}_{\tI} : \tC \rightleftarrows
    \FUN(\tI,\tC)^{\elaxoplax} : \lim^{\elaxoplax}_{\tI} =: R,\] and
  since $\FUN(\tS,\blank)^{T\dplax}$ preserves adjunctions by
  \cref{propn:prod pres gives 2-fun}, this induces by composition an
  adjunction
  \[ L_{*} : \FUN(\tS,\tC)^{T\dplax} \rightleftarrows \FUN(\tS,
    \FUN(\tI,\tC)^{\elaxoplax})^{T\dplax} : R_{*}\]
  The constant diagram $\underline{\underline{c}_{\tI}}_{\tS}$ is also
  $L_{*}(\underline{c}_{\tS})$, so using this adjunction we get a
  natural equivalence
  \[ \Nat^{T\dplax}_{\tS, \FUN(\tI,\tC)^{\elaxoplax}}(\underline{\underline{c}_{\tI}}_{\tS},
    F) \simeq \Nat^{T\dplax}_{\tS,\tC}(\underline{c}_{\tS}, R_{*}F)
    \simeq \tC(c, \lim^{T\dplax}_{\tS} H),\]
  as required.
\end{proof}

\subsection{Cofinal functors}\label{sec:cofinal new}
In this section we  use our work so far to give a treatment of
cofinality for \itcats{}, giving model-independent proofs of results
that previously appeared in \cite{AS23,AbMarked,GagnaHarpazLanariLaxLim}.  
Related results on cofinality for $(\infty,\infty)$-categories also
appear in \cite[\S 4.2.3]{loubaton}.

We start by defining cofinal maps by an orthogonality property and
considering some consequences of this before we characterize
cofinality in terms of the preservation of (co)limits in
\cref{propn:cofinal colimits}; we also derive a more concrete
criterion for cofinality in \cref{thm:adaggerrefined}.

\begin{defn}
  A functor of marked \itcats{} $F \colon (\tI,S) \to (\tJ, T)$ is called
  \emph{$\ve$-cofinal} if it is left orthogonal to all 1-fibred
  $\ve$-fibrations $\tE^{\natural} \to \tB^{\sharp}$.
\end{defn}

\begin{defn}\label{defn:cofinalmarkedequiv}
  We write
  $L^{\ve;1}_{\tB^{\sharp}}$ for the left adjoint to the
  inclusion
  \[ \oFIB^{\ve}_{/\tB} \hookrightarrow \MCATITsl{\tB^{\sharp}}\]
  (which we described explicitly in \cref{cor:explicit locn to fibs}).
  We say that a morphism of marked \itcats{} $F \colon (\tI,S) \to
  (\tJ,T)$ over $\tB^{\sharp}$ is a \emph{marked $\ve$-equivalence}
  over $\tB^{\sharp}$ if
  $L^{\ve;1}_{\tB^{\sharp}}(F)$ is an equivalence.
\end{defn}

\begin{lemma}\label{lem:marked eps-equiv cond}
  For a morphism of marked \itcats{} $F \colon (\tI,S) \to
  (\tJ,T)$ over $\tB^{\sharp}$, the following are equivalent:
  \begin{enumerate}
  \item $F$ is a marked $\ve$-equivalence over $\tB^{\sharp}$.
  \item $F$ is an $\ve$-equivalence over $\tB^{\sharp\sharp}$ when
    viewed as a functor $(\tI,S)^{\sharp} \to (\tJ,T)^{\sharp}$.
  \item For $b \in \tB$, the induced functor of \icats{}
  \[ \FrofeB(\tI,S)(b) \to \FrofeB(\tJ,T)(b)\]
  is an equivalence.
  \end{enumerate}
\end{lemma}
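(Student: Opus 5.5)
The plan is to prove (1)$\Leftrightarrow$(3) via the explicit description of $L^{\ve;1}_{\tB^{\sharp}}$, and (1)$\Leftrightarrow$(2) via the factorization of the localization functors through decorated fibrations.

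For (1)$\Leftrightarrow$(3), I use \cref{cor:free 1-fibred on marked}. The left adjoint $L^{\ve;1}_{\tB^{\sharp}}$ sends $(\tI,S)\to\tB^{\sharp}$ to the unstraightening of $\FrofeB(\tI,S)\colon \tB^{\veop}\to\CATI$, naturally in the marked \itcat{} over $\tB^{\sharp}$. Thus $L^{\ve;1}_{\tB^{\sharp}}(F)$ is the unstraightening of the induced natural transformation $\FrofeB(\tI,S)\to\FrofeB(\tJ,T)$. Straightening is an equivalence, so this map is an equivalence of fibrations \IFF{} the transformation is an equivalence in $\FUN(\tB^{\veop},\CATI)$. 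That in turn holds \IFF{} it is objectwise an equivalence, by the conservativity of restriction along the essentially surjective $\tB^{\simeq}\to\tB^{\veop}$ (\cref{lem:ess surj restr cons}). This gives (3).

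For (1)$\Leftrightarrow$(2), I compare $L^{\ve;1}_{\tB^{\sharp}}$ with $L^{\ve}_{\tB^{\sharp\sharp}}$ restricted along $(\blank)^{\sharp}\colon\MCATITsl{\tB^{\sharp}}\to\DCATITsl{\tB^{\sharp\sharp}}$. By \cref{cor:explicit locn to fibs} (or its other variances), the left adjoint of $\FIB^{\ve}_{/\tB}\hookrightarrow \DCATITsl{\tBss}$ on $(\tI,S)^{\sharp}$ is $\Ld$ applied to the free decorated fibration, restricted over $\tB^{\flat\flat}$. Since $(\tI,S)^{\sharp}$ has all 2-morphisms decorated, this is exactly the unstraightening of $\FrofeB(\tI,S)$ (\cref{not:free 1-fib functor}, \cref{cor:free fib on dec}). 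So $L^{\ve}_{\tBss}((\tI,S)^{\sharp})$ is the 1-fibred fibration $L^{\ve;1}_{\tB^{\sharp}}(\tI,S)$, equipped with its canonical decoration $(\blank)^{\natural}$, naturally in $(\tI,S)$. Hence $L^{\ve}_{\tBss}(F^{\sharp})$ is an equivalence \IFF{} $L^{\ve;1}_{\tB^{\sharp}}(F)$ is, since equivalences in $\PFibe_{/\tBss}$ are detected on underlying \itcats{}.

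The one subtle point is that $L^{\ve}_{\tBss}$ is defined as the localization to partial fibrations over $\tBss$, namely $\PFibe_{/\tBss}=\Fibe_{/\tB}$ with canonical decorations. I need to check this agrees with the left adjoint of $\FIB^{\ve}_{/\tB}\hookrightarrow\DCATITsl{\tBss}$ used in \cref{cor:explicit locn to fibs}. This holds because both are left adjoints to the same fully faithful inclusion $\tE\mapsto\tEn$: decorated maps into $\tEn$ over $\tBss$ are automatically maps of fibrations.

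I expect the naturality bookkeeping in the comparison (1)$\Leftrightarrow$(2) to be the main obstacle. Alternatively, it can be done purely by Yoneda: $F$ is a marked $\ve$-equivalence \IFF{} composition with $F$ induces equivalences on $\MCATITsl{\tB^{\sharp}}(\blank,\tE^{\natural})$ for all 1-fibred $\tE$. It is an $\ve$-equivalence \IFF{} the same holds against all $\ve$-fibrations $\tEn$ over $\tBss$ with decorated mapping spaces. Mapping a maximally-2-decorated source into $\tEn$ factors through the sub-fibration of cartesian 2-morphisms, which is a 1-fibred fibration; this reduces the general test objects to the 1-fibred ones.
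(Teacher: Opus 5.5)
Your proposal is correct. For (1)$\Leftrightarrow$(3) you argue exactly as the paper does, reading it off the description of $L^{\ve;1}_{\tB^{\sharp}}$ in \cref{cor:free 1-fibred on marked}.

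For (1)$\Leftrightarrow$(2), the paper uses precisely your fallback Yoneda argument. Any decorated map from $(\tI,S)^{\sharp}$ to $\tEn$ over $\tBss$ sends all 2-morphisms to $j$-cartesian ones, so it factors through the underlying 1-fibred fibration. Hence testing against $\ve$-fibrations and testing against 1-fibred ones detect the same local equivalences.

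Your primary route is also valid but different: you identify the localizations themselves, $L^{\ve}_{\tBss}((\tI,S)^{\sharp}) \simeq L^{\ve;1}_{\tB^{\sharp}}(\tI,S)^{\natural}$ naturally in $(\tI,S)$, using \cref{cor:free fib on dec} and the definition of $\FrofeB$. This is in effect how \cref{cor:free 1-fibred on marked} was obtained, by restricting the adjunction of \cref{cor:free fib on dec}. It yields slightly more information, namely that $L^{\ve}_{\tBss}$ preserves marked objects. The cost is the bookkeeping you flag: naturality of the identification, and matching $\PFibe_{/\tBss}$ with $\FIB^{\ve}_{/\tB}$. The paper's one-line orthogonality argument sidesteps all of this.
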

\begin{proof}
  The first two conditions are equivalent since 
  any map from
  $(\tI,S)^{\sharp}$ to a fibration over $\tBss$ factors through its
  underlying 1-fibred fibration.
  The equivalence of the first and third conditions follows from the
  description of the straightening of
  $L^{\ve;1}_{\tB^{\sharp}}(\tI,S)$ from \cref{cor:free 1-fibred on
    marked}.
\end{proof}

\begin{propn}\label{propn:cofinal charn}
  The following are equivalent for $F \colon (\tI,S) \to (\tJ, T)$:
  \begin{enumerate}[(1)]
  \item $F$ is $\ve$-cofinal.
  \item $F$ is a marked $\ve$-equivalence over $\tJ^{\sharp}$.
  \item $F$ is an $\ve$-equivalence over $\tJ^{\sharp\sharp}$ when
    viewed as a functor $(\tI,S)^{\sharp} \to (\tJ,T)^{\sharp}$.
  \item $L^{\ve;1}_{\tJ^{\sharp}}(F) \colon
    L^{\ve;1}_{\tJ^{\sharp}}(\tI,S) \to
    L^{\ve;1}_{\tJ^{\sharp}}(\tJ,T)$ is an equivalence.
  \item For every $b \in \tJ$, the functor of \icats{}
    \[ \FrofeB(\tI,S)(b) \to \FrofeB(\tJ,T)(b)\]
    is an equivalence.
  \end{enumerate}
\end{propn}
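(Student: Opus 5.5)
By \cref{lem:marked eps-equiv cond} applied with $\tB = \tJ$, conditions (2), (3) and (5) are already equivalent, and (4) is (2) by \cref{defn:cofinalmarkedequiv}. It therefore remains to show that (1) is equivalent to (2). Throughout, I regard $F$ as a morphism in $\MCATITsl{\tJ^{\sharp}}$ from $F \colon (\tI,S) \to \tJ^{\sharp}$ to $\id \colon (\tJ,T) \to \tJ^{\sharp}$. I will use the adjunction $L^{\ve;1}_{\tJ^{\sharp}} \dashv \iota$, where $\iota \colon \oFIB^{\ve}_{/\tJ} \hookrightarrow \MCATITsl{\tJ^{\sharp}}$ is the fully faithful inclusion.

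\textbf{(2) $\Rightarrow$ (1).} Take a commutative square whose left edge is $F$ and whose right edge is a 1-fibred $\ve$-fibration $q \colon \tE^{\natural} \to \tB^{\sharp}$, with bottom map $g \colon (\tJ,T) \to \tB^{\sharp}$. The space of lifts in this square is the fibre of
\[ \Map_{/\tB^{\sharp}}((\tJ,T), \tE^{\natural}) \to \Map_{/\tB^{\sharp}}((\tI,S), \tE^{\natural}), \]
taken over the given top map. By adjunction, maps over $\tB^{\sharp}$ out of $(\tJ,T)$ correspond to maps over $\tJ^{\sharp}$ into the pullback $g^{*}\tE^{\natural}$, and the same holds for $(\tI,S)$. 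Since $g^{*}\tE^{\natural}$ is again a 1-fibred $\ve$-fibration over $\tJ$ (\cref{obs:epscofabs}), the restriction map above is identified with precomposition with $F$ on $\Map_{/\tJ^{\sharp}}(\blank, \iota(g^{*}\tE))$. This factors through $L^{\ve;1}_{\tJ^{\sharp}}(F)$, so it is an equivalence. Hence the fibres are contractible and $F$ is left orthogonal to $q$.

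\textbf{(1) $\Rightarrow$ (2).} I will show that $L^{\ve;1}_{\tJ^{\sharp}}(F)$ is an equivalence by the Yoneda lemma. It suffices to check that for every 1-fibred $\ve$-fibration $\tE \to \tJ$, precomposition with $F$ gives an equivalence
\[ \Map_{/\tJ^{\sharp}}((\tJ,T), \tE^{\natural}) \isoto \Map_{/\tJ^{\sharp}}((\tI,S), \tE^{\natural}). \]
The fibre of this map over a given $(\tI,S) \to \tE^{\natural}$ is exactly the space of lifts in the square with right edge $\tE^{\natural} \to \tJ^{\sharp}$ and bottom edge $\id_{(\tJ,T)}$ followed by the marked map $(\tJ,T) \to \tJ^{\sharp}$. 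These lift spaces are contractible by (1), which gives the claim.

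The one subtlety I expect is the bottom edge of this square. Orthogonality is stated against $\tE^{\natural} \to \tB^{\sharp}$, so the bottom map must land in a maximally marked base. This holds automatically, because every marked functor factors through $\tJ^{\sharp}$. Similarly, in the proof of (2) $\Rightarrow$ (1), the pullback of a 1-fibred fibration along a functor must again be 1-fibred, with its marking given by the pulled-back cartesian edges; this also follows from \cref{obs:epscofabs}.
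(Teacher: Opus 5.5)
Your proposal is correct and follows essentially the same route as the paper. Both arguments reduce orthogonality against arbitrary 1-fibred $\ve$-fibrations to those over $\tJ^{\sharp}$, using that every marked bottom map factors through $\tJ^{\sharp}$ and that 1-fibred fibrations are stable under pullback; identify this with $F$ being a local equivalence for $L^{\ve;1}_{\tJ^{\sharp}}$; and get the remaining conditions from \cref{lem:marked eps-equiv cond}, where you spell out the two directions of the local-equivalence step via the adjunction and the Yoneda lemma more explicitly than the paper does.
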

\begin{proof}
  Since any functor $(\tJ,S) \to \tB^{\sharp}$ factors through
  $\tJ^{\sharp}$, and 1-fibred $\ve$-fibrations are closed under
  pullback, (1) holds \IFF{} for any 1-fibred $\ve$-fibration $\tE^{\natural}
  \to \tJ^{\sharp}$, there is a unique lift in any square
  \[
    \begin{tikzcd}
     (\tI,S) \ar[r] \ar[d, "F"'] & \tE^{\natural}  \ar[d] \\
     (\tJ,T) \ar[r] \ar[dashed,ur] & \tJ^{\sharp}.
    \end{tikzcd}
  \]
  This is equivalent to $F$ being a local equivalence for 1-fibred
  $\ve$-fibrations when viewed as a map in $\MCATITsl{\tB^{\sharp}}$,
  so (1) is equivalent to (2). The equivalence of this with the
  remaining conditions is then a special case of \cref{lem:marked eps-equiv cond}.
\end{proof}

\begin{observation}
  We have the following immediate properties of $\ve$-cofinal maps:
  \begin{itemize}
  \item $\ve$-cofinal functors are closed under cobase change,
    composition, retracts, and colimits.
  \item If $F$ is $\ve$-cofinal then $GF$ is $\ve$-cofinal \IFF{} $G$
    is.
  \item If $F \colon \tCd \to \tDd$ is an $\ve$-cofibration, then the
    underlying functor of marked \itcats{} is $\ve$-cofinal. In
    particular, by \cref{obs:loc e-eqce} all localizations of
    \itcats{} give $\ve$-cofinal maps for any $\ve$.
  \end{itemize}
\end{observation}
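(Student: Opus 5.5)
The three properties will all come from the characterization of $\ve$-cofinal functors in \cref{propn:cofinal charn}: $F$ is $\ve$-cofinal \IFF{} it is left orthogonal to the class $\mathcal{R}$ of 1-fibred $\ve$-fibrations $\tE^{\natural} \to \tB^{\sharp}$, viewed as morphisms in $\MCatIT$. In other words, the $\ve$-cofinal maps are exactly ${}^{\perp}\mathcal{R}$.

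For the first bullet, recall that for any class $\mathcal{R}$ of morphisms in an \icat{}, the left orthogonal class ${}^{\perp}\mathcal{R}$ is closed under cobase change, composition, retracts and all colimits in the arrow category. The reason is that left orthogonality to a fixed $r$ says a certain square of mapping spaces is a pullback. This condition is stable under composition by pasting pullbacks, and under retracts since retracts of pullback squares are pullbacks. For cobase change and colimits it holds because $\Map(\blank,\cdot)$ converts colimits to limits, and limits of pullback squares are pullbacks. I will simply invoke this standard fact (e.g.\ \cite[\S 5.2.8]{LurieHTT}) and not reprove it.

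The second bullet is the left-cancellation property of left classes, with $F$ the first map and $G$ the second. If $F$ and $GF$ are both in ${}^{\perp}\mathcal{R}$, take a lifting problem of $G$ against $r\in\mathcal{R}$. Precomposing with $F$ turns it into a lifting problem for $GF$, which has a contractible space of solutions. Since $F$ is left orthogonal to $r$, restriction along $F$ identifies solutions for $G$ with solutions for $GF$, so $G\perp r$. The converse is just composition. Alternatively, both directions follow at once from the 2-out-of-3 property of the equivalences in criterion (4) of \cref{propn:cofinal charn}, since $L^{\ve;1}_{\tJ^{\sharp}}$ is a functor. The subtlety is that criterion (4) refers to a base: there all maps would need to live over the same target, whereas here $F$ and $G$ land in different \itcats{}. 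So the orthogonality argument is the cleaner route.

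For the third bullet, let $F \colon \tCd \to \tDd$ be an $\ve$-cofibration. By \cref{cor:partial fib right orth} and \cref{obs:epscofabs}, $F$ is left orthogonal to all partial $\ve$-fibrations of decorated \itcats{}. Given a 1-fibred $\ve$-fibration $p\colon\tE^{\natural} \to \tB^{\sharp}$, \cref{ex:1-fibred as partial} says that $(\tE^{\natural})^{\sharp} \to \tB^{\sharp\sharp}$ is a partial $\ve$-fibration. A square of marked \itcats{} from the underlying marked map $\Udm F$ to $p$ corresponds, via the adjunction $\Udm \dashv (\blank)^{\sharp}$ of \cref{var:idecorated}, to a square of decorated \itcats{} from $F$ to $p^{\sharp}$, and lifts correspond as well. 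Hence $\Udm F \perp p$. The claim about localizations then follows from \cref{obs:loc e-eqce}, which says that localizations at decorated morphisms are $\ve$-cofibrations. The main point to check carefully is that this adjunction really matches lifting problems, which it does because $(\blank)^{\sharp}$ is a right adjoint.
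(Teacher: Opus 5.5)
Your proposal is correct and is essentially the argument the paper leaves implicit. The first two bullets are the formal closure and left-cancellation properties of a left orthogonal class. Note that the left-orthogonality description is literally the definition of $\varepsilon$-cofinality, so you do not need \cref{propn:cofinal charn} for it. The third bullet follows by transposing lifting problems along the adjunction $\Udm \dashv (-)^{\sharp}$: by \cref{ex:1-fibred as partial}, $(\tE^{\natural})^{\sharp} \to \tB^{\sharp\sharp}$ is a partial $\varepsilon$-fibration, and hence it is right orthogonal to every $\varepsilon$-cofibration.
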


\begin{observation}\label{obs:cofinal change variance}
  The following are equivalent for a functor of marked \itcats{} $F \colon (\tI,S) \to
  (\tJ,T)$ (\cf{} \cref{obs:fib change variance}):
  \begin{itemize}
  \item $F$ is $(1,0)$-cofinal.
  \item $F^{\op}$ is $(0,0)$-cofinal.
  \item $F^{\co}$ is $(1,1)$-cofinal.
  \item $F^{\coop}$ is $(0,1)$-cofinal.
  \end{itemize}
\end{observation}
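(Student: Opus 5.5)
The plan is to deduce all four equivalences from the corresponding statement about fibrations in \cref{obs:fib change variance}, by tracking how $\ve$-cofinality transforms under the duality involutions of $\MCatIT$. The involutions $(\blank)^{\op}$, $(\blank)^{\co}$ and $(\blank)^{\coop}$ of $\CatIT$ extend to automorphisms of $\MCatIT$, since reversing 1- and/or 2-morphisms preserves wide locally full sub-\itcat{} inclusions. Being automorphisms of the \icat{}, they preserve and reflect orthogonality, so $F$ is left orthogonal to a class $\mathcal{R}$ \IFF{} $F^{\op}$ is left orthogonal to $\mathcal{R}^{\op}$, and likewise for $\co$ and $\coop$.

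The key step is therefore to check that each involution carries the class of 1-fibred $(1,0)$-fibrations $\tE^{\natural} \to \tB^{\sharp}$, marked by the cartesian morphisms, bijectively onto the 1-fibred $(0,0)$-, $(1,1)$- and $(0,1)$-fibrations respectively, again with their canonical markings. By \cref{obs:fib change variance}, $p$ is a $(1,0)$-fibration \IFF{} $p^{\op}$ is a $(0,0)$-fibration, $p^{\co}$ a $(1,1)$-fibration and $p^{\coop}$ a $(0,1)$-fibration. Under these dualities, $p$-cartesian 1-morphisms become $p^{\op}$-cocartesian (respectively stay cartesian for $\co$, become cocartesian for $\coop$), because the defining pullback squares of mapping \icats{} are transported accordingly: for $\co$ the mapping \icats{} are replaced by their opposites, which preserves pullbacks. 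Hence the marking $\natural$ is carried to the marking $\natural$. Being 1-fibred means the fibres are \icats{}, a condition invariant under all three involutions since $\oK^{\op}$ is still an \icat{}. Finally, the maximal marking $\tB^{\sharp}$ is sent to $(\tB^{\op})^{\sharp}$, etc.

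Combining these gives the result: $F$ is $(1,0)$-cofinal \IFF{} it is left orthogonal to all $\tE^{\natural}\to\tB^{\sharp}$ in the $(1,0)$ class, \IFF{} $F^{\op}$ is left orthogonal to the images of these maps, which are exactly the 1-fibred $(0,0)$-fibrations; and similarly for $\co$ and $\coop$. The only point I expect to need care is the bookkeeping in the second step for $\co$: checking that cartesianness of 1-morphisms is unchanged while the variance of 2-cells flips. This amounts to noting that a pullback square of \icats{} remains a pullback after applying $(\blank)^{\op}$ to every corner.
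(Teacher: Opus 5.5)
Your proposal is correct and is essentially the argument the paper has in mind: the paper gives no separate proof and simply points to \cref{obs:fib change variance}. Transporting the orthogonality definition of $\ve$-cofinality along the duality automorphisms of $\MCatIT$, and checking that these carry 1-fibred fibrations with their canonical markings to 1-fibred fibrations of the corresponding variance, is exactly what that reference leaves implicit.
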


We next characterize our four notions of cofinality in terms of
partially (op)lax (co)limits:
\begin{defn}
  We say that a functor of marked \itcats{} $F \colon (\tI,S) \to
  (\tJ,T)$ is \emph{(op)lax colimit-cofinal} if for every functor $D \colon \tJ
  \to \tC$ the $T$-(op)lax colimit of $D$ exists \IFF{} the
  $S$-(op)lax colimit of $DF$ exists, and the canonical map
  \[ \colim^{S\dplax}_{\tI} DF \to \colim^{T\dplax}_{\tJ} F\] is an
  equivalence. Dually, we say that $F$ (op)lax limit-cofinal if the
  corresponding condition for partially (op)lax limits holds.
\end{defn}

\begin{propn}\label{propn:cofinal colimits}
  For a functor of marked \itcats{} $F \colon (\tI,S) \to
  (\tJ,T)$ we have
  \begin{itemize}
  \item $F$ is $(1,0)$-cofinal \IFF{} it is lax colimit-cofinal.
  \item $F$ is $(0,1)$-cofinal \IFF{} it is lax limit-cofinal.
  \item $F$ is $(1,1)$-cofinal \IFF{} it is oplax colimit-cofinal.
  \item $F$ is $(0,0)$-cofinal \IFF{} it is oplax limit-cofinal.
  \end{itemize}
\end{propn}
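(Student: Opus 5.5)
We prove the case of $(1,0)$-cofinality and lax colimits in detail; the other three cases then follow from \cref{obs:cofinal change variance} together with \cref{lem:lax colim change variance}, which exchange the variances of fibrations and the lax/oplax, limit/colimit variants of the universal properties in matching ways. The key input is \cref{propn:lax as Kan weighted} (or its special case \cref{prop:laxasweighted}), which expresses $S$-lax colimits of $DF$ as weighted colimits of $D$ over $\tJ$:
\[ \colim^{S\dlax}_{\tI} DF \simeq \colim^{\Frof^{(1,0)}_{\tJ}(\tI,S)}_{\tJ} D, \qquad \colim^{T\dlax}_{\tJ} D \simeq \colim^{\Frof^{(1,0)}_{\tJ}(\tJ,T)}_{\tJ} D, \]
each valid whenever either side exists. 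Here $F$ induces a natural transformation of weights $\Frof^{(1,0)}_{\tJ}(F) \colon \Frof^{(1,0)}_{\tJ}(\tI,S) \to \Frof^{(1,0)}_{\tJ}(\tJ,T)$, which induces the canonical comparison map of colimits. We should check that this is compatible with the map $\colim^{S\dlax}DF \to \colim^{T\dlax}D$, by tracing the naturality of the equivalences in the proof of \cref{propn:lax as Kan weighted} in the marked \itcat{} over $\tJ^{\sharp}$.

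For the forward direction, suppose $F$ is $(1,0)$-cofinal. By \cref{propn:cofinal charn}(5), the map of weights is a pointwise equivalence, hence an equivalence of presheaves. The two weighted colimits then corepresent the same copresheaf $\Nat_{\tJ^{\op},\CATI}(W,\tC(D,\blank))$, so one exists if and only if the other does, and the comparison map is an equivalence. This shows that $F$ is lax colimit-cofinal.

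For the converse, suppose $F$ is lax colimit-cofinal. We test against a universal target: take $\tC = \tPSh(\tJ)^{\op}$ (or equivalently use the Yoneda embedding), and take $D$ to be the Yoneda embedding $h \colon \tJ \to \tPSh(\tJ)$ viewed appropriately. By the co-Yoneda/weighted-colimit formula, the $W$-weighted colimit of $h$ in $\tPSh(\tJ)$ exists and is $W$ itself. This follows from \cref{obs: weighted lim rep} and the Yoneda lemma: $\Nat(W, \tPSh(\tJ)(h,P)) \simeq \Nat(W,P)$. Therefore the comparison map $\colim^{S\dlax}_{\tI} hF \to \colim^{T\dlax}_{\tJ} h$ identifies with $\Frof^{(1,0)}_{\tJ}(F)$. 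Since this map is an equivalence by assumption, \cref{propn:cofinal charn} again gives that $F$ is $(1,0)$-cofinal.

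The main obstacle is the compatibility bookkeeping in both directions: verifying that the equivalences of \cref{propn:lax as Kan weighted} are natural in the marked source $(\tI,S)$ over $\tJ^{\sharp}$, so that the canonical colimit comparison really corresponds to the map of free 1-fibred weights. A further subtlety is size: we must check that $\tPSh(\tJ)$ is an admissible target for the definition of colimit-cofinality, or else replace it with the large \itcat{} $\FUN(\tJ^{\op},\CATI)$.
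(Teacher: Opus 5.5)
Your proof is correct. The forward direction and the reduction of the other three variances are exactly the paper's: \cref{propn:lax as Kan weighted} turns the comparison map into the map of weighted colimits induced by $\Frof^{(1,0)}_{\tJ}(F)$, and \cref{propn:cofinal charn} makes that map of weights an equivalence.

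The converse takes a different route from the paper. The paper tests separately for each $j \in \tJ$ with the corepresentable $\tJ(j,\blank) \colon \tJ \to \CATI$. It identifies the comparison map $\colim^{S\dlax}_{\tI}\tJ(j,F(\blank)) \to \colim^{T\dlax}_{\tJ}\tJ(j,\blank)$ directly with $\Frof^{(1,0)}_{\tJ}(\tI,S)(j) \to \Frof^{(1,0)}_{\tJ}(\tJ,T)(j)$, using the fibrational description of lax colimits in $\CATI$ (\cref{cor:laxcoliminCat1}). You instead use the single diagram $h \colon \tJ \to \tPSh(\tJ)$ together with the co-Yoneda identification $\colim^{W}_{\tJ} h \simeq W$.

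The two tests are the same idea in different packaging: evaluating $h$ at $j$ gives $\tJ(j,\blank)$, and colimits in $\tPSh(\tJ)$ are computed pointwise. Each route has an advantage:
\begin{itemize}
\item In yours, both directions run through the single identification of \cref{propn:lax as Kan weighted}. So the compatibility of the canonical comparison map with the map of weights only has to be checked once. The paper's route needs a second such compatibility, for \cref{cor:laxcoliminCat1}.
\item The paper's route stays inside $\CATI$. It does not need the Yoneda equivalence $\tPSh(\tJ)(h(\blank),P) \simeq P$ natural in $P$ (the functorial Yoneda lemma). The paper proves that only later, in \S\ref{sec:free cocomp}, though with no dependence on cofinality, so there is no circularity in your use of it.
\end{itemize}

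Two small corrections. The target should be $\tPSh(\tJ)$ itself, not $\tPSh(\tJ)^{\op}$. Also, the size concern is no worse than in the paper: $\tPSh(\tJ)$ is by definition $\FUN(\tJ^{\op},\CATI)$, which is large in the same sense as $\CATI$ when $\tJ$ is small. So there is nothing to replace it with.
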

\begin{proof}
  We prove the first case; the other 3 follow
  from this by applying
  \cref{lem:lax colim change variance} and \cref{obs:cofinal change
    variance}. Consider first the
  representable functor $\tJ(j,\blank) \colon \tJ \to \CATI$; here
  \cref{cor:laxcoliminCat1} implies that the canonical map
  \[ \colim^{S\dlax}_{\tI}
    \tJ(j,F(\blank)) \to \colim^{T\dlax}_{\tJ} \tJ(j,\blank)\]
  is equivalent to 
  \[ \Frof^{(1,0)}_{\tJ}(\tI,S)(j) \to
    \Frof^{(1,0)}_{\tJ}(\tJ,T)(j),\]
  so if $F$ is lax final then this is an equivalence for all $j$,
  \ie{} $F$ is $(1,0)$-cofinal. On the other hand, \cref{propn:lax as
    Kan weighted} identifies the comparison map on lax colimits as the
  map
  \[ \colim_{\tJ}^{\Frof^{(1,0)}_{\tJ}(\tI,S)} D \to
    \colim_{\tJ}^{\Frof^{(1,0)}_{\tJ}(\tJ,T)} D \]
  induced by the transformation of weights $\Frof^{(1,0)}_{\tJ}(\tI,S)
  \to \Frof^{(1,0)}_{\tJ}(\tJ,T)$. Thus the map on lax colimits is an
  equivalence when this transformation is a natural equivalence,
  \ie{} when $F$ is $(1,0)$-cofinal.
\end{proof}

\begin{remark}
  Loubaton proves a version of \cref{propn:cofinal colimits} for
  $(\infty,\infty)$-categories as \cite[Theorem 4.2.3.21]{loubaton}.
\end{remark}

\begin{propn}
  Consider a diagram of marked \itcats{}
  \begin{equation}
    \label{eq:pb for cofinal smooth}
      \begin{tikzcd}
        (\tC,P) \ar[r, "f'"] \ar[d] & (\tD,Q)  \ar[d, "p'"] \ar[r] &
        (\tD, R) \ar[d, "p"] \\
        (\tI,S) \ar[r, "f"'] & (\tJ,T) \ar[r] & \tJ^{\sharp}
      \end{tikzcd}
  \end{equation}
  where both squares are pullbacks (so $Q$ consists of the morphisms
  in $R$ that lie over $T$). If $p$ is $\ve$-smooth when
  viewed as a morphism $(\tD,R)^{\sharp} \to \tJ^{\sharp\sharp}$ 
  and $f$ is $\ve$-cofinal, then $f'$ is
  also $\ve$-cofinal.
\end{propn}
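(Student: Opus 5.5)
We translate cofinality into the language of $\ve$-cofibrations over the maximally decorated base and then apply the definition of $\ve$-smoothness. By \cref{propn:cofinal charn}, $f$ is $\ve$-cofinal \IFF{} $f \colon (\tI,S)^{\sharp} \to (\tJ,T)^{\sharp}$ is an $\ve$-equivalence over $\tJ^{\sharp\sharp}$. Similarly, $f'$ is $\ve$-cofinal \IFF{} $f'\colon (\tC,P)^{\sharp} \to (\tD,Q)^{\sharp}$ is an $\ve$-equivalence over $\tD^{\sharp\sharp}$. Since $\tD^{\sharp\sharp} \to (\tD,R)^{\sharp}$ is a partial $\ve$-fibration (indeed an identity on underlying \itcats{} with fewer decorations, so trivially having lifts of the decorated cells in $(\tD,R)^{\sharp}$), \cref{obs:cofib vs equiv} says that $\ve$-equivalences over $\tD^{\sharp\sharp}$ are detected after composing to $(\tD,R)^{\sharp}$. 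Hence it suffices to show that $f'$ is an $\ve$-equivalence over $(\tD,R)^{\sharp}$.

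Now observe that the composite square in \cref{eq:pb for cofinal smooth}, upgraded to decorated \itcats{} via $(\blank)^{\sharp}$, is a pullback square of decorated \itcats{}. Pullbacks in $\DCatIT$ are computed on underlying \itcats{} and on the decoration subcategories, and $(\blank)^{\sharp}$ is a right adjoint, so it preserves pullbacks. Concretely, $(\tD,Q)^{\sharp} \simeq (\tD,R)^{\sharp} \times_{\tJ^{\sharp\sharp}} (\tJ,T)^{\sharp}$ and $(\tC,P)^{\sharp} \simeq (\tD,R)^{\sharp} \times_{\tJ^{\sharp\sharp}} (\tI,S)^{\sharp}$. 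Thus $f'^{\sharp} = p_{\dec}^{*}(f^{\sharp})$, where $f^{\sharp}$ is viewed as a morphism in $\DCATITsl{\tJ^{\sharp\sharp}}$ and $p_{\dec}^*$ is pullback along $p\colon(\tD,R)^\sharp\to\tJ^{\sharp\sharp}$.

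By \cref{lem:smooth and pushfwd}(i), $\ve$-smoothness of $p$ implies that $p_{\dec}^{*} \colon \DCATITsl{\tJ^{\sharp\sharp}} \to \DCATITsl{(\tD,R)^{\sharp}}$ preserves $\ve$-equivalences. Applying this to the $\ve$-equivalence $f^{\sharp}$ gives that $f'^{\sharp}$ is an $\ve$-equivalence over $(\tD,R)^{\sharp}$, which by the first paragraph completes the proof.

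The main point to check carefully is the reduction in the first paragraph: that an $\ve$-equivalence over $(\tD,R)^{\sharp}$ between objects lying over $\tD^{\sharp\sharp}$ is an $\ve$-equivalence over $\tD^{\sharp\sharp}$. This requires verifying that the identity $\tD^{\sharp\sharp}\to(\tD,R)^\sharp$ really is a partial $\ve$-fibration. Its decorated cells must coincide with the cartesian lifts over the decorations of $(\tD,R)^\sharp$; these lifts are equivalences over themselves, but $\tD^{\sharp\sharp}$ also decorates morphisms outside $R$. If this fails, the alternative is to argue directly: by \cref{lem:marked eps-equiv cond} and the Beck--Chevalley compatibility of free fibrations, check condition (5) of \cref{propn:cofinal charn} fibrewise over $d\in\tD$, reducing to the $(\tD,R)^\sharp$-relative statement, which is insensitive to the extra decorations by \cref{lem:marked eps-equiv cond}.
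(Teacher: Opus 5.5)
Your overall strategy is the paper's. You translate cofinality of $f$ into $f^{\sharp}$ being an $\ve$-equivalence over $\tJ^{\sharp\sharp}$. You then pull back along the $\ve$-smooth functor $p$ via \cref{lem:smooth and pushfwd}(i), so that $f'$ is an $\ve$-equivalence over $(\tD,R)^{\sharp}$. Finally you pass to $\tD^{\sharp\sharp}$ and conclude with \cref{propn:cofinal charn}. Your identification $f'^{\sharp} \simeq p_{\dec}^{*}(f^{\sharp})$ via the decorated pullbacks is fine.

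The gap is the justification of the last transfer, in your first paragraph. The identity of $\tD$ does not give a morphism $\tD^{\sharp\sharp} \to (\tD,R)^{\sharp}$ in $\DCatIT$ at all. $\tD^{\sharp\sharp}$ has \emph{more} decorated 1-morphisms than $(\tD,R)^{\sharp}$, and a decorated functor must send decorated morphisms to decorated ones. A fortiori it is not a partial $\ve$-fibration: for that, the decorated 1-morphisms of the source would have to be exactly the lifts of the morphisms in $R$. So the ``detection'' bullet of \cref{obs:cofib vs equiv} does not apply, as you half-suspect at the end. The fallback you sketch, via fibrewise Beck--Chevalley, is not an argument as stated.

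The fix is immediate, and it is what the paper does. The identity of $\tD$ \emph{does} give a decorated functor $\iota \colon (\tD,R)^{\sharp} \to \tD^{\sharp\sharp}$. By the other bullet of \cref{obs:cofib vs equiv}, composition $\iota_{!}$ preserves $\ve$-equivalences for any decorated functor, because $\iota^{*}$ preserves partial $\ve$-fibrations. Explicitly,
\[ \Map_{/\tD^{\sharp\sharp}}(\iota_{!}X, \tEd) \simeq \Map_{/(\tD,R)^{\sharp}}(X, \iota^{*}\tEd). \]
Since $\iota_{!}f'$ is just $f' \colon (\tC,P)^{\sharp} \to (\tD,Q)^{\sharp}$ viewed over $\tD^{\sharp\sharp}$, this is exactly the implication you need. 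You only need preservation along $\iota$, not detection along the reversed map.
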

\begin{proof}
  By \cref{propn:cofinal charn} $f$ is an $\ve$-equivalence over
  $\tJ^{\sharp\sharp}$ when
  viewed as a decorated functor. It therefore follows from 
  \cref{lem:smooth and pushfwd} that $f'$ is an $\ve$-equivalence over
  $(\tD,R)^{\sharp}$. It is then also an $\ve$-equivalence over
  $\tD^{\sharp\sharp}$ by \cref{obs:cofib vs equiv}, and hence
  $\ve$-cofinal as required.
\end{proof}

Specializing this using \cref{thm:decsmooth}, we get:
\begin{cor}
  Consider a diagram \cref{eq:pb for cofinal smooth} of marked
  \itcats{} as above, with both squares pullbacks. If $p$ is a marked
  $\overline{\ve}$-fibration and 
  $f$ is $\ve$-cofinal, then $f'$ is
  also $\ve$-cofinal. \qed  
\end{cor}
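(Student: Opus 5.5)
The plan is to deduce this directly from the preceding proposition, so the only task is to check that its smoothness hypothesis holds when $p$ is a marked $\overline{\ve}$-fibration. By definition, this means that $p^{\sharp} \colon (\tD,R)^{\sharp} \to \tJ^{\sharp\sharp}$ is a decorated $\overline{\ve}$-fibration whose source is maximally decorated on 2-morphisms. In other words, $p$ is a 1-fibred $\overline{\ve}$-fibration, and $R$ is contained in (or equal to) its $\overline{i}$-cartesian morphisms, with the decoration conditions of a decorated fibration satisfied. \cref{thm:decsmooth} is stated for decorated $\check{\ve}$-fibrations, yielding $\ve$-smoothness; here I read $\check{\ve}$ as the conjugate variance $\overline{\ve}$, consistent with \cref{cor:pushforwardisfib}. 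So \cref{thm:decsmooth} applied to $p^{\sharp}$ gives that $p^{\sharp}$ is $\ve$-smooth as a decorated functor $(\tD,R)^{\sharp} \to \tJ^{\sharp\sharp}$.

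With that in hand, the preceding proposition applies verbatim to the diagram \cref{eq:pb for cofinal smooth}. Its two hypotheses are that both squares are pullbacks, which is given, and that $f$ is $\ve$-cofinal, which is also given. It then yields that $f'$ is $\ve$-cofinal.

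The only real point to verify is the variance bookkeeping, namely that ``marked $\overline{\ve}$-fibration'' matches the hypothesis of \cref{thm:decsmooth} for the variance that produces $\ve$-smoothness. I would double-check this against \cref{cor:pushforwardisfib}, which pairs a decorated $\overline{\ve}$-fibration with pushforward preserving $\ve$-fibrations, exactly as \cref{lem:smooth and pushfwd} extracts from $\ve$-smoothness. No further computation should be needed.
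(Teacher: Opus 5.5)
Your argument is correct and is the paper's own: \cref{thm:decsmooth} (where $\check{\ve}$ is indeed the conjugate variance $\overline{\ve}$) makes $(\tD,R)^{\sharp} \to \tJ^{\sharp\sharp}$ $\ve$-smooth, and the preceding proposition then gives that $f'$ is $\ve$-cofinal. One correction to your paraphrase of the hypothesis, which the argument never actually uses: a marked $\overline{\ve}$-fibration need not be 1-fibred (its fibres are marked \itcats{}, not necessarily \icats{}), and the decorated-fibration condition requires the (co)cartesian 1-morphisms to be contained in $R$, not $R$ to be contained in them.
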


Our next goal is to give a more concrete characterization of
cofinality, which is based on the following properties of $\Frof^{(1,0)}_{\tJ}(\tJ,T)$:
\begin{lemma}\label{lem:Frof initial stuff}
  For a marked \itcat{} $(\tJ, T)$, we have:
  \begin{enumerate}[(i)]
  \item For every $j \in \tJ$, the \icat{}
    $\Frof^{(1,0)}_{\tJ}(\tJ,T)(j)$ has an initial object
  \item The image in $\Frof^{(1,0)}_{\tJ}(\tJ,T)(j)$ of any
    marked morphism $f \colon j \to j'$ in $T$ is initial.
  \item For every marked morphism $f \colon j \to j'$ in $T$, the
    functor $\Frof^{(1,0)}_{\tJ}(\tJ,T)(f)$ preserves initial objects.
  \end{enumerate}
\end{lemma}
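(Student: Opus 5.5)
We first unwind $\Frof^{(1,0)}_{\tJ}(\tJ,T)(j)$ using \cref{not:free 1-fib functor}. For $\ve=(1,0)$ it is $\Ld\tJd_{j\upslash}$, where $\tJ_{j\upslash}$ is the fibre over $j$ of $\ev_0\colon\ARopl(\tJ)\to\tJ$. Its decorated 1-morphisms are those lying over $T$ under the projection $\ev_1$, and all of its 2-morphisms are decorated. Localizing inverts all 2-morphisms, so we obtain an \icat{}. By \cref{ARlax mor pb} the mapping \icats{} of $\tJ_{j\upslash}$ are $\tJ_{j\upslash}(f,g)\simeq \tJ(j',j'')_{f/}$-type comma categories, i.e.\ pairs of a morphism $u\colon j'\to j''$ and a 2-morphism $u\circ f\Rightarrow g$. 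In particular $\tJ_{j\upslash}(\id_j,g)\simeq\tJ(j,j'')_{/g}$ up to orientation, which has a terminal object. So $\id_j$ is ``initial up to 2-cells'' in $\tJ_{j\upslash}$.

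For (i), the plan is to show that $\id_j$ becomes initial in $\Ld\tJd_{j\upslash}$. We use the description of $\Frof$ as the left adjoint of \cref{cor:free 1-fibred on marked}. Maps out of $\Frof^{(1,0)}_{\tJ}(\tJ,T)$ into a representable-type target should be computed by the free fibration. Concretely, $\tJ_{\upslash j}\to\tJ$ is the free $(1,0)$-fibration on $\{j\}$ by \cref{thm:free partial fib}, so \cref{propn:rep cart fib cond} (via the equivalence (1)$\Leftrightarrow$(4)) says that $\id_j$ is initial in the fibre category. We will apply this to the 1-fibred localization. The inclusion $\{\id_j\}\to\tJd_{j\upslash}$ is an $\ve$-equivalence over the fibre: indeed $\tJ_{j\upslash}$ over $\{j\}$, with all 2-morphisms inverted, is the free fibration on the point, whose localization is the fibre over $j$.

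The cleaner route is direct. We check that for every object $g$ of $\Ld\tJd_{j\upslash}$ the mapping space from $\id_j$ is contractible. Mapping \icats{} in $\tJ_{j\upslash}$ out of $\id_j$ have terminal objects, and therefore weakly contractible classifying spaces. Since $\Ld$ here inverts all 2-morphisms and some 1-morphisms, the key step is to argue that the localization does not disturb this. For that we use \cref{obs:lax homotopy eps-equiv}: the lax transformation of \cref{constr:lax adj tr for free fib} from $\id$ to $s_0^*\ev_1$ (in the $(1,0)$ variance, from the constant functor at $\id_j$) gives, after $\Ld$, a natural transformation from the constant functor at $\id_j$ to the identity of $\Ld\tJd_{j\upslash}$. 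Its component at $\id_j$ is an equivalence. An \icat{} with such a ``contracting'' transformation whose value at the point is the identity has that point initial, by the standard argument. This is the main obstacle: we must check that the transformation is decorated, so that it descends through $\Ld$, and that its component at $\id_j$ is trivial. We expect this to follow from \cref{lem:lax tr for dec fib}.

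For (ii), let $f\colon j\to j'$ be in $T$. Viewed as an object of $\tJ_{j\upslash}$, the morphism $f$ receives the square $\id_j\to f$ with components $(\id_j,f)$. This square commutes and lies over $f\in T$, so it is decorated and becomes invertible in $\Ld$. Hence $f$ is equivalent to the initial object. For (iii), the functor $\Frof(f)$ for $f\in T$ is induced by cartesian transport, i.e.\ precomposition with $f$: it sends $\id_{j'}\mapsto f$. By (ii) this image is initial, so initial objects are preserved.
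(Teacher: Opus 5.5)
Your (ii) and (iii) are exactly the paper's argument. Your strategy for (i), a ``contracting'' transformation $\underline{\id_j}\Rightarrow\id$ on the localization, is viable, but you misdiagnose the step you call the main obstacle, and the justification you point to does not apply.

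\cref{lem:lax tr for dec fib} and \cref{obs:lax homotopy eps-equiv} concern transformations out of $[1]^{\sharp}\otimesd(\blank)$, whose components must be decorated 1-morphisms. The component of your transformation at $f\colon j\to j'$ is the commuting triangle $\id_j\to f$ lying over $f$, which is decorated only when $f\in T$. If all components were decorated, $\Ld$ would turn the transformation into a natural \emph{equivalence} $\underline{\id_j}\simeq\id$. That would force $\Frof^{(1,0)}_{\tJ}(\tJ,T)(j)$ to be contractible, which already fails for $\tJ=[1]$, $T$ minimal and $j=0$, where this \icat{} is $[1]$.

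What you need is weaker and automatic. Take the $(1,0)$-analogue of \cref{constr:lax adj tr for free fib}, i.e.\ precomposition with $[1]\otimes[1]\to[1]\times[1]\xrightarrow{\min}[1]$, restricted to the fibre over $j$. This gives a lax transformation $[1]\otimes\tX\to\tX$, with $\tX:=\tJ_{j\upslash}$, from $\underline{\id_j}$ to $\id$, and its component at $\id_j$ is the identity. Compose with the localization $\ell\colon\tX\to Y:=\Frof^{(1,0)}_{\tJ}(\tJ,T)(j)$. The result is a functor from $[1]\otimes\tX$ to an \icat{}, so it factors through $\Lcat([1]\otimes\tX)\simeq[1]\times\Lcat\tX$ (cf.\ \cref{obs:max marked gray is cart}). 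It then factors through the localization $[1]\times Y$, because both endpoints $\ell\circ\underline{\id_j}$ and $\ell$ invert the decorated morphisms. No condition on the components is needed. You get an honest natural transformation $\underline{\id_j}\Rightarrow\id_{Y}$ that is the identity at $\id_j$, and your standard argument then shows $\id_j$ is initial.

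Also drop the detour through \cref{propn:rep cart fib cond}. Its condition (4) concerns initial objects of mapping \icats{} in $\tJ_{\upslash j}$, not an initial object of the fibre $\tJ_{j\upslash}$.

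With this repair your proof is correct and differs from the paper's only in (i). The paper starts from the computation you also make: $\tJ_{j\upslash}(\id_j,f)$ has an initial (or terminal) object, so $(\Lcat\tJ_{j\upslash})(\id_j,f)\simeq\|\tJ_{j\upslash}(\id_j,f)\|\simeq *$ and $\id_j$ is initial in $\Lcat\tJ_{j\upslash}$. It then handles ``the localization does not disturb this'' by noting that the further localization $\Lcat\tJ_{j\upslash}\to Y$ is limit-cofinal, so $\{\id_j\}\to Y$ is limit-cofinal and $\id_j$ stays initial. Your argument replaces the description of mapping spaces in $\Lcat$ and the cofinality of localizations with the construction of the transformation, and makes the mapping-category computation unnecessary. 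The paper's route is shorter given those two standard facts.
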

\begin{proof}
  By definition, $\Frof^{(1,0)}_{\tJ}(\tJ,T)(j)$ is the \icat{}
  obtained from $\tJ_{j \upslash}$ by inverting the commuting
  triangles under $j$ given by marked morphisms in $\tJ$ as well as
  all 2-morphisms. We first consider the localization
  $\Lcat\tJ_{j \upslash}$ that inverts all 2-morphisms; we
  claim $\id_{j}$ is an initial object here. Indeed, for a morphism $f
  \colon j \to j'$ we can identify the mapping \icat{}
  $\tJ_{j \upslash}(\id_{j},f)$ as $\tJ(j,j')_{f/}$ using
  \cref{ARlax mor pb}; this
  has an initial object, and so 
  $(\Lcat\tJ_{j \upslash})(\id_{j},f) \simeq
  \|\tJ(j,j')_{f/}\|$ is contractible, which means that $\id_{j}$ is an
  initial object. Now consider the composition
  \[ \{\id_{j}\} \to \Lcat\tJ_{j \upslash}(\id_{j},f) \to
    \Frof^{(1,0)}_{\tJ}(\tJ,T)(j);
  \]
  here the first functor is limit-cofinal since $\id_{j}$ is initial,
  while the second functor is also limit-cofinal as it is a
  localization of \icats{}. The composite is therefore also
  limit-cofinal, which means that $\id_{j}$ is also an initial object
  of $\Frof^{(1,0)}_{\tJ}(\tJ,T)(j)$. This proves (i).

  If $f \colon j \to j'$ is a marked morphism, the commuting
  triangle
  \[
    \begin{tikzcd}
      {} & j \ar[dl, "="'] \ar[dr, "f"] \\
      j \ar[rr, "f"] & & j'
    \end{tikzcd}
  \]
  is sent to an equivalence in $\Frof^{(1,0)}_{\tJ}(\tJ,T)(j)$, and so
  the image of $f$ is equivalent to that of $\id_{j}$, and so must
  also be initial, giving (ii). Part (iii) follows as the image of
  $\id_{j'}$ under $\Frof^{(1,0)}_{\tJ}(\tJ,T)(f)$ is the image of
  $f$, which is initial.
\end{proof}

\begin{theorem}\label{thm:adaggerrefined}
  A functor of marked \itcats{} $f \colon (\bII,E) \to (\bJJ,T)$
  is $(1,0)$-cofinal (\ie{} lax colimit-cofinal)
  if and only if the following conditions hold:
    \begin{enumerate}[(i)]
    \item For every $j \in \bJJ$, there exists an initial object in
      $\Frof^{(1,0)}_{\tJ}(\tI,E)(j)$. 
    \item The initial object of $\Frof^{(1,0)}_{\tJ}(\tI,E)(j)$ is
      sent by $\Frof^{(1,0)}_{\tJ}(f)$ to an initial object in
      $\Frof^{(1,0)}_{\tJ}(\tJ,T)(j)$.
    \item Given an object $x \in \tI$, 
      the object of $\Frof^{(1,0)}_{\tJ}(\tI,E)(f(x))$
      that is the image of $(x,\id_{f(x)})$ is initial.
  \item
    For every marked morphism $\phi \colon j \to j'$ in $T$, the functor
      \[
        \Frof^{\ve}_{\tJ}(\tI,E)(\phi) \colon \Frof^{\ve}_{\tJ}(\tI,E)(j')
        \to \Frof^{\ve}_{\tJ}(\tI,E)(j)		 	\]
      preserves initial objects.
    \end{enumerate}
    Similarly, in the other three cases we have:
    \begin{itemize}
    \item $f$ is $(1,1)$-cofinal (oplax colimit-cofinal) if and only
      if the same conditions hold with $(1,0)$ replaced by $(1,1)$.
    \item $f$ is $\ve$-cofinal with $\ve = (0,1)$ or $(0,0)$ ((op)lax
      limit-cofinal) if and only if
      the analogous conditions hold with $(1,0)$ replaced by $\ve$ and 
      ``initial object'' replaced by ``terminal object''.
    \end{itemize}
  \end{theorem}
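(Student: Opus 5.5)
By \cref{propn:cofinal charn}, $f$ is $(1,0)$-cofinal \IFF{} for every $j\in\tJ$ the functor of \icats{}
\[ \Phi_j := \Frof^{(1,0)}_{\tJ}(f)(j) \colon \Frof^{(1,0)}_{\tJ}(\tI,E)(j) \to \Frof^{(1,0)}_{\tJ}(\tJ,T)(j) \]
is an equivalence. The plan is to compare both sides through their initial objects, using \cref{lem:Frof initial stuff} for the target. The other three variances then follow from \cref{obs:cofinal change variance} and \cref{lem:lax colim change variance}. Dualizing turns initial into terminal objects for $\ve=(0,1),(0,0)$.

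For necessity, assume every $\Phi_j$ is an equivalence. Then (i) and (ii) follow from \cref{lem:Frof initial stuff}(i), since an equivalence reflects and preserves initial objects. For (iii), the object $(x,\id_{f(x)})$ maps under $\Phi_{f(x)}$ to $\id_{f(x)}$, which is initial by the proof of \cref{lem:Frof initial stuff}(i); hence it is initial in the source. For (iv), naturality of $\Phi$ identifies $\Frof^{(1,0)}_{\tJ}(\tI,E)(\phi)$ with $\Frof^{(1,0)}_{\tJ}(\tJ,T)(\phi)$ up to equivalence. The latter preserves initial objects by \cref{lem:Frof initial stuff}(iii).

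For sufficiency, the key point is that (i) and (ii) alone do not make $\Phi_j$ an equivalence. I would prove it by a fibrational argument instead of fibrewise. Let $q\colon\tQ\to\tJ$ be the 1-fibred $(1,0)$-fibration $L^{(1,0);1}_{\tJ^{\sharp}}(\tI,E)$ from \cref{cor:explicit locn to fibs}, with fibres $\Frof^{(1,0)}_{\tJ}(\tI,E)(j)$. Let $\tJ_{\upslash \blank}$ denote the corresponding fibration for $(\tJ,T)$. By the universal property (\cref{cor:free 1-fibred on marked}), maps out of $L^{(1,0);1}_{\tJ^{\sharp}}(\tJ,T)$ are determined by marked maps out of $(\tJ,T)$.

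I then build a marked section $s\colon(\tJ,T)\to\tQ^{\natural}$ choosing the initial object of each fibre. To get a functor rather than a pointwise choice, I take the full sub-\itcat{} $\tQ_0\subseteq\tQ$ on fibrewise-initial objects. Because fibres have initial objects, $\tQ_0\to\tJ$ is a trivial fibration: its fibres are contractible and mapping \icats{} are computed as maps out of initial objects. This uses that for a $(1,0)$-fibration, $\tQ(x,y)$ is fibred over $\tJ(qx,qy)$ with fibres given by fibre mapping spaces after cartesian transport. Condition (iv) ensures $s$ sends marked morphisms to cartesian ones.

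The section $s$ then induces a map of fibrations $\Psi\colon L^{(1,0);1}_{\tJ^{\sharp}}(\tJ,T)\to\tQ$. Next I show $\Psi$ and $\Phi$ are mutually inverse. First, $\Phi\Psi$ is an endomorphism of the free fibration on $(\tJ,T)$ preserving the unit section, by (ii) and uniqueness of initial objects, so it is the identity by freeness. Second, $\Psi\Phi$ is an endomorphism of $\tQ$. It restricts along the unit $(\tI,E)\to\tQ$ to $x\mapsto s(f(x))$, and (iii) identifies $s(f(x))$ with the image of $x$ itself. Freeness of $\tQ$ on $(\tI,E)$ then gives $\Psi\Phi\simeq\id$.

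The main obstacle is constructing $s$ coherently as a functor of \itcats{}, in particular on 2-morphisms. I would handle this via the trivial-fibration argument on $\tQ_0$. It needs a check that 2-morphisms between maps into initial objects are uniquely determined, which holds because the fibres are \icats{} where initial objects have contractible mapping spaces into all of $\tQ(x,\blank)$ over each base morphism.
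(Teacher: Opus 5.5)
Your proposal is correct and follows essentially the same route as the paper: necessity from \cref{lem:Frof initial stuff}, and sufficiency by showing that the full sub-\itcat{} $\tQ_{0}\subseteq\tQ$ of fibrewise initial objects maps equivalently onto $\tJ$. The paper then also uses (iv) to make the resulting section marked, extends it over the free fibration, and checks the two composites via (ii) and (iii) together with freeness, exactly as you do. The one step to make explicit is the ``uniqueness'' behind $\Phi\Psi\simeq\id$. It should come from running the same $\tQ_{0}$-argument on the fibration for $\Frof^{(1,0)}_{\tJ}(\tJ,T)$, whose fibrewise initial objects also form a copy of $\tJ$ by \cref{lem:Frof initial stuff}, not from pointwise uniqueness of initial objects.
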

  \begin{proof}
    By \cref{propn:cofinal charn}, the functor $f$ is $(1,0)$-cofinal
    \IFF{} $\Frof^{\ve}_{\tJ}(f)$ is a natural equivalence. It
    therefore follows from \cref{lem:Frof initial stuff} that the
    conditions are necessary. We are left with proving that they are
    also sufficient for $\Frof^{\ve}_{\tJ}(f)$ to be an equivalence.

    Let $q \colon \tQ \to \tJ$ be the 1-fibred $(1,0)$-fibration
    corresponding to $\Frof^{(1,0)}_{\tJ}(\tI,E)$ and $\tP \to \tJ$ be
    that corresponding to $\Frof^{(1,0)}_{\tJ}(\tJ,T)$, with
    $\Phi \colon \tQ \to \tP$ the morphism of 1-fibred
    $(1,0)$-fibrations corresponding to
    $\Frof^{\ve}_{\tJ}(f)$; we also have the canonical maps
    $\eta_{\tJ} \colon \tJ \to \tP$ and $\eta_{\tI} \colon \tI \to
    \tQ$ that exhibit these as free 1-fibred $(0,1)$-fibrations over
    $\tJ$ on
    $(\tJ,T)$ and $(\tI,E)$, respectively.
    We consider the full sub-\itcat{} $\tQ_{0} \subseteq \tQ$ of
    fibrewise initial objects. Assumption (i) implies that the
    restriction $\tQ_{0} \to \tJ$ is essentially surjective; it is
    also fully faithful since for objects $x,y \in \tQ_{0}$ over $j,j'$
    in $\tJ$, the fibre of the cocartesian fibration
    $\tQ(x,y) \to \tJ(j,j')$ at $f \colon j \to j'$ is equivalent to
    $\tQ_{j}(x,f^{*}j')$, which is contractible since by assumption
    $x$ is initial in $\tQ_{j}$. Hence we have an equivalence
    $\tQ_{0} \isoto \tJ$, and its inverse gives a section
    $s \colon \tJ \to \tQ$ of $q$ that picks out the fibrewise initial
    objects. Moreover, condition (iv) implies that $s$ takes the
    marked morphisms in $T$ to cartesian morphisms. We can therefore
    extend $s$ over the free 1-fibred $(1,0)$-fibration $\tP$ to a morphism of 1-fibred
    $(1,0)$-fibrations $\bar{s} \colon \tP \to \tQ$ that straightens
    to a natural transformation
    \[ \sigma \colon \Frof^{(1,0)}_{\tJ}(\tJ,T) \to
      \Frof^{(1,0)}_{\tJ}(\tI,E).\]
    Moreover, the composite $\tJ \xto{s} \tQ \xto{\Phi} \tP$ picks out
    the
    fibrewise initial objects in $\tP$ by condition (ii), so that by
    uniqueness the composite $\Phi \circ \bar{s}$ is the
    identity, hence so is $\Frof^{\ve}_{\tJ}(f) \circ \sigma$. On the
    other hand, by the freeness of $\tQ$ the composite $\bar{s} \circ
    \Phi$ is uniquely determined by the functor
    \[ \tI \to \tQ \xto{\Phi} \tP \xto{\bar{s}} \tQ. \]
    Here the composite of the first two maps is equivalent to the
    composite
    \[ \tI \xto{f} \tJ \xto{\eta_{\tJ}} \tP,\]
    so the full composite agrees with
    \[ \tI \xto{f} \tJ \xto{s} \tQ. \]
    To identify this with $\eta_{\tI}$, consider the commutative
    triangle
    \[
      \begin{tikzcd}
       \tI \ar[rr, "\eta_{\tI}"] \ar[dr, "f"'] & & \tQ \ar[dl, "q"] \\
        & \tJ.
      \end{tikzcd}
    \]
    By assumption (iii), $\eta_{\tI}$ factors through
    $\tilde{\eta}_{\tI} \colon \tI \to \tQ_{0}$, so that we can
    identify $f$ with the composite $\tI \xto{\tilde{\eta}_{\tI}}
    \tQ_{0} \isoto \tJ$. It follows that $s \circ f$ is equivalent to
    $\eta_{\tI}$, and so $\bar{s} \circ \Phi$ must be the identity,
    and hence so is $\sigma \circ \Frof^{\ve}_{\tJ}(f)$. We have thus
    shown that $\sigma$ is an inverse of $\Frof^{\ve}_{\tJ}(f)$; hence
    the latter is an equivalence, as we wanted to prove.
\end{proof}

As a consequence, we get the following simpler sufficient (but not
necessary) conditions for cofinality of a map from the point:
\begin{cor}\label{cor:point cofinal simple cond}
  Suppose $(\tJ,T)$ is a marked \itcat{} and $j$ is an object of
  $\tJ$ for which the following conditions hold:
  \begin{itemize}
  \item for every object $j'$ of $\tJ$, there exists a marked morphism
    $j' \to j$,
  \item and every marked morphism $j' \to j$ is initial in $\tJ(j',j)$.
  \end{itemize}
  Then the functor $\{j\}^{\flat} \to (\tJ, T)$ is cofinal.
\end{cor}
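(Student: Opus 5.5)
The plan is to read ``cofinal'' here as $(1,0)$-cofinal, i.e.\ lax colimit-cofinal, and to check the four conditions of \cref{thm:adaggerrefined} for the functor $f \colon \{j\}^{\flat} \to (\tJ,T)$ picking out $j$. The conditions of the statement resemble condition (4) of \cref{propn:rep cart fib cond}, which supports this choice of variance.

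The first step, and the only one requiring real care, is to identify the weight $\Frof^{(1,0)}_{\tJ}(\{j\}^{\flat})(j')$ for $j' \in \tJ$. By \cref{not:free 1-fib functor} it is the localization $\Ld$ of the decorated \itcat{} $\{j\}^{\sharp} \times_{\tJ^{\sharp\sharp}} \tJ^{\diamond}_{j' \upslash}$. Its underlying \itcat{} is the fibre of $\ev_{1} \colon \tJ_{j'\upslash} \to \tJ$ over $j$. By \cref{ARlax mor pb} this fibre is the \icat{} $\tJ(j',j)$, regarded as an \itcat{} with no non-invertible 2-morphisms; its 1-morphisms are the 2-morphisms of $\tJ$. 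The decorated 1-morphisms are those lying over the (trivial) marking of $\{j\}^{\flat}$, so only equivalences are inverted, and inverting all 2-morphisms changes nothing. Hence I expect
\[ \Frof^{(1,0)}_{\tJ}(\{j\}^{\flat})(j') \simeq \tJ(j',j), \]
functorially in $j'$ via precomposition. I will double-check this with the description of the slice in \cref{not:decoratedslices}; this bookkeeping is the main point where an error could creep in.

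With this identification the conditions of \cref{thm:adaggerrefined} are checked as follows.
\begin{enumerate}[(i)]
\item By hypothesis there is a marked morphism $g \colon j' \to j$, and it is initial in $\tJ(j',j)$, so an initial object exists.
\item The functor $\Frof^{(1,0)}_{\tJ}(f)$ sends this initial object to the image of the marked morphism $g$ in $\Frof^{(1,0)}_{\tJ}(\tJ,T)(j')$. That image is initial by \cref{lem:Frof initial stuff}(ii).
\item For the unique object of $\{j\}$, the relevant object is $\id_{j} \in \tJ(j,j)$. Since markings contain all equivalences, $\id_{j}$ is a marked morphism $j \to j$, so it is initial by hypothesis.
\item For a marked $\phi \colon j' \to j''$, the transition functor is precomposition $\phi^{*} \colon \tJ(j'',j) \to \tJ(j',j)$. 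It sends an initial object, i.e.\ a marked $g \colon j'' \to j$, to $g\phi$. This composite is marked because markings are closed under composition, and is therefore initial.
\end{enumerate}

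Applying \cref{thm:adaggerrefined} then shows that $f$ is $(1,0)$-cofinal. Dual versions for the other variances follow from \cref{obs:cofinal change variance}.
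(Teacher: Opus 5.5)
Your proposal is correct and is essentially the paper's proof: apply \cref{thm:adaggerrefined} with $\ve=(1,0)$, identify $\Frof^{(1,0)}_{\tJ}(\{j\}^{\flat})$ with $\tJ(\blank,j)$, and verify (i)--(iv) using that markings contain identities and are closed under composition. The only difference is in (ii), where you cite \cref{lem:Frof initial stuff}(ii) directly and the paper repeats that argument inline.

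One small correction to your justification of the weight. Every morphism in the fibre lies over $\id_{j}$, so the minimal marking of $\{j\}$ imposes no constraint. The actual reason only equivalences of $\tJ(j',j)$ get inverted is that decorated 1-morphisms of $\DARopl(\tJ^{\sharp\sharp})^{\diafib}$ must be commuting squares, which here means invertible 2-morphisms.
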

\begin{proof}
  We apply \cref{thm:adaggerrefined}.  In this case
  $\Frof^{(1,0)}_{\tJ}(\{j\})$ is the functor $\tJ(\blank,j)$, so the
  two assumptions together give condition (i) as well as (iii), since
  $\id_{j}$ is always marked, and (iv), since marked morphisms are
  closed under composition. For condition (ii), we recall from
  \cref{lem:Frof initial stuff} that the initial object of
  $\Frof^{(1,0)}_{\tJ}(\tJ,T)(j')$ is the image of $\id_{j'}$, and a
  marked morphism $\phi \colon j' \to j$ gives a morphism
  $\id_{j'} \to \phi$ whose image in $\Frof^{(1,0)}_{\tJ}(\tJ,T)(j')$
  is an equivalence; thus the initial object $\phi$ in $\tJ(j',j)$ is
  indeed sent to an initial object, as required.
\end{proof}

\begin{remark}\label{rem:cofinalrepresentable}
  We saw in \cref{propn:rep cart fib cond} that if $p \colon \tE \to
  \tB$ is a 1-fibred $(1,0)$-fibration and $e$ is an object of $\tE$,
  then the functor $\{e\}^{\flat} \to (\tE,C)$, where $C$ denotes the
  cartesian morphisms, is $(1,0)$-cofinal \IFF{} the conditions of
  \cref{cor:point cofinal simple cond} are satisfied (and both are
  equivalent to $e$ exhibiting $p$ as a representable fibration). Thus
  in this particular case these a priori stronger conditions are
  actually necessary for cofinality.
\end{remark}

\subsection{Kan extensions}\label{sec:kanext}
In this section we apply our results on pushforward of fibrations from
\S\ref{sec:cofree} to construct Kan extensions of \itcats{}. This
includes not only ordinary Kan extensions, which can also be
constructed using enriched \icat{} theory \cite{heineweighted}, but
also \emph{partially lax} Kan extensions, which originally appeared in
\cite{Abellan2023} in a model-categorical approach.\footnote{Loubaton
  also defines the notion of partially lax Kan extension for \iicats{}
in \cite[\S 4.2.4]{loubaton}, but does not seem to prove they exist
except in the non-lax case (his Corollary 4.2.4.9).}

We start by
considering the case of right Kan extensions in $\CATIT$, which
follows immediately from our previous results via straightening:
\begin{propn}\label{prop:kanbasecase}
  Let $f \colon (\tC, I) \to \tD^{\sharp}$ be a functor of
  marked \itcats{}. Then there exists an adjunction of \itcats{}
  \[
    f^* \colon \FUN(\tD,\CATIT) \llra \Fun(\tC,\CATIT)^{I\dplax} \colon f_*^{I\dplax}
  \]
  where $f^*$ is the obvious restriction functor. For $F \colon
  \tC \to \CATIT$, we can describe $f_{*}^{I\dplax}F$ by the
  pointwise formula     
\[  \begin{split}
    f_*^{I\dlax}F(d) & \simeq
                       \lim^{I_{d}\dlax}_{\tC_{d\upslash}}\left(\tC_{d
                       \upslash} \to \tC \xrightarrow{F} \CATIT\right)
    \\
    f_*^{I\doplax}F(d) & \simeq
                       \lim^{I_{d}\doplax}_{\tC_{d\downslash}}\left(\tC_{d
                       \upslash} \to \tC \xrightarrow{F} \CATIT\right),                       
  \end{split}
\]
where $I_{d}$ denotes the morphisms that project over $\tC$ to
morphisms in $I$ and to commuting triangles in $\tD$. 
\end{propn}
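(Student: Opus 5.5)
The plan is to transport the adjunction of \cref{cor:lax kan adjn on fibs} across the straightening equivalences, and then to read off the pointwise formula by computing the fibres of $R_{f}$.

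Take $\ve=(0,1)$ for the lax case and $\ve=(0,0)$ for the oplax case. Under straightening, $\FUN(\tD,\CATIT)\simeq\FIB^{\ve}_{/\tD}$ by \cref{thm:str}. The partially lax side $\FUN(\tC,\CATIT)^{I\dlax}$ (resp.\ the corresponding oplax one for $\tC^{\co}$) is identified with $\FIB^{\ve}_{/(\tC,I)}$ by \cref{thm:elaxstr}. Both equivalences are contravariantly natural with respect to pullback on the fibration side and composition on the functor side. Consequently the restriction functor $f^{*}$ corresponds to the pullback functor $f^{*}\colon\FIB^{\ve}_{/\tD}\to\FIB^{\ve}_{/(\tC,I)}$. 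By \cref{cor:lax kan adjn on fibs} this pullback has a right adjoint $R_{f}$, and we define $f^{I\dplax}_{*}$ to be the straightening of $R_{f}$. For the oplax variant, one should check that the variance conventions match: $\FIB^{(0,0)}_{/(\tC,I)}$ straightens to $I$-lax transformations of functors out of $\tC^{\co}$, and after applying $(\blank)^{\co}$ to $\CATIT$ these become $I$-oplax transformations. I would sort this out via the symmetries of \cref{lem:lax colim change variance}, but it is only bookkeeping.

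For the pointwise formula, \cref{obs:Rf fibre} gives the fibre of $R_{f}(q)$ at $d$ as $\DFUN_{/(\tC,I)^{\sharp}}(\tCd_{d\upslash},\tEn)$, where $\tCd_{d\upslash}$ is decorated by the morphisms over $I$ that map to commuting triangles, together with all 2-morphisms. Since $\tCd_{d\upslash}=\tC_{d\upslash}\times_{\tC}\tC$ and all of its 2-morphisms are decorated, a decorated functor over $\tC$ into $\tEn$ is the same as a section of the pulled-back fibration $\tC_{d\upslash}\times_{\tC}\tE\to\tC_{d\upslash}$. Such a section must send $I_{d}$ to cocartesian morphisms and all 2-morphisms to cartesian ones. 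Thus the fibre is exactly $\FUN^{I_{d}\dcoc}_{/\tC_{d\upslash}}(\tC_{d\upslash},\tC_{d\upslash}\times_{\tC}\tE)$. The pullback $\tC_{d\upslash}\times_{\tC}\tE$ is the fibration straightening $F$ composed with $\tC_{d\upslash}\to\tC$, so \cref{prop:laxlimCATIT} identifies this \itcat{} with $\lim^{I_{d}\dlax}_{\tC_{d\upslash}}F$. Finally, the fibre of the fibration $R_{f}(q)$ over $d$ is the value of its straightening at $d$, which gives the stated formula. The oplax case is the same argument with $\tC_{d\downslash}$ and $(0,0)$-fibrations.

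I expect the main obstacle to be the matching of decorations. One has to verify that the decoration on $\tCd_{d\upslash}$ coming from $\DARelax(\tDss)^{\diafib}$ restricted over $(\tC,I)^{\sharp}$ is precisely the marking $I_{d}$, that is, the maps over $I$ whose triangle in $\tD$ commutes. One also has to check that decorated functors into $\tEn$ coincide with the $I_{d}$-cocartesian sections. The first point comes from unwinding that decorated 1-morphisms of $\DARelax(\tDss)^{\diafib}$ are exactly the commuting squares. The second uses that every decorated morphism of the pullback lies over $I$ and has cocartesian image.
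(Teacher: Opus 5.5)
Your lax case is exactly the paper's argument: straighten the adjunction of \cref{cor:lax kan adjn on fibs} for $\ve=(0,1)$ using \cref{thm:str} and \cref{thm:elaxstr}, then combine \cref{obs:Rf fibre} with \cref{prop:laxlimCATIT}. You also correctly identify the decoration on $\tCd_{d\upslash}$ as $(\tC_{d\upslash},I_{d})^{\sharp}$.

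The oplax case, however, is set up with the wrong variance, and this is more than bookkeeping. For $\ve=(0,0)$ and $f$ itself, \cref{thm:str} gives $\FIB^{(0,0)}_{/\tD}\simeq\FUN(\tD^{\co},\CATIT)$, not $\FUN(\tD,\CATIT)$. Likewise \cref{thm:elaxstr} gives $\FIB^{(0,0)}_{/(\tC,I)}\simeq\FUN(\tC^{\co},\CATIT)^{I\dlax}$, which is $I$-\emph{lax}. For the same reason, ``the same argument with $\tC_{d\downslash}$ and $(0,0)$-fibrations'' does not yield the claimed formula. By \cref{prop:laxlimCATIT}, $I_{d}$-cocartesian sections of a $(0,j)$-fibration compute $I_{d}$-lax limits. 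So you would obtain the $I_{d}$-lax limit of a functor $(\tC_{d\downslash})^{\co}\to\CATIT$, rather than $\lim^{I_{d}\doplax}_{\tC_{d\downslash}}F$.

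Your route can be rescued, but only through a genuine double dualization, which requires four ingredients:
\begin{enumerate}
\item the equivalence $\FUN(\tA,\tB)^{I\dlax}\simeq(\FUN(\tA^{\co},\tB^{\co})^{I\doplax})^{\co}$, which comes from $(\tA\otimes\tB)^{\co}\simeq\tB^{\co}\otimes\tA^{\co}$;
\item the equivalence $(\blank)^{\op}\colon\CATIT^{\co}\simeq\CATIT$, under which $F$ corresponds to the $(0,0)$-fibration with fibres $F(c)^{\op}$;
\item an outer $(\blank)^{\co}$ applied to the entire adjunction;
\item the limit version of \cref{lem:lax colim change variance}, to rewrite the lax limit over $(\tC_{d\downslash})^{\co}$ as $(\lim^{I_{d}\doplax}_{\tC_{d\downslash}}F)^{\op}$, whose $\op$ then cancels the one on fibres.
\end{enumerate}

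The paper avoids all of this by choosing variances that match directly. It applies \cref{cor:lax kan adjn on fibs} to $f^{\op}\colon(\tC^{\op},I)\to(\tD^{\op})^{\sharp}$ with $\ve=(1,0)$, or to $f^{\coop}$ with $\ve=(1,1)$. Then \cref{thm:elaxstr} identifies $\FIB^{(1,0)}_{/(\tC^{\op},I)}$ with $\FUN(\tC,\CATIT)^{I\doplax}$ on the nose. Next, \cref{obs:Rf fibre} describes the fibre at $d$ as $I_{d}$-cartesian sections over $(\tC^{\op})_{\upslash d}\simeq(\tC_{d\downslash})^{\op}$, using $\ARopl(\tD^{\op})\simeq\ARlax(\tD)^{\op}$. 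Finally, \cref{prop:laxlimCATIT} for $(1,0)$-fibrations identifies these sections with the $I_{d}$-oplax limit over $\tC_{d\downslash}$ of the restriction of $F$.
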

\begin{proof}
  The existence of the adjunction follows by combining \cref{cor:lax kan adjn on fibs} with the
  straightening equivalence of \cref{thm:elaxstr}: in the lax case by
  straightening the adjunction for $f$ in
  the case $\ve = (0,1)$ or that for $f^{\co}$ in the case $\ve =
  (0,0)$, and in the oplax case by straightening the
  adjunction for $f^{\coop}$ with $\ve = (1,1)$ or $f^{\op}$ with $\ve
  = (1,0)$. To identify the values of $f_*^{I\dlax}F$ we then combine
  \cref{obs:Rf fibre} with the fibrational description of $I$-(op)lax
  limits in $\CATIT$ from \cref{prop:laxlimCATIT}.
\end{proof}
We call the functor $f_*^{I\dplax}$ the \emph{$I$-(op)lax right Kan
  extension} functor along $f$. Our goal is now to use the Yoneda
embedding to extend these Kan extensions to more general targets than
$\CATIT$, for which we start with the following observations:

\begin{observation}\label{obs:squarelaxkan}
  Let $f \colon (\tC, I) \to \tD^{\sharp}$ be a marked functor. Then
  for every \itcat{} $\tA$, restriction along $f$ yields a commutative diagram
	\[
		\begin{tikzcd}
			\FUN(\tD,\tA) \arrow[d] \arrow[r,"f^*"] & \FUN(\tC,\tA)^{I\dplax} \arrow[d] \\
			\FUN( \tA^\op \times \tD,\CATI) \arrow[r,"(f\times \id)^*"] & \FUN(\tA^\op \times \tC,\CATI)^{\overline{I}\dplax} 
		\end{tikzcd}
	\]
	where the vertical functors are induced by the Yoneda
        embedding and $\overline{I}$ contains all pairs of morphisms
        whose component in $\tC$ lies in $I$.

        From \cite[Corollary 2.8.5]{AGH24} we see that the vertical functors are fully faithful with essential image given by those functors $F \colon \tX \times \tA^\op \to \CATI$,  with $\tX \in \{\tC,\tD\}$, such that $F(-,x)$ is representable for every $x \in \tX$.
\end{observation}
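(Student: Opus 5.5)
The statement to prove is \cref{obs:squarelaxkan}: the square commutes, and its vertical maps are fully faithful with the stated essential image. The plan is to reduce everything to currying and the marked Gray tensor adjunctions, and then quote \cite[Corollary 2.8.5]{AGH24} for the vertical functors.

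\emph{Step 1: construct the vertical functors.} The Yoneda embedding $h_{\tA}\colon \tA \to \tPSh(\tA) = \FUN(\tA^{\op},\CATI)$ induces $h_{\tA,*}\colon \FUN(\tX,\tA)^{J\dplax} \to \FUN(\tX,\FUN(\tA^{\op},\CATI))^{J\dplax}$ for $(\tX,J)$ equal to $\tD^{\sharp}$ or $(\tC,I)$. The target should then be identified with $\FUN(\tA^{\op}\times\tX,\CATI)^{\overline{J}\dplax}$, as follows.
\begin{itemize}
\item Use \cref{marked lax hom adj} together with \cref{obs:max marked gray is cart} for the maximally marked factor $(\tA^{\op})^{\sharp}$.
\item This gives $(\tA^{\op})^{\sharp}\otimesm(\tX,J) \simeq (\tA^{\op}\times\tX, \overline{J})$.
\item Currying therefore gives the equivalence
\[\FUN(\tX,\FUN(\tA^{\op},\CATI))^{J\dplax} \simeq \FUN(\tA^{\op}\times\tX,\CATI)^{\overline{J}\dplax}.\]
\end{itemize}
Here $\overline{J}$ is generated by $J$ in the $\tX$-coordinate together with all morphisms of $\tA^{\op}$ paired with equivalences. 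Pairs whose $\tX$-component lies in $J$ and whose other component is arbitrary are then automatically strong, since both factors of the product are strong in those directions. This matches the description of $\overline{I}$ in the statement.

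\emph{Step 2: commutativity.} Both composites are induced by precomposition with $f$ and postcomposition with $h_{\tA}$. These act on different variables, so the square commutes by naturality of the currying equivalence in $(\tX,J)$ with respect to marked functors. Note that $f\colon(\tC,I)\to\tD^{\sharp}$ is marked, and $f\times\id$ is marked for $\overline{I}$ and the maximal marking.

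\emph{Step 3: fully faithfulness and essential image.} By \cite[Corollary 2.8.13]{AGH24} (as used in \cref{prop:yonedacont}), $h_{\tA,*}$ is fully faithful on partially (op)lax functor \itcats{}, because $h_{\tA}$ is fully faithful. An object of $\FUN(\tA^{\op}\times\tX,\CATI)$ lies in the image exactly when each curried value $F(\blank,x)$ lies in the essential image of $h_{\tA}$, that is, is representable. This statement concerns objects only, so the marking plays no role, and it is the content of \cite[Corollary 2.8.5]{AGH24}.

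The only genuinely delicate point is Step 1. There one must check that the marking on the Gray tensor really reduces to the product with the marking $\overline{J}$ when one factor is maximally marked, and that the variance (lax versus oplax) is placed on the correct factor. Both checks should be routine unwindings of the definitions.
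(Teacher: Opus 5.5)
Your proposal is correct and follows the same approach as the paper, which simply asserts the square and cites \cite[Corollary 2.8.5]{AGH24}. The vertical functors are postcomposition with $h_{\tA}$ followed by the currying equivalence $\FUN(\tX,\tPSh(\tA))^{J\dplax}\simeq\FUN(\tA^{\op}\times\tX,\CATI)^{\overline{J}\dplax}$ (via \cref{marked lax hom adj} and \cref{obs:max marked gray is cart}), commutativity is naturality in the marked source, and full faithfulness and the essential image come from \cite{AGH24}; you only make explicit the currying and objectwise-image details the paper leaves implicit.
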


\begin{observation}\label{obs:embedding}
  Let $(\tA,I)$ be a marked \itcat{} and let $J$ denote the marking of
  $(\tA,I) \times \tB^{\sharp}$. Applying \cref{marked lax hom adj} to
  $(\tA,I)$, $\tB^{\sharp}$ and $\CATIT^{\sharp}$ we get an
  equivalence
  \[
    \FUN(\tB\times \tA,\CATIT)^{J\dlax}\simeq \FUN(\tA,\FUN(\tB,\CATIT))^{I\dlax},
  \]
  as well as a fully faithful functor
  \[
    \FUN(\tA,\FUN(\tB,\CATIT))^{I\dlax} \to \FUN(\tB,\FUN(\tA,\CATIT)^{I\dlax}),
  \] 
  with essential image given by those functors that send every
  morphism in $\tB$ to a strong natural transformation, or
  equivalently factor through the cartesian product.
\end{observation}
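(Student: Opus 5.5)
The first equivalence should follow formally from the marked Gray tensor machinery. By \cref{obs:max marked gray is cart}, since $\tB^{\sharp}$ is maximally marked, the marked Gray tensor $(\tA,I) \otimesm \tB^{\sharp}$ is the cartesian product $(\tA,I) \times \tB^{\sharp}$, whose marking is $J$. Applying the second equivalence of \cref{marked lax hom adj} to $(\tA,I)$, $\tB^{\sharp}$ and $\CATIT^{\sharp}$ gives
\[ \MFUN((\tA,I)\times\tB^{\sharp}, \CATIT^{\sharp})^{\lax} \simeq \MFUN((\tA,I), \MFUN(\tB^{\sharp},\CATIT^{\sharp})^{\lax})^{\lax}. \]
On the left, forgetting markings gives $\FUN(\tB\times\tA,\CATIT)^{J\dlax}$ by definition. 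On the right, $\MFUN(\tB^{\sharp},\CATIT^{\sharp})^{\lax}$ consists of lax transformations whose naturality squares commute at \emph{all} morphisms. These are exactly the strong transformations, and all of them are marked. This identifies it with $\FUN(\tB,\CATIT)^{\sharp}$, so the outer term has underlying \itcat{} $\FUN(\tA,\FUN(\tB,\CATIT))^{I\dlax}$.

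For the fully faithful functor I will reduce to \cref{marked dec gray assoc} with trivial decorations, i.e.\ to the last statement of \cref{lax oplax reverse marked}, applied with the roles of the variables chosen so that the outer functor \itcat{} is indexed by $\tB$. Concretely, that result identifies $\FUN(\tB, \FUN(\tA,\CATIT)^{I\dlax})^{\oplax}$ with the full sub-\itcat{} of $\FUN(\tA,\FUN(\tB,\CATIT)^{\oplax})^{I\dlax}$ on functors sending morphisms in $I$ to strong transformations. I then pass to locally full sub-\itcats{} on both sides.
\begin{itemize}
\item On the source side I restrict $\FUN(\tB,\blank)^{\oplax}$ to the strong transformations $\FUN(\tB,\blank)$.
\item On the target side I use the locally full inclusion $\FUN(\tB,\CATIT) \subseteq \FUN(\tB,\CATIT)^{\oplax}$. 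Postcomposition with it gives a locally full inclusion of $\FUN(\tA,\FUN(\tB,\CATIT))^{I\dlax}$ into $\FUN(\tA,\FUN(\tB,\CATIT)^{\oplax})^{I\dlax}$, using \cite[Corollary 2.7.13]{AGH24} or \cref{lem:lff on Funlax}.
\end{itemize}
Composing these yields the comparison functor
\[ \FUN(\tA,\FUN(\tB,\CATIT))^{I\dlax} \to \FUN(\tB,\FUN(\tA,\CATIT)^{I\dlax}). \]
On mapping spaces out of test objects $\tK$, this functor is induced by the canonical map $(\tK\times\tB)\otimes_{\flat,I}\tA \to (\tK\otimes_{\flat,I}\tA)\times\tB$, which gives an independent check of the construction.

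The main obstacle is verifying that the two locally full sub-\itcats{} match under the identification from \cref{lax oplax reverse marked}, i.e.\ that the functor is fully faithful with the stated essential image. I would check this by unwinding what 1- and 2-morphisms are on each side, as in the proof of \cref{marked dec gray assoc}. A morphism of $\FUN(\tB,\FUN(\tA,\CATIT)^{I\dlax})$ is a strong transformation of functors $\tB\to\FUN(\tA,\CATIT)^{I\dlax}$. Transposed, this is an $I$-lax transformation of functors $\tA\to\FUN(\tB,\CATIT)^{\oplax}$ whose components, and whose naturality 2-cells, are strong in the $\tB$-direction. This is exactly a morphism in $\FUN(\tA,\FUN(\tB,\CATIT))^{I\dlax}$, and the analogous comparison for 2-morphisms holds componentwise. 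This gives full faithfulness. For the essential image, an object $\tB\to\FUN(\tA,\CATIT)^{I\dlax}$ lies in the image precisely when its transpose $\tA\to\FUN(\tB,\CATIT)^{\oplax}$ lands in $\FUN(\tB,\CATIT)$. By the same transposition, this means that each morphism of $\tB$ is sent to a strong natural transformation. Equivalently, the adjoint functor out of $(\tB\times\tA)$ factors through the cartesian product rather than the marked Gray tensor $[1]\otimes_{\flat,I}\tA$, which is the claimed description.
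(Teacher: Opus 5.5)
Your argument for the first equivalence is the same as the paper's. For the fully faithful functor your route is correct but different from the paper's, and more roundabout.

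The paper reads the functor off from the \emph{first} equivalence of \cref{marked lax hom adj}, with the maximally marked $\tB^{\sharp}$ in the outer slot:
\[ \MFUN(\tB^{\sharp}, \MFUN((\tA,I),\CATIT^{\sharp})^{\lax})^{\oplax} \simeq \MFUN((\tA,I), \MFUN(\tB^{\sharp},\CATIT^{\sharp})^{\oplax})^{\lax}. \]
The right-hand side has underlying \itcat{} $\FUN(\tA,\FUN(\tB,\CATIT))^{I\dlax}$, exactly as in your first paragraph, since $\MFUN(\tB^{\sharp},\CATIT^{\sharp})^{\oplax}$ is again $\FUN(\tB,\CATIT)^{\sharp}$.

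On the left, every morphism of $\tB^{\sharp}$ is marked. So the objects are the functors $\tB\to\FUN(\tA,\CATIT)^{I\dlax}$ sending every morphism to a marked, i.e.\ strong, transformation. The admissible oplax transformations are those factoring through $\tB\otimes_{\sharp,\flat}[1]\simeq\tB\times[1]$, i.e.\ the strong ones. Hence the underlying \itcat{} is by definition the full sub-\itcat{} of $\FUN(\tB,\FUN(\tA,\CATIT)^{I\dlax})$ described in the statement. The markings do the cutting-down, and the ``main obstacle'' you identify never arises.

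You instead start from the minimally marked version (the last part of \cref{lax oplax reverse marked}) and cut down to strong transformations by hand on both sides. This works, but your matching step silently uses a further fact: a 2-morphism of $\FUN(\tA,\CATIT)^{I\dlax}$ is invertible once all its components at objects of $\tA$ are. Here the 2-morphism in question is the $\tB$-naturality 2-cell of a transposed transformation. This fact is exactly what makes the transpose of a 1-morphism of $\FUN(\tA,\FUN(\tB,\CATIT))^{I\dlax}$ strong. The factorization through $\FUN(\tB,\FUN(\tA,\CATIT)^{I\dlax})$ that defines your comparison functor therefore depends on it. It follows from \cref{propn:ess surj gives 2-cons}(iii) applied to $\tA^{\simeq}\to\tA$, and you should cite it.

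Your final sentence is also garbled. The correct condition is that the adjoint functor $\tB\otimes_{\flat,I}\tA\to\CATIT$ factors through $\tB\times\tA$.

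Your route does give an explicit description of the comparison functor on $\tK$-points. The paper's route gives both claims at once from a single corollary.
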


\begin{lemma}\label{lem:pointwiselimit}
  Given a marked \itcat{} $(\tA,I)$ and an \itcat{} $\tB$, let
  $\pi_{\tB} \colon (\tA,I) \times \tB^{\sharp} \to \tB^{\sharp}$
  be the projection and write $J$ for the marking of $(\tA,I) \times \tB^{\sharp}$. Then the right adjoint
  \[ \pi_{\tB,*} \colon \FIB^{\ve}_{/(\tA \times \tB, J)} \to
    \FIB^{\ve}_{/\tB}\]
  to pullback along $\pi_{\tB}$ from \cref{cor:pushforwardisfib marked} can
  be identified under straightening with the composite
  \[
    \begin{split}
      \FUN((\tA \times \tB)^{\veop}, \CATIT)^{J\dplax} & \hookrightarrow
      \FUN(\tB^{\veop},\FUN(\tA^{\veop},\CATIT)^{I\dplax}) \\ & \to
      \FUN(\tB^{\veop}, \CATIT)
    \end{split}
  \] where the first functor is the
  inclusion from \cref{obs:embedding} and the second is given by
  composing with
  \[\lim^{I\dplax}_{\tA^{\veop}} \colon
    \FUN(\tA^{\veop},\CATIT)^{I\dplax} \to \CATIT.\]
\end{lemma}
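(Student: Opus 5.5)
The plan is to compare the two functors through their values on mapping spaces out of an arbitrary $\ve$-fibration over $\tB$, and then conclude by the Yoneda lemma. Both are right adjoints: $\pi_{\tB,*}$ is right adjoint to $\pi_{\tB}^{*}$ by \cref{cor:pushforwardisfib marked}. Under straightening, $\pi_{\tB}^{*}$ becomes precomposition with the projection $(\tA\times\tB)^{\veop}\to\tB^{\veop}$ (\cref{thm:str}, \cref{thm:elaxstr}, naturality in the base). It therefore suffices to show that the composite described in the statement is right adjoint to this precomposition functor
\[ \FUN(\tB^{\veop},\CATIT) \to \FUN((\tA\times\tB)^{\veop},\CATIT)^{J\dplax}. \]
The identification then follows from uniqueness of adjoints.

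To produce that adjunction, I factor the precomposition functor through the equivalence and embedding of \cref{obs:embedding}. For $G\colon\tB^{\veop}\to\CATIT$, the composite $G\circ\pi$ corresponds under \cref{obs:embedding} to $\const\circ G\colon \tB^{\veop}\to\FUN(\tA^{\veop},\CATIT)^{I\dplax}$. So $\pi^{*}$ is the composite of $\const_{*}$ with the fully faithful inclusion $\iota$ onto those functors that send morphisms of $\tB$ to strong transformations.

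By \cref{rem:limfunctor} and \cref{prop:laxlimCATIT}, $\CATIT$ admits all $I$-(op)lax limits, so $\const\dashv\lim^{I\dplax}_{\tA^{\veop}}$. Since $\FUN(\tB^{\veop},\blank)$ preserves adjunctions (\cref{propn:prod pres gives 2-fun}), postcomposition gives $\const_{*}\dashv(\lim^{I\dplax})_{*}$ on $\FUN(\tB^{\veop},\blank)$. Now $\const_{*}$ already lands in the full subcategory that is the image of $\iota$. Restricting the right adjoint along the fully faithful $\iota$ therefore gives a right adjoint of $\iota^{-1}\circ\const_{*}\simeq\pi^{*}$, namely $(\lim^{I\dplax})_{*}\circ\iota$. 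This is exactly the composite in the statement.

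The main obstacle is the first step: checking that straightening really intertwines $\pi_{\tB}^{*}$ on $\FIB^{\ve}_{/(\tA\times\tB,J)}$ with precomposition into the $J$-(op)lax functor category. In particular, the variance conventions of \cref{thm:elaxstr} must be compatible with the lax/oplax choice made in \cref{obs:embedding}. I would handle this by treating $\ve=(0,1)$ (lax) explicitly and deducing the other cases by $(\blank)^{\op}$ and $(\blank)^{\co}$. As a sanity check, the resulting fibres should agree with \cref{obs:fibre of pushforward}: $\pi_{\tB,*}(q)_{b}$ is the \itcat{} of sections over $\tA\times\{b\}$ preserving $I$-cartesian lifts, and this matches the $I$-(op)lax limit formula of \cref{prop:laxlimCATIT}.
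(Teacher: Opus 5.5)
Your proposal is correct and is essentially the paper's argument. Like the paper, you transport $\pi_{\tB}^{*}\dashv\pi_{\tB,*}$ through straightening, use \cref{obs:embedding} to identify precomposition with the projection as postcomposition with the constant-diagram functor (landing among functors that send morphisms of $\tB$ to strong transformations), and then invoke the pointwise adjunction between constant diagrams and $I$-lax limits. The only difference is presentational: the paper runs this as a Yoneda argument on mapping $\infty$-categories, $\FIB^{(0,1)}_{/\tB}(p,\pi_{\tB,*}q)\simeq\Nat_{\tB,\CATIT}(F,\lim^{I\dlax}_{\tA}G)$, whereas you restrict the adjunction to that full subcategory and appeal to uniqueness of adjoints.
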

\begin{proof}
  We deal with the case $\vepsilon=(0,1)$. Let $p$ and $q$ be the
  $(0,1)$-fibrations for functors $F \colon \tB \to \CATIT$ and
  $G \colon \tA \times \tB \to \CATIT$, respectively. We then have a
  natural identification of mapping \icats{}
  \[
    \FIB^{(0,1)}_{/\tB}(p, \pi_{\tB,*}(q)) \simeq
    \FIB^{(0,1)}_{/(\tA \times \tB, J)}(p \times \tA, q)
    \simeq
    \Nat^{J\dlax}_{\tA \times \tB, \CATIT}(F\circ \pi_{\tA}, G),
  \]
  using the straightening equivalence \cref{thm:elaxstr}, where
  $\pi_{\tA}$ denotes the projection to $\tA$. By \cref{obs:embedding} we have a natural identification
  \[
\Nat^{J\dlax}_{\tA \times \tB, \CATIT}(F \circ \pi_{\tA}, G) \simeq \Nat_{\tB,\FUN(\tA,\CATIT)^{I\dlax}}(F', G')
\]
where $G'(b)(\blank) \simeq G(\blank,b)$ and similarly for $F'$, so
that we can identify $F'$ as the functor $b \mapsto \underline{F(b)}$
given by composing with the constant diagram functor over $\tA$. This
functor in $F$ then
has a right adjoint, given by taking $I$-lax limits over $\tA$
pointwise, so that we have a natural equivalence
\[ \FIB^{(0,1)}_{/\tB}(p, \pi_{\tB,*}(q)) \simeq \Nat_{\tB,\CATIT}(F,
  \lim_{\tA}^{I\dlax}G).\]
Thus the straightening of $\pi_{\tB,*}(q)$ is the functor
$\lim_{\tA}^{I\dlax}G$, as required.
\end{proof}

\begin{thm}\label{thm:laxKan}
  Let $f \colon (\tC,I) \to \tD^{\sharp}$ be a marked functor and let $\tB$ be an \itcat{}. Then the following holds:
  \begin{itemize}
  \item If $\tB$ admits $I_{d}$-lax limits over $\tC_{d \upslash}$
    for all $d$, then we have an adjunction
    \[
      f^* \colon \FUN(\tD,\tB) \llra \Fun(\tC,\tB)^{I\dlax} \colon f_*^{I\dlax}
    \]
    where $f_{*}^{I\dlax}F(d) \simeq \lim^{I_{d}\dlax}_{\tC_{d
        \upslash}} F$.
  \item If $\tB$ admits $I_{d}$-lax colimits over $\tC_{\upslash
      d}$ for all $d$, then we have an adjunction 
    \[
      f_!^{I\dlax} \colon \Fun(\tC,\tB)^{I\dlax} \llra \FUN(\tD,\tB) \colon f^*
    \]
    where $f_{!}^{I\dlax}F(d) \simeq
    \colim^{I_{d}\dlax}_{\tC_{\upslash d}} F$.
  \item If $\tB$ admits $I_{d}$-oplax limits over $\tC_{d \downslash}$
    for all $d$, then we have an adjunction
    \[
      f^* \colon \FUN(\tD,\tB) \llra \Fun(\tC,\tB)^{I\doplax} \colon f_*^{I\doplax}
    \]
    where $f_{*}^{I\doplax}F(d) \simeq \lim^{I_{d}\doplax}_{\tC_{d
        \downslash}} F$.
  \item If $\tB$ admits $I_{d}$-oplax colimits over $\tC_{\downslash
      d}$ for all $d$, then we have an adjunction 
    \[
      f_!^{I\doplax} \colon \Fun(\tC,\tB)^{I\doplax} \llra \FUN(\tD,\tB) \colon f^*
    \]
    where $f_{!}^{I\doplax}F(d) \simeq
    \colim^{I_{d}\doplax}_{\tC_{\downslash d}} F$.
  \end{itemize}
  When they exist, we refer to the functors $f_*^{I\dplax}$ and
  $f_{!}^{I\dplax}$ as the $I$-(op)lax right and left Kan extension
  functors along $f$, respectively.
\end{thm}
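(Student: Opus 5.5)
We prove the first case (the $I$-lax right Kan extension); the other three follow by applying it to $f^{\op}$, $f^{\co}$ or $f^{\coop}$, together with \cref{lem:lax colim change variance} and the corresponding identifications of lax and oplax functor \itcats{} under these dualities. The strategy is to reduce to $\CATIT$ (in fact $\CATI$) via the Yoneda embedding, using \cref{obs:squarelaxkan}. We embed $\FUN(\tD,\tB)$ and $\FUN(\tC,\tB)^{I\dlax}$ fully faithfully into $\FUN(\tB^{\op}\times\tD,\CATI)$ and $\FUN(\tB^{\op}\times\tC,\CATI)^{\overline{I}\dlax}$, compatibly with restriction along $f$. By the Yoneda lemma it then suffices to show the following: for each $F\colon\tC\to\tB$, the presheaf $\Nat^{I\dlax}_{\tC,\tB}(f^{*}(\blank),F)$ on $\FUN(\tD,\tB)$ is represented by a functor whose value at $d$ is $\lim^{I_d\dlax}_{\tC_{d\upslash}}F$.

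First we apply \cref{prop:kanbasecase} to the marked functor $f\times\id\colon(\tC,I)\times(\tB^{\op})^{\sharp}\to(\tD\times\tB^{\op})^{\sharp}$. This gives a right adjoint $(f\times\id)_{*}^{\overline{I}\dlax}$ to restriction on $\CATIT$-valued functors. We take as input $G := h_{\tB}\circ F$, viewed as a functor $\tC\times\tB^{\op}\to\CATI$. The pointwise formula gives $G'(d,b)\simeq\lim^{\overline{I}_{(d,b)}\dlax}$ over $(\tC\times\tB^{\op})_{(d,b)\upslash}$. Since the $\tB^{\op}$ direction is maximally marked, this slice should reduce to $\tC_{d\upslash}\times(\tB^{\op})_{b\upslash}$, where the second factor has the initial object $\id_b$ and all its maps are marked. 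Using the cofinality criterion \cref{cor:point cofinal simple cond} (or directly the fact that a marked-initial object computes the limit), we obtain
\[ G'(d,b)\simeq \lim^{I_d\dlax}_{\tC_{d\upslash}}\tB(b,F(\blank)). \]
To make this rigorous we will likely instead use \cref{lem:pointwiselimit} to split the pushforward along the product projection. By \cref{cor:laxlimit rep} and the hypothesis that $\tB$ admits these limits, this equals $\tB(b,\lim^{I_d\dlax}_{\tC_{d\upslash}}F)$. Hence $G'(\blank,d)$ is representable for every $d$.

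By \cref{obs:squarelaxkan}, $G'$ then lies in the essential image of $\FUN(\tD,\tB)$, say $G'\simeq h_{\tB}\circ F'$. For $H\colon\tD\to\tB$ we have the chain
\[ \Nat(H,F')\simeq\Nat(h H,G')\simeq\Nat^{\overline I\dlax}((f\times\id)^{*}hH,hF)\simeq\Nat^{I\dlax}(f^{*}H,F), \]
using full faithfulness of the vertical functors twice and the commutativity of the square. This is natural in $H$, so $F'$ is the value of a right adjoint $f_*^{I\dlax}$ at $F$, defined pointwise. By the Yoneda lemma these values assemble into a functor, and $F'(d)\simeq\lim^{I_d\dlax}_{\tC_{d\upslash}}F$ as claimed.

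The main obstacle is the identification of $G'(d,b)$ in the second paragraph. It requires matching the marked slice of the product with the product of slices, and showing that the maximally marked $\tB^{\op}$-factor contributes nothing. A cleaner route may be to use the description of $\lim^{I_d\dlax}$ in $\CATIT$ from \cref{prop:laxlimCATIT} together with \cref{obs:Rf fibre} directly, and to check that sections over the $\tB^{\op}$ slice that are cartesian on marked edges are determined by their value at $\id_b$.
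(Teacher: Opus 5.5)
Your overall architecture is the paper's: reduce to the first case by the dualities, use the Yoneda square of \cref{obs:squarelaxkan}, take the right adjoint of $(f\times\id)^{*}$ on $\CATIT$-valued functors from \cref{prop:kanbasecase}, show that it preserves functors representable in the $\tB$-variable, and conclude by full faithfulness. The gap is in the step you flag yourself, the identification of the presheaf $G'(d,\blank)$ on $\tB$, and your sketch for it does not work as written. In $(\tC\times\tB^{\op})_{(d,b)\upslash}\simeq\tC_{d\upslash}\times(\tB^{\op})_{b\upslash}$, the marking $\overline{I}_{(d,b)}$ contains only morphisms lying over commuting triangles. So in the second factor only the commuting triangles are marked, not all maps. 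You also do not justify passing from a statement about $\{\id_{b}\}$ in the second factor to cofinality of $\tC_{d\upslash}\times\{\id_{b}\}$ in the product, and \cref{cor:point cofinal simple cond} is the colimit version. More seriously, an equivalence $G'(d,b)\simeq\lim^{I_{d}\dlax}_{\tC_{d\upslash}}\tB(b,F(\blank))$ for each $b$ separately does not make $G'(d,\blank)$ representable. You need an equivalence of presheaves on $\tB$, natural in $b$, and neither the pointwise formula of \cref{prop:kanbasecase} nor an objectwise cofinality argument provides that.

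What is missing is a base change that lets you apply \cref{lem:pointwiselimit} to the whole $\tB^{\op}$-family at once. In fibrational terms the right adjoint is $R_{g}=\phi_{*}\pi^{*}$ for $g=(\tB^{\op})^{\sharp}\times f$, in the notation of \cref{def:Rf}. Restrict along $i\colon\tB^{\op}\times\{d\}\hookrightarrow\tB^{\op}\times\tD$ and use the Beck--Chevalley equivalence $i^{*}\phi_{*}\simeq\phi_{d,*}j^{*}$ from \cref{obs:fibre of pushforward}. The paper thereby identifies $i^{*}R_{g}$ with pullback along $s\times\id$ followed by pushforward along the projection $\tCd_{d\upslash}\times\tB^{\op,\sharp\sharp}\to\tB^{\op,\sharp\sharp}$, where $s\colon\tCd_{d\upslash}\to(\tC,I)^{\sharp}$ is the canonical functor. \cref{lem:pointwiselimit} then identifies the straightening of $i^{*}R_{g}(p)$, \emph{as a functor of $b$}, with $b\mapsto\lim^{I_{d}\dlax}_{\tC_{d\upslash}}\tB(b,F(\blank))$. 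The hypothesis on $\tB$ together with \cref{cor:laxlimit rep} makes this the representable presheaf $\tB(\blank,\lim^{I_{d}\dlax}_{\tC_{d\upslash}}F)$. Your instinct to use \cref{lem:pointwiselimit} was right, but it applies only after this restriction to $\tB^{\op}\times\{d\}$. With that step supplied, the rest of your argument, including the final chain of equivalences, goes through.
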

\begin{proof}
  Applying the involutions $\co$ and $\op$ we reduce to verifying the
  first claim. This amounts to
  showing that the commutative square in \cref{obs:squarelaxkan} is
  horizontally adjointable. Note that the bottom horizontal morphism
  admits a right adjoint by \cref{prop:kanbasecase}, so we may reduce
  again to showing that this right adjoint restricts appropriately. In
  terms of fibrations, we thus want to show that the functor
  \[ R_{g} \colon \FIB^{(0,1)}_{/(\tB^{\op} \times \tC,
      J)} \to \FIB^{(0,1)}_{/\tB^{\op} \times \tD},\]
  where $g \colon (\tB^{\op} \times \tC,
      J) \to (\tB^{\op} \times \tD)^{\sharp}$ is the marked functor
      $(\tB^{\op})^{\sharp} \times f$,
  has the following property: given a $(0,1)$-fibration $p \colon \tX
  \to \tB^{\op}\times \tC$ such that for every $c \in \tC$, the
  pullback $\tE_{c} \to \tB^{\op}$ straightens to a representable
  presheaf on $\tB$, then the fibration $R_{g}(p) \colon \tY \to
  \tB^{\op} \times \tD$ has the analogous property for every $d \in
  \tD$.

  We thus want to identify the pullback $i^{*}R_{g}(p)$ for $i \colon
  \tB^{\op} \times \{d\} \to \tB^{\op} \times \tD$. Recall that by
  definition, $R_{g}$ is the composite $\phi_{*}\pi^{*}$ for functors
  \[ (\tB^{\op} \times \tC,J)^{\sharp} \xleftarrow{\pi} \tQd \xrightarrow{\phi}
    (\tB^{\op} \times \tD)^{\sharp\sharp}, \] using the notation from
  \cref{def:Rf}. If we consider the pullback
  \[
    \begin{tikzcd}
     \tQd_{d} \ar[r, "j"] \ar[d, "\phi_{d}"'] & \tQd  \ar[d, "\phi"] \\
     \tB^{\op,\sharp\sharp} \times \{d\} \ar[r, "i"'] & (\tB^{\op}
     \times \tD)^{\sharp\sharp},
    \end{tikzcd}
  \]
  then \cref{obs:fibre of pushforward} gives a base change equivalence
  $i^{*}\phi_{*} \simeq \phi_{d,*}j^{*}$, so that we have
  \[ i^{*}R_{g} \simeq \phi_{d,*}r^{*}\] where $r$ is the composite
  $\tQd_{d} \xto{j} \tQd \xto{\pi} (\tB^{\op} \times
  \tC,J)^{\sharp}$. Unpacking the notation, we see that $\tQd_{d}$ is
  $\tCd_{d \upslash} \times \tB^{\op,\sharp\sharp}$ with $r$ being $s
  \times \id_{\tB^{\op}}$ where $s$ is the canonical functor $\tCd_{d
    \upslash} \to (\tC,I)^{\sharp}$ and $\phi_{d}$ is the projection
  to $\tB^{\op}$. The desired property therefore follows from
  \cref{lem:pointwiselimit} and our assumption on $\tB$.
\end{proof}

In the maximally marked case, this specializes to give ordinary Kan extensions of \itcats{}:
\begin{cor}\label{cor:strong kan ext}
  For a functor of \itcats{} $f \colon \tC \to \tD$ and an \itcat{}
  $\tB$, we have:
  \begin{itemize}
  \item If $\tB$ admits $\tD(d,f(\blank))$-weighted limits over $\tC$
    for all $d$, then we have an adjunction
    \[
      f^* \colon \FUN(\tD,\tB) \llra \Fun(\tC,\tB) \colon f_*
    \]
    where $f_{*}F(d) \simeq \lim_{\tC}^{\tD(d,f(\blank))} F$.
  \item If $\tB$ admits $\tD(F(\blank),d)$-weighted colimits
    over $\tC$ for all $d$, then we have an adjunction 
    \[
      f_! \colon \Fun(\tC,\tB) \llra \FUN(\tD,\tB) \colon f^*
    \]
    where $f_{!}F(d) \simeq \colim_{\tC}^{\tD(F(\blank),d)} F$.
  \end{itemize}
\end{cor}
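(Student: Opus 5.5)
This is the special case of \cref{thm:laxKan} with $f$ regarded as the marked functor $\tC^{\sharp} \to \tD^{\sharp}$, i.e.\ $I$ is the maximal marking. The plan is to apply that theorem and then translate the resulting maximally marked lax (co)limits over the lax slices into the weighted (co)limits appearing in the statement.

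First, when $I$ is maximal, $\FUN(\tC,\tB)^{I\dlax} = \FUN(\tC,\tB)$, since every $I$-lax transformation then has commuting naturality squares, so the adjunctions in \cref{thm:laxKan} are adjunctions with $\FUN(\tC,\tB)$. I will treat the right Kan extension; the left case is formally dual and obtained by applying $\op$. The first bullet of \cref{thm:laxKan} gives $f_{*}F(d) \simeq \lim^{I_{d}\dlax}_{\tC_{d\upslash}}(F\circ\pi)$, where $\pi \colon \tC_{d\upslash} = \tC \times_{\tD} \tD_{d\upslash} \to \tC$ is the projection and $I_{d}$ consists of the morphisms lying over commuting triangles in $\tD$.

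Second, I identify this lax limit as a weighted limit. The projection $\pi$ is the pullback along $f$ of the $(0,1)$-fibration $\tD_{d\upslash} \to \tD$, which by \cref{def:laxslices} classifies $\tD(d,\blank)$. Hence $\pi$ is the $(0,1)$-fibration classifying $W := \tD(d,f(\blank)) \colon \tC \to \CATI$. Its cocartesian morphisms are exactly those lying over commuting triangles, so $I_{d} = C$ is precisely its collection of cocartesian morphisms, here using that all morphisms of $\tC$ are marked. \cref{propn:weighted limit as lax} then gives $\lim^{W}_{\tC} F \simeq \lim^{C\dlax}_{\tC_{d\upslash}} F\circ\pi$. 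The universal properties on the two sides coincide, so the existence hypothesis on $W$-weighted limits is equivalent to the hypothesis of \cref{thm:laxKan}.

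The step needing the most care is the identification $I_{d} = C$. A morphism of $\tC_{d\upslash}$ is a pair consisting of a morphism in $\tC$ and an oplax triangle under $d$. It is cocartesian for $\pi$ exactly when the triangle commutes, since this is the pullback of the description of cocartesian morphisms for $\ev_{1}$ in \cref{thm:ev is partial fib}. With the maximal marking on $\tC$, this matches the definition of $I_{d}$. For the colimit case, the same argument uses the $(1,0)$-fibration $\tC_{\upslash d} \to \tC$, which classifies $\tD(f(\blank),d)$, together with \cref{cor:weightedcolimitaslax}. (The statement's $\tD(F(\blank),d)$ should read $\tD(f(\blank),d)$.)
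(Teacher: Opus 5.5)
Your proposal is correct and is essentially the paper's argument: specialize \cref{thm:laxKan} to the maximal marking, recognize $\tC_{d\upslash}\to\tC$ as the pullback along $f$ of the $(0,1)$-fibration for $\tD(d,\blank)$ with $I_{d}$ its cocartesian morphisms, and conclude with \cref{propn:weighted limit as lax}. The paper additionally notes that the oplax description over $\tC_{d\downslash}$ computes the same weighted limit, which you do not need, and you are right that $\tD(F(\blank),d)$ in the second bullet should read $\tD(f(\blank),d)$.
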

\begin{proof}
  We prove the first case. From the maximally marked case of
  \cref{thm:laxKan}, we see that the right adjoint exists and is given
  either by taking partially lax limits over $\tC_{d \upslash}$ or by
  partially oplax limits over $\tC_{d \downslash}$, in both cases
  marked by the cartesian morphisms over $\tC$, if these exist. Here
  $\tC_{d \upslash} \to \tC$ is the $(0,1)$-fibration for the functor
  $\tD(d,f(\blank))$ while $\tC_{d \downslash} \to \tC$ is the
  $(0,0)$-fibration for $\tD(d,f(\blank))^{\op}$, so that
  $(\tC_{d \downslash})^{\op} \to \tC^{\op}$ is the $(1,0)$-fibration
  for $\tD(d,f(\blank))$. It then follows from \cref{propn:weighted
    limit as lax} that both of these compute the
  $\tD(d,f(\blank))$-weighted limit of $F$.
\end{proof}

At the other extreme, taking $f$ to be the identity gives the following special case:
\begin{cor}\label{cor:delaxify}
  Let $\tB$ be an \itcat{} and $(\tC,I)$ a marked \itcat{}. Then we have:
  \begin{itemize}
  \item If $\tB$ admits $I_{c}$-lax limits over $\tC_{c \upslash}$
    for all $c \in \tC$, then we have an adjunction
    \[
      \id^* \colon \FUN(\tC,\tB) \llra \Fun(\tC,\tB)^{I\dplax} \colon \id_*^{I\dlax}
    \]
    where $\id_{*}^{I\dlax}F(c) \simeq \lim^{I_{d}\dlax}_{\tC_{c
        \upslash}} F$.
  \item If $\tB$ admits $I_{c}$-lax colimits over $\tC_{\upslash
      c}$ for all $c \in \tC$, then we have an adjunction 
    \[
      \id_!^{I\dlax} \colon \Fun(\tC,\tB)^{I\dlax} \llra \FUN(\tC,\tB) \colon \id^*
    \]
    where $\id_{!}^{I\dlax}F(c) \simeq
    \colim^{I_{c}\dlax}_{\tC_{\upslash c}} F$.
  \item If $\tB$ admits $I_{c}$-oplax limits over $\tC_{c \downslash}$
    for all $c \in \tC$, then we have an adjunction
    \[
      \id^* \colon \FUN(\tC,\tB) \llra \Fun(\tC,\tB)^{I\dplax} \colon \id_*^{I\doplax}
    \]
    where $\id_{*}^{I\doplax}F(c) \simeq \lim^{I_{c}\doplax}_{\tC_{c
        \downslash}} F$.
  \item If $\tB$ admits $I_{c}$-oplax colimits over $\tC_{\downslash
      c}$ for all $c \in \tC$, then we have an adjunction 
    \[
      \id_!^{I\doplax} \colon \Fun(\tC,\tB)^{I\doplax} \llra \FUN(\tC,\tB) \colon \id^*
    \]
    where $\id_{!}^{I\doplax}F(c) \simeq
    \colim^{I_{c}\doplax}_{\tC_{\downslash c}} F$. \qed
  \end{itemize}
\end{cor}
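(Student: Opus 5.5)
This is the special case $f = \id_{\tC}$ of \cref{thm:laxKan}, applied to the marked functor $\id \colon (\tC,I) \to \tC^{\sharp}$, which is a functor of marked \itcats{} because every morphism of $\tC$ is marked in $\tC^{\sharp}$. The plan has three steps: substitute this choice of $f$ into the theorem, identify the resulting slices and markings, and observe that the hypotheses coincide.

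First, the restriction functor $f^{*} \colon \FUN(\tD,\tB) \to \FUN(\tC,\tB)^{I\dplax}$ becomes the canonical inclusion $\id^{*} \colon \FUN(\tC,\tB) \to \FUN(\tC,\tB)^{I\dplax}$. This inclusion is the restriction along $(\tC,I) \to \tC^{\sharp}$, so it is exactly the functor appearing in \cref{thm:laxKan}.

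Next, the slices $\tC_{d\upslash}$, $\tC_{\upslash d}$, $\tC_{d\downslash}$ and $\tC_{\downslash d}$ from \cref{thm:laxKan} are pullbacks $\tC \times_{\tD} \tD_{d\upslash}$, and so on. With $\tD = \tC$ and $f = \id$, these pullbacks are just the lax slices of $\tC$ from \cref{def:laxslices} at $c = d$. The marking $I_{c}$ consists of the morphisms whose image in $\tC$ lies in $I$ and whose image in $\tD = \tC$ is a commuting triangle. This matches the marking in the statement.

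Finally, the existence hypotheses on $\tB$ are therefore identical to those of \cref{thm:laxKan}. Each of the four adjunctions, together with its pointwise formula, then follows directly from the corresponding bullet of the theorem.

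The main potential obstacle is purely notational. For the right adjoints, one must check that the lax and oplax markings on the functor categories in the statement match those of \cref{thm:laxKan}: the $I$-lax case uses $\FUN(\tC,\tB)^{I\dlax}$ and the $I$-oplax case uses $\FUN(\tC,\tB)^{I\doplax}$. Once these are matched, nothing beyond \cref{thm:laxKan} is needed.
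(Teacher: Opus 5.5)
Your proposal is correct and matches the paper, which obtains this corollary directly by taking $f$ to be the identity $(\tC,I) \to \tC^{\sharp}$ in \cref{thm:laxKan}. Your identification of the slices and the markings $I_{c}$ is right, and so is your reading of the functor categories as $\FUN(\tC,\tB)^{I\dlax}$ and $\FUN(\tC,\tB)^{I\doplax}$ in the two right-adjoint cases. The statement's $I\dplax$ and $I_{d}$ there are typos.
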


\begin{observation}\label{rem:LKEff}
  Let $f \colon (\tC,I) \to \tD^{\sharp}$ be a marked functor and
  assume that $\tB$ has $I_{d}$-lax limits over $\tC_{d
    \upslash}$. We make the following claims:
    \begin{itemize}
      \item One can identify the unit of the adjunction $f^* \dashv f_*^{I\dlax}$ at an object $c$ with the
  canonical map on $I\dlax$ limits induced via restriction along the
  marked functor
  $(\tC_{d\upslash},I_{d}) \to \tD^{\sharp}_{d\upslash}$.
\item One can identify the counit of the adjunction
  $f^* \dashv f_*^{I\dlax}$ at an object $c$ with the canonical map on
  $I\dlax$ limits induced via restriction along the marked functor
  $(\tC_{f(c)\upslash},I_{f(c)}) \to
  \tD^{\sharp}_{f(c)\upslash}$. 
\end{itemize}
  To see this, recall that we showed in \cref{thm:laxKan} that the
  commutative square in \cref{obs:squarelaxkan} is horizontally
  adjointable. In particular, this allows us to reduce to
  $\tB=\CATI$. The first claim from \cref{obs:unitcofree} by setting
  $p=\DFFree^{\vepsilon}_{\tDss}(t_d)$ for $t_d \colon [0] \to \tDss$
  selecting the object $d$. The second claim follows similarly from
  \cref{obs:counitcofree}, by setting $p$ to be a decorated functor
  with source the terminal category.

  In particular, if $f$ is fully faithful, we
  have that $\tC_{f(c)\upslash} \simeq \tC_{c \upslash}$; since the
  inclusion of $\id_{c}$ is $(0,1)$-cofinal  by
  \cref{propn:rep cart fib cond}, this characterization of the
  adjunction counit allows us to conclude that
  $f_*^{I\dlax}$ is a fully faithful functor.
\end{observation}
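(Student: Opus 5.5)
The plan is to reduce everything to the case $\tB = \CATI$ and then read off the unit and counit from the fibrational descriptions already established. In the proof of \cref{thm:laxKan} we showed that the square of \cref{obs:squarelaxkan} is horizontally adjointable. Its vertical functors are fully faithful Yoneda-type embeddings, so the unit and counit of $f^{*} \dashv f_{*}^{I\dlax}$ are carried to the unit and counit of the corresponding adjunction for $(\tB^{\op})^{\sharp}\times f$ with values in $\CATI$. By \cref{lem:pointwiselimit}, this adjunction is computed pointwise in the $\tB^{\op}$-variable. It therefore suffices to treat functors $\tC \to \CATI$, where the adjunction is the straightening of $f^{*} \dashv R_f$ from \cref{cor:lax kan adjn on fibs}.

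For the unit at $d \in \tD$, I would use \cref{obs:unitcofree} with $p = \DFFree^{\ve}_{\tDss}(t_d)$, where $t_d \colon [0]\to\tDss$ picks out $d$; this $p$ is the fibration $\tD_{d\upslash}$. Maps out of $p$ compute fibres at $d$, so the square \cref{eq:obsunitcofree} identifies the unit component $q_d \to R_f(f^{*}q)_d$ with pullback along $f$. The source is
\[ \FIB^{\ve}_{/\tD}(\tD_{d\upslash},q) \simeq \lim^{\dlax}_{\tD_{d\upslash}} q, \]
and the target is $\DFUN_{/\tCd}(\tCd_{d\upslash}, f^{*}q)$, which \cref{obs:Rf fibre} and \cref{prop:laxlimCATIT} identify with $\lim^{I_d\dlax}_{\tC_{d\upslash}} f^{*}q$. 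Pullback along $f$ is exactly restriction along the marked functor $(\tC_{d\upslash}, I_d) \to \tD^{\sharp}_{d\upslash}$, which is the first claim.

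For the counit at $c \in \tC$, I would use \cref{obs:counitcofree} with $p$ the decorated functor $[0] \to \tCd$ selecting $c$. Maps from this $p$ into $f^{*}R_f(q)$ compute the fibre at $c$, namely $R_f(q)_{f(c)} \simeq \lim^{I_{f(c)}\dlax}_{\tC_{f(c)\upslash}} q$. The map $\psi$ in \cref{eq:obscounitcofree} is the inclusion of $c$ into $f^{*}\DFFree(f_!p)$, that is, $(c,\id_{f(c)}) \in \tC_{f(c)\upslash}$. So the counit component is restriction along $\{(c,\id_{f(c)})\} \to \tC_{f(c)\upslash}$, which is the restriction comparing the $I$-lax limit over $\tC_{f(c)\upslash}$ with the one over the trivially marked point. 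The step I expect to be most delicate is checking that these Yoneda identifications match the abstract unit and counit, not merely their components. I would handle this by tracing naturality in $p$ through the two observations, using the triangle identities as in \cref{obs:counitcofree}.

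Finally, if $f$ is fully faithful, then $\tC_{f(c)\upslash} \simeq \tC_{c\upslash}$, and the counit becomes restriction along $\{\id_c\} \to (\tC_{c\upslash}, I_c)$. The fibration $\tC_{c\upslash} \to \tC$ is representable with $\id_c$ as representing object. By \cref{propn:rep cart fib cond}, in its $(0,1)$-dual form, the inclusion of $\id_c$ is $(0,1)$-cofinal, and hence lax limit-cofinal by \cref{propn:cofinal colimits}. So the counit is an equivalence at every $c$, and $f_*^{I\dlax}$ is fully faithful.
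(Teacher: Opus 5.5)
Up to the last step your argument is the same as the paper's. You reduce to $\CATI$ via the horizontal adjointability from the proof of \cref{thm:laxKan}, read off the unit from \cref{obs:unitcofree} with $p$ the free fibration on $d$, and read off the counit from \cref{obs:counitcofree} with $p$ a point. Your description of the counit at $c$ as evaluation at $(c,\id_{f(c)})\in\tC_{f(c)\upslash}$ is the correct output of \cref{obs:counitcofree}. The statement's wording, restriction along $(\tC_{f(c)\upslash},I_{f(c)})\to\tD^{\sharp}_{f(c)\upslash}$, is a slip: that restriction would produce the unit at $f(c)$.

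\textbf{The gap.} The problem is the fully faithful conclusion, and the paper's sketch has the same problem. You identify the counit with restriction along $\{\id_c\}\to(\tC_{c\upslash},I_c)$. However, \cref{propn:rep cart fib cond} (cf.\ \cref{rem:cofinalrepresentable}) only gives $(0,1)$-cofinality of $\{\id_c\}^{\flat}\to(\tC_{c\upslash},C)$, where $C$ is the class of \emph{all} cocartesian morphisms, i.e.\ all commuting triangles under $c$. The marking $I_c$ contains only those lying over $I$. Cofinality with respect to the marking $C$ says nothing about $I_c$-lax limits.

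\textbf{A counterexample.} When $I_c\neq C$ the conclusion is actually false. Take $\tC=\tD=[1]$, $f=\id$, $I$ minimal and $c=0$. Then $\tC_{0\upslash}\simeq[1]$, and $I_0$ consists only of identities. Let $F\colon[1]\to\CATI$ be given by $\phi\colon A\to B$. By \cref{cor:laxlimcat}, $\lim^{\lax}_{[1]}F$ is the \icat{} of triples $(a,b,\phi(a)\to b)$. The counit component at $0$ is then the projection to $A$, which is not an equivalence for $A=[0]$, $B=[1]$. Conceptually: $\id^{*}\colon\FUN([1],\CATI)\to\FUN([1],\CATI)^{\lax}$ is conservative but not an equivalence, so its right adjoint cannot be fully faithful.

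\textbf{What survives.} The argument is valid when $I$ is the maximal marking, since then $I_c=C$. This is the only case used later, in \cref{thm:laxcones}, where the Kan extension is the strong one of \cref{cor:strong kan ext}. You need to add this hypothesis to the last claim; by the example above, no argument can rescue it for general $I$.
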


\begin{propn}\label{thm:laxcones}
	Let $(\tI,E)$ be a marked
        \itcat{} and let $\tC$ be an \itcat{} that admits partially
        (op)lax colimits of shape $(\tI,E)$. Then the left
        Kan extension along $\iota \colon \tI^{\sharp \sharp} \hra
        (\tI_{\elaxoplax}^{\colimcone})^{\sharp \sharp}$ exists, is
        fully faithful and has as essential image the full
        sub-$(\infty,2)$-category on $E$-(op)lax colimit cones.  
\end{propn}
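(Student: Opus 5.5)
We treat the $E$-lax case $\iota\colon \tI \hookrightarrow \tI^{\triangleright}_{E\dlax}$; the $E$-oplax case follows by the same argument, or by applying $(\blank)^{\co}$ via \cref{lem:lax colim change variance}.

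The plan is to apply the second item of \cref{thm:laxKan}, in the maximally marked case, to $\iota$. This requires computing, for each object $d$ of $\tI^{\triangleright}_{E\dlax}$, the slice $\tI_{\upslash d}$ with its induced marking. There are two kinds of objects.

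\emph{Objects $d=\iota(i)$.} By \cref{prop:inclusionconeff}, $\iota$ is fully faithful, so $\tI_{\upslash \iota(i)}\simeq \tI_{\upslash i}$. Here \cref{propn:rep cart fib cond} (cf.\ \cref{rem:cofinalrepresentable}) shows that the inclusion of $\id_i$ is cofinal. Hence the relevant colimit is just $F(i)$, and it exists trivially. The same cofinality argument as in \cref{rem:LKEff}, applied to the unit rather than the counit, shows that $F\to \iota^*\iota_!F$ is an equivalence. Thus $\iota_!$ is fully faithful wherever it is defined.

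\emph{The cone point $*$.} I expect the main obstacle here: identifying $(\tI_{\upslash *}, I_*)$. The approach is to use \cref{rem:conestraightening}. Restricted over $\tI$, the $(1,0)$-fibration $\tI_{\upslash *}\to\tI$ is the one straightening the constant functor, i.e.\ the identity, while the marking by commuting triangles over $\tI^{\sharp}$ corresponds to the edges of $E$. The aim is to show that the weight $\Frof^{(1,0)}_{\tI^{\triangleright}}(\tI,\flat)(*)$ agrees with the weight $\Frof^{(1,0)}_{\tI}(\tI,E)$ evaluated appropriately. Concretely, one would show that the colimit of $F$ over $(\tI_{\upslash *},I_*)$ is the $E$-lax colimit over $\tI$, using \cref{propn:lax as Kan weighted} together with an identification of free fibrations. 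If that comparison proves hard, the fallback is a direct route: use \cref{lem:lax slice cone} to identify maps out of $\iota_!F(*)$ with $E$-lax cocones. In either case the existence of the required colimit is exactly the hypothesis that $\tC$ has $E$-lax colimits of shape $(\tI,E)$. We would then conclude $\iota_!F(*)\simeq \colim^{E\dlax}_{\tI}F$, with the canonical cocone given by $\iota_!F$ itself.

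\emph{Essential image.} It remains to show the essential image is exactly the $E$-lax colimit cones. A functor $G\colon \tI^{\triangleright}_{E\dlax}\to\tC$ lies in the essential image if and only if the counit $\iota_!\iota^*G\to G$ is an equivalence. On objects $\iota(i)$ this holds automatically by full faithfulness. At the cone point, by the identification above, the counit component is the comparison map $\colim^{E\dlax}_{\tI}G|_{\tI}\to G(*)$ induced by the cocone that $G$ encodes; this identification uses \cref{lem:lax slice cone}. Since equivalences in $\FUN(\tI^{\triangleright}_{E\dlax},\tC)$ are detected objectwise, $G$ is in the image precisely when $G$ is an $E$-lax colimit cone.
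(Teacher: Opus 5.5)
Your proposal follows the paper's proof essentially step by step. Existence comes from the Kan extension formula (your \cref{thm:laxKan} in the maximally marked case is equivalent to the paper's \cref{cor:strong kan ext}). At the objects $\iota(i)$ the weight is handled by representable cofinality, and at the cone point by \cref{rem:conestraightening} plus \cref{propn:lax as Kan weighted}. Full faithfulness comes from the dual of \cref{rem:LKEff}, and the essential image from the counit at $*$.

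Two corrections. First, \cref{rem:conestraightening} identifies $\tI_{\upslash *}\to\tI$ with the free 1-fibred $(1,0)$-fibration on $(\tI,E)$, so the weight at the cone point is $\Frof^{(1,0)}_{\tI}(\tI,E)$. It is not the fibration of the constant functor, which would only produce the strong colimit, and your $\Frof^{(1,0)}_{\tI^{\triangleright}}(\tI,\flat)(*)$ has the wrong variance. Second, the paper makes your counit identification at $*$ precise by using that the unit section $s\colon(\tI,E)\to\tI_{\upslash *}$ of this free fibration is $(1,0)$-cofinal; your argument needs that step rather than just \cref{lem:lax slice cone}.
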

\begin{proof}
  Since the inclusion $\iota$ is fully faithful by
  \cref{prop:inclusionconeff}, the left Kan extension $\iota_{!}$ will
  be fully faithful if it exists by \cref{rem:LKEff}. To show that
  $\iota_!$ does exist, we will apply \cref{cor:strong kan ext} and verify that the corresponding weighted colimits exist. We start by considering the pullback diagram
	\[
		\begin{tikzcd}
			\tI(j)=\left(\tI^{\colimcone}_{\elaxoplax}\right)_{ \upslash j}\times_{\tI^{\colimcone}_{\elaxoplax}}\tI  \arrow[d,"p"] \arrow[r,"\iota"] & \left(\tI^{\colimcone}_{\elaxoplax}\right)_{ \upslash j} \arrow[d,"\pi"] \\
			\tI \arrow[r,"j"] &\tI^{\colimcone}_{\elaxoplax},
		\end{tikzcd}
	\]
	and note that $\tI(j)$ can be identified with $\tI_{
          \upslash j}$ whenever $j\neq *$,
        where $*$ denotes the cone
        point. In particular this weighted colimit exists and is
        given by evaluating our functor at the object $j$. If
        $j= \ast$, then \cref{rem:conestraightening} tells us that
        $\tI(*) \to \tI$ is the $(1,0)$-fibration
        $\Frof^{(1,0)}_{\tI}(\tI,E)$. We conclude that the
        corresponding weighted colimit exists by our assumptions after
        invoking \cref{propn:lax as Kan weighted}.

	To finish the proof, we must show that a functor $\hat{F} \colon \bII_{\elaxoplax}^{\colimcone} \to \bCC$ defines an $\elaxoplax$ colimit cone for $F\simeq \iota^* \hat{F}$ if and only if $\hat{F} \simeq \iota_{!} F$. From \cref{rem:LKEff} we know that the counit transformation can be identified with the map between colimits
  \[
    \colim^{\natural\text{-}\laxoplax}_{\bII(*)}\hat{F}\circ \pi \circ \iota  \to \colim^{\natural\text{-}\laxoplax}_{\left(\bII^{\colimcone}_{\elaxoplax}\right)_{ \upslash *}} \hat{F} \circ \pi \simeq \hat{F}(\ast)
  \]
  where the last equivalence comes from identifying the colimit over the slice with evaluation at the object corresponding to the identity map, exactly as in the case of $\tI(j)$ for $j \neq *$. 

  Next, we observe that the unit of the adjunction in \cref{cor:free
    1-fibred on marked} gives a section $s \colon (\bII,E) \to
  \bII(\ast)$ of $p$. This
  adjunction immediately implies that $s$ is a marked
  $(1,0)$-equivalence over $\tI^{\sharp}$, \ie{} $(1,0)$-cofinal.
  Combining this with the equivalence $\hat{F} \circ j \circ  p  \simeq \hat{F}\circ \pi \circ \iota$,  we can identify the counit transformation with the map
  \[
    \colim^{\elaxoplax}_{(\bII,E)} \hat{F}\circ j \xrightarrow{} \hat{F}(*).
  \]
  By construction, this map is induced, via the universal property of
  the $\elaxoplax$-colimit, by the cone determined by $\hat{F}$. We
  conclude that this map is an equivalence precisely when $\hat{F}$ is
  a colimit cone.
\end{proof}

\subsection{A Bousfield--Kan formula for weighted colimits}
\label{sec:bousfieldkan}
Our goal in this section is to use our results on Kan extensions to
obtain a Bousfield--Kan type (or ``bar construction'') description of
weighted colimits in \itcats{}; for a general enrichment this is also
done by Heine~\cite[Theorem 3.44]{heineweighted} using very different
methods. The starting point for this is a description of mapping
\icats{} in functor \itcats{}, which we deduce from the monadicity
theorem via the following observations:

\begin{propn}
  Suppose $\tD$ is a cocomplete \itcat{} and $f \colon \tC' \to \tC$
  is an essentially surjective functor of small \itcats{}. Then the
  adjunction
  \[ f_{!} : \Fun(\tC', \tD) \rightleftarrows \Fun(\tC, \tD) : f^{*}\]
  is monadic, and $f^{*}$ preserves all colimits.
\end{propn}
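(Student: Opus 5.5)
We will apply the Barr--Beck--Lurie monadicity theorem to the underlying adjunction of \icats{} $f_{!} \dashv f^{*}$; the ordinary \icatl{} theorem suffices, since monadicity is a statement about the underlying \icats{}. Note first that $f_{!}$ exists as a left adjoint by \cref{cor:strong kan ext}. Indeed, $\tD$ is cocomplete, so it has all small weighted colimits by \cref{cor:cocomplete}, and in particular the $\tC(f(\blank),c)$-weighted colimits over the small \itcat{} $\tC'$. We must then check two things: that $f^{*}$ is conservative, and that $f^{*}$ preserves (and $\Fun(\tC,\tD)$ has) colimits of $f^{*}$-split simplicial objects. We will in fact show the stronger statement that $\FUN(\tC,\tD)$ has all colimits and $f^{*}$ preserves all of them, which gives the second clause of the statement.

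Conservativity follows from \cref{lem:ess surj restr cons}: since $f$ is essentially surjective on objects, $f^{*} \colon \FUN(\tC,\tD) \to \FUN(\tC',\tD)$ is conservative on 1-morphisms. Restricting to underlying \icats{}, $\Fun(\tC,\tD) \to \Fun(\tC',\tD)$ is then conservative.

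For colimits, we use \cref{cor:colim strong functor cat} applied to strong (maximally marked) colimits. Since $\tD$ is cocomplete, $\FUN(\tC,\tD)$ and $\FUN(\tC',\tD)$ admit all strong colimits. These are computed pointwise, and they are preserved by $f^{*}$ by part (ii) of that corollary. It remains to pass from strong colimits in the \itcat{} $\FUN(\tC,\tD)$ to ordinary colimits in the underlying \icat{} $\Fun(\tC,\tD)$. By \cref{ex:conical}, a strong colimit indexed by an \icat{} $\oJ$ is in particular a colimit in the underlying \icat{}, since $\FUN(\oJ,\tC)^{\leq 1} \simeq \Fun(\oJ,\tC^{\leq 1})$ and the universal property on cores restricts accordingly. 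Hence $\Fun(\tC,\tD)$ has all small colimits and $f^{*}$ preserves them. The same holds for $\tC'$.

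The hypotheses of Barr--Beck--Lurie are therefore satisfied, so the adjunction is monadic. The main subtle point is the identification of strong colimits over \icats{} with underlying colimits. That direction is exactly what \cref{ex:conical} gives (dualized to colimits), so we do not anticipate a serious obstacle. The only other care needed is to confirm that $f_{!}$ really is left adjoint on underlying \icats{}. This holds because \cref{cor:strong kan ext} provides an adjunction of \itcats{}, and $(\blank)^{\leq 1}$ preserves adjunctions.
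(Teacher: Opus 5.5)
Your proof is correct and follows the paper's argument: conservativity of $f^{*}$ from \cref{lem:ess surj restr cons}, preservation of colimits from \cref{cor:colim strong functor cat}, then the Barr--Beck--Lurie monadicity theorem. You also spell out two points the paper leaves implicit: that $f_{!}$ exists by \cref{cor:strong kan ext}, and that strong colimits over \icats{} in $\FUN(\tC,\tD)$ are colimits in the underlying \icat{} by \cref{ex:conical}.
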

\begin{proof}
  It follows from \cref{lem:ess surj restr cons} that $f^{*}$ is
  conservative. Moreover, it preserves all colimits by
  \cref{cor:colim strong functor cat}. It therefore
  follows from the monadicity
  theorem \cite[Theorem 4.7.3.5]{LurieHA} that the adjunction is monadic.
\end{proof}

 As a special case, we have:
 \begin{cor}\label{cor:monadiccore}
   Suppose $\tD$ is a cocomplete \itcat{} and $\tC$ is a small
   \itcat{}. Then the adjunction
   \[ i_{!} : \Fun(\tC^{\simeq},\tD) \rightleftarrows \Fun(\tC,
     \tD) : i^{*}\]
   is monadic, where $i$ is the core inclusion $\tC^{\simeq} \to \tC$. \qed
 \end{cor}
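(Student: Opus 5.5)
This is the special case of the preceding proposition with $\tC' = \tC^{\simeq}$ and $f = i$. So the plan is to check its two hypotheses and then apply it.

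First, the left adjoint $i_{!}$ must exist. The preceding proposition takes the adjunction $f_{!} \dashv f^{*}$ as given, so I would justify it via \cref{cor:strong kan ext}. For $d \in \tC$, the value $i_{!}F(d)$ is the colimit of $F$ over the \igpd{} $\tC^{\simeq}$ weighted by $\tC(i(\blank), d)$. By \cref{obs:weighted over igpd}, this is $\colim_{x \in \tC^{\simeq}} \tC(x,d) \boxtimes F(x)$, a conical colimit of tensors. Both tensors and conical colimits exist because $\tD$ is cocomplete and $\tC$ is small, so all weights involved are small.

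Second, $i$ must be essentially surjective. This is immediate: the core inclusion $\tC^{\simeq} \to \tC$ is a bijection on equivalence classes of objects.

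Given both hypotheses, the preceding proposition shows that $i^{*}$ is conservative by \cref{lem:ess surj restr cons} and preserves all colimits by \cref{cor:colim strong functor cat}. Lurie's monadicity theorem then makes the adjunction monadic. Beyond checking that the weights are small, I expect no real obstacle.
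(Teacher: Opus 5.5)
Your proposal is correct and matches the paper, which states this corollary without proof as the special case $f = i$ of the preceding proposition, using that the core inclusion is essentially surjective. Your additional check that $i_{!}$ exists via \cref{cor:strong kan ext} (the weights $\tC(i(\blank),d)$ are small) makes explicit a step the paper leaves implicit.
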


 \begin{propn}\label{propn:nat end}
   For functors of \itcats{} $F,G \colon \tC \to \tD$, the \icat{} of natural
   transformations from $F$ to $G$ can be expressed as a limit of the
   form
   \[ \Nat_{\tC,\tD}(F,G) \simeq \lim_{[n] \in \simp}
     \lim_{(c_{0},\dots,c_{n}) \in (\tC^{\simeq})^{\times
         n+1}}\Fun(\tC(c_{0},\dots,c_{n}), \tD(F(c_{0}), G(c_{n}))\]
   where \[\tC(c_{0},\dots,c_{n}) := \tC(c_{0},c_{1}) \times
     \tC(c_{1},c_{2}) \times \cdots \times \tC(c_{n-1},c_{n}).\]
 \end{propn}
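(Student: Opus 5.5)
We will obtain this end formula from the monadic adjunction of \cref{cor:monadiccore}, applied to the cocomplete \itcat{} $\tPSh(\tD)$ or, more directly, to $\CATI$-valued functors via the Yoneda embedding. The plan is to resolve $F$ by the bar resolution of the monad $T = i^{*}i_{!}$, and then to compute $\Nat(\blank, G)$ on each free stage using the adjunction $i_{!} \dashv i^{*}$.

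\textbf{Step 1 (reduction to a cocomplete target).} First we replace $\tD$ by $\tPSh(\tD)$. The Yoneda embedding $h_{\tD}$ is fully faithful, so $h_{\tD,*}$ is fully faithful on functor \itcats{} (\cite[Corollary 2.8.13]{AGH24}). Hence
\[ \Nat_{\tC,\tD}(F,G) \simeq \Nat_{\tC,\tPSh(\tD)}(hF,hG), \]
and it suffices to treat the case where $\tD$ is cocomplete. We may also need $\tC$ small, possibly after enlarging the universe. At the end, the mapping \icats{} $\tPSh(\tD)(hF(c_0), hG(c_n))$ return to $\tD(F(c_0),G(c_n))$ by the Yoneda lemma.

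\textbf{Step 2 (bar resolution).} By \cref{cor:monadiccore} the adjunction $i_{!} \dashv i^{*}$ is monadic. The standard consequence of monadicity (\cite[\S 4.7]{LurieHA}) is that every $F$ is the geometric realization of its bar construction:
\[ F \simeq \colim_{[n]\in\simp^{\op}} (i_{!}i^{*})^{n+1}F \]
in $\Fun(\tC,\tD)$. This colimit is split after applying $i^{*}$, and it is a colimit in the underlying \icat{}. To pass to the \icat{} $\Nat(\blank,G)$ rather than the space, we use that $\Fun(\tC,\tD)$ is tensored over $\CatI$ pointwise. Mapping $\oK\boxtimes(\blank)$ into it shows that $\Nat(\blank,G)$ sends this colimit to a limit of \icats{}, giving
\[ \Nat(F,G) \simeq \lim_{\simp} \Nat_{\tC,\tD}\bigl(i_{!}(i^{*}i_{!})^{n}i^{*}F, G\bigr). \]

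\textbf{Step 3 (identifying the terms).} Adjunction gives
\[ \Nat_{\tC,\tD}(i_{!}X, G) \simeq \Nat_{\tC^{\simeq},\tD}(X, i^{*}G) \simeq \lim_{c\in\tC^{\simeq}} \tD(X(c), G(c)). \]
By \cref{cor:strong kan ext}, $i_{!}X(c) \simeq \colim^{\tC(\blank,c)}_{\tC^{\simeq}} X$, which by \cref{obs:weighted over igpd} equals $\colim_{c'\in\tC^{\simeq}} \tC(c',c)\boxtimes X(c')$. Iterating $n$ times starting from $X = i^{*}F$, the $n$-th term becomes
\[ \colim_{(c_0,\dots,c_n)} \tC(c_0,\dots,c_n)\boxtimes F(c_0) \]
evaluated at $c_n$. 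Mapping this into $G(c_n)$ and using the tensoring universal property gives exactly
\[ \lim_{(c_0,\dots,c_n)} \Fun\bigl(\tC(c_0,\dots,c_n), \tD(F(c_0),G(c_n))\bigr). \]

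\textbf{Main obstacle.} The hardest part is Step 2 together with the \icat{}-valued (rather than space-valued) limit. We must check that the bar resolution colimit is preserved by tensors with \icats{} and holds in the \itcat{} sense, so that $\Nat(\blank,G)$ converts it into a limit of \icats{}. We would argue this by noting that $\Fun(\tC,\tD)$ is cotensored pointwise over $\CatI$, because $\tD$ is, and that $G^{\oK}$ commutes with the relevant $\Map$ computations. A secondary bookkeeping issue is making the iterated Kan extension formula in Step 3 compatible with the simplicial face maps, which come from composition in $\tC$ and the actions of $F$ and $G$; we will only sketch this.
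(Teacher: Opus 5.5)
Your proposal is correct and follows essentially the same route as the paper. Both arguments embed $\tD$ in presheaves to assume $\tD$ cocomplete and $\tC$ small, resolve $F$ by the bar construction $F \simeq \colim_{[n]\in\simp^{\op}} i_{!}(i^{*}i_{!})^{n}i^{*}F$ of the monadic adjunction $i_{!}\dashv i^{*}$, use the $(\infty,2)$-categorical adjunction to rewrite each term, and iterate $(i_{!}W)(c)\simeq\colim_{x\in\tC^{\simeq}}\tC(x,c)\boxtimes W(x)$. The only minor difference is how you see that the bar colimit is also an $(\infty,2)$-categorical colimit: you use (co)tensors in $\FUN(\tC,\tD)$, whereas the paper appeals to the cocompleteness of $\FUN(\tC,\tD)$ and the fact that conical colimits over $\infty$-categories are computed in the underlying $\infty$-category, which amounts to the same thing.
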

 \begin{proof}
   By embedding $\tD$ in (large) presheaves if necessary (which gives
   a fully faithful functor on functor \itcats{} by \cite[Corollary
   2.7.14]{AGH24}), we may without loss of generality assume that
   $\tC$ is small and $\tD$ is cocomplete (with respect to some
   universe).  Since the adjunction $i_{!} \dashv i^{*}$ for
   $i \colon \tC^{\simeq} \to \tC$ is then monadic by
   \cref{cor:monadiccore}, the object $F$ has a simplicial free
   resolution as a colimit
  \[ F \simeq \colim_{[n] \in \simp^{\op}} i_{!}T^{n}i^{*}F\] in
  $\Fun(\tC, \tD)$, where $T := i^{*}i_{!}$ (\eg{} by the proof of
  \cite[Lemma 4.7.3.13]{LurieHA}). Since
  $\FUN(\tC, \tD)$ is a cocomplete \itcat{} by \cref{cor:colim strong
    functor cat}, this is also an \itcatl{} colimit, so that we get an
  equivalence
  \[
    \begin{split}
      \Nat_{\tC,\tD}(F, G) & \simeq \lim_{[n] \in \simp}
                             \Nat_{\tC,\tD}(i_{!}T^{n}i^{*}F, G)
      \\
       & \simeq \lim_{[n] \in \simp} \Nat_{\tC^{\simeq},
         \tD}(T^{n}i^{*}F, i^{*}G) \\
      & \simeq \lim_{[n] \in \simp} \lim_{c \in \tC^{\simeq}}
        \tD((T^{n}i^{*}F)(c), G(c)),
    \end{split}
  \]
  where the first equivalence uses that the left Kan extension $i_{!}$
  is an \itcatl{} left adjoint and the last equivalence uses that $\tC^{\simeq}$ is an \igpd{},
  so that we have an equivalence $\FUN(\tC^{\simeq}, \tD) \simeq
  \lim_{\tC^{\simeq}} \tD$. 

  Now \cref{obs:weighted over igpd} and the weighted colimit formula
  for left Kan extensions from \cref{cor:strong kan ext} imply that
  for $W \colon \tC^{\simeq} \to \tD$, we have
  \[ (i_{!}W)(c) \simeq \colim^{\tC(\blank,c)}_{\tC^{\simeq}} W
    \simeq \colim_{x \in \tC^{\simeq}} W(x) \boxtimes \tC(x,c),\]
  where $\boxtimes$ denotes the tensoring of $\tD$ over \icats{},
  so that
  \[
    \begin{split}
      (T^{n}W)(c) & \simeq \colim_{x \in \tC^{\simeq}} T^{n-1}W(x) \boxtimes
                    \tC(x,c) \\
       & \simeq \colim_{(x_{1},\ldots,x_{n}) \in
         (\tC^{\simeq})^{\times n}} W(x_{1}) \boxtimes \tC(x_{1},\dots,x_{n},c).
    \end{split}
  \]
  Taking this colimit out and rewriting the resulting limit, we then get
  \[
    \begin{split}
      \Nat_{\tC,\tD}(F, G) & \simeq \lim_{[n] \in \simp}
                             \lim_{c \in \tC^{\simeq}} \tD(\colim_{(x_{1},\ldots,x_{n}) \in
         (\tC^{\simeq})^{\times n}} F(x_{1}) \boxtimes
                             \tC(x_{1},\dots,x_{n},c), G(c))
      \\
      & \simeq \lim_{[n] \in \simp}
                             \lim_{(x_{0},\ldots,x_{n}) \in
        (\tC^{\simeq})^{\times (n+1)}} \tD(F(x_{0}) \boxtimes
        \tC(x_{0},\dots,x_{n}), G(x_{n})) \\
       & \simeq \lim_{[n] \in \simp}
                             \lim_{(x_{0},\ldots,x_{n}) \in
        (\tC^{\simeq})^{\times (n+1)}} \Fun(\tC(x_{0},\dots,x_{n}), \tD(F(x_{0}), G(x_{n}))),
    \end{split}
  \]
  as required.
\end{proof}

\begin{cor}\label{cor:cotensordecomp}
  For a functor of \itcats{} $F \colon \tC \to \tD$, where $\tC$ is
  small, we have:
  \begin{enumerate}[(i)]
  \item For any weight $W \colon \tC^{\op} \to \CATI$, the weighted
    colimit $\colim^{W}_{\tC}F$ exists and is computed as
  \[  \colim_{[n] \in \Dop} \colim_{(c_{0},
      \dots, c_{n}) \in (\tC^{\simeq})^{\times (n+1)}} F(c_{0})
    \boxtimes \left(\tC(c_{0},\dots,c_{n}) \times W(c_{n})\right)\]
  if this exists in $\tD$.
\item   For any weight $W
  \colon \tC \to \CATI$, the weighted limit
  $\lim^{W}_{\tC}F$ exists and is computed as
  \[  \lim_{[n] \in \simp} \lim_{(c_{0},
      \dots, c_{n}) \in (\tC^{\simeq})^{\times (n+1)}}
    F(c_{n})^{W(c_{0}) \times \tC(c_{0},\dots,c_{n})}\]
  if this exists in $\tD$.
  \end{enumerate}
\end{cor}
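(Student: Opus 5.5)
We derive this from \cref{propn:nat end} by the Yoneda lemma, using the universal properties of weighted (co)limits and of (co)tensors. Take the colimit case (i); the limit case (ii) is formally dual, or follows by applying (i) to $F^{\op} \colon \tC^{\op} \to \tD^{\op}$, since cotensors in $\tD$ are tensors in $\tD^{\op}$. Write $X$ for the proposed colimit, assumed to exist in $\tD$. For $d \in \tD$, since $X$ is a colimit and $\boxtimes$ is characterized by $\tD(K \boxtimes c, d) \simeq \Fun(K, \tD(c,d))$, we get natural equivalences
\[ \tD(X, d) \simeq \lim_{[n] \in \simp} \lim_{(c_{0},\dots,c_{n})} \Fun\bigl(\tC(c_{0},\dots,c_{n}) \times W(c_{n}), \tD(F(c_{0}), d)\bigr). \]
Currying, each term is $\Fun\bigl(W(c_{n}), \Fun(\tC(c_{0},\dots,c_{n}), \tD(F(c_{0}),d))\bigr)$.

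Compare this with $\Nat_{\tC^{\op},\CATI}(W, \tD(F(\blank),d))$, which is by definition what $\tD(\colim^{W}_{\tC}F, d)$ must equal. Apply \cref{propn:nat end} to the functors $W, \tD(F(\blank),d) \colon \tC^{\op} \to \CATI$, indexing strings in $\tC^{\op}$ as $(c_{n},\dots,c_{0})$ so the string hom is $\tC(c_{0},\dots,c_{n})$. This gives
\[ \Nat_{\tC^{\op},\CATI}(W, \tD(F,d)) \simeq \lim_{[n]}\lim_{(c_{0},\dots,c_{n})} \Fun\bigl(\tC(c_{0},\dots,c_{n}), \Fun(W(c_{n}), \tD(F(c_{0}),d))\bigr), \]
using that $\CATI(A,B) = \Fun(A,B)$ in $\CATI$. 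Swapping the two exponentials via $\Fun(A,\Fun(B,E)) \simeq \Fun(B,\Fun(A,E))$ identifies the two limits termwise. Since the resulting equivalence is natural in $d$, $X$ corepresents the weighted-colimit copresheaf, so $X \simeq \colim^{W}_{\tC}F$.

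The main obstacle is checking that the simplicial structure maps agree, and not just the individual terms. The face maps in \cref{propn:nat end} come from the bar resolution for $i_{!} \dashv i^{*}$. We have to confirm they match the face maps of the displayed bar construction: composition in $\tC$, the action of $F$ on the outer face $d_{0}$, and the action of $W$ on the last face $d_{n}$. We would do this by running the proof of \cref{propn:nat end} directly with $G = \tD(F(\blank),d)$ rather than citing it as a black box, so that the formula $(T^{n}W)(c) \simeq \colim W(x_{1}) \boxtimes \tC(x_{1},\dots,x_{n},c)$ makes the identification visible. Finally, in the phrase ``exists and is computed as'' the existence part is conditional on the displayed (co)limit existing. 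The Yoneda argument gives exactly this: the weighted (co)limit exists whenever the bar-construction (co)limit does.
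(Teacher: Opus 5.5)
Your proposal is correct and follows the paper's proof. You apply \cref{propn:nat end} to $W$ and $\tD(F(\blank),d)$ as functors $\tC^{\op}\to\CATI$, reindex strings in $\tC^{\op}$ as strings in $\tC$, rewrite each term using the universal property of tensors, and conclude by Yoneda, with (ii) following by duality. The compatibility of simplicial structure maps that you flag is left implicit in the paper, where the bar construction's simplicial structure is just the one transported from the monadic resolution in \cref{propn:nat end}.
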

\begin{proof}
  We prove the colimit case; the limit case is proved similarly, or
  follows by applying $(\blank)^{\op}$. For $d \in \tD$, we have
  \[
    \begin{split}
      \tD(\colim^{W}_{\tC} F, d) & \simeq \Nat_{\tC^{\op}, \CATI}(W,
                                   \tD(F, d)) \\
       & \simeq \lim_{[n] \in \simp} \lim_{(c_{0},\ldots,c_{n}) \in
        (\tC^{\simeq})^{\times (n+1)}} \Fun(W(c_{0}) \times
         \tC^{\op}(c_{0},\dots,c_{n}), \tD(F(c_{n}), d)) \\
      & \simeq \lim_{[n] \in \simp} \lim_{(c_{0},\ldots,c_{n}) \in
        (\tC^{\simeq})^{\times (n+1)}} \tD(F(c_{0}) \boxtimes
        \left(\tC(c_{0},\dots,c_{n}) \times W(c_{n})\right), d) \\
                                 & \simeq \tD(\colim_{[n] \in \Dop} \colim_{(c_{0},
      \dots, c_{n}) \in (\tC^{\simeq})^{\times (n+1)}} F(c_{0})
    \boxtimes \left(\tC(c_{0},\dots,c_{n}) \times W(c_{n}), d)\right),
    \end{split}
  \]
  as required, where in the third equivalence we have reordered the
  list of objects to pass from $\tC^{\op}$ to $\tC$.
\end{proof}

\begin{cor}\
  \begin{enumerate}[(i)]
  \item An \itcat{} $\tC$ is cocomplete \IFF{} $\tC$ is tensored over
    $\CatI$ and $\tC^{\leq 1}$ is a cocomplete \icat{}.
  \item An \itcat{} $\tC$ is complete \IFF{} $\tC$ is cotensored over
    $\CatI$ and $\tC^{\leq 1}$ is a complete \icat{}.   \qed
  \end{enumerate}
\end{cor}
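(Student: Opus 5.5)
We prove (i); (ii) follows by applying $(\blank)^{\op}$, which exchanges tensors and cotensors and turns limits in $\tC^{\leq 1}$ into colimits in $(\tC^{\op})^{\leq 1}$. The ``only if'' direction is immediate from the definitions. Tensors $\oK \boxtimes c$ are the weighted colimits $\colim^{\oK}_{[0]} c$. Colimits in $\tC^{\leq 1}$ indexed by a small \icat{} $\oJ$ are obtained from the strong (conical) colimits in $\tC$ over $\oJ$, by \cref{ex:conical}: for functors out of an \icat{}, the conical colimit is the ordinary colimit in the underlying \icat{}.

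For the converse we use \cref{cor:cocomplete}: it suffices to produce all small $\CatI$-weighted colimits. Fix a small \itcat{} $\tA$, a weight $W \colon \tA^{\op} \to \CATI$ and a functor $F \colon \tA \to \tC$. By \cref{cor:cotensordecomp}(i), $\colim^{W}_{\tA}F$ exists as soon as the iterated colimit
\[ \colim_{[n] \in \Dop} \colim_{(a_{0},\dots,a_{n}) \in (\tA^{\simeq})^{\times(n+1)}} F(a_{0}) \boxtimes \left(\tA(a_{0},\dots,a_{n}) \times W(a_{n})\right) \]
exists in $\tC$. The terms $F(a_{0}) \boxtimes (\cdots)$ exist because $\tC$ is tensored, and the indexing shapes $\Dop$ and $(\tA^{\simeq})^{\times(n+1)}$ are small \icats{}. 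So the argument reduces to showing that $\tC$ has strong colimits indexed by small \icats{}.

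This reduction is the one point needing care. A colimit in $\tC^{\leq 1}$ need not a priori be an \itcatl{} conical colimit. Suppose $\phi \colon \oJ \to \tC$ and $x = \colim \phi$ in $\tC^{\leq 1}$. We must check that the map $\tC(x,c) \to \lim_{j} \tC(\phi(j),c)$ of \icats{} is an equivalence, not merely an equivalence on cores. To test this, we map in an arbitrary \icat{} $\oK$ and use the tensoring:
\[ \Map(\oK, \tC(x,c)) \simeq \tC^{\leq 1}(x, c^{\oK})\quad\text{or}\quad \simeq \tC^{\leq 1}(\oK \boxtimes x, c). \]
The cotensor form directly uses colimits in $\tC^{\leq 1}$, but only cotensors are available in (ii), and only tensors here. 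So we instead use that $\oK \boxtimes (\blank) \colon \tC^{\leq 1} \to \tC^{\leq 1}$ is a left adjoint with right adjoint $\tC(\blank)$-cotensor-free description: its right adjoint on underlying \icats{} sends $c$ to the object corepresenting $\Map(\oK,\tC(\blank,c))$. Concretely, $\Map(\oK \boxtimes \blank, c) \simeq \Map(\oK, \tC(\blank,c))$ as functors on $(\tC^{\leq 1})^{\op}$. Taking $\oK = [1]$ and $\oK = [0]$, these detect equivalences of \icats{}.

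It therefore suffices to show that $\oK \boxtimes (\blank)$ preserves colimits in $\tC^{\leq 1}$. We argue this via \cref{propn:upgrade adjoint}. The $\CatI$-module structure on $\tC^{\leq 1}$ given by the tensoring (Heine's correspondence) makes $\oK \boxtimes \blank$ part of an action. By associativity of the action, $\oK \boxtimes \blank$ commutes with $[1] \boxtimes \blank$. Hence the map $\tC^{\leq 1}(\oK \boxtimes x, c) \to \lim_j \tC^{\leq 1}(\oK \boxtimes \phi(j), c)$ is identified with the core of $\Fun(\oK, \tC(x,c)) \to \lim_j \Fun(\oK,\tC(\phi j, c))$. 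This makes the argument circular, so we instead verify the key claim, that $\tC(\blank,c)$ takes colimits in $\tC^{\leq 1}$ to limits of \icats{}, by induction over the cells $[0],[1]$ generating $\CatI$ under colimits. Since $\Map([1],\tC(y,c)) \simeq \tC^{\leq 1}([1]\boxtimes y, c)$ and $[1] \boxtimes (\blank)$ is a left adjoint on $\tC^{\leq 1}$ (its right adjoint is built from the mapping \icats{}), we obtain
\[ \Map([1],\tC(x,c)) \simeq \lim_j \Map([1],\tC(\phi j,c)). \]
Together with the corresponding statement on cores, this gives the required equivalence of \icats{}. Identifying this right adjoint is the main obstacle; the rest is formal.
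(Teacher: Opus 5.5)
\textbf{Same reduction as the paper, but the last step is a genuine gap.} The paper records this corollary as an immediate consequence of \cref{cor:cotensordecomp}, which builds any weighted colimit from tensors and conical colimits indexed by small \icats{}. Your reduction is exactly that, and your ``only if'' direction is fine. You are also right about where care is needed: the paper's one-line derivation tacitly treats colimits in $\tC^{\leq 1}$ as conical colimits in $\tC$, which amounts to $\oK \boxtimes (\blank)$ preserving them.

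The gap is your justification of that point. You assert that $[1]\boxtimes(\blank)$ is a left adjoint on $\tC^{\leq 1}$, with right adjoint ``built from the mapping \icats{}''. Such a right adjoint $R$ would have to satisfy $\Map(y,Rc)\simeq\Map([1]\boxtimes y,c)\simeq\Map([1],\tC(y,c))$. So $Rc$ is precisely the cotensor $c^{[1]}$, which is not among the hypotheses of (i); in (ii) you would dually need tensors. The final step therefore assumes exactly what you noted you could not use. Your earlier attempt via associativity of the action is, as you say yourself, circular.

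\textbf{The gap cannot be closed: the ``if'' direction is false as stated.} Here is a counterexample to (i).
\begin{itemize}
\item Let $\tC \subseteq \FUN([1],\CATI)$ be the full sub-\itcat{} on functors $f \colon X \to Y$ with $f^{\simeq}$ an equivalence. It is closed under the pointwise tensors $\oK \times (\blank)$, so it is tensored.
\item $\tC^{\leq 1}$ is the full subcategory of $\Ar(\CatI)$ on maps right orthogonal to $\emptyset \to *$. It is reflective (reflect onto the right factor of the factorization system generated by $\emptyset \to *$) and accessible, hence presentable and in particular cocomplete.
\item Take the span $x_{1} \leftarrow x_{2} \to x_{3}$, where $x_{1}$ and $x_{2}$ are the core inclusions of $[3]$ and of $\Delta^{\{0,2\}} \amalg \Delta^{\{1,3\}}$, $x_{3}$ is the identity of a two-point set $\{p,q\}$, and $x_{2} \to x_{3}$ collapses each edge.
\item Its pushout in $\Ar(\CatI)$ is $\{p,q\} \to *$, since localizing $[3]$ at $0 \to 2$ and $1 \to 3$ gives a contractible \icat{} by two-out-of-six. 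Hence its pushout in $\tC^{\leq 1}$ is the reflection $(* \to *)$.
\item Test against $y = (\mathrm{Kr} \to [1])$, where $\mathrm{Kr}$ is the free category on two parallel arrows. One gets $\tC((*\to *), y) \simeq \mathrm{Kr}$. But $\tC(x_{1},y) \times_{\tC(x_{2},y)} \tC(x_{3},y) \simeq \mathrm{Kr} \times_{[1]} \mathrm{Kr}$, which has four morphisms between its two objects.
\end{itemize}
So $\tC$ has no conical pushout of this span and is not cocomplete. Passing to $\tC^{\op}$ likewise contradicts (ii).

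\textbf{What your argument does prove.} It proves the corrected statement in which ``$\tC^{\leq 1}$ is cocomplete'' is replaced by ``$\tC$ admits conical colimits indexed by small \icats{}''. Alternatively, suppose $\tC$ is both tensored and cotensored, as for presentable $\tC$ in \S\ref{sec:pres}. Then $\Map(\oK,\tC(x,c)) \simeq \Map(x,c^{\oK})$ shows every colimit in $\tC^{\leq 1}$ is conical, and $\Map(\oK,\tC(c,x)) \simeq \Map(\oK \boxtimes c, x)$ shows every limit in $\tC^{\leq 1}$ is conical. In that case both parts of the corollary hold.
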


\subsection{Free cocompletion}
\label{sec:free cocomp}
Our goal in this section is to identify $\tPSh(\tC)$ as the free
cocompletion of a small \itcat{} $\tC$. This is a special case of
Heine's very general construction of free cocompletions of enriched
\icats{} under a class of colimits in \cite[\S 3.11]{heineweighted};
the analogue for $(\infty,\infty)$-categories has also been proved by
Loubaton \cite[Corollary 4.2.4.8]{loubaton}. For our proof we need a
functorial version of the Yoneda lemma, which we will derive from our
results on fibrations using the following construction:
\begin{construction}
  Given a functor $\Phi \colon \tA^{\op} \times \tB \to \CATI$, we can
  view it as a functor $\Phi' \colon \tA^{\op} \to \FUN(\tB, \CATI)
  \simeq \oFIB_{/\tB}^{(0,1)}$. Identifying the latter with a full
  sub-\itcat{} of $\MCATITsl{\tB^{\sharp}}$, we can straighten this to
  a morphism of marked (1,0)-fibrations
  \[
    \begin{tikzcd}
     (\tE,I) \ar[rr, "{(p,q)}"] \ar[dr, "p"'] & & \tA^{\sharp} \times \tB^{\sharp} \ar[dl] \\
      & \tA^{\sharp},
    \end{tikzcd}
  \]
  meaning that $p$-cartesian 1-morphisms and $p$-cocartesian
  2-morphisms in $\tE$ map to equivalences in $\tB$.
  Given a 1-morphism $\phi \colon x \to y$ in $\tE$ over $(f\colon a \to a', g
  \colon b \to b')$ in $\tA \times \tB$, we can factor $\phi$ as
  \[ x \xto{\phi'} f^{*}y \xto{\bar{f}} y\]
  where $\bar{f}$ is $p$-cartesian over $f$ and so lies over $(f,
  \id_{b'})$; then $\phi'$ lies over $(\id_{a},g)$ and so factors as
  $x \to g_{!}x \to f^{*}y$ through a $q_{a}$-cocartesian morphism in
  $\tE_{a}$; the morphism $\phi$ lies in the marking $I$ precisely if
  the resulting morphism $g_{!}x \to f^{*}y$ is an equivalence, \ie{}
  if $\phi$ factors as a $q_{a}$-cocartesian morphism followed by a
  $p$-cartesian morphism. We refer to
  \[ (p,q) \colon (\tE,I) \to \tA^{\sharp} \times \tB^{\sharp}\] as
  the \emph{marked $(1,0)$-bifibration} associated to $\Phi$. If we
  instead view $\Phi$ as a functor
  $\tB \to \FUN(\tA^{\op}, \CATI) \simeq \oFIB^{(1,0)}_{/\tA}$ and
  straighten this over $\tB$, we similarly obtain the \emph{marked
    $(0,1)$-bifibration}\footnote{In fact, we expect this to be the
    same as the marked $(1,0)$-bifibration for $\Phi$, just with the
    order of $\tA$ and $\tB$ reversed; this would follow from an
    \itcatl{} version of the uniqueness of bivariant straightening
    proved in \cite{TwoVariable}, but we will not pursue this here.}
  \[ (q',p') \colon (\tE', I') \to \tB^{\sharp} \times \tA^{\sharp}\]
   for $\Phi$.
 \end{construction}

\begin{propn}\label{propn:push pull bifib}
  Given a functor $\Phi
  \colon \tA^{\op} \times \tB \to \CATI$, let 
  \[ (p,q) \colon (\tE,I) \to \tA^{\sharp} \times \tB^{\sharp},
    \qquad (q',p') \colon (\tE',I') \to \tB^{\sharp} \times \tA^{\sharp}\]
  be the marked $(1,0)$-bifibration and marked $(0,1)$-bifibration
  corresponding to $\Phi$, respectively.
  \begin{enumerate}[(i)]
  \item For a functor $G \colon \tB
    \to \CATI$ with corresponding 1-fibred $(0,1)$-fibration $\tQ \to
    \tB$, the functor $\Nat_{\tB,\CATI}(\Phi, G) \colon \tA \to \CATI$ is
    classified by the 1-fibred $(0,1)$-fibration $p_{*}q^{*}\tQ$ over
    $\tA$.
  \item For a functor $H \colon \tA^{\op} \to \CATI$ with corresponding
    1-fibred $(1,0)$-fibration $\tP \to \tA$, the functor
    $\Nat_{\tA^{\op},\CATI}(\Phi, G) \colon \tB^{\op} \to \CATI$ is
    classified by the 1-fibred $(1,0)$-fibration $q'_{*}p'^{*}\tP$ over
    $\tB$.
  \end{enumerate}
\end{propn}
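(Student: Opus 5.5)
We prove (i); part (ii) follows by the same argument with the roles of $\tA$ and $\tB$ exchanged (or by applying $(\blank)^{\op}$). The plan is to compute the mapping \icats{} out of an arbitrary $(0,1)$-fibration into $p_{*}q^{*}\tQ$ and compare them with natural transformations into $\Nat_{\tB,\CATI}(\Phi,G)$, then conclude by the Yoneda lemma in $\FUN(\tA,\CATI)\simeq \oFIB^{(0,1)}_{/\tA}$.

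First, $p_{*}q^{*}\tQ$ is a 1-fibred $(0,1)$-fibration over $\tA$. To see this, we use that $p\colon (\tE,I)\to\tA^{\sharp}$ is a marked $(1,0)$-fibration, so that $p^{\sharp}$ is a decorated $\overline{(0,1)}$-fibration. Hence \cref{cor:pushforwardisfib marked} applies to $p$: the pullback $q^{*}\tQ$, with the marking by those morphisms that are cocartesian over morphisms of $I$, lies in $\FIB^{(0,1)}_{/(\tE,I)}$. The key point here is that $q$ sends $I$ into $\tB^{\sharp}$, and the $p$-cartesian morphisms in $I$ map to equivalences in $\tB$. The pushforward $p_{\dec,*}$ then lands in $\FIB^{(0,1)}_{/\tA}$, and it is 1-fibred because its fibres are $\DFUN_{/(\tE,I)}(\tE_{a}, q^{*}\tQ)$ by \cref{obs:fibre of pushforward}, and these are \icats{} since $\tQ$ has \icat{} fibres.

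Next, for $F\colon\tA\to\CATI$ with 1-fibred fibration $\tF\to\tA$, we have by adjunction
\[\oFIB^{(0,1)}_{/\tA}(\tF,p_{*}q^{*}\tQ)\simeq \FIB^{(0,1)}_{/(\tE,I)}(p^{*}\tF,q^{*}\tQ).\]
On the other side,
\[\Nat_{\tA,\CATI}(F,\Nat_{\tB,\CATI}(\Phi,G))\simeq \Nat_{\tB,\CATI}(\colim^{F}_{\tA^{\op}}\Phi',G),\]
where $\Phi'\colon\tA^{\op}\to\FUN(\tB,\CATI)$ is the curried functor, using \cref{obs: weighted lim rep}; equivalently this is $\Nat_{\tA^{\op}\times\tB,\CATI}(F\times\Phi,\ G\circ\mathrm{pr})$. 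Under straightening over $\tB$, this last \icat{} should become maps of 1-fibred $(0,1)$-fibrations over $\tB$ from the localization $L^{(0,1);1}_{\tB^{\sharp}}$ of $q\colon (p^{*}\tF, I_{F})\to\tB^{\sharp}$ to $\tQ$. Here $I_{F}$ consists of the morphisms lying over $I$ whose component in $\tF$ is cocartesian.

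The main obstacle is identifying $\colim^{F}\Phi'$ with this localization. By \cref{cor:wtcolimcatit} applied in $\oFIB_{/\tB}\subseteq \MCATITsl{\tB^{\sharp}}$, or more directly by the universal property of the free 1-fibred fibration from \cref{cor:free 1-fibred on marked}, maps out of $L^{(0,1);1}_{\tB^{\sharp}}(p^{*}\tF,I_F)$ into $\tQ$ are exactly marked functors $(p^{*}\tF,I_F)\to\tQ^{\natural}$ over $\tB^{\sharp}$. These in turn agree with $\FIB^{(0,1)}_{/(\tE,I)}(p^{*}\tF,q^{*}\tQ)$, since $q^{*}\tQ$ carries the pulled-back cocartesian marking. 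To justify that $(p^{*}\tF,I_F)\to\tB^{\sharp}$ represents the weighted colimit $\colim^{F}\Phi'$, I would first check the case $F=\tA(a,\blank)$ corepresentable. In that case $p^{*}\tF$ is the free $(0,1)$-fibration on $\tE_{a}$, and the claim reduces via \cref{propn:unit fib univ eps-equiv} to $\tE_{a}\to\tB$ classifying $\Phi(a,\blank)$. Since both sides preserve colimits in $F$, I would then extend to general $F$ by the free resolution of \cref{propn:nat end}.

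With the two sides identified naturally in $F$, the Yoneda lemma yields the equivalence between $p_{*}q^{*}\tQ$ and the fibration classifying $\Nat_{\tB,\CATI}(\Phi,G)$.
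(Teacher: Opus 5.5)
Your skeleton matches the paper's. You use Yoneda in the weight $F$, rewrite $\Nat_{\tA,\CATI}(F,\Nat_{\tB,\CATI}(\Phi,G))$ as maps out of the weighted colimit of $\Phi'$, identify that colimit with $(p^{*}\tF,I_{F})$ up to localization, and conclude by $p^{*}\dashv p_{*}$. Your 1-fibredness check and the final comparison with $\FIB^{(0,1)}_{/(\tE,I)}(p^{*}\tF,q^{*}\tQ)$ are fine. The gap is the step you flag as the main obstacle: what you propose does not establish it.

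\emph{Corepresentable case.} For $F=\tA(a,\blank)$, the pullback $p^{*}\tF=\tE\times_{\tA}\tA_{a\upslash}$ is not a free $(0,1)$-fibration on $\tE_{a}$ in any sense covered by \cref{propn:unit fib univ eps-equiv}. It is the fibre over $a$ of the free decorated $(1,0)$-fibration on $(\tE,I)^{\sharp}\to\tA^{\sharp\sharp}$. So that proposition only says $\tE_{a}\to\tE\times_{\tA}\tA_{a\upslash}$ is a $(1,0)$-equivalence over $\{a\}$, i.e.\ an equivalence after applying $\Ld$, which forgets all structure over $\tB$. You need a marked $(0,1)$-equivalence over $\tB^{\sharp}$. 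The counit $\ev_{0}$ and the lax transformation behind the proposition are compatible with $q\circ\ev_{0}$, not with the structure map $q\circ\ev_{1}$ you need. Hence \cref{lem: lax tr eps-equiv} does not apply without an extra argument identifying these two functors to $\tB$.

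\emph{Extension to general $F$.} This is the more serious problem. Knowing that both sides are cocontinuous in $F$ and agree on each corepresentable does not let you use the resolution of \cref{propn:nat end}. For that you need a comparison map natural in $F$, i.e.\ natural in $a\in\tA^{\op}$ on corepresentables, with respect to 1- and 2-morphisms, and you never construct one. The evident maps $e\mapsto(e,\id_{a})$ commute with the transition functors only up to morphisms in $I_{F}$; producing a coherent natural map is essentially the whole content of the identification. A minor point: $\Nat_{\tA^{\op}\times\tB,\CATI}(F\times\Phi,G\circ\mathrm{pr})$ is ill-typed, since $F$ is covariant and $\Phi$ contravariant in $\tA$. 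The correct object is a coend, which is why a weighted colimit appears.

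The missing ingredient is one you cite, but only for the universal property of $L^{(0,1);1}_{\tB^{\sharp}}$: the fibrational formula for weighted colimits in $\DCATIT$ (\cref{cor:wtcolimcatit} in its $\DCATIT$ form, coming from \cref{cor:lax lim colim DCat2}). Write $\Nat_{\tB,\CATI}(\Phi(a,\blank),G)$ as the fibre of $\DFun(\Phi'(a)^{\natural},\tQ^{\natural})\to\DFun(\Phi'(a)^{\natural},\tB^{\sharp\sharp})$. This gives $\Nat_{\tA,\CATI}(F,\Nat_{\tB,\CATI}(\Phi,G))\simeq\DFun_{/\tB^{\sharp\sharp}}(\colim^{F}_{\tA^{\op}}\Phi'^{\natural},\tQ^{\natural})$, with the colimit taken in $\DCATIT$. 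Since $(\tE,I)^{\sharp}\to\tA^{\sharp\sharp}$ is the decorated $(1,0)$-fibration for $\Phi'^{\natural}$, this colimit is, for all $F$ at once, a localization of $(\tE,I)^{\sharp}\times_{\tA^{\sharp\sharp}}\tF^{\natural}=(p^{*}\tF,I_{F})^{\sharp}$. It inverts the 1-morphisms that are $p$-cartesian in $\tE$ and cocartesian in $\tF$, together with the $p$-cocartesian 2-morphisms. All of these lie over equivalences in $\tB$ --- this is where your ``key point'' about $q$ is actually needed. Hence any decorated functor to $\tQ^{\natural}$ over $\tB^{\sharp\sharp}$ sends them to equivalences, and the localization can be dropped. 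You get $\DFun_{/\tB^{\sharp\sharp}}((p^{*}\tF,I_{F})^{\sharp},\tQ^{\natural})$ naturally in $F$, and your adjunction step plus Yoneda finish the proof; this is the paper's argument.
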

\begin{proof}
  We prove the first case. Let $\Phi'$ be the functor $\tA^{\op} \to \oFIB^{(0,1)}_{/\tB}$
  associated to $\Phi$. Then we have a natural equivalence
  \[ \Nat_{\tB, \CATI}(\Phi, G) \simeq \oFIB^{(0,1)}_{/\tB}(\Phi', \tQ)
    \simeq \DCATITsl{\tB^{\sharp\sharp}}(\Phi'^{\natural}, \tQ^{\natural}). \]
  Given $W \colon \tA \to \CATI$, we get a natural pullback
  square
  \[
    \begin{tikzcd}
     \Nat_{\tA,\CATI}(W, \Nat_{\tB,\CatI}(\Phi,G)) \ar[r]
     \ar[d] & \Nat_{\tA,\CATI}(W, \DFun(\Phi'^{\natural},
     \tQ^{\natural}))  \ar[d] \\
     * \ar[r] & \Nat_{\tA,\CATI}(W,
     \DFun(\Phi'^{\natural}, \tB^{\sharp\sharp})),
    \end{tikzcd}
  \]
  where we regard $\Phi'$ as a functor to $\DCATIT$. This allows us to
  identify the top left \icat{} as
  \[\DFun_{/\tB^{\sharp\sharp}}(\colim^{W}_{\tA^{\op}}\Phi'^{\natural},
    \tQ^{\natural})\]
  with the weighted colimit taken in $\DCATIT$. Here $p \colon (\tE,I)^{\sharp}
  \to \tA^{\sharp\sharp}$ is the decorated $(1,0)$-fibration for
  $\Phi'$,
  so if $\pi \colon \tW \to \tA$ is the $(0,1)$-fibration for $W$, then
  \cref{cor:wtcolimcatit} identifies this weighted colimit as the
  localization of $(\tE,I)^{\sharp} \times_{\tA^{\sharp\sharp}}
  \tW^{\natural}$ where we invert
  \begin{itemize}
  \item the 1-morphisms that project to a $p$-cartesian morphism in
    $\tE$ and a $\pi$-cocartesian morphism in $\tW$,
  \item the 2-morphisms that project to a $p$-cocartesian 2-morphism
    in $\tE$ (as all 2-morphisms  in $\tW$ are $\pi$-cartesian).
  \end{itemize}
  We claim that in our case we can ignore this localization: given a
  commutative triangle
  \[
    \begin{tikzcd}
      (\tE,I)^{\sharp} \times_{\tA^{\sharp\sharp}}
  \tW^{\natural} \ar[rr, "\alpha"] \ar[dr] & & \tQ^{\natural} \ar[dl, "\pi"] \\
      & \tB^{\sharp\sharp}
    \end{tikzcd}
  \]
  we see that any 1-morphism in the source that projects to a
  $p$-cartesian morphism in $\tE$ is decorated, and so is sent to a
  $\pi$-cocartesian morphism in $\tQ^{\natural}$ --- but it also
  projects to an equivalence in $\tB$ via $q$, so its image is
  $\pi$-cocartesian over an equivalence and so must itself be an
  equivalence. Similarly, the image of any 2-morphism that projects to
  a $p$-cocartesian morphism in $\tE$ must be invertible in $\tQ$. It
  follows that we have natural equivalences
  \[
    \begin{split}
      \Nat_{\tA,\CATI}(W, \Nat_{\tB,\CatI}(\Phi,G))
      & \simeq \DFun_{/\tB^{\sharp\sharp}}((\tE,I)^{\sharp} \times_{\tA^{\sharp\sharp}}
        \tW^{\natural}, \tQ^{\natural}) \\
      & \simeq \oFIB^{(0,1)}_{/\tB}(q_{!}p^{*}\tW^{\natural},
        \tQ^{\natural}) \\
      & \simeq \oFIB^{(0,1)}_{/\tB}(\tW^{\natural},
        p_{*}q^{*}\tQ^{\natural}),
    \end{split}
  \]
  as required.
\end{proof}

As a special case, we can identify the fibration for a functor of the form $\Fun(F(\blank), \oC)$:
\begin{corollary}\label{cor:fib for Fun}
  Consider a functor $F \colon \tB \to \CATI$ and let
  \begin{itemize}
  \item $p \colon \tE \to \tB$ be the $(0,1)$-fibration for $F$,
  \item $q \colon \tE' \to \tB^{\op}$ be the $(1,0)$-fibration for $F$.
  \end{itemize}
  Then for any \icat{} $\oC$, the functor $\Fun(F(\blank), \oC) \colon \tB^{\op} \to \CATI$ is classified by
  \begin{itemize}
  \item the $(1,0)$-fibration $p_{*}(\oC \times \tE)$,
  \item the $(0,1)$-fibration $q_{*}(\oC \times \tE')$. \qed
  \end{itemize}
\end{corollary}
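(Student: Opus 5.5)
The plan is to deduce \cref{cor:fib for Fun} from \cref{propn:push pull bifib} by taking $\tA = \tB$ and a suitable $\Phi$, or more directly by repeating that proof with a trivial bifibration. The natural choice is $\Phi \colon \tB^{\op} \times \tB \to \CATI$ given by $\Phi(b',b) = \tB(b',b) \times F(b)$? That is awkward. A cleaner route is to observe that $\Fun(F(\blank),\oC) \simeq \Nat_{[0],\CATI}(F(\blank),\oC)$. We therefore view $F$ itself as $\Phi \colon \tB^{\op\op}\times [0] \to \CATI$, i.e.\ we apply \cref{propn:push pull bifib} with $\tA = \tB^{\op}$ and $\tB = [0]$, and take $G = \oC$ constant over the point. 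Then $\tQ = \oC$ viewed over $[0]$.

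The first step is to identify the bifibrations in this degenerate case. The marked $(1,0)$-bifibration for $\Phi\colon (\tB^{\op})^{\op}\times[0] = \tB \to \CATI$ over $\tA^{\sharp}\times[0] = (\tB^{\op})^{\sharp}$ is obtained by straightening $\Phi' \colon \tB \to \oFIB^{(0,1)}_{/[0]} \simeq \CATI$ over $\tA = \tB^{\op}$ as a $(1,0)$-fibration. By \cref{obs:fib change variance}, a $(1,0)$-fibration over $\tB^{\op}$ classifying a functor $(\tB^{\op})^{\op} = \tB \to \CATI$ is exactly the $(1,0)$-fibration $q \colon \tE' \to \tB^{\op}$ for $F$. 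Its marking $I$ consists of the cartesian morphisms, since over the point every morphism is $q_{a}$-cocartesian only when invertible. Part (i) of \cref{propn:push pull bifib} then says that $\Nat_{[0],\CATI}(\Phi,\oC) = \Fun(F(\blank),\oC)$, as a functor on $\tA = \tB^{\op}$, is classified by the $(0,1)$-fibration $q_{*}(\oC \times \tE')$, because $q^{*}$ of $\oC \to [0]$ along $\tE' \to [0]$ is $\oC\times\tE'$. This gives the second bullet.

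For the first bullet, we apply part (ii) instead, with roles swapped. We take $\tA = [0]$ and $\tB$ as given, $\Phi = F$ viewed as $[0]^{\op}\times\tB\to\CATI$, and $H = \oC$. The marked $(0,1)$-bifibration is then the $(0,1)$-fibration $p\colon \tE \to \tB$, and $q'_{*}p'^{*}\tP = p_{*}(\oC\times\tE)$ classifies $\Fun(F(\blank),\oC)$ as a $(1,0)$-fibration over $\tB$.

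The main obstacle is bookkeeping rather than substance. We must check that in the degenerate case the markings $I, I'$ are exactly the (co)cartesian morphisms, so that the pushforward is the honest one from \cref{cor:pushforwardisfib marked}, and that the variances match the statement. If the markings caused trouble, we would instead rerun the proof of \cref{propn:push pull bifib} directly: for a test weight $W$, compute $\Nat(W,\Fun(F,\oC)) \simeq \Fun(\colim^{W}F,\oC)$. Then use \cref{cor:wtcolimcatit} to express $\colim^{W}F$ as a localized fibre product, and conclude by the adjunction $p^{*}\dashv p_{*}$.
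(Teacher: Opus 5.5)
Your proposal is correct and is the paper's argument: the corollary is the special case of \cref{propn:push pull bifib} in which one variable is $[0]$. Part (i), with $\tA = \tB^{\op}$ and $G = \oC$, gives the $(0,1)$-fibration $q_{*}(\oC \times \tE')$, and part (ii), with $\tA = [0]$ and $H = \oC$, gives the $(1,0)$-fibration $p_{*}(\oC \times \tE)$; your identification of the markings as just the (co)cartesian morphisms (since (co)cartesian morphisms over the point are equivalences) is the only bookkeeping needed.
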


Combined with our work on right Kan extensions, we now obtain the desired
functorial version of the Yoneda Lemma:
\begin{notation}
  For an \itcat{} $\tC$, Let $h_{\tC} \colon \tC \to \tPSh(\tC)$ be
  the functor obtained by straightening \[(\ev_{0},\ev_{1}) \colon \ARopl(\tC) \to \tC \times
  \tC\] in the second variable to a functor
  \[ \tC \to \oFIB^{(1,0)}_{/\tC}\simeq \tPSh(\tC).\]
\end{notation}
\begin{cor}\label{cor:functorial yoneda}
  We have a natural equivalence
  \[ \Nat_{\tC^{\op},\CATI}(h_{\tC}, \blank) \simeq \id\]
  of functors $\tPSh(\tC) \to \tPSh(\tC)$.
\end{cor}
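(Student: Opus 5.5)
We will deduce this from \cref{propn:push pull bifib}(ii), applied with $\tA = \tB = \tC$ and $\Phi \colon \tC^{\op} \times \tC \to \CATI$ the functor $\tC(\blank,\blank)$, viewed as $h_{\tC}$ straightened in the first variable. The bifibration we need is the one attached to the span $\ARopl(\tC) \to \tC \times \tC$ via $(\ev_{0},\ev_{1})$. By \cref{def:laxslices}, $\ev_{0}$ restricted to a fibre over $c$ in the second variable is the $(1,0)$-fibration $\tC_{\upslash c} \to \tC$ for $\tC(\blank,c)$, which is exactly how $h_{\tC}$ was defined. Similarly, $\ev_{1}$ on a fibre is the $(0,1)$-fibration $\tC_{c\upslash}$. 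So the marked $(0,1)$-bifibration $(q',p') \colon (\tE',I') \to \tC^{\sharp}\times\tC^{\sharp}$ for $\Phi$ should be $\ARopl(\tC)$, marked by the squares that factor as a $q'$-cocartesian morphism followed by a $p'$-cartesian one. By \cref{thm:ev is partial fib} these are commutative squares whose source edge is invertible, followed by commutative squares whose target edge is invertible; that is, all commutative squares. I would verify this identification first, comparing fibrewise with \cref{def:laxslices} and using \cref{thm:ev is partial fib} to pin down the (co)cartesian morphisms.

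Given this, \cref{propn:push pull bifib}(ii) says that for $H \in \tPSh(\tC)$ with 1-fibred $(1,0)$-fibration $\tP \to \tC$, the presheaf $\Nat_{\tC^{\op},\CATI}(h_{\tC}(\blank),H)$ is classified by $q'_{*}p'^{*}\tP = (\ev_{1})_{*}\ev_{0}^{*}\tP$, with the pushforward taken in the marked sense of \cref{cor:pushforwardisfib marked}. Now $\ev_{0}^{*}\tP \simeq \tP \times_{\tC} \ARopl(\tC)$, where the pullback uses the $(1,0)$-variance of the cofree/free constructions. The key claim is that the pushforward along $\ev_{1}$ of this pullback is equivalent to $\tP$ itself. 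I would obtain this from \cref{thm:cofree} in the form $R_{\id}$: the right adjoint $R_{f}$ of \cref{def:Rf} for $f = \id \colon \tC^{\sharp\sharp} \to \tC^{\sharp\sharp}$ is precisely $\phi_{*}\pi^{*}$ with $\tQd = \DARelax(\tCss)^{\diafib}$. It is also right adjoint to $\id^{*}$, which is the identity, so $R_{\id} \simeq \id$ naturally on $\FIB^{(1,0)}_{/\tC}$.

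It then remains to match the $(1,0)$ variant of $R_{\id}$, namely pushforward along $\ev_{1}$ of pullback along $\ev_{0}$ on $\DARopl$, with $q'_{*}p'^{*}$. The variance conventions (lax versus oplax, and which evaluation is which) are the main obstacle. I would handle this by running the same Yoneda-style chain as in the proof of \cref{propn:push pull bifib}:
\[
\Nat(W,\Nat(h_{\tC},H)) \simeq \FIB(\tW^{\natural}, p'_{*}\ldots).
\]
The target here is $\oFIB^{(1,0)}_{/\tC}(\DFFree(\tW),\tP)$. By the free fibration adjunction, \cref{propn:free dec partial fib} together with \cref{obs:unit eps-equiv}, this is equivalent to $\oFIB^{(1,0)}_{/\tC}(\tW,\tP) \simeq \Nat(W,H)$, naturally in $W$ and $H$. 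Naturality in $H$ is automatic because every identification used is an adjunction equivalence natural in $\tP$. The Yoneda lemma in $\tPSh(\tC)$ then gives the equivalence of functors $\Nat(h_{\tC},\blank)\simeq \id$.
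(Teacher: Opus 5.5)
Your proposal is correct and is essentially the paper's proof: you use \cref{propn:push pull bifib} to identify $\Nat_{\tC^{\op},\CATI}(h_{\tC},\blank)$ with pulling back and pushing forward along $\ARopl(\tC)$, and then recognise this composite as $R_{\id}$ from \cref{def:Rf}, which \cref{thm:cofree} makes right adjoint to $\id^{*}=\id$. Your final paragraph is unnecessary: your computation that the marking consists of the commutative squares already gives $(\tE',I')^{\sharp}\simeq\DARopl(\tC^{\sharp\sharp})^{\diafib}$ with $(q',p')=(\ev_{1},\ev_{0})$, so $q'_{*}p'^{*}$ and $\phi_{*}\pi^{*}$ agree by definition (and your order $\ev_{1,*}\ev_{0}^{*}$ is the correct one), whereas that paragraph just re-derives \cref{thm:cofree} in the case $f=\id$.
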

\begin{proof}
  From \cref{propn:push pull bifib}, we know that
  $\Nat_{\tC^{\op},\CATI}(h_{\tC}, \blank)$ can be identified in terms
  of fibrations as
  \[ \ev_{0,*}\ev_{1}^{*} \colon \oFIB^{(1,0)}_{/\tC} \to
    \oFIB^{(1,0)}_{/\tC}.\]
  But \cref{thm:cofree} identifies the latter as the right adjoint to
  pullback along $\id_{\tC}$.
\end{proof}

\begin{remark}
  A similarly functorial version of the Yoneda lemma for \iicats{}
  appears as \cite[Theorem 4.2.1.20]{loubaton}.
\end{remark}

\begin{cor}[``Co-Yoneda Lemma'']
  For an \itcat{} $\tC$ we have a natural equivalence
  \[ \colim^{\Phi}_{\tC} h_{\tC} \simeq \Phi \]
  for $\Phi \in \tPSh(\tC)$.
\end{cor}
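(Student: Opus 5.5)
We prove the co-Yoneda equivalence by comparing corepresented copresheaves: both sides are objects of $\tPSh(\tC)$, so by the Yoneda lemma for \itcats{} it suffices to give a natural equivalence
\[ \Nat_{\tC^{\op},\CATI}(\colim^{\Phi}_{\tC} h_{\tC}, G) \simeq \Nat_{\tC^{\op},\CATI}(\Phi, G) \]
for $G \in \tPSh(\tC)$, natural in both $\Phi$ and $G$. Because $\tPSh(\tC)$ is cocomplete by \cref{cor:colim strong functor cat} (weighted colimits in $\CATI$ exist by \cref{cor:wtcolimcatit}, and are computed pointwise), the weighted colimit on the left exists.

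The key steps, in order, are as follows. First, unwind the definition of the weighted colimit:
\[ \tPSh(\tC)(\colim^{\Phi}_{\tC} h_{\tC}, G) \simeq \Nat_{\tC^{\op},\CATI}(\Phi, \tPSh(\tC)(h_{\tC}(\blank), G)). \]
Second, apply the functorial Yoneda lemma, \cref{cor:functorial yoneda}. It gives a natural equivalence of presheaves $\tPSh(\tC)(h_{\tC}(\blank), G) = \Nat_{\tC^{\op},\CATI}(h_{\tC}(\blank), G) \simeq G$, natural in $G$. Substituting this yields $\Nat_{\tC^{\op},\CATI}(\Phi,G)$, and the Yoneda lemma in $\tPSh(\tC)$ then gives $\colim^{\Phi}_{\tC}h_{\tC}\simeq \Phi$.

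The main point to check is that \cref{cor:functorial yoneda} really provides an equivalence of \emph{functors of $c$}: we need $\tPSh(\tC)(h_{\tC}(c),G)\simeq G(c)$ natural in $c \in \tC^{\op}$, compatibly with the functoriality of $h_{\tC}$ that enters the weighted colimit. Here I would use that the corollary is stated as an equivalence of functors $\tPSh(\tC)\to\tPSh(\tC)$, whose values are themselves presheaves in the variable $c$. This is exactly the naturality we need, provided the $h_{\tC}$ defined via $\ARopl(\tC)$ agrees with the one appearing in the weighted colimit. By definition it is the same functor, so no further comparison should be needed. Naturality in $\Phi$ is formal.
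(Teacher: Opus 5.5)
Your argument is the paper's: unwind the defining property $\tPSh(\tC)(\colim^{\Phi}_{\tC} h_{\tC}, \Psi) \simeq \Nat_{\tC^{\op},\CATI}(\Phi, \Nat_{\tC^{\op},\CATI}(h_{\tC},\Psi))$, rewrite the inner term using \cref{cor:functorial yoneda}, and conclude by Yoneda in $\tPSh(\tC)$. One step is superfluous: the appeal to cocompleteness of $\tPSh(\tC)$ for existence, which would also force $\tC$ to be small although the statement allows arbitrary $\tC$; the chain of equivalences already shows that $\Phi$ corepresents the functor the weighted colimit must corepresent, so existence follows automatically.
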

\begin{proof}
  By definition, this weighted colimit should satisfy
  \[ \tPSh(\tC)(\colim^{\Phi}_{\tC} h_{\tC}, \Psi) \simeq \Nat_{\tC^{\op},
      \CATI}(\Phi, \Nat_{\tC^{\op}, \CATI}(h_{\tC}, \Psi)),\]
  and from \cref{cor:functorial yoneda} the right-hand side is
  naturally equivalent to the \icat{} $\Nat_{\tC^{\op}, \CATI}(\Phi, \Psi)$.
\end{proof}

With these results in hand, we can now turn to the free cocompletion
of a small \itcat{}.

\begin{propn}
  Suppose $\tC$ is a small \itcat{} and $\tD$ is a cocomplete
  \itcat{}.
  \begin{enumerate}[(i)]
  \item For any functor $F \colon \tC \to \tD$, the left Kan extension
    $h_{\tC,!}F \colon \tPSh(\tC) \to \tD$ exists and is cocontinuous.
  \item For any cocontinuous functor $G \colon \tPSh(\tC) \to \tD$,
    the counit map $h_{\tC,!}h_{\tC}^{*}G \to G$ is an equivalence.
  \end{enumerate}
\end{propn}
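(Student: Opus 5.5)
For (i) we use the weighted colimit formula for left Kan extensions from \cref{cor:strong kan ext}. For $\Phi \in \tPSh(\tC)$ we need the weighted colimit of $F$ with weight $\tPSh(\tC)(h_{\tC}(\blank),\Phi)$. By the Yoneda lemma (the pointwise case of \cref{cor:functorial yoneda}) this weight is naturally equivalent to $\Phi$ itself. The weighted colimit $\colim^{\Phi}_{\tC} F$ exists because $\tC$ is small and $\tD$ is cocomplete. Hence $h_{\tC,!}F$ exists and is given by
\[ (h_{\tC,!}F)(\Phi) \simeq \colim^{\Phi}_{\tC} F. \]
This formula is natural in $\Phi$, because the identification of the weight with $\Phi$ is natural (\cref{cor:functorial yoneda}).

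To show that $h_{\tC,!}F$ is cocontinuous, by \cref{cor:cocomplete} and \cref{def:conti} it suffices to check that it preserves $\CatI$-weighted colimits. Take $\Psi \colon \tJ \to \tPSh(\tC)$ and a weight $W \in \tPSh(\tJ)$. Colimits in $\tPSh(\tC)$ are computed pointwise by \cref{cor:colim strong functor cat}, so $\colim^{W}_{\tJ}\Psi$ exists there. \cref{propn:colim over colim of weights} then gives
\[ \colim^{\colim^{W}_{\tJ}\Psi}_{\tC} F \simeq \colim^{W}_{\tJ}\bigl(\colim^{\Psi(\blank)}_{\tC} F\bigr). \]
The right-hand side exists because $\tD$ is cocomplete. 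Under the formula above this equivalence is exactly the statement that $h_{\tC,!}F$ sends $\colim^{W}_{\tJ}\Psi$ to $\colim^{W}_{\tJ} h_{\tC,!}F\circ\Psi$. One must still check that this equivalence is the canonical comparison map, which follows by unwinding the corepresented functors in the proof of \cref{propn:colim over colim of weights}.

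For (ii), let $G$ be cocontinuous. At $\Phi$ the counit is a map
\[ \colim^{\Phi}_{\tC}(G\circ h_{\tC}) \to G(\Phi). \]
By the co-Yoneda lemma $\Phi \simeq \colim^{\Phi}_{\tC} h_{\tC}$, and $G$ preserves weighted colimits, so $G(\Phi) \simeq \colim^{\Phi}_{\tC} G h_{\tC}$. It then remains to identify the counit with this equivalence. To do so, observe that the counit is induced by the canonical cocone $\Phi \to \tD(Gh_{\tC}(\blank), G\Phi)$, which is the composite of the Yoneda cocone for $\Phi$ with the action of $G$ on morphisms. This matches the map exhibiting $G$ applied to the co-Yoneda colimit.

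The main obstacle is this identification of canonical maps: checking that the counit agrees with the composite of the co-Yoneda equivalence and the preservation map. In (i) the analogous naturality issue arises for the comparison map. I would first try to handle both via the Yoneda-lemma description of maps out of weighted colimits, reducing each to a check on corepresented functors.
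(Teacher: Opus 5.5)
Your proposal is correct and follows the paper's proof essentially step for step. Existence comes from \cref{cor:strong kan ext} together with \cref{cor:functorial yoneda}, cocontinuity from \cref{propn:colim over colim of weights}, and the counit is identified with the comparison map $\colim^{\Phi}_{\tC} G h_{\tC} \to G(\colim^{\Phi}_{\tC} h_{\tC})$ via the co-Yoneda lemma. The paper simply asserts the identifications of canonical maps that you flag as the remaining check, so your extra care there goes slightly beyond what is written.
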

\begin{proof}
  By \cref{cor:strong kan ext}, the Kan extension $h_{\tC,!}F$ exists
  if $\tD$ admits colimits weighted by
  $\tPSh(\tC)(h_{\tC}(\blank), \phi)$ for all $\phi$ in $\tPSh(\tC)$;
  by \cref{cor:functorial yoneda} this presheaf is naturally
  equivalent to $\phi$, so this exists by our assumption that $\tC$ is
  small and $\tD$ is cocomplete. To see that $h_{\tC,!}F$ is
  cocontinuous, we consider $\Phi \colon \tJ \to \tPSh(\tC)$ and $W
  \in \tPSh(\tJ)$ and use \cref{propn:colim over colim of weights} to compute
  \[ h_{\tC,!}F(\colim^{W}_{\tJ} \Phi) \simeq
    \colim_{\tC}^{\colim^{W}_{\tJ} \Phi} F \simeq \colim_{\tJ}^{W}
    \colim_{\tC}^{\Phi} F \simeq \colim_{\tJ}^{W} h_{\tC,!}F(\Phi).\]
  Finally, if $G$ preserves colimits, then the counit map
  $h_{\tC,!}h_{\tC}^{*}G \to G$ at $\phi \in \PSh(\tC)$ is the
  canonical map
  \[ \colim^{\phi}_{\tC} G(h_{\tC}) \to G(\colim^{\phi}_{\tC}
    h_{\tC}),\]
  which is an equivalence since $G$ is cocontinuous.
\end{proof}

\begin{cor}
  If $\tC$ is a small \itcat{} and $\tD$ is a cocomplete
  \itcat{}, then the restriction functor
  \[ h_{\tC}^{*} \colon \FUN(\tPSh(\tC), \tD) \to \FUN(\tC, \tD)\]
  has a fully faithful left adjoint $h_{\tC,!}$ with image the
  cocontinuous functors. \qed
\end{cor}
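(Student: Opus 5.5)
The plan is to deduce the corollary formally from the preceding proposition. Part (i) of that proposition tells us that for every functor $F \colon \tC \to \tD$ the left Kan extension $h_{\tC,!}F$ exists and is cocontinuous. I first promote this to an adjunction $h_{\tC,!} \dashv h_{\tC}^{*}$. Since $\tD$ is cocomplete and, by the functorial Yoneda lemma (\cref{cor:functorial yoneda}), the weights $\tPSh(\tC)(h_{\tC}(\blank),\phi) \simeq \phi$ are small, the second case of \cref{cor:strong kan ext} applies to $f = h_{\tC}$ and target $\tD$. This directly gives an adjunction of \itcats{}
\[ h_{\tC,!} \colon \FUN(\tC,\tD) \llra \FUN(\tPSh(\tC),\tD) \colon h_{\tC}^{*}. \]

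Next I show that $h_{\tC,!}$ is fully faithful. The Yoneda embedding $h_{\tC}$ is fully faithful, so by the dual of \cref{rem:LKEff} (the left Kan extension version with maximal marking) the unit $F \to h_{\tC}^{*}h_{\tC,!}F$ is an equivalence. Concretely, at $c$ it is the map $F(c) \to \colim^{h_{\tC}(c)}_{\tC} F$, and this is an equivalence by the Yoneda lemma, since colimits weighted by a representable are evaluation. A left adjoint whose unit is invertible is fully faithful.

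Finally I identify the essential image. By part (i) of the proposition, every $h_{\tC,!}F$ is cocontinuous. Conversely, if $G$ is cocontinuous, part (ii) says the counit $h_{\tC,!}h_{\tC}^{*}G \to G$ is an equivalence, so $G$ lies in the image. The image is therefore exactly the full sub-\itcat{} of cocontinuous functors.

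The only point needing care is that the adjunction is genuinely one of \itcats{} rather than merely of underlying \icats{}. \cref{cor:strong kan ext} already provides this, so I do not expect a real obstacle. If one preferred to check the unit claim without invoking \cref{rem:LKEff}, one could instead use the Yoneda equivalence on mapping \icats{} directly.
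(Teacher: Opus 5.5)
Your proposal is correct and is essentially the argument the paper leaves implicit with its \qed: the adjunction comes from \cref{cor:strong kan ext} with the weights identified via \cref{cor:functorial yoneda}, full faithfulness follows from $h_{\tC}$ being fully faithful (the left-adjoint dual of \cref{rem:LKEff}, or directly since colimits weighted by representables are evaluations). The identification of the image follows from parts (i) and (ii) of the preceding proposition.
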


\subsection{Presentable \itcats{}}
\label{sec:pres}

In this section we consider a rather simple-minded definition of
presentable \itcats{}, and show that this admits a number of other
characterizations generalizing those for presentable \icats{}:

\begin{defn}
  An \itcat{} $\tC$ is \emph{presentable} if $\tC$ is cocomplete and
  $\tC^{\leq 1}$ is presentable, or equivalently accessible.
\end{defn}

We state the comparison first and then introduce the terminology
needed to unpack it:
\begin{thm}\label{thm:pres char}
  The following are equivalent for an \itcat{} $\tC$:
  \begin{enumerate}[(1)]
  \item $\tC$ is presentable.
  \item $\tC$ is cocomplete and locally small, and there is a regular
    cardinal $\kappa$ and a small full sub-\itcat{} $\tC_{0} \subseteq
    \tC$ consisting of 2-$\kappa$-compact objects, such that every
    object in $\tC$ is the conical colimit of a diagram in $\tC$
    indexed by a $\kappa$-filtered \icat{}.
  \item There is a small \itcat{} $\tJ$ and a fully faithful functor 
    $\tC \hookrightarrow \tPSh(\tJ)$ that admits a left adjoint and whose image is closed under
    conical colimits over $\kappa$-filtered \icats{} for some regular
    cardinal $\kappa$.
  \item There is a small \itcat{} $\tJ$, a small set $S$ of 1-morphisms in $\tPSh(\tJ)$ and an
    equivalence $\tC \simeq \Loc_{S}(\tPSh(\tJ))$.
  \end{enumerate}
\end{thm}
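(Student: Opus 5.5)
The plan is to prove the cycle (1)$\Rightarrow$(2)$\Rightarrow$(3)$\Rightarrow$(4)$\Rightarrow$(1), reducing at each step to the corresponding statement for presentable \icats{} (\cite{LurieHTT}), together with the earlier results on tensors and cotensors. The 2-categorical input we rely on is the following. An \itcat{} is cocomplete \IFF{} it is tensored over $\CatI$ and $\tC^{\leq 1}$ is cocomplete, by the corollary to \cref{cor:cotensordecomp}. Adjunctions of \itcats{} can be detected on underlying \icats{} once tensors by $[1]$ are preserved (\cref{propn:upgrade adjoint}). An object is 2-$\kappa$-compact if $\tC(c,\blank)\colon \tC \to \CATI$ preserves $\kappa$-filtered conical colimits. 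By \cref{ex:conical}, conical colimits over \icats{} are computed in $\tC^{\leq 1}$.

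For (1)$\Rightarrow$(2), $\tC^{\leq 1}$ is $\kappa$-accessible for some $\kappa$. Since $\tC(c,d)^{\simeq} = \Map(c,d)$ and $\Map(\oK,\tC(c,d)) \simeq \Map(\oK\boxtimes c,d)$, local smallness follows. For 2-compactness, we enlarge $\kappa$ so that the $\kappa$-compact objects of $\tC^{\leq 1}$ are closed under tensoring with $[1]$ (and hence with all $\kappa$-small \icats{}); this is possible because $[1]\boxtimes(\blank)$ is a left adjoint between accessible categories. Then $\Map([n],\tC(c,\blank)) \simeq \Map([n]\boxtimes c,\blank)$ commutes with $\kappa$-filtered colimits, so $\tC(c,\blank)$ does too, because $\kappa$-filtered colimits in $\CatI$ are detected levelwise on complete Segal spaces for $\kappa$ large. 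We take $\tC_0$ to be the $\kappa$-compact objects.

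For (2)$\Rightarrow$(3), we set $\tJ=\tC_0$ and consider the restricted Yoneda functor $\tC \to \tPSh(\tC_0)$, $c\mapsto \tC(\blank,c)|_{\tC_0}$. It preserves $\kappa$-filtered conical colimits by 2-compactness. It has the left adjoint $h_{\tC_0,!}$ of the inclusion $\tC_0\hookrightarrow\tC$, which exists by the free cocompletion result in \S\ref{sec:free cocomp}, because $\tC$ is cocomplete. Full faithfulness reduces to showing that the counit $\colim^{\tC(\blank,c)|_{\tC_0}}_{\tC_0}\iota \to c$ is an equivalence. Writing $c=\colim_{\oK} c_k$ with $\oK$ a $\kappa$-filtered \icat{} and $c_k\in\tC_0$, this colimit reduces via \cref{propn:colim over colim of weights} and the co-Yoneda lemma to $\colim_{\oK} c_k$. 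The same argument shows the image is closed under $\kappa$-filtered conical colimits.

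For (3)$\Rightarrow$(4), we take $S$ to be the unit maps $\phi \to RL\phi$ for $\phi$ in a small generating set. The candidates are the tensors $\oK\boxtimes h_j$ with $\oK\in\{[0],[1]\}$, or better their $\kappa$-small colimits, using $\kappa$-accessibility of $R$. Being 2-local means $\tPSh(\tJ)(s,\phi)$ is an equivalence; equivalently, $\Map(s,\phi)$ is an equivalence for all tensors of $s$ by $[1]$. This identifies the 2-local objects with the image, by the \icatl{} argument of \cite[5.5.4]{LurieHTT} applied to $\tPSh(\tJ)^{\leq 1}$ together with closure under tensors.

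For (4)$\Rightarrow$(1), $\Loc_S$ is the full sub-\itcat{} of objects local for the small set $S \cup \{[1]\boxtimes s : s\in S\}$ in $\tPSh(\tJ)^{\leq 1}$, which is presentable. So $\tC^{\leq1}$ is an accessible localization, and it is closed under cotensors, and hence the localization functor preserves tensors and upgrades by \cref{propn:upgrade adjoint}. Cocompleteness then follows, since colimits are computed by localizing colimits in $\tPSh(\tJ)$, which is cocomplete by \cref{cor:colim strong functor cat}. I expect the main obstacle to be the compactness bookkeeping in (1)$\Rightarrow$(2): showing $\tC(c,\blank)$ preserves $\kappa$-filtered colimits as an $\CatI$-valued functor.
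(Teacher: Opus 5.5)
Your proposal is correct and is essentially the paper's proof arranged as a cycle. Your (1)$\Rightarrow$(2) is \cref{obs:pres loc small} plus \cref{lem:compact is 2-compact}. Your (2)$\Rightarrow$(3) is the paper's restricted-Yoneda argument for (1)$\Rightarrow$(3); you just build the right adjoint directly from the weighted-colimit universal property instead of via the adjoint functor theorem. Your (3)$\Rightarrow$(4)$\Rightarrow$(1) is \cref{lem:acc locn is 2-local} together with \cref{lem: acc locn 2-pres}.

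The one point to fix is your first candidate for $S$ in (3)$\Rightarrow$(4): units at $h_j$ and $[1]\boxtimes h_j$ alone do not suffice. For instance, take $\tJ=*$ and the posets (or $(n,1)$-categories) inside $\CATI$; both units are equivalences there, so every object would be 2-local. You therefore need your ``better'' $\kappa$-compact version. More simply, take the set that \cite[Proposition 5.5.4.2]{LurieHTT} supplies for $\tC^{\leq 1}\subseteq \tPSh(\tJ)^{\leq 1}$ and upgrade to 2-locality using closure under cotensors by $[1]$, as the paper does.
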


\begin{defn}
  An object $c \in \tC$ is \emph{2-$\kappa$-compact} for a regular
  cardinal $\kappa$ if
  $\tC(c,\blank)$ preserves conical colimits over $\kappa$-filtered \icats{}.
\end{defn}

\begin{observation}\label{obs: 2-compact via tensor}
  If $\tC$ admits conical colimits over $\kappa$-filtered \icats{} and
  tensors by $[1]$, then an object $c \in \tC$ is 2-$\kappa$-compact
  \IFF{} $c$ and $[1] \boxtimes c$ are both $\kappa$-compact in
  $\tC^{\leq 1}$. (In fact, it suffices that $[1] boxtimes c$ is
  2-$\kappa$-compact as $c$ is a retract of this.)
\end{observation}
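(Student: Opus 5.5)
The plan is to unwind both directions through the universal property of the tensor $[1] \boxtimes c$, using that $\tC(c,\blank)$ is valued in $\CatI$ and that an \icat{} is detected by its mapping spaces out of $[0]$ and $[1]$.

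For a $\kappa$-filtered \icat{} $\oK$ and a diagram $D \colon \oK \to \tC$ with conical colimit $\colim D$, I want the comparison
\[ \colim_{k \in \oK} \tC(c, D(k)) \to \tC(c, \colim D) \]
to be an equivalence in $\CatI$. Since filtered colimits in $\CatI$ are computed levelwise on complete Segal spaces, and the functors $\Map([0],\blank)$ and $\Map([1],\blank)$ are jointly conservative and commute with filtered colimits ($[0]$ and $[1]$ being compact in $\CatI$), the comparison is an equivalence \IFF{} it becomes one after applying $\Map([n],\blank)$ for $n = 0,1$.

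By the definition of tensors, $\Map([n], \tC(c,x)) \simeq \tC^{\leq 1}([n] \boxtimes c, x)$, naturally in $x$. By \cref{ex:conical}, a conical colimit over the \icat{} $\oK$ is also the colimit of $D^{\leq 1}$ in $\tC^{\leq 1}$. The two conditions therefore translate into:
\begin{itemize}
\item the map $\colim_{k} \tC^{\leq 1}(c, D(k)) \to \tC^{\leq 1}(c, \colim D)$ is an equivalence;
\item the corresponding map with $c$ replaced by $[1] \boxtimes c$ is an equivalence.
\end{itemize}
These say exactly that $c$ and $[1] \boxtimes c$ are $\kappa$-compact in $\tC^{\leq 1}$, which proves both directions of the main claim.

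The point needing care is naturality: the equivalence $\Map([n], \tC(c,\blank)) \simeq \tC^{\leq 1}([n]\boxtimes c, \blank)$ must be natural in the diagram variable, so that it identifies the two comparison maps and not merely their endpoints. It is natural, because it comes from corepresentability.

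For the parenthetical remark, $[1] \to [0] \to [1]$ exhibits $[0]$ as a retract of $[1]$. Applying $(\blank)\boxtimes c$ exhibits $c$ as a retract of $[1] \boxtimes c$ in $\tC^{\leq 1}$. Since $\kappa$-compact objects are closed under retracts, the $\kappa$-compactness of $[1] \boxtimes c$ in $\tC^{\leq 1}$ implies that of $c$.

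Once $[1]\boxtimes c$ is $\kappa$-compact in $\tC^{\leq 1}$, so is $c$, and then $c$ is 2-$\kappa$-compact by the first part. Conversely, if $[1]\boxtimes c$ is 2-$\kappa$-compact, then taking cores (applying $\Map([0],\blank)$) shows it is $\kappa$-compact in $\tC^{\leq 1}$, so the previous sentence applies.
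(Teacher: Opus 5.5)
Your argument is correct and is essentially the reasoning the paper intends; the paper states this observation without proof. Equivalences in $\CatI$ are detected by $\Map([0],\blank)$ and $\Map([1],\blank)$, which commute with filtered colimits, these are rewritten via the corepresenting objects $c$ and $[1]\boxtimes c$, and the parenthetical follows from $c$ being a retract of $[1]\boxtimes c$. Two small points: the retraction should read $[0] \to [1] \to [0]$, not $[1]\to[0]\to[1]$; and in the direction ``2-$\kappa$-compact $\Rightarrow$ $\kappa$-compact in $\tC^{\leq 1}$'' you implicitly use that every $\kappa$-filtered colimit in $\tC^{\leq 1}$ is a conical colimit in $\tC$, which holds by the existence hypothesis and uniqueness of colimits.
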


\begin{defn}
  For $s \colon x \to y$ and $c$ in an \itcat{} $\tC$, we say that $c$
  is \emph{2-local} with respect to $s$ if the functor
  \[ s^{*} \colon \tC(y,c) \to \tC(x,c)\]
  is an equivalence. If $S$ is a set of 1-morphisms in $\tC$, we write
  $\Loc_{S}(\tC)$ for the full sub-\itcat{} of objects that are
  2-local with respect to all elements of $S$.
\end{defn}

\begin{observation}\label{obs:2-local local}
  Suppose $\tC$ is an \itcat{} that admits tensors by $[1]$. Then an
  object $c$ is 2-local with respect to $s$ \IFF{} $c$ is local in
  $\tC^{\leq 1}$ with
  respect to both $s$ and $s \boxtimes [1]$. Similarly, if $\tC$
  admits cotensors by $[1]$ then $c$ is 2-local with respect to $s$
  \IFF{} $c$ and $c^{[1]}$ are local with respect to $s$ in $\tC^{\leq
  1}$.
\end{observation}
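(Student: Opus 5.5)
The statement to prove is \cref{obs:2-local local}. I will reduce everything to mapping spaces in $\tC^{\leq 1}$ and then invoke the tensor or cotensor universal property.

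The starting point is the observation that a functor of \icats{} $F \colon \oA \to \oB$ is an equivalence \IFF{} $\Map([0],F)$ and $\Map([1],F)$ are equivalences of \igpds{}. This holds because $[0]$ and $[1]$ generate $\CatI$ under colimits, so these two mapping spaces detect equivalences (equivalently, $F^{\simeq}$ and $\Ar(F)^{\simeq}$ are equivalences). Applied to $s^{*} \colon \tC(y,c) \to \tC(x,c)$, this says $c$ is 2-local with respect to $s$ \IFF{} both of the following maps are equivalences:
\[ \Map([0], \tC(y,c)) \to \Map([0],\tC(x,c)), \qquad \Map([1], \tC(y,c)) \to \Map([1],\tC(x,c)). \]

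I then identify each map. The first is $\tC^{\leq 1}(y,c) \to \tC^{\leq 1}(x,c)$, so it is an equivalence exactly when $c$ is $s$-local in $\tC^{\leq 1}$.

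For the second I use the tensor universal property $\Map([1],\tC(z,c)) \simeq \tC^{\leq 1}([1]\boxtimes z, c)$. This equivalence is natural in $z$, since tensoring is functorial and corepresents the copresheaf $\Map([1],\tC(z,\blank))$. Under it, the second map becomes precomposition with $[1]\boxtimes s$, which the statement writes as $s \boxtimes [1]$. So it is an equivalence exactly when $c$ is local with respect to $[1]\boxtimes s$ in $\tC^{\leq 1}$.

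The cotensor case is symmetric. Here I use $\Map([1],\tC(z,c)) \simeq \tC^{\leq 1}(z, c^{[1]})$, naturally in $z$, so the second map becomes $s^{*} \colon \tC^{\leq 1}(y,c^{[1]}) \to \tC^{\leq 1}(x,c^{[1]})$. Therefore $c$ is 2-local \IFF{} both $c$ and $c^{[1]}$ are $s$-local in $\tC^{\leq 1}$.

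The only point needing care is the naturality in $z$ of the (co)tensor equivalences, which is what lets us identify the induced maps with $[1]\boxtimes s$ and with $s^{*}$ respectively. I would justify this by noting that the (co)tensor is characterized by a Yoneda-type (co)representability statement, so the comparison maps are natural transformations of (co)presheaves, as in the definition of tensors in \S\ref{sec:twocat}. Beyond that, the argument is a formal check.
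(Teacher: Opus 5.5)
Your argument is correct, and it is exactly the reasoning the paper leaves implicit; the statement is recorded as an observation without a written proof. You detect the equivalence $s^{*}$ on $\Map([0],-)$ and $\Map([1],-)$. You then use the (co)tensor universal property, made natural in the source object via the Yoneda lemma, to identify the second map with precomposition by $[1]\boxtimes s$, or respectively with $s^{*}$ on maps into $c^{[1]}$.
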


Now we consider some easy consequences of standard results on
presentable \icats{}.

\begin{observation}
  Suppose $\tC$ is a presentable \itcat{}. Then $\tC$ is also
  cotensored over $\CatI$, as the presheaves $\Map(\oK, \tC(\blank,
  c))$ preserve limits and so are representable. Since $\tC^{\leq 1}$
  is complete, it follows that $\tC$ is also a complete \itcat{}.
\end{observation}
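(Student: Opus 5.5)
The statement to prove is the last Observation: a presentable \itcat{} $\tC$ is cotensored over $\CatI$, and hence complete. The plan is to obtain the cotensors from the adjoint functor theorem for presentable \icats{}, and then feed them into the completeness criterion already proved.

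\emph{Step 1 (cotensors).} Fix a small \icat{} $\oK$ and an object $c \in \tC$. We must show that the presheaf
\[ \Map(\oK, \tC(\blank, c)) \colon (\tC^{\leq 1})^{\op} \to \GpdI \]
is representable. Since $\tC^{\leq 1}$ is presentable, it suffices by the representability criterion for presentable \icats{} (\cite{LurieHTT}) to show that this presheaf takes colimits in $\tC^{\leq 1}$ to limits.

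\emph{Step 2 (colimits go to limits).} Colimits in $\tC^{\leq 1}$ indexed by small \icats{} are conical colimits in $\tC$, by \cref{ex:conical}. Such colimits exist because $\tC$ is cocomplete. By the definition of strong colimits, $\tC(\blank, c)$ sends them to limits in $\CatI$. Finally, $\Map(\oK, \blank) \colon \CatI \to \GpdI$ preserves limits. Hence the composite presheaf sends colimits in $\tC^{\leq 1}$ to limits, and is representable by an object $c^{\oK}$. These representing objects are precisely the cotensors in the sense of the notation section, so $\tC$ is cotensored.

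\emph{Step 3 (completeness).} A presentable \icat{} is complete, so $\tC^{\leq 1}$ is complete. Together with Step 1, the dual part (ii) of the corollary following \cref{cor:cotensordecomp} gives that $\tC$ is complete.

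The only point needing care is the identification in Step 2 of colimits in $\tC^{\leq 1}$ with conical colimits in $\tC$, so that $\tC(\blank, c)$ really turns them into limits of mapping \icats{}. \cref{ex:conical} supplies exactly this, given that cocompleteness of $\tC$ guarantees these conical colimits exist.
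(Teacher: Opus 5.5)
Your proposal is correct and follows the paper's own argument. You show that the presheaves $\Map(\oK,\tC(\blank,c))$ are representable via the representability criterion for presentable \icats{}, then deduce completeness from the corollary following \cref{cor:cotensordecomp}; your Step~2 simply unpacks the paper's ``preserve limits'' using \cref{ex:conical} and the cocompleteness of $\tC$.

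One detail is worth making explicit: the criterion of \cite{LurieHTT} requires the presheaf to take values in \emph{small} \igpds{}. This holds because $\Map(\oK,\tC(d,c)) \simeq \tC(\oK \boxtimes d, c)^{\simeq}$, using the tensors that exist by cocompleteness, and $\tC^{\leq 1}$ is locally small (cf.\ \cref{obs:pres loc small}).
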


\begin{observation}\label{obs:pres loc small}
  If $\tC$ is a presentable \itcat{}, then for $c,c' \in \tC$ we have
  that
  \[ \Map(\oK, \tC(c,c')) \simeq \tC(\oK \boxtimes c, c')^{\simeq}\]
  is a small \igpd{} for all small \icats{} $\oK$. It follows that
  $\tC(c,c')$ is a small \icat{}, and so $\tC$ is locally small.
\end{observation}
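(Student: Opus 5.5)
The plan is to reduce local smallness of $\tC$ to local smallness of the presentable \icat{} $\tC^{\leq 1}$. The bridge is the tensoring of $\tC$ by small \icats{}, tested against only $[0]$ and $[1]$.

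First, I will observe that $\tC$ admits tensors by every small \icat{} $\oK$. Indeed, the tensor $\oK \boxtimes c$ is the weighted colimit $\colim^{\oK}_{[0]} c$, and $\tC$ is cocomplete, so this exists by \cref{cor:cocomplete}. By the defining universal property of tensors we then get natural equivalences
\[ \Map(\oK, \tC(c,c')) \simeq \tC(\oK \boxtimes c, c')^{\simeq}. \]
The right-hand side is a mapping \igpd{} in $\tC^{\leq 1}$, which is small because presentable \icats{} are locally small. Hence $\Map(\oK, \tC(c,c'))$ is small for every small $\oK$.

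Second, I will deduce that the \icat{} $X := \tC(c,c')$ is essentially small using only the cases $\oK = [0]$ and $\oK = [1]$. The case $\oK = [0]$ says that the core $X^{\simeq}$ is small. The case $\oK = [1]$ says that $\Map([1], X)$ is small. For objects $x,y \in X$, the mapping \igpd{} $X(x,y)$ is the fibre of $\Map([1],X) \to X^{\simeq} \times X^{\simeq}$ over $(x,y)$, so it is small too. An \icat{} with a small core and small mapping \igpds{} is essentially small, for instance via the complete Segal space presentation, whose levels are then all small. Hence $\tC(c,c')$ is small, and since $c,c'$ were arbitrary, $\tC$ is locally small.

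I do not expect a genuine obstacle here. The only point needing care is the existence of tensors, which must come from cocompleteness (tensors are the weighted colimits over $[0]$) rather than from presentability of $\tC^{\leq 1}$ alone. Size bookkeeping should be kept relative to the fixed universe throughout.
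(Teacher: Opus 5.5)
Your proposal is correct and follows the paper's argument: tensors exist because $\tC$ is cocomplete, which identifies $\Map(\oK,\tC(c,c'))$ with a mapping \igpd{} of the locally small presentable \icat{} $\tC^{\leq 1}$. Your reduction to $\oK = [0]$ and $\oK = [1]$ simply spells out the ``it follows'' step that the paper leaves implicit.
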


Combining the adjoint functor theorem of \cite[Theorem 5.5.2.9]{LurieHTT}
with \cref{propn:upgrade adjoint}, we get:
\begin{propn}\label{cor:adjoint functor thm}
  Suppose $\tC$ and $\tD$ are locally small \itcats{} with $\tC$
  presentable and consider a functor $F
  \colon \tC \to \tD$.
  \begin{enumerate}
  \item $F$ is a left adjoint \IFF{} it is cocontinuous.
  \item If $\tD$ is also presentable, then $F$ is a right adjoint \IFF{} it is continuous and $F^{\leq
      1}$ is accessible.\qed
  \end{enumerate}
\end{propn}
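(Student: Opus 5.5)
The plan is to reduce both parts to the \icatl{} adjoint functor theorem \cite[Theorem 5.5.2.9]{LurieHTT} applied to $F^{\leq 1} \colon \tC^{\leq 1} \to \tD^{\leq 1}$, and then upgrade via \cref{propn:upgrade adjoint} and its dual. First, the necessity directions are formal. A left adjoint is cocontinuous by \cref{prop:adjpreservelimtis}. A right adjoint is continuous by the same result. For accessibility of $F^{\leq 1}$ when $F = G$ is a right adjoint, note that $(\blank)^{\leq 1}$ preserves adjunctions, so $F^{\leq 1}$ is a right adjoint between presentable \icats{}. It is therefore accessible by \cite[Proposition 5.4.7.7]{LurieHTT}.

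For sufficiency in (1), suppose $F$ is cocontinuous. Then $F^{\leq 1}$ preserves small colimits in $\tC^{\leq 1}$: by \cref{ex:conical}, conical colimits over \icats{} in $\tC$ are computed as colimits in $\tC^{\leq 1}$, and $\tC$ admits all of them since it is cocomplete. Here I need $\tD^{\leq 1}$ to be locally small. This follows from $\tD$ being locally small, because the mapping \igpds{} are the cores $\tD(d,d')^{\simeq}$ of small \icats{}. Since $\tC^{\leq 1}$ is presentable, \cite[Corollary 5.5.2.9(1)]{LurieHTT} gives a right adjoint to $F^{\leq 1}$. Next, $\tC$ is cocomplete, so it admits tensors by $[1]$ (a weighted colimit over $[0]$), and $F$ preserves them by cocontinuity. \Cref{propn:upgrade adjoint} then upgrades this to an adjunction of \itcats{}.

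For sufficiency in (2), suppose $F$ is continuous and $F^{\leq 1}$ is accessible. By the same reasoning, $F^{\leq 1}$ preserves small limits, and \cite[Corollary 5.5.2.9(2)]{LurieHTT} gives a left adjoint. To upgrade it, I will apply the dual of \cref{propn:upgrade adjoint}, i.e.\ the version for right adjoints preserving cotensors by $[1]$; this follows by applying the stated version to $F^{\op}$. Its hypotheses hold because $\tC$ is presentable and hence complete and cotensored over $\CatI$ (see the observation following the definition of presentability), and the continuous functor $F$ preserves the cotensor $c^{[1]}$, which is a weighted limit.

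The only mildly delicate point is checking that the (co)tensors are genuinely preserved by $F$ in the sense of the definition, as the upgrade criterion requires. This comes directly from the definition of weighted (co)limits and of (co)continuity (\cref{def:conti}, \cref{cor:cocomplete}), so I expect no real obstacle.
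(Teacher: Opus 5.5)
Your proposal is correct and follows the paper's own argument. The paper likewise applies the \icatl{} adjoint functor theorem \cite[Theorem 5.5.2.9]{LurieHTT} to $F^{\leq 1}$ and upgrades the result via \cref{propn:upgrade adjoint} (and its dual for cotensors); you have just filled in what the paper leaves implicit: compatibility of conical (co)limits with $(\blank)^{\leq 1}$ via \cref{ex:conical}, preservation of (co)tensors by $[1]$ as weighted (co)limits, and the necessity directions via \cref{prop:adjpreservelimtis} and \cite[Proposition 5.4.7.7]{LurieHTT}.
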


\begin{cor}
  Suppose $\tC$ is a presentable \itcat{}.
  \begin{enumerate}
  \item A presheaf $F \colon \tC^{\op} \to \CATI$ is representable
    \IFF{} $F$ is continuous.
  \item A copresheaf $F \colon \tC \to \CATI$ is corepresentable
    \IFF{} $F$ is continuous and $F^{\leq 1}$ is accessible.
  \end{enumerate}
\end{cor}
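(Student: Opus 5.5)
The plan is to reduce both parts to the preceding adjoint functor theorem (\cref{cor:adjoint functor thm}) applied to functors between \itcats{} that are built from $F$. The two natural candidates are the Yoneda embedding $h_{\tC}$ and the \itcat{} $\CATI^{\op}$ or $\CATI$ itself.

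For (1), one direction is immediate. A representable presheaf $\tC(\blank,c)$ sends partially (op)lax colimits in $\tC$ to the corresponding limits in $\CATI$ by \cref{cor:laxlimit rep}, so it is continuous as a functor $\tC^{\op} \to \CATI$. For the converse, let $F$ be continuous and consider
\[ F^{\op} \colon \tC \to \CATI^{\op}.\]
This functor is cocontinuous. The target is locally small and $\tC$ is presentable, so \cref{cor:adjoint functor thm}(1) gives a right adjoint $G \colon \CATI^{\op} \to \tC$. Then
\[ F(c) \simeq \CATI^{\op}(F^{\op}c, [0])^{\op\ldots}\]
and we should take the variance carefully. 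The cleaner statement is
\[ \CATI(\oK, F(c)) \simeq \CATI^{\op}(F^{\op}(c), \oK) \simeq \tC(c, G(\oK)).\]
Setting $\oK = [0]$ gives $F(c) \simeq \tC(c,G([0]))$ on underlying \icats{}, up to the $(\blank)^{\op}$ conventions on mapping \icats{} in $\CATI^{\op}$. I would have to check these conventions: $\CATI^{\op}(x,y) = \CATI(y,x)$, so the identification really is $\Fun([0],F(c)) \simeq \tC(c,G[0])$, naturally in $c$. The Yoneda lemma then gives $F \simeq \tC(\blank, G[0])$.

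An alternative avoids worrying about whether $\CATI^{\op}$ fits the hypotheses. Since $\tC$ is cotensored, we can define the representing object directly as the weighted limit $\lim^{F}_{\tC^{\op}} \id$. Because $\tC$ is complete it exists, and its universal property can be matched against co-Yoneda. I would keep this as a backup.

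For (2), necessity works the same way. A corepresentable $\tC(c,\blank)$ preserves limits by \cref{cor:laxlimit rep}. Its underlying functor $\Map(c,\blank)$ is accessible because $\tC^{\leq 1}$ is presentable and $c$ is $\kappa$-compact for some $\kappa$. For a general $\oK$ we use $\Fun(\oK,\tC(c,\blank)) \simeq \tC(\oK\boxtimes c,\blank)^{\simeq}$, and each $\oK \boxtimes c$ is also compact for large $\kappa$. For sufficiency, apply \cref{cor:adjoint functor thm}(2) to $F \colon \tC \to \CATI$. Here $\CATI$ is presentable, cocomplete by \cref{cor:laxcoliminCat1}, and has presentable underlying \icat{}. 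So $F$ has a left adjoint $L$, and $F \simeq \CATI([0],F(\blank)) \simeq \tC(L[0],\blank)$.

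The main obstacle is the precise meaning of ``$F^{\leq 1}$ accessible'' in (2), together with the variance bookkeeping in (1). Concretely, we must make sure that \cref{cor:adjoint functor thm} applies with target $\CATI$ (or $\CATI^{\op}$), and not only with $\tC$ as the target. For (1) I would first try the $\CATI^{\op}$ argument. If its local smallness or the form of the adjoint functor theorem causes trouble, I would fall back to the weighted-limit construction.
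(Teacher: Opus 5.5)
Your main route is the paper's own. For (2) you get a left adjoint $L$ from \cref{cor:adjoint functor thm}(2) and $L([0])$ corepresents $F$; for (1) you apply \cref{cor:adjoint functor thm}(1) to $F^{\op} \colon \tC \to \CATI^{\op}$ and evaluate the right adjoint at $[0]$. The paper gets the necessity directions from the ``iff'' in that theorem, since for instance $\tC(c,\blank)$ has the left adjoint $\oK \mapsto \oK \boxtimes c$. That is cleaner than your direct accessibility check, which as written ranges over all small $\oK$ and so does not produce a single $\kappa$; you should test only against $\oK = [0],[1]$. Your weighted-limit backup is not needed, and it would not work as stated: it is indexed by the large $\tC^{\op}$, so its existence is not guaranteed.
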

\begin{proof}
  In the second case, the condition is equivalent to $F$ having a left
  adjoint $L \colon \CATI \to \tC$, which is the unique cocontinuous functor taking
  the point to $x = L(*)$. Then $x$ corepresents $F$ as we get
  \[ \tC(x, \blank) \simeq \Fun(*, F(\blank)) \simeq F(\blank).\]
  The first case is similar, using that $F^{\op}$ has a right adjoint.
\end{proof}

\begin{lemma}
  Suppose $\tC$ is a presentable \itcat{}. Then so is $\FUN(\tK, \tC)$
  for any small \itcat{} $\tK$. In particular, $\tPSh(\tK) :=
  \FUN(\tK^{\op}, \CATI)$ is always presentable.
\end{lemma}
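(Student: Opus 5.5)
We need to show that $\FUN(\tK,\tC)$ is cocomplete and that its underlying \icat{} $\Fun(\tK,\tC)$ is presentable. Cocompleteness is immediate from \cref{cor:colim strong functor cat}: since $\tC$ admits all small partially (op)lax colimits, so does $\FUN(\tK,\tC)$, computed pointwise. The second claim, that $\Fun(\tK,\tC)$ is presentable, is the real content. The statement about $\tPSh(\tK)$ then follows by taking $\tC = \CATI$, which is presentable: it is cocomplete by \cref{cor:laxcoliminCat1} (or by \cref{cor:cocomplete} together with the weighted colimits computed there), and $\CatI$ is presentable.

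For the presentability of $\Fun(\tK,\tC)$, I would use the monadicity argument from \S\ref{sec:bousfieldkan}. Let $i\colon \tK^{\simeq}\to\tK$ be the core inclusion. By \cref{cor:monadiccore}, the adjunction $i_!\dashv i^*$ is monadic. Here $\Fun(\tK^{\simeq},\tC)\simeq\lim_{\tK^{\simeq}}\tC^{\leq 1}$ is a small limit of presentable \icats{} along identity functors, i.e.\ $\Fun(\tK^{\simeq},\tC^{\leq1})$, hence presentable.

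Next, by \cref{cor:colim strong functor cat} the functor $i^*$ preserves all colimits, so the monad $T=i^*i_!$ preserves all small colimits and in particular is accessible. By the formula in the proof of \cref{propn:nat end}, $T$ is given by $(TW)(c)\simeq\colim_{x\in\tK^{\simeq}}W(x)\boxtimes\tK(x,c)$. Then \cite[Corollary 4.2.3.7]{LurieHA}, which says that algebras over an accessible monad on a presentable \icat{} form a presentable \icat{}, shows that $\Fun(\tK,\tC)\simeq\mathrm{Alg}_T$ is presentable.

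The main subtlety is size. \cref{cor:monadiccore} requires $\tK$ small and $\tC$ cocomplete, and both hold here. The only other point to check is that $\tC^{\leq1}$ being presentable gives presentability of the functor \icat{} on the core, which is standard by \cite[Proposition 5.5.3.6]{LurieHTT}. An alternative route avoids monads: write $\tK$ as a small colimit of objects of $\Theta_2$. Then $\Fun(\tK,\tC)$ is a limit of $\Fun(C_k,\tC)$ along colimit-preserving restriction functors. Each $\Fun(C_k,\tC)$ is presentable, since it is built from $\tC^{\leq1}$ and cotensors, and limits of presentable \icats{} along left adjoints are presentable. Between the two, I would present the monadicity argument.
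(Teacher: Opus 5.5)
Your argument is correct and is essentially the paper's proof. The paper likewise first treats an $\infty$-category as source, where $\FUN(\tK,\tC)^{\leq 1}\simeq\Fun(\tK,\tC^{\leq 1})$ is presentable; this covers your core $\tK^{\simeq}$. It then concludes from the monadicity of $i_{!}\dashv i^{*}$ in \cref{cor:monadiccore} together with the presentability of algebras over an accessible monad. Your explicit checks (cocompleteness via \cref{cor:colim strong functor cat}, and accessibility of $T=i^{*}i_{!}$ because $i^{*}$ preserves colimits) just spell out what the paper leaves implicit.
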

\begin{proof}
  First suppose $\tK$ is an \icat{}. Then $\FUN(\tK, \tC)^{\leq 1}
  \simeq \Fun(\tK, \tC^{\leq 1})$, which is presentable. The general
  case then follows from \cref{cor:monadiccore} together with
  \cite[Corollary 4.2.3.7]{LurieHTT} or \cite[Corollary 6.8]{HenryMeadows}.
\end{proof}

We next make some simple observations about 2-compact and 2-local
objects that will lead us to the proof of \cref{thm:pres char}.

\begin{lemma}\label{lem:compact is 2-compact}
  Suppose $\tC$ is a presentable \itcat{}. Then there exists a regular
  cardinal $\kappa$ such that $\tC^{\leq 1}$ is $\kappa$-accessible
  and an object $c \in \tC$ is 2-$\kappa$-compact \IFF{} it is a
  $\kappa$-compact object of $\tC^{\leq 1}$.
\end{lemma}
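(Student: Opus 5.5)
The plan is to reduce 2-$\kappa$-compactness to ordinary $\kappa$-compactness via \cref{obs: 2-compact via tensor}. By that observation (using that a presentable $\tC$ is cocomplete, hence admits tensors by $[1]$ and conical $\kappa$-filtered colimits), an object $c$ is 2-$\kappa$-compact \IFF{} both $c$ and $[1]\boxtimes c$ are $\kappa$-compact in $\tC^{\leq 1}$. The direction ``2-$\kappa$-compact $\Rightarrow$ $\kappa$-compact in $\tC^{\leq 1}$'' is immediate for every $\kappa$. For the converse, it suffices to choose $\kappa$ so that the functor
\[ [1]\boxtimes(\blank) \colon \tC^{\leq 1} \to \tC^{\leq 1} \]
preserves $\kappa$-compact objects.

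To arrange this, first note that $[1]\boxtimes(\blank)$ is a left adjoint on $\tC^{\leq 1}$, with right adjoint the cotensor $(\blank)^{[1]}$. The cotensor exists because presentable \itcats{} are cotensored, as observed above. Since $\tC^{\leq 1}$ is presentable, this right adjoint is accessible by \cite[Proposition 5.4.7.7]{LurieHTT}. Hence there is a regular cardinal $\lambda$ such that $\tC^{\leq 1}$ is $\lambda$-accessible and $(\blank)^{[1]}$ preserves $\lambda$-filtered colimits.

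A left adjoint whose right adjoint preserves $\kappa$-filtered colimits preserves $\kappa$-compact objects. Indeed,
\[ \Map([1]\boxtimes c, \colim_i d_i) \simeq \Map(c, (\colim_i d_i)^{[1]}) \simeq \Map(c, \colim_i d_i^{[1]}) \simeq \colim_i \Map(c, d_i^{[1]}) \simeq \colim_i \Map([1]\boxtimes c, d_i). \]
So I will take $\kappa := \lambda$, or any larger regular cardinal $\kappa \gg \lambda$, to keep $\kappa$-accessibility via \cite[Proposition 5.4.2.11]{LurieHTT}. With this choice, every $\kappa$-compact object $c$ of $\tC^{\leq 1}$ has $[1]\boxtimes c$ also $\kappa$-compact, and \cref{obs: 2-compact via tensor} then shows $c$ is 2-$\kappa$-compact.

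The main point to check is that conical colimits over $\kappa$-filtered \icats{} in $\tC$ agree with the ordinary colimits in $\tC^{\leq 1}$ (\cref{ex:conical}). This is what lets ``$\tC(c,\blank)$ preserves $\kappa$-filtered conical colimits'' be tested on the cores $\Map(\oK,\tC(c,\blank)) \simeq \tC^{\leq 1}(\oK\boxtimes c,\blank)$ for $\oK=[0],[1]$. That reduction is exactly the content of \cref{obs: 2-compact via tensor}, so once accessibility of the cotensor is in hand the argument is formal.
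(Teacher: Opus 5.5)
Your proposal is correct and follows essentially the same route as the paper's proof: you reduce via \cref{obs: 2-compact via tensor} to choosing $\kappa$ so that $[1]\boxtimes(\blank)$ preserves $\kappa$-compact objects, and you get this from the accessibility of its right adjoint. The only difference is cosmetic: you identify that right adjoint explicitly as the cotensor $(\blank)^{[1]}$, whereas the paper obtains its existence from the adjoint functor theorem.
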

\begin{proof}
  By \cref{obs: 2-compact via tensor} it is enough to show there
  exists a $\kappa$ such that \[[1] \boxtimes \blank \colon \tC^{\leq
    1} \to \tC^{\leq 1}\] preserves $\kappa$-compact objects and
  $\tC^{\leq 1}$ is $\kappa$-accessible. But this is a cocontinuous
  functor between presentable \icats{}, so such a $\kappa$ must exist:
  by the adjoint functor theorem \cite[Theorem 5.5.2.9]{LurieHTT} this
  functor has a right adjoint, which must be $\kappa$-accessible for
  some $\kappa$ such that $\tC^{\leq 1}$ is $\kappa$-accessible by
  (the proof of) \cite[Lemma 5.4.7.7]{LurieHTT}; the left adjoint then
  preserves $\kappa$-compact objects, as required.
\end{proof}

\begin{lemma}\label{lem:local closed limit}
  For any set of maps $S$ in an \itcat{}, the full sub-\itcat{}
  $\Loc_{S}(\tC)$ is closed under weighted limits in $\tC$.
\end{lemma}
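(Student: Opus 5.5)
The plan is to argue directly from the universal property of weighted limits together with the fact that mapping \icats{} into a limit are limits of mapping \icats{}. Let $W \colon \tA \to \CATI$ be a weight and $F \colon \tA \to \tC$ a diagram that factors through $\Loc_{S}(\tC)$, and suppose $\lim^{W}_{\tA} F$ exists in $\tC$. We need to show that for every $s \colon x \to y$ in $S$ the functor
\[ s^{*} \colon \tC(y, \lim^{W}_{\tA}F) \to \tC(x, \lim^{W}_{\tA}F) \]
is an equivalence. Since $\Loc_{S}(\tC)$ is a full sub-\itcat{}, this also shows that the limit in $\tC$ is a limit in $\Loc_{S}(\tC)$.

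By \cref{obs: weighted lim rep} we have equivalences $\tC(b, \lim^{W}_{\tA}F) \simeq \Nat_{\tA,\CATI}(W, \tC(b,F(\blank)))$, natural in $b \in \tC$. We will use naturality in $b$ applied to the morphism $s$. Under these equivalences, $s^{*}$ corresponds to the functor
\[ \Nat_{\tA,\CATI}(W, \tC(y,F(\blank))) \to \Nat_{\tA,\CATI}(W, \tC(x,F(\blank))) \]
given by postcomposition with the natural transformation $s^{*} \colon \tC(y,F(\blank)) \to \tC(x,F(\blank))$ of functors $\tA \to \CATI$. Each component $s^{*} \colon \tC(y,F(a)) \to \tC(x,F(a))$ is an equivalence because $F(a)$ is $S$-local. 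Hence this transformation is a natural equivalence in $\FUN(\tA,\CATI)$, and postcomposition with it is an equivalence on $\Nat_{\tA,\CATI}(W,\blank)$. (A transformation that is an equivalence objectwise is an equivalence in the functor \itcat{}; this is, for example, the conservativity statement in \cref{lem:ess surj restr cons} applied to $\tA^{\simeq} \to \tA$.)

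The only point that needs care is the naturality in $b$ of the equivalence defining the weighted limit. That naturality is exactly what the representability statement in the definition provides: the representing equivalence is an equivalence of functors $\tC^{\op} \to \CATI$. So we expect no real obstacle; the argument is formal. The same reasoning, or \cref{prop:laxasweighted}, also gives closure under partially (op)lax limits.
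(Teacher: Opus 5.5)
Your proof is correct and is the paper's argument: a commutative square identifying $s^{*}$ on $\tC(\blank,\lim^{W}_{\tA}F)$ with postcomposition on $\Nat_{\tA,\CATI}(W,\blank)$ by the transformation $\tC(s,F)$, which is pointwise invertible and hence an equivalence in $\FUN(\tA,\CATI)$. Your appeal to \cref{lem:ess surj restr cons} for $\tA^{\simeq}\to\tA$ justifies that last step, which the paper states without comment.
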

\begin{proof}
  Given $W \colon \tK \to \CATI$ and $F \colon \tK \to \Loc_{S}\tC$,
  consider the commutative square
  \[
    \begin{tikzcd}
     \tC(y,\lim^{W}_{\tK}F) \ar[r, "s^{*}"] \ar[d, "\sim"'] & \tC(x,\lim^{W}_{\tK}F)  \ar[d, "\sim"] \\
     \Nat_{\tK,\CATI}(W, \tC(y,F)) \ar[r, "{\tC(s,F)_{*}}"'] & \Nat_{\tK,\CATI}(W, \tC(x,F)).
    \end{tikzcd}
  \]
  Here the bottom horizontal map is an equivalence since the natural
  transformation $\tC(s,F)$ is pointwise invertible and so an
  equivalence in $\FUN(\tK, \CATI)$.
\end{proof}

\begin{lemma}\label{lem: acc locn 2-pres}
  Suppose $\tC$ is a presentable \itcat{} and
  $\tC' \hookrightarrow \tC$ is a full sub-\itcat{} that is closed
  under conical colimits over $\kappa$-filtered \icats{} for some
  $\kappa$. If the inclusion has a left adjoint, then $\tC'$ is a
  presentable \itcat{}.
\end{lemma}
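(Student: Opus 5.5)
We need to show $\tC'$ is cocomplete and $\tC'^{\leq 1}$ is presentable.

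Cocompleteness comes from the left adjoint. Write $L \colon \tC \to \tC'$ for the left adjoint to the inclusion $i$. Since $i$ is fully faithful, the counit $Li \to \id$ is an equivalence. Given a diagram $F \colon \tK \to \tC'$ and a weight $W$, form $\colim^{W}_{\tK} iF$ in $\tC$, which exists since $\tC$ is cocomplete. By \cref{prop:adjpreservelimtis}, $L$ preserves this weighted colimit, so $L(\colim^{W}_{\tK} iF) \simeq \colim^{W}_{\tK} LiF \simeq \colim^{W}_{\tK} F$ exists in $\tC'$. Equivalently, for $c' \in \tC'$ we have natural equivalences
\[ \tC'(L\colim^{W} iF, c') \simeq \tC(\colim^{W} iF, ic') \simeq \Nat(W, \tC(iF, ic')) \simeq \Nat(W, \tC'(F,c')), \]
using that $i$ is fully faithful. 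By \cref{cor:cocomplete}, this shows $\tC'$ is cocomplete.

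Next, presentability of the underlying \icat{}. Since $(\blank)^{\leq 1}$ preserves adjunctions and full faithfulness, $\tC'^{\leq 1} \hookrightarrow \tC^{\leq 1}$ is a fully faithful right adjoint. We then check that $\tC'^{\leq 1}$ is closed under $\kappa$-filtered colimits in $\tC^{\leq 1}$. By \cref{ex:conical}, conical colimits over an \icat{} in $\tC$ are computed as ordinary colimits in $\tC^{\leq 1}$, and $\tC'$ is closed under these by hypothesis. So $\tC'^{\leq 1}$ is a reflective subcategory of the presentable \icat{} $\tC^{\leq 1}$, closed under $\kappa$-filtered colimits. Enlarging $\kappa$ so that $\tC^{\leq 1}$ is $\kappa$-accessible, the composite $\tC^{\leq 1} \to \tC'^{\leq 1} \hookrightarrow \tC^{\leq 1}$ is accessible. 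By \cite[Proposition 5.5.4.2]{LurieHTT} (accessible localizations of presentable \icats{} are presentable), $\tC'^{\leq 1}$ is presentable.

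The only mildly delicate point is the identification of conical colimits in $\tC$ over $\kappa$-filtered \icats{} with colimits in $\tC^{\leq 1}$, which is exactly what is needed to transfer the closure hypothesis to the underlying \icat{}; \cref{ex:conical} supplies this, and the rest is formal.
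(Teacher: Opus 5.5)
Your proof is correct and follows the same route as the paper. Cocompleteness of $\tC'$ comes from applying the left adjoint to colimits formed in $\tC$, and presentability of $\tC'^{\leq 1}$ comes from \cite[Proposition 5.5.4.2]{LurieHTT}, applied to it as a reflective subcategory of $\tC^{\leq 1}$ closed under $\kappa$-filtered colimits; your appeal to \cref{ex:conical} to transfer the closure hypothesis to $\tC^{\leq 1}$ only makes explicit a step the paper leaves implicit.
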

\begin{proof}
  It is clear that $\tC'$ is cocomplete, since colimits can be
  computed by applying the left adjoint of the inclusion to the
  colimit in $\tC$. Moreover, $\tC'^{\leq 1}$ is presentable by
  \cite[Proposition 5.5.4.2]{LurieHTT} since it is a reflective
  subcategory of the presentable \icat{} $\tC^{\leq 1}$, with the
  inclusion preserving $\kappa$-filtered colimits.
\end{proof}

\begin{lemma}\label{lem:acc locn is 2-local}
  Suppose $\tC$ is a presentable \itcat{} and $\tC_{0}$ is a full
  sub-\itcat{}. Then the following are equivalent:
  \begin{enumerate}[(1)]
  \item $\tC_{0}$ is closed under conical colimits over
    $\kappa$-filtered \icats{} for some regular cardinal $\kappa$ and
    the inclusion $\tC_{0} \hookrightarrow \tC$ has a left adjoint.
  \item There exists a small set $S$ of morphisms in $\tC$ such that
    $\tC_{0} \simeq \Loc_{S}\tC$.
  \end{enumerate}
\end{lemma}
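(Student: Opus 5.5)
The plan is to reduce both directions to the corresponding statement for the presentable \icat{} $\tC^{\leq 1}$ (\cite[Proposition 5.5.4.15]{LurieHTT}), using \cref{obs:2-local local} to translate between 2-locality and ordinary locality. Since $\tC$ is presentable, it is tensored and cotensored over $\CatI$, so both of these descriptions are available. We also have \cref{lem:compact is 2-compact} and the adjoint functor theorem \cref{cor:adjoint functor thm} at our disposal.

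For (2)$\Rightarrow$(1), we set $S' := S \cup \{s \boxtimes [1] : s \in S\}$, which is still a small set. By \cref{obs:2-local local}, $\Loc_{S}\tC$ is then the full subcategory of $\tC^{\leq 1}$ of objects local with respect to $S'$, and by \cite[Proposition 5.5.4.15]{LurieHTT} this inclusion has a left adjoint $L$ on underlying \icats{}. The inclusion is also closed under $\kappa$-filtered colimits once $\kappa$ is chosen so that the sources and targets of all maps in $S'$ are $\kappa$-compact in $\tC^{\leq 1}$; such a $\kappa$ exists since $S'$ is small. Colimits over \icats{} in $\tC$ are conical, so this closure holds in $\tC$ as well.

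It then remains to upgrade $L$ to an \itcatl{} left adjoint, and this is the step I expect to be the main obstacle. By \cref{propn:upgrade adjoint} (in its dual form), it suffices that the inclusion $i$ preserves cotensors by $[1]$. By \cref{lem:local closed limit}, $\Loc_S\tC$ is closed under weighted limits in $\tC$, and cotensors are weighted limits, so $c^{[1]}$ stays local for $c$ local. Therefore $i$ preserves them, and $L$ is a left adjoint of \itcats{}.

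For (1)$\Rightarrow$(2), we first note that $\tC_0$ is presentable by \cref{lem: acc locn 2-pres}. Its reflection $L$ is then a cocontinuous functor between presentable \icats{} on underlying \icats{}, and $i^{\leq 1}$ is accessible. By \cite[Proposition 5.5.4.15]{LurieHTT}, $\tC_0^{\leq 1}$ is the subcategory of objects local with respect to some small set $S$ of maps in $\tC^{\leq 1}$; concretely, $S$ is the set of unit maps $c \to iLc$ for $c$ ranging over a small set of $\kappa$-compact generators of $\tC^{\leq 1}$.

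We then check that objects of $\tC_0$ are actually 2-local for $S$. Since $L$ is an \itcatl{} left adjoint, the map $\tC(iLc, d) \to \tC(c,d)$ is an equivalence of \icats{} for $d \in \tC_0$. Conversely, a 2-local object is local in the ordinary sense, and hence lies in $\tC_0$. This gives $\tC_0 \simeq \Loc_S\tC$.
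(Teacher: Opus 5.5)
Your proposal is correct and follows the paper's proof. Both directions reduce to \cite{LurieHTT} for $\tC^{\leq 1}$, and (2)$\Rightarrow$(1) goes exactly as in the paper: pass to $S \cup (S \boxtimes [1])$ via \cref{obs:2-local local}, then upgrade the left adjoint using the dual of \cref{propn:upgrade adjoint} and \cref{lem:local closed limit}. The one minor difference is in (1)$\Rightarrow$(2): you get 2-locality directly from the $(\infty,2)$-adjunction applied to the unit maps $c \to iLc$, whereas the paper uses closure of $\tC_{0}$ under cotensors by $[1]$ (so $x^{[1]}$ is again $S$-local) for any $S$ with $\tC_{0}^{\leq 1} \simeq \Loc_{S}(\tC^{\leq 1})$; also, the reference for this direction should be \cite[Proposition 5.5.4.2]{LurieHTT} rather than 5.5.4.15.
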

\begin{proof}
  Given (1), we know from \cite[Proposition 5.5.4.2]{LurieHTT} that
  there exists a small set $S$ of morphisms in $\tC$ such that
  $\tC_{0}^{\leq 1} \simeq \Loc_{S}(\tC^{\leq 1})$. But we also know
  that $\tC_{0}$ is closed under cotensors in $\tC$ (since the
  inclusion is a right adjoint and so preserves limits by
  \cref{prop:adjpreservelimtis}), so if $x \in \tC^{\leq 1}$ is
  $S$-local then so is $x^{[1]}$, hence $x$ is also $S$-2-local and so
  $\tC_{0}$ contains precisely the $S$-2-local objects, as
  required. Conversely, given (2) we know from \cref{obs:2-local
    local} that $\tC_{0}^{\leq 1} \simeq \Loc_{S'} \tC^{\leq 1}$ where
  $S'$ consists of $S$ and $S \boxtimes [1]$. Hence the inclusion
  $\tC_{0}^{\leq 1} \to \tC^{\leq 1}$ has a left adjoint by
  \cite[Proposition 5.5.4.15]{LurieHTT} and therefore also a left
  adjoint at the $(\infty,2)$-level by \cref{propn:upgrade adjoint}
  since $\Loc_{S}\tC$ is closed under cotensors with $[1]$ by
  \cref{lem:local closed limit}.
\end{proof}

\begin{proof}[Proof of \cref{thm:pres char}]
  Suppose first that $\tC$ is presentable. Then $\tC$ is locally small
  by   \cref{obs:pres loc small}, and the rest of condition (2)
  follows from \cite[Proposition 5.4.2.2]{LurieHTT} together with
  \cref{lem:compact is 2-compact}. Conversely, condition (2) implies
  that $\tC$ is presentable by applying \cite[Proposition
  5.4.2.2]{LurieHTT} again.
  
  The equivalence of the last two conditions is a special case of
  \cref{lem:acc locn is 2-local}, while condition (3) implies
  that $\tC$ is presentable by \cref{lem: acc locn 2-pres}. It remains
  to show that if $\tC$ is presentable then condition (3) holds.
  
  Choose $\kappa$ as in
  \cref{lem:compact is 2-compact} and consider the inclusion
  $i \colon \tC^{\kappa} \hookrightarrow \tC$. Since $\tC^{\kappa}$ is
  small and $\tC$ is cocomplete, we have a Yoneda extension
  \[ i' = h_{\tC^{\kappa},!}i \colon \tPSh(\tC^{\kappa}) \to \tC,\]
  which is cocontinuous, and so has a right adjoint \[R \colon \tC \to \tPSh(\tC^{\kappa})\] by
  \cref{propn:upgrade adjoint}. We claim that $R$ preserves conical
  colimits over $\kappa$-filtered \icats{} and is fully faithful,
  which will complete the proof. To see the former, we note that
  colimits in presheaves are computed pointwise, so given a diagram
  $\phi \colon \oJ \to \tC$ with $\oJ$ a $\kappa$-filtered \icat{} it
  suffices to check that for $x \in \tC^{\kappa}$,
  \[ \colim_{\oJ} \tPSh(\tC^{\kappa})(h_{\tC^{\kappa}}x, R(\phi))
    \simeq \tPSh(\tC^{\kappa})(h_{\tC^{\kappa}}x, \colim_{\oJ} R(\phi)) \to
    \tPSh(\tC^{\kappa})(h_{\tC^{\kappa}}x, R(\colim_{\oJ} \phi))\]
  is an equivalence. Via the adjunction and the equivalence between
  $i' \circ h_{\tC^{\kappa}}$ and the inclusion $i$, we can identify
  this as
  \[ \colim_{\oJ} \tC(x, \phi) \to \tC(x, \colim_{\oJ} \phi).\] This
  is indeed an equivalence since $x$ is 2-$\kappa$-compact. To see
  that $R$ is fully faithful we can equivalently prove that the counit
  map $i'R(c) \to c$ is an equivalence for all $c \in \tC$. Since $R$
  and $i'$ preserve $\kappa$-filtered colimits and any $c$ is the
  $\kappa$-filtered colimit of a diagram in $\tC^{\kappa}$, we may
  reduce to the case where $c$ is 2-$\kappa$-compact. In this case we
  know $c \simeq i'h_{\tC^{\kappa}}(c)$, so applying the 2-of-3
  property to the triangle equivalence for $i'$ we see that it is
  enough to show that the unit map $h_{\tC^{\kappa}}c \to Rc$ is an equivalence,
  or equivalently that the map
  \[ \tPSh(\tC^{\kappa})(\phi, h_{\tC^{\kappa}}(c)) \to \tC(i'\phi,
    i(c))\]
  is an equivalence for all presheaves $\phi$; we can detect this on
  representable presheaves, where 
  we are asking for the map
  \[ \tC^{\kappa}(x, c) \to \tC(i(x),i(c))\]
  to be an equivalence, which is clear.
\end{proof}

\newcommand{\etalchar}[1]{$^{#1}$}


\begin{thebibliography}{LMGR{\etalchar{+}}24}

\bibitem[Abe23]{Abellan2023}
Fernando Abellán.
\newblock Comparing lax functors of $(\infty,2)$-categories, 2023,
  \href{http://arxiv.org/abs/2311.12746}{{\ttfamily arXiv:2311.12746}}.

\bibitem[AG22]{AbMarked}
Fernando Abellán~García.
\newblock Marked colimits and higher cofinality.
\newblock {\em J. Homotopy Relat. Struct.}, 17(1):1--22, 2022.

\bibitem[AGH25]{AGH24}
Fernando Abell\'an, Andrea Gagna, and Rune Haugseng.
\newblock Straightening for lax transformations and adjunctions of
  {$(\infty,2)$}-categories.
\newblock {\em Selecta Math. (N.S.)}, 31(4):Paper No. 85, 81, 2025,
  \href{http://arxiv.org/abs/2404.03971}{{\ttfamily arXiv:2404.03971}}.

\bibitem[AM24]{2Topoi}
Fernando Abellán and Louis Martini.
\newblock $(\infty,2)$-topoi and descent, 2024,
  \href{http://arxiv.org/abs/2410.02014}{{\ttfamily arXiv:2410.02014}}.

\bibitem[AMGR17]{AyalaMazelGeeRozenblyumEnr}
David Ayala, Aaron Mazel-Gee, and Nick Rozenblyum.
\newblock Factorization homology of enriched $\infty$-categories, 2017,
  \href{http://arxiv.org/abs/1710.06414}{{\ttfamily arXiv:1710.06414}}.

\bibitem[AMGR24]{AMGR}
David Ayala, Aaron Mazel-Gee, and Nick Rozenblyum.
\newblock Stratified noncommutative geometry.
\newblock {\em Mem. Amer. Math. Soc.}, 297(1485):iii+260, 2024,
  \href{http://arxiv.org/abs/1910.14602}{{\ttfamily arXiv:1910.14602}}.

\bibitem[AS23a]{ASI23}
Fernando Abellán and Walker~H. Stern.
\newblock 2-cartesian fibrations {II}: A {G}rothendieck construction for
  $\infty$-bicategories.
\newblock {\em J. Inst. Math. Jussieu (to appear)}, 2023,
  \href{http://arxiv.org/abs/2201.09589}{{\ttfamily arXiv:2201.09589}}.

\bibitem[AS23b]{AS23}
Fernando Abellán and Walker~H. Stern.
\newblock On cofinal functors of $\infty$-bicategories, 2023,
  \href{http://arxiv.org/abs/2304.07028}{{\ttfamily arXiv:2304.07028}}.

\bibitem[Ber24]{BermanLax}
John~D. Berman.
\newblock On lax limits in {$\infty$}-categories.
\newblock {\em Proc. Amer. Math. Soc.}, 152(12):5055--5066, 2024,
  \href{http://arxiv.org/abs/2006.10851}{{\ttfamily arXiv:2006.10851}}.

\bibitem[CM23]{CMGray}
Timothy Campion and Yuki Maehara.
\newblock A model-independent gray tensor product for $(\infty,2)$-categories,
  2023,  \href{http://arxiv.org/abs/2304.05965}{{\ttfamily arXiv:2304.05965}}.

\bibitem[DDS18]{sigmalim}
M.E. Descotte, E.J. Dubuc, and M.~Szyld.
\newblock Sigma limits in 2-categories and flat pseudofunctors.
\newblock {\em Adv. Math.}, 333:266--313, 2018.

\bibitem[GH15]{GHenriched}
David Gepner and Rune Haugseng.
\newblock Enriched {$\infty$}-categories via non-symmetric {$\infty$}-operads.
\newblock {\em Adv. Math.}, 279:575--716, 2015.

\bibitem[GHL21]{GHLGray}
Andrea Gagna, Yonatan Harpaz, and Edoardo Lanari.
\newblock Gray tensor products and lax functors of {$(\infty,2)$}-categories.
\newblock {\em Adv. Math.}, 391:Paper No. 107986, 32, 2021,
  \href{http://arxiv.org/abs/2006.14495}{{\ttfamily arXiv:2006.14495}}.

\bibitem[GHL24]{GHLFib}
Andrea Gagna, Yonatan Harpaz, and Edoardo Lanari.
\newblock Cartesian fibrations of {$(\infty,2)$}-categories.
\newblock {\em Algebr. Geom. Topol.}, 24(9):4731--4778, 2024,
  \href{http://arxiv.org/abs/2107.12356}{{\ttfamily arXiv:2107.12356}}.

\bibitem[GHL25]{GagnaHarpazLanariLaxLim}
Andrea Gagna, Yonatan Harpaz, and Edoardo Lanari.
\newblock Marked limits in $(\infty,2)$-categories.
\newblock {\em Doc. Math.}, published online, 2025.

\bibitem[GHN17]{GHN}
David Gepner, Rune Haugseng, and Thomas Nikolaus.
\newblock Lax colimits and free fibrations in $\infty$-categories.
\newblock {\em Doc. Math.}, 22:225--1266, 2017.

\bibitem[Hau15]{HaugsengRect}
Rune Haugseng.
\newblock Rectification of enriched {$\infty$}-categories.
\newblock {\em Algebr. Geom. Topol.}, 15(4):1931--1982, 2015.

\bibitem[Hei23]{HeineEquiv}
Hadrian Heine.
\newblock An equivalence between enriched {$\infty$}-categories and
  {$\infty$}-categories with weak action.
\newblock {\em Adv. Math.}, 417:Paper No. 108941, 140, 2023.

\bibitem[Hei24]{heineweighted}
Hadrian Heine.
\newblock The higher algebra of weighted colimits, 2024,
  \href{http://arxiv.org/abs/2406.08925}{{\ttfamily arXiv:2406.08925}}.

\bibitem[HHLN23]{TwoVariable}
Rune Haugseng, Fabian Hebestreit, Sil Linskens, and Joost Nuiten.
\newblock Lax monoidal adjunctions, two-variable fibrations and the calculus of
  mates.
\newblock {\em Proc. Lond. Math. Soc. (3)}, 127(4):889--957, 2023,
  \href{http://arxiv.org/abs/2011.08808}{{\ttfamily arXiv:2011.08808}}.

\bibitem[Hin16]{HinichDK}
Vladimir Hinich.
\newblock Dwyer-{K}an localization revisited.
\newblock {\em Homology Homotopy Appl.}, 18(1):27--48, 2016.

\bibitem[Hin20]{HinichYoneda}
Vladimir Hinich.
\newblock Yoneda lemma for enriched {$\infty$}-categories.
\newblock {\em Adv. Math.}, 367:107129, 119, 2020.

\bibitem[HM25]{HenryMeadows}
Simon Henry and Nicholas~J. Meadows.
\newblock Higher theories and monads.
\newblock {\em High. Struct.}, 9(1):227--268, 2025.

\bibitem[Lam17]{Lambert}
Michael Lambert.
\newblock Computing weighted colimits, 2017,
  \href{http://arxiv.org/abs/1711.05903}{{\ttfamily arXiv:1711.05903}}.

\bibitem[LMGR{\etalchar{+}}24]{Soergel}
Yu~Leon Liu, Aaron Mazel-Gee, David Reutter, Catharina Stroppel, and Paul
  Wedrich.
\newblock A braided monoidal $(\infty,2)$-category of {S}oergel bimodules,
  2024,  \href{http://arxiv.org/abs/2401.02956}{{\ttfamily arXiv:2401.02956}}.

\bibitem[Lou24]{loubaton}
F\'elix Loubaton.
\newblock Categorical theory of $(\infty,\omega)$-categories, 2024,
  \href{http://arxiv.org/abs/2406.05425}{{\ttfamily arXiv:2406.05425}}.

\bibitem[Lur09a]{LurieHTT}
Jacob Lurie.
\newblock {\em Higher topos theory}, volume 170 of {\em Annals of Mathematics
  Studies}.
\newblock Princeton University Press, Princeton, NJ, 2009.

\bibitem[Lur09b]{LurieGoodwillie}
Jacob Lurie.
\newblock $(\infty,2)$-categories and the {G}oodwillie calculus {I}, 2009,
  \href{http://arxiv.org/abs/0905.0462}{{\ttfamily arXiv:0905.0462}}.

\bibitem[Lur17]{LurieHA}
Jacob Lurie.
\newblock Higher algebra, 2017.
\newblock \url{https://www.math.ias.edu/~lurie/papers/HA.pdf}.

\bibitem[MGS24]{secondaryK}
Aaron Mazel-Gee and Reuben Stern.
\newblock A universal characterization of noncommutative motives and secondary
  algebraic {K}-theory.
\newblock {\em Ann. K-Theory}, 9:369--445, 2024.

\bibitem[NS18]{NikolausScholze}
Thomas Nikolaus and Peter Scholze.
\newblock On topological cyclic homology.
\newblock {\em Acta Math.}, 221(2):203--409, 2018.

\bibitem[Nui24]{Nuiten}
Joost Nuiten.
\newblock On straightening for {S}egal spaces.
\newblock {\em Compos. Math.}, 160(3):586–656, 2024.

\bibitem[RV22]{RVelements}
Emily Riehl and Dominic Verity.
\newblock {\em Elements of {$\infty$}-category theory}, volume 194 of {\em
  Cambridge Studies in Advanced Mathematics}.
\newblock Cambridge University Press, Cambridge, 2022.

\bibitem[Sha21]{Shah}
Jay Shah.
\newblock Parametrized higher category theory {I}{I}: Universal constructions,
  2021,  \href{http://arxiv.org/abs/2109.11954}{{\ttfamily arXiv:2109.11954}}.

\bibitem[Ste20]{StefanichPres}
Germ\'{a}n Stefanich.
\newblock Presentable $(\infty,n)$-categories, 2020,
  \href{http://arxiv.org/abs/2011.03035}{{\ttfamily arXiv:2011.03035}}.

\bibitem[Str87]{StreetOriental}
Ross Street.
\newblock The algebra of oriented simplexes.
\newblock {\em J. Pure Appl. Algebra}, 49(3):283--335, 1987.

\end{thebibliography}
\end{document}